\theoremstyle{theorem}
\newtheorem{Thm}{Theorem}[section]
\newtheorem{Prop}[Thm]{Proposition}
\newtheorem{Lem}[Thm]{Lemma}
\newtheorem{Cor}[Thm]{Corollary}
\newtheorem{As}[Thm]{Assumption}
\newtheorem{Claim}[Thm]{Claim}
\newtheorem{Def}[Thm]{Definition}
\newtheorem{Rem}[Thm]{Remark}
\theoremstyle{remark}
\newtheorem{Ex}[Thm]{Example}
\newcommand{\Min}{\min\limits_}
\newcommand{\Sup}{\sup\limits_}
\newcommand{\Inf}{\inf\limits_}
\newcommand{\PP}{\mathds{P}}
\newcommand{\R}{\mathbb{R}}
\newcommand{\N}{\mathbb{N}}
\newcommand{\borel}{\mathcal{B}}
\newcommand{\Llra}{\Longleftrightarrow}
\newcommand{\ra}{\rightarrow}
\newcommand{\ind}{\mathds{1}}
\newcommand{\Let}{\coloneqq}
\newcommand{\diff}{\mathrm{d}}
\newcommand{\wt}{\widetilde}
\newcommand{\opt}{^\star}
\newcommand{\ball}[2]{\mathsf{B}_{#2}(#1)}		
\newcommand{\eps}{\varepsilon}
\DeclareMathOperator{\e}{e}
\newcommand{\X}{\mathbb{X}}
\newcommand{\C}{\mathbb{C}}
\newcommand{\B}{\mathbb{B}}
\newcommand{\Y}{\mathbb{Y}}
\newcommand{\U}{\mathbb{U}}
\newcommand{\conv}{\mathsf{conv}}
\newcommand{\M}{\mathcal{M}}
\newcommand{\Lip}{\mathscr{L}}
\newcommand{\dir}[1]{\pmb{\delta}_{#1}}
\newcommand{\inner}[2]{\big \langle #1, #2 \big \rangle}
\newcommand{\KL}[2]{\ensuremath{\mathsf{D}\hspace{0.5mm}{\vphantom{T}}\!\!\left ({#1}\vphantom{\big|}\vert \hspace{-0.4mm} \vert \vphantom{\big|}{#2}\right)}}
\newcommand{\set}[1]{\mathbb{#1}}
\newcommand{\Prim}{\mathcal{P}}
\newcommand{\Dual}{\mathcal{D}}
\newcommand{\op}{\mathcal{A}}
\newcommand{\opn}{\mathcal{A}_{n}}
\newcommand{\Jp}{J}
\newcommand{\Jpn}{J_n}
\newcommand{\Jpnd}{J_n(\delta)}
\newcommand{\JpnN}{J_{n,N}}
\newcommand{\Jd}{\wt{J}}
\newcommand{\Jdn}{\wt{J}_n}
\newcommand{\Jdnd}{\wt{J}_n(\delta)}
\newcommand{\cone}{\mathbb{K}}
\newcommand{\proj}{\Pi}
\newcommand{\comp}[1]{\overline {#1}}
\newcommand{\Jdnr}{\wt{J}_{n,\eta}}
\newcommand{\Jac}{J^{\mathrm{AC}}}
\newcommand{\Jdc}{J^{\mathrm{DC}}}
\newcommand{\Jacn}{J^{\mathrm{AC}}_n}
\newcommand{\JacnN}{J^{\mathrm{AC}}_{n,N}}
\newcommand{\Jdcn}{J^{\mathrm{DC}}_n}
\newcommand{\Jnlb}{J_{n,\eta}^{\rm LB}}
\newcommand{\Jnub}{J_{n,\eta}^{\rm UB}}
\newcommand{\Jneta}{\wt{J}_{n,\eta}}
\newcommand{\Jlb}{J_n^{\rm LB}}
\newcommand{\yeta}{y_{\eta}^{\star}}
\newcommand{\yn}{y^\star_n}
\newcommand{\ext}{\mathcal{E}}
\newcommand{\Cont}{\mathcal{C}}
\newcommand{\Prob}{\mathcal{P}}
\newcommand{\Meas}{\mathcal{M}}
\newcommand{\Sigalg}{\mathcal{G}}
\newcommand{\Func}{\mathcal{F}}
\newcommand{\lip}{\mathrm{L}}
\newcommand{\wass}{\mathrm{W}}
\newcommand{\uball}{\mathsf{B}_n}
\newcommand{\ynb}{{\theta_{\mathcal{D}}}}
\newcommand{\xnb}{\theta_{\mathcal{P}}}
\newcommand{\Rnorm}{\mathfrak{R}}
\newcommand{\Uball}{\U_n}
\newcommand{\cnew}{{\mathbf{c}}}
\newcommand{\wh}{\widehat}
\newcommand{\order}{\mathcal{O}}
\newcommand{\ratio}{\varrho_n}
\newcommand{\T}{\mathds{T}}
\newcommand{\Leb}{\lambda}
\newcommand{\Ab}{\mathscr{A}}
\newcommand{\Yb}{\mathscr{Y}}
\newcommand{\cost}{\psi}
\newcommand{\NN}{\mathsf{N}}
\newcommand{\gmax}{g_{\rm max}}
\newcommand{\Norm}[1]{\| #1 \|}
\DeclareMathOperator{\XX}{\mathbb{X}}
\newcommand{\inprod}[2]{\ensuremath{\left\langle{#1}\vphantom{\big|},\vphantom{\big|}{#2}\right\rangle}}
\newcommand{\geqc}[1]{\succeq_{#1}}
\newcommand{\norm}[1]{\left\lVert#1\right\rVert}
\newcommand{\ProbIT}[1]{\ensuremath{\mathbb{P}\!\left[\vphantom{\big|}#1\vphantom{\big|}\right]}}
\newcommand{\Borelsigalg}[1]{\ensuremath{\mathcal{B}\!\left(#1\right)}}
\newcommand{\prlsection}[1]{{\bf{#1}.}}
\definecolor{darkgreen}{rgb}{0.0, 0.42, 0.24}
\newcommand{\K}{\mathbb{K}}
\newcommand{\MM}{\mathbb{M}}
\newcommand{\drv}{\ensuremath{\mathrm{d}}}
\newcommand{\indic}[1]{\ensuremath{\boldsymbol{1}_{#1}}}
\newcommand{\Borel}[1]{\ensuremath{\mathcal{B}\!\left(#1\right)}}
\newcommand{\Rp}{\mathbb{R}_{+}}
\newcommand{\Probpi}[1]{\ensuremath{\mathbb{P}^{\pi}_{\nu}\!\left(\vphantom{\big|}#1\vphantom{\big|}\right)}}
\newcommand{\Expecpi}[1]{\ensuremath{\mathbb{E}^{\pi}_{\nu}\!\left[\vphantom{\big|}#1\vphantom{\big|}\right]}}
\newcommand{\AAA}{\mathcal{A}}
\newcommand{\Kb}{\mathcal{K}}
\DeclareMathOperator{\subjectto}{s.t.}
\newcommand{\supp}[2]{\ensuremath{\sigma_{#1}\!\left({#2}\right)}}
\DeclarePairedDelimiter\ceil{\lceil}{\rceil}
\newcommand{\transp}{\ensuremath{^{\scriptscriptstyle{\top}}}}
\newcommand{\I}[2]{I\!\left({#1},{#2}\right)} 
\newcommand{\Hh}[1]{H\!\left({#1}\right)} 
\newcommand{\Hdiff}[1]{h\!\left({#1}\right)} 
\newcommand{\D}[2]{D\!\left({#1}\right| \!\!\left|{#2}\right)} 
\newcommand{\E}[1]{\,{\mathbb E}\!\left[#1\right]} 
\newcommand{\W}{\mathsf{W}} 
\newcommand{\Lp}[1]{\mathrm{L}^{#1}}
\def\Hb{\ensuremath{H_{\rm b}}}
\newcommand{\A}{\mathbb{A}}
\def\setC{\mathsf{c}}
\newcommand{\RQ}{\mathcal{T}}
\newcommand{\RQm}{\mathcal{T}_m}
\newcommand{\ExpecQ}[1]{\ensuremath{\mathbb{E}^{Q(\cdot|x,a)}\!\left[\vphantom{\big|}#1\vphantom{\big|}\right]}}
\newcommand{\meas}{\mathcal{P}}
\newcommand{\WW}{\mathcal{W}}
\newcommand{\bracket}[1]{[#1]}
\newcommand{\Rsp}{\ensuremath{\R_{> 0}}}
\newcommand{\mypart}[2]{
    \setcounter{part}{#1}
    \setcounter{chapter}{6}
    \part*{#2}
    \addcontentsline{toc}{part}{#2}
}
\begin{document}

\thispagestyle{empty}
	\vspace*{-2cm} 
	
	\hspace{-0.0cm}

	\begin{minipage}[t]{\textwidth}

		\begin{center}

			\hspace{-0.9cm}DISS.\ ETH NO.\ 24732


		\end{center}

   \end{minipage}\\[1.0cm]

	\begin{minipage}[h]{\textwidth}

		\begin{center}\bf\Large

			\hspace{-1.0cm} 
			Convex programming in optimal control and information theory

		\vspace{0.5cm}

		\end{center}

   \end{minipage}\\[2cm]

	\begin{minipage}[h]{\textwidth}

		\begin{center}\large

			\hspace{-1.0cm}  A thesis submitted to attain the degree of \\[0.5cm]
			
			\hspace{-1.0cm} DOCTOR OF SCIENCES of ETH ZURICH \\ [0.3cm]
			
			\vspace{0.3cm}
			
			\hspace{-1.0cm} (Dr.\ sc.\ ETH Zurich) \\ [1.9cm]

%
%
%


			\hspace{-1.0cm}    presented by\\[0.5cm]

			\hspace{-1.0cm}    \textsc  {Tobias Samuel Sutter} \\[0.5cm]

			\hspace{-1.0cm}    MSc ETH Zurich\\ 

			\hspace{-1.0cm}    born on 14.\ March 1987\\

			\hspace{-1.0cm}    citizen of St.Gallen-Straubenzell, Switzerland\\[1.0cm]
			\vspace{1cm}
			\hspace{-1.0cm}    accepted on the recommendation of\\[0.5cm]
			\vspace{0.4cm}

			\hspace{-1.0cm}    Prof.\ Dr.\ John Lygeros, examiner (ETH Zurich, Switzerland)\\
			
			\hspace{-1.0cm}    Prof.\ Dr.\ Daniel Kuhn, co-examiner (EPF Lausanne, Switzerland)\\
			
			\hspace{-1.0cm}    Prof.\ Dr.\ Sean Meyn, co-examiner (University of Florida, USA)\\

			\hspace{-1.0cm}    2017

		\end{center}

	\end{minipage}


\pagenumbering{gobble}
\thispagestyle{empty}
\newpage
\vspace*{\fill}
\copyright\ November 2017

Tobias Sutter

All Rights Reserved\\

ISBN 978-3-906916-04-0

DOI\ \,\,10.3929/ethz-b-000218720




\pagenumbering{roman}

\addcontentsline{toc}{chapter}{Acknowledgments}



\chapter*{Acknowledgements}
First and foremost, I would like to express my sincere gratitude to my advisor Professor~John Lygeros for the opportunity to do my PhD at the Automatic Control Lab.
His vast knowledge, patience and continuous guidance together with his unique open-mindedness to new territories are the main reasons why my PhD turned out to be such an exciting and truly wonderful journey.
I will never forget the amount of academic freedom, trust and support I received from him --- John, you are a fantastic supervisor thank you for everything!

It was an incredible luck for me to meet Peyman Mohajerin Esfahani, who approaches scientific problems with an unparalleled level of creativity that from the very beginning of our collaboration deeply impressed me. Better than anybody else he taught me how important (and rewarding) it is for a scientist to leave his comfort zone, while never loosing or sacrificing the mathematical rigour.  His passion and dedication to science is absolutely captivating and heavily influenced my own approach to scientific problems. I remember many nights and evenings we spent in our office filling (and erasing) one whiteboard after another.

I am especially grateful to Professor Daniel Kuhn not only for serving in my PhD exam committee, but also for our fruitful collaboration on approximate dynamic programming.~I greatly appreciate his kind willingness and precious time to work with me and support me with his immense knowledge. Moreover, my sincere thanks go to Professor Sean Meyn for being my co-examiner. Thank you very much for sharing your vast knowledge and experience as well as for your consistent interest in my work.

Thanks to Professor Manfred Morari and John, the Automatic Control Lab provided a priceless collaborative environment to work with brightest minds and genuinely friendly people. Let me start with Debasish Chatterjee, to whom I am always indebted. It was his deep mathematical knowledge combined with his outstanding ability of motivating people, that made it clear to me as a Master's student that I would like to pursue a PhD in the Automatic Control Lab. Ever since then he has been a close friend and scientific advisor. My special thanks go to Arnab Ganguly, who was not only a wonderful friend, but also a great teacher providing me with his unique perspective on research that only a mathematician could have. I also owe Federico Ramponi a large debt, for being extremely influential and motivating particularly in the beginning of my scientific path. Moreover, I would like to offer my sincere thanks to my former office mates Stefan Richter, Kostas Margellos and Andreas Milias for sharing and dealing with research and other topics and always finding time to discuss them as well as for all the valuable comments --- I have benefited considerably from all of you.

My honest and deepest thanks go to my twin brother, who has always very closely supported and guided me, on a personal as well as on an academic level. I have learned so much from him that allowed me to enter scientific territories that I would never even have dared to touch without him.
Also, my family has played an important role within my PhD; my deepest thanks go to my parents and my sister who have supported my life in the academic world unselfishly from the beginning and put trust in me even at times when my research activities seemed quite ominous to them. 
Finally, I simply want to thank Janine for being by my side during this journey. Thanks for your steady belief in me, the encouragement, and for standing by me in all tough times during these years.

\vspace{0.8cm}


\hfill{Tobias Sutter}

\hfill{Zurich, November 2017}




\chapter*{Abstract}
\addcontentsline{toc}{chapter}{Abstract}

The main theme of this thesis is the development of computational methods for classes of infinite-dimensional optimization problems arising in optimal control and information theory.\ The first part of the thesis is concerned with the optimal control of discrete-time continuous space Markov decision processes (MDP). The second part is centred around two fundamental problems in information theory that can be expressed as optimization problems: the channel capacity problem as well as the entropy maximization subject to moment constraints.

Solutions to the optimal control of MDPs traditionally are characterized by means of dynamic programming, that is broadly applicable but suffers from an exponential computational growth with the problem dimension. In addition, it is oftentimes impossible to obtain an explicit solution of such MDPs, which motivates the task of finding tractable approximations leading to explicit solutions. Such approximation schemes are the core of a methodology known as \emph{approximate dynamic programming} (ADP).
We focus on the so-called linear progamming approach to MDPs; in particular we develop an approximation bridge from the infinite-dimensional LP to tractable finite convex programs in which the performance of the approximation is quantified explicitly. The first stage of our proposed approximation scheme provides an explicit error bound from the infinite LP to its semi-infinite approximation and as a key feature introduces an additional norm constraint that effectively acts as a regularizer. For the second step, we offer two approaches that both provide explicit bounds on the approximation error. One is based on randomized algorithms and the other on fast gradient methods in the spirit of Nesterov.

The channel capacity, describing the ultimate limit of reliable communication can be represented by an optimization problem that is difficult to evaluate analytically or numerically for most channels of interest, which motivates the task of finding tractable approximations leading to explicit solutions.
We present a novel iterative method for approximately computing the capacity of discrete memoryless channels, possibly under additional constraints on the input distribution. Based on duality of convex programming, we derive explicit upper and lower bounds for the capacity. 
The presented method requires $O(M^2 N \sqrt{\log N}/\varepsilon)$ to provide an estimate of the capacity to within $\varepsilon$, where $N$ and $M$ denote the input and output alphabet size; a single iteration has a complexity $O(M N)$. We also show how to approximately compute the capacity of memoryless channels having a bounded continuous input alphabet and a countable output alphabet under some mild assumptions on the decay rate of the channel's tail.
It is shown that discrete-time Poisson channels fall into this problem class.

Finally this thesis presents a novel numerical approximation scheme to minimize the relative entropy subject to noisy moment constraints. This is a generalization of the maximum entropy estimation problem and appears in various applications ranging from Machine Learning to Physics and is a key object in first part of this thesis. Our emphasis is on the provable efficiency of the approximation scheme, and in particular we derive explicit bounds on the approximation error.

%




\chapter*{Zusammenfassung}
\addcontentsline{toc}{chapter}{Zusammenfassung}

Diese Dissertation befasst sich mit der numerischen Berechnung einer Klasse unendlich-dimensionaler Optimierungsprobleme, die sowohl in der Regelungstechnik als auch in der Informationstheorie auftauchen. Der erste Teil behandelt die optimale Steuerung von zeitdiskreten Markov-Entscheidungsproblemen (MDP) auf kontinuierlichen R\"aumen. Im zweiten Teil untersuchen wir zwei fundamentale Probleme der Informationstheorie, die als Optimierungsprobleme repr\"asentiert werden k\"onnen: das Kanalkapazit\"ats-Problem und die Berechnung einer Verteilung mit gegebenen Momenten welche die Entropie maximiert.

Traditionellerweise werden die L\"osungen solcher MDPs durch dynamische Programmierung charakterisiert, wobei wobei die Komplexit\"at exponentiell mit der Problemgr\"osse anw\"achst. Diese L\"osungen haben im Allgemeinen keine geschlossene Form und m\"ussen daher in einer m\"oglichst effizienten und exakten Art und Weise numerisch approximiert werden. Solche Approximationsmethoden begr\"unden das Gebiet der \textit{approximativen dynamischen Programmierung} (ADP). Wir befassen uns mit der Formulierung von MDPs als lineare Programme (LP).
Insbesondere entwickeln wir eine Approximation von unendlich-dimensionalen LPs hin zu endlich-dimensionalen konvexen Optimierungsproblemen, wobei der Approximationsfehler explizit quantifiziert wird. Diese Approximation besteht aus zwei Teilen: Im ersten Teil wird das unendlich-dimensionale LP durch ein semi-unendliches konvexes Programm approximiert, wobei wir einen neuen Regularisierungsterm einf\"uhren. Im zweiten Teil etwickeln wir zwei Methoden mit expliziten Fehlerschranken. Eine basiert auf randomisierten Algorithmen und die andere auf schnellen Gradienten Verfahren.

Die h\"ochste Rate mit der Information asymptotisch fehlerfrei über einen fehlerhaften Kanal \"ubertragen werden kann wird durch die Kanalkapazit\"at beschrieben. Diese wiederum kann als konvexes Optimierungsproblem ausgedr\"uckt werden.
F\"ur die meisten Kan\"ale ist diese Optimierung jedoch schwierig analytisch zu l\"osen, warum man an approximativen L\"osungen interessiert ist, welche in effizienter Weise gefunden werden k\"onnen. Wir stellen eine neue iterative Methode vor um die Kanalkapazit\"at zu berechnen, welche zus\"atzliche Bedingungen an die Eingangverteilung ber\"ucksichtigen kann. Basierend auf der Dualit\"at der konvexen Optimierung leiten wir obere und untere Schranken f\"ur die Kanalkapazit\"at her. Die Methode hat eine Komplexit\"at von $O(M^2 N \sqrt{\log N}/\varepsilon)$ Iterationen um eine $\varepsilon$-genaue Approximation der Kapazit\"at zu berechnen, wobei $N$ und $M$ die Gr\"osse des Eingangs- respektive Ausgangsalphabets sind; jede Iteration hat die Komplexit\"at $O(MN)$.
Des Weiteren zeigen wir wie man die Kapazit\"at eines ged\"achtnislosen Kanals mit einem beschr\"ankten kontinuierlichen Eingangsalphabet und einem abz\"ahlbaren Ausgansalphabet approximiert unter schwachen Annahmen an die Abfallrate des Kanalendes. Diese Annahmen sind beispielsweise erf\"ullt f\"ur den zeitdiskreten Poisson-Kanal, dessen Kapazit\"at wir berechnen.

Der letzte Teil besch\"aftigt sich mit der numerischen Minimierung der relativen Entropie unter gegebenen (m\"oglicherweise verrauschten) Moment Bedingungen. Dies ist eine Verallgemeinerung des Maximierungsproblems der Entropie unter gegebenen Momenten --- ein Problem welches zahlreiche Anwendungen im maschinellen Lernen, der Physik oder auch im ersten Teil dieser Arbeit hat. Unser Schwerpunkt liegt wiederum auf der beweisbaren Effizienz der Methode sowie auf expliziten Schranken des Approximationsfehlers.



\setlength{\parskip}{0.51mm}

\addcontentsline{toc}{chapter}{Contents}

\tableofcontents

\setlength{\parskip}{1.1mm}



\pagenumbering{arabic}
\chapter{Introduction }\label{ch.preliminaries}

Mathematical optimization problems arise in many settings, ranging from the control of heating systems to managing entire economies. An amazing variety of practical problems involving decision making (or system design, analysis, and operation) can be cast in the form of a mathematical optimization problem. In general these optimization problems, however, are surprisingly difficult to solve
and approaches to the general problem therefore involve some kind of compromise, such as very long computation time, or the possibility of not finding the solution and relying to some level of approximate solution. There are, however, some important exceptions to the general rule that most optimization problems are difficult to solve.

Two disciplines of engineering that naturally deal with mathematical optimization problems are automatic control and optimal decision making as well as information theory.
This thesis mainly addresses classes of optimization problems from these two disciplines, namely the optimal control of discrete-time continuous space Markov decision processes (MDP), the channel capacity problem as well as the entropy maximization subject to moment constraints.

The first part of the thesis is concerned with a class of discrete-time, stochastic control systems, MDPs. Solutions to these problems traditionally are characterized by means of dynamic programming (DP), that is broadly applicable but suffers from two so-called curses (that were already pointed out by Bellman): the \textit{curse of dimensionality}, i.e., the exponential computational growth with the problem dimension and the \textit{curse of modelling}, i.e., the fact that DP methods require an explicit system model. As a consequence of the first curse, it is oftentimes impossible to obtain an explicit solution of such MDPs, which motivates the task of finding tractable approximations leading to explicit solutions. Such approximation schemes are the core of a methodology known as approximate dynamic programming (ADP) that is equipped with a substantial body of literature. The second curse traditionally is addressed by using data drawn from the system model that is unknown, which refers to the widely studied fields of neural networks or reinforcement learning. Finally, the combination of the two methods has been settled with the term \emph{neuro-dynamic programming} in the 1990s and since then it remains an active field of research with a corresponding literature that is rapidly growing, additionally driven by technological advancements over the last years that have led to an unprecedented increase in the availability of computational resources. Here, we focus on the so-called linear progamming approach to MDPs and more specifically, we leverage recent breakthroughs in the area of sample-based optimization (the "scenario approach", [CC06, CG08, Cal10]) in combination with parametric approximate representation of the cost-to-go function to efficiently compute solutions to MDPs defined on continuous spaces. More details regarding this part and its contribution is provided in Section~\ref{sec:contributions:ADP}

The second part of the thesis studies optimization problems arising from an information theoretic perspective, namely the channel capacity problem and the maximum entropy estimation problem. In his famous 1948 paper "A Mathematical Theory of Communication" Claude E. Shannon derives the ultimate limit of reliable communication - the channel capacity - and gives a general expression of this limit as a function of the conditional probability distribution describing the channel. This expression, however, involves an optimization that is difficult to evaluate analytically or numerically for most channels of interest, which motivates the task of finding tractable approximations leading to explicit solutions. Another, somewhat related class of optimization problems investigated in this thesis are the entropy maximization problems, subject to finitely many moment constraints. 
The details regarding this second part can be found in Section~\ref{sec:contributions:IT}

\section{Outline and contributions}
Here we outline the organization and contributions of the thesis:

\subsection{Part~\ref{part:ADP}: Approximate Dynamic Programming} \label{sec:contributions:ADP}
Part~\ref{part:ADP} is motivated by the optimal control of a class of discrete-time MDPs on continuous compact state and action spaces. This problems are equipped with a mature theoretical framework \cite{ref:Bertsekas-78, ref:Hernandez-96, ref:Hernandez-99}, its numerical computation, however, remains challenging. The first part of this thesis contributes in this direction.

\subsection*{A. Linear programming in infinite-dimensional spaces}
Chapter~\ref{chap:infLP} explains how a wide range of optimal control problems involving MDPs can be equivalently expressed as \emph{static} optimization problems over a closed convex set of measures, more specifically, as infinite-dimensional linear programming problems \cite{Manne, ref:Hernandez-96, ref:Hernandez-99, ref:chapter:Lerma}. This LP reformulation is particularly appealing for dealing with unconventional settings involving additional constraints \cite{ref:Hernandez-03, ref:Borkar-08}, secondary costs \cite{ref:Dufour-14} and information-theoretic considerations \cite{ref:Raginsky-15}, where traditional dynamic programming techniques are not applicable. In addition, the infinite LP reformulation allows one to leverage the developments in the optimization literature, in particular convex approximation techniques, to develop approximation schemes for MDP problems.

Section~\ref{subsec:dual-pair} introduces the general setting for the infinite-dimensional linear programming problems considered in this thesis.
In Section~\ref{section:MDP:LP}, we formally define the Markov control model considered and recall some standard definitions. Then, we review and introduce the LP approach to a particular class of discrete-time MDPs.

\subsection*{B. Approximation of infinite-dimensional LPs}
Chapter~\ref{chap:approx:LP} represents the main contribution of the first part of this thesis. Under suitable assumptions, we develop an approximation bridge from the infinite-dimensional LP (as introduced in Chapter~\ref{chap:infLP}) to tractable finite convex programs in which the performance of the approximation is quantified explicitly. 
The first stage of our proposed approximation scheme provides an explicit error bound from the infinite LP to its semi-infinite approximation and basically uses perturbation analysis and duality of convex optimization. For the second step, we offer two approaches that both provide explicit bounds on the approximation error. One is based on randomized algorithms motivated by \cite{ref:MohSut-13} (Section~\ref{sec:semi-fin:rand}) and the other on fast gradient methods in the spirit of \cite{ref:Nest-05} (Section~\ref{sec:semi-fin:smoothing}). A combination of the two approaches for the second step provides an a posteriori error bound that in practice often is much smaller than the a priori error bounds (Section~\ref{sec:inf-fin}).

\subsection*{C. Approximation of MDPs}

In Chapter~\ref{chap:approx:MDP} we apply and simplify the approximation scheme developed in the previous chapter to the class of infinite-dimensional LPs that arise from discrete-time MDP as introduced in Section~\ref{section:MDP:LP}, which is the content of Sections~\ref{subsec:semi:MDP}, \ref{subsec:rand:MDP} and \ref{subsec:smooth:MDP}. The applicability of the theoretical results is demonstrated through a constrained linear quadratic optimal control problem and a fisheries management problem in Section~\ref{sec:sim}. The approximation method is generalized to the case where the transition kernel of the Markov process is unknown and has to be inferred from data, in Section~\ref{sec:RL}.

\subsection{Part~\ref{part:IT}: Information theoretic problems} \label{sec:contributions:IT}

Part~\ref{part:IT} is concerned with two convex optimization problems that are fundamental in information theory: the channel capacity problem as well as the entropy maximization problem \cite{cover}. Both problems can be cast as infinite-dimensional convex programs that in general do not admit an explicit solution and as such one is interested in finding tractable approximations leading to approximate solutions.

\subsection*{A. Channel capacity approximation}
Chapter~\ref{chap:channel:cap} presents a novel iterative method for approximately computing the capacity memoryless channels, possibly under additional constraints on the input distribution. The method is based on on duality of convex programming and recent developments in structured convex optimization \cite{ref:Nest-05} and provides explicit (prior and posterior) upper and lower bounds for the capacity. In the context of discrete memoryless channels, the presented method requires $O(M^2 N \sqrt{\log N}/\varepsilon)$ to provide an estimate of the capacity to within $\varepsilon$, where $N$ and $M$ denote the input and output alphabet size; a single iteration has a complexity $O(M N)$ and as such in particular (favourable) cases outperforms state-of the art approaches such as for example the celebrated Blahut-Arimoto Algorithm \cite{blahut72, arimoto72}.

We also show how to approximately compute the capacity of memoryless channels having a bounded continuous input alphabet and a countable output alphabet under some mild assumptions on the decay rate of the channel's tail. It is shown that discrete-time Poisson channels fall into this problem class. As an example, we compute sharp upper and lower bounds for the capacity of a discrete-time Poisson channel with a peak-power input constraint.

\subsection*{B. Entropy maximization}

Chapter~\ref{chap:entropy:max} presents a novel numerical approximation scheme to minimize the relative entropy subject to noisy moment constraints. This is a generalization of the so-called maximum entropy estimation problem and appears in various applications ranging from Machine Learning to Physics and in the context of this thesis appears as a key object in Part~\ref{part:ADP} as well as in Chapter~\ref{chap:channel:cap}. Our emphasis is on the provable efficiency of the approximation scheme, and in particular we derive explicit bounds on the approximation error; something, that to the best of our knowlege has not been available yet in the literature.

\section{Publications}

The work presented in this thesis mainly relies on previously published or submitted articles. The thesis only contains a subset of the research performed throughout my PhD studies and several projects are not featured here. The corresponding articles are listed below according to the related chapters.

\noindent {\bf Part \ref{part:ADP} (Chapters~\ref{chap:infLP} to \ref{chap:approx:MDP})}
\begin{itemize}
	\item P.\ Mohajerin Esfahani, {\bf T.\ Sutter}, D.\ Kuhn and J.\ Lygeros, ``From Infinite to Finite Programs: Explicit Error Bounds with an Application to Approximate Dynamic Programming", submitted to \textit{SIAM Journal on Optimization}, June 2017, \cite{ref:Peyman-17}.
	\item P.\ Mohajerin Esfahani, {\bf T.\ Sutter} and J.\ Lygeros, ``Performance Bounds for the Scenario Approach and an Extension to a Class of Non-convex Programs", \textit{IEEE Transactions on Automatic Control}, vol. 60, no. 1, January 2015, \cite{ref:MohSut-13}.
	\item {\bf T.\ Sutter}, A.\ Kamoutsi, P.\ Mohajerin Esfahani and J.\ Lygeros, ``Data-driven approximate dynamic programming: A linear programming approach", \textit{IEEE Conference on Decision and Control}, Melbourne, Australia, 2017,  \cite{ref:SutKam-17}.
	\item {\bf T.\ Sutter}, P.\ Mohajerin Esfahani and J.\ Lygeros, ``Approximation of Constrained Average Cost Markov Control Processes", \textit{IEEE Conference on Decision and Control}, Los Angeles, USA, 2015, \cite{ref:sutter-CDC-14}.

\end{itemize}



\noindent {\bf Part~\ref{part:IT} to (Chapter~\ref{chap:channel:cap} to \ref{chap:entropy:max})}
\begin{itemize}
	\item {\bf T.\ Sutter}, D.\ Sutter, P.\ Mohajerin Esfahani and J.\ Lygeros, ``Efficient Approximation of Channel Capacities", \textit{IEEE Transactions on Information Theory}, vol. 61, no. 4, April 2015, \cite{TobiasSutter15}.

	\item {\bf T.\ Sutter}, D.\ Sutter, P.\ Mohajerin Esfahani and J.\ Lygeros, ``Generalized maximum entropy estimation", submitted to \textit{Journal on Machine Learning Research}, September 2017, \cite{SutSut-17}.

	\item D.\ Sutter, {\bf T.\ Sutter}, P.\ Mohajerin Esfahani and R.\ Renner, ``Efficient Approximation of Quantum Channel Capacities", \textit{IEEE Transactions on Information Theory}, vol. 62, no. 1, January 2016, \cite{ref:D:Sutter-16}.

	\item {\bf T.\ Sutter}, D.\ Sutter, P.\ Mohajerin Esfahani and J.\ Lygeros, ``Capacity Approximation of Memoryless Channels with Countable Output Alphabets", \textit{IEEE International Symposium on Information Theory}, June 2014, \cite{ref:tsutter-ISIT-14}.

	\item D.\ Sutter, {\bf T.\ Sutter}, P.\ Mohajerin Esfahani and J.\ Lygeros, ``Efficient Approximation of Discrete Memoryless Channel Capacities", \textit{IEEE International Symposium on Information Theory}, June 2014, \cite{ref:dsutter-ISIT-14}.

\end{itemize}

\subsection{Other publications}

The following papers have been prepared during my doctorate but are not part of this thesis.

\subsubsection*{Journal papers}

\begin{itemize}
		\item {\bf T.\ Sutter}, A.\ Ganguly and H.\ Koeppl, ``Path Estimation and Variational Inference for Hidden Diffusion Processes", \textit{Journal on Machine Learning Research}, vol. 17, October 2016, \cite{ref:tsutter-JMLR-16}.
		\item {\bf T.\ Sutter}, D.\ Sutter and J.\ Lygeros, ``Capacity of Random Channels with Large Alphabets", \textit{Advances in Mathematics of Communications}, vol. 11, no. 4, pp. 813 - 835, November 2017, \cite{ref:random:channel}.
		\item A.\ Kamoutsi, {\bf T.\ Sutter}, P.\ Mohajerin Esfahani and J.\ Lygeros, ``On Infinite Linear Programming and the Moment Approach to Deterministic Infinite Horizon Discounted Optimal Control Problems", \textit{IEEE Control Systems Letters}, vol. 1, no. 1, June 2017, \cite{ref:Kamoutsi-17}.
		\item {\bf T.\ Sutter}, D.\ Chatterjee, F.\ Ramponi and J.\ Lygeros, ``Isospectral flows on a class of finite-dimensional Jacobi matrices", \textit{Systems and Control Letters},  vol. 62, no. 5, May 2013, \cite{ref:tsutter-SCL-13}.
		\item {\bf T.\ Sutter} and J.\ Lygeros, ``Signals and Systems II: A Flipped Classroom Experiment for Undergraduate Control Education", \textit{ASME Dynamic Systems and Control Magazine}, vol. 138, no. 6, June 2016, \cite{ref:tsutter-ASME-16}.

\end{itemize}

\subsubsection*{Conference papers}
\begin{itemize}
\item {\bf T.\ Sutter}, D.\ Sutter and J.\ Lygeros, ``Asymptotic Capacity of a Random Channel", \textit{Allerton Conference on Communication, Control, and Computing}, September 2014, \cite{ref:Allerton:random:channel}.
\item M.\ Th{\'e}ly, {\bf T.\ Sutter}, P.\ Mohajerin Esfahani and J.\ Lygeros, ``Maximum Entropy Estimation via Gauss-LP Quadratures", \textit{IFAC World Congress}, Toulouse, France, 2017, \cite{ref:Thely-17}.
\end{itemize}

\part{Approximate Dynamic Programming}\label{part:ADP}



\chapter{Linear programming in infinite-dimensional spaces} \label{chap:infLP}
In this chapter, in Section~\ref{subsec:dual-pair}, we formally introduce a class of infinite-dimensional linear programming problems, for which an approximation scheme is developed in Chapter~\ref{chap:approx:LP}, that can be seen as the main contribution in Part~\ref{part:ADP} of this thesis. A motivation for this treatment is presented (and will be carried though in Chapter~\ref{chap:approx:MDP}) in Section~\ref{section:MDP:LP}, namely the control of discrete-time MDP and their LP characterization. Using standard results in the literature we embed these MDP in the more general framework of infinite LPs.

\section{Introduction}
Linear programming (LP) problems in infinite-dimensional spaces appear in various areas as for example engineering, economics, operations research and probability theory  \cite{ref:Luenberger-69, ref:Anderson-87, ref:Lasserre-11}. Infinite LPs offer remarkable modeling power, subsuming general finite-dimensional optimization problems and the generalized moment problem as special cases. They are, however, often computationally intractable, motivating the study of approximations schemes.
	
	A particularly rich class of problems that can be modeled as infinite LPs involves Markov decision processes (MDP) and their optimal control. More often than not, it is impossible to obtain explicit solutions to MDP problems, making it necessary to resort to approximation techniques. Such approximations are the core of a methodology known as \emph{approximate dynamic programming} (ADP) \cite{ref:Bertsekas-96, ref:VanRoy2002, ref:Bertsekas-12}. Interestingly, a wide range of optimal control problems involving MDPs can be equivalently expressed as \emph{static} optimization problems over a closed convex set of measures, more specifically, as infinite LPs \cite{Manne, ref:Hernandez-96, ref:Hernandez-99, ref:chapter:Lerma}. This LP reformulation is particularly appealing for dealing with unconventional settings involving additional constraints \cite{ref:Hernandez-03, ref:Borkar-08}, secondary costs \cite{ref:Dufour-14}, information-theoretic considerations \cite{ref:Raginsky-15}, and reachability problems \cite{ref:Kariot-13,ref:Moh:motion-16}. In addition, the infinite LP reformulation allows one to leverage the developments in the optimization literature, in particular convex approximation techniques, to develop approximation schemes for MDP problems. This will also be the perspective adopted in the first part of this thesis.
	
	Historically, the LP formulation of MDPs is almost as old as the dynamic programming approach \cite{ref:Bellman-54} and was derived in the pioneering work \cite{Manne}. It was later extended to the so-called convex analytical approach to optimal control problems \cite{ref:Borkar-88, ref:chapter:Borkar}. In the setting of uncountable spaces, the underlying LPs are infinite-dimensional and computationally intractable in general and the study of approximation schemes for such infinite LPs, which is the content of Chapters~\ref{chap:approx:LP} and \ref{chap:approx:MDP}, is mainly unexplored in the literature so far.

\section{Infinite-dimensional LPs} \label{subsec:dual-pair}
	We introduce the general setting for the infinite-dimensional linear programming problems considered in this thesis. A more detailed exposition of this material can be found in \cite{ref:Anderson-87}.
	\begin{Def}[Dual pair]
	The triple $\big(\X,\C, \|\cdot\|\big)$ is called a \emph{dual pair} of normed vector spaces if
	\begin{itemize}
		\item $\X$ and $\C$ are vector spaces; 
		
		\item $\inner{\cdot}{\cdot}$ is a bilinear form on $\X\times \C$ that ``separates points", i.e., 
		\begin{itemize}
			\item for each nonzero $x \in \X$ there is some $c \in \C$ such that $\inner{x}{c} \neq 0$,
			\item for each nonzero $c \in \C$ there is some $x \in \X$ such that $\inner{x}{c} \neq 0$;
		\end{itemize}
		
		\item $\X$ is equipped with the norm $\|{\cdot}\|$, which together with the bilinear form induces a \emph{dual} norm on $\C$ defined through $\|{c}\|_* \Let \sup_{\|{x}\|\le 1}\inner{x}{c}$. 
	\end{itemize}
	\end{Def}
	The norm in the vector spaces is used as a means to quantify the performance of the approximation schemes. In particular, we emphasize that the vector spaces are not necessarily complete with respect to these norms.
	
	Let $\big(\B,\Y, \|\cdot\| \big)$ be another dual pair of normed vector spaces. As there is no danger of confusion, we use the same notation for the potentially different norm and bilinear form for each pair. Let $\op:\X \ra \B$ be a linear operator, and $\cone$ be a convex cone in $\B$. Given the fixed elements $c \in \C$ and $b \in \B$, we define a linear program, hereafter called the \emph{primal} program \ref{primal-inf}, as
	\begin{align} 
	\label{primal-inf} 
	\tag{$\Prim$}
	\Jp \Let \left\{ \begin{array}{ll}
	\Inf{x \in \X} & \inner{x}{c}   \\
	\subjectto & \op x \geqc{\cone} b, 
	\end{array} \right.
	\end{align}
	where the conic inequality $\op x \geqc{\cone} b$ is understood in the sense of $\op x - b \in \cone$. Throughout this study we assume that the program \ref{primal-inf} has an optimizer (i.e., the infimum is indeed a minimum), the cone $\cone$ is closed and the operator $\op$ is continuous where the corresponding topology is the weakest in which the topological duals of $\X$ and $\B$ are $\C$ and $\Y$, respectively. Let $\op^*:\Y \ra \C$ be the adjoint operator of $\op$ defined by
		\begin{align*}
		\inner{\op x}{y} = \inner{x}{\op^* y}, \qquad \forall x \in \X, \quad \forall y \in \Y. 
		\end{align*}
		Recall that if $\op$ is weakly continuous, then the adjoint operator $\op^*$ is well defined as its image is a subset of $\C$ \cite[Proposition 12.2.5]{ref:Hernandez-99}. The \emph{dual} program of \ref{primal-inf} is denoted by \ref{dual-inf} and is given by 
		\begin{align} 
		\label{dual-inf} 
		\tag{$\Dual$}
		\Jd \Let \left\{ \begin{array}{ll}
		\Sup{y\in \Y} & \inner{b}{y}   \\
		\subjectto & \op^* y = c \\
		& y \in \cone^*, 
		\end{array} \right.
		\end{align}
		where $\cone^*$ is the dual cone of $\cone$ defined as $\cone^* \Let \big\{ y \in \Y : \inner{b}{y} \ge 0, \  \forall \ b \in \cone \big\}.$ It is not hard to see that \emph{weak duality} holds, as
		\begin{align*}
		\Jp = \Inf{x \in \X} \Sup{y \in \cone^*} \inner{x}{c} - \inner{\op x - b}{y} \ge \Sup{y \in \cone^*} \Inf{x \in \X} \inner{x}{c} - \inner{\op x - b}{y} = \Jd.
		\end{align*}
		An interesting question is when the above assertion holds as an equality. This is known as \emph{zero duality gap}, also referred to as \emph{strong duality} particularly when both \ref{primal-inf} and \ref{dual-inf} admit an optimizer \cite[p.\ 52]{ref:Anderson-87}. Our study is not directly concerned with conditions under which strong duality between \ref{primal-inf} and \ref{dual-inf} holds; see \cite[Section 3.6]{ref:Anderson-87} for a comprehensive discussion of such conditions. The programs \ref{primal-inf} and \ref{dual-inf} are assumed to be \emph{infinite}, in the sense that the dimensions of the decision spaces ($\X$ in \ref{primal-inf}, and $\Y$ in \ref{dual-inf}) as well as the number of constraints are both infinite.

\section{Linear programming approach to MDPs} \label{section:MDP:LP}

We briefly recall some standard definitions and refer interested readers to \cite{ref:Hernandez-96, ref:Hernandez-03, ref:Arapostathis-93} for further details. Consider a \emph{Markov control model}
	$\big( S,A,\{A(s) : s\in S\},Q,\cost \big),$
	where $S$ (resp.\ $A$) is a metric space called the \emph{state space} (resp.\ \emph{action space}) and for each $s \in S$ the measurable set $A(s) \subseteq A$ denotes the set of \textit{feasible actions} when the system is in state $s\in S$. The \emph{transition law} is a stochastic kernel $Q$ on $S$ given the feasible state-action pairs in $K \Let\{(s,a):s\in S, a\in A(s)\}$. A stochastic kernel acts on real valued measurable functions $u$ from the left as
	\begin{align*}
	Qu(s,a):= \int_{S}u(s') Q(\drv s'|s,a), \quad \forall (s,a)\in K,
	\end{align*}
	and on probability measures $\mu$ on $K$ from the right as
	\begin{align*}
	\mu Q(B) := \int_{K}Q(B|s,a)\mu\big(\drv(s,a)\big), \quad \forall B\in \borel(S),
	\end{align*} 
	where $\borel(S)$ denotes the Borel $\sigma$-algebra on $S$.
	Finally $\cost:K \to\R_+$ denotes a measurable function called the \emph{one-stage cost function}. The \emph{admissible history spaces} are defined recursively as $H_{0}\Let S$ and $H_{t}\Let H_{t-1}\times K$ for $t\in\N$ and the canonical sample space is defined as $\Omega\Let(S\times A)^{\infty}$. All random variables will be defined on the measurable space $(\Omega,\Sigalg)$ where $\Sigalg$ denotes the corresponding product $\sigma$-algebra. A generic element $\omega\in\Omega$ is of the form $\omega=(s_{0},a_{0},s_{1},a_{1},\hdots)$, where $s_{i}\in S$ are the states and $a_{i}\in A$ the action variables. An \textit{admissible policy} is a sequence $\pi=(\pi_{t})_{t\in\N_{0}}$ of stochastic kernels $\pi_{t}$ on $A$ given $h_t\in H_{t}$, satisfying the constraints $\pi_{t}(A(s_{t})|h_{t})=1$. The set of admissible policies will be denoted by $\Pi$. 
	Given a probability measure $\nu\in \Prob(S)$ and policy $\pi\in\Pi$, by the Ionescu Tulcea theorem \cite[p.~140-141]{ref:Bertsekas-78} there exists a unique probability measure $\mathbb{P}^{\pi}_{\nu}$ on $\left(\Omega,\Sigalg \right)$ such that for all measurable sets $B \subset S$, $C \subset A$, $h_{t}\in H_{t}$, and $t\in\N_{0}$
	\begin{align*}
	\Probpi{s_{0}\in B}			&= \nu(B)  \\
	\Probpi{a_{t}\in C|h_{t}}		&= \pi_{t}(C|h_{t})  \\
	\Probpi{s_{t+1}\in B|h_{t},a_{t}}	&= Q(B|s_{t},a_{t}). 
	\end{align*}
	The expectation operator with respect to $\mathbb{P}^{\pi}_{\nu}$ is denoted by $\mathbb{E}^{\pi}_{\nu}$. The stochastic process $\big( \Omega,\Sigalg,\mathbb{P}^{\pi}_{\nu},(s_{t})_{t\in\N_{0}} \big)$ is called a \emph{discrete-time MDP}.
	For most of this thesis we consider optimal control problems where the aim is to minimise a long term \emph{average cost} (AC) over the set of admissible policies and initial state measures. We definite the optimal value of the optimal control problem by
	\begin{align} \label{AC}
	\Jac \Let  \inf_{(\pi,\nu) \in \Pi \times \Prob(S)}\limsup_{T\to\infty} \frac{1}{T} \Expecpi{\sum_{t=0}^{T-1}\cost(s_{t},a_{t})}.
	\end{align}	
	We emphasize, however, that the results also apply to other performance objective, including the long-run \emph{discounted cost} problem as shown in Section~\ref{sec:discounted:setting}.

The problem in \eqref{AC} admits an alternative LP characterization under some mild assumptions. 
	
	\begin{As}[Control model] \label{a:CM} We stipulate that
		\begin{enumerate}[label=(\roman*), itemsep = 1mm, topsep = -1mm]
			\item \label{a:CM:K} the set of feasible state-action pairs is the unit hypercube $K = [0,1]^{\dim(S\times A)}$;
			\item \label{a:CM:Q} the transition law $Q$ is Lipschitz continuous, i.e., there exists $L_Q>0$ such that for all $k, k'\in K$ and all continuous functions $u$  
			$$|Qu(k) - Qu(k')| \le L_Q \|u\|_\infty \|k - k'\|_{\ell_\infty};$$
			\item \label{a:CM:cost} the cost function $\cost$ is non-negative and Lipschitz continuous on $K$ with respect to the $\ell_\infty$-norm.
		\end{enumerate}
	\end{As}
	
	Assumption \ref{a:CM}\ref{a:CM:K} may seem restrictive, however, essentially it simply requires that the state-action set $K$ is compact. We refer the reader to Example~\ref{ex:fisheries} where a non-rectangular $K$ is transferred to a hypercube, and to \cite[Chapter~12.3]{ref:Hernandez-99} for further information about the LP characterization in more general settings.
	
	\begin{Thm}[LP characterization {\cite[Proposition~2.4]{ref:Dufour-15}}] 
		\label{thm:equivalent:LP}
		Under Assumption~\ref{a:CM}, 
		\begin{align}
		\label{AC-LP} 
		-\Jac = & \left\{ \begin{array}{ll}
		\inf\limits_{\rho, u} & -\rho   \\
		 \subjectto &\rho + u(s) - Qu(s,a) \leq \cost(s,a), \quad \forall (s,a)\in K \\
		& \rho\in\R, \quad u\in\Lip(S).
		\end{array} \right. 
		\end{align}
	\end{Thm}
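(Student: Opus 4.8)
The plan is to prove the claimed identity \eqref{AC-LP} by reading its right-hand side as $-\rho^\star$, where $\rho^\star \Let \sup\{\rho : (\rho,u)\in\R\times\Lip(S)\text{ satisfies }\rho+u(s)-Qu(s,a)\le\cost(s,a)\ \forall (s,a)\in K\}$, and then establishing $\rho^\star=\Jac$ by two opposite inequalities. For the \textbf{lower bound (weak duality)}, I would show that every feasible pair already forces $\rho\le\Jac$, so that $\rho^\star\le\Jac$. Fix a feasible $(\rho,u)$ and any $(\pi,\nu)\in\Pi\times\Prob(S)$. Evaluating the constraint along the state--action process and using the defining property $Qu(s_t,a_t)=\Expecpi{u(s_{t+1})\mid h_t,a_t}$ of the kernel, taking expectations and applying the tower property gives
\[
\rho+\Expecpi{u(s_t)}-\Expecpi{u(s_{t+1})}\le\Expecpi{\cost(s_t,a_t)},\qquad t\in\N_0.
\]
Summing over $t=0,\dots,T-1$ telescopes the $u$-terms, and dividing by $T$ yields
\[
\rho+\frac1T\big(\Expecpi{u(s_0)}-\Expecpi{u(s_T)}\big)\le\frac1T\sum_{t=0}^{T-1}\Expecpi{\cost(s_t,a_t)}.
\]
Since $K$ is compact by Assumption~\ref{a:CM}\ref{a:CM:K} and $u\in\Lip(S)$ is therefore bounded, the middle term vanishes as $T\to\infty$; taking $\limsup_{T\to\infty}$ and then the infimum over $(\pi,\nu)$ gives $\rho\le\Jac$.

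For the reverse inequality $\rho^\star\ge\Jac$ I would produce a feasible pair attaining $\Jac$, which reduces the problem to solving the average-cost optimality equation (ACOE): finding a constant $\bar\rho$ and a \emph{Lipschitz} relative value function $h\in\Lip(S)$ with
\[
\bar\rho+h(s)=\inf_{a\in A(s)}\big\{\cost(s,a)+Qh(s,a)\big\},\qquad s\in S.
\]
Granting this, the ACOE immediately yields $\bar\rho+h(s)-Qh(s,a)\le\cost(s,a)$ for every $(s,a)\in K$, so $(\bar\rho,h)$ is feasible and $\rho^\star\ge\bar\rho$; meanwhile a measurable minimizer of the ACOE defines a stationary policy whose long-run average cost equals $\bar\rho$, giving $\Jac\le\bar\rho$. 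Chaining these with the lower bound produces $\Jac\le\bar\rho\le\rho^\star\le\Jac$, so all four quantities coincide and \eqref{AC-LP} follows.

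The \textbf{main obstacle} is thus to solve the ACOE with an honestly Lipschitz $h$, and this is exactly where Assumption~\ref{a:CM} is needed. I would use the vanishing-discount method: for $\alpha\in(0,1)$ let $V_\alpha$ be the unique bounded fixed point of the contraction $V\mapsto\inf_{a}\{\cost(\cdot,a)+\alpha QV(\cdot,a)\}$, fix a reference state $s_0$, and set $h_\alpha\Let V_\alpha-V_\alpha(s_0)$. The genuinely delicate point is that the Lipschitz modulus in Assumption~\ref{a:CM}\ref{a:CM:Q} scales with $\|u\|_\infty$, whereas $\|V_\alpha\|_\infty$ blows up like $(1-\alpha)^{-1}$ as $\alpha\uparrow1$; the remedy is to pass to $h_\alpha$ and exploit that $Q$ preserves constants, so that $QV_\alpha$ and $Qh_\alpha$ have identical spatial increments. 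Feeding Assumption~\ref{a:CM}\ref{a:CM:Q}--\ref{a:CM:cost} through the Bellman equation then yields a self-referential estimate of the form $\mathrm{Lip}(h_\alpha)\le L_{\cost}+\alpha L_Q\,\|h_\alpha\|_\infty$, which one closes via $\|h_\alpha\|_\infty\le\mathrm{Lip}(h_\alpha)\cdot\mathrm{diam}(K)$ to obtain an $\alpha$-uniform Lipschitz bound. With equi-Lipschitz continuity on the compact $K$ secured, $\{h_\alpha\}$ is uniformly bounded, so the Arzel\`a--Ascoli theorem extracts a sequence $\alpha_k\uparrow1$ along which $h_{\alpha_k}\to h$ uniformly and $(1-\alpha_k)V_{\alpha_k}(s_0)\to\bar\rho$; continuity of $Q$ then permits passing to the limit in the discounted optimality equation to recover the ACOE with $h\in\Lip(S)$. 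This coupling between the sup-norm and the Lipschitz constant of the relative value function is the crux, and the full estimates are carried out in \cite[Proposition~2.4]{ref:Dufour-15}.
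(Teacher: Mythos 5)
Your weak-duality half is sound: the tower-property/telescoping argument, together with boundedness of $u$ (Lipschitz on a compact set), correctly yields $\rho\le\Jac$ for every feasible pair, and this is indeed the easy half. Bear in mind that the paper offers no proof of this theorem at all --- it is quoted verbatim from \cite[Proposition~2.4]{ref:Dufour-15} --- so your proposal has to stand on its own, and it does not, because of the strong-duality half. The bootstrap you propose, $\mathrm{Lip}(h_\alpha)\le L_{\cost}+\alpha L_Q\|h_\alpha\|_\infty$ combined with $\|h_\alpha\|_\infty\le\mathrm{Lip}(h_\alpha)\cdot\mathrm{diam}(S)$, closes only when $\alpha L_Q\,\mathrm{diam}(S)<1$; since $S$ is a unit hypercube in the $\ell_\infty$-norm, $\mathrm{diam}(S)=1$, so as $\alpha\uparrow 1$ you need $L_Q<1$. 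Assumption~\ref{a:CM}\ref{a:CM:Q} places no such restriction on $L_Q$, so no $\alpha$-uniform bound on $\{h_\alpha\}$ follows, Arzel\`a--Ascoli cannot be invoked, and the vanishing-discount limit is unjustified.

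This is not a matter of sharpening estimates: under Assumption~\ref{a:CM} alone the ACOE need not admit \emph{any} solution, so the route ``exhibit $(\bar\rho,h)$ solving the ACOE'' is unavailable in principle. Take $S=A=[0,1]$ with kernel and cost independent of $a$ (so $K=[0,1]^2$ and Assumption~\ref{a:CM}\ref{a:CM:K} holds), $Q(\cdot\,|s,a)=(1-w(s))\,\mathrm{Unif}[0,\tfrac12]+w(s)\,\mathrm{Unif}[\tfrac23,1]$ where $w$ is Lipschitz, $w\equiv 0$ on $[0,\tfrac12]$ and $w\equiv 1$ on $[\tfrac23,1]$, and $\cost(s,a)=\min\{1,\max\{0,6(s-\tfrac12)\}\}$: then $|Qu(k)-Qu(k')|\le 12\,\|u\|_\infty\|k-k'\|_{\ell_\infty}$, so Assumption~\ref{a:CM} holds with $L_Q=12$, but $[0,\tfrac12]$ and $[\tfrac23,1]$ are both absorbing with long-run average costs $0$ and $1$ respectively. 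If $(\bar\rho,h)$ solved the ACOE (here a Poisson equation) with $h$ bounded, telescoping would force $\bar\rho$ to equal the average cost from \emph{every} initial state, i.e.\ $0=\bar\rho=1$, a contradiction. Note that the theorem's conclusion nevertheless holds in this example ($\Jac=0$, and $(\rho,u)=(0,0)$ is feasible, so your weak-duality half closes the gap), which shows that a correct proof must establish the absence of a duality gap \emph{without} presupposing ACOE solvability --- for instance the convex-analytic route: identify $\Jac$ with $\min\big\{\int_K\cost\,\diff\mu\,:\,\mu \text{ an invariant occupation measure}\big\}$ using weak-$*$ compactness of $\Prob(K)$ and the Feller property implied by Assumption~\ref{a:CM}\ref{a:CM:Q}, and then prove zero duality gap between that measure LP and \eqref{AC-LP} by a closedness argument, which is the kind of machinery the cited reference relies on. Your vanishing-discount program becomes valid only under an additional ergodicity/minorization hypothesis (e.g.\ a Doeblin-type condition giving contraction of the span seminorm, or indeed $L_Q<1$), which is not part of Assumption~\ref{a:CM}.
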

	
	The LP \eqref{AC-LP} can be expressed as \ref{primal-inf}, i.e., in the standard conic form $\inf_{x \in \X} \big\{\inner{x}{c} : \op x - b \in \cone \big\}$ by introducing
	\begin{align}
	\label{AC-setting}
	\left\{
	\begin{array}{l}
	\X = \R\times\Lip(S) \\
	x = (\rho, u) \in \X\\
	c = (c_{1},c_{2})=(-1,0) \in \R\times\Meas(S) \\
	b(s,a) = -\cost(s,a) \\ 
	\inner{x}{c} = c_{1}\rho +\int_S u(s) c_{2}(\diff s) \\
	\op x(s,a) = -\rho - u(s) + Qu(s,a) \\
	\cone = \Lip_+(K),  
	\end{array}\right.
	\end{align}
	where $\Meas(S)$ is the set of finite signed measures supported on $S$, and $\Lip_+(K)$ is the cone of Lipschitz functions taking non-negative values. It should be noted that the choice of the positive cone $\K = \Lip_+(K)$ is justified since, thanks to Assumption~\ref{a:CM}\ref{a:CM:Q}, the linear operator $\op$ maps the elements of $\X$ into $\Lip(K)$. 
	
	\begin{Rem}[Constrained MDP]
		The LP characterization of MDP naturally allows us to incorporate constraints in the form of 
		\begin{align*}
		\limsup_{T\to\infty} \frac{1}{T}\, \Expecpi{\sum_{t=0}^{T-1}d_i(s_{t},a_{t})} \le \ell_i, \qquad \forall i \in \{1,\cdots,I\},
		\end{align*}
		where the functions $d_i: K \ra \R$ and constants $\ell_i$ reflect our desired specifications. To this end, it suffices to introduce auxiliary decision variables $\beta_i \in \R_+$, and in \eqref{AC-LP} replace $\rho$ in the objective with $\rho - \sum_{i=1}^{I}\beta_{i} \ell_{i}$ and in the constraint with $ \rho - \sum_{i=1}^{I}\beta_{i}d_{i}$, see \cite[Theorem 5.2]{ref:Hernandez-03}. 
	\end{Rem}
	
	Our aim is to derive an approximation scheme for a class of such infinite-dimensional LPs, including problems of the form \eqref{AC-LP}, that comes with an explicit bound on the approximation error.




\chapter{Approximation of infinite-dimensional LPs} \label{chap:approx:LP}

In this chapter, we present an approximation scheme for a class of infinite-dimensional linear programs \ref{primal-inf}, as formally introduced in Section~\ref{subsec:dual-pair}.
The scheme basically consists of two approximation steps: first the infinite LP \ref{primal-inf} is reduced to a semi-infinite (or robust) LP, that is the content of Section~\ref{subsec:semi}.
Then, the semi-infinite LP is approximated by means of a finite-dimensional convex program, for which we present to complementary approaches, one based on randomized sampling methods (Section~\ref{sec:semi-fin:rand}) and another based on accelerated first-order methods (Section~\ref{sec:semi-fin:smoothing}). The two-step process is finally combined in Section~\ref{sec:inf-fin} and establishes a link from the original infinite LP to finite-dimensional convex counterparts.

\section{Introduction} \label{subsec:approxLP:intro}
	Approximation schemes to tackle infinite LPs, as introduced in Section~\ref{subsec:dual-pair}, have historically been developed for particular classes of problems, e.g., the general capacity problem \cite{ref:Lai-92}, the generalized moment problem in a polynomial setting \cite{ref:Lasserre-11} or the LP approach to MDPs, in the setting described in Section~\ref{section:MDP:LP} \cite{ref:Duf-13, ref:Dufour-15}. An exception is the paper by Hernandez-Lerma and Lasserre \cite{ref:Hernandez-98}, where an approximation scheme for general linear programs is proposed. The main difficulty in practically using this scheme, however, is that the convergence proof is an existence proof but is not constructive. Furthermore, there are no explicit error bounds available.
	
To date, to the best of our knowledge, there are no results in the literature regarding approximation schemes for general linear programs on infinite dimensional spaces that are applicable in practice and that provides explicit error estimates. This chapter, that can be seen as the main contribution of the Part~\ref{part:ADP} of this thesis, fills this gap by investigating	
an approximation scheme  that involves a restriction of the decision variables from an infinite dimensional space to a finite dimensional subspace, followed by the approximation of the infinite number of constraints by a finite subset; we develop two complementary methods for performing the latter step. The structure of this section is illustrated in Figure~\ref{fig:overview}, where the contributions are summarized as follows:

	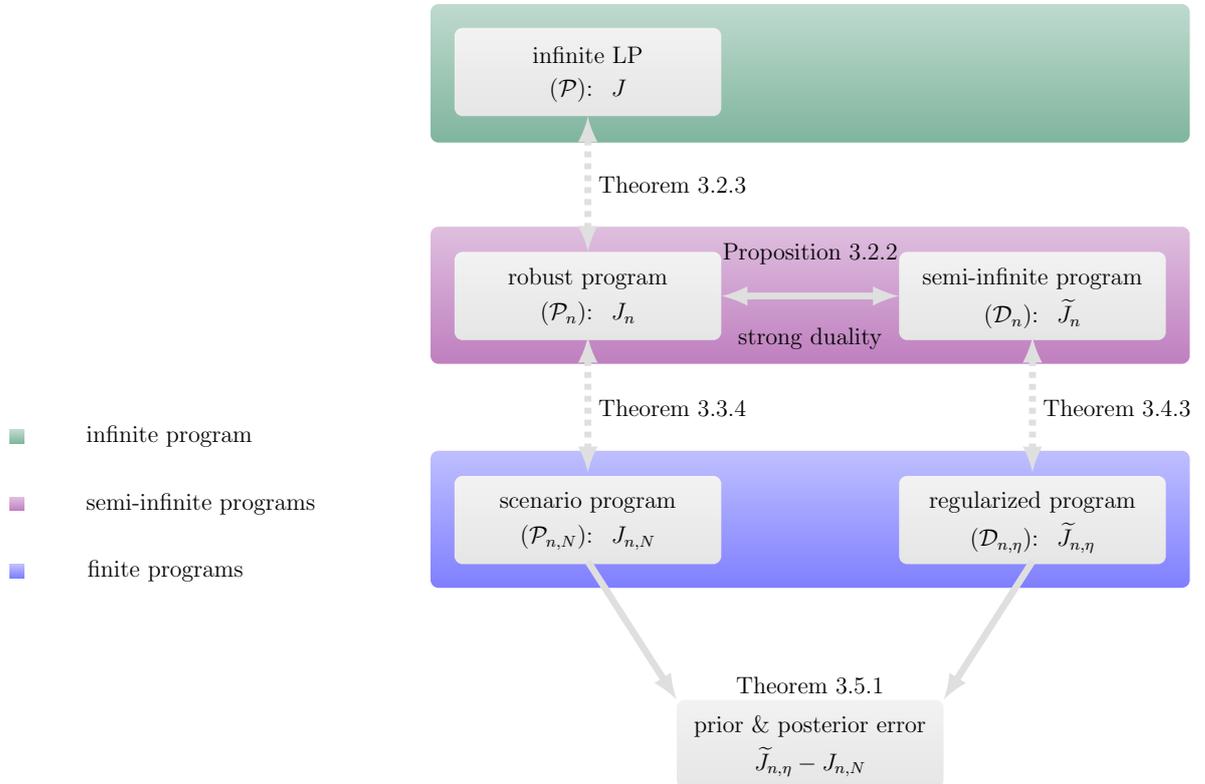
\begin{figure}[t!]
		\centering
		\scalebox{.75}{
\def \sca{1.2}

\def \xb{3.9*\sca}
\def \yb{1.3*\sca}

\def \r{6.5*\sca}

\def \y{3.3*\sca} 
\def \yy{2*\sca} 

\def \la{0.2*\sca}

\def \di{0.2*\sca}

\def \backx{-0.1*\sca}
\def \backy{-1.4*\sca}
\def \back{0.2*\sca}

\def \buf{0.35*\sca}

\begin{tikzpicture}[scale=1,auto, node distance=1cm,>=latex']

\shade[rounded corners,bottom color = darkgreen!50, top color = darkgreen!25] (\r-\buf,\yb+\buf)--(\r+\r+\xb+\buf,\yb+\buf)--(\r+\r+\xb+\buf,-0.5*\y+3.6*\buf)--(\r-\buf,-0.5*\y+3.6*\buf)--cycle;

\shade[rounded corners,bottom color = violet!50, top color = violet!25] (\r-\buf,-0.6*\y+\buf)--(\r+\r+\xb+\buf,-0.6*\y+\buf)--(\r+\r+\xb+\buf,-1.0*\y-\buf)--(\r-\buf,-1.0*\y-\buf)--cycle;

\shade[rounded corners,bottom color = blue!50, top color = blue!25] (\r-\buf,-1.6*\y+\buf)--(\r+\r+\xb+\buf,-1.6*\y+\buf)--(\r+\r+\xb+\buf,-2*\y-\buf)--(\r-\buf,-2*\y-\buf)--cycle;


\shade[rounded corners,bottom color = gray!20, top color = gray!10] (\r,0)--(\r+\xb,0)--(\r+\xb,\yb)--(\r,\yb)--cycle;
\node at (\r+0.5*\xb,0.7*\yb) {infinite LP};
\node at (\r+0.5*\xb,0.3*\yb) {$\eqref{primal-inf}$: \ $\Jp$};
\shade[rounded corners,bottom color = gray!20, top color = gray!10] (\r,-\y)--(\r+\xb,-\y)--(\r+\xb,\yb-\y)--(\r,\yb-\y)--cycle;
\node at (\r+0.5*\xb,0.7*\yb-\y) {robust program};
\node at (\r+0.5*\xb,0.3*\yb-\y) {$\eqref{primal-n}$: \ $\Jpn$};
\shade[rounded corners,bottom color = gray!20, top color = gray!10] (\r,-\y-\y)--(\r+\xb,-\y-\y)--(\r+\xb,\yb-\y-\y)--(\r,\yb-\y-\y)--cycle;
\node at (\r+0.5*\xb,0.7*\yb-\y-\y) {scenario program};
\node at (\r+0.5*\xb,0.3*\yb-\y-\y) {$\eqref{primal-n,N}$: \ $\JpnN$};

\shade[rounded corners,bottom color = gray!20, top color = gray!10] (\r+\r,-\y)--(\r+\r+\xb,-\y)--(\r+\r+\xb,\yb-\y)--(\r+\r,\yb-\y)--cycle;
\node at (\r+\r+0.5*\xb,0.7*\yb-\y) {semi-infinite program};
\node at (\r+\r+0.5*\xb,0.3*\yb-\y) {$\eqref{dual-n}$: \ $\Jdn$};

\shade[rounded corners,bottom color = gray!20, top color = gray!10] (\r+\r,-\y-\y)--(\r+\r+\xb,-\y-\y)--(\r+\r+\xb,\yb-\y-\y)--(\r+\r,\yb-\y-\y)--cycle;
\node at (\r+\r+0.5*\xb,0.7*\yb-\y-\y) {regularized program};
\node at (\r+\r+0.5*\xb,0.3*\yb-\y-\y) {$\eqref{dual-n-eta}$: \ $\Jdnr$};

\shade[rounded corners,bottom color = gray!20, top color = gray!10] (\r+0.5*\r,-\y-\y-\y)--(\r+0.5*\r+\xb,-\y-\y-\y)--(\r+0.5*\r+\xb,\yb-\y-\y-\y)--(\r+0.5*\r,\yb-\y-\y-\y)--cycle;
\node at (\r+0.5*\r+0.5*\xb,0.7*\yb-\y-\y-\y) {prior $\&$ posterior error};
\node at (\r+0.5*\r+0.5*\xb,0.3*\yb-\y-\y-\y) {$\Jdnr - \JpnN$};

    \draw[>=latex,<->,line width=1.2mm,gray!25] (\xb+\r,-\y+0.5*\yb) -- (\r+\r,-\y+0.5*\yb);
  \draw[>=latex,<->,dashed, line width=1.2mm,gray!25] (\r+0.5*\xb,0) -- (\r+0.5*\xb,-\y+\yb);
 \draw[>=latex,<->,dashed, line width=1.2mm,gray!25] (\r+0.5*\xb,-\y+0.25*\la) -- (\r+0.5*\xb,-\y+0.25*\la-\y+\yb);
 \draw[>=latex,<->,dashed, line width=1.2mm,gray!25] (\r+\r+0.5*\xb,-\y+0.25*\la) -- (\r+\r+0.5*\xb,-\y+0.25*\la-\y+\yb);
 
   \draw[>=latex,->,line width=1.2mm,gray!25] (\r+0.5*\xb,-\y-\y+0.2*\di) -- (\r+0.5*\r,\yb-\y-\y-\y);
   \draw[>=latex,->, line width=1.2mm,gray!25] (\r+\r+0.5*\xb,-\y-\y+0.2*\di) -- (\r+0.5*\r+\xb,\yb-\y-\y-\y);
 


  \shade[bottom color = darkgreen!50, top color = darkgreen!25] (0,-2.4*\y+1*\y)--(\di,-2.4*\y+1*\y)--(\di,-2.4*\y-\di+1*\y)--(0,-2.4*\y-\di+1*\y)--cycle;
  \node[] at (0.35*\r,-2.4*\y-0.5*\di+1*\y) {\  infinite program};

  \shade[bottom color = violet!50, top color = violet!25] (0,-2.7*\y+1*\y)--(\di,-2.7*\y+1*\y)--(\di,-2.7*\y-\di+1*\y)--(0,-2.7*\y-\di+1*\y)--cycle;
  \node[] at (0.35*\r,-2.7*\y-0.5*\di+1*\y) {\ \ \ \ \ \ \ \ \ semi-infinite programs};

  \shade[bottom color = blue!50, top color = blue!25] (0,-3.0*\y+1*\y)--(\di,-3.0*\y+1*\y)--(\di,-3.0*\y-\di+1*\y)--(0,-3.0*\y-\di+1*\y)--cycle;
  \node[] at (0.35*\r,-3.0*\y-0.5*\di+1*\y) {finite programs};

 \node[] at (0.5*\r+0.5*\xb+\r,-0.95*\y+0.5*\yb+1.3*\buf) {Proposition~$\ref{prop:SD}$};
 \node[] at (0.5*\r+0.5*\xb+\r,-\y+0.5*\yb -1.3*\buf) {}; 
 \node[] at (0.5*\r+0.5*\xb+\r,-1.05*\y+0.5*\yb -1.3*\buf) {strong duality};   
 \node[] at (0.19*\r+0.5*\xb+\r,-0.5*\y+0.5*\yb-0.0*\buf) {Theorem~$\ref{thm:inf-semi}$};
 \node[] at (0.19*\r+0.5*\xb+\r,-1.5*\y+0.5*\yb-0.0*\buf) {Theorem~$\ref{thm:semi-fin:rand}$};  
 \node[] at (1.19*\r+0.5*\xb+\r,-1.5*\y+0.5*\yb-0.0*\buf) {Theorem~$\ref{thm:semi-fin:smooth}$};     
  \node[] at (0.5*\r+0.5*\xb+\r,-2.6*\y+0.5*\yb -1.3*\buf) {Theorem~\ref{thm:inf-fin}};  
\end{tikzpicture}}
		\caption{Graphical representation of the chapter structure and its contributions}
		\label{fig:overview}
	\end{figure}
	
	\begin{enumerate}[label=$\bullet$, itemsep = 1mm, topsep = -1mm]
		\item \label{cont:1}
		We introduce a subclass of infinite LPs whose {\emph{regularized}} semi-infinite restriction enjoys analytical bounds for both primal and dual optimizers (Proposition~\ref{prop:SD}). 
		
		\item \label{cont:2}
		We derive an explicit error bound between the original infinite LP and the regularized semi-infinite counterpart, providing insights on the impact of the underlying norm structure as well as on how the choice of basis functions contributes to the approximation error (Theorem~\ref{thm:inf-semi}, Corollary~\ref{cor:Hilbert}). 
		
		\item \label{cont:3}
		We adopt recent developments from the randomized optimization literature to propose a finite convex program whose solution enjoys a priori probabilistic performance bounds (Theorem~\ref{thm:semi-fin:rand}). We extend the existing results to offer also an a posteriori bound under a generic underlying norm structure. 
		
		\item \label{cont:4}
		In parallel to the randomized approach, we also utilize recent developments in the structural convex optimization literature to propose an iterative algorithm for approximating the semi-infinite program. For this purpose, we extend the setting to incorporate unbounded prox-terms with a certain growth rate (Theorem~\ref{thm:semi-fin:smooth}). 
	\end{enumerate}

		\section{Semi-infinite approximation}\label{subsec:semi}
		
		Consider a family of linearly independent elements $\{x_n\}_{n \in \N} \subset \X$, and let $\X_n$ be the finite dimensional subspace generated by the first $n$ elements $\{x_i\}_{i\le n}$. Without loss of generality, we assume that $x_i$ are normalized, i.e., $\|x_i\| = 1$. Restricting the decision space $\X$ of \ref{primal-inf} to $\X_n$, along with an additional norm constraint, yields the program
		\begin{align} 
		\label{primal-n:old}
		\Jpn \Let \left\{ \begin{array}{ll}
		\Inf{\alpha \in \R^n} & \sum_{i = 1}^{n} \alpha_i \inner{x_i}{c}   \\
		\subjectto & \sum_{i = 1}^{n} \alpha_i \op x_i \geqc{\cone} b \vspace{1mm}\\
		& \|\alpha\|_{\Rnorm} \le \xnb \vspace{1mm}
		\end{array} \right.
		\end{align}
		where $\|\cdot\|_{\Rnorm}$ is a given norm on $\R^n$ and $\xnb$ determines the size of the feasible set. In the spirit of dual-paired normed vector spaces, one can approximate $(\X,\C,\|\cdot\|)$ by the finite dimensional counterpart $(\R^n,\R^n,\|\cdot\|_\Rnorm)$ where the bilinear form is the standard inner product. In this view, the linear operator $\op:\X \ra \B$ may also be approximated by the linear operator $\opn:\R^n \ra \B$ with the respective adjoint $\opn^*:\Y \ra \R^n$ defined as
		\begin{align}
		\label{Ln}
		\opn \alpha \Let \sum_{i=1}^{n} \alpha_i \op x_i, 
		\qquad 
		\opn^*y \Let \big[\inner{\op x_1}{y}, \cdots, \inner{\op x_n}{y} \big]\transp.
		\end{align}
		It is straightforward to verify the definitions \eqref{Ln} by noting that $\inner{\opn\alpha}{y} = \alpha \transp \opn^*y$ for all $\alpha\in\R^n$ and $y\in\Y$. 
		Defining the vector $\cnew \Let [\inner{x_1}{c}, \cdots, \inner{x_n}{c}]\transp$, we can rewrite the program \eqref{primal-n:old} as
		\begin{align} 
		\label{primal-n} 
		\tag{$\Prim_n$}	
		\Jpn \Let \left\{ \begin{array}{ll}
		\Inf{\alpha \in \R^n} & \alpha \transp \cnew   \\
		\subjectto & \opn \alpha \geqc{\cone} b \vspace{1mm}\\
		& \|\alpha\|_{\Rnorm} \le \xnb. 
		\end{array} \right.
		\end{align}
		We call \ref{primal-n} a \emph{semi-infinite} (or robust) program, as the decision variable is a finite dimensional vector $\alpha \in \R^n$, but the number of constraints is still in general infinite due to the conic inequality. The additional constraint on the norm of $\alpha$ in \ref{primal-n} acts as a \emph{regularizer} and is a key difference between the proposed approximation schemes and existing schemes in the literature. Methods for choosing the parameter $\xnb$ will be discussed later in Remark~\ref{rem:theta}.
		
		Dualizing the conic inequality constraint in \ref{primal-n} and using the dual norm definition leads to a dual counterpart
		\begin{align} 
		\label{dual-n} 
		\tag{$\Dual_n$}
		\Jdn \Let \left\{ \begin{array}{ll}
		\Sup{y \in \Y} & \inner{b}{y} - \xnb \|\opn^* y - \cnew\|_{\Rnorm^*}  \\
		\subjectto & y \in \cone^*, 
		\end{array} \right.
		\end{align}
		where$\|\cdot\|_{\Rnorm^*}$ denotes the dual norm of $\|\cdot\|_\Rnorm$. Note that setting $\xnb = \infty$ effectively implies that the second term of the objective in \ref{dual-n} introduces $n$ hard constraints $\opn^* y = \cnew$ (cf. \eqref{Ln}). We study further the connection between \ref{primal-n} and \ref{dual-n} under the following regularity assumption: 
		
		\begin{As}[Semi-infinite regularity]
			\label{a:reg}
			We stipulate that 
			\begin{enumerate}[label=(\roman*), itemsep = 1mm, topsep = -1mm]
				\item \label{a:reg:feas} the program \ref{primal-n} is feasible; 
				\item \label{a:reg:inf-sup} there exists a positive constant $\gamma$ such that $\|\opn^* y\|_{\Rnorm^*} \geq \gamma \|y\|_*$ for every $y \in \cone^*$, and $\xnb$ is large enough so that $\gamma \xnb > \|b\|$.
			\end{enumerate}
		\end{As}
		
		Assumption \ref{a:reg}\ref{a:reg:inf-sup} is closely related to the condition
		\begin{align*}
		\inf_{y \in \cone^*}\sup_{x \in \X_n} {\inner{\op x}{y} \over \|x\| \|y\|_*} \geq \gamma, 
		\end{align*}
		that in the literature of numerical algorithms in infinite dimensional spaces, in particular the Galerkin discretization methods for partial differential equations, is often referred to as the ``\emph{inf-sup}" condition, see \cite{ref:Ern-04} for a comprehensive survey. To see this, note that for every $x \in \X_n$ the definitions in \eqref{Ln} imply that
		$$\inner{\op x}{y} = \inner{\opn \alpha}{y} = \alpha \transp \opn^* y, \qquad x = \sum_{i=1}^{n} \alpha_i x_i.$$
		These conditions are in fact equivalent if the norm $\|\cdot\|_\Rnorm$ is induced by the original norm on $\X$, i.e., $\|\alpha\|_\Rnorm \Let \| \sum_{i=1}^{n} \alpha_i x_i\|$. We note that $\opn^*$ maps an infinite dimensional space to a finite dimensional one, and as such  Assumption~\ref{a:reg}\ref{a:reg:inf-sup} effectively necessitates that the null-space of $\opn^*$ intersects the positive cone $\cone^*$ only at $0$. In the following we show that this regularity condition leads to a zero duality gap between \ref{primal-n} and \ref{dual-n}, as well as an upper bound for the dual optimizers. The latter turns out to be a critical quantity for the performance bounds of this study. 
		
		\begin{Prop}[Duality gap \& bounded dual optimizers]
			\label{prop:SD}
			Under Assumption \ref{a:reg}\ref{a:reg:feas}, the duality gap between the programs \ref{primal-n} and \ref{dual-n} is zero, i.e., $\Jpn = \Jdn$. 
			If in addition Assumption \ref{a:reg}\ref{a:reg:inf-sup} holds, then for any optimizer $\yn$ of the program \ref{dual-n} and any lower bound $\Jlb \le \Jpn$ we have
			\begin{align}
			\label{yb}
			\|\yn\|_* \leq
			\ynb \Let {\xnb\|\cnew\|_{\Rnorm^*} - \Jlb \over \gamma \xnb - \|b\|} \le  {2\xnb\|\cnew\|_{\Rnorm^*} \over \gamma \xnb - \|b\|}.
			\end{align} 
		\end{Prop}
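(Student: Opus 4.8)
The plan is to prove the two assertions separately: first the zero duality gap $\Jpn = \Jdn$, which I would derive from a minimax interchange that is made legitimate precisely by the regularizing norm constraint, and then the norm bound on a dual optimizer, which is a direct estimate assembled from the inf-sup condition in Assumption~\ref{a:reg}\ref{a:reg:inf-sup}.

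For the duality gap I would start from the saddle-point form of \ref{primal-n}. Writing $L(\alpha,y) := \alpha\transp\cnew - \inner{\opn\alpha - b}{y}$ for $\alpha \in B := \{\alpha \in \R^n : \|\alpha\|_\Rnorm \le \xnb\}$ and $y \in \cone^*$, the bipolar theorem (using that $\cone$ is closed and convex) gives $\sup_{y\in\cone^*} L(\alpha,y) = \alpha\transp\cnew$ whenever $\opn\alpha \geqc{\cone} b$ and $+\infty$ otherwise, so that $\inf_{\alpha\in B}\sup_{y\in\cone^*} L = \Jpn$; here Assumption~\ref{a:reg}\ref{a:reg:feas} guarantees the feasible set is nonempty and hence this value is finite. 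On the other side, the identity $\inner{\opn\alpha}{y} = \alpha\transp\opn^* y$ from \eqref{Ln} together with the dual-norm relation $\inf_{\|\alpha\|_\Rnorm\le\xnb}\alpha\transp v = -\xnb\|v\|_{\Rnorm^*}$ yields $\inf_{\alpha\in B} L(\alpha,y) = \inner{b}{y} - \xnb\|\opn^* y - \cnew\|_{\Rnorm^*}$, which is exactly the objective of \ref{dual-n}, so $\sup_{y\in\cone^*}\inf_{\alpha\in B} L = \Jdn$. It then remains only to exchange the order of $\inf$ and $\sup$. Since $B$ is convex and compact, $\cone^*$ is convex, and $L$ is affine (hence quasi-convex and lower semicontinuous) in $\alpha$ and affine (hence quasi-concave and, in the weak topology $\sigma(\Y,\B)$, upper semicontinuous) in $y$, Sion's minimax theorem applies and gives $\inf_\alpha\sup_y L = \sup_y\inf_\alpha L$, i.e.\ $\Jpn = \Jdn$. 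I regard this interchange as the crux: the whole point of the extra norm constraint is that it renders $B$ compact, so that zero gap follows \emph{without} any Slater-type constraint qualification, which in this infinite-constraint setting is typically unavailable.

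For the bound on $\|\yn\|_*$ I would use that a dual optimizer attains $\Jdn = \inner{b}{\yn} - \xnb\|\opn^*\yn - \cnew\|_{\Rnorm^*}$, so that the first part together with $\Jlb \le \Jpn = \Jdn$ gives $\Jlb \le \inner{b}{\yn} - \xnb\|\opn^*\yn - \cnew\|_{\Rnorm^*}$. I then upper-bound the right-hand side by combining $\inner{b}{\yn} \le \|b\|\,\|\yn\|_*$ (H\"older for the dual pair), the reverse triangle inequality $\|\opn^*\yn - \cnew\|_{\Rnorm^*} \ge \|\opn^*\yn\|_{\Rnorm^*} - \|\cnew\|_{\Rnorm^*}$, and the inf-sup estimate $\|\opn^*\yn\|_{\Rnorm^*} \ge \gamma\|\yn\|_*$ of Assumption~\ref{a:reg}\ref{a:reg:inf-sup}, to obtain $\Jlb \le -(\gamma\xnb - \|b\|)\|\yn\|_* + \xnb\|\cnew\|_{\Rnorm^*}$. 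As $\gamma\xnb - \|b\| > 0$ by the same assumption, rearranging yields $\|\yn\|_* \le \ynb$. The cruder second inequality follows by noting that $-\xnb\|\cnew\|_{\Rnorm^*}$ is itself a valid lower bound on $\Jpn$ — every feasible $\alpha$ obeys $\alpha\transp\cnew \ge -\|\alpha\|_\Rnorm\|\cnew\|_{\Rnorm^*} \ge -\xnb\|\cnew\|_{\Rnorm^*}$ — so one may take $\Jlb \ge -\xnb\|\cnew\|_{\Rnorm^*}$, whence $\xnb\|\cnew\|_{\Rnorm^*} - \Jlb \le 2\xnb\|\cnew\|_{\Rnorm^*}$. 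This last part is routine; the only care needed is to keep the inequality directions straight when multiplying the lower bound on $\|\opn^*\yn - \cnew\|_{\Rnorm^*}$ by $-\xnb$.
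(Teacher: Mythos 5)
Your proof is correct and takes essentially the same route as the paper's: the zero duality gap is obtained via Sion's minimax theorem, enabled by compactness of the ball $\{\alpha : \|\alpha\|_\Rnorm \le \xnb\}$ (the paper folds your bipolar-theorem and dual-norm computations into the phrases ``by the definition of the dual cone'' and the display itself), and the dual bound follows from the identical chain of estimates — $\Jlb \le \Jdn$, H\"older, triangle inequality, and the inf-sup condition — with the same choice $\Jlb \Let -\xnb\|\cnew\|_{\Rnorm^*}$ justifying the final inequality.
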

		
		\begin{proof}
			Since the elements $\{x_i\}_{i\le n}$ are linearly independent, the feasible set of the decision variable $\alpha$ in program \ref{primal-n} is a bounded closed subset of a finite dimensional space, and hence compact. Thus, thanks to the feasibility Assumption~\ref{a:reg}\ref{a:reg:feas} and compactness of the feasible set, the zero duality gap follows because
			\begin{align*}
			\Jpn = \inf_{\|\alpha\|_\Rnorm \le \xnb} \Big\{\alpha \transp \cnew + \sup_{y \in \cone^*} \inner{b - \opn \alpha}{y} \Big\} &=  \sup_{y \in \cone^*}\inf_{\|\alpha\|_\Rnorm \le \xnb}\Big\{ \inner{b}{y} - \alpha \transp (\opn^* y - \cnew)\Big\} = \Jdn,
			\end{align*}
			where the first equality holds by the definition of the dual cone $\cone^*$, and the second equality follows from Sion's minimax theorem \cite[Theorem 4.2]{ref:Sion-58}. Thanks to the zero duality gap above, we have 
			\begin{align*}
			\Jlb \le \Jpn = \Jdn & = \inner{b}{\yn} - \xnb\|\opn^* \yn - \cnew\|_{\Rnorm^*} \le \inner{b}{\yn}  - \xnb\|\opn^* \yn\|_{\Rnorm^*} + \xnb \|\cnew\|_{\Rnorm^*}.
			\end{align*}
			By Assumption \ref{a:reg}\ref{a:reg:inf-sup}, we then have
			\begin{align*}
			\Jpn & \le \|b\|\|{\yn}\|_* - \gamma \xnb\|\yn\|_* + \xnb\|\cnew\|_{\Rnorm^*} = \xnb\|\cnew\|_{\Rnorm^*} - \big(\gamma \xnb - \|b\|\big)\|\yn\|_*,
			\end{align*}
			which together with the simple lower bound $\Jlb \Let -\xnb\|\cnew\|_{\Rnorm^*} \le \Jpn$ concludes the proof. 
		\end{proof}
		
		Proposition~\ref{prop:SD} effectively implies that in the program \ref{dual-n} one can add a norm constraint $\|y\|_* \le \ynb$ without changing the optimal value. The parameter $\ynb$ depends on $\Jlb$, a lower bound for the optimal value of $\Jpn$. A simple choice for such a lower bound is $-\xnb\|\cnew\|_{\Rnorm^*}$, but in particular problem instances one may be able to obtain a less conservative bound. We validate the assertions of Proposition~\ref{prop:SD} for long-run average cost problems in Section~\ref{subsec:semi:MDP} and for long-run discounted cost problems in Section~\ref{sec:discounted:setting}.
		
		Program~\ref{primal-n} is a restricted version of the original program \ref{primal-inf} (also called an \emph{inner approximation} \cite[Definition 12.2.13]{ref:Hernandez-99}), and thus $\Jp \le \Jpn$. 
		However, under Assumption \ref{a:reg}, we show that the gap $\Jpn-\Jp$ can be quantified explicitly.
		To this end, we consider the projection mapping $\proj_{\set A}(x) \Let \arg\min_{x' \in \set A} \| x' - x\|$, the operator norm $\|{\op}\| \Let \sup_{\|x\| \le 1} \|{\op x}\|$, and define the set 
		\begin{align}
		\label{uball}
		\uball \Let \Big\{\sum_{i=1}^{n} \alpha_i x_i \in \X_n ~:~ \|\alpha\|_\Rnorm \le \xnb\Big\}.
		\end{align}
		
		\begin{Thm}[Semi-infinite approximation]
			\label{thm:inf-semi}
			Let $x\opt$ and $\yn$ be optimizers for the programs \ref{primal-inf} and \ref{dual-n}, respectively, and let $r_n \Let x\opt - \proj_{\uball}(x\opt)$ be the projection residual of the optimizer $x\opt$ onto  the set $\uball$ as defined in \eqref{uball}. Under Assumption~\ref{a:reg}\ref{a:reg:feas}, we have $0\leq \Jpn - \Jp \le \inner{r_n}{\op^*\yn - c}$ where $\Jpn$ and $J$ are the optimal value of the programs \ref{primal-n} and \ref{primal-inf}. In addition, if Assumption~\ref{a:reg}\ref{a:reg:inf-sup} holds, then 
			\begin{align}
			\label{inf-semi error}
			0 \le \Jpn - \Jp \le \big(\|c\|_*+\ynb\|\op\| \big)\|r_n\|,
			\end{align}
			where $\ynb$ is the dual optimizer bound introduced in \eqref{yb}.
		\end{Thm}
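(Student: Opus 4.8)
The plan is to prove the lower bound $0 \le \Jpn - \Jp$ by a restriction argument, and then to obtain the upper bound by passing to the dual program \ref{dual-n} and exploiting a cancellation induced by its regularization term. For the lower bound, note that \ref{primal-n} is an inner approximation of \ref{primal-inf}: any feasible $\alpha$ for \ref{primal-n} produces $x = \sum_{i=1}^{n}\alpha_i x_i \in \X_n \subset \X$ with $\op x = \opn\alpha \geqc{\cone} b$ and $\inner{x}{c} = \alpha\transp\cnew$, so $x$ is feasible for \ref{primal-inf} with the same cost, whence $\Jp \le \Jpn$.

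For the upper bound I would first invoke strong duality. Under Assumption~\ref{a:reg}\ref{a:reg:feas}, Proposition~\ref{prop:SD} gives $\Jpn = \Jdn = \inner{b}{\yn} - \xnb\|\opn^*\yn - \cnew\|_{\Rnorm^*}$. Since $x\opt$ is feasible for \ref{primal-inf} we have $\op x\opt - b \in \cone$, and since $\yn \in \cone^*$ this yields $\inner{\op x\opt - b}{\yn} \ge 0$, i.e.\ $\inner{b}{\yn} \le \inner{\op x\opt}{\yn} = \inner{x\opt}{\op^*\yn}$. Substituting this and subtracting $\Jp = \inner{x\opt}{c}$ leads to
\[
\Jpn - \Jp \le \inner{x\opt}{\op^*\yn - c} - \xnb\|\opn^*\yn - \cnew\|_{\Rnorm^*}.
\]

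Next I would decompose $x\opt = \proj_{\uball}(x\opt) + r_n$. Since $\proj_{\uball}(x\opt) \in \uball$, write $\proj_{\uball}(x\opt) = \sum_{i=1}^{n}\hat\alpha_i x_i$ with $\|\hat\alpha\|_{\Rnorm} \le \xnb$. A direct computation using the definitions in \eqref{Ln} and of $\cnew$ gives $\inner{\proj_{\uball}(x\opt)}{\op^*\yn - c} = \inner{\op\proj_{\uball}(x\opt)}{\yn} - \inner{\proj_{\uball}(x\opt)}{c} = \hat\alpha\transp(\opn^*\yn - \cnew)$, and the dual-norm (H\"older) inequality bounds this by $\|\hat\alpha\|_{\Rnorm}\|\opn^*\yn - \cnew\|_{\Rnorm^*} \le \xnb\|\opn^*\yn - \cnew\|_{\Rnorm^*}$. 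The crucial observation is that this is precisely the regularization term appearing with opposite sign in the displayed inequality, so the two cancel and we are left with $\Jpn - \Jp \le \inner{r_n}{\op^*\yn - c}$, the first claimed bound. Note the regularizer $\xnb\|\opn^*\yn - \cnew\|_{\Rnorm^*}$ is exactly what compensates for the fact that $\proj_{\uball}(x\opt)$ need not be feasible for \ref{primal-n}, so one cannot simply argue $\Jpn \le \inner{\proj_{\uball}(x\opt)}{c}$; this cancellation is the only nonroutine step of the argument.

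Finally, to obtain \eqref{inf-semi error} under the additional Assumption~\ref{a:reg}\ref{a:reg:inf-sup}, I would chain the dual-norm inequality $\inner{r_n}{\op^*\yn - c} \le \|r_n\|\,\|\op^*\yn - c\|_*$, the triangle inequality $\|\op^*\yn - c\|_* \le \|c\|_* + \|\op^*\yn\|_*$, the operator-norm identity $\|\op^*\yn\|_* \le \|\op\|\,\|\yn\|_*$ (which follows from $\|\op^* y\|_* = \sup_{\|x\|\le 1}\inner{\op x}{y} \le \sup_{\|x\|\le 1}\|\op x\|\,\|y\|_* = \|\op\|\,\|y\|_*$), and the dual-optimizer bound $\|\yn\|_* \le \ynb$ from \eqref{yb}. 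Combining these yields $\Jpn - \Jp \le \big(\|c\|_* + \ynb\|\op\|\big)\|r_n\|$, as claimed. The only genuine obstacle is the cancellation in the third paragraph; the remaining estimates are routine bookkeeping with the dual-pair norms.
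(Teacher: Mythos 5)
Your proof is correct and takes essentially the same route as the paper's: both rest on feasibility of $x\opt$ against the dual cone $\cone^*$, the decomposition $x\opt = \proj_{\uball}(x\opt) + r_n$, the H\"older bound $\wt{\alpha}\transp(\opn^*\yn - \cnew) \le \xnb\|\opn^*\yn - \cnew\|_{\Rnorm^*}$ cancelling against the regularization term of \ref{dual-n}, and strong duality $\Jpn = \Jdn$ from Proposition~\ref{prop:SD}. The differences are purely organizational — you invoke strong duality at the outset rather than at the end, and you bound $\|\op^*\yn - c\|_*$ by the triangle inequality instead of splitting $\inner{r_n}{\op^*\yn - c}$ into $\inner{r_n}{-c} + \inner{\op r_n}{\yn}$ as the paper does, which yields the identical estimate.
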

		
		\begin{proof}
			The lower bound $0\le \Jpn - \Jp$ is trivial, and we only need to prove the upper bound. Note that since the optimizer $x\opt \in \X$ is a feasible solution of \ref{primal-inf}, then $\op x\opt - b \in \cone$. By the definition of the dual cone $\cone^*$, this implies that $\inner{\op x\opt - b}{y} \ge 0$ for all $y \in \cone^*$. Since the dual optimizer $\yn$ belongs to the dual cone $\cone^*$, then 
			\begin{align*}
			\Jpn - \Jp &\le \Jpn - \Jp + \inner{\op x\opt - b}{\yn} = \Jpn - \inner{x\opt}{c} + \inner{\op x\opt}{\yn} - \inner{b}{\yn} \\
			& = \Jpn + \inner{x\opt}{\op^*\yn - c} - \inner{b}{\yn} \\
			& = \Jpn +  \inner{r_{n}}{\op^*\yn - c} + \inner{\proj_{\uball}(x\opt)}{\op^*\yn - c} - \inner{b}{\yn},\\
			& = \Jpn +  \inner{r_{n}}{\op^*\yn - c} + \wt{\alpha}\transp\big({\opn^*\yn - \cnew}\big) - \inner{b}{\yn},
			\end{align*}
			for some $\wt\alpha \in \R^n$ with norm $\|\wt\alpha\|_\Rnorm \le \xnb$; for the last line, see the definition of the operator $\opn$ in \eqref{Ln} as well as the vector $\cnew$ in the program \ref{primal-n}. Using the definition of the dual norm and the operators \eqref{Ln}, one can deduce from above that
			\begin{align*}
			\Jpn - \Jp & \le J_n + \inner{r_{n}}{\op^* \yn - c} + \xnb \|\opn^* \yn - \cnew\|_{\Rnorm^*} - \inner{b}{\yn} = J_n + \inner{r_{n}}{\op^* \yn - c} - \Jdn,
			\end{align*}
			which in conjunction with the zero duality gap ($\Jpn = \Jdn$) establishes the first assertion of the proposition. The second assertion is simply the consequence of the first part and the norm definitions, i.e.,
			\begin{align*}
			\inner{r_n}{\op^* \yn- c} = \inner{r_n}{-c} + \inner{\op r_n}{\yn} \le  \|r_n\|\|{c}\|_* + \|\op r_n\|\| \yn\|_{*} \le \|r_n\| \Big( \|c\|_{*} + \|\op\| \|\yn\|_* \Big).
			\end{align*}
			Invoking the bound on the dual optimizer $\yn$ from Proposition \ref{prop:SD} completes the proof. 
		\end{proof}		
		
		\begin{Rem}[Impact of norms on semi-infinite approximation]\label{rem:norm-semi} 
			We note the following concerning the impact of the choice of norms on the approximation error:
			\begin{enumerate} [label=(\roman*), itemsep = 1mm, topsep = -1mm]
				\item \label{rem:norm-semi:theta}
				The only norm that influences the semi-infinite program \ref{primal-n} is $\|\cdot\|_\Rnorm$ on $\R^n$. When it comes to the approximation error \eqref{inf-semi error}, the norm $\|\cdot\|_\Rnorm$ may have an impact on the residual $r_n$ only if the set $\uball$ in \eqref{uball} does not contain $\proj_{\X_n}(x\opt)$, the projection $x\opt$ on the subspace $\X_n$, where $x\opt$ is an optimizer of the infinite  program~\ref{primal-inf}. 	
				
				\item \label{rem:norm-semi:dual-pair}
				The norms of the dual pairs of vector spaces only appear in Theorem~\ref{thm:inf-semi} to quantify the approximation error. Note that in \eqref{inf-semi error} the stronger the norm on $\X$, the higher $\|r_n\|$, and the lower $\|c\|_*$ and $\|\op\|$. On the other hand, the stronger the norm on $\B$, the higher $\|b\|$ and $\|\op\|$ and the lower $\gamma$ (cf. Assumption~\ref{a:reg}\ref{a:reg:inf-sup}). 
			\end{enumerate}
		\end{Rem}				
		
		The error bound \eqref{inf-semi error} can be further improved when $\X$ is a Hilbert space. In this case, let $\comp{\X}_n$ denote the orthogonal complement of $\X_n$. We define the \emph{restricted} norms by
		\begin{align}
		\label{norm-res}
		\|c\|_{*n} \Let \sup_{x \in \comp{\X}_n} {\inner{x}{c} \over {\|{x}\|}}, \qquad \|\op\|_{n} \Let \sup_{x \in \comp{\X}_{n}} \frac{\|{\op x}\|}{\|{x}\|}. 
		\end{align}
		It is straightforward to see that by definition $\|c\|_{*n} \le \|c\|_{*}$ and $\|\op\|_{n} \le \|\op\|$.
		
		\begin{Cor}[Hilbert structure]\label{cor:Hilbert}
			Suppose that $\X$ is a Hilbert space and $\|\cdot\|$ is the norm induced by the corresponding inner product. Let $\{x_i\}_{i\in\N}$ be an orthonormal dense family and $\|\cdot\|_\Rnorm = \|\cdot\|_{\ell_2}$. Let $x\opt$ be an optimal solution for \ref{primal-inf} and chose $\xnb \ge \|x\opt\|$. Under the assymptions of Theorem~\ref{thm:inf-semi}, we have 
			\begin{align*}
			0 \le \Jpn - \Jp \le \big(\|c\|_{n}+\ynb\|\op\|_n \big)\big\|\proj_{\comp{\X}_{n}}(x\opt) \big\|.
			\end{align*}
		\end{Cor}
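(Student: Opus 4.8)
The plan is to specialize the estimate of Theorem~\ref{thm:inf-semi} by exploiting the Hilbert geometry to (i) identify the projection residual $r_n$ explicitly and (ii) replace the global dual norms $\|c\|_*$ and $\|\op\|$ by their restricted counterparts defined in \eqref{norm-res}.

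First I would pin down $r_n = x\opt - \proj_{\uball}(x\opt)$. Because $\{x_i\}_{i\in\N}$ is orthonormal, every $x = \sum_{i=1}^n \alpha_i x_i \in \X_n$ satisfies $\|x\| = \|\alpha\|_{\ell_2} = \|\alpha\|_\Rnorm$, so the set $\uball$ in \eqref{uball} is exactly the closed ball $\{x \in \X_n : \|x\| \le \xnb\}$. Let $p \Let \proj_{\X_n}(x\opt) = \sum_{i=1}^n \inner{x\opt}{x_i}\, x_i$ denote the orthogonal projection of $x\opt$ onto the subspace $\X_n$. By Bessel's inequality $\|p\| \le \|x\opt\| \le \xnb$, where the last step uses the hypothesis $\xnb \ge \|x\opt\|$; hence $p \in \uball$. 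Since $p$ minimizes $\|x\opt - x'\|$ over the whole subspace $\X_n \supseteq \uball$ and itself lies in $\uball$, it also minimizes this distance over $\uball$. Therefore $\proj_{\uball}(x\opt) = p$ and
\[
r_n = x\opt - p = \proj_{\comp{\X}_n}(x\opt) \in \comp{\X}_n .
\]

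The crucial gain is that $r_n$ now lies in the orthogonal complement $\comp{\X}_n$. I would then revisit the final inequality chain in the proof of Theorem~\ref{thm:inf-semi}, namely $\Jpn - \Jp \le \inner{r_n}{\op^*\yn - c} = \inner{r_n}{-c} + \inner{\op r_n}{\yn}$. Since $\comp{\X}_n$ is a subspace and $r_n \in \comp{\X}_n$, the definitions \eqref{norm-res} yield $\inner{r_n}{-c} \le \|c\|_{*n}\|r_n\|$ and $\|\op r_n\| \le \|\op\|_n \|r_n\|$, so that $\inner{r_n}{\op^*\yn - c} \le \|r_n\|\big(\|c\|_{*n} + \|\op\|_n \|\yn\|_*\big)$. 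Invoking the dual-optimizer bound $\|\yn\|_* \le \ynb$ from Proposition~\ref{prop:SD} (available since the assumptions of Theorem~\ref{thm:inf-semi} include Assumption~\ref{a:reg}\ref{a:reg:inf-sup}) and substituting $\|r_n\| = \big\|\proj_{\comp{\X}_n}(x\opt)\big\|$ gives the claimed estimate, which refines \eqref{inf-semi error} because $\|c\|_{*n} \le \|c\|_*$ and $\|\op\|_n \le \|\op\|$.

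The only genuinely delicate point is the identity $\proj_{\uball}(x\opt) = \proj_{\X_n}(x\opt)$: it is precisely here that the orthonormality of the basis (so that the $\ell_2$-constraint on $\alpha$ coincides with the $\X$-norm constraint on $\X_n$) together with the sizing assumption $\xnb \ge \|x\opt\|$ enter, ensuring that the regularizing norm constraint is inactive at the projection and hence that the residual is the pure orthogonal-complement component $\proj_{\comp{\X}_n}(x\opt)$. Everything else is a verbatim re-run of the argument of Theorem~\ref{thm:inf-semi} with the restricted norms $\|c\|_{*n}$ and $\|\op\|_n$ in place of the global ones.
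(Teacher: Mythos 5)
Your proposal is correct and follows essentially the same route as the paper: identify $\uball$ with the norm ball of $\X_n$ via orthonormality, show $\proj_{\uball}(x\opt)=\proj_{\X_n}(x\opt)$ under $\xnb\ge\|x\opt\|$ so that $r_n\in\comp{\X}_n$, and then re-run the inequality chain of Theorem~\ref{thm:inf-semi} with the restricted norms \eqref{norm-res}. Your write-up merely makes explicit some details the paper leaves implicit (Bessel's inequality and the subspace-versus-ball projection argument), which is a faithful elaboration rather than a different approach.
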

		
		\begin{proof}
			We first note that the $\ell_2$-norm on $\R^n$ is indeed the norm induced by $\|\cdot\|$, since due to the orthonormality of $\{x_i\}_{i\in\N}$ we have
			$$\|\alpha\|_\Rnorm \Let \Big\|\sum_{i=1}^{n} \alpha_i x_i \Big\| = \sqrt{\sum_{i=1}^{n} \alpha_i^2\|x_i\|^2} = \|\alpha\|_{\ell_2}.$$ 
			If $\xnb \ge \|x\opt\|$, then $\proj_{\uball}(x\opt) = \proj_{\X_n}(x\opt)$, i.e., the projection of the optimizer $x\opt$ on the ball $\uball$ is in fact the projection onto the subspace $\X_{n}$. Therefore, thanks to the orthonormality, the projection residual $r_n = x\opt - \proj_{\X_{n}}(x\opt)$ belongs to the orthogonal complement $\comp{\X}_{n}$. Thus, following the same reasoning as in the proof of Theorem~\ref{thm:inf-semi}, one arrives at a bound similar to \eqref{inf-semi error} but using the restricted norms \eqref{norm-res}; recall that the norm in a Hilbert space is self-dual. 
		\end{proof}

		\section{Semi-infinite to finite programs: randomized approach} 
		\label{sec:semi-fin:rand}
		We study conditions under which one can provide a finite approximation to the semi-infinite programs of the form \ref{primal-n}, that are in general known to be computationally intractable --- NP-hard \cite[p.~16]{ref:BenTal-09}. We approach this goal by deploying tools from two areas, leading to different theoretical guarantees for the proposed solutions. This section focuses on a randomized approach and the next section is dedicated to an iterative gradient-based descent method. The solution of each of these methods comes with a~priori as well as a posteriori performance certificates.

		We start with a lemma suggesting a simple bound on the norm of the operator $\opn$ in \eqref{Ln}. We will use the bound to quantify the approximation error of our proposed solutions. 
		
		\begin{Lem}[Semi-infinite operator norm]\label{lem:opn}
			Consider the operator $\opn:\R^n \ra \B$ as defined in \eqref{Ln}. Then,
			\begin{align}\label{opt}
			\|\opn\| \Let \sup_{\alpha \in \R^n} {\|\opn \alpha\| \over \|\alpha\|_\Rnorm} \le \|\op\|\ratio, \qquad  \ratio \Let \sup_{\|\alpha\|_\Rnorm \le 1} \|\alpha\|_{\ell_1},
			\end{align}
			where the constant $\ratio$ is the equivalence ratio between the norms $\|\cdot\|_\Rnorm$ and $\|\cdot\|_{\ell_1}$.\footnote{The constant $\ratio$ is indexed by $n$ as it potentially depends on the dimension of $\alpha \in \R^n$. } 
		\end{Lem}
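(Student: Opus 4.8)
The plan is to obtain \eqref{opt} through a single application of the triangle inequality in $\B$, exploiting the normalization of the basis elements, and then to recognize the leftover supremum as the norm-equivalence constant $\ratio$. No perturbation or duality machinery is needed here; the lemma is essentially a bookkeeping estimate that decouples the action of $\op$ on the individual $x_i$ from the geometry of the norm $\|\cdot\|_\Rnorm$ on $\R^n$.

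First I would fix an arbitrary $\alpha \in \R^n$ and expand $\opn \alpha = \sum_{i=1}^n \alpha_i \op x_i$ according to the definition in \eqref{Ln}. Applying the triangle inequality in $\B$ gives $\|\opn \alpha\| \le \sum_{i=1}^n |\alpha_i|\, \|\op x_i\|$. The next step is to control each $\|\op x_i\|$ by the operator norm: by the definition $\|\op\| = \sup_{\|x\|\le 1}\|\op x\|$ one has $\|\op x_i\| \le \|\op\|\,\|x_i\|$, and since the family $\{x_i\}$ is normalized so that $\|x_i\| = 1$ (as stipulated in Section~\ref{subsec:semi}), this reduces to $\|\op x_i\| \le \|\op\|$. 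Substituting yields $\|\opn \alpha\| \le \|\op\| \sum_{i=1}^n |\alpha_i| = \|\op\|\,\|\alpha\|_{\ell_1}$.

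Second, I would divide by $\|\alpha\|_\Rnorm$ for $\alpha \neq 0$ and take the supremum over $\alpha$, so that $\|\opn\| \le \|\op\|\, \sup_{\alpha \neq 0} \|\alpha\|_{\ell_1}/\|\alpha\|_\Rnorm$. By positive homogeneity of both norms, the supremum is unchanged if restricted to the unit ball $\{\|\alpha\|_\Rnorm \le 1\}$, which is precisely the definition of $\ratio$ in \eqref{opt}; this delivers $\|\opn\| \le \|\op\|\,\ratio$. I would remark in passing that $\ratio$ is finite because all norms on the finite-dimensional space $\R^n$ are equivalent, so the bound is always meaningful.

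I do not anticipate a genuine obstacle in this argument, as every inequality is an elementary norm estimate; the only points requiring care are to invoke the normalization $\|x_i\| = 1$ at the correct place (so that the $\|\op\|$ factor appears cleanly without extra constants) and to phrase the final supremum exactly as the norm-equivalence ratio $\ratio$ rather than a generic constant, since it is this explicit dependence on the choice of $\|\cdot\|_\Rnorm$ that will later feed into the approximation-error bounds.
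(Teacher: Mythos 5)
Your proof is correct and takes essentially the same approach as the paper: both arguments reduce the claim to the elementary bound $\|\opn \alpha\| \le \|\op\|\,\|\alpha\|_{\ell_1}$ via the triangle inequality, the operator norm, and the normalization $\|x_i\|=1$, only in swapped order (the paper first writes $\|\op(\sum_{i}\alpha_i x_i)\| \le \|\op\|\,\|\sum_i \alpha_i x_i\|$ and then applies the triangle inequality in $\X$, whereas you apply the triangle inequality in $\B$ and bound each $\|\op x_i\|$ term by term). The final identification of the leftover supremum with $\ratio$ is identical in both.
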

		
		\begin{proof}
			The proof follows directly from the definition of the operator norm, that is, 
			$$\|\opn \alpha\| = \Big\|\sum_{i=1}^{n}\alpha_i \op x_i\Big\| \le \|\op\|\Big\|\sum_{i=1}^{n}\alpha_i x_i \Big\|,$$
			together with the inequality $\big\|\sum_{i=1}^{n}\alpha_i x_i \big \| \le \|\alpha\|_{\ell_1} \max_{i \le n} \|x_i\| = \|\alpha\|_{\ell_1}$, which concludes the proof. 
		\end{proof}
		
		Since $\cone$ is a closed convex cone, then $\cone^{**} = \cone$ \cite[p.\ 40]{ref:Anderson-87}, and as such the conic constraint in program \ref{primal-n} can be reformulated as 
		\begin{align}
		\label{conic-const}
		\opn \alpha \geqc{\cone} b \qquad \Llra \qquad \inner{\opn \alpha - b}{y} \ge 0, \quad \forall y \in \Kb \Let \ext\{y \in \cone^* : \|y\|_* = 1 \}, 
		\end{align}
		where $\ext\{B\}$ denotes the extreme points of the set $B$, i.e., the set of points that cannot be represented as a strict convex combination of some other elements of the set. Notice that the norm constraint as well as the restriction to the extreme points in the definition of $\Kb$ in \eqref{conic-const} does not sacrifice any generality, as conic constraints are homogeneous. These restrictions are introduced to improve the approximation errors. In what follows, however, one can safely replace the set $\Kb$ with any subset of the cone $\cone^*$ whose closure contains $\Kb$. This adjustment may be taken into consideration for computational advantages. Let $\PP$ be a Borel probability measure supported on $\Kb$, and $\{y_j\}_{j\le N}$ be independent, identically distributed (i.i.d.) samples generated from $\PP$. Consider the \emph{scenario} counterpart of the program \ref{primal-n} defined as
		\begin{align} 
		\label{primal-n,N} 
		\tag{$\Prim_{n,N}$}
		\JpnN \Let 
		\left\{ \begin{array}{ll}
		\Min{\alpha\in\R^n} & \alpha \transp \cnew   \\
		\subjectto & \alpha \transp \opn^* y_j \ge \inner{b}{y_j},  \quad  j \in \{1,\cdots,N\}\\
		& \|\alpha\|_{\Rnorm} \le \xnb,
		\end{array} \right.
		\end{align}
		where the adjoint operator $\opn^*:\B \ra \R^n$ is introduced in \eqref{Ln}. The optimization problem \ref{primal-n,N} is a standard finite convex program, and thus computationally tractable whenever the norm constraint $\|\alpha\|_\Rnorm \le \xnb$ is tractable. Program \ref{primal-n,N} is a relaxation of \ref{primal-n}, i.e., $ \Jpn \ge \JpnN$; note that $\JpnN$ is a random variable, therefore the relaxation error $\Jpn - \JpnN$ can only be interpreted in a probabilistic sense. 
		
		\begin{Def}[Tail bound]
			\label{def:tail}
			Given a probability measure $\PP$ supported on $\Kb$, we define the function $p:\R^n \times \R_+ \ra [0,1]$ as
			\begin{align*}
			p(\alpha,\zeta) \Let \PP\Big[y ~ : ~  \supp{\Kb}{-\opn \alpha + b}<\inner{-\opn \alpha + b}{y} + \zeta \Big],
			\end{align*}
			where $\sigma_{\Kb}(\cdot) \Let \sup_{y\in\Kb}\inner{\cdot}{y}$ is the support function of $\Kb$. We call $h:\R^n\times [0,1] \ra \R_+$ a \emph{tail bound} (TB) of the program \ref{primal-n,N}, if for all $\eps \in [0,1]$ and $\alpha$ we have	
			\begin{align*}
			h(\alpha,\eps) \ge \sup \big \{ \zeta ~:~ p(\alpha,\zeta) \le \eps \big \}.
			\end{align*}
		\end{Def}		
		
		The TB function in Definition~\ref{def:tail} can be interpreted as a {\em shifted} quantile function of the mapping $y \mapsto \inner{-\opn \alpha + b}{y}$ on $\Kb$--- the ``shift" is referred to the maximum value of the mapping which is $\supp{\Kb}{-\opn \alpha + b}$. 		
		TB functions depend on the probability measure $\PP$ generating the scenarios $\{y_j\}_{j\le N}$ in the program \ref{primal-n,N}, as well as the properties of the optimization problem. Definition \ref{def:tail} is rather abstract and not readily applicable. The following example suggests a more explicit, but not necessarily optimal, candidate for a TB.
		
		\begin{Ex}[TB candidate] \label{ex:TB}
			Let $g:\R_{+}\to [0,1]$ be a non-decreasing function such that for any $\kappa \in \Kb$ we have $g(\gamma) \le \PP\big[\ball{\kappa}{\gamma} \big]$, where $\ball{\kappa}{\gamma}$ is the open ball centered at $\kappa$ with radius $\gamma$; note that function $g$ depends on the choice of the norm on $\Y$. Then, a candidate for a TB function of the program \ref{primal-n,N} is 
			$$h(\alpha,\eps) \Let \| \opn \alpha - b \| g^{-1}(\eps) \le \big(\ratio \|\op\| \|\alpha\|_\Rnorm + \|b\|\big)g^{-1}(\eps),$$ 
			where the inverse function is understood as $g^{-1}(\eps) \Let \sup\{\gamma \in~\R_+~: g(\gamma) \le \eps\}$, and $\ratio$ is the constant ratio defined in \eqref{opt}. 
			To see this note that according to Definition~\ref{def:tail} we have 
			\begin{align*}
			p(\alpha,\zeta) 	&= \PP\Big[y ~ : ~  \sup_{\kappa\in\Kb}\inner{-\opn \alpha + b}{\kappa -y}<\zeta \Big] \\
			&= \inf_{\kappa\in\Kb}\PP\Big[y ~ : ~  \inner{-\opn \alpha + b}{\kappa -y}<\zeta \Big] \\
			&\geq \inf_{\kappa\in\Kb} \PP\Big[ y ~ : ~ \| \opn \alpha - b \| \| y-\kappa \|_{*} <\zeta \Big] \\
			&= \inf_{\kappa\in\Kb} \PP\Big[ \ball{\kappa}{\gamma(\zeta)} \Big] \geq g(\gamma(\zeta)), \quad \gamma(\zeta) \Let {\zeta \| \opn \alpha- b \|^{-1}}.
			\end{align*}
			Thus, if $p(\alpha,\zeta)\leq \eps$, then $g(\gamma(\zeta))\leq \eps$ and by construction of the inverse function $g^{-1}$ we have $\zeta \|\opn \alpha - b\|^{-1} \leq g^{-1}(\eps).$ In view of Definition \ref{def:tail}, this observation readily suggests that the function $h(\alpha,\varepsilon) \Let \| \opn \alpha - b \| g^{-1}(\eps)$ is indeed a TB candidate, and the suggested upper bound follows readily from Lemma~\ref{lem:opn}.
		\end{Ex}

		\begin{Thm}[Randomized approximation error]
			\label{thm:semi-fin:rand}
			Consider the programs \ref{primal-n} and \ref{primal-n,N} with the associated optimum values $\Jpn$ and $\JpnN$, respectively. Let Assumption \ref{a:reg} hold, $\alpha\opt_N$ be the optimizer of the program \ref{primal-n,N}, and the function $h$ be a TB as in Definition \ref{def:tail}. Given $\eps, \beta$ in $(0,1)$, we define 
			\begin{align} 
			\label{N}
			\NN(n, \eps, \beta) \Let \min \Big\{ N\in \N ~:~ \sum_{i=0}^{n-1}  {N \choose i} \eps^{i}(1-\eps)^{N-i}\leq\beta \Big\}.
			\end{align}
			For all positive parameters $\eps, \beta$ and $N \ge \NN(n,\eps,\beta)$ we have
			\begin{subequations}
				\label{eq:thm:semi-fin:rand}	
				\begin{align}
				\label{eq:thm:semi-fin:rand:1} 
				\PP^N \bigg[0\leq \Jpn - \JpnN \le \ynb h\big(\alpha\opt_N,\eps \big) \bigg] \ge 1-\beta,
				\end{align}
				where the constant $\ynb$ is defined as in \eqref{yb}. In particular, suppose the function $h$ is the TB candidate from Example \ref{ex:TB} with corresponding $g$ function, and 
				\begin{align}
				\label{NN}
				N \ge \NN\big(n,g(z_n\eps),\beta\big), \qquad z_n \Let \Big(\ynb\big(\xnb \ratio\|\op\| + \|b\|\big)\Big)^{-1}
				\end{align} 
				where $\ratio$ is the ratio constant defined in Lemma~\ref{lem:opn}. We then have 
				\begin{align}
				\label{eq:thm:semi-fin:rand:2} 	
				\PP^N \Big[0\leq \Jpn - \JpnN  \le  \eps \Big] \ge 1-\beta \,.
				\end{align}
			\end{subequations}
		\end{Thm}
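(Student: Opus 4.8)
The plan is to decouple the estimate into a deterministic bound that holds whenever $\alpha\opt_N$ is ``almost feasible'' for \ref{primal-n}, and a probabilistic bound that guarantees this near-feasibility with confidence $1-\beta$; the tail bound $h$ is the bridge between the two. For the \emph{deterministic step} I would condition on the event
$$
\mathcal{E} \Let \big\{ \supp{\Kb}{b - \opn\alpha\opt_N} \le h(\alpha\opt_N,\eps) \big\}
$$
and show that on $\mathcal{E}$ one has $\Jpn - \JpnN \le \ynb\, h(\alpha\opt_N,\eps)$. Let $\yn$ be an optimizer of \ref{dual-n}; by Proposition~\ref{prop:SD} we have $\Jpn = \Jdn = \inner{b}{\yn} - \xnb\|\opn^*\yn - \cnew\|_{\Rnorm^*}$ and $\|\yn\|_* \le \ynb$. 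Positive homogeneity of the support function together with the reformulation \eqref{conic-const} turns the defining inequality of $\mathcal{E}$ into $\inner{b - \opn\alpha\opt_N}{y} \le h(\alpha\opt_N,\eps)\,\|y\|_*$ for every $y \in \cone^*$, in particular for $y = \yn$. Using the adjoint identity $\inner{\opn\alpha\opt_N}{\yn} = (\alpha\opt_N)\transp\opn^*\yn$ from \eqref{Ln} and $\|\alpha\opt_N\|_\Rnorm \le \xnb$, I would then estimate
\begin{align*}
\Jpn - \JpnN &= \inner{b - \opn\alpha\opt_N}{\yn} + (\alpha\opt_N)\transp(\opn^*\yn - \cnew) - \xnb\|\opn^*\yn - \cnew\|_{\Rnorm^*} \\
&\le \ynb\, h(\alpha\opt_N,\eps) + \big(\|\alpha\opt_N\|_\Rnorm - \xnb\big)\|\opn^*\yn - \cnew\|_{\Rnorm^*} \le \ynb\, h(\alpha\opt_N,\eps),
\end{align*}
the last term being non-positive; the lower bound $\Jpn - \JpnN \ge 0$ is immediate since \ref{primal-n,N} relaxes \ref{primal-n}.

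For the \emph{probabilistic step} I would invoke the classical scenario sample-complexity bound \cite{ref:MohSut-13}: the program \ref{primal-n,N} has $n$ decision variables (the norm constraint $\|\alpha\|_\Rnorm \le \xnb$ is deterministic and does not raise the Helly dimension), so for $N \ge \NN(n,\eps,\beta)$ with $\NN$ as in \eqref{N} the optimizer $\alpha\opt_N$ satisfies, with $\PP^N$-probability at least $1-\beta$,
$$
\PP\big[\, y ~:~ \inner{b - \opn\alpha\opt_N}{y} > 0 \,\big] \le \eps.
$$
The key observation linking this to $\mathcal{E}$ is that the left-hand side equals $p\big(\alpha\opt_N, \supp{\Kb}{b - \opn\alpha\opt_N}\big)$ in the notation of Definition~\ref{def:tail} (take $\zeta = \supp{\Kb}{b - \opn\alpha\opt_N}$). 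Hence $p(\alpha\opt_N,\zeta) \le \eps$ for this particular $\zeta$, and the defining property of a tail bound gives $\supp{\Kb}{b - \opn\alpha\opt_N} \le h(\alpha\opt_N,\eps)$, i.e.\ the event $\mathcal{E}$ occurs. Combining the two steps yields \eqref{eq:thm:semi-fin:rand:1}.

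For the \emph{explicit rate} \eqref{eq:thm:semi-fin:rand:2} I would apply \eqref{eq:thm:semi-fin:rand:1} with the violation level $\eps$ replaced by $g(z_n\eps)$ and with $h$ the candidate of Example~\ref{ex:TB}. Since $\|\alpha\opt_N\|_\Rnorm \le \xnb$, Example~\ref{ex:TB} gives $h(\alpha\opt_N,g(z_n\eps)) \le \big(\xnb\ratio\|\op\| + \|b\|\big)\,g^{-1}\!\big(g(z_n\eps)\big)$, and with the stated choice of $z_n$ together with $g^{-1}(g(t)) \le t$ this bounds $\ynb\, h(\alpha\opt_N,g(z_n\eps))$ by $\eps$, which is exactly \eqref{eq:thm:semi-fin:rand:2}.

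The \textbf{main obstacle} is the probabilistic step: one must correctly identify that the relevant dimension in the combinatorial tail \eqref{N} is the number of decision variables $n$ rather than the (infinite) number of constraints, and handle the existence/uniqueness and possible degeneracy of $\alpha\opt_N$ required by the scenario theorem. The identity converting the violation probability into $p(\alpha\opt_N,\supp{\Kb}{b-\opn\alpha\opt_N})$ is the conceptual heart that translates a statement about \emph{how often} a fresh constraint is violated into a bound on the \emph{magnitude} of the worst-case violation controlled by $h$. A minor technical point in the explicit-rate step is the generalized-inverse inequality $g^{-1}\circ g \le \mathrm{id}$, which holds with equality when $g$ is continuous and strictly increasing.
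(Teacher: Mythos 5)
Your proposal is correct and follows essentially the same route as the paper's own proof: a deterministic sensitivity bound built on strong duality of \ref{primal-n}/\ref{dual-n} (Proposition~\ref{prop:SD}) together with the support-function estimate $\inner{\delta}{y}\le\|y\|_*\,\supp{\Kb}{\delta}$, combined with the Campi--Garatti scenario bound (Theorem~\ref{thm:Campi}) and the tail-bound conversion (Lemma~\ref{lem:ccp}). The differences are presentational: your single duality-gap identity merges the paper's two perturbation lemmas (Lemmas~\ref{lem:Lemma0} and~\ref{lem:Lemma1}) --- where the paper introduces the perturbed pair \ref{primal-n-pert}/\ref{dual-n-pert} and invokes weak duality, you bound the residual term $(\alpha\opt_N)\transp(\opn^*\yn-\cnew)-\xnb\|\opn^*\yn-\cnew\|_{\Rnorm^*}$ directly via H\"older's inequality and $\|\alpha\opt_N\|_\Rnorm\le\xnb$ --- and you re-derive Lemma~\ref{lem:ccp} inline through the identity $p\big(\alpha\opt_N,\supp{\Kb}{b-\opn\alpha\opt_N}\big)=\PP\big[y : \inner{b-\opn\alpha\opt_N}{y}>0\big]$, which is indeed the conceptual hinge of the probabilistic step.

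One slip worth correcting: with the paper's definition $g^{-1}(\eps)=\sup\{\gamma\in\R_+ : g(\gamma)\le\eps\}$, the generalized inverse satisfies $g^{-1}\circ g\ge\mathrm{id}$, not $\le$ as you state; what your explicit-rate step actually needs is $g^{-1}\big(g(z_n\eps)\big)\le z_n\eps$, which can fail if $g$ is flat immediately to the right of $z_n\eps$. Since equality holds whenever $g$ is strictly increasing --- which is the case in every instance the paper uses, e.g.\ $g(\gamma)=\gamma^{\dim(K)}$ in Corollary~\ref{cor:adp:semi-finite} --- and the paper's own proof glosses over the very same point, your argument stands, but the inequality should be stated in the correct direction (or strict monotonicity of $g$ should be made an explicit hypothesis).
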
	
		
		Theorem~\ref{thm:semi-fin:rand} extends the result \cite[Theorem 3.6]{ref:MohSut-13} in two respects: 
		\begin{enumerate} [label=$\bullet$, itemsep = 1mm, topsep = -1mm]
			\item The bounds \eqref{eq:thm:semi-fin:rand} are described in terms of a generic norm and the corresponding dual optimizer bound.
			\item Through the optimizer of \ref{primal-n,N}, the bounds involve an a posteriori element (cf. \eqref{eq:thm:semi-fin:rand:1} to \eqref{eq:thm:semi-fin:rand:2}). 
		\end{enumerate}
		Before proceeding with the proof, we first remark on the complexity of the a~priori bound of Theorem~\ref{thm:semi-fin:rand}, its implications for an appropriate choice of $\xnb$, and its dependence on the dual pair norms. 
		
		\begin{Rem}[Curse of dimensionality]
			\label{rem:curse}
			The TB function $h$ of Example~\ref{ex:TB} may grow exponentially in the dimension of the support set $\Kb$ (i.e., $h(\alpha,\eps) \propto \eps^{-\dim(\Kb)}$). Since $\NN(n,\cdot,\beta)$ admits a linear growth rate, the a~priori bound \eqref{eq:thm:semi-fin:rand:2} effectively leads to an exponential number of samples in the precision level $\eps$, an observation related to the curse of dimensionality \cite[Remark~3.9]{ref:MohSut-13}. To mitigate this inherent computational complexity, one may resort to a more elegant sampling approach so that the required number of samples $\NN$ has a sublinear rate in the second argument, see for instance \cite{ref:NemShap-06}. 
		\end{Rem}
		
		\begin{Rem}[Optimal choice of $\xnb$]
			\label{rem:theta}
			In view of the a priori error in Theorem~\ref{thm:semi-fin:rand}, the parameter $\xnb$ may be chosen so as to minimize the required number of samples. To this end, it suffices to maximize $z_n$ defined in \eqref{NN} over all $\xnb> \|b\|\gamma^{-1}$, see Assumption~\ref{a:reg}\ref{a:reg:inf-sup}, where $\ynb$ is defined in \eqref{yb}. One can show that the optimal choice in this respect is analytically available as 
			\begin{align*}
			\xnb\opt \Let {\|b\| \over \gamma} + \sqrt{\Big({\|b\| \over \gamma} + {\|b\| \over \ratio\|\op\|} \Big)\Big({\|b\| \over \gamma}- {\Jlb \over \|\cnew\|_{\Rnorm*}} \Big)} \, ,
			\end{align*}
			where $\Jlb$ is a lower bound on the optimal value of \ref{primal-n} used in \eqref{yb}.
		\end{Rem}
		
		\begin{Rem}[Norm impact on finite approximation]
			\label{rem:norm-fin}
			Besides to what has already been highlighted in Remark~\ref{rem:norm-semi}, the choice of norms in the dual pairs of normed vector spaces also has an impact on the function $g^{-1}(\eps)$. More specifically, the stronger the norm in the space $\B$, the larger the balls in the dual space $\Y$, and thus the smaller the function $g^{-1}$. 
		\end{Rem}
		
		To prove Theorem~\ref{thm:semi-fin:rand} we need a few preparatory results.
		
		\begin{Lem}[Perturbation function]
			\label{lem:Lemma0}
			Given $\delta\in\B$, consider the $\delta$-\emph{perturbed} program of \ref{primal-n} defined as
			\begin{align} 
			\label{primal-n-pert} 
			\tag{$\Prim_n(\delta)$}
			\Jpnd \Let \left\{ \begin{array}{ll}
			\Inf{\alpha\in\R^n} & \alpha \transp \cnew   \\
			\subjectto & \opn \alpha \geqc{\cone} b - \delta \\
			& \|\alpha\|_\Rnorm \le \xnb.
			\end{array} \right.
			\end{align}
			Under Assumption \ref{a:reg}, we then have $\Jpn - \Jpnd \leq \inner{\delta}{\yn}$, where $\yn$ is an optimizer of \ref{dual-n}.
		\end{Lem}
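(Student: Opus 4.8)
The plan is to exploit the fact that perturbing $b$ to $b-\delta$ leaves the \emph{feasible set} of the dual program unchanged---only the linear part of the dual objective is shifted---so that the dual optimizer $\yn$ of \ref{dual-n} remains admissible for the perturbed dual and can simply be substituted in.

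First I would write down the dual of the perturbed program \ref{primal-n-pert}. Since $\Prim_n(\delta)$ differs from \ref{primal-n} only through the substitution $b \mapsto b-\delta$ in the conic constraint, repeating verbatim the Lagrangian dualization that produced \ref{dual-n}---dualizing the conic inequality over $\cone^*$ and evaluating $\inf_{\|\alpha\|_\Rnorm \le \xnb}\alpha\transp(\cnew-\opn^* y) = -\xnb\|\opn^* y - \cnew\|_{\Rnorm^*}$ via the dual norm---yields the perturbed dual objective $\inner{b-\delta}{y} - \xnb\|\opn^* y - \cnew\|_{\Rnorm^*}$ over $y\in\cone^*$. The max-min inequality then gives weak duality for the perturbed pair,
\begin{align*}
\Jpnd \;\ge\; \Jdnd \;=\; \sup_{y\in\cone^*}\Big\{\inner{b-\delta}{y} - \xnb\,\|\opn^* y - \cnew\|_{\Rnorm^*}\Big\},
\end{align*}
and this direction requires no regularity hypothesis on the perturbed problem.

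Next I would lower-bound $\Jdnd$ by evaluating its objective at the single feasible point $y=\yn\in\cone^*$, obtaining
\begin{align*}
\Jdnd \;\ge\; \inner{b-\delta}{\yn} - \xnb\,\|\opn^* \yn - \cnew\|_{\Rnorm^*} \;=\; \Big(\inner{b}{\yn} - \xnb\,\|\opn^* \yn - \cnew\|_{\Rnorm^*}\Big) - \inner{\delta}{\yn} \;=\; \Jdn - \inner{\delta}{\yn},
\end{align*}
where the middle equality uses that $\yn$ attains the optimum $\Jdn$ in \ref{dual-n}. Finally, invoking the zero duality gap for the \emph{unperturbed} pair from Proposition~\ref{prop:SD} (valid under Assumption~\ref{a:reg}\ref{a:reg:feas}), namely $\Jdn=\Jpn$, and chaining the two displays gives $\Jpnd \ge \Jpn - \inner{\delta}{\yn}$, which rearranges to the claimed bound $\Jpn - \Jpnd \le \inner{\delta}{\yn}$.

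The argument is short, and the point requiring the most care---rather than a genuine obstacle---is recognizing that strong duality is needed \emph{only} on the unperturbed side: weak duality alone suffices for $\Prim_n(\delta)$, so I never have to verify that the perturbed program remains feasible or that its gap vanishes. Indeed, if $\delta$ renders $\Prim_n(\delta)$ infeasible, then $\Jpnd=+\infty$ and the inequality holds vacuously. The key conceptual step is simply that the perturbation touches the dual objective but not the dual constraint $y\in\cone^*$, which is precisely what legitimizes reusing $\yn$.
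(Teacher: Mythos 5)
Your proof is correct and follows essentially the same route as the paper's: both introduce the perturbed dual \ref{dual-n-pert}, use weak duality for the perturbed pair, substitute the unperturbed dual optimizer $\yn$ (feasible since the perturbation only shifts the dual objective, not the constraint $y\in\cone^*$), and close with the zero duality gap $\Jpn=\Jdn$ from Proposition~\ref{prop:SD}. The only difference is presentational—you chain lower bounds on $\Jpnd$ while the paper expands $\Jpn-\Jpnd$ directly—and your remark that the bound holds vacuously when $\Prim_n(\delta)$ is infeasible is a small but valid addition.
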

		\begin{proof}
			For the proof we first introduce the dual program of \ref{primal-n-pert}:
			\begin{align} 
			\label{dual-n-pert} 
			\tag{$\Dual_n(\delta)$}
			\Jdnd \Let \left\{ \begin{array}{ll}
			\Sup{y} & \inner{b-\delta}{y} - \xnb\|\opn^* y-\cnew\|_{\Rnorm^*}  \\
			\subjectto & y\in\cone^{*}.
			\end{array} \right.
			\end{align}
	We then have 
			\begin{align*}
			\Jpn - \Jpnd & = \Jdn - \Jpnd= \inner{b}{\yn} - \xnb\|\opn^* \yn-\cnew\|_{\Rnorm^*}  - \Jpnd \\ 
			&= \inner{\delta}{\yn} + \inner{b-\delta}{\yn}- \xnb\|\opn^* \yn-\cnew\|_{\Rnorm^*}  - \Jpnd \\
			&\leq \inner{\delta}{\yn} + \Jdnd - \Jpnd \leq \inner{\delta}{\yn},
			\end{align*}
			where the first line follows from the strong duality (gap-free) between \ref{primal-n} and \ref{dual-n} by Proposition \ref{prop:SD}. The third line is due to the fact that $\yn$ is a feasible solution of \ref{dual-n-pert}, and the last line follows from weak duality between \ref{primal-n-pert} and \ref{dual-n-pert}.
		\end{proof}
						
		\begin{Lem}[Perturbation error] 
			\label{lem:Lemma1}
			Let $\alpha\opt_N$ be an optimal solution of \ref{primal-n,N} and assume that $\delta\in\B$ satisfies the conic inequality $ \opn \alpha\opt_N \geqc{\cone} b - \delta$. Then, under Assumption \ref{a:reg}, we have $0\leq \Jpn - \JpnN \leq \inner{\delta}{\yn}$.
			\end{Lem}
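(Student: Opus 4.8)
The plan is to deduce this lemma almost immediately from the preceding Lemma~\ref{lem:Lemma0}, once we observe that the scenario optimizer $\alpha\opt_N$ is in fact feasible for the perturbed program \ref{primal-n-pert}. For the lower bound $0 \le \Jpn - \JpnN$, I would simply invoke the fact, already recorded in the text preceding the statement, that \ref{primal-n,N} is a relaxation of \ref{primal-n}: the scenario program keeps the same objective $\alpha\transp\cnew$ and the same norm constraint $\|\alpha\|_\Rnorm \le \xnb$, but retains only finitely many of the conic constraints encoded in $\opn \alpha \geqc{\cone} b$. Hence every feasible point of \ref{primal-n} is feasible for \ref{primal-n,N}, so $\JpnN \le \Jpn$.

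For the upper bound, the key step is to certify that $\alpha\opt_N$ is feasible for $\Prim_n(\delta)$. Indeed, $\alpha\opt_N$ inherits the norm constraint $\|\alpha\opt_N\|_\Rnorm \le \xnb$ directly from \ref{primal-n,N}, and by the standing hypothesis it satisfies the perturbed conic inequality $\opn \alpha\opt_N \geqc{\cone} b - \delta$. These are precisely the two constraints that define the feasible set of \ref{primal-n-pert}. Since that program is a minimization and $\alpha\opt_N$ attains the objective value $(\alpha\opt_N)\transp \cnew = \JpnN$, feasibility forces $\Jpnd \le \JpnN$.

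Chaining these observations with Lemma~\ref{lem:Lemma0} then finishes the argument. Using $\Jpnd \le \JpnN$ for the first inequality and the conclusion of Lemma~\ref{lem:Lemma0} (valid under Assumption~\ref{a:reg}) for the second, we obtain
\begin{align*}
\Jpn - \JpnN \le \Jpn - \Jpnd \le \inner{\delta}{\yn},
\end{align*}
which together with the relaxation bound gives the two-sided estimate. I do not anticipate any genuine obstacle here, as the whole argument is a short composition of the relaxation property and Lemma~\ref{lem:Lemma0}; the one point worth checking carefully is that the norm constraint $\|\alpha\|_\Rnorm \le \xnb$ is common to \ref{primal-n,N} and \ref{primal-n-pert}, so that the perturbation $\delta$ enters only through the conic side and Lemma~\ref{lem:Lemma0} applies verbatim.
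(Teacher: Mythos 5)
Your proposal is correct and follows exactly the paper's argument: the lower bound via the relaxation property of \ref{primal-n,N}, and the upper bound by certifying that $\alpha\opt_N$ is feasible for \ref{primal-n-pert} (so $\JpnN \geq \Jpnd$) and then invoking Lemma~\ref{lem:Lemma0}. Your additional remark that the norm constraint $\|\alpha\|_\Rnorm \le \xnb$ is shared by both programs, so the perturbation enters only through the conic constraint, is a correct and worthwhile sanity check that the paper leaves implicit.
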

		\begin{proof}
			The lower bound on $\Jpn -\JpnN$ is trivial since \ref{primal-n,N} is a relaxation of \ref{primal-n}. For the upper bound the requirement on $\delta$ in the program \ref{primal-n-pert} implies that $\alpha\opt_N$ is a feasible solution of \ref{primal-n-pert}. We then have $\JpnN \geq \Jpnd$, and thus $0\leq \Jpn - \JpnN \leq \Jpn - \Jpnd$. Applying Lemma~\ref{lem:Lemma0} completes the proof.
		\end{proof}
		
		The following fact follows readily from Definition \ref{def:tail}. 
		
		\begin{Lem}[TB lower bound] \label{lem:ccp}
			If $\alpha \in \R^n$ satisfies $\PP\left[ y ~ : ~ \inner{\opn \alpha-b}{y} < 0 \right] \leq \eps$, then for any TB function in the sense of Definition~\ref{def:tail} we have $\supp{\Kb}{-\opn \alpha + b}\leq h(\alpha,\eps)$.
		\end{Lem}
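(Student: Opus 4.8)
The plan is to unwind the definitions and exploit the fact that the probabilistic hypothesis is \emph{exactly} the event appearing in the function $p$ of Definition~\ref{def:tail}, once that function is evaluated at one cleverly chosen shift. Write $s \Let \supp{\Kb}{-\opn \alpha + b}$ for the support-function value we wish to bound, and recall that by Definition~\ref{def:tail} any tail bound satisfies $h(\alpha,\eps) \ge \sup\{\zeta : p(\alpha,\zeta)\le\eps\}$. Hence it suffices to exhibit a single admissible shift $\zeta \in \R_+$ with $\zeta \ge s$ and $p(\alpha,\zeta)\le\eps$; the natural candidate is $\zeta = s$ itself.

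First I would dispose of the trivial case $s \le 0$: since $h$ takes values in $\R_+$, we immediately get $s \le 0 \le h(\alpha,\eps)$ and there is nothing to prove. So assume $s > 0$, which also places $s$ inside the domain $\R_+$ of $p$. Substituting $\zeta = s$ into the definition of $p$ and cancelling the common $s$ on both sides of the strict inequality gives
\begin{align*}
p(\alpha,s) = \PP\Big[y : s < \inner{-\opn \alpha + b}{y} + s \Big] = \PP\Big[y : \inner{-\opn \alpha + b}{y} > 0 \Big].
\end{align*}
Since $-\opn \alpha + b = -(\opn \alpha - b)$, the event $\{y : \inner{-\opn \alpha + b}{y} > 0\}$ coincides with $\{y : \inner{\opn \alpha - b}{y} < 0\}$, which is precisely the event controlled by the hypothesis. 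Therefore the assumption $\PP[y : \inner{\opn \alpha - b}{y}<0]\le\eps$ yields $p(\alpha,s)\le\eps$.

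It follows that $s$ belongs to $\{\zeta : p(\alpha,\zeta)\le\eps\}$, whence $s \le \sup\{\zeta : p(\alpha,\zeta)\le\eps\} \le h(\alpha,\eps)$, which is the desired inequality $\supp{\Kb}{-\opn \alpha + b}\le h(\alpha,\eps)$. The argument is essentially a one-line cancellation, so I expect no genuine obstacle; the only point needing any care is the domain restriction $\zeta \in \R_+$ of $p$, which is handled by the separate trivial case. As a sanity check one may note that $s \le \Norm{-\opn \alpha + b} < \infty$ because every element of $\Kb$ has unit dual norm, so the chosen shift is always finite.
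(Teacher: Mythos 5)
Your proof is correct and follows essentially the same route as the paper: both substitute $\zeta = \supp{\Kb}{-\opn\alpha+b}$ into the definition of $p$, observe that the resulting event is exactly $\{y : \inner{\opn\alpha-b}{y}<0\}$, and conclude via the supremum in Definition~\ref{def:tail}. Your separate treatment of the case $\supp{\Kb}{-\opn\alpha+b}\le 0$ (to respect the domain $\R_+$ of $p$) is a small point of extra care that the paper leaves implicit, but it does not change the argument.
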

		
		\begin{proof}
			By the definition of the support function we can equivalently write 
			\begin{align*}
			p(\alpha,\zeta)= \PP \left[ y ~ : ~ \inner{\opn \alpha - b }{y} < \zeta - \supp{\Kb}{-\opn \alpha + b} \right].
			\end{align*}
			Now setting $\zeta = \supp{\Kb}{-\opn \alpha + b}$ in the above relation together with the assumption of Lemma~\ref{lem:ccp} yields $p(\alpha,\zeta) \le \eps$, which in light of a TB in Definition \ref{def:tail} suggests that $\supp{\Kb}{-\opn \alpha + b}\leq h(\alpha,\eps)$. 
		\end{proof}

		We follow our discussion with a result from randomized optimization in a convex setting.

		\begin{Thm}[Finite-sample probabilistic feasibility {\cite[Theorem 1]{ref:CamGar-08}}] \label{thm:Campi}
			Assume that the program \ref{primal-n,N} admits a unique minimizer $\alpha\opt_N$.\footnote{The uniqueness assumption may be relaxed at the expense of solving an auxiliary convex program, see \cite[Section 3.3]{ref:MohSut-13}.} If $N \ge \NN(n,\eps,\beta)$ as defined in \eqref{N}, then with confidence at least $1 - \beta$ (across multi-scenarios $\{y_j\}_{j\le N}\subset \Kb$) we have $\PP \big[ y ~ : ~ \inner{\opn\alpha_N - b}{y} < 0  \big] \leq \eps$.
		\end{Thm}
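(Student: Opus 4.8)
The plan is to prove this as a special case of the classical scenario-approach feasibility bound, following the support-constraint and exchangeability machinery behind \cite[Theorem~1]{ref:CamGar-08}. First I would recast \ref{primal-n,N} as a convex program over the $n$-dimensional variable $\alpha$ that minimises the fixed linear objective $\alpha\transp\cnew$ subject to the $N$ sampled half-space constraints $\inner{\opn\alpha - b}{y_j}\ge 0$ together with the fixed convex constraint $\|\alpha\|_\Rnorm \le \xnb$. Writing the violation probability of a candidate $\alpha$ as $V(\alpha)\Let \PP\big[y : \inner{\opn\alpha - b}{y}<0\big]$, the claim is precisely that $\PP^N\big[V(\alpha\opt_N) > \eps\big] \le \beta$ whenever $N \ge \NN(n,\eps,\beta)$.

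The backbone of the argument is a Helly-type structural lemma: since the decision variable lives in $\R^n$ and the objective $\alpha\transp\cnew$ is fixed, at most $n$ of the sampled constraints can be \emph{support constraints}, i.e.\ constraints whose removal strictly lowers the optimal value (equivalently, moves the minimiser); the norm constraint $\|\alpha\|_\Rnorm\le\xnb$ is always present and is not counted among the sampled ones. I would establish this lemma first, using the uniqueness of $\alpha\opt_N$ assumed in the statement to make the support set well defined, and I would treat the fully-supported instances (exactly $n$ support constraints) as the extremal case to which the general situation is reduced.

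Next I would exploit exchangeability of the i.i.d.\ samples $\{y_j\}_{j\le N}$. Decomposing the bad event $\{V(\alpha\opt_N)>\eps\}$ according to which $n$-element subset $I\subseteq\{1,\dots,N\}$ carries the support constraints, each subset contributes equally by symmetry, so it suffices to analyse the representative event in which $\{1,\dots,n\}$ is the support set. Conditioning on $y_1,\dots,y_n$ fixes the candidate optimiser $\alpha_{\{1,\dots,n\}}$ and hence its violation level $v = V(\alpha_{\{1,\dots,n\}})$; for this candidate to be the actual scenario solution, the remaining $N-n$ samples must satisfy its constraint, an event of conditional probability $(1-v)^{N-n}$ by independence. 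Integrating over the outcomes with $v>\eps$ and summing the $\binom{N}{n}$ equal contributions yields a bound of binomial-tail form, which is $\le\beta$ exactly when $N\ge\NN(n,\eps,\beta)$ as defined in \eqref{N}.

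The main obstacle is sharpening the crude bound $\binom{N}{n}(1-\eps)^{N-n}$ that this conditioning first produces into the \emph{tight} binomial tail $\sum_{i=0}^{n-1}\binom{N}{i}\eps^i(1-\eps)^{N-i}$ underlying \eqref{N}. This is the genuinely delicate part of \cite{ref:CamGar-08}: one shows that for fully-supported problems $V(\alpha\opt_N)$ follows exactly a $\mathrm{Beta}(n,N-n+1)$ law. I would obtain this via an ``add-one-sample'' identity: among $N+1$ samples, a constraint $y_k$ is a support constraint of the full problem if and only if the leave-one-out solution $\alpha^{(-k)}$ violates $y_k$; taking expectations and using exchangeability relates the moments of $V(\alpha\opt_N)$ to the law of the support-constraint count, which a peeling argument pins down to the full distribution. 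Finally, degenerate instances (strictly fewer than $n$ support constraints, or ties resolved by the uniqueness convention) can only reduce the violation, so the inequality $\PP^N[V(\alpha\opt_N)>\eps]\le\beta$ is preserved in general.
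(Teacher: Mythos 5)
You should know at the outset that the paper contains no proof of this statement: Theorem~\ref{thm:Campi} is imported verbatim, with attribution, as Theorem~1 of Campi and Garatti \cite{ref:CamGar-08}, and is then used purely as a black box in the proof of Theorem~\ref{thm:semi-fin:rand}. So there is no internal argument to compare yours against; what you have written is a reconstruction of the proof in the cited reference, i.e.\ you are supplying self-containedness that the paper deliberately trades away for brevity and modularity.

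As such a reconstruction, your outline is faithful and structurally correct. You correctly cast \ref{primal-n,N} in scenario-program format with the norm ball $\|\alpha\|_\Rnorm \le \xnb$ treated as part of the deterministic domain, so that the dimension parameter entering the bound is $n$; this is exactly the convention the paper relies on elsewhere (e.g.\ Corollary~\ref{cor:adp:semi-finite} invokes $\NN$ with first argument $n+1$ because there the decision variable is the pair $(\rho,\alpha)$). Your three pillars --- (i) at most $n$ sampled support constraints by the Helly/Levi argument, with uniqueness making the support set well defined; (ii) exchangeability over the ${N \choose n}$ candidate support sets with the conditional factor $(1-v)^{N-n}$; (iii) upgrading the crude bound ${N \choose n}(1-\eps)^{N-n}$ to the exact $\mathrm{Beta}(n,N-n+1)$ law for fully supported problems, whose tail is precisely $\sum_{i=0}^{n-1}{N \choose i}\eps^i(1-\eps)^{N-i}$ and hence matches the definition \eqref{N} --- are exactly the skeleton of \cite{ref:CamGar-08}. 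Two caveats if you intended this to stand as a complete proof rather than a sketch: the ``peeling'' step that pins down the full Beta law is, in the original, a careful conditional-density computation rather than a quick moment identity; and the claim that degenerate instances ``can only reduce the violation'' is established there by a nontrivial perturbation (heating) argument, not as an immediate consequence of the uniqueness convention. Neither is a structural flaw in your plan --- they are simply the technical core of the cited paper, and would be where the real work lies.
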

		
		We are now in a position to prove Theorem~\ref{thm:semi-fin:rand}.
		
		\begin{proof}[Proof of Theorem~\ref{thm:semi-fin:rand}]
			By definition of the support function we know that $\supp{\Kb}{\delta} = \supp{\conv(\Kb)}{\delta}$ where $\conv(\Kb)$ is the convex hull of $\Kb$. Recall that by definition of the set $\Kb$ in \eqref{conic-const}, we also have $y/\|y\|_* \in \conv(\Kb)$ for any $y \in \cone^*$. Thus, for any $\delta \in \B$ and $y \in \cone^*$ we have $\inner{\delta}{y} \le \|y\|_* \supp{\Kb}{\delta}$. This leads to 
			\begin{align*}
			0\leq \Jpn-\JpnN \leq \inner{-\opn\alpha\opt_N + b}{\yn} \leq \|\yn\|_{*} \supp{\Kb}{-\opn\alpha\opt_N + b} 
			\end{align*}
			where the second inequality is due to Lemma~\ref{lem:Lemma1} as $\delta=-\opn\alpha\opt_N + b$ clearly satisfies the requirements. By  Lemma~\ref{lem:ccp} and Theorem~\ref{thm:Campi}, we know that with probability at least $1-\beta$ we have $\supp{\Kb}{-\opn\alpha_N + b} \le h(\alpha_N,\eps)$, which in conjunction with the dual optimizer bound in Proposition~\ref{prop:SD} results in \eqref{eq:thm:semi-fin:rand:1}. Now using the TB candidate in Example \ref{ex:TB} immediately leads to the first assertion of  \eqref{eq:thm:semi-fin:rand:2}. Recall that the solution \ref{primal-n,N} obeys the norm bound $\|\alpha\opt_N\|_\Rnorm \le \xnb$. Thus, by employing the triangle inequality together with Lemma~\ref{lem:opn} we arrive at the second assertion \eqref{eq:thm:semi-fin:rand:2}. 
		\end{proof}
		
		Theorem~\ref{thm:semi-fin:rand} quantifies the approximation error between programs \ref{primal-n} and \ref{primal-n,N} probabilistically in terms of the TB functions as introduced in Definition \ref{def:tail}. The natural question is under what conditions can the proposed bound be made arbitrarily small. This question is intimately related to the behavior of TB functions. For the TB candidate proposed in Example \ref{ex:TB}, the question translates to when does the measure of a ball $\ball{\kappa}{\gamma} \subset \Kb$ have a lower bound $g(\gamma)$ uniformly away from $0$ with respect to the location of its center: The answer to this question also depends on the properties of the norm on $(\B,\Y,\|\cdot\|)$. A positive answer to this question requires that the set $\Kb$ can be covered by finitely many balls, indicating that $\Kb$ is indeed compact with respect to the (dual) norm topology. In Section~\ref{subsec:rand:MDP} we study this requirement in more detail in the MDP setting.

		\section{Semi-infinite to finite programs: structural convex optimization} 
		\label{sec:semi-fin:smoothing}
		
		This section approaches the approximation of the semi-infinite program \ref{primal-n} from an alternative perspective relying on an iterative first order descent method. As opposed to the scenario approach presented in Section~\ref{sec:semi-fin:rand}, that is probabilistic and starts from the program \ref{primal-n}, the method of this section is deterministic and starts with the dual counterpart \ref{dual-n}, in particular a {\em regularized} version of whose solutions can be computed efficiently, see Figure~\ref{fig:overview} for a pictorial overview. It turns out that the regularized solution allows one to reconstruct a nearly optimal solution for both programs \ref{primal-n} and \ref{dual-n}, offering a meaningful performance bound for the approximation step from the semi-infinite program to a finite program.

		The basis of our approach is the fast gradient method that significantly improves the theoretical and, in many cases, also the practical convergence speed of the gradient method. The main idea is based on a well known technique of smoothing nonsmooth functions \cite{ref:Nest-05}. To simplify the notation, for a given $\xnb$ we define the sets
		\begin{align*}
		\Ab \Let \big\{ \alpha\in\R^{n} :  \| \alpha \|_\Rnorm \leq \xnb \big\}, \qquad
		\Yb \Let \bigg\{ y\in\cone^{*} : \|y\|_* \leq \ynb \bigg\},
		\end{align*}
		where $\ynb$ is the constant defined in \eqref{yb}. Recall that due to Proposition~\ref{prop:SD} we know that the decision variables of the dual program \ref{dual-n} may be restricted to the set $\Yb$ without loss of generality. We modify the program \ref{dual-n} with a regularization term scaled with the non-negative parameter $\eta$ and define the \emph{regularized} program
		\begin{align}  \label{dual-n-eta}
		\tag{$\Dual_{n,\eta}$}
		\Jneta \Let \Sup{y\in\Yb} 	\Big\{  \inner{b}{y} - \xnb\|\opn^* y-\cnew\|_{\Rnorm^*}   - \eta d(y) \Big\}, 		
		\end{align}
		where the regularization function $d: \Yb \ra \R_{+}$, also known as the {\em prox-function}, is strongly convex. The choice of the prox-function depends on the specific problem structure and may have significant impact on the approximation errors. Given the regularization term $\eta$ and the parameter $\alpha \in \R^n$, we introduce the auxiliary quantity
		\begin{align} \label{yeta}
		\yeta(\alpha)\Let \arg\max_{y\in\Yb} \Big\{  \inner{b-\opn \alpha }{y}  - \eta d(y) \Big\}.
		\end{align}
		It is computationally crucial for the solution method proposed in this part that the prox-function allows us to have access to the auxiliary variable $\yeta(\alpha)$ for each $\alpha \in \R^n$. This requirement is formalized as follows.
		
		\begin{As}[Lipschitz gradient]
			\label{a:yeta}
			Consider the adjoint operator $\opn^*$ in \eqref{Ln} and the optimizer $\yeta(\alpha)$ of the auxiliary quantity \eqref{yeta}. We assume that for each $\alpha \in \Ab$ the vector $\opn^*\yeta(\alpha) \in \R^n$ can be approximated to an arbitrary precision, and the mapping $\alpha \mapsto \opn^*\yeta(\alpha)$ is Lipschitz continuous with a constant $\tfrac{L}{\eta}$, i.e.,  
			\begin{align*}
			\|\opn^*\yeta(\alpha) - \opn^*\yeta(\alpha')\|_{\Rnorm^*} \le {L \over \eta} \|\alpha - \alpha'\|_\Rnorm, \qquad \forall  \alpha, \alpha' \in \Ab. 
			\end{align*}
		\end{As}
		
		Let $\vartheta > 0 $ be the strong convexity parameter of the mapping $\alpha \mapsto \tfrac{1}{2}\|\alpha\|_\Rnorm^2$ with respect to the $\Rnorm$-norm. We then define the operator $\T :\R^n\times\R^n \ra \R^n$ as
		\begin{align}
		\label{T}
		\T(q,\alpha) \Let \arg\min_{\beta \in \Ab} \Big\{q \transp \beta + {1 \over 2\vartheta} \|\beta - \alpha\|^2_\Rnorm \Big \}, 
		\end{align}
		More generally, a different norm can be used in the second term in \eqref{T} when $\vartheta$ is a different strong convexity parameter. However, we forgo this additional generality to keep the exposition simple. The operator $\T$ is defined implicitly through a finite convex optimization program whose computational complexity may depend on the $\Rnorm$-norm through the constraint set $\Ab$. For typical norms in $\R^n$ (e.g., $\|\cdot\|_{\ell_p}$) the pointwise evaluation of the operator $\T$ is computationally tractable. Furthermore, if $\|\cdot\|_\Rnorm = \|\cdot\|_{\ell_2}$, then the definition of \eqref{T} has an explicit analytical description for any pair $(q,\alpha)$ as follows.
		
		\begin{Lem}[Explicit description of {$\T$}]
			\label{lem:T}
			Suppose in the definition of the operator \eqref{T} the $\Rnorm$-norm is the classical $\ell_2$-norm. Then, the operator $\T$ admits the analytical description $\T(q,\alpha) = \xi\, (\alpha - q)$ where $\xi \Let \min\big\{1,\xnb\|q-\alpha\|_{\ell_2}^{-1}\big\}$.
		\end{Lem}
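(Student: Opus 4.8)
The plan is to recognize the minimization defining $\T(q,\alpha)$ in \eqref{T} as a Euclidean projection onto the ball $\Ab$, and then to invoke the closed-form expression for such a projection. First I would observe that when $\|\cdot\|_\Rnorm = \|\cdot\|_{\ell_2}$, the map $\alpha \mapsto \tfrac12\|\alpha\|_\Rnorm^2$ has Hessian equal to the identity, so its strong-convexity constant is $\vartheta = 1$; this is precisely what renders the stated formula free of $\vartheta$. With $\vartheta = 1$, the objective in \eqref{T} becomes $q\transp\beta + \tfrac12\|\beta-\alpha\|_{\ell_2}^2$, a strictly convex quadratic, which over the compact convex set $\Ab$ admits a unique minimizer; hence $\T$ is single-valued.

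Next I would complete the square, writing
\[
q\transp\beta + \tfrac12\|\beta-\alpha\|_{\ell_2}^2 = \tfrac12\big\|\beta - (\alpha - q)\big\|_{\ell_2}^2 + \tfrac12\|\alpha\|_{\ell_2}^2 - \tfrac12\|\alpha - q\|_{\ell_2}^2,
\]
where the last two terms are independent of $\beta$. Minimizing over $\beta \in \Ab$ is therefore equivalent to minimizing $\|\beta - (\alpha - q)\|_{\ell_2}$, i.e.\ $\T(q,\alpha) = \proj_{\Ab}(\alpha - q)$ is exactly the Euclidean projection of the point $\alpha - q$ onto the centered ball $\Ab = \{\beta : \|\beta\|_{\ell_2} \le \xnb\}$.

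Finally I would invoke the standard closed form for projection onto a centered Euclidean ball. If $\|\alpha - q\|_{\ell_2} \le \xnb$, the unconstrained minimizer $\alpha - q$ already lies in $\Ab$, so the projection equals $\alpha - q$. Otherwise strict convexity pushes the minimizer to the boundary, yielding $\xnb\,(\alpha-q)/\|\alpha-q\|_{\ell_2}$. Using the symmetry $\|q-\alpha\|_{\ell_2} = \|\alpha-q\|_{\ell_2}$, both cases merge into $\T(q,\alpha) = \xi\,(\alpha-q)$ with $\xi = \min\{1, \xnb\|q-\alpha\|_{\ell_2}^{-1}\}$, as claimed.

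There is no substantial obstacle here: the argument is elementary convex analysis. The only points that genuinely require care are confirming that $\vartheta = 1$ for the $\ell_2$-norm (so the cross term reduces cleanly to $\alpha - q$) and handling the two cases---unconstrained optimum inside versus outside the ball---so that they collapse into the single $\min$ defining $\xi$.
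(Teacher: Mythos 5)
Your proof is correct, but it takes a genuinely different route from the paper. You complete the square to rewrite the objective as $\tfrac12\|\beta-(\alpha-q)\|_{\ell_2}^2$ plus a constant, recognize $\T(q,\alpha)$ as the Euclidean projection of $\alpha-q$ onto the centered ball $\Ab$, and then invoke the standard closed form for that projection. The paper instead proceeds via Lagrangian duality: it dualizes the constraint $\|\beta\|_{\ell_2}\le\xnb$, solves the inner unconstrained problem in closed form as $\beta\opt(\lambda)=(\alpha-q)/(1+2\lambda)$, finds the optimal multiplier $\lambda\opt=\tfrac{1}{2\xnb}\max\{\|\alpha-q\|-\xnb,0\}$ by setting the derivative of the dual function to zero, and substitutes back, checking feasibility of $\beta\opt(\lambda\opt)$ to close the primal--dual gap. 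Your argument is shorter and more elementary: it avoids computing the dual function (the somewhat heavy algebra in the paper's display) and sidesteps the implicit appeal to strong duality, while making the geometric content---a ball projection---transparent; the paper's route, on the other hand, additionally delivers the optimal Lagrange multiplier explicitly, which is occasionally useful in sensitivity or complexity analyses. Both proofs correctly note that $\vartheta=1$ for the $\ell_2$-norm, and both share the same harmless convention at $\alpha=q$ (where $\xi=\min\{1,\infty\}=1$), so there is no gap in your argument.
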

		
		\begin{proof}
			In case of the $\ell_2$-norm the strong convexity parameter is $\vartheta = 1$. Now using the classical duality theory, the objective function of \eqref{T} is equal to
			\begin{align}
			\label{pd}
			&\notag \min_{\|\beta\|_{\ell_2} \le \xnb } \Big\{q \transp \beta + {1 \over 2} \|\beta - \alpha\|^2_{\ell_2}\Big\} \\ 
			&\notag \quad = \max_{\lambda \ge 0} \Big\{-\lambda\xnb^2 + \min_\beta \big\{ q \transp \beta + {1 \over 2} \|\beta - \alpha\|^2_{\ell_2} + \lambda \|\beta\|^2_{\ell_2}\big\}\Big\} \\
			& \quad = \max_{\lambda \ge 0} \Big\{ - \lambda\xnb^2 + {q \transp \alpha -\|q\|^2_{\ell_2} \over 1 + 2\lambda} + {\|q+2\lambda\alpha\|^2_{\ell_2} \over 2(1+2\lambda)^2} + {\lambda \|\alpha - q\|^2_{\ell_2} \over (1+2\lambda)^2} \Big\},
			\end{align}
			where \eqref{pd} follows by substituting the explicit solution of the unconstrained inner problem described by
			\begin{align}
			\label{beta}
			\beta\opt(\lambda) \Let \arg\min_\beta \big\{ q \transp \beta + {1 \over 2} \|\beta - \alpha\|^2_{\ell_2} + \lambda \|\beta\|^2_{\ell_2}\big\} = {\alpha - q \over 1 + 2\lambda}.
			\end{align}
			To find the optimal $\lambda$ in the right-hand side of \eqref{pd}, it suffices to set the derivative to zero, which yields $\lambda\opt \Let \tfrac{1}{2\xnb}\max\{\|\alpha - q\| - \xnb, 0\}$. By substituting $\lambda\opt$ in \eqref{beta}, we have an optimal solution $\beta\opt(\lambda\opt) = \xi(\alpha - q)$ that is feasible since $\|\beta\opt(\lambda\opt)\|_{\ell_2} \le \xnb$. By virtue of the equality in \eqref{pd}, this concludes the desired assertion.
		\end{proof}
		
		Algorithm~\ref{alg} exploits the information revealed under Assumption~\ref{a:yeta} as well as the operator $\T$ to approximate the solution of the program \ref{dual-n}. The following theorem provides explicit error bounds for the solution provided by Algorithm \ref{alg} after $k$ iterations. The result is a slight extension of the classical smoothing technique in finite dimensional convex optimization \cite[Theorem 3]{ref:Nest-05} where the prox-function is not necessarily uniformly bounded, a potential difficulty in infinite dimensional spaces. We address this difficulty by considering a growth rate for the prox-function $d$ evaluated at the optimal solution $\yeta$. We later show, in Section~\ref{subsec:smooth:MDP}, how this extension will help in the MDP setting. 
		
		\begin{algorithm}[t!]
			\caption{\quad Optimal scheme for smooth convex optimization}
			\label{alg}
			\begin{algorithmic} 
				\State Choose some $w^{(0)} \in \Ab$
				\State {For $k \ge 0$ do}
				\begin{enumerate} [label=$(\roman*)$, itemsep = 1mm, topsep = 1mm, leftmargin = 2cm]
					
					\item [{\bf Step 1:}] Define $r^{(k)} \Let \frac{\eta}{L} \big(\cnew - \opn^*\yeta(w^{(k)})\big)$;
					
					\item [{\bf Step 2:}] Compute  $z^{(k)} \Let \T\big(\sum_{j=0}^{k} \frac{j+1}{2} r^{(j)}, 0\big), \quad \alpha^{(k)} \Let \T\big({1 \over \vartheta}r^{(k)}, w^{(k)}\big)$;
					
					\item [{\bf Step 3:}] Set $w^{(k+1)}=\frac{2}{k+3} z^{(k)} + \frac{k+1}{k+3} \alpha^{(k)}$.
				\end{enumerate}
			\end{algorithmic}
		\end{algorithm}

		\begin{Thm}[Smoothing approximation error] 
			\label{thm:semi-fin:smooth}
			Suppose Assumption \ref{a:yeta} holds with constant $L$ and $\vartheta$ is the strong convexity parameter in the definition of the operator $\T$ in \eqref{T}. Given the regularization term $\eta > 0$ and $k$ iterations of Algorithm \ref{alg}, we define 
			\begin{align*}
			\wh{\alpha}_\eta \Let \alpha^{(k)}, \qquad \wh{y}_\eta \Let \sum_{j=0}^{k}\frac{2(j+1)}{(k+1)(k+2)} \yeta(w^{(j)}).
			\end{align*}
			Under Assumption~\ref{a:reg}, the optimal value of the program \ref{primal-n} is bounded by $\Jnlb \le \Jpn \le \Jnub$ where 
			\begin{align}
			\label{Jn-algo}
			\Jnlb \Let \inner{b}{\wh{y}_\eta} - \xnb \|\opn^*\wh{y}_\eta -\cnew\|_{\Rnorm^*}, \qquad \Jnub \Let \wh{\alpha}_\eta \transp \cnew + \sup_{y \in \Yb} \inner{b-\opn \wh{\alpha}_\eta}{y}	
			\end{align} 
			Moreover, suppose there exist positive constants $c, C$ such that
			$$C \max\big\{\log\big(c\eta^{-1}\big),1\big\} \ge d\big(\yeta(\alpha)\big), \qquad \forall \eta > 0, \quad \forall \alpha \in \Ab,$$
			and, given an a~priori precision $\eps>0$, the regularization parameter $\eta$ and the number of iterations $k$ satisfy 
			\begin{align}
			\label{eta-k}
			\eta \le {\eps \over 2C\max\{2\log(2cC\eps^{-1}),1\}}, \qquad k \ge 2\xnb \ratio \frac{\sqrt{CL\max\{2\log(2cC\eps^{-1}),1\}}}{\sqrt{\vartheta}~\eps}, 
			\end{align} 
			where $\ratio$ is the constant defined in \eqref{opt}. Then, after $k$ iterations of Algorithm~\ref{alg} we have $\Jnub - \Jnlb \le \eps$. 
		\end{Thm}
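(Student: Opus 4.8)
\emph{Sandwich bounds first.} I would begin with $\Jnlb\le\Jpn\le\Jnub$, which needs neither the algorithm nor the growth hypothesis and is valid for any $\wh{\alpha}_\eta\in\Ab$ and $\wh{y}_\eta\in\Yb$. Introduce the bilinear saddle function $\Phi(\alpha,y)\Let\alpha\transp\cnew+\inner{b-\opn\alpha}{y}$ and set $F(\alpha)\Let\sup_{y\in\Yb}\Phi(\alpha,y)$ and $G(y)\Let\inf_{\alpha\in\Ab}\Phi(\alpha,y)$; carrying out the inner minimisation in $G$ (which produces the dual-norm term) shows $\Jnub=F(\wh{\alpha}_\eta)$ and $\Jnlb=G(\wh{y}_\eta)$. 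Since $\wh{y}_\eta$ is a convex combination of the points $\yeta(w^{(j)})\in\Yb\subseteq\cone^*$, it lies in $\cone^*$, so $\Jnlb$ is the value of \ref{dual-n} at a feasible point and hence $\Jnlb\le\Jdn=\Jpn$ by the zero duality gap of Proposition~\ref{prop:SD}. Dually, $\Jnub=\sup_{y\in\Yb}\Phi(\wh{\alpha}_\eta,y)\ge\sup_{y\in\Yb}\inf_{\alpha\in\Ab}\Phi(\alpha,y)=\Jpn$, where the restriction of the dual supremum to $\Yb$ is again legitimised by Proposition~\ref{prop:SD}.

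\emph{The convergent part.} Next I would identify Algorithm~\ref{alg} as Nesterov's optimal scheme applied to the \emph{smoothed} primal $F_\eta(\alpha)\Let\alpha\transp\cnew+\sup_{y\in\Yb}\{\inner{b-\opn\alpha}{y}-\eta d(y)\}$, whose inner maximiser is exactly $\yeta(\alpha)$ from \eqref{yeta} and whose gradient $\cnew-\opn^*\yeta(\alpha)$ is Lipschitz with constant $L/\eta$ by Assumption~\ref{a:yeta}; the operator $\T$ of \eqref{T} furnishes the prox-step with strong-convexity parameter $\vartheta$. By Sion's minimax theorem \cite{ref:Sion-58}, $\min_{\alpha\in\Ab}F_\eta=\sup_{y\in\Yb}\{G(y)-\eta d(y)\}=\Jneta$, the value of the regularised program \ref{dual-n-eta}. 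The weights $\tfrac{2(j+1)}{(k+1)(k+2)}$ defining $\wh{y}_\eta$ are precisely Nesterov's estimate-sequence weights, so the scheme certifies a primal--dual gap for the smoothed problem, namely $F_\eta(\wh{\alpha}_\eta)-\Psi(\wh{y}_\eta)\le\Delta_k$ with $\Psi(y)\Let G(y)-\eta d(y)$ and $\Delta_k$ of order $L\xnb^2\ratio^2/(\vartheta\,\eta\,(k+1)^2)$, the factor $\ratio$ entering through $\|\opn\|\le\ratio\|\op\|$ of Lemma~\ref{lem:opn}. This is the step invoking the slight extension of \cite[Theorem~3]{ref:Nest-05}.

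\emph{Assembling the gap.} I would then telescope the exact gap through the smoothed quantities,
\begin{align*}
\Jnub-\Jnlb=\big[F(\wh{\alpha}_\eta)-F_\eta(\wh{\alpha}_\eta)\big]+\big[F_\eta(\wh{\alpha}_\eta)-\Psi(\wh{y}_\eta)\big]-\eta\, d(\wh{y}_\eta).
\end{align*}
The middle bracket is at most $\Delta_k$; the last term is nonnegative and enters with a minus sign, so it only helps (and by convexity of $d$ one has $d(\wh{y}_\eta)\le\max_j d(\yeta(w^{(j)}))\le C\max\{\log(c\eta^{-1}),1\}$ should a quantitative bound be needed). The first bracket is the smoothing error $\sup_{y\in\Yb}\inner{b-\opn\wh{\alpha}_\eta}{y}-\sup_{y\in\Yb}\{\inner{b-\opn\wh{\alpha}_\eta}{y}-\eta d(y)\}$, which I would bound by $\eta$ times the prox value at the pertinent maximiser and control through the growth hypothesis $d(\yeta(\cdot))\le C\max\{\log(c\eta^{-1}),1\}$. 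Finally, selecting $\eta$ and $k$ as in \eqref{eta-k} forces the smoothing error below $\eps/2$ (the factor-two buffer in $2\log(2cC\eps^{-1})$ absorbing the self-referential $\log(1/\eta)\approx\log(1/\eps)$) and $\Delta_k$ below $\eps/2$, whence $\Jnub-\Jnlb\le\eps$.

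\emph{Main obstacle.} The crux is the first bracket. In Nesterov's original argument the smoothing error is bounded by $\eta\sup_{y\in\Yb}d(y)$, which is useless here because the prox-function need not be uniformly bounded on $\Yb$ (indeed in the MDP application of Section~\ref{subsec:smooth:MDP} one has $\sup_{\Yb}d=+\infty$). Replacing this uniform bound by the logarithmic growth of $d$ along the regularised maximisers---so that the smoothing error scales like $\eta\log(1/\eta)$ rather than $\eta\sup_{\Yb}d$---is the technical heart of the theorem and the only place where the growth hypothesis, together with the precise shape of the thresholds in \eqref{eta-k}, is genuinely used. The remaining estimates are the standard convex-duality and Nesterov-rate calculations carried over verbatim.
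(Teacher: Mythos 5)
Your overall architecture is sound and is essentially the paper's own route: the sandwich bounds via the saddle function $\Phi$, the identification of Algorithm~\ref{alg} as Nesterov's scheme on the smoothed objective $F_\eta$ with Lipschitz constant $L/\eta$, the estimate-sequence gap bound $F_\eta(\wh{\alpha}_\eta)-\Psi(\wh{y}_\eta)\le\Delta_k$ (correctly retaining the $-\eta\sum_j w_j d(\yeta(w^{(j)}))$ term and using Jensen), and the final tuning of $(\eta,k)$. The genuine gap is in the step you yourself call the technical heart: your bound on the first bracket $F(\wh{\alpha}_\eta)-F_\eta(\wh{\alpha}_\eta)$. The elementary smoothing estimate reads $F(\alpha)-F_\eta(\alpha)\le\eta\, d(y^\star)$, where $y^\star\in\arg\max_{y\in\Yb}\inner{b-\opn\alpha}{y}$ is the \emph{unregularised} maximiser; the growth hypothesis, however, only controls $d$ at the \emph{regularised} maximiser $\yeta(\alpha)$. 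These are different points, and in exactly the regime the theorem is built for ($d$ unbounded on $\Yb$, e.g.\ the MDP application where $d$ is a relative entropy and the unregularised maximiser of a linear functional over $\Prob(K)$ is a Dirac measure) one has $d(y^\star)=+\infty$, so ``$\eta$ times the prox value at the pertinent maximiser'' is vacuous. Nothing in the hypothesis excludes $\sup_{y\in\Yb}d(y)=\infty$ while $d(\yeta(\alpha))$ stays logarithmic; that is precisely the intended situation.

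The repair --- and the reason the hypothesis is imposed for \emph{all} $\eta>0$ rather than only at the running value --- is to integrate along the smoothing parameter instead of evaluating at a single maximiser. Writing $F_s(\alpha)\Let\alpha\transp\cnew+\sup_{y\in\Yb}\{\inner{b-\opn\alpha}{y}-s\,d(y)\}$ with maximiser $y_s(\alpha)$, the map $s\mapsto F_s(\alpha)$ is convex with $\tfrac{d}{ds}F_s(\alpha)=-d\big(y_s(\alpha)\big)$ by Danskin's theorem (the maximiser is unique by strong convexity of $d$), whence
\begin{align*}
F(\wh{\alpha}_\eta)-F_\eta(\wh{\alpha}_\eta)=\int_0^\eta d\big(y_s(\wh{\alpha}_\eta)\big)\,ds
\le C\int_0^\eta\max\{\log(c\,s^{-1}),1\}\,ds
\le C\eta\big(\max\{\log(c\,\eta^{-1}),1\}+1\big);
\end{align*}
a dyadic telescoping $F_{\eta 2^{-(i+1)}}(\alpha)-F_{\eta 2^{-i}}(\alpha)\le \eta 2^{-(i+1)}d\big(y_{\eta 2^{-(i+1)}}(\alpha)\big)$, summed over $i\ge 0$, gives the same $O\big(\eta\log(\eta^{-1})\big)$ bound while avoiding differentiability issues near $s=0$. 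With this estimate in place of your invalid one, your assembly of the three brackets together with the choices \eqref{eta-k} indeed yields $\Jnub-\Jnlb\le\eps$ (up to the constants the paper also absorbs). Note that the paper's own proof is terse at exactly this point --- it invokes ``similar arguments'' to \cite[Theorem~3]{ref:Nest-05} and writes the smoothing term directly as $\eta\, d(\yeta(\alpha^*))$ --- but a complete argument requires this integration step; as written, your proposal substitutes an estimate that is false in the setting the theorem is meant to cover, so the proof is incomplete.
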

		
		\begin{proof} 
			Observe that the bounds $\Jnlb$ and $\Jnub$ in \eqref{Jn-algo} are the values of the programs \ref{dual-n} and \ref{primal-n} evaluated at $\wh{y}_\eta$ and $\wh \alpha_\eta$, respectively. As such, the first assertion follows immediately. Towards the second part, thanks to the compactness of the set $\Ab$, the strong duality argument of Sion's minimax theorem \cite{ref:Sion-58} allows to describe the program \ref{dual-n-eta} through
			\begin{align} 
			\notag
			\Jneta & \Let \sup_{y \in \Yb} \inner{b}{y} - \Big[ \sup_{\alpha \in \Ab} \inner{\opn \alpha}{y} - \alpha \transp \cnew + \eta d(y)\Big] \\
			\notag 
			& = \inf_{\alpha \in \Ab} \alpha \transp \cnew + \sup_{y\in\Yb}  \Big[ \inner{b - \opn \alpha}{y}  - \eta d(y) \Big]\\
			\label{eq:dual:sion}	
			& = \inf_{\alpha \in \Ab} \alpha \transp \cnew + \inner{b - \opn \alpha}{\yeta(\alpha)}  - \eta d\big(\yeta(\alpha)\big), 
			\end{align}
			where the last equality follows from the definition in \eqref{yeta}. Note that the problem \eqref{eq:dual:sion} belongs to the class of smooth and strongly convex optimization problems, and can be solved using a fast gradient method developed by \cite{ref:Nest-05}. For this purpose, we define the function 
			\begin{align} 
			\label{eq:objective:smoothing}
			\phi_{\eta}(\alpha) &\Let \alpha \transp \cnew + \inner{b - \opn \alpha}{\yeta(\alpha)}  - \eta d\big(\yeta(\alpha)\big).
			\end{align}
			Invoking similar techniques to \cite[Theorem~1]{ref:Nest-05}, it can be shown that the mapping $\alpha \mapsto \phi_\eta(\alpha)$ is smooth with the gradient $\nabla \phi_{\eta}(\alpha) = \cnew - \opn^* \yeta(\alpha)$. The gradient $\nabla \phi_\eta(\alpha)$ is Lipschitz continuous by Assumption \ref{a:yeta} with constant $\tfrac{L}{\eta}$. Thus, following similar arguments as in the proof of \cite[Theorem~3]{ref:Nest-05} we have
			\begin{align}
			\label{eq:bound}
			0\le \Jnub - \Jnlb \le {L\|\alpha\opt\|^2_\Rnorm \over \vartheta (k+1)(k+2)\eta} + \eta d\big(\yeta(\alpha^*)\big) \le {L(\xnb \ratio)^2\over \vartheta k^2\eta} + C\eta\max\big\{\log(c\eta^{-1}),1\big\}.
			\end{align}
			Now, it is enough to bound each of the terms in the right-hand side of the above inequality by $\tfrac{1}{2}\eps$. It should be noted that this may not lead to an optimal choice of the parameter $\eta$, but it is good enough to achieve a reasonable precision order with respect to $\eps$. To ensure $\eta\log(\eta^{-1}) \le \eps$ for an $\eps \in (0,1)$ , it is not difficult to see that it suffices to set $\eta \le \tfrac{\eps}{2\log (\eps^{-1})}$. In this observation if we replace $\eta$ and $\eps$ with $\tfrac{1}{c}\eta$ and $\tfrac{1}{2cC}\eps$, respectively, we deduce that the second term on the right-hand side in \eqref{eq:bound} bounded by $\tfrac{1}{2}\eps$. Thus, the desired assertion follows by equating the first term on the right-hand side in \eqref{eq:bound} to $\tfrac{1}{2}\eps$ while the parameter $\eta$ is set as just suggested. 
		\end{proof}
		
		\begin{Rem}[Computational complexity]
			\label{rem:complex}
			Adding the prox-function to the problem~\ref{dual-n} ensures that the regularized counterpart \ref{dual-n-eta} admits an efficiency estimate (in terms of iteration numbers) of the order $\order\big(\sqrt{\tfrac{L}{\eta}\eps^{-1}}\big)$. To construct a smooth $\eps$- approximation for the original problem \ref{dual-n}, the Lipschitz constant $\tfrac{L}{\eta}$ can be chosen of the order $\order({\eps^{-1}}$ ${\log(\eps^{-1})})$. Thus, the presented gradient scheme has an efficiency estimate of the order $\order\big({\eps^{-1}}{\sqrt{\log(\eps^{-1})}}\big)$, see \cite{ref:Nest-05} for a more detailed discussion along similar objective.	
		\end{Rem}
		
		\begin{Rem}[Inexact gradient]
			\label{ref:inexact}
			The error bounds in Theorem~\ref{thm:semi-fin:smooth} are introduced based on the availability of the exact first-order information, i.e., it is assumed that at each iteration the vector $r^{(k)}$ that due to the bilinear form potentially involves a multi-dimensional integration can be computed exactly. In general, the evaluation of those vectors may only be available approximately. This gives rise to the question of how the fast gradient method performs in the case of \emph{inexact} first-order information. We refer the interested reader to \cite{ref:Devolver-13} for further details.
		\end{Rem}
		
		The a priori bound proposed by Theorem~\ref{thm:semi-fin:smooth} involves the positive constants $c, C$, which are used to introduce an upper bound for the proxy-term. These constants potentially depend on $\ynb$, the size of the dual feasible set, hence also on $\xnb$. Therefore, unlike the randomized approach in Section~\ref{sec:semi-fin:rand}, it is not immediately clear how $\xnb$ can be chosen to minimize the complexity of the proposed method, which in this case is the required number of iterations $k$ suggested in \eqref{eta-k} (cf. Remark~\ref{rem:theta}). In Section~\ref{subsec:smooth:MDP}, we shall discuss how to address this issue in the MDP setting for particular constants $c,C$.

		\section{Full infinite to finite programs} 
		\label{sec:inf-fin}
		
		The intention in this short section is to combine the two-step process from infinite to semi-infinite programs in Section~\ref{subsec:semi} and from semi-infinite to finite programs in Section~\ref{sec:semi-fin:rand} and \ref{sec:semi-fin:smoothing}, and hence establish a link from the original infinite program to finite counterparts. We only present the final result for the general infinite programs without discussing its implication in the MDP setting, as it is essentially a similar assertion. 
		
		\begin{Thm}[Infinite to finite approximation error]
			\label{thm:inf-fin}
			Consider the infinite program \ref{primal-inf} with a solution $\{x\opt,\Jp\}$, the finite (random) convex program \ref{primal-n,N} with the (random) solution $\big\{\alpha\opt_N,\JpnN\big\}$, and the output of Algorithm~\ref{alg} with values $\big\{\Jnlb, \Jnub\big\}$. Suppose Assumption~\ref{a:reg} holds and assume further that there exist constants $d, D$ so that the projection residual of the optimizer $x\opt$ onto the finite dimensional ball defined in Theorem~\ref{thm:inf-semi} is bounded by $\|r_n\| \le D n^{-1/d}$ for all $n\in\N$. Then, for any number of scenario samples $N$ and prox-term coefficient $\eta$, with probability $1-\beta$ we have 
			\begin{align*}
			\max\big\{\JpnN,\Jnlb\big\} - D\big(\|c\|_* + \ynb\|\op\|\big)n^{-1/d} \le \Jp \le \min\big\{\Jnub ~,~\JpnN+ \ynb h(\alpha\opt_N,\eps) \big\}.
			\end{align*}
			where $\ynb$ is as defined in \eqref{yb} and the function $h$ is a TB in the sense of Definition~\ref{def:tail}. Moreover, given an a priori precision level $\eps$, if 
			$$n \ge \Big(D\big(\|c\|_* + \ynb\|\op\|\big)\eps^{-1}\Big)^{d},$$
			and the number samples $N$ are chosen as in \eqref{NN} {\em or} the parameter $\eta$ together with the number of iterations of Algorithm~\ref{alg} is chosen as in \eqref{eta-k}, then with probability $1-\beta$ we have 
			\begin{align*}
			\min\Big\{|\Jp - \JpnN|, |\Jp - \Jnlb| \Big\} \le \eps \,.
			\end{align*}
		\end{Thm}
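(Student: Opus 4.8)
The plan is to treat this as a lossless telescoping argument that glues together the three per-step estimates already in hand: Theorem~\ref{thm:inf-semi} for the infinite-to-semi-infinite step, Theorem~\ref{thm:semi-fin:rand} for the semi-infinite-to-scenario step, and Theorem~\ref{thm:semi-fin:smooth} for the semi-infinite-to-regularized step. The common pivot is the semi-infinite value $\Jpn$, and the structural fact that keeps the chaining from doubling the error is that all three approximating quantities lie on the \emph{same side} of $\Jpn$: by construction $\Jp \le \Jpn$ (inner approximation, Section~\ref{subsec:semi}), $\JpnN \le \Jpn$ (scenario relaxation), and $\Jnlb \le \Jpn \le \Jnub$ (the primal/dual values returned by Algorithm~\ref{alg}, via Theorem~\ref{thm:semi-fin:smooth}).

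First I would record the semi-infinite estimate. Under Assumption~\ref{a:reg} and the residual decay $\|r_n\| \le D n^{-1/d}$, the bound \eqref{inf-semi error} of Theorem~\ref{thm:inf-semi} gives the deterministic chain
\begin{align*}
0 \le \Jpn - \Jp \le \big(\|c\|_* + \ynb\|\op\|\big)\|r_n\| \le D\big(\|c\|_* + \ynb\|\op\|\big)\,n^{-1/d};
\end{align*}
abbreviate the rightmost expression by $e_n$. From $\JpnN \le \Jpn \le \Jp + e_n$ I obtain $\JpnN - e_n \le \Jp$, and from $\Jnlb \le \Jpn \le \Jp + e_n$ I obtain $\Jnlb - e_n \le \Jp$; taking the maximum yields the left-hand inequality of the first display, and both are deterministic. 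For the right-hand side, $\Jp \le \Jpn \le \Jnub$ is deterministic, whereas the a~posteriori bound \eqref{eq:thm:semi-fin:rand:1} of Theorem~\ref{thm:semi-fin:rand} supplies, with confidence at least $1-\beta$, the estimate $\Jpn - \JpnN \le \ynb\, h(\alpha\opt_N,\eps)$, hence $\Jp \le \JpnN + \ynb h(\alpha\opt_N,\eps)$. Intersecting the two upper bounds gives the right-hand side, and since the scenario estimate is the only probabilistic ingredient, the full sandwich holds with probability $1-\beta$.

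For the quantitative claim I would first calibrate $n$: the hypothesis $n \ge \big(D(\|c\|_* + \ynb\|\op\|)\eps^{-1}\big)^d$ is precisely $e_n \le \eps$, so that $\Jpn - \eps \le \Jp \le \Jpn$. Splitting on the chosen second step, if $N$ is taken as in \eqref{NN} then \eqref{eq:thm:semi-fin:rand:2} yields $0 \le \Jpn - \JpnN \le \eps$ with probability $1-\beta$; as both $\Jp$ and $\JpnN$ then lie in $[\Jpn-\eps,\Jpn]$, we get $|\Jp - \JpnN| \le \eps$. If instead $\eta$ and the iteration count are chosen as in \eqref{eta-k}, Theorem~\ref{thm:semi-fin:smooth} gives $\Jnub - \Jnlb \le \eps$, and since $\Jnlb \le \Jpn \le \Jnub$ we have $0 \le \Jpn - \Jnlb \le \eps$; again $\Jp$ and $\Jnlb$ both lie in $[\Jpn-\eps,\Jpn]$, so $|\Jp - \Jnlb| \le \eps$. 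In either case the minimum of the two errors is at most $\eps$.

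The argument is mostly bookkeeping, so the one point I would be most careful about is exploiting the one-sidedness to avoid a factor-of-two loss: a naive triangle inequality would combine $e_n \le \eps$ with a second-step error $\le \eps$ into a bound of $2\eps$, whereas confining $\Jp$, $\JpnN$ and $\Jnlb$ to the single length-$\eps$ interval $[\Jpn-\eps,\Jpn]$ below the common reference $\Jpn$ keeps the total at $\eps$. A secondary point to state cleanly is the bookkeeping of confidence: every inequality drawn from Theorem~\ref{thm:inf-semi} and Theorem~\ref{thm:semi-fin:smooth} is deterministic, and only the scenario bound of Theorem~\ref{thm:semi-fin:rand} carries the $1-\beta$ confidence (and implicitly the requirement $N \ge \NN(n,\eps,\beta)$), so the level $1-\beta$ is inherited cleanly and attaches only to the statements involving $\JpnN$.
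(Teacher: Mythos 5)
Your proposal is correct and follows essentially the same route as the paper, whose proof is exactly the one-sentence chaining of Theorem~\ref{thm:inf-semi} with Theorems~\ref{thm:semi-fin:rand} and \ref{thm:semi-fin:smooth} through the common pivot $\Jpn$. Your write-up merely makes explicit what the paper leaves implicit — in particular the one-sidedness observation that $\Jp$, $\JpnN$ and $\Jnlb$ all lie in the single interval $[\Jpn-\eps,\Jpn]$, which is precisely why the final bound is $\eps$ rather than $2\eps$, and the bookkeeping that only the scenario step carries the $1-\beta$ confidence.
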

		
		The proof follows readily from the link between the infinite program~\ref{primal-inf} to the semi-infinite counterpart~\ref{primal-n} in Theorem~\ref{thm:inf-semi}, in conjunction with the link between \ref{primal-n} to the finite programs~\ref{primal-n,N} and \ref{dual-n-eta} in Theorems~\ref{thm:semi-fin:rand} and \ref{thm:semi-fin:smooth}, respectively.




\chapter{Approximation of MDPs} \label{chap:approx:MDP}

In this chapter we apply and simplify the approximation scheme developed in the previous chapter to the class of infinite-dimensional LPs that arise from discrete-time MDP as introduced in Section~\ref{section:MDP:LP}, which is the content of Sections~\ref{subsec:semi:MDP}, \ref{subsec:rand:MDP} and \ref{subsec:smooth:MDP}. The applicability of the theoretical results is demonstrated through a constrained linear quadratic optimal control problem and a fisheries management problem in Section~\ref{sec:sim}. The approximation method is generalized to the case where the transition kernel of the Markov process is unknown and has to be inferred from data, in Section~\ref{sec:RL}.

\section{Introduction} \label{subsec:approxMDP:intro}

The literature on control of MDP with uncountable state or action spaces mostly concentrates on approximation schemes with asymptotic performance guarantees \cite{ref:Hernandez-99, ref:Hernandez-98}, see also the comprehensive book \cite{ref:Kush-03} for controlled stochastic differential equations and \cite{ref:PrandiniHu-2007,ref:MohChatLyg-15} for reachability problems in a similar setting. From a practical viewpoint, a challenge using these schemes is that the convergence analysis is not constructive and does not lead to explicit error bounds. 
	A wealth of approximation schemes have been proposed in the literature under the names of approximate dynamic programming \cite{ref:Bert-75}, neuro-dynamic programming \cite{ref:Bertsekas-96}, reinforcement learning \cite{ref:Konda-03,ref:TsitVan-97}, and value and/or policy iteration \cite{ref:Bertsekas-12}. Most, however, deal with discrete (finite or at most countable) state and action spaces, while approximation over uncountable spaces remains largely unexplored. 

In addition, in many realistic applications the underlying dynamics are unknown and the decision maker needs to learn the optimal policy by trial-and-error, through interaction with the environment. In such a setting of unknown dynamics, the problem is particularly difficult and a prevalent approach in the existing literature consists of dynamic-programming-based reinforcement learning methods, also known as neuro-dynamic programming \cite{ref:Bertsekas-96, ref:Sutton-98,ref:Powell-07}. The two most common types of such reinforcement learning algorithms are $Q$-learning and actor-critic algorithms. $Q$-learning algorithms~\cite{ref:Watkins} are simulation-based schemes derived from value iteration, while actor-critic methods~\cite{ref:Konda-03} are simulation-based, two-time scale variants of policy iteration. $Q$-learning comes with asymptotic convergence guarantees but it is mostly considered in the case that state and action spaces are both discrete. On the other hand, while actor-critic algorithms can tackle continuous action and state spaces, since they are gradient-based, one can prove convergence to a local optimum.

The approach presented in this chapter is a particular instance of the approximation scheme developed in Chapter~\ref{chap:approx:LP}, i.e., by means of an infinite LP characterization that covers both long-run discounted and average cost performance criteria in the optimal control of MDP. The resulting approximation is based on finite convex programs that are different from the existing schemes. Closest in spirit to our proposed approximation is the linear programming approach based on constraint sampling in \cite{ref:FarVan-03,ref:FarVan-04,ref:sutter-CDC-14}. Unlike these works, however, we introduce an additional norm constraint that effectively acts as a \emph{regularizer}. We study in detail the conditions under which this regularizer can be exploited to bound the optimizers of the primal and dual programs, and hence provide an explicit approximation error for the proposed solution.

The structure of Sections~\ref{subsec:semi:MDP}, \ref{subsec:rand:MDP} and \ref{subsec:smooth:MDP} is illustrated in Figure~\ref{fig:overview:MDP} (that is the MDP simplification of Figure~\ref{fig:overview}), where the contributions are that the approximation results derived in the previous chapter are applied to the MDP setting with long-run average and discounted cost. Invoking the particular MDP structure of the underlying infinite LP several objects that were presented on a rather abstract level in Chapter~\ref{chap:approx:LP} can be simplified to explicit expressions, such as the tail bound function, presented in Defintion~\ref{def:tail} or the prox-function introduced in program~\ref{dual-n-eta}, that is chosen as the relative entropy.

Section~\ref{sec:RL} is an extension of the approximation scheme to the case where the transition kernel is is unknown but information is obtained by simulation. More specifically, in response to the current state and action, data about the next state is received. We derive a probabilistic explicit error bound between the data-driven finite convex program and the original infinite LP and discuss the sample complexity of the error bound, i.e., how many data are required for a certain approximation accuracy.

	\begin{figure}[t!]
		\centering
		\scalebox{.75}{
\def \sca{1.2}

\def \xb{3.9*\sca}
\def \yb{1.3*\sca}

\def \r{6.5*\sca}

\def \y{3.3*\sca} 
\def \yy{2*\sca} 

\def \la{0.2*\sca}

\def \di{0.2*\sca}

\def \backx{-0.1*\sca}
\def \backy{-1.4*\sca}
\def \back{0.2*\sca}

\def \buf{0.35*\sca}

\begin{tikzpicture}[scale=1,auto, node distance=1cm,>=latex']

\shade[rounded corners,bottom color = darkgreen!50, top color = darkgreen!25] (\r-\buf,\yb+\buf)--(\r+\r+\xb+\buf,\yb+\buf)--(\r+\r+\xb+\buf,-0.5*\y+3.6*\buf)--(\r-\buf,-0.5*\y+3.6*\buf)--cycle;

\shade[rounded corners,bottom color = violet!50, top color = violet!25] (\r-\buf,-0.6*\y+\buf)--(\r+\r+\xb+\buf,-0.6*\y+\buf)--(\r+\r+\xb+\buf,-1.0*\y-\buf)--(\r-\buf,-1.0*\y-\buf)--cycle;

\shade[rounded corners,bottom color = blue!50, top color = blue!25] (\r-\buf,-1.6*\y+\buf)--(\r+\r+\xb+\buf,-1.6*\y+\buf)--(\r+\r+\xb+\buf,-2*\y-\buf)--(\r-\buf,-2*\y-\buf)--cycle;


\shade[rounded corners,bottom color = gray!20, top color = gray!10] (0,0)--(\xb,0)--(\xb,\yb)--(0,\yb)--cycle;
\node at (0.5*\xb,0.7*\yb) {discrete time};
\node at (0.5*\xb,0.3*\yb) {MDP};
\shade[rounded corners,bottom color = gray!20, top color = gray!10] (\r,0)--(\r+\xb,0)--(\r+\xb,\yb)--(\r,\yb)--cycle;
\node at (\r+0.5*\xb,0.7*\yb) {infinite LP};
\node at (\r+0.5*\xb,0.3*\yb) {$\eqref{AC-LP}$: \ $\Jac$};
\shade[rounded corners,bottom color = gray!20, top color = gray!10] (\r,-\y)--(\r+\xb,-\y)--(\r+\xb,\yb-\y)--(\r,\yb-\y)--cycle;
\node at (\r+0.5*\xb,0.7*\yb-\y) {robust program};
\node at (\r+0.5*\xb,0.3*\yb-\y) {$\eqref{AC-LP-n}$: \ $\Jacn$};
\shade[rounded corners,bottom color = gray!20, top color = gray!10] (\r,-\y-\y)--(\r+\xb,-\y-\y)--(\r+\xb,\yb-\y-\y)--(\r,\yb-\y-\y)--cycle;
\node at (\r+0.5*\xb,0.7*\yb-\y-\y) {scenario program};
\node at (\r+0.5*\xb,0.3*\yb-\y-\y) {$\eqref{AC-LP-n,N}$: \ $\JacnN$};

\shade[rounded corners,bottom color = gray!20, top color = gray!10] (\r+\r,-\y)--(\r+\r+\xb,-\y)--(\r+\r+\xb,\yb-\y)--(\r+\r,\yb-\y)--cycle;
\node at (\r+\r+0.5*\xb,0.7*\yb-\y) {semi-infinite program};
\node at (\r+\r+0.5*\xb,0.3*\yb-\y) {$\eqref{dual-n}$: \ $\Jdn^{\text{AC}}$};

\shade[rounded corners,bottom color = gray!20, top color = gray!10] (\r+\r,-\y-\y)--(\r+\r+\xb,-\y-\y)--(\r+\r+\xb,\yb-\y-\y)--(\r+\r,\yb-\y-\y)--cycle;
\node at (\r+\r+0.5*\xb,0.7*\yb-\y-\y) {regularized program};
\node at (\r+\r+0.5*\xb,0.3*\yb-\y-\y) {$\eqref{dual-n-eta}$: \ $\Jdnr^{\text{AC}}$};

\shade[rounded corners,bottom color = gray!20, top color = gray!10] (\r+0.5*\r,-\y-\y-\y)--(\r+0.5*\r+\xb,-\y-\y-\y)--(\r+0.5*\r+\xb,\yb-\y-\y-\y)--(\r+0.5*\r,\yb-\y-\y-\y)--cycle;
\node at (\r+0.5*\r+0.5*\xb,0.7*\yb-\y-\y-\y) {prior $\&$ posterior error};
\node at (\r+0.5*\r+0.5*\xb,0.3*\yb-\y-\y-\y) {$\JacnN - \Jdnr^{\text{AC}}$};

  \draw[>=latex,<->,line width=1.2mm,gray!25] (\xb,0.5*\yb) -- (\r,0.5*\yb);
    \draw[>=latex,<->,line width=1.2mm,gray!25] (\xb+\r,-\y+0.5*\yb) -- (\r+\r,-\y+0.5*\yb);
  \draw[>=latex,<->,dashed, line width=1.2mm,gray!25] (\r+0.5*\xb,0) -- (\r+0.5*\xb,-\y+\yb);
 \draw[>=latex,<->,dashed, line width=1.2mm,gray!25] (\r+0.5*\xb,-\y+0.25*\la) -- (\r+0.5*\xb,-\y+0.25*\la-\y+\yb);
 \draw[>=latex,<->,dashed, line width=1.2mm,gray!25] (\r+\r+0.5*\xb,-\y+0.25*\la) -- (\r+\r+0.5*\xb,-\y+0.25*\la-\y+\yb);
 
   \draw[>=latex,->,line width=1.2mm,gray!25] (\r+0.5*\xb,-\y-\y+0.2*\di) -- (\r+0.5*\r,\yb-\y-\y-\y);
   \draw[>=latex,->, line width=1.2mm,gray!25] (\r+\r+0.5*\xb,-\y-\y+0.2*\di) -- (\r+0.5*\r+\xb,\yb-\y-\y-\y);
 


  \shade[bottom color = darkgreen!50, top color = darkgreen!25] (0,-2.4*\y+1*\y)--(\di,-2.4*\y+1*\y)--(\di,-2.4*\y-\di+1*\y)--(0,-2.4*\y-\di+1*\y)--cycle;
  \node[] at (0.35*\r,-2.4*\y-0.5*\di+1*\y) {\  infinite program};

  \shade[bottom color = violet!50, top color = violet!25] (0,-2.7*\y+1*\y)--(\di,-2.7*\y+1*\y)--(\di,-2.7*\y-\di+1*\y)--(0,-2.7*\y-\di+1*\y)--cycle;
  \node[] at (0.35*\r,-2.7*\y-0.5*\di+1*\y) {\ \ \ \ \ \ \ \ \ semi-infinite programs};

  \shade[bottom color = blue!50, top color = blue!25] (0,-3.0*\y+1*\y)--(\di,-3.0*\y+1*\y)--(\di,-3.0*\y-\di+1*\y)--(0,-3.0*\y-\di+1*\y)--cycle;
  \node[] at (0.35*\r,-3.0*\y-0.5*\di+1*\y) {finite programs};


 \node[] at (0.5*\r+0.5*\xb+\r,-0.95*\y+0.5*\yb+1.3*\buf) {Proposition~$\ref{prop:SD}$};
 \node[] at (0.5*\r+0.5*\xb+\r,-\y+0.5*\yb -1.3*\buf) {}; 
 \node[] at (0.5*\r+0.5*\xb+\r,-1.05*\y+0.5*\yb -1.3*\buf) {strong duality};   
 \node[] at (0.5*\xb+0.5*\r,0.5*\yb+1.3*\buf) {equivalent};
  \node[] at (0.5*\xb+0.5*\r,-0.3*\yb+1.3*\buf) {Theorem~$\ref{thm:equivalent:LP}$};
 \node[] at (0.19*\r+0.5*\xb+\r,-0.5*\y+0.5*\yb-0.0*\buf) {Corollary~$\ref{cor:MDP:inf-semi}$};
 \node[] at (0.19*\r+0.5*\xb+\r,-1.5*\y+0.5*\yb-0.0*\buf) {Corollary~$\ref{cor:adp:semi-finite}$};  
 \node[] at (1.19*\r+0.5*\xb+\r,-1.5*\y+0.5*\yb-0.0*\buf) {Corollary~$\ref{cor:adp:smooth}$};     
  \node[] at (0.5*\r+0.5*\xb+\r,-2.6*\y+0.5*\yb -1.3*\buf) {Theorem~\ref{thm:inf-fin}};  
\end{tikzpicture}}
		\caption{Graphical representation of the chapter structure and its contributions}
		\label{fig:overview:MDP}
	\end{figure}
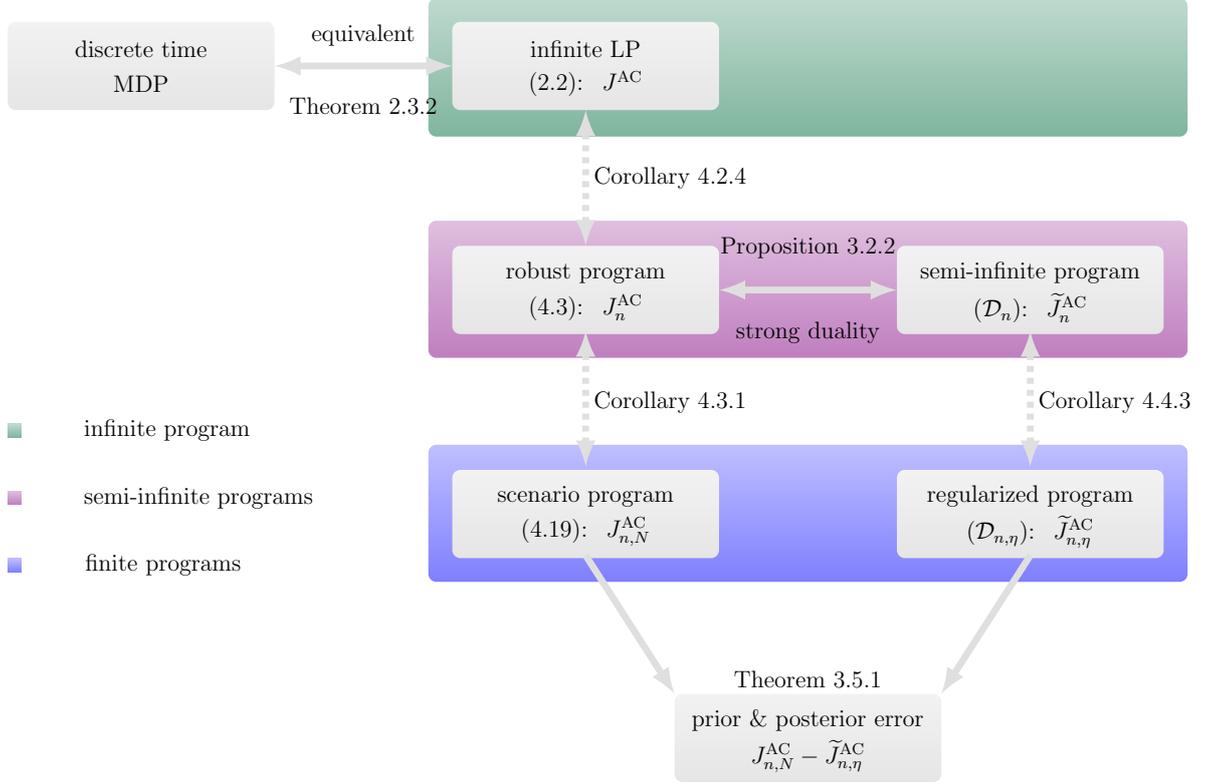

		\section{Semi-infinite results in the MDP setting}
		\label{subsec:semi:MDP}
		We now return to the MDP setting introduced in Section~\ref{section:MDP:LP}, and in particular the AC problem \eqref{AC-LP}, to investigate the application of the approximation scheme presented in Chapter~\ref{chap:approx:LP}. Recall that the AC problem \eqref{AC} can be recast in an LP framework in the form of \ref{primal-inf}, see \eqref{AC-setting}. To complete this transition to the dual pairs, we introduce the spaces 
		\begin{align} 
		\label{AC:pairs}
		\left\{\begin{array}{ll}
		\X = \R\times \Lip(S), & \C=\R\times\Meas(S), \\
		\B=\Lip(K), & \Y = \Meas(K), \\
		\cone=\Lip_+(K), & \cone^{*}=\Meas_+(K).
		\end{array}\right.
		\end{align}
		The bilinear form between each pair $(\X, \C)$ and $(\B,\Y)$ is defined in an obvious way (cf.\ \eqref{AC-setting}). The linear operator $\op:\X \ra \B$ is defined as $\op(\rho,u)(s,a) \Let -\rho - u(s) + Qu(s,a)$, and it can be shown to be weakly continuous \cite[p.~220]{ref:Hernandez-99}. On the pair $(\X, \C)$ we consider the norms
		\begin{subequations}
			\label{norm}
			\begin{align}
			\label{norm-1}
			\left\{
			\begin{array}{l}
			\|x\| = \|(\rho, u)\| := \max\big\{|\rho|, \|u\|_\lip \} = \max\big\{|\rho|, \|u\|_\infty, \sup_{s,s' \in S}{u(s) - u(s') \over \|s - s'\|_{\ell_\infty}}   \big\}, \vspace{2mm}\\
			\|c\|_* \Let \sup_{\|x\| \le 1} \inner{x}{c} =|c_1| + \sup_{\|u\|_\lip \le 1} \int_S u(s) c_2(\diff s) = |c_1| + \|c_2\|_{\wass}. 
			\end{array} \right. 
			\end{align}
			Recall that $\|\cdot\|_\lip$ is the Lipschitz norm on $\Lip(S)$ whose dual norm $\|\cdot\|_\wass$ in $\Meas(S)$ is known as the Wasserstein norm \cite[p.~105]{ref:Villani}. The adjoint operator $\op^{*}:\Y\to\C$ is given by $\op^{*}y(\cdot) \Let \big(-\inner{\ind}{y},-y(\cdot\times A)+y Q(\cdot)\big)$, where $\ind$ is the constant function in $\Lip(S)$ with value 1. In the second pair $(\B,\Y)$, we consider the norms
			\begin{align}
			\label{norm-2}
			\left\{
			\begin{array}{l}
			\|b\| = \|b\|_\lip \Let \max\big\{\|b\|_\infty, \sup_{k,k' \in K}{b(k) - b(k') \over \|k - k'\|_{\ell_\infty}}   \big\}, \vspace{2mm} \\
			\|y\|_* \Let \sup_{\|b\|_\lip \le 1} \inner{b}{y} = \|y\|_{\wass}. 
			\end{array} \right. 
			\end{align}
		\end{subequations}
		A commonly used norm on the set of measures is the total variation whose dual (variational) characterization is associated with $\|\cdot\|_\infty$ in the space of continuous functions \cite[p.~2]{ref:Hernandez-99}. We note that in the positive cone $\cone^* = \Meas_+(K)$ the total variation and Wasserstein norms indeed coincide. 
		
		Following the construction in \ref{primal-n}, we consider a collection of $n$-linearly independent, normalized functions $\{u_i\}_{i \le n}$, $\|u_i\|_\lip = 1$, and define the semi-infinite approximation of the AC problem \eqref{AC-LP} by
		\begin{align}
		\label{AC-LP-n} 
		-\Jacn = & \left\{ \begin{array}{ll}
		\inf\limits_{(\rho, \alpha)\in\R\times\R^n} & -\rho   \\
		\subjectto &\rho + \sum\limits_{i=1}^{n} \alpha_i\big(u_i(s) - Qu_i(s,a)\big) \leq \cost(s,a), \quad \forall (s,a)\in K \\
		&  \| \alpha \|_\Rnorm \le \xnb 
		\end{array} \right. 
		\end{align}
		Comparing with the program \ref{primal-n}, we note that the finite dimensional subspace $\X_n \subset \R \times \Lip(S)$ is the subspace spanned by the basis elements $x_0 = (1,0)$ and $x_i = (0,u_i)$ for all $i \in \{1,\cdots,n\}$, i.e., the subspace $\X_n$ is in fact $n+1$ dimensional.  Moreover, the norm constraint in \eqref{AC-LP-n} is only imposed on the second coordinate of the decision variables $(\rho,\alpha)$ (i.e., $\|\alpha\|_\Rnorm \le \xnb$). The following lemmas address the operator norm and the respective regularity requirements of Assumption~\ref{a:reg} for the program \eqref{AC-LP-n}.

\begin{Lem}[MDP operator norm] \label{lem:operator}
	In the AC problem \eqref{AC-LP} under Assumption~\ref{a:CM}\ref{a:CM:Q} with the specific norms defined in \eqref{norm}, the linear operator norm satisfies $\|I - Q\| \Let \sup_{\|u\|_\lip \le 1} \|u - Qu\|_\lip \le 1 + \max\{L_Q,1\}$.
\end{Lem}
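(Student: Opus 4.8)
The plan is to unwind the definition of the Lipschitz norm $\|\cdot\|_\lip$ on $\Lip(K)$ as the maximum of a sup-norm part and a Lipschitz-constant part, bound each of the two parts separately for the function $(s,a) \mapsto u(s) - Qu(s,a)$ under the normalization $\|u\|_\lip \le 1$, and then take the maximum and finally the supremum over the unit ball. The only substantive input is Assumption~\ref{a:CM}\ref{a:CM:Q}; everything else is the triangle inequality together with the fact that $Q(\cdot|s,a)$ is a probability measure.

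First I would record the elementary facts. For any $v \in \Lip(K)$ we have $\|v\|_\lip = \max\big\{\|v\|_\infty, \mathrm{Lip}(v)\big\}$ with $\mathrm{Lip}(v) \Let \sup_{k\neq k'} |v(k)-v(k')|/\|k-k'\|_{\ell_\infty}$, so $\|u\|_\lip \le 1$ forces both $\|u\|_\infty \le 1$ and $\mathrm{Lip}(u) \le 1$, where $u$ is viewed as a function on $K$ that ignores the action coordinate. Moreover, since $Qu(s,a) = \int_S u(s')\,Q(\diff s'|s,a)$ and $Q(\cdot|s,a)$ is a probability measure, Jensen's inequality gives $\|Qu\|_\infty \le \|u\|_\infty \le 1$. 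This already handles the sup-norm part: $\|u - Qu\|_\infty \le \|u\|_\infty + \|Qu\|_\infty \le 2$, which is dominated by $1 + \max\{L_Q,1\}$.

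For the Lipschitz-constant part, the triangle inequality yields $\mathrm{Lip}(u - Qu) \le \mathrm{Lip}(u) + \mathrm{Lip}(Qu) \le 1 + \mathrm{Lip}(Qu)$, and here Assumption~\ref{a:CM}\ref{a:CM:Q} supplies the bound $|Qu(k) - Qu(k')| \le L_Q \|u\|_\infty \|k-k'\|_{\ell_\infty} \le L_Q \|k-k'\|_{\ell_\infty}$, i.e. $\mathrm{Lip}(Qu) \le L_Q$. Hence $\mathrm{Lip}(u - Qu) \le 1 + L_Q$. Combining the two parts, $\|u - Qu\|_\lip = \max\big\{\|u-Qu\|_\infty, \mathrm{Lip}(u-Qu)\big\} \le \max\{2,\, 1+L_Q\} = 1 + \max\{L_Q,1\}$, and taking the supremum over $\|u\|_\lip \le 1$ gives the assertion.

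The step demanding the most care — though it is not deep — is keeping the two coordinates of $K = S\times A$ straight when estimating $\mathrm{Lip}(u)$ as a function on $K$: since $u$ depends on $s$ alone, the increment $|u(s)-u(s')|$ at two points $(s,a),(s',a')\in K$ must be bounded by $\mathrm{Lip}_S(u)\,\|s-s'\|_{\ell_\infty} \le \|k-k'\|_{\ell_\infty}$, using that the $\ell_\infty$ distance on $K$ dominates the $\ell_\infty$ distance on the $S$-component. Once this identification is made, the whole argument reduces to two triangle inequalities and the single quantitative hypothesis on $Q$.
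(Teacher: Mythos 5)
Your proposal is correct and takes essentially the same route as the paper's proof: both arguments reduce to the triangle inequality combined with the two facts that $\|Qu\|_\infty \le \|u\|_\infty$ (since $Q$ is a stochastic kernel) and $\mathrm{Lip}(Qu) \le L_Q \|u\|_\infty$ (Assumption~\ref{a:CM}\ref{a:CM:Q}). The only difference is bookkeeping order — you split $\|u-Qu\|_\lip$ into its sup-norm and Lipschitz-constant components before applying the triangle inequality, whereas the paper applies the triangle inequality at the $\lip$-norm level and then bounds $\|Qu\|_\lip \le \max\{L_Q,1\}\|u\|_\infty$ — and the resulting bounds $\max\{2,\,1+L_Q\} = 1+\max\{L_Q,1\}$ coincide.
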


\begin{proof}
	Using the triangle inequality it is straightforward to see that
	\begin{align*}
	\| I-Q \| 	&= 		\sup\limits_{u\in\Lip(S)} \frac{\| u - Qu \|_\lip}{\|u\|_\lip } \le 1 + \sup\limits_{u\in \Lip(S)} {\| Qu \|_\lip \over \| u \|_\lip} \le 1 + \sup\limits_{u\in \Lip(S)} {\| Qu \|_\lip \over \| u \|_\infty}\\ 
	& \le 1 + \max\Big\{L_Q, \sup\limits_{u\in \Lip(S)} {\| Qu \|_\infty \over \| u \|_\infty} \Big\} \le 1 + \max\{L_Q,1\},
	\end{align*}
	where the second line is an immediate consequence of Assumption~\ref{a:CM}\ref{a:CM:Q} and the fact that the operator $Q$ is a stochastic kernel. Hence, $|Qu(s,a)|=| \int_{S} u(y) Q(\drv y|s,a)| \leq \| u \|_\infty (\int_{S} Q(\drv y|s,a)) = \| u \|_\infty$.
\end{proof}

\begin{Lem}[MDP semi-infinite regularity]\label{lem:MDP:bd-dual}
	Consider the AC program \eqref{AC-LP} under Assumption \ref{a:CM}. Then, Assumption~\ref{a:reg} holds for the semi-infinite counterpart in \eqref{AC-LP-n} for any positive $\xnb$ and all sufficiently large $\gamma$. In particular, the dual optimizer bound in Proposition~\ref{prop:SD} simplifies to $\|\yn\|_\wass \le \ynb = 1$.
\end{Lem}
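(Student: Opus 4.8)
The plan is to verify the two clauses of Assumption~\ref{a:reg} for \eqref{AC-LP-n} and then read off the dual bound $\ynb=1$ directly from the structure of the MDP dual. Everything rests on one elementary identity: on the positive cone $\cone^*=\Meas_+(K)$ the Wasserstein norm equals the total mass. Indeed, for $y\in\Meas_+(K)$ any test function with $\|b\|_\lip\le 1$ satisfies $\|b\|_\infty\le 1$, so $\inner{b}{y}\le y(K)$, with equality at $b=\ind$ (note $\|\ind\|_\lip=1$); hence $\|y\|_*=\|y\|_\wass=y(K)$. I would establish this first, as both the inf-sup clause and the final bound depend on it.

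Feasibility (Assumption~\ref{a:reg}\ref{a:reg:feas}) is immediate: taking $\alpha=0$, which obeys $\|0\|_\Rnorm=0\le\xnb$ for every $\xnb>0$, the constraint in \eqref{AC-LP-n} collapses to $\rho\le\cost(s,a)$ for all $(s,a)\in K$. Since $\cost$ is non-negative and continuous on the compact set $K$ (Assumption~\ref{a:CM}\ref{a:CM:cost}), it attains a finite minimum $\min_K\cost\ge 0$, so any $\rho\le\min_K\cost$ — for instance $\rho=0$ — makes $(\rho,0)$ feasible. Thus \ref{primal-n} is feasible for all positive $\xnb$.

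The inf-sup clause is where the unconstrained variable $\rho$, the coefficient of the basis element $x_0=(1,0)$, is decisive, and this is the step I expect to be the crux. Evaluating the adjoint on $x_0$ gives $\inner{\op x_0}{y}=\inner{-\ind}{y}=-y(K)=-\|y\|_*$ for $y\in\cone^*$, so the $x_0$-coordinate of $\opn^* y$ already reproduces the full dual norm of $y$. Because $\rho$ carries no norm restriction in \eqref{AC-LP-n}, the natural norm on the $(n+1)$-dimensional subspace $\X_n$ is infinite in the $x_0$-direction, and the associated dual norm $\|\cdot\|_{\Rnorm^*}$ therefore takes the value $+\infty$ on any vector with nonzero $x_0$-component. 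Since $(\opn^* y)_0=-y(K)\neq 0$ for every nonzero $y\in\cone^*$, the lower bound $\|\opn^* y\|_{\Rnorm^*}\ge\gamma\|y\|_*$ holds trivially for \emph{every} $\gamma>0$; this is the sense in which Assumption~\ref{a:reg}\ref{a:reg:inf-sup} holds for all sufficiently large $\gamma$, and in particular $\gamma\xnb>\|b\|=\|\cost\|_\lip$ is satisfiable for any fixed $\xnb>0$.

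To pin down $\ynb=1$ I would dualize \eqref{AC-LP-n} explicitly rather than substitute into the general formula \eqref{yb}. Introducing a multiplier $y\in\Meas_+(K)$ for the conic constraint and swapping $\inf$ and $\sup$ via Sion's theorem (as in the proof of Proposition~\ref{prop:SD}, using compactness of the $\alpha$-ball), the inner minimization over the free variable $\rho$ contributes the term $\inf_{\rho\in\R}\rho\,(y(K)-1)$, which equals $-\infty$ unless $y(K)=1$. Hence every dual-feasible measure is a probability measure, and in particular any optimizer $\yn$ satisfies $\|\yn\|_\wass=\yn(K)=1$, which is the asserted (and in fact attained) bound $\ynb=1$. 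The main obstacle throughout is the careful bookkeeping of the mixed constrained/unconstrained pair $(\rho,\alpha)$ and the degenerate dual norm it induces, together with justifying the minimax interchange over the unbounded variable $\rho$; once the hard constraint $y(K)=1$ has been isolated, the remaining steps are routine.
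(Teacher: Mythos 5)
Your feasibility step, the identity $\|y\|_\wass = y(K)$ on $\Meas_+(K)$, and your route to $\ynb = 1$ are all sound, and the last of these is genuinely different from the paper's proof: the paper obtains $\ynb=1$ by substituting into the general bound \eqref{yb} (computed in a suitable norm) and passing to a limit, whereas you dualize \eqref{AC-LP-n} directly and observe that the unconstrained variable $\rho$ forces every dual-feasible measure to satisfy $\inner{\ind}{y}=y(K)=1$. That observation is exactly the refinement the paper records later (Remark~\ref{rem:ynb:AC} and \eqref{Y} in Section~\ref{subsec:smooth:MDP}), and it even yields the equality $\|\yn\|_\wass=1$ rather than just a bound. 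Note also that the minimax interchange you flag as the main obstacle is not actually needed for this step: finiteness of the Lagrangian dual function already forces $y(K)=1$; Sion's theorem only enters for the zero duality gap, which is Proposition~\ref{prop:SD}'s job.

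The genuine gap is in your verification of Assumption~\ref{a:reg}\ref{a:reg:inf-sup}. The object you use there is not a norm, and your wording is backwards: the Minkowski functional of the feasible set $\R \times \{\|\alpha\|_\Rnorm \le \xnb\}$ is \emph{zero} in the $x_0$-direction (a seminorm), not infinite; it is the \emph{dual} functional that equals $+\infty$ on vectors with nonzero first component. Assumption~\ref{a:reg} and Proposition~\ref{prop:SD} presuppose an actual norm: the bound \eqref{yb} contains $\|\cnew\|_{\Rnorm^*}$, which under your convention is $+\infty$ (since $\cnew=[-1,0,\dots,0]$ has a nonzero $x_0$-component), so \eqref{yb} becomes meaningless; and the proof of Proposition~\ref{prop:SD} invokes Sion's theorem through compactness of the primal feasible set, which fails when $\rho$ is unconstrained. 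Hence ``holds trivially for every $\gamma$'' is vacuous rather than a verification. The paper closes exactly this gap with a regularization you are missing: since the optimal value of \eqref{AC-LP-n} is bounded (Lemma~\ref{lem:operator} together with $\|\cost\|_\infty<\infty$), a constraint $|\rho|\le\omega^{-1}\xnb$ is redundant for any small $\omega>0$; then $\|(\rho,\alpha)\|_\omega \Let \max\{\omega|\rho|,\|\alpha\|_\Rnorm\}$ is a genuine norm under which \eqref{AC-LP-n} fits the framework of \ref{primal-n}, the inf-sup condition holds with $\gamma=\omega^{-1}$ (whence ``all sufficiently large $\gamma$''), $\|\cnew\|_{\omega^*}=\omega^{-1}$ is finite, and \eqref{yb} evaluates to $\big(\xnb\omega^{-1}+\|\cost\|_\infty\big)/\big(\omega^{-1}\xnb-\|\cost\|_\lip\big)\to 1$ as $\omega\to 0$. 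With this device your argument becomes rigorous; without it, the inf-sup clause and the applicability of Proposition~\ref{prop:SD} are unsupported.
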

		
\begin{proof}
Since $K$ is compact, for any nonnegative $\xnb$, the program \eqref{AC-LP-n} is feasible and the optimal value is bounded; recall that $\|(Q-I)u_i\|_\lip \le 1+\max\{L_Q,1\}$ from Lemma~\ref{lem:operator} and $\|\cost\|_\infty < \infty$ thanks to Assumption~\ref{a:CM}\ref{a:CM:cost}. Hence, the optimal value of \eqref{AC-LP-n} is bounded and, without loss of generality, one can add a redundant constraint $|\rho| \le \omega^{-1}\xnb$, where $\omega$ is a sufficiently small positive constant. In this view, the last constraint $\|\alpha\|_\Rnorm \le \xnb$ may be replaced with 
\begin{align}
\label{norm-AC}
	\|(\rho,\alpha)\|_{\omega} \Let \max\{\omega|\rho|,\|\alpha\|_\Rnorm\} \le \xnb, 
\end{align}
where $\|\cdot\|_\omega$ can be cast as the norm on the pair $(\rho,\alpha) \in \R\times\R^{n+1}$. Using the $\omega$-norm as defined in \eqref{norm-AC}, we can now directly translate the program~\eqref{AC-LP-n} into the semi-infinite framework of \ref{primal-n}. As mentioned above, the feasibility requirement in  Assumption~\ref{a:reg}\ref{a:reg:feas} immediately holds. In addition, observe that for every $y\in\cone^*$ we have 
\begin{align*}
	\|\opn^* y\|_{\omega^*} &= \sup_{\|(\rho,\alpha)\|_{\omega} \le 1} (\rho,\alpha) \transp \big[-\inner{\ind}{y}, \inner{Qu_1-u_1}{y}, \cdots,\inner{Qu_n-u_n}{y}\big]\\	
	&= \sup_{\omega |\rho| \le 1}  - \rho\inner{\ind}{y} + \sup_{\|\alpha\|_\Rnorm\le 1} \alpha \transp \big[\inner{Qu_1-u_1}{y}, \cdots,\inner{Qu_n-u_n}{y}\big] \\
	& \ge \omega^{-1}\|y\|_\wass,
\end{align*}
where the third line above follows from the equality $\inner{\ind}{y} = \|y\|_\wass$ for every $y$ in the positive cone $\cone^*$, and the fact that the second term in the second line is nonnegative. Since $\omega$ can be arbitrarily close to 0, the inf-sup requirement Assumption~\ref{a:reg}\ref{a:reg:inf-sup} holds for all sufficiently large $\gamma = \omega^{-1}$. 
The second assertion of the lemma follows from the bound \eqref{yb} in Proposition \ref{prop:SD}. To show this, recall that in the MDP setting $c = (-1,0) \in \R \times \Meas(S)$ (cf. \eqref{AC-setting}) with the respective vector $\cnew = [-1,0,\cdots,0] \in \R\times \R^{n}$ (cf. \ref{primal-n}). Thus, $\|\cnew\|_{\omega^*} = \sup_{\|(\rho,\alpha)\|_{\omega}\le 1} {(\rho,\alpha)} \transp [-1,0,\cdots,0] = \omega^{-1}$, that helps simplifying the bound \eqref{yb} to
\begin{align*}
\|\yn\|_\wass \le \ynb \Let {\xnb\|\cnew\|_{\Rnorm^*} - \Jlb \over \gamma \xnb - \|b\|} = {\xnb \omega^{-1} + \|\cost\|_\infty  \over \omega^{-1} \xnb -\|\cost\|_\lip},  
\end{align*}
which delivers the desired assertion when $\omega$ tends to 0. 
\end{proof}
		
\begin{Rem}[AC dual optimizers bound]
	\label{rem:ynb:AC}
	As opposed to the general LP in Proposition~\ref{prop:SD}, Lemma~\ref{lem:MDP:bd-dual} implies that the dual optimizers for the AC problem is not influenced by the primal norm bound $\xnb$ and is uniformly bounded by $1$. In fact, this result can be strengthened to $\|\yn\|_\wass = 1$ due to the special minimax structure of the AC program~\eqref{AC-LP-n}. This refinement is not needed at this stage and we postpone the discussion to Section~\ref{subsec:smooth:MDP}. The feature discussed in this remark does, however, not hold for the class of long-run discounted cost problems, see Lemma~\ref{lem:MDP:DC} in Section~\ref{sec:discounted:setting}. 
\end{Rem}
	
Now we are in a position to translate Theorem~\ref{thm:inf-semi} to the MDP setting for the AC problem \eqref{AC-LP}.

\begin{Cor}[MDP semi-infinite approximation]
	\label{cor:MDP:inf-semi}
	Let $\Jac$ and $u\opt$ be the optimal value and an optimizer for the AC program~\eqref{AC-LP}, respectively. Consider the semi-infinite program \eqref{AC-LP-n} where $\xnb > \|\cost\|_\lip$, and let $\Uball \Let \{\sum_{i=1}^{n} \alpha_i u_i ~:~ \|\alpha\|_\Rnorm \le \xnb \}$. Then, the optimal value $\Jacn$ of \eqref{AC-LP-n} satisfies the inequality
		\begin{align*}
			0 \le \Jac - \Jacn \le \big(1 + \max\{L_Q, 1\}\big) \big\|u\opt - \proj_{\Uball}(u\opt)\big\|_\lip.
		\end{align*}
\end{Cor}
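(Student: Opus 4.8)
The plan is to specialize Theorem~\ref{thm:inf-semi} to the dual pairs \eqref{AC:pairs} and to exploit the particular structure of the cost vector $c=(-1,0)$ together with the fact that the $\rho$-coordinate is left unconstrained, which is exactly the situation anticipated in Remark~\ref{rem:norm-semi}\ref{rem:norm-semi:theta}. First I would record the identification between \eqref{AC-LP}, \eqref{AC-LP-n} and the abstract programs \ref{primal-inf}, \ref{primal-n} given in \eqref{AC-setting}: with $x=(\rho,u)$ and $c=(-1,0)$ one has $\inner{x}{c}=-\rho$, so that $\Jp=-\Jac$ and $\Jpn=-\Jacn$, whence $\Jpn-\Jp=\Jac-\Jacn$. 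The lower bound $0\le\Jac-\Jacn$ is then immediate, since \eqref{AC-LP-n} is an inner (restricted) approximation of \eqref{AC-LP}.

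For the upper bound I would invoke the sharper first assertion of Theorem~\ref{thm:inf-semi}, namely $\Jpn-\Jp\le\inner{r_n}{\op^*\yn-c}$, and split it as $\inner{r_n}{\op^*\yn-c}=\inner{r_n}{-c}+\inner{\op r_n}{\yn}$. The decisive structural observation is that the residual $r_n$ has a vanishing $\rho$-coordinate. Recall that $\X_n$ is spanned by $x_0=(1,0)$ and $x_i=(0,u_i)$, and that (as in the $\omega$-norm reformulation of Lemma~\ref{lem:MDP:bd-dual}, letting $\omega\to0$) the coefficient of $x_0$ is unconstrained, so $\uball$ becomes the cylinder $\R\times\Uball$. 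Since the norm on $\X$ is $\|(\rho,u)\|=\max\{|\rho|,\|u\|_\lip\}$, the projection of $x\opt=(\rho\opt,u\opt)$ onto this cylinder decouples: one keeps the $\rho$-coordinate equal to $\rho\opt$ at no cost and projects $u\opt$ onto $\Uball$, so one may select $r_n=\big(0,\,u\opt-\proj_{\Uball}(u\opt)\big)$. Because $c=(-1,0)$ carries no measure component, this gives $\inner{r_n}{-c}=0$.

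It then remains to bound the surviving term $\inner{\op r_n}{\yn}$. Writing $\tilde u\Let u\opt-\proj_{\Uball}(u\opt)$ and using $\op(\rho,u)(s,a)=-\rho-u(s)+Qu(s,a)$, I would compute $\op r_n=(Q-I)\tilde u$. Hölder's inequality for the dual pair $\big(\Lip(K),\Meas(K)\big)$ with the norms $\|\cdot\|_\lip$ and $\|\cdot\|_\wass$ in \eqref{norm-2} yields $\inner{\op r_n}{\yn}\le\|(I-Q)\tilde u\|_\lip\,\|\yn\|_\wass$. I would then apply Lemma~\ref{lem:operator} to get $\|(I-Q)\tilde u\|_\lip\le\big(1+\max\{L_Q,1\}\big)\|\tilde u\|_\lip$ and Lemma~\ref{lem:MDP:bd-dual} to get $\|\yn\|_\wass\le\ynb=1$. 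Chaining the inequalities produces exactly $\Jac-\Jacn\le\big(1+\max\{L_Q,1\}\big)\big\|u\opt-\proj_{\Uball}(u\opt)\big\|_\lip$.

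The hard part is the second paragraph: justifying that the residual's $\rho$-component can be taken to be zero, so that the $\|c\|_*$ contribution drops out entirely. This is precisely what lets the MDP bound depend only on $\|I-Q\|$ (through $1+\max\{L_Q,1\}$) rather than on the full operator norm $\|\op\|$ that a blind application of the second assertion of Theorem~\ref{thm:inf-semi} would deliver. The argument rests on $\rho$ occupying an unconstrained direction of $\X_n$ and on the max-norm decoupling the two coordinates, so a minimizer matching $\rho\opt$ exists; one should also note that the projection need not be unique, and it suffices to select any minimizer whose $\rho$-residual vanishes.
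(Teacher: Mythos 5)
Your proposal is correct and follows essentially the same route as the paper's proof: both apply the first (sharper) assertion of Theorem~\ref{thm:inf-semi}, observe that the projection residual has vanishing $\rho$-component so that $\inner{r_n}{c}=0$, and then bound the surviving term $\inner{\op r_n}{\yn}$ via Lemma~\ref{lem:operator} and the dual bound $\ynb=1$ from Lemma~\ref{lem:MDP:bd-dual}. The only difference is that you spell out the cylinder/max-norm decoupling argument that makes the residual's $\rho$-coordinate zero, a step the paper asserts without elaboration.
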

		
		\begin{proof}
			We first note that the existence of the optimizer $u\opt$ is guaranteed under Assumption \ref{a:CM} \cite[Theorem~12.4.2]{ref:Hernandez-99}.
			The proof is a direct application of Theorem~\ref{thm:inf-semi} under the preliminary results in Lemma \ref{lem:MDP:bd-dual} and \ref{lem:operator}. Observe that the projection error is $r_n \Let (\rho\opt,u\opt) - \proj_{\Uball}(\rho\opt,u\opt) = \big(0, u\opt - \proj_{\Uball}(u\opt)\big)$, resulting in $\inner{r_n}{c} = 0$. Thanks to this observation Lemma~\ref{lem:operator}, the assertion of Theorem~\ref{thm:inf-semi} translates to 
			\begin{align*}
			0 \le \Jac - \Jacn & = \Jpn - \Jp \le \inner{r_n}{\op^*\yn - c} = \inner{\op r_n}{\yn} \le \|I-Q\|\, \|r_n\|_\lip \, \|\yn\|_\wass\\
			& \le (1 + \max\{L_Q, 1\}) \|u\opt - \proj_{\Uball}(u\opt)\|_\lip.  \qedhere
			\end{align*}
		\end{proof}
		
		Observe that if from the beginning we consider the norm $\|\cdot\|_\infty$ on the spaces $\X$ and $\B$, it is not difficult to see that the operator norm in Lemma~\ref{lem:operator} simplifies to $2$ (recall that $Q$ is a stochastic kernel). Thus, the semi-infinite bound reduces to $\Jac - \Jacn \le 2 \|u\opt - \proj_{\Uball}(u\opt)\|_\infty$. One may arrive at this particular observation through a more straightforward approach: Using the shorthand notation $(Q-I)u \Let Qu - u$, we have 
		\begin{align*}
		\Jac - \Jacn & \le \min_{k\in K} \Big( \big(Q-I\big)u\opt(k) + \cost(k) \Big) - \min_{k\in K} \Big( \big(Q-I\big)\proj_{\Uball}(u\opt)(k) + \cost(k) \Big)\\
		& \le \max_{k\in K} \big(Q-I\big)\big(u\opt - \proj_{\Uball}(u\opt)\big)(k) \le \big\|\big(Q-I\big) \big(u\opt - \proj_{\Uball}(u\opt)\big)\big\|_\infty \\
		& \le 2\big\|u\opt - \proj_{\Uball}(u\opt)\big\|_\infty.
		\end{align*}
		Theorem~\ref{thm:inf-semi} is a generalization to the above observation in two respects: 
		\begin{itemize}
			\item It holds for a general LP that, unlike the AC problem \eqref{AC-LP}, may not necessarily enjoy a min-max structure.
			\item The result reflects how the bound on the decision space (i.e., $\xnb$ in \ref{primal-n}) influences the dual optimizers as well as the approximation performance in generic normed spaces. 
		\end{itemize}
	The latter feature is of particular interest as the boundedness of the decision space is often an a~priori requirement for optimization algorithms, see for instance \cite{ref:nesterov-book-04} and the results in Section~\ref{sec:semi-fin:smoothing}. The approximation error from the original infinite LP to the semi-infinite version is quantified in terms of the projection residual of the value function. Clearly, this is where the choice of the finite dimensional ball $\Uball$ plays a crucial role. We close this section with a remark on this point. 
		
		\begin{Rem}[Projection residual]\label{rem:projection}
			The residual error $\big\|u\opt - \proj_{\Uball}(u\opt)\big\|_\lip$ can be approximated by leveraging results from the literature on universal function approximation. Prior information about the value function $u\opt$ may offer explicit quantitative bounds. For instance, for MDP under Assumption~\ref{a:CM} we know that $u\opt$ is Lipschitz continuous. For appropriate choice of basis functions, we can therefore ensure a convergence rate of ${n}^{-1/\dim(S)}$ where $\dim(S)$ is the dimension of the state-action set $S$, see for instance \cite{ref:Farouki-12} for polynomials and \cite{ref:Olver-09} for the Fourier basis functions. 
			
		\end{Rem}

		\section{Randomized results in the MDP setting} 
		\label{subsec:rand:MDP}
		We return to the MDP setting and discuss the implication of Theorem~\ref{thm:semi-fin:rand} as the bridge from the semi-infinite program \ref{primal-n} to the finite counterpart \ref{primal-n,N}. Recall the dual pairs of vector spaces setting in \eqref{AC:pairs} with the assigned norms \eqref{norm}. To construct the finite program \ref{primal-n,N}, we need to sample from the set of extreme points of $\Prob(K)$, i.e., the set of point measures
		\begin{align*}
		\Kb \Let \ext\big(\Prob(K)\big) = \big\{ \dir{(s,a)} : (s,a)\in K \big\},
		\end{align*}
		where $\dir{(s,a)}$ denotes a point probability distribution at $(s,a) \in K$. In this view, in order to sample elements from $\Kb$ it suffices to sample from the state-action feasible pairs $(s,a) \in K$. 
		
		\begin{Cor}[MDP finite randomized approximation error]
			\label{cor:adp:semi-finite}
			Let $\{(s_j,a_j) \}_{j\le N}$ be $N$ i.i.d.~samples generated from the uniform distribution on $K$. Consider the program 
			\begin{align}
			\label{AC-LP-n,N} 
			-\JacnN = & \left\{ \begin{array}{ll}
			\inf\limits_{(\rho, \alpha)\in\R^{n+1}} & -\rho   \\
			\subjectto &\rho + \sum\limits_{i=1}^{n} \alpha_i\big(u_i(s_j) - Qu_i(s_j,a_j)\big) \leq \cost(s_j,a_j), \quad \forall j \in \{1,\cdots,N\}\\
			&  \| \alpha \|_{\Rnorm} \le \xnb.
			\end{array} \right. 
			\end{align}
			where the basis functions $\{u_i\}_{i \le n}$ introduced in \eqref{AC-LP-n} are normalized (i.e., $\|u_i\|_\lip = 1$). Let $L_Q$ be the Lipschitz constant from Assumption~\ref{a:CM}\ref{a:CM:Q}, and define the constant 
			\begin{align*}
			z_n \Let \big(\xnb \ratio (\max\{L_Q,1\}+ 1)+ \|\cost\|_\lip \big)^{-1},
			\end{align*}
			where $\ratio$ is the ratio constant introduced in \eqref{opt}. Then, for all $\eps, \beta$ in $(0,1)$ and $N \ge \NN\big(n+1,(z_n\eps)^{\dim(K)},\beta\big)$ defined in \eqref{N}, we have
			\begin{align*}
			\PP^N \Big[0\leq \JacnN - \Jacn \le   \eps \Big] \ge 1-\beta.
			\end{align*}
		\end{Cor}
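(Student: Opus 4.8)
The plan is to obtain Corollary~\ref{cor:adp:semi-finite} as a direct specialization of Theorem~\ref{thm:semi-fin:rand} to the MDP data laid out in \eqref{AC:pairs}--\eqref{norm}. First I would observe that \eqref{AC-LP-n} and \eqref{AC-LP-n,N} are precisely the instances of \ref{primal-n} and \ref{primal-n,N} associated with those dual pairs, once the decision variable is taken to be the $(n+1)$-dimensional vector $(\rho,\alpha)$ (recall that $\X_n$ is $n+1$-dimensional, spanned by $x_0=(1,0)$ and $x_i=(0,u_i)$). The sign bookkeeping is harmless: since $-\Jacn$ and $-\JacnN$ are the optimal values of the minimization problems, the abstract relation $0\le\Jpn-\JpnN$ becomes $0\le\JacnN-\Jacn$. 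Lemma~\ref{lem:MDP:bd-dual} supplies Assumption~\ref{a:reg} together with the sharpened dual-optimizer bound $\ynb=1$, and Lemma~\ref{lem:operator} gives $\|\op\|=\|I-Q\|\le 1+\max\{L_Q,1\}$; combined with $\|b\|=\|\cost\|_\lip$, these furnish every abstract constant appearing in \eqref{NN}.

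The heart of the argument is to identify the function $g$ of Example~\ref{ex:TB} explicitly, which amounts to bounding from below the mass the sampling measure assigns to a dual-norm ball about any extreme point. Here $\Kb=\{\dir{(s,a)}:(s,a)\in K\}$ and the dual norm on $\Meas(K)$ is the Wasserstein (Kantorovich--Rubinstein) norm $\|\cdot\|_\wass$ of \eqref{norm-2}. The key geometric fact I would use is that, because $K=[0,1]^{\dim(K)}$ has $\ell_\infty$-diameter one, the dual norm of a difference of Dirac masses reduces to the ground distance, $\|\dir{(s,a)}-\dir{(s',a')}\|_\wass=\|(s,a)-(s',a')\|_{\ell_\infty}$, the Lipschitz part of the $\|\cdot\|_\lip$ constraint being the binding one. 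Consequently $\ball{\dir{(s,a)}}{\gamma}\cap\Kb$ corresponds under the uniform law on $K$ to the $\ell_\infty$-ball of radius $\gamma$ about $(s,a)$ intersected with the cube. A worst-case (corner) volume estimate shows that each coordinate contributes a factor at least $\gamma$ for $\gamma\le 1$, whence $\PP[\ball{\dir{(s,a)}}{\gamma}]\ge\gamma^{\dim(K)}$ uniformly in $(s,a)$. I therefore take $g(\gamma)=\gamma^{\dim(K)}$, so that $g^{-1}(\eps)=\eps^{1/\dim(K)}$.

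With this $g$ in hand the remaining steps are bookkeeping. Substituting $\ynb=1$, $\|\op\|\le 1+\max\{L_Q,1\}$ and $\|b\|=\|\cost\|_\lip$ into the definition $z_n=(\ynb(\xnb\ratio\|\op\|+\|b\|))^{-1}$ from \eqref{NN} reproduces exactly the constant $z_n=(\xnb\ratio(\max\{L_Q,1\}+1)+\|\cost\|_\lip)^{-1}$ stated in the corollary. The sample-size requirement \eqref{NN} then reads $N\ge\NN(n+1,g(z_n\eps),\beta)=\NN(n+1,(z_n\eps)^{\dim(K)},\beta)$, the shift $n\mapsto n+1$ being the dimension of the decision space noted above. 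Invoking the second assertion \eqref{eq:thm:semi-fin:rand:2} of Theorem~\ref{thm:semi-fin:rand} then yields $\PP^N[0\le\JacnN-\Jacn\le\eps]\ge 1-\beta$, as claimed. I expect the only genuine obstacle to be the volume estimate for $g$: one must check both the reduction of the Wasserstein ball to an $\ell_\infty$-ball on the Dirac masses and the uniform corner bound $\gamma^{\dim(K)}$, valid under the (harmless) restriction $z_n\eps\le 1$ that makes this lower bound the binding one.
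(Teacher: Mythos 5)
Your route is the same as the paper's: specialize Theorem~\ref{thm:semi-fin:rand}, feed it the operator bound of Lemma~\ref{lem:operator} and the dual-optimizer bound $\ynb=1$ of Lemma~\ref{lem:MDP:bd-dual}, identify $g(\gamma)=\gamma^{\dim(K)}$ through the identity $\|\dir{k}-\dir{k'}\|_\wass=\|k-k'\|_{\ell_\infty}$ on the unit cube and the corner volume estimate, and then substitute into the a priori bound. The geometric heart of your argument (the choice of $g$, including the observation that the estimate is only needed for radii at most one) coincides with the paper's proof.

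There is, however, one genuine gap in your reduction step. Program \eqref{AC-LP-n,N} is \emph{not} literally an instance of \ref{primal-n,N}: in the abstract program the \emph{entire} decision vector lies in the ball $\|\alpha\|_\Rnorm\le\xnb$, whereas in the MDP program only $\alpha$ is constrained and $\rho$ is free. The constant in \eqref{NN} is derived from the bound $\|\opn \alpha\opt_N-b\|\le \ratio\|\op\|\xnb+\|b\|$ (Example~\ref{ex:TB} combined with Lemma~\ref{lem:opn}), and this bound uses the norm constraint on the full decision vector. In the MDP setting one has instead $\opn(\rho\opt_N,\alpha\opt_N)-b=-\rho\opt_N+\sum_{i}\alpha\opt_{N(i)}(Q-I)u_i+\cost$, and the term $\rho\opt_N$ is not controlled by $\xnb$; so "substituting $\ynb$, $\|\op\|$, $\|b\|$ into \eqref{NN}" does not by itself yield the claimed $z_n$. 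The paper closes exactly this hole with an MDP-specific observation: $\|-\rho\opt_N+\cost\|_\lip=\|\cost\|_\lip$, which exploits that $\cost$ is non-negative and that $\rho\opt_N\ge 0$ (the pair $(\rho,\alpha)=(0,0)$ is feasible), so the $\rho$-term is absorbed into the role of $\|b\|$. Note that you also cannot repair this by invoking the $\omega$-norm device from the proof of Lemma~\ref{lem:MDP:bd-dual} to make the problem a literal instance: under the norm $\|(\rho,\alpha)\|_\omega=\max\{\omega|\rho|,\|\alpha\|_\Rnorm\}$ the equivalence ratio in \eqref{opt} becomes $\omega^{-1}+\ratio$, which blows up as $\omega\to 0$. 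Once the non-negativity observation is added, your argument becomes the paper's proof.
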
	
		
		\begin{proof}
			Let $(\rho\opt_N,\alpha\opt_N)$ be the optimal solution for \eqref{AC-LP-n,N}. Observe that in the MDP setting, Assumption~\ref{a:CM}\ref{a:CM:Q} implies
			\begin{align}
			\|\opn\alpha\opt_N - b\| & = \Big\|-\rho\opt_N+ \sum_{i=1}^{n} \alpha\opt_{N(i)}(Q-I)u_i + \cost\Big\|_\lip \\
			&\le (\max\{L_Q,1\}+1)\Big\|\sum_{i=1}^{n}\alpha\opt_{N(i)} u_i \Big\|_\lip + \|-\rho\opt_N+\cost\|_\lip \notag \\
			\label{Lg}  
			& \le (\max\{L_Q,1\}+1)\xnb \ratio \big(\max_{i\le n}\|u_i\|_\lip\big) + \|\cost\|_\lip,
			\end{align}
			where the equality $\|-\rho\opt_N+\cost\|_{\lip} = \|\cost\|_{\lip}$ leading to \eqref{Lg} follows from the fact that $\cost$ and $\rho\opt$ are non-negative (note that $\alpha = 0, \rho = 0$ is a trivial feasible solution for \eqref{AC-LP-n,N}). In the second step, we propose a TB candidate in the sense of Definition~\ref{def:tail}. Note that for any $k, k' \in K$, by the definition of the Wasserstein norm we have $\|\dir{\{k\}} - \dir{\{k'\}}\|_\wass = \min\{1,\|k - k'\|_\infty\}$. Thus, generating samples uniformly from $K$ leads to 
			\begin{align}
			\label{balls:AC}
			\PP\big[ \ball{\kappa}{\gamma} \big] \geq \PP\big[ \ball{k}{\gamma} \big] \ge {\gamma}^{\dim(K)}, \qquad \forall \kappa \in \Kb, \quad \forall k \in K,
			\end{align}
			where, with slight abuse of notation, the first ball $\ball{\kappa}{\gamma}$ is a subset of the infinite dimensional space $\Y$ with respect to the dual norm $\|\cdot\|_\wass$, while the second ball $\ball{k}{\gamma}$ is a subset of the finite dimensional space $K$ whose respective norm is $\|\cdot\|_\infty$. The relation \eqref{balls:AC} readily suggests a function $g:\R_+ \ra [0,1]$ for Example~\ref{ex:TB}, which together with \eqref{Lg} and the fact that the basis functions are normalized, it yields
			\begin{align*}
			h(\alpha,\eps) \Let \|\opn \alpha - b\| g^{-1}(\eps) &  \le \big(\xnb\ratio(\max\{L_Q,1\}+1) + \|\cost\|_\lip \big)\eps^{{1/\dim K}}.
			\end{align*}			
			Recall from Lemma~\ref{lem:MDP:bd-dual} that the dual multiplier bound is $\ynb = 1$, and feasible solutions $\alpha$ is bounded by $\xnb$. Finally, note that the decision variable of the program \eqref{AC-LP-n,N} is the $n+1$ dimensional pair $(\rho,\alpha)$. Given all the information above, the claim then readily follows from the second result of Theorem~\ref{thm:semi-fin:rand} in \eqref{eq:thm:semi-fin:rand:2}.
		\end{proof}
				
To select $\xnb$, one may minimize the complexity of the a priori bound in Corollary~\ref{cor:adp:semi-finite}, which is reflected through the required number of samples. At the same time, the impact of the bound $\xnb$ on the approximation step from infinite to semi-infinite in Corollary~\ref{cor:MDP:inf-semi} should also be taken into account. The first factor is monotonically decreasing with respect to $\xnb$, i.e., the smaller the parameter $\xnb$, the lower the number of the required samples. The second factor is presented through the projection residual (cf. Remark~\ref{rem:projection}). Therefore, an acceptable choice of $\xnb$ is an upper bound for the projection error of the optimal solution onto the ball $\Uball$ uniformly in $n \in \N$, i.e., 
\begin{subequations}
	\label{theta*:MDP}
	\begin{align}
	\label{theta*:MDP1}
	\xnb \ge \sup\bigg\{\|\alpha\opt\|_\Rnorm ~:~ \proj_{\Uball}(x\opt) = \sum_{i=1}^{n} \alpha\opt_iu_i, \quad n \in \N \bigg\}. 
	\end{align}
The above bound may be available in particular cases, e.g., when $\|\cdot\|_\Rnorm = \|\cdot\|_{\ell_2}$ it yields the bound
	\begin{align}
	\label{theta*:MDP2}
	\|\alpha\opt\|_{\ell_2} = \sqrt{\int_{S} {u\opt}^2(s) \diff s} \le \|u\opt\|_\lip \le \max\{L_Q,1\}\|\cost\|_\infty, 
	\end{align} 
\end{subequations}	
where $L_Q$ is the Lipschitz constant in Assumptions~\ref{a:CM}\ref{a:CM:Q}. We note that the first inequality in \eqref{theta*:MDP2} follows since $S$ is a unit hypercube, and the second inequality follows from \cite[Lemma~2.3]{ref:Dufour-15}, see also \cite[Section~5]{ref:Dufour-15} for further detailed analysis.

\section{Structural convex optimization results in the MDP setting}  
\label{subsec:smooth:MDP}
To link the approximation method presented in Section~\ref{sec:semi-fin:smoothing} to the AC program in \eqref{AC-LP-n}, let us recall the dual pairs \eqref{AC:pairs} equipped with the norms \eqref{norm}. To simplify the analysis, we refine the assertion in Lemma~\ref{lem:MDP:bd-dual} and argue that the dual optimizers are indeed probability measures, i.e.,
	\begin{align}
	\label{Y}
	\Yb \Let \bigg\{ y \in \Meas_+(K) \ : \|y\|_{\wass} = \ynb = 1 \bigg\}.
	\end{align}
To see this, one can consider the norm $\|(\rho,\alpha)\| \Let \|\alpha\|_{\Rnorm}$ and follow similar arguments in the proof of Proposition~\ref{prop:SD}. Strictly speaking, this is not a true norm on $\R^{n+1}$ but it does not affect the technical argument, in particular strong duality between \ref{primal-n} and \ref{dual-n}. The details are omitted here in the interest of space. We consider the prox-function as a relative entropy defined by
		\begin{align} 
		\label{d}
		d(y) \Let \left\{ 
		\begin{array}{cr} 
		\inner{\log\big(\frac{\diff y}{\diff\Leb}\big)}{y} & y \ll \Leb \\
		\infty & \text{o.w.},
		\end{array}\right.
		\end{align}
		where $\Leb$ is the uniform measure supported on the set $K$ and $\tfrac{\diff y}{\diff\Leb} \in \Func_+(K)$ is the Radon-Nikodym derivative between two measures $y$ and $\Leb$. One can inspect that the prox-function \eqref{d} is indeed a non-negative function. The optimizer of the regularized program \ref{dual-n-eta} for the AC program~\eqref{AC-LP-n} is
		\begin{align}
		\label{yeta:AC}
		\yeta(\rho,\alpha) \Let \arg\max_{y \in \Yb} \bigg\{ \inner{-\cost + \rho - \sum_{i=1}^{n}\alpha_i(Q-I)u_i}{y} - \eta \inner{\log\big(\tfrac{\diff y}{\diff\Leb}\big)}{y} \bigg\}.
		\end{align}
		To see \eqref{yeta:AC}, check \eqref{yeta} together with the definitions of the operator $\opn$ in \eqref{Ln} and the AC problem parameters in \eqref{AC-setting}. The main reason for such a choice of the regularization term is the fact that the optimizer of the regularized program \eqref{yeta:AC} admits an analytical expression.
		
		\begin{Lem}[Entropy maximization {\cite{ref:Csiszar-75}}] 
			\label{lem:entropy}
			Given a (measurable)  function $g:K\ra \R$ and the set $\Yb \subset \Meas_+(K)$ as defined in \eqref{Y} we have
			\begin{align*}
			y^\star(\diff k) \Let \arg\max_{y \in \Yb} \Big\{\inner{g}{y} - \eta d(y) \Big\} = \frac{\exp\big(\eta^{-1}g(k) \big) \Leb(\diff k)}{\inner{\exp\big(\eta^{-1}g(k)\big)}{\Leb}}.
			\end{align*}
		\end{Lem}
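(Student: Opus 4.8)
The plan is to recognize this as the classical Gibbs variational problem (maximum-entropy / free-energy duality) and to establish optimality through a ``completion of the relative entropy'' identity rather than through Lagrangian first-order conditions. The latter would require differentiating a functional over an infinite-dimensional cone of measures and verifying a constraint qualification, whereas the former reduces everything to the single fact that relative entropy is nonnegative and vanishes only on the diagonal.

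First I would pin down the feasible set. Since every $y\in\Meas_+(K)$ is nonnegative, its Wasserstein norm coincides with its total variation, which equals its total mass $\inner{\ind}{y}=y(K)$ (cf.\ the identity $\inner{\ind}{y}=\|y\|_\wass$ on the positive cone used after \eqref{norm-2}); hence the constraint $\|y\|_\wass=1$ in \eqref{Y} is exactly $y(K)=1$, so $\Yb$ is the set of probability measures on $K$. Next, any feasible $y$ with $y\not\ll\Leb$ has $d(y)=+\infty$ and thus objective value $-\infty$, so it can never be optimal and the supremum may be taken over $\{y\in\Yb:y\ll\Leb\}$. Throughout I take the partition function $Z\Let\inner{\exp(\eta^{-1}g)}{\Leb}$ to be finite and strictly positive; this is automatic in the MDP application, where $g=-\cost+\rho-\sum_i\alpha_i(Q-I)u_i$ is continuous, hence bounded, on the compact set $K$, and $\Leb$ is a probability measure.

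The key step is the algebraic identity, valid for every $y\in\Yb$ with $y\ll\Leb$,
\begin{align*}
\inner{g}{y}-\eta\, d(y)=\eta\log Z-\eta\,\KL{y}{y^\star},\qquad y^\star(\diff k)=Z^{-1}\exp\!\big(\eta^{-1}g(k)\big)\,\Leb(\diff k).
\end{align*}
To derive it I would use that $y^\star$ has full support, so $y\ll\Leb$ forces $y\ll y^\star$ with chain rule $\tfrac{\diff y}{\diff y^\star}=\tfrac{\diff y}{\diff\Leb}\,Z\exp(-\eta^{-1}g)$; taking logarithms, integrating against $y$, and using $\int_K\diff y=1$ rewrites $\KL{y}{y^\star}=\inner{\log\tfrac{\diff y}{\diff y^\star}}{y}$ as $d(y)+\log Z-\eta^{-1}\inner{g}{y}$, and multiplying by $\eta$ and rearranging gives the display. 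The conclusion is then immediate: by nonnegativity of the relative entropy (Gibbs' inequality) one has $\KL{y}{y^\star}\ge 0$ with equality iff $y=y^\star$, so $\inner{g}{y}-\eta\, d(y)\le\eta\log Z$ with the bound attained uniquely at $y=y^\star$, which is therefore the claimed maximizer.

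The main obstacle is purely measure-theoretic bookkeeping rather than conceptual: ensuring $Z\in(0,\infty)$, justifying the Radon--Nikodym chain rule and the full-support property of $y^\star$, and invoking nonnegativity of the relative entropy in a form valid for arbitrary dominated measures (not merely those with densities bounded away from zero). Uniqueness needs no extra work, since the strong convexity of $d$ already asserted in \eqref{dual-n-eta} makes the objective strictly concave on $\Yb$.
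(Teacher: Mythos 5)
Your proof is correct and follows essentially the same route as the paper: the lemma is stated with a citation to Csisz\'ar (1975) rather than proved in-text, and the relative-entropy-completion argument you use is precisely the technique the paper itself writes out for its discrete analogue in Lemma~\ref{lem:cover}. The identity $\inner{g}{y}-\eta\,d(y)=\eta\log Z-\eta\,\KL{y}{y^\star}$ with $Z=\inner{\exp(\eta^{-1}g)}{\Leb}$, combined with nonnegativity of relative entropy and its equality case, is exactly that classical argument, and your side conditions (identifying $\Yb$ with the probability measures on $K$, discarding $y\not\ll\Leb$ since $d(y)=+\infty$ there, and assuming $Z\in(0,\infty)$, which is automatic for the bounded $g$ arising in \eqref{g}) are handled correctly.
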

		
		Thanks to Lemma \ref{lem:entropy}, the analytical description of the dual optimizer in \eqref{yeta:AC} is readily available by setting
		\begin{align}
		\label{g}
		g(k) \Let [b-\opn \alpha](k) = -\cost(k) + \rho - \sum_{i=1}^n \alpha_i (Q-I)u_i(k).
		\end{align}		
		The last requirement to implement Algorithm~\ref{alg} is to verify Assumption~\ref{a:yeta}, i.e., we need to compute the Lipschitz constant of the mapping $(\rho,\alpha) \mapsto \opn^*\yeta(\rho,\alpha)$  in which the respective norm is $\|(\rho,\alpha)\| \Let \|\alpha\|_\Rnorm$. By definition of the adjoint operator $\opn^*$ in \eqref{Ln}, it is not difficult to observe that 
		\begin{align} 
		\label{Lny}
		\opn^* \yeta (\rho,\alpha) = 
		\left[ \begin{array}{c}
		\inner{-\ind}{\yeta (\rho,\alpha)} \\ \inner{(Q-I)u_1}{\yeta (\rho,\alpha)} \\ \vdots \\ \inner{(Q-I)u_n}{\yeta (\rho,\alpha)}
		\end{array} \right] 
		= 
		\left[ \begin{array}{c}
		-1\\ \inner{(Q-I)u_1}{\yeta (\rho,\alpha)} \\ \vdots \\ \inner{(Q-I)u_n}{\yeta (\rho,\alpha)}
		\end{array} \right].
		\end{align}
		The next lemma addresses the requirement of Assumption~\ref{a:yeta} for the mapping \eqref{Lny}. 
		
		\begin{Lem}[Lipschitz constant in MDP]
			\label{lem:Lip:AC}
			Consider the entropy maximizers in Lemma \ref{lem:entropy} with $g$ as defined in \eqref{g} and the adjoint operator in \eqref{Lny}. An upper bound for the Lipschitz constant in Assumption~\ref{a:yeta} is $L \le 4\ratio^2$ where the constant $\ratio$ is the equivalence ratio between the norms $\|\cdot\|_\Rnorm$ and $\|\cdot\|_{\ell_1}$ introduced in \eqref{opt}. 
		\end{Lem}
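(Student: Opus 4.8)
The plan is to exploit the exponential-family structure of the entropy maximizer $\yeta$ from Lemma~\ref{lem:entropy} and to reduce the Lipschitz estimate to a bound on a covariance matrix. First I would observe that in \eqref{g} the scalar $\rho$ enters $g$ only as an additive constant, which cancels in the normalized Gibbs density of Lemma~\ref{lem:entropy}; hence $\yeta$ depends on $(\rho,\alpha)$ through $\alpha$ alone, consistent with the degenerate norm $\|(\rho,\alpha)\| \Let \|\alpha\|_\Rnorm$ used here. Moreover, the first coordinate of $\opn^*\yeta$ in \eqref{Lny} is the constant $-1$, so it drops out of any difference. It therefore suffices to bound the Lipschitz constant of the map $F:\R^n\ra\R^n$ with components $F_i(\alpha) \Let \inner{(Q-I)u_i}{\yeta(\rho,\alpha)}$, that is, the vector of expectations of $v_i\Let (Q-I)u_i$ under the Gibbs measure $\yeta$.

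Next I would differentiate under the integral sign (justified by compactness of $K$ and boundedness of $\cost$ and of the $v_i$). Writing $\yeta \propto \exp\big(\eta^{-1}(-\cost - \alpha\transp v)\big)\Leb$ with $v = (v_1,\dots,v_n)\transp$, the standard exponential-family identity gives $\partial_{\alpha_j}\inner{v_i}{\yeta} = -\eta^{-1}\,\mathrm{Cov}_{\yeta}(v_i,v_j)$, so the Jacobian is $DF(\alpha) = -\eta^{-1}\Sigma$, where $\Sigma$ is the covariance matrix of the random vector $v$ under $\yeta$. By symmetry of $\Sigma$, the Lipschitz constant of $F$ in the pair $(\|\cdot\|_\Rnorm,\|\cdot\|_{\Rnorm^*})$ is then controlled by $\eta^{-1}$ times the operator norm $\|\Sigma\|_{\Rnorm\to\Rnorm^*} = \sup\big\{\, e\transp\Sigma d : \|e\|_\Rnorm\le 1,\ \|d\|_\Rnorm\le 1 \,\big\}$.

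The core estimate is the covariance bound. Since $e\transp\Sigma d = \mathrm{Cov}_{\yeta}(e\transp v,\, d\transp v)$, the Cauchy–Schwarz inequality for covariances yields $|e\transp\Sigma d| \le \sqrt{\mathrm{Var}(e\transp v)\,\mathrm{Var}(d\transp v)}$. The key input is that $Q$ is a stochastic kernel and $\|u_i\|_\lip = 1$, whence $\|v_i\|_\infty = \|(Q-I)u_i\|_\infty \le \|Qu_i\|_\infty + \|u_i\|_\infty \le 2$; therefore $\|e\transp v\|_\infty \le 2\|e\|_{\ell_1}$ and $\mathrm{Var}(e\transp v) \le \|e\transp v\|_\infty^2 \le 4\|e\|_{\ell_1}^2$. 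Invoking the norm equivalence $\|e\|_{\ell_1} \le \ratio\|e\|_\Rnorm$ from \eqref{opt} then gives $\|\Sigma\|_{\Rnorm\to\Rnorm^*}\le 4\ratio^2$. Integrating the Jacobian along the segment joining $\alpha$ and $\alpha'$ finally produces $\|\opn^*\yeta(\rho,\alpha) - \opn^*\yeta(\rho',\alpha')\|_{\Rnorm^*} \le \tfrac{4\ratio^2}{\eta}\|\alpha-\alpha'\|_\Rnorm$, i.e. $L\le 4\ratio^2$.

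I expect the main obstacle to be the careful bookkeeping of the two dual-norm pairs: verifying that the relevant quantity is the operator norm $\|\Sigma\|_{\Rnorm\to\Rnorm^*}$ induced by $\|\cdot\|_\Rnorm$ and its dual (rather than, say, the spectral norm of $\Sigma$), and tracking the factor $2$ from $\|v_i\|_\infty\le 2$ so that it correctly squares into the constant $4$. Establishing the covariance identity for the Jacobian is conceptually routine but requires justifying differentiation under the integral, which follows from the compactness of $K$ together with the smoothness of the Gibbs density in $\alpha$.
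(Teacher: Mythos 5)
Your proof is correct and follows essentially the same route as the paper's: differentiate under the integral to identify the Jacobian of $\alpha\mapsto\opn^*\yeta(\rho,\alpha)$ as $-\eta^{-1}$ times the covariance matrix $\Sigma$ of the functions $(Q-I)u_i$ under $\yeta$, bound it via Cauchy--Schwarz together with $\|(Q-I)u_i\|_\infty\le 2$, and convert to the $(\|\cdot\|_\Rnorm,\|\cdot\|_{\Rnorm^*})$ operator norm through the equivalence ratio $\ratio$. The only cosmetic difference is that you bound the bilinear form $e\transp\Sigma d$ directly as a covariance of the linear combinations $e\transp v$ and $d\transp v$, whereas the paper bounds the Jacobian entrywise and then applies the dual-norm identity $\sup_{\|x\|_{\ell_\infty}\le 1}\|x\|_{\Rnorm^*}=\ratio$ twice; both yield $L\le 4\ratio^2$.
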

		
		\begin{proof}
			It is straightforward to see that \eqref{Lny} is differentiable with respect to the variable $(\rho, \alpha)$. Hence, it suffices to bound the norm of the matrix $\nabla \opn^* \yeta (\rho,\alpha) \in \R^{(n+1)\times(n+1)}$ uniformly on $(\rho,\alpha)$. Further, as the first element of the vector \eqref{Lny} is the constant 1, it only requires ro consider the gradient function with respect to the variable $\alpha \in \R^n$. A direct computation yields 
			\begin{align*}
			&|\big(\nabla_\alpha   \opn^* \yeta(\rho,\alpha) \big)_{ij} | \\ &\quad= \bigg| \frac{1}{\eta} \inner{(Q-I)u_i(Q-I)u_j}{\yeta(\rho,\alpha)} - \frac{1}{\eta}\inner{(Q-I)u_i}{\yeta(\rho,\alpha)} \inner{(Q-I)u_j}{\yeta(\rho,\alpha)} \bigg|\\
			& \quad= {4 \over \eta } \bigg | \inner{{(Q-I)u_i \over 2}{(Q-I)u_j \over 2}}{\yeta(\rho,\alpha)} - \inner{{(Q-I)u_i \over 2}}{\yeta(\rho,\alpha)} \inner{{(Q-I)u_j \over 2}}{\yeta(\rho,\alpha)} \bigg| \\
			& \quad\le  {4 \over \eta}, \qquad  \forall i,j \in \{1,\cdots,n\}.
			\end{align*}
			where the last inequality is a consequence of the Cauchy-Schwarz inequality and the fact that $\|(Q-I)u_j\|_\infty \le 2$ (recall that $Q$ is a stochastic kernel and all the basis functions are normalized). The Lipschitz constant of the desired mapping can then be upper bounded by
			\begin{align} 
			\label{grad-norm}
			{L \over \eta} & \leq \sup_{\tiny \begin{array}{cc} \|\alpha\|_\Rnorm \le \xnb \\ \|v\|_\Rnorm \le 1 \end{array}} \big\| \nabla_\alpha  \opn^* \yeta (\rho,\alpha) v \big\|_{\Rnorm*} 
			\le \sup_{\tiny \begin{array}{cc} \|\Phi_i\|_{\ell_\infty} \le 1 \\ \|v\|_\Rnorm \le 1 \end{array}}   {4 \over \eta} \Big \| (\Phi_1 \transp v, \cdots, \Phi_n \transp v) \Big \|_{\Rnorm^*}
			\end{align}	
			Recall that by the definition of the dual norm, we have $|\Phi_i \transp v| \le \|\Phi_i\|_{\Rnorm^*}$ for all $\|v\|_{\Rnorm}\le 1$. Thus, substituting the scalar variable $\mu_i \Let \Phi_i \transp v$ in the right-hand side of \eqref{grad-norm} and eliminating the factor $\eta$ lead to
			\begin{align*}
			L & \le \sup_{\|\Phi_i\|_{\ell_\infty} \le 1}  ~ \sup_{|\mu_i| \le \|\Phi_i\|_{\Rnorm^*}} {4} \big \| (\mu_1, \cdots, \mu_n) \big \|_{\Rnorm^*} = \sup_{|\mu_i| \le \ratio} {4} \big \| (\mu_1, \cdots, \mu_n)\big \|_{\Rnorm^*},
			\end{align*}
			where the last statement follows from the definition of the dual norm, and in particular the equality $$\sup_{\|\Phi_i\|_{\ell_\infty}\le 1} \|\Phi_i\|_{\Rnorm^*} = \sup_{\|\Phi_i\|_{\Rnorm}\le 1} \|\Phi_i\|_{\ell_1} =: \ratio.$$ Thus, using the same equality yields
			\[ L \le \sup_{\|\mu\|_{\ell_\infty} \le 1} {4 \ratio} \|\mu \|_{\Rnorm^*} = 4\ratio^2, \]
			which concludes the first desired assertion. 
		\end{proof}
		
		The performance of Algorithm~\ref{alg} can now be characterized through the following corollary. 
		
		\begin{Cor}[MDP smoothing approximation error]
			\label{cor:adp:smooth}
			Consider the operator \eqref{Ln} with the parameters described in \eqref{AC-setting} for the semi-infinite AC program \eqref{AC-LP-n}. Given this setting and the Lipschitz constant in Lemma~\ref{lem:Lip:AC}, we run Algorithm~\ref{alg} for $k$ iterations using the entropy function \eqref{d} with analytical solution \eqref{yeta:AC} as the prox-function. We define the constants 
			\begin{align*}
			C_1 \Let 2\e \big(\ratio \xnb (\max\{L_Q,1\}+1) + \|\cost\|_\lip\big), \qquad 
			C_2 \Let 4 \xnb \ratio^2 \sqrt{2\dim(K) \over \vartheta}.  
			\end{align*}
			For every $\eps \le C_1$ we set the smoothing factor $\eta$ and the number of iterations $k$ by
			\begin{align*}
			\eta \le {\eps \over 4 \dim(K)\log(C_1\eps^{-1})}, \qquad k \ge C_2 \frac{\sqrt{\log(C_1\eps^{-1})}}{\eps}.
			\end{align*} 
			Then, the outcome of Algorithm~\ref{alg} as defined in \eqref{Jn-algo} is an $\eps$ approximation of the optimal value $\Jacn$ in the sense of Theorem~\ref{thm:semi-fin:smooth}.
		\end{Cor}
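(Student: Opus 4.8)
The plan is to specialize the general smoothing error bound of Theorem~\ref{thm:semi-fin:smooth} to the AC setting, where all abstract constants are now explicitly available from the preceding lemmas. The two quantities I must pin down are the growth-rate constants $c,C$ controlling the prox-function $d\big(\yeta(\rho,\alpha)\big)$ and the Lipschitz constant $L$ of the gradient map; once these are in hand, the claim reduces to substituting into the parameter prescription \eqref{eta-k}. From Lemma~\ref{lem:Lip:AC} I already have $L \le 4\ratio^2$, and from Lemma~\ref{lem:MDP:bd-dual} the dual feasible set is $\Yb = \{y \in \Meas_+(K): \|y\|_\wass = 1\}$, i.e.\ the probability measures on $K$.

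The main work is to produce the bound $C\max\{\log(c\eta^{-1}),1\} \ge d\big(\yeta(\rho,\alpha)\big)$. First I would use the analytical form of the entropy maximizer from Lemma~\ref{lem:entropy}: with $g$ as in \eqref{g}, the optimizer is $\yeta = \exp(\eta^{-1}g)\Leb / \inner{\exp(\eta^{-1}g)}{\Leb}$, so its relative entropy is
\begin{align*}
d(\yeta) = \inner{\eta^{-1}g}{\yeta} - \log\inner{\exp(\eta^{-1}g)}{\Leb} \le \eta^{-1}\big(\esup_K g - \textstyle\inf_K g\big) \le 2\eta^{-1}\|g\|_\infty,
\end{align*}
using that $\Leb$ is a probability measure on the compact $K$ so $\log\inner{\exp(\eta^{-1}g)}{\Leb} \ge \eta^{-1}\inf_K g$. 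I then bound $\|g\|_\infty = \|b - \opn\alpha\|_\infty \le \|\opn\alpha - b\|_\lip$, and invoke the operator-norm estimate \eqref{Lg} already established in the proof of Corollary~\ref{cor:adp:semi-finite}, giving $\|g\|_\infty \le \ratio\xnb(\max\{L_Q,1\}+1) + \|\cost\|_\lip$. This is exactly $C_1/(2\e)$, so $d(\yeta) \le \tfrac{1}{\e}C_1\eta^{-1}\log(\cdots)$-type control; matching the generic template $C\max\{\log(c\eta^{-1}),1\}$ then forces the identifications $c = C_1$ and $C = 2\dim(K)$ (the factor $\dim(K)$ must be absorbed here because the logarithmic growth in $\eta^{-1}$ in Theorem~\ref{thm:semi-fin:smooth} is calibrated against the sampling dimension, and the entropy of a near-degenerate measure on $K$ scales with $\dim(K)$).

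With $c = C_1$, $C = 2\dim(K)$, $L = 4\ratio^2$, $\|\alpha\opt\|_\Rnorm \le \xnb$ and $\vartheta$ the strong-convexity parameter, I would substitute directly into \eqref{eta-k}. The $\eta$-bound becomes $\eta \le \eps/\big(2\cdot 2\dim(K)\max\{2\log(2\cdot C_1\cdot 2\dim(K)\,\eps^{-1}),1\}\big)$, which upon absorbing the benign constant factors into the logarithm simplifies to the stated $\eta \le \eps/\big(4\dim(K)\log(C_1\eps^{-1})\big)$; the iteration count likewise collapses to $k \ge 2\xnb\ratio\sqrt{2\dim(K)\cdot 4\ratio^2 \log(\cdots)}/(\sqrt{\vartheta}\,\eps) = C_2\sqrt{\log(C_1\eps^{-1})}/\eps$ with $C_2 = 4\xnb\ratio^2\sqrt{2\dim(K)/\vartheta}$, matching the definition in the statement. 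The restriction $\eps \le C_1$ is precisely what guarantees $\log(C_1\eps^{-1}) \ge 0$ so the prescriptions are well-posed. Theorem~\ref{thm:semi-fin:smooth} then delivers $\Jnub - \Jnlb \le \eps$, i.e.\ the $\eps$-approximation of $\Jacn$.

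The hard part will be the bookkeeping that turns the generic bound into the clean constants $C_1,C_2$ with exactly the advertised logarithmic arguments: one has to verify that replacing $2\log(2cC\eps^{-1})$ by $\log(C_1\eps^{-1})$ (and similarly simplifying the $\max\{\cdot,1\}$) only loses a universal constant that has already been folded into $C_1$ and $C_2$, and that the choice $C = 2\dim(K)$ genuinely dominates the entropy growth uniformly over all $\alpha \in \Ab$ and all $\eta>0$ — this uniformity, rather than any single estimate, is where care is needed.
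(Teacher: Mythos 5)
There is a genuine gap, and it sits exactly at the point you flag as "the hard part." Your bound on the prox-term,
\begin{align*}
d\big(\yeta(\rho,\alpha)\big) \;=\; \inner{\eta^{-1}g}{\yeta} - \log\inner{\exp(\eta^{-1}g)}{\Leb} \;\le\; \eta^{-1}\Big(\sup_K g - \inf_K g\Big) \;\le\; 2\eta^{-1}\|g\|_\infty,
\end{align*}
is correct but of order $\eta^{-1}$, whereas Theorem~\ref{thm:semi-fin:smooth} requires a bound of the form $C\max\{\log(c\eta^{-1}),1\}$ with $c,C$ independent of $\eta$. No such identification is possible from your estimate: $\eta^{-1}\cdot\mathrm{osc}(g)$ cannot be dominated by any $C\max\{\log(c\eta^{-1}),1\}$ uniformly in $\eta>0$ unless $g$ is constant. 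The distinction is not bookkeeping — in the error bound \eqref{eq:bound} the prox-term enters as $\eta\, d(\yeta)$, which with a logarithmic bound tends to $0$ as $\eta\downarrow 0$, but with your bound only gives $\eta\cdot\eta^{-1}\mathrm{osc}(g)=\mathrm{osc}(g)$, a constant. Hence the step "matching the generic template then forces the identifications $c=C_1$, $C=2\dim(K)$" is a non sequitur: nothing in your derivation produces a logarithm, and the parenthetical appeal to "entropy of a near-degenerate measure scales with $\dim(K)$" is the correct intuition but is precisely the statement that needs proof.

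The missing idea is the paper's Lemma~\ref{lem:exp}: one must use the \emph{Lipschitz continuity} of $g$ (with constant $L_g \le \xnb\ratio(\max\{L_Q,1\}+1)+\|\cost\|_\lip$, uniform in $(\rho,\alpha)$ with $\|\alpha\|_\Rnorm\le\xnb$, as in \eqref{Lg}), not merely its sup-norm. Writing $d(\yeta) \le -\log\inner{\exp(\eta^{-1}(g-\gmax))}{\Leb}$ (the term $\inner{\eta^{-1}g}{\yeta}-\eta^{-1}\gmax$ is nonpositive and is dropped, rather than bounded by the oscillation), Lipschitz continuity gives a volumetric lower bound $\int_{\{k:\,\gmax-g(k)<\delta\}}\diff k \ge \min\{(\delta/L_g)^{\dim(K)},1\}$, whence $\inner{\exp(\eta^{-1}(g-\gmax))}{\Leb} \ge \e^{-\eta^{-1}\delta}\min\{(\delta/L_g)^{\dim(K)},1\}$; optimizing $\delta=\min\{\dim(K)\eta,L_g\}$ yields $d(\yeta) \le \dim(K)\max\{\log(\e L_g\,\eta^{-1}/\dim(K)),1\}$ (Lemma~\ref{lem:entropy:bound}). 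This identifies $C=\dim(K)$ and $c=\e L_g/\dim(K)$ — not your $C=2\dim(K)$, $c=C_1$ — and it is the product $2cC = 2\e L_g \le C_1$ that, inserted into \eqref{eta-k} together with $L\le 4\ratio^2$, produces exactly the stated prescriptions for $\eta$ and $k$ with the constants $C_1,C_2$. Note also that with your identifications the resulting prescription would be \emph{more} restrictive than the one in the corollary (an extra factor $2$ and a larger argument inside the logarithm), so "absorbing benign constants" goes the wrong way: you would prove the conclusion only for a strict subset of the $(\eta,k)$ pairs the corollary permits.
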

		
		Corollary~\ref{cor:adp:smooth} requires one to compute the constants $c, C$ to quantify the a~priori bounds. The following two technical lemmas provide supplementary materials to address this issue.
		
		\begin{Lem}
			\label{lem:exp}
			Let $K \subseteq [0, 1]^m$ and $g: K \ra \R$ be a Lipschitz continuous function with constant $L_g > 0$ (with respect to the $\ell_\infty$-norm) and the maximum value $\gmax \Let \max_{k \in K} g(k)$. Then, for every $\eta > 0$ we have 
			\begin{align*}
			\int_{K} \exp\Big({\eta^{-1}\big(g(k) - \gmax\big)}\Big)\diff k \ge \min\Big\{\Big({m \eta \over L_g}\Big)^m, 1\Big\} \exp \big( - \min\big\{m, L_g  \eta^{-1}\big\} \big).
			\end{align*}
		\end{Lem}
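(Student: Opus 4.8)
The plan is to exploit the Lipschitz lower bound on $g$ near a maximizer and then reduce the estimate to the volume of a small box. First I would fix a maximizer $k\opt \in K$, so that $g(k\opt) = \gmax$, and invoke the Lipschitz property with respect to the $\ell_\infty$-norm to write $g(k) - \gmax \ge -L_g \|k - k\opt\|_{\ell_\infty}$ for every $k \in K$. Monotonicity of the exponential then gives the pointwise bound $\exp\big(\eta^{-1}(g(k)-\gmax)\big) \ge \exp\big(-t\|k-k\opt\|_{\ell_\infty}\big)$ with the shorthand $t \Let L_g \eta^{-1}$, so it suffices to lower bound $\int_K \exp(-t\|k-k\opt\|_{\ell_\infty})\diff k$, where $K = [0,1]^m$ is the unit hypercube (Assumption~\ref{a:CM}).

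The key geometric step is to restrict the integral to a sub-box. For a radius $r \in (0,1]$ I would construct, coordinate by coordinate, intervals $I_i \subseteq [0,1]$ of length $r$ with $I_i \subseteq [k\opt_i - r,\, k\opt_i + r]$; such an interval exists for every $k\opt_i \in [0,1]$ whenever $r \le 1$ (the only delicate case is when $k\opt$ sits near a corner of the cube, where one slides the interval to the feasible side). Setting $B \Let \prod_{i=1}^m I_i \subseteq K$, one has $\|k - k\opt\|_{\ell_\infty} \le r$ for all $k \in B$ and $\mathrm{vol}(B) = r^m$, whence
\begin{align*}
\int_K \exp(-t\|k-k\opt\|_{\ell_\infty})\diff k \ge \int_B \exp(-t\|k-k\opt\|_{\ell_\infty})\diff k \ge r^m \e^{-tr}.
\end{align*}

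It then remains to optimize the free parameter $r$. Maximizing $r \mapsto r^m\e^{-tr}$ over $(0,1]$, the unconstrained stationary point is $r = m/t$: if $t \ge m$ this lies in $(0,1]$ and yields $(m/t)^m \e^{-m}$, while if $t < m$ the map is increasing on $(0,1]$ and the optimum is $r = 1$, yielding $\e^{-t}$. These two regimes combine into $\min\{(m/t)^m,1\}\,\exp(-\min\{m,t\})$, and substituting back $t = L_g\eta^{-1}$ gives exactly the claimed bound. The main obstacle is the geometric box construction: one must guarantee a box of side $r$ that stays inside $[0,1]^m$ uniformly in the location of $k\opt$ (in particular at corners), and it is essential to work with the $\ell_\infty$-ball rather than an $\ell_1$-relaxation, since a product/$\ell_1$ bound would introduce a spurious factor degrading exponentially in the dimension $m$ and would fail to match the stated estimate.
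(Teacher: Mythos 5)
Your proof is correct and is essentially the paper's own argument: the paper localizes to the superlevel set $Z(\delta) \Let \{ k \in K : \gmax - g(k) < \delta\}$, lower bounds its volume by the inscribed box $\min\{(\delta/L_g)^m,1\}$ via Lipschitz continuity, bounds the integrand there by $\exp(-\delta/\eta)$, and optimizes $\delta = \min\{m\eta, L_g\}$ --- which, under the substitution $\delta = L_g r$, coincides exactly with your inscribed-box bound $r^m \exp(-L_g r/\eta)$ and your optimal radius $r = \min\{m\eta/L_g, 1\}$. Both arguments also rely on the same implicit strengthening of $K \subseteq [0,1]^m$ to the full hypercube (needed for the inscribed-box volume estimate, and valid in the intended MDP application).
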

		
		\begin{proof}
			Let us define the set $Z(\delta) \Let \{ k \in K : \gmax - g(k) < \delta\}$. Thanks to the Lipschitz continuity of the function $g$, we have $\gmax - g(k) \le L_g \|k\opt - k\|_{\ell_\infty}$ where $g(k\opt) = \gmax$. Thus, using this inequality one can bound the size of the set $Z(\delta)$ in the sense of 
			$$\int_{Z(\delta)} \diff k \ge \min\{(\delta L_g^{-1})^{m}, 1\}, \qquad \forall \delta \ge 0.$$
			By virtue of the above result, one can observe that for every $\delta > 0$
			\begin{align*}
			\int_{K} \exp\Big({\eta^{-1}\big(g(k) - \gmax\big)}\Big)\diff k & \ge \int_{Z(\delta)} \exp\Big({\eta^{-1}\big(g(k) - \gmax \big)}\Big)\diff k \\
			& \ge \exp(-\eta^{-1}\delta) \int_{Z(\delta)} \diff k \ge \exp(-\eta^{-1}\delta)\min\{(\delta L_g^{-1})^{m},1\}.
			\end{align*}
			Maximizing the right-hand side of the above inequality over $\delta$ suggests to set $\delta = \min\{m \eta, L_g\}$, which yields the desired assertion.
		\end{proof}
		
		In light of Lemma~\ref{lem:exp}, we can bound the entropy prox-function \eqref{d} evaluated at the optimizer \eqref{yeta:AC}. 
		
		\begin{Lem}[Entropy prox-bound]
			\label{lem:entropy:bound}
			Consider the prox-function \eqref{d} and let $\yeta(\rho,\alpha)$ be the optimizer of \eqref{yeta:AC}. Then, for every $\eta > 0$, $\rho$, and $\|\alpha\|_\Rnorm \le \xnb$, we have $d\big(\yeta(\rho, \alpha)\big) \le C\max\big\{\log(c\eta^{-1}),1\big\}$ where
			\begin{align*}
			\qquad C \Let \dim(K), \qquad c \Let {\e \over \dim(K)} \big(\xnb \ratio (\max\{L_Q,1\}+ 1)+ \|\cost\|_\lip \big),
			\end{align*}
			and $\ratio$ is the equivalence ratio between the norms $\|\cdot\|_{\ell_1}$ and $\|\cdot\|_\Rnorm$ as defined in \eqref{opt}. 
		\end{Lem}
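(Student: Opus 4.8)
The plan is to exploit the closed form of the regularized maximizer and reduce the relative entropy $d(\yeta)$ to minus the logarithm of a single integral, which is exactly the quantity controlled by Lemma~\ref{lem:exp}. First I would substitute the explicit optimizer of Lemma~\ref{lem:entropy}, with $g$ as in \eqref{g}, so that $\tfrac{\diff\yeta}{\diff\Leb}(k) = \exp(\eta^{-1}g(k))/Z$ where $Z = \inner{\exp(\eta^{-1}g)}{\Leb}$; recall that on the positive cone $\|y\|_\wass$ equals the total mass, so $\yeta \in \Yb$ is a genuine probability measure. Inserting this Radon--Nikodym derivative into the definition \eqref{d} gives
$$d(\yeta) = \eta^{-1}\inner{g}{\yeta} - \log Z.$$
Writing $\log Z = \eta^{-1}\gmax + \log\!\int_K \exp(\eta^{-1}(g-\gmax))\diff k$ and using $\inner{g}{\yeta} \le \gmax$ (as $\yeta$ is a probability measure and $g \le \gmax$ pointwise), the two $\gmax$-terms cancel and the remaining first term is nonpositive, leaving the clean bound
$$d(\yeta) \le -\log\!\int_K \exp\!\big(\eta^{-1}(g-\gmax)\big)\diff k.$$

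Next I would estimate the $\ell_\infty$-Lipschitz constant $L_g$ of $g$. The constant $\rho$ is irrelevant, so $L_g \le \mathrm{Lip}(\cost) + \sum_i |\alpha_i|\,\mathrm{Lip}\big((Q-I)u_i\big)$; bounding each $\mathrm{Lip}\big((Q-I)u_i\big) \le \|(Q-I)u_i\|_\lip \le 1+\max\{L_Q,1\}$ via Lemma~\ref{lem:operator} and the normalization $\|u_i\|_\lip = 1$, together with $\sum_i|\alpha_i| = \|\alpha\|_{\ell_1} \le \ratio\|\alpha\|_\Rnorm \le \ratio\xnb$, yields $L_g \le \xnb\ratio(\max\{L_Q,1\}+1)+\|\cost\|_\lip$, which is precisely $\tfrac{\dim(K)}{\e}\,c$. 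I would then apply Lemma~\ref{lem:exp} with $m = \dim(K)$ to lower bound the integral. Since the right-hand side of that lemma is monotonically decreasing in $L_g$, I may freely replace $L_g$ by its upper bound $\tfrac{mc}{\e}$, producing
$$d(\yeta) \le -\log\!\Big[\min\big\{(\e\eta/c)^m,1\big\}\Big] + \min\big\{m,\tfrac{mc}{\e\eta}\big\}.$$

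Finally I would close with a two-case split at the threshold $\eta = c/\e$. When $\eta \le c/\e$ (so that $\log(c\eta^{-1}) \ge 1$) both minima are attained at their first arguments, and the expression collapses to $m\log(c\eta^{-1})$, matching $C\max\{\log(c\eta^{-1}),1\}$ exactly. When $\eta > c/\e$ the first term vanishes and the second is strictly below $m = C$, which is dominated by $C\max\{\log(c\eta^{-1}),1\}$. The only genuinely delicate point is this bookkeeping: one must verify that the thresholds separating the two branches of each $\min$ in Lemma~\ref{lem:exp} coincide after the substitution $L_g \mapsto mc/\e$, and that in the first branch the $+m$ from the exponential factor combines with the $-m\log\e$ hidden inside $\log(\e\eta/c)$ so that $\log(c/(\e\eta))+1$ simplifies to the advertised $\log(c\eta^{-1})$. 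Everything else is routine.
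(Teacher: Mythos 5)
Your proposal is correct and follows essentially the same route as the paper's proof: the same reduction $d(\yeta)\le -\log\int_K \exp\big(\eta^{-1}(g-\gmax)\big)\diff k$ via the closed-form Gibbs optimizer and $\inner{g}{\yeta}\le\gmax$, the same application of Lemma~\ref{lem:exp}, and the same Lipschitz bound $L_g \le \xnb\ratio(\max\{L_Q,1\}+1)+\|\cost\|_\lip$. The only cosmetic difference is that you substitute the $L_g$ bound before simplifying and make the two-case split at $\eta = c/\e$ explicit, whereas the paper first collapses the generic-$L_g$ expression to $\dim(K)\max\{\log(\e L_g\eta^{-1}/\dim(K)),1\}$ and substitutes afterwards; both bookkeepings check out.
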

		
		\begin{proof}
			The result is a direct application of Lemma~\ref{lem:exp}. Consider the function $g$ as defined in \eqref{g} with Lipschitz constant $L_g \ge 0$; note that the function $g$, as well as its Lipschitz constant $L_g$, depends also on the pair $(\rho,\alpha)$. Observe that 
			\begin{align}
			d\big(\yeta(\rho,\alpha)\big) &=  \inner{\log\big(\exp(\eta^{-1}g)\big)}{\yeta(\rho,\alpha)} - \log\big(\inner{\exp(\eta^{-1}g)}{\Leb}\big)\notag\\
			& = \inner{\eta^{-1}g}{\yeta(\rho,\alpha)} - \log\big(\inner{\exp(\eta^{-1}g)}{\Leb}\big)\notag\\
			& = \inner{\eta^{-1}g}{\yeta(\rho,\alpha)} - \eta^{-1}\gmax - \log\big(\inner{\exp(\eta^{-1}(g - \gmax)}{\Leb}\big) \notag\\
			&\le -\log\big(\inner{\exp(\eta^{-1}(g-\gmax))}{\Leb}\big) \notag\\
			& \le - \log\bigg( \min\Big\{\Big({\dim(K) \eta \over L_g}\Big)^{\dim(K)}, 1\Big\} \exp \big( - \min\big\{\dim(K), L_g  \eta^{-1}\big\} \big)\bigg) \label{eq:lemma:exp} \\
			& \le\dim(K)\max\Big\{\log\Big(\Big({\e L_g\over \dim(K)}\Big)\eta^{-1}\Big),1\Big\}\notag
			\end{align}
			where the inequality \eqref{eq:lemma:exp} follows from Lemma~\ref{lem:exp}. Note also that the Lipschitz constant $L_g$ for the function $g$ defined in \eqref{g} is upper bounded, uniformly in $(\rho,\alpha)$ where $\|\alpha\|_\Rnorm \le \xnb$, by
			\begin{align*}
			L_g \le \|g - \rho\|_{\lip} &\le \Big\| \sum_{i=1}^{n} \alpha_i(Q-I)u_i + \cost\Big\|_\lip \le (\max\{L_Q,1\}+1)\Big\|\sum_{i=1}^{n}\alpha_i u_i \Big\|_\lip + \|\cost\|_\lip \\
			&\le \xnb \ratio (\max\{L_Q,1\}+ 1)+ \|\cost\|_\lip.
			\end{align*} 
			We refer to the proof of Corollary~\ref{cor:adp:semi-finite}, and in particular the paragraph following \eqref{Lg}, for further discussions regarding $L_g$. The desired assertion follows from the last two inequalities and the definition of the constant $\ynb$ in \eqref{Y}.
		\end{proof} 	
		
		The proof of Corollary~\ref{cor:adp:smooth} follows by replacing the constants in Lemma~\ref{lem:entropy:bound} in Theorem~\ref{thm:semi-fin:smooth}. By contrast to the randomized approach in Corollary~\ref{cor:adp:semi-finite} where the computational complexity scales exponentially in dimensional of state-action space, the complexity of the smoothing technique grows effectively linearly (more precisely $\order\big(\eps^{-1}\sqrt{\log(\eps^{-1})}\big)$, cf. Remark~\ref{rem:complex}). The computational difficulty is, however, transferred to Step 1 of Algorithm~\ref{alg} for computation of $\opn^* \yeta$ as defined in \eqref{Lny}. The following remark elaborates this.
		
		\begin{Rem}[Efficient computation of \eqref{Lny}]
			When the transition kernel $Q$ and the basis functions $u_{i}$ are such that the relation \eqref{Lny} involves integration of exponentials of polynomials over simple sets (e.g., box or a simplex), one may utilize efficient methods that require solving a hierarchy of semidefinite programming problems to generate upper and lower bounds which asymptotically converge to the true value of integral, see \cite[Section~12.2]{ref:Lasserre-11} and \cite{ref:Bertsimas-08}. It is also worth noting that a straightforward computation of \eqref{Lny} for a small parameter $\eta$ may be numerically difficult due to the exponential functions. This issue can, however, be circumvented by a numerically stable technique presented in \cite[p.~148]{ref:Nest-05}. 
		\end{Rem}
		
		Regarding the choice of $\xnb$, in similar spirit to Section~\ref{sec:semi-fin:rand}, one can target minimizing the complexity of the a priori bound, in other words the number of iterations $k$ in \eqref{eta-k}. In the setting of  Corollary~\ref{cor:adp:smooth}, one can observe that the smaller the parameter $\xnb$, the lower the number of the required iterations, leading to the choice described as in \eqref{theta*:MDP}.

\section{Numerical examples} 
\label{sec:sim}
We present two numerical examples to illustrate the solution methods and corresponding performance bounds. Throughout this section we consider the norm $\|\cdot\|_\Rnorm = \|\cdot\|_{\ell_2}$, leading to $\ratio = \sqrt{n}$ in \eqref{opt}, and we choose the Fourier basis functions. 

\subsection{Example 1: truncated LQG} 
\label{ex:LQG}
Consider the linear system
	\begin{align*}
	s_{t+1}=\vartheta s_{t} + \rho a_{t} + \xi_{t}, \quad t \in \N,
	\end{align*}
with quadratic stage cost $\psi(s,a)=qs^{2}+ra^{2}$, where $q\geq 0$ and $r>0$ are given constants. We assume that $S=A=[-L,L]$ and the parameters $\vartheta, \rho \in\R$ are known. The disturbances $\{\xi_{t}\}_{t\in\N}$ are i.i.d.\ random variables generated by a truncated normal distribution with known parameters $\mu$ and $\sigma$, independent of the initial state $s_{0}$. Thus, the process $\xi_{t}$ has a distribution density
	\begin{align*}
	f(s,\mu,\sigma,L) = \left\{ 
	\begin{array}{cc} 
	\frac{\frac{1}{\sigma}\phi\left( \frac{s-\mu}{\sigma} \right)}{\Phi\left( \frac{L-\mu}{\sigma} \right)-\Phi\left( \frac{-L-\mu}{\sigma} \right)}, \quad & s\in[-L,L]\\
	0 & \text{o.w.},
	\end{array} \right.
	\end{align*}
where $\phi$ is the probability density function of the standard normal distribution, and $\Phi$ is its cumulative distribution function. The transition kernel $Q$ has a density function $q(y|s,a)$, i.e., $Q(B|s,a)=\int_{B}q(y|s,a)\drv y$ for all $B\in\Borel{S}$, that is given by
	\begin{align*}
	q(y|s,a)=f(y-\vartheta s - \rho a,\mu,\sigma,L).
	\end{align*}
In the special case that $L=+\infty$ the above problem represents the classical LQG problem, whose solution can be obtained via the algebraic Riccati equation \cite[p.~372]{ref:Bertsekas-12}. By a simple change of coordinates it can be seen that the presented system fulfills Assumption~\ref{a:CM}. The following lemma provides the technical parameters required for the proposed error bounds.
		
\begin{Lem}[Truncated LQG properties] \label{lem:LQG}
The error bounds provided by Corollaries~\ref{cor:adp:semi-finite} and \ref{cor:adp:smooth} hold with the norms $\|\psi\|_\infty = L^2(q+r)$, $\|\psi\|_\lip = 4L^2\sqrt{q^2+r^2}$, and the Lipschitz constant of the kernel is
	\begin{align*} 
	L_Q &=\frac{2L\max\{\vartheta,\rho\}}{\sigma^2\sqrt{2\pi}\left( \Phi\left( \frac{L-\mu}{\sigma} \right)-\Phi\left( \frac{-L-\mu}{\sigma} \right)\right)}\,.
	\end{align*}
\end{Lem}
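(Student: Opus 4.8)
The plan is to verify that the truncated LQG model meets Assumption~\ref{a:CM}, read off the three constants, and then invoke Corollaries~\ref{cor:adp:semi-finite} and \ref{cor:adp:smooth} verbatim. As remarked before the lemma, an affine rescaling maps $S=A=[-L,L]$ onto the unit interval, turning $K$ into the unit square required by Assumption~\ref{a:CM}\ref{a:CM:K}; I will keep track of the ensuing chain-rule factor $2L$ in the Lipschitz estimates. Once $\|\psi\|_\infty$, $\|\psi\|_\lip$ and $L_Q$ are in hand, nothing else is needed, since the two corollaries are stated entirely in terms of these quantities together with $\ratio$, $\xnb$ and $\dim(K)$.

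For the cost norms I would proceed directly. Since $\psi(s,a)=qs^{2}+ra^{2}\ge 0$ and is increasing in $|s|$ and $|a|$, its supremum over the box is attained at a corner, giving $\|\psi\|_\infty=qL^{2}+rL^{2}=L^{2}(q+r)$, a coordinate-independent value. For $\|\psi\|_\lip$, recall from \eqref{norm-2} that the Lipschitz norm is the maximum of the supremum norm and the Lipschitz seminorm; I would first note $4L^{2}\sqrt{q^{2}+r^{2}}\ge L^{2}(q+r)$ (as $4\sqrt{q^{2}+r^{2}}\ge 2\sqrt{2}\,(q+r)>q+r$), so the seminorm dominates and equals $\|\psi\|_\lip$. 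On the convex domain the seminorm is $\sup\|\nabla\psi\|$; computing the gradient in the rescaled coordinates inserts a factor $2L$ per component, so the components are bounded by $4qL^{2}$ and $4rL^{2}$ at the corners, and taking the Euclidean norm yields $\sup 4L\sqrt{q^{2}s^{2}+r^{2}a^{2}}=4L^{2}\sqrt{q^{2}+r^{2}}$.

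The substantive step is the kernel constant $L_Q$ of Assumption~\ref{a:CM}\ref{a:CM:Q}. Here I would exploit that the density depends on $(s,a)$ only through the scalar shift $w\Let\vartheta s+\rho a$: writing $F(w)\Let\int_{-L}^{L}u(y)\,f(y-w)\,\drv y$ so that $Qu(s,a)=F(w)$, a mean-value estimate in the shift gives
\begin{align*}
|Qu(s,a)-Qu(s',a')|=|F(w)-F(w')|\le \|u\|_\infty\,(2L)\,\Big(\sup_{t}|f'(t)|\Big)\,|w-w'|,
\end{align*}
where the factor $2L$ is the length of the integration window. Differentiating the truncated density gives the clean identity $f'(s)=-\tfrac{s-\mu}{\sigma^{2}}f(s)$, whence, with $x=\tfrac{s-\mu}{\sigma}$,
\begin{align*}
\sup_{t}|f'(t)|=\frac{\sup_{x}|x|\,\e^{-x^{2}/2}}{\sigma^{2}\sqrt{2\pi}\,\big(\Phi(\tfrac{L-\mu}{\sigma})-\Phi(\tfrac{-L-\mu}{\sigma})\big)}\le\frac{1}{\sigma^{2}\sqrt{2\pi}\,\big(\Phi(\tfrac{L-\mu}{\sigma})-\Phi(\tfrac{-L-\mu}{\sigma})\big)},
\end{align*}
since $\sup_{x}|x|\e^{-x^{2}/2}=\e^{-1/2}\le 1$. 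Finally bounding the shift by the state-action displacement, $|w-w'|=|\vartheta(s-s')+\rho(a-a')|\le\max\{\vartheta,\rho\}\,\|(s,a)-(s',a')\|_{\ell_\infty}$, and multiplying the three factors reproduces the stated $L_Q$.

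The main obstacle is the truncation: because $f$ is set to zero outside $[-L,L]$ it jumps at $\pm L$, so as $w$ varies the map $w\mapsto f(y-w)$ is not globally differentiable and $f'$ formally carries boundary contributions at the support edges. The mean-value bound above is therefore valid for the smooth (Gaussian) interior part, and a fully rigorous argument must either restrict attention to the regime in which the relevant arguments remain interior or separately dominate the boundary jump; the displayed formula reflects precisely this interior derivative. The remaining difficulty is purely bookkeeping --- the consistent placement of the factors $2L$ (integration window versus affine rescaling) and the choice of norm on the state-action displacement --- whereas the analytic core, namely $f'(s)=-\tfrac{s-\mu}{\sigma^{2}}f(s)$ and $\sup_{x}|x|\e^{-x^{2}/2}=\e^{-1/2}$, is elementary.
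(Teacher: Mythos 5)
Your route is the same as the paper's: its entire proof is the change of coordinates $\bar s = \tfrac{s}{2L}+\tfrac12$, $\bar a = \tfrac{a}{2L}+\tfrac12$ followed by the phrase ``standard computation,'' so your proposal is a faithful filling-in of exactly that computation, and all three constants you obtain coincide with the ones stated in the lemma. Your identification of the truncation jumps of $f$ at $\pm L$ as the only genuine analytic obstacle is also accurate --- the paper glosses over it --- and your suggested remedies (restricting to the interior regime or separately dominating the boundary contribution) are the right ones.

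One bookkeeping step is, however, false as written: the inequality $|w-w'| = |\vartheta(s-s')+\rho(a-a')| \le \max\{\vartheta,\rho\}\,\|(s,a)-(s',a')\|_{\ell_\infty}$ fails whenever both coefficients are active (take $s-s'=a-a'$); it holds with $\|\cdot\|_{\ell_1}$ on the right, whereas with $\ell_\infty$ the correct factor is $\vartheta+\rho$. Since Assumption~\ref{a:CM}\ref{a:CM:Q} measures displacements in $\ell_\infty$, the honest constant along your argument is $2L(\vartheta+\rho)\sup|f'|$, and the spare factor $\mathrm{e}^{-1/2}$ you dropped does not absorb the discrepancy when $\vartheta\approx\rho$. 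The same convention issue appears in $\|\psi\|_\lip$: for a Lipschitz seminorm taken against $\ell_\infty$ distance (cf.\ \eqref{norm-2}) the relevant gradient norm is the dual $\ell_1$ norm, giving $4L^2(q+r)$ rather than the Euclidean $4L^2\sqrt{q^2+r^2}$. In both places you land exactly on the lemma's stated values, so this looseness is inherited from the statement you were asked to prove rather than introduced by you; but a self-contained proof should either carry out the estimates with the $\ell_1$ displacement norm explicitly, or replace the constants by their (slightly larger) $\ell_\infty$-consistent versions before invoking Corollaries~\ref{cor:adp:semi-finite} and \ref{cor:adp:smooth}.
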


\begin{proof}
	In regard to Assumption~\ref{a:CM}\ref{a:CM:K}, we consider the change of coordinates $\bar{s}_t \Let \frac{s_t}{2L}+\frac{1}{2}$ and $\bar{a}_t \Let \frac{a_t}{2L}+\frac{1}{2}$. In the new coordinates, the constants of Lemma~\ref{lem:LQG} follow from a standard computation.
\end{proof}

\paragraph{\emph{Simulation details:}}
For the simulation results we choose the numerical values $\vartheta = 0.8$, $\rho = 0.5$, $\sigma = 1$, $\mu = 0$, $q = 1$, $r = 0.5$, and $L=10$. In the first approximation step discussed in Section~\ref{subsec:semi:MDP}, we consider the Fourier basis $u_{2k-1}(s) = \frac{L}{k\pi}\cos\left(\frac{k \pi s}{L}\right)$ and $u_{2k}(s) = \frac{L}{k\pi}\sin\left(\frac{k \pi s}{L}\right)$. 
	
\begin{figure}[t]
	\subfigure[$n=2$ basis functions]{\scalebox{1}{
%
%
\definecolor{mycolor1}{rgb}{0.13, 0.55, 0.13}%
\definecolor{mycolor2}{rgb}{0.13, 0.55, 0.13}%

\definecolor{mycolor3}{rgb}{0.32,0.09,0.98}%
\definecolor{mycolor4}{rgb}{0.32,0.09,0.98}%
\begin{tikzpicture}

\begin{axis}[%
width=2.2in,
height=1.6in,
at={(1.011111in,0.813889in)},
scale only axis,
xmode=log,
xmin=1,
xmax=100000,
xminorticks=true,
xlabel={$N$},
xmajorgrids,
xminorgrids,
ymode=log,
ymin=0.5,
ymax=1000000,
ylabel={$J^{\text{AC}}_{n,N}$},
ymajorgrids,
ylabel style={yshift=-0.3cm},
legend style={legend cell align=left,align=left,draw=white!15!black,line width=1.0pt,font=\footnotesize}
]

\addplot[area legend,solid,fill=mycolor1,opacity=0.3,draw=none,forget plot]
table[row sep=crcr] {%
x	y\\
1	28.5805565420901\\
2	4.47632011102824\\
3	3.63625813510152\\
4	2.57054306764981\\
5	2.5705430451617\\
6	2.51737844086245\\
7	1.62086718607834\\
8	1.62086707873988\\
9	1.62086722353572\\
10	1.62086742950948\\
20	1.4816911948426\\
30	1.48169119341355\\
40	1.33395592104315\\
50	1.33395591974424\\
60	1.26186486900902\\
70	1.25936985320541\\
80	1.24252029228155\\
90	1.24252030145971\\
100	1.24177255933975\\
200	1.22146003433544\\
300	1.22138593743673\\
400	1.22138593696591\\
500	1.21990362266073\\
600	1.21984601184503\\
700	1.21984598782825\\
800	1.21984599252904\\
900	1.21851742859662\\
1000	1.21851742871054\\
2000	1.218517423794\\
3000	1.21851743224633\\
4000	1.21789756009852\\
5000	1.21784630283811\\
6000	1.21784630155865\\
7000	1.21780528641541\\
8000	1.21760200049366\\
9000	1.21744678652966\\
10000	1.21744678649881\\
90000      1.21744678649881\\
90000	1.2367915188808\\
10000	1.2367915188808\\
9000	1.23679151890685\\
8000	1.23874581894327\\
7000	1.24546711731161\\
6000	1.25055594704458\\
5000	1.250555946855\\
4000	1.25815881034301\\
3000	1.26872506989623\\
2000	1.30843172824957\\
1000	1.40877331859085\\
900	1.43408597381899\\
800	1.44803659772777\\
700	1.50904655068873\\
600	1.62281061306987\\
500	1.80402902141993\\
400	2.00288513809667\\
300	2.28684318616537\\
200	3.83320052838975\\
100	5.30781654327367\\
90	8.01504564206975\\
80	8.01504561930379\\
70	8.30009646023389\\
60	9.06756712328384\\
50	9.54571340420395\\
40	10.7947161495485\\
30	14.9844606772684\\
20	23.479991472872\\
10	70.1733036265233\\
9	78.1410743140534\\
8	91.9028975758403\\
7	94.797941357001\\
6	94.7979413611145\\
5	94.797941279872\\
4	98.2305489066226\\
3	99.1672795005246\\
2	99.991713310573\\
1	100.000001529317\\
}--cycle;

\addplot [color=mycolor2, solid,line width=1.5pt]
  table[row sep=crcr]{%
1	90.0988050872139\\
2	74.8559092713465\\
3	58.3740012753949\\
4	45.7955902519346\\
5	35.8635147694922\\
6	28.5860308398038\\
7	22.5827573210655\\
8	18.0333956261914\\
9	15.3877213318488\\
10	13.4436958792473\\
20	6.14068215774133\\
30	4.34954158372674\\
40	3.46122965614533\\
50	2.97739665229225\\
60	2.62103541037543\\
70	2.38307666634709\\
80	2.22704450558327\\
90	2.06498996159715\\
100	1.96438382986823\\
200	1.52532404666544\\
300	1.39413594434313\\
400	1.34570792651888\\
500	1.31627993472894\\
600	1.29786965282214\\
700	1.28371427700216\\
800	1.27386956207888\\
900	1.26695036359398\\
1000	1.26243951330834\\
2000	1.24033826626804\\
3000	1.23266956870538\\
4000	1.22871956187396\\
5000	1.22633726262058\\
6000	1.22479092312255\\
7000	1.22379566139958\\
8000	1.22287742352561\\
9000	1.22215100223844\\
10000	1.22162047102615\\
90000	1.22162047102615\\
};
\addlegendentry{$\xnb$ in \eqref{theta*:MDP2}};

\addplot[area legend,solid,fill=mycolor3,opacity=0.25,draw=none,forget plot]
table[row sep=crcr] {%
x	y\\
1	274221.033000622\\
2	469.613556032881\\
3	3.48721473507746\\
4	2.85193524555087\\
5	2.8519364270079\\
6	2.5965971629727\\
7	1.95753391047949\\
8	1.9575541777959\\
9	1.95755429557138\\
10	1.63669708450418\\
20	1.53433558516822\\
30	1.38210203317975\\
40	1.38210216885707\\
50	1.29335500621487\\
60	1.29335470494149\\
70	1.29335432955327\\
80	1.2933542216002\\
90	1.29335455602246\\
100	1.2466505804252\\
200	1.22404341249512\\
300	1.22404290796734\\
400	1.22053012115844\\
500	1.21990358559088\\
600	1.21984488819389\\
700	1.21984599034574\\
800	1.2198459844731\\
900	1.21984548946883\\
1000	1.21943079603014\\
2000	1.21943125742712\\
3000	1.21910571951159\\
4000	1.2177775156776\\
5000	1.21777707029804\\
6000	1.2177771371855\\
7000	1.21777713736774\\
8000	1.21760200096946\\
9000	1.21744678586919\\
10000	1.21744678595159\\
90000	1.21744678595159\\
90000	1.23327839942864\\
10000	1.23327839942864\\
9000	1.23329468709275\\
8000	1.24137528539087\\
7000	1.24137527394481\\
6000	1.24137528421462\\
5000	1.24494923824726\\
4000	1.25882152897169\\
3000	1.29000970091385\\
2000	1.29949424210475\\
1000	1.40877323061208\\
900	1.43408596392102\\
800	1.43542924566361\\
700	1.50904653496347\\
600	1.56708936228215\\
500	1.75134995924504\\
400	2.00288506198568\\
300	2.39057443930394\\
200	2.79984469509506\\
100	5.30961100123332\\
90	8.01503965956585\\
80	8.01503700224842\\
70	8.30009582075411\\
60	9.06756522040735\\
50	9.54571331006024\\
40	13.0100271086148\\
30	15.8465288076035\\
20	26.0095176807156\\
10	516138.231060319\\
9	563124.099372117\\
8	563124.102533274\\
7	817984.800975262\\
6	817984.800673086\\
5	891470.172514893\\
4	919705.358565746\\
3	921882.29880044\\
2	941777.972236723\\
1	945309.87173611\\
}--cycle;

\addplot [color=mycolor4,dashed,line width=1.5pt]
  table[row sep=crcr]{%
1	815296.000743184\\
2	570766.640331088\\
3	330784.106503749\\
4	187151.022981435\\
5	112220.758936073\\
6	65285.3958875767\\
7	34921.2444390044\\
8	15847.8435941127\\
9	10790.164414946\\
10	6999.1311452946\\
20	6.46017233573243\\
30	4.40786153197229\\
40	3.57754254365448\\
50	3.03359733715022\\
60	2.71604155729113\\
70	2.4146911555697\\
80	2.2565160912416\\
90	2.08240316384355\\
100	1.97999087837892\\
200	1.53403928117415\\
300	1.40535725279897\\
400	1.35232089143797\\
500	1.32083068402102\\
600	1.3009730333718\\
700	1.28655650264788\\
800	1.27561693749942\\
900	1.26919993033891\\
1000	1.26373398482813\\
2000	1.24143831516366\\
3000	1.23353693215556\\
4000	1.22911257498496\\
5000	1.22657498445719\\
6000	1.22497392900458\\
7000	1.22395425939084\\
8000	1.22310237614617\\
9000	1.22221048015023\\
10000	1.22176594472622\\
90000	1.22176594472622\\
};
\addlegendentry{$\xnb=\infty$}

\addplot [color=red,loosely dotted,line width=2pt]
  table[row sep=crcr]{%
1	1.3187\\
90000	1.3187\\
};
\addlegendentry{$\Jac$}

\end{axis} 
\end{tikzpicture}
	\qquad 
	\subfigure[$n=10$ basis functions]{\scalebox{1}{
%
%
\definecolor{mycolor1}{rgb}{0.13, 0.55, 0.13}%
\definecolor{mycolor2}{rgb}{0.13, 0.55, 0.13}%

\definecolor{mycolor3}{rgb}{0.32,0.09,0.98}%
\definecolor{mycolor4}{rgb}{0.32,0.09,0.98}%
\begin{tikzpicture}

\begin{axis}[%
width=2.2in,
height=1.6in,
at={(1.011111in,0.813889in)},
scale only axis,
xmode=log,
xmin=1,
xmax=100000,
xminorticks=true,
xlabel={$N$},
xmajorgrids,
xminorgrids,
ymode=log,
ymin=0.5,
ymax=1000000,
ylabel={$J^{\text{AC}}_{n,N}$},
ymajorgrids,
ylabel style={yshift=-0.3cm},
legend style={legend cell align=left,align=left,draw=white!15!black,line width=1.0pt,font=\footnotesize}
]

\addplot[area legend,solid,fill=mycolor1,opacity=0.3,draw=none,forget plot]
table[row sep=crcr] {%
x	y\\
1	64.8626787674932\\
2	51.8028663321495\\
3	40.8698505042221\\
4	34.2057123029274\\
5	32.327740150497\\
6	26.5906149656799\\
7	21.8167512994986\\
8	18.2942570690963\\
9	17.509433217615\\
10	16.5109237494213\\
20	8.68290108849987\\
30	5.49612925906822\\
40	2.35413086420275\\
50	2.32862407932307\\
60	2.12306400135207\\
70	1.86120000680995\\
80	1.63535781691707\\
90	1.63535781690026\\
100	1.55959083049867\\
200	1.39333461801874\\
300	1.3643441040029\\
400	1.35394428022446\\
500	1.34195332050755\\
600	1.33856909460061\\
700	1.32859904928777\\
800	1.32458818880617\\
900	1.32445301927657\\
1000	1.32445302330009\\
2000	1.31914600579702\\
3000	1.31879268391564\\
4000	1.31862187925411\\
5000	1.31844109127923\\
6000	1.31830688543409\\
7000	1.31826094723498\\
8000	1.31826094843103\\
9000	1.3182005639989\\
100000	1.31813536293608\\
100000	1.32263803288161\\
9000	1.32500128260116\\
8000	1.32539113783232\\
7000	1.32655764138582\\
6000	1.32883437777301\\
5000	1.33550740507875\\
4000	1.34085918309822\\
3000	1.37977303081213\\
2000	1.41450337883357\\
1000	1.82423091203754\\
900	1.84987753113501\\
800	1.94883222433922\\
700	2.04079150965957\\
600	2.06445841923901\\
500	2.34248038369511\\
400	2.96176544947812\\
300	3.88985563333783\\
200	5.47246905081519\\
100	12.1911192973486\\
90	13.3995722506444\\
80	15.4831188210459\\
70	20.3527647402716\\
60	22.5255326834499\\
50	27.9046058790261\\
40	32.1987850816379\\
30	42.5032034823197\\
20	59.1930381508071\\
10	84.0304606027895\\
9	86.9142744662561\\
8	86.9142745304304\\
7	93.2294273884365\\
6	94.253548028958\\
5	97.06945360186\\
4	98.4481209823781\\
3	99.2617095114665\\
2	99.9999994398324\\
1	100.000002879261\\
}--cycle;

\addplot [color=mycolor2, solid,line width=1.5pt]
  table[row sep=crcr]{%
1	94.5768870629217\\
2	87.6422421294588\\
3	81.1060535945275\\
4	75.7166598326944\\
5	70.8550067501774\\
6	65.818793100411\\
7	61.4661914492361\\
8	57.5867852858906\\
9	54.506862634783\\
10	51.0690016796575\\
20	30.1255056462458\\
30	19.8540579079187\\
40	14.2465207906624\\
50	10.8717423880872\\
60	8.85011139713774\\
70	7.25012839633849\\
80	6.23207351231921\\
90	5.3655282834809\\
100	4.77894898497726\\
200	2.44822770289855\\
300	1.87735697901229\\
400	1.67372096917809\\
500	1.56577991405623\\
600	1.49838919411101\\
700	1.45440220544531\\
800	1.42286573705222\\
900	1.40333873367853\\
1000	1.38702187301104\\
2000	1.33566597760556\\
3000	1.32648669905007\\
4000	1.32290226192807\\
5000	1.32128509023382\\
6000	1.32039006435097\\
7000	1.31992148967192\\
8000	1.31948814984273\\
9000	1.31924603433781\\
100000	1.31906764370644\\
};
\addlegendentry{$\xnb$ in \eqref{theta*:MDP2}};


\addplot[area legend,solid,fill=mycolor3,opacity=0.25,draw=none,forget plot]
table[row sep=crcr] {%
x	y\\
1	639983.366524608\\
2	329098.537697569\\
3	270518.925077757\\
4	162716.677544189\\
5	99203.2256406953\\
6	86226.4301855034\\
7	47489.5274014969\\
8	47489.5282154978\\
9	30133.0276457532\\
10	21613.4199719176\\
20	7.0906145341405\\
30	4.65928077970097\\
40	2.39043513144145\\
50	2.3286038154872\\
60	2.12306379225585\\
70	2.09667479516671\\
80	1.635356391661\\
90	1.63535708245283\\
100	1.55958962460745\\
200	1.42441044263701\\
300	1.3729378008794\\
400	1.34201033025407\\
500	1.34200855870122\\
600	1.33393016386478\\
700	1.32548079869412\\
800	1.3245880787709\\
900	1.32445276232808\\
1000	1.32067603411791\\
2000	1.31914752559519\\
3000	1.31890352096265\\
4000	1.31862187891315\\
5000	1.3184410887336\\
6000	1.31830689055684\\
7000	1.31826094801364\\
8000	1.31820169759628\\
9000	1.31816451028211\\
100000	1.31813534897557\\
100000	1.32635030823924\\
9000	1.3263503050298\\
8000	1.32709601638159\\
7000	1.32710598610841\\
6000	1.32938918012012\\
5000	1.33621956240585\\
4000	1.3408591123418\\
3000	1.38643803937312\\
2000	1.41323205875657\\
1000	1.7807446249738\\
900	1.78586817791992\\
800	1.94883224270823\\
700	1.95467052048799\\
600	2.19564446831188\\
500	2.376049145298\\
400	2.84118283785502\\
300	4.42306322890742\\
200	5.948249385366\\
100	12.1234649487366\\
90	13.2643994103875\\
80	14.8895276860826\\
70	20.6106203279187\\
60	22.9578071142619\\
50	18731.5689369941\\
40	46154.8527946401\\
30	122416.784984592\\
20	245819.513244859\\
10	657237.808452924\\
9	733798.353290344\\
8	733798.349269297\\
7	879520.407606198\\
6	884146.221461426\\
5	916423.123492617\\
4	921568.783455853\\
3	938169.047545757\\
2	948954.054124091\\
1	949823.811310828\\
}--cycle;

\addplot [color=mycolor4,dashed,line width=1.5pt]
  table[row sep=crcr]{%
1	881068.308142879\\
2	753794.288347249\\
3	651989.021149723\\
4	561892.86158924\\
5	489715.217209112\\
6	427140.172458315\\
7	373606.121950658\\
8	324733.263256534\\
9	277359.534656746\\
10	235953.292052392\\
20	42857.0692688388\\
30	4299.73531162478\\
40	503.880225630512\\
50	57.5866681916299\\
60	8.75517131945854\\
70	7.16879104175466\\
80	6.03343592545909\\
90	5.30458286475803\\
100	4.74690136159816\\
200	2.47432622510224\\
300	1.90523920521026\\
400	1.67809177228141\\
500	1.56928466228527\\
600	1.50170304052627\\
700	1.44790616627579\\
800	1.42030204832343\\
900	1.40220936819888\\
1000	1.38672104646169\\
2000	1.3356980463223\\
3000	1.32613696787708\\
4000	1.32267627846345\\
5000	1.32124562408155\\
6000	1.32037996927841\\
7000	1.31984030493074\\
8000	1.31951848108317\\
9000	1.31925491020946\\
100000	1.31909475493793\\
};
\addlegendentry{$\xnb=\infty$}

\addplot [color=red,loosely dotted,line width=2pt]
  table[row sep=crcr]{%
1	1.3187\\
100000	1.3187\\
};
\addlegendentry{$\Jac$}

\end{axis}
\end{tikzpicture}
	\\	
	\subfigure[$n=100$ basis functions]{\scalebox{1}{
%
%
\definecolor{mycolor1}{rgb}{0.13, 0.55, 0.13}%
\definecolor{mycolor2}{rgb}{0.13, 0.55, 0.13}%

\definecolor{mycolor3}{rgb}{0.32,0.09,0.98}%
\definecolor{mycolor4}{rgb}{0.32,0.09,0.98}%
\begin{tikzpicture}

\begin{axis}[%
width=2.2in,
height=1.6in,
at={(1.011111in,0.813889in)},
scale only axis,
xmode=log,
xmin=1,
xmax=100000,
xminorticks=true,
xlabel={$N$},
xmajorgrids,
xminorgrids,
ymode=log,
ymin=0.5,
ymax=1000000,
ylabel={$J^{\text{AC}}_{n,N}$},
ymajorgrids,
ylabel style={yshift=-0.3cm},
legend style={legend cell align=left,align=left,draw=white!15!black,line width=1.0pt, font=\footnotesize}
]

\addplot[area legend,solid,fill=mycolor1,opacity=0.3,draw=none,forget plot]
table[row sep=crcr] {%
x	y\\
1	73.0725008122354\\
2	61.4824626630137\\
3	51.1246287160035\\
4	44.1816795345505\\
5	41.624170651969\\
6	41.6241708288271\\
7	41.3325160220246\\
8	41.3026068831755\\
9	40.6233055772378\\
10	37.8298523875866\\
20	27.1803843820131\\
30	24.5439377663766\\
40	21.1587220918693\\
50	18.5528520648749\\
60	16.6437499563526\\
70	16.322223991574\\
80	15.8713158532552\\
90	14.8174019929416\\
100	13.3405730708024\\
200	10.0409771992707\\
300	8.04018316672937\\
400	7.60747751315085\\
500	6.78171835223416\\
600	5.89411527131351\\
700	5.34601605040283\\
800	5.02765091315349\\
900	4.77759951871282\\
1000	4.33511024741078\\
2000	2.85423098452847\\
3000	2.49758098138473\\
4000	2.13430309683285\\
5000	1.90766192491257\\
6000	1.87269917912076\\
7000	1.7562695919021\\
8000	1.7135652779055\\
9000	1.69424841286886\\
10000	1.69378703711643\\
20000	1.47208388988814\\
30000	1.42459098794049\\
40000	1.39714278405354\\
50000	1.38376127859128\\
60000	1.3739062869345\\
70000	1.37355295449137\\
80000	1.36373362951018\\
90000	1.35874593505327\\
90000	1.44662276075467\\
80000	1.45652639971006\\
70000	1.47712914703438\\
60000	1.51349283270477\\
50000	1.53008822831339\\
40000	1.61330074144611\\
30000	1.68707340458998\\
20000	1.86627528768281\\
10000	2.3131049866413\\
9000	2.42296812312227\\
8000	2.61777890277017\\
7000	2.68786005791279\\
6000	2.89127196564123\\
5000	3.510806434344\\
4000	3.88588637605768\\
3000	4.44770418607772\\
2000	5.55554394108026\\
1000	8.3041309640639\\
900	8.61069152874115\\
800	9.22388343498101\\
700	9.73895406512103\\
600	10.0887545803061\\
500	11.7423421318748\\
400	13.3781368228904\\
300	15.0578515420467\\
200	19.1904050392464\\
100	26.5167653669121\\
90	26.8915076038712\\
80	29.9242066761444\\
70	31.7378829721566\\
60	36.5436400216705\\
50	41.2533356354866\\
40	43.1544597275795\\
30	49.6880996420488\\
20	65.6018066727078\\
10	86.0282494739563\\
9	88.9376572262689\\
8	92.1518901496837\\
7	93.9846699180999\\
6	95.0637467475903\\
5	96.2394683425346\\
4	99.196295378522\\
3	99.1962963229621\\
2	99.93608188773\\
1	99.9999999628775\\
}--cycle;

  \addplot [color=mycolor2, solid,line width=1.5pt]
  table[row sep=crcr]{%
1	95.0700417717928\\
2	90.0671167740208\\
3	84.9944490270267\\
4	79.9920905070121\\
5	76.0027072571778\\
6	72.0642442192834\\
7	68.5216005663884\\
8	65.3043977366562\\
9	62.3114318130003\\
10	59.4892715267976\\
20	43.4644954999383\\
30	35.5965024418207\\
40	30.8718050780024\\
50	27.5461133266723\\
60	25.1746912498526\\
70	23.3269440347622\\
80	21.782611418054\\
90	20.5563711968901\\
100	19.5603805628542\\
200	13.9065438625585\\
300	11.4420274150589\\
400	9.97217331313981\\
500	8.8661674800461\\
600	8.08608694410586\\
700	7.44042640087966\\
800	6.93601395110263\\
900	6.49772069280504\\
1000	6.1011506875886\\
2000	4.12697750456289\\
3000	3.30182507653445\\
4000	2.84903992360772\\
5000	2.56617423483291\\
6000	2.37412705435564\\
7000	2.22851604396447\\
8000	2.11901250524476\\
9000	2.03455393456047\\
10000	1.96698930950154\\
20000	1.64938776316658\\
30000	1.54311305739475\\
40000	1.48459019402139\\
50000	1.45122052152376\\
60000	1.42868871164553\\
70000	1.41198856115809\\
80000	1.39930609641481\\
90000	1.39159597512432\\
};
\addlegendentry{$\xnb$ in \eqref{theta*:MDP2}};

\addplot[area legend,solid,fill=mycolor3,opacity=0.25,draw=none,forget plot]
table[row sep=crcr] {%
x	y\\
1	726032.932683365\\
2	575497.422786663\\
3	487235.424507427\\
4	433795.962742556\\
5	373175.234062319\\
6	311798.973653394\\
7	268995.513623822\\
8	258287.077277272\\
9	227632.440252786\\
10	220081.695430595\\
20	112157.835636989\\
30	87800.1396742205\\
40	65844.4903492122\\
50	50680.4553501273\\
60	43162.6798842125\\
70	35355.7953123166\\
80	32661.7650283694\\
90	26002.954959635\\
100	22773.0465341619\\
200	2411.72466093664\\
300	21.0573383250428\\
400	16.046915139724\\
500	12.8109912873503\\
600	11.2102195393165\\
700	10.7167490565531\\
800	10.1411754565593\\
900	9.30056376015061\\
1000	8.2707566654869\\
2000	5.24154994749335\\
3000	4.03795564502112\\
4000	3.26947862218561\\
5000	3.03295845707396\\
6000	2.77332330384995\\
7000	2.49505756211084\\
8000	2.36738517154882\\
9000	2.25014532078066\\
10000	2.21109518688523\\
20000	1.69815240236852\\
30000	1.62161878310671\\
40000	1.51748252946769\\
50000	1.47201423243165\\
60000	1.45660169781521\\
70000	1.44077296343534\\
80000	1.42369917319803\\
90000	1.40053639945081\\
90000	1.59392070952854\\
80000	1.62476181187348\\
70000	1.68447114549425\\
60000	1.7304245328959\\
50000	1.82134680815319\\
40000	1.97163710481217\\
30000	2.20338202233787\\
20000	2.47092552819889\\
10000	3.66438170316786\\
9000	3.88629035695664\\
8000	4.37476900292604\\
7000	4.48013062748919\\
6000	4.82971826032916\\
5000	5.46485996971368\\
4000	6.26061109589694\\
3000	7.79724788905368\\
2000	10.3707742851764\\
1000	14.5345960731987\\
900	15.2854850836583\\
800	16.928960911425\\
700	19.0934194379443\\
600	38.9268305307339\\
500	607.995945437792\\
400	2447.8486587807\\
300	10365.3366176473\\
200	25996.4007632041\\
100	52013.7262786537\\
90	65575.2480556314\\
80	76659.0671714717\\
70	87594.2707644928\\
60	114733.56462992\\
50	166787.067405686\\
40	170002.388063472\\
30	306333.320936457\\
20	395872.245625063\\
10	677071.315309723\\
9	699076.981019667\\
8	701474.473762693\\
7	822322.595975613\\
6	911948.680060861\\
5	923400.838429024\\
4	936081.988079509\\
3	945357.985362924\\
2	947776.774626934\\
1	951818.750313226\\
}--cycle;

\addplot [color=mycolor4,dashed,line width=1.5pt]
  table[row sep=crcr]{%
1	898168.581130224\\
2	802140.564420225\\
3	711397.054174735\\
4	631227.507176506\\
5	564596.867924508\\
6	509081.327498531\\
7	461185.52507959\\
8	419045.290316326\\
9	388003.069338418\\
10	354681.204123067\\
20	195414.667488606\\
30	133418.7035383\\
40	99905.9936620545\\
50	79327.3864603108\\
60	65543.7009034799\\
70	55436.5410869081\\
80	47602.4289447553\\
90	41155.4071614626\\
100	36113.5661551971\\
200	10498.8978786933\\
300	1302.02082425537\\
400	97.9984650481927\\
500	24.2219285693239\\
600	16.2337383070317\\
700	14.4287485468089\\
800	13.1608781833076\\
900	12.1746393095505\\
1000	11.361749131535\\
2000	7.27993084063056\\
3000	5.65572319796055\\
4000	4.74967312763189\\
5000	4.1522738928749\\
6000	3.73741043135761\\
7000	3.41604996101106\\
8000	3.18587668073005\\
9000	2.99136381697669\\
10000	2.8428530214657\\
20000	2.10067242226305\\
30000	1.83987392899804\\
40000	1.70657578675137\\
50000	1.62699148118933\\
60000	1.57452717762884\\
70000	1.53602078018279\\
80000	1.50792906440744\\
90000	1.4855385240109\\
};
\addlegendentry{$\xnb=\infty$}

\addplot [color=red,loosely dotted,line width=2pt]
  table[row sep=crcr]{%
1	1.3187\\
90000	1.3187\\
};
\addlegendentry{$\Jac$}

\end{axis}
\end{tikzpicture}
	~
	\subfigure[Zoomed version of the average cost for different $n$]{\scalebox{1}{
%
%

\definecolor{mycolor1}{rgb}{0.0, 0.5, 1.0}
\definecolor{mycolor2}{rgb}{0.0, 0.8, 0.6}
\definecolor{mycolor3}{rgb}{1.0, 0.5, 0.31}
\definecolor{mycolor4}{rgb}{0.13, 0.55, 0.13}%

\begin{tikzpicture}

\begin{axis}[%
width=2.2in,
height=1.6in,
at={(1.011111in,0.813889in)},
scale only axis,
xmode=log,
xmin=100,
xmax=1000000,
xminorticks=true,
xlabel={$N$},
xmajorgrids,
xminorgrids,
ymode=log,
ymin=1.2,
ymax=2,
ylabel={$J^{\text{AC}}_{n,N}$},
ymajorgrids,
ylabel style={yshift=-0.1cm},
legend style={legend cell align=left,align=left,draw=white!15!black,line width=1.0pt, font=\footnotesize}
]

\addplot [color=mycolor1, solid, mark=otimes*, mark size=2 ,line width=1.5pt]
  table[row sep=crcr]{%
1	90.0988050872139\\
2	74.8559092713465\\
3	58.3740012753949\\
4	45.7955902519346\\
5	35.8635147694922\\
6	28.5860308398038\\
7	22.5827573210655\\
8	18.0333956261914\\
9	15.3877213318488\\
10	13.4436958792473\\
20	6.14068215774133\\
30	4.34954158372674\\
40	3.46122965614533\\
50	2.97739665229225\\
60	2.62103541037543\\
70	2.38307666634709\\
80	2.22704450558327\\
90	2.06498996159715\\
100	1.96438382986823\\
200	1.52532404666544\\
300	1.39413594434313\\
400	1.34570792651888\\
500	1.31627993472894\\
600	1.29786965282214\\
700	1.28371427700216\\
800	1.27386956207888\\
900	1.26695036359398\\
1000	1.26243951330834\\
2000	1.24033826626804\\
3000	1.23266956870538\\
4000	1.22871956187396\\
5000	1.22633726262058\\
6000	1.22479092312255\\
7000	1.22379566139958\\
8000	1.22287742352561\\
9000	1.22215100223844\\
10000	1.22162047102615\\
20000 1.22162047102615\\
30000 1.22162047102615\\
40000 1.22162047102615\\
50000 1.22162047102615\\
60000 1.22162047102615\\
70000 1.22162047102615\\
80000 1.22162047102615\\
90000	1.22162047102615\\
100000 1.22162047102615\\
200000 1.22162047102615\\
300000 1.22162047102615\\
400000 1.22162047102615\\
500000 1.22162047102615\\
600000 1.22162047102615\\
700000 1.22162047102615\\
800000 1.22162047102615\\
900000 1.22162047102615\\
1000000 1.22162047102615\\
};
\addlegendentry{$n=2$};

\addplot [color=mycolor2, solid, mark=star, mark size=3, line width=1.5pt]
  table[row sep=crcr]{%
1	94.5768870629217\\
2	87.6422421294588\\
3	81.1060535945275\\
4	75.7166598326944\\
5	70.8550067501774\\
6	65.818793100411\\
7	61.4661914492361\\
8	57.5867852858906\\
9	54.506862634783\\
10	51.0690016796575\\
20	30.1255056462458\\
30	19.8540579079187\\
40	14.2465207906624\\
50	10.8717423880872\\
60	8.85011139713774\\
70	7.25012839633849\\
80	6.23207351231921\\
90	5.3655282834809\\
100	4.77894898497726\\
200	2.44822770289855\\
300	1.87735697901229\\
400	1.67372096917809\\
500	1.56577991405623\\
600	1.49838919411101\\
700	1.45440220544531\\
800	1.42286573705222\\
900	1.40333873367853\\
1000	1.38702187301104\\
2000	1.33566597760556\\
3000	1.32648669905007\\
4000	1.32290226192807\\
5000	1.32128509023382\\
6000	1.32039006435097\\
7000	1.31992148967192\\
8000	1.31948814984273\\
9000	1.31924603433781\\
10000  1.31906764370644\\
20000  1.31906764370644\\
30000  1.31906764370644\\
40000  1.31906764370644\\
50000  1.31906764370644\\
60000  1.31906764370644\\
70000  1.31906764370644\\
80000  1.31906764370644\\
90000  1.31906764370644\\
100000	1.31906764370644\\
200000 1.31906764370644\\
300000 1.31906764370644\\
400000 1.31906764370644\\
500000 1.31906764370644\\
600000 1.31906764370644\\
700000 1.31906764370644\\
800000 1.31906764370644\\
900000 1.31906764370644\\
1000000   1.31906764370644\\
};
\addlegendentry{$n=10$};

 \addplot [color=mycolor3, solid, mark=diamond, mark size=3, line width=1.5pt]
  table[row sep=crcr]{%
1	95.0727659242603\\
2	89.3293464937204\\
3	83.9600007375867\\
4	78.0428561341892\\
5	73.7229205184179\\
6	70.4072926106444\\
7	66.6424865458847\\
8	63.334550659593\\
9	60.216269546597\\
10	58.0616678649646\\
20	42.5296820114253\\
30	34.3797500254576\\
40	29.669338715003\\
50	26.831507897844\\
60	24.6676317740388\\
70	22.7390049288413\\
80	21.4440070632338\\
90	20.3367569025039\\
100	19.2554694316222\\
200	13.76144920044\\
300	11.3359882537192\\
400	9.84420511083659\\
500	8.82623441506369\\
600	7.9885333739873\\
700	7.38758245792849\\
800	6.81009576743343\\
900	6.42393695130645\\
1000	6.02474149436536\\
2000	4.09354171022974\\
3000	3.23896547092362\\
4000	2.78533674555221\\
5000	2.53689020223726\\
6000	2.35122639488588\\
7000	2.20059192075798\\
8000	2.09602080177611\\
9000	2.00843218001926\\
10000	1.94483860948849\\
20000	1.63302526595443\\
30000	1.53134706861736\\
40000	1.4734347662596\\
50000	1.44301446129694\\
60000	1.42129288813096\\
70000	1.4058035303561\\
80000	1.39424179901048\\
90000	1.38593192917479\\
100000	1.379296445348\\
200000	1.34534237322102\\
300000	1.33368847028753\\
400000	1.32412199975997\\
500000	1.32166204586395\\
600000	1.32034235423453\\
700000  1.31986648762340\\
800000  1.31955634230423\\
800000  1.31920979847234\\
1000000 1.31906764370644\\
};
\addlegendentry{$n=100$};

\addplot [color=red,loosely dotted,line width=2pt]
  table[row sep=crcr]{%
100	1.3187\\
1000000	1.3187\\
};

\addlegendentry{$\Jac$}

\end{axis}
\end{tikzpicture}
	\caption{The objective performance $\JacnN$ is computed using \eqref{AC-LP-n,N} for Example~\ref{ex:LQG}. The red dotted line denoted by $\Jac$ is the optimal solution approximated by $n = 10^3$ and $N = 10^6$.}  
	\label{fig:LQG:ran}
\end{figure}
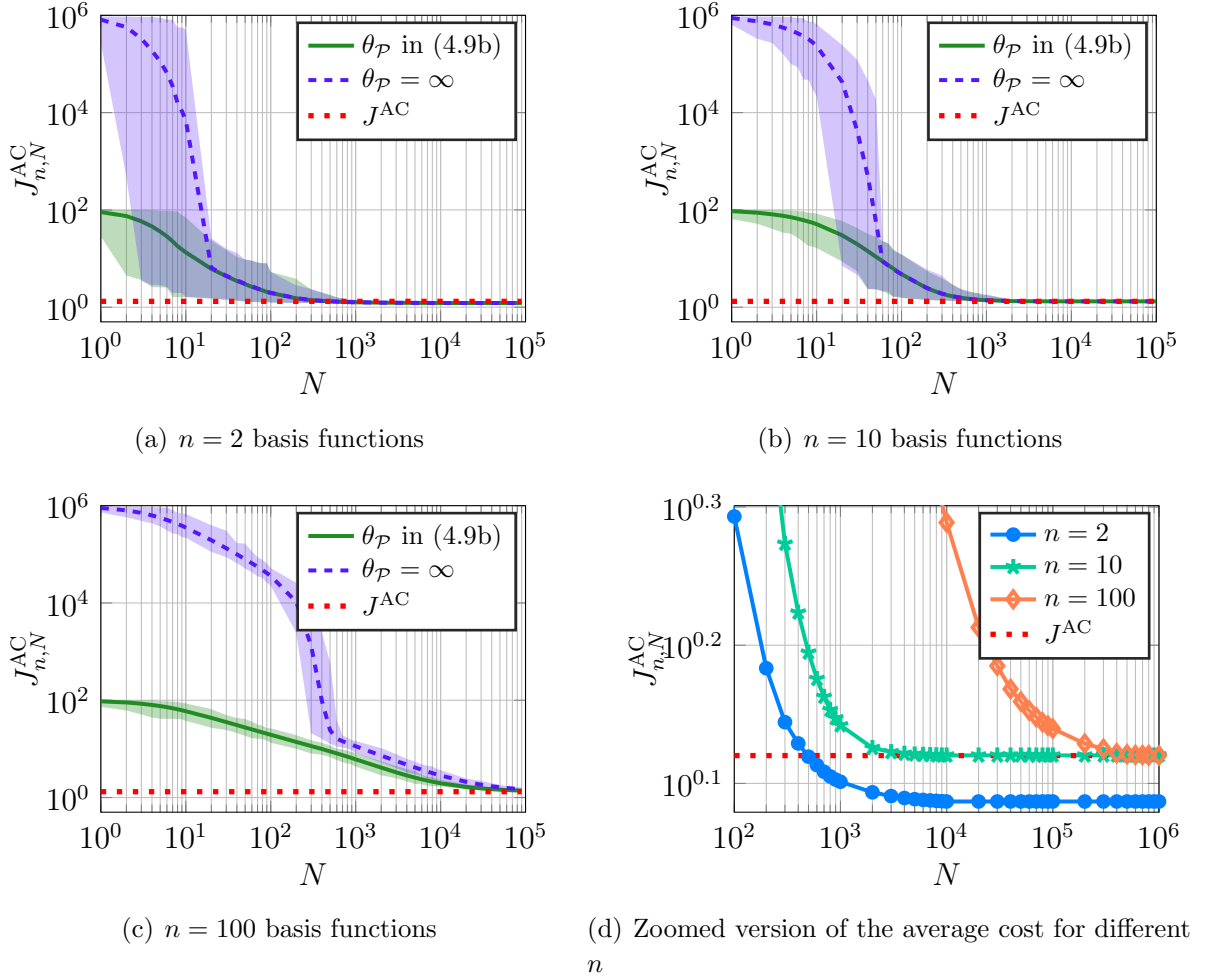

\subsubsection*{Randomized approach:} 			
We implement the methodology presented in Section~\ref{subsec:rand:MDP}, resulting in a finite random convex program as in \eqref{AC-LP-n,N}, where the uniform distribution on $K = S\times A = [-L,L]^2$ is used to draw the random samples. Figures~\ref{fig:LQG:ran:2}, \ref{fig:LQG:ran:10}, and \ref{fig:LQG:ran:100} visualize three cases with different number of basis functions $n \in \{2,10,100\}$, respectively. To show the impact of the additional norm constraint, in each case two approximation settings are examined: the constrained (regularized) one proposed in this article (i.e., $\xnb < \infty$), and the unconstrained one (i.e., $\xnb = \infty$). In the former we choose the bound suggested by \eqref{theta*:MDP2}. In the latter, the resulting optimization programs of \eqref{AC-LP-n,N} may happen to be unbounded, particularly when the number of samples $N$ is low; numerically, we capture the behavior of the unbounded $\xnb$ through a large bound such as $\theta = 10^6$. In each sub-figure, the colored tubes represent the results of $400$ independent experiments (shaded areas) as well as the mean value across different experiments (solid and dashed lines) of the objective performance $\JacnN$ as a function of the sample size $N$. 
		
Figure~\ref{fig:LQG:ran:tail} depicts a zoomed perspective of the means for the three cases of $n$. All the results in Figure~\ref{fig:LQG:ran} are obtained based on 400 independent simulation experiments. It is perhaps not surprising that the optimal value depicted in red dotted line is very close to the classical LQG example whose exact solution is analytically available. It can be seen that the randomized approximations asymptotically converge, as suggested by Theorem~\ref{thm:inf-fin}.  		

The simulation results suggest three interesting features concerning  $n$, the number of basis functions: The higher the number of basis functions,
\begin{enumerate}[label=(\roman*), itemsep = 1mm, topsep = -1mm]
	\item \label{sim:err} 
	the smaller the approximation error (i.e., asymptotic distance for $N\to\infty$ to the red dotted line),
	\item \label{sim:var}
	the lower the variance of approximation with respect to the sampling distribution for each $N$, and 
	\item \label{sim:conv}
	the slower the convergence behavior with respect to the sample size $N$.
\end{enumerate}
The features~\ref{sim:err} and \ref{sim:var} are positive impacts of increasing the number of basis functions. While \ref{sim:err} is predicted by Corollary~\ref{cor:MDP:inf-semi}, since the error due to the projection term becomes smaller, it is not entirely clear how to formally explain \ref{sim:var}. On the contrary, the feature~\ref{sim:conv} is indeed a negative impact, as a high number of basis functions requires a large number of samples $N$ to produce reasonable approximation errors. This phenomena can be justified through the lens of Corollary~\ref{cor:adp:semi-finite} where the approximation errors grows proportionally to $n$. 

	
\subsubsection*{Structural convex optimization:} 
Algorithm \ref{alg} was implemented with the parameters described in Corollary~\ref{cor:adp:smooth} leading to deterministic upper and lower bounds ($\Jnub$ and $\Jnlb$, respectively) for the cost function $\Jacn$, see also Theorem~\ref{thm:semi-fin:smooth}. These bounds are computationally appealing as they provide a posteriori bounds on the approximation error that often is significantly smaller than the a priori bounds given by Theorem~\ref{thm:semi-fin:smooth}. This behavior can be seen in the simulation results summarized in Figure~\ref{fig:LQG:smoothing} where the number of basis functions is $n=10$. Similar to Figure~\ref{fig:LQG:ran}, the red dotted line is the optimal value of the original infinite program \ref{primal-inf}, which we approximated by using $10^3$ basis functions and $10^6$ iterations of Algorithm~\ref{alg}; it coincides with the one from the randomized method.
\begin{figure}[t]
	\subfigure[A priori error $\varepsilon$ and a posteriori error $\Jnub-\Jnlb$]{\scalebox{1}{
%
%
\begin{tikzpicture}

\begin{axis}[%
width=2.2in,
height=1.8in,
at={(1.011111in,0.641667in)},
scale only axis,
xmin=10,
xmax=100000,
xmode=log,
xlabel={number of iterations $k$},
xmajorgrids,
xminorgrids,
ymin=0,
ymax=600,
ymode=log,
ylabel={prior $\&$ posterior error},
ymajorgrids,
ylabel style={yshift=-0.15cm},
legend style={legend cell align=left,align=left,draw=white!15!black,line width=1.0pt,font=\footnotesize}
]
\addplot [color=black,solid,line width=1.5pt]
  table[row sep=crcr]{%
10	536.129757314671\\
20	297.757439244727\\
30	209.453263910916\\
40	162.72697105966\\
50	133.599201535883\\
60	113.618546830154\\
70	99.0202124606833\\
80	87.8655207223592\\
90	79.0515988085919\\
100	71.9035232973631\\
200	38.3663375914169\\
300	26.4817232476722\\
400	20.3309563638865\\
500	16.5512539578754\\
600	13.9852128898149\\
700	12.1253407196401\\
800	10.7133433712918\\
900	9.6036263953016\\
1000	8.70775656335686\\
2000	4.56109548359483\\
3000	3.11906336071083\\
4000	2.38020824257256\\
5000	1.92921540328592\\
6000	1.62455856340165\\
7000	1.40460792895481\\
8000	1.23815997581314\\
9000	1.10769960273813\\
10000	1.00262523720079\\
20000	0.51969893006755\\
30000	0.353458689836964\\
40000	0.268758966720061\\
50000	0.217256676546119\\
60000	0.182565901433752\\
70000	0.157578014270967\\
80000	0.138704324779933\\
90000	0.123935179488576\\
100000	0.112056573667818\\
200000	0.0576958143278744\\
300000	0.0391007132359004\\
400000	0.0296601216738937\\
500000	0.0239338161100979\\
};
\addlegendentry{$\varepsilon$};

\addplot [color=black,dashed,line width=1.5pt]
  table[row sep=crcr]{%
10	24.3565005293573\\
20	17.5364265215274\\
30	13.2451063157308\\
40	9.31312109408406\\
50	9.03581976197761\\
60	7.29124989487846\\
70	6.85165949467133\\
80	6.73215972985745\\
90	7.91906192077684\\
100	6.43867285663217\\
200	3.45711494005305\\
300	1.81584251291752\\
400	1.55869807407919\\
500	1.46754111427825\\
600	1.66757043728\\
700	1.65550462437804\\
800	1.37002395686236\\
900	1.26175419668461\\
1000	1.33156112752499\\
2000	0.510705599116073\\
3000	0.320497504636412\\
4000	0.29995740865895\\
5000	0.285731965916484\\
6000	0.190662075258335\\
7000	0.169055891413946\\
8000	0.19042832425306\\
9000	0.184585055707421\\
10000	0.14371286558865\\
20000	0.126900649247346\\
30000	0.0655305859838577\\
40000	0.0253164317947498\\
50000	0.0334016204747398\\
60000	0.0405570683669749\\
70000	0.0271062817729779\\
80000	0.0168730222321867\\
90000	0.0179113676236098\\
100000	0.02217519505106\\
200000	0.00879183854352084\\
300000	0.00619908849403994\\
400000	0.00513727563791022\\
500000	0.00364347933428188\\
};
\addlegendentry{$\Jnub-\Jnlb$};

\end{axis}
\end{tikzpicture}
	\qquad 
	\subfigure[Upper bound $\Jnub$ and lower bound $\Jnlb$]{\scalebox{1}{
%
%
\begin{tikzpicture}

\begin{axis}[%
width=2.2in,
height=1.8in,
at={(1.011111in,0.641667in)},
scale only axis,
xmin=10,
xmax=100000,
xmode=log,
xlabel={number of iterations $k$},
xmajorgrids,
xminorgrids,
ymin=0.5,
ymax=3,
ylabel={posterior approximation},
ymajorgrids,
ylabel style={yshift=-0.15cm},
legend style={legend cell align=left,align=left,legend pos=north east,draw=white!15!black,line width=1.0pt,font=\footnotesize}
]
\addplot [color=black,solid,line width=1.5pt]
  table[row sep=crcr]{%
10	25.1527051087009\\
20	18.5453358562866\\
30	14.2530400670406\\
40	10.3615635341208\\
50	10.1650692726669\\
60	8.37690974774427\\
70	7.94622728775838\\
80	7.88020667877186\\
90	9.082242328564\\
100	7.63091987563802\\
200	4.69978256390843\\
300	3.07692493690422\\
400	2.8212316189458\\
500	2.73122526180391\\
600	2.94530887515802\\
700	2.93846732574994\\
800	2.65795277930856\\
900	2.55324120683406\\
1000	2.62579210015638\\
2000	1.8153459536679\\
3000	1.62867228923354\\
4000	1.6090745751905\\
5000	1.59625396327131\\
6000	1.50194678447926\\
7000	1.48000020459195\\
8000	1.50290052371669\\
9000	1.49728138060725\\
10000	1.45712194885304\\
20000	1.44419732361832\\
30000	1.38230623729533\\
40000	1.3432552583054\\
50000	1.35079180338104\\
60000	1.35837949527842\\
70000	1.34481075737979\\
80000	1.33488846388037\\
90000	1.33579752492145\\
100000	1.34013935255392\\
200000	1.32686085101654\\
300000	1.32434929714319\\
400000	1.32325490194924\\
500000	1.32179986534431\\
};
\addlegendentry{$\Jnub$};

\addplot [color=black,dashed,line width=1.5pt]
  table[row sep=crcr]{%
10	0.796204579343647\\
20	1.00890933475926\\
30	1.00793375130979\\
40	1.04844244003675\\
50	1.12924951068927\\
60	1.08565985286581\\
70	1.09456779308705\\
80	1.14804694891441\\
90	1.16318040778716\\
100	1.19224701900585\\
200	1.24266762385537\\
300	1.2610824239867\\
400	1.26253354486661\\
500	1.26368414752567\\
600	1.27773843787802\\
700	1.2829627013719\\
800	1.28792882244619\\
900	1.29148701014945\\
1000	1.29423097263139\\
2000	1.30464035455183\\
3000	1.30817478459713\\
4000	1.30911716653155\\
5000	1.31052199735482\\
6000	1.31128470922093\\
7000	1.310944313178\\
8000	1.31247219946363\\
9000	1.31269632489983\\
10000	1.31340908326439\\
20000	1.31729667437098\\
30000	1.31677565131148\\
40000	1.31793882651065\\
50000	1.3173901829063\\
60000	1.31782242691145\\
70000	1.31770447560681\\
80000	1.31801544164819\\
90000	1.31788615729784\\
100000	1.31796415750286\\
200000	1.31806901247302\\
300000	1.31815020864915\\
400000	1.31811762631133\\
500000	1.31815638601002\\
};
\addlegendentry{$\Jnlb$};

\addplot [color=red,loosely dotted,line width=2pt]
  table[row sep=crcr]{%
10	1.3187\\
100000	1.3187\\
};
\addlegendentry{$\Jac$}

\end{axis}
\end{tikzpicture}
	\caption{The results and error bounds are obtained by Algorithm~\ref{alg} with $n=10$ for Example~\ref{ex:LQG}. The red dotted line is the optimal solution computed as indicated in Figure~\ref{fig:LQG:ran}.}  
	\label{fig:LQG:smoothing}
\end{figure}
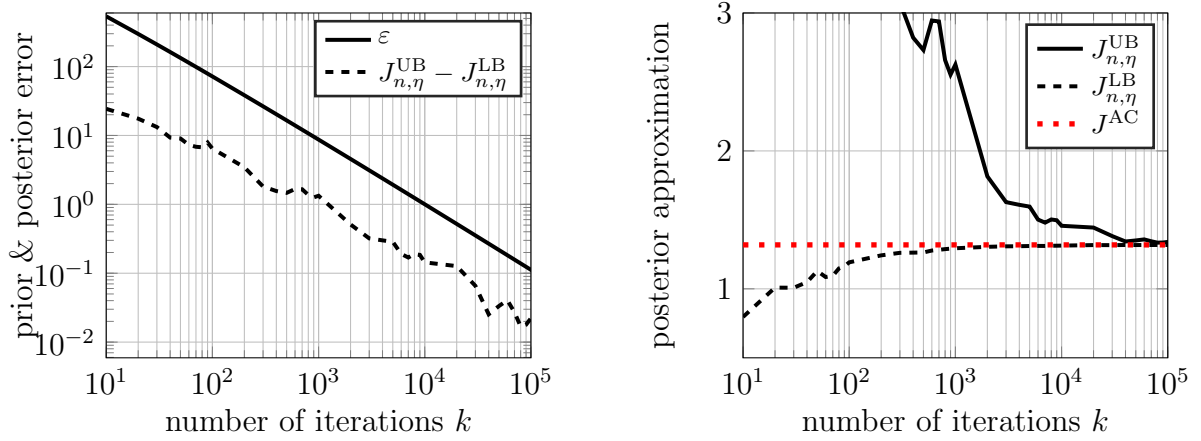

\subsection{Example 2: A fisheries management problem} 
\label{ex:fisheries}
A natural approximation approach toward dynamic programming problems goes through a discretization scheme (e.g., discretization the state and/or action spaces). The main objective of this example is to compare the proposed LP-based approximation of this article with more standard discretization schemes. To this end, we borrow an example from \cite[Section~1.3]{ref:Hernandez-96} and compare our results with the recent discretization method proposed by \cite{ref:Saldi-15}. Consider the population growth model, known as Ricker model, 
	\begin{align*} 
	s_{t+1} = \vartheta_1 a_t  \exp(-\vartheta_2 a_t + \xi_t), \quad t \in \N,
	\end{align*}
where $\vartheta_1, \vartheta_2 \in \Rp$, $s_t$ is the population size in season $t$, and $a_t$ is the population to be left for spawning for the next season, i.e., the difference $s_t -a_t$ is the amount of fish captured in season $t$. The running reward function, to be maximized is $\cost(a,s) = \varphi(s - a)$, where $\varphi$ is the so-called shifted isoelastic utility function $\varphi(z) := 3(z + 0.5)^{1/3} - (0.5)^{1/3}$ \cite[13, Section 4.1]{ref:Dufour-12}. The state space is $S = [\underline{\kappa}, \overline{\kappa}]$, for some $\underline{\kappa}, \overline{\kappa}\in\Rp$.
Since the population left for spawning cannot be greater than the total population, for each $s\in S$, the set of admissible actions is $A(s) = [\underline{\kappa}, s]$. To fulfill Assumption~\ref{a:CM}\ref{a:CM:K}, following the transformation suggested by \cite{ref:Saldi-15}, we equivalently reformulate the above problem using the dynamics 
	\begin{align*}
	s_{t+1} = \vartheta_1 \min(a_t, s_t) \exp( -\vartheta_2 \min( a_t, s_t) + \xi_t), \quad  t\in\N,
	\end{align*}
where the admissible actions set is now the state-independent set $A=[\underline{\kappa}, \overline{\kappa}]$, and the running reward function is $\cost(a,s) = \varphi(s - a)\indic{\{s\geq a\}}$. The noise process $(\xi_t)_{t\in\N}$ is a sequence of i.i.d.\ random variables which have a uniform density function $g$ supported on the interval $[0, \lambda]$. Thus, the corresponding kernel is
	\begin{align*}
	Q(B|s,a) = \int_B g\Big( \log \xi - \log\big( \vartheta_1 \min(a,s) \big) + \vartheta_2 \min(a,s) \Big)\frac{1}{\xi} \drv \xi, \quad \forall B \in \Borel{\R}.
	\end{align*}
Note that to make the model consistent, we must have $ \vartheta_1 a  \exp(-\vartheta_2 a + \xi)\in [\underline{\kappa}, \overline{\kappa}]$ for all $(a,\xi) \in  [\underline{\kappa}, \overline{\kappa}]\times [0,\lambda]$. By defining an appropriate change of coordinate similar to Lemma~\ref{lem:LQG}, Assumption~\ref{a:CM} are fulfilled; we refer the reader to \cite[Section~7.2]{ref:Saldi-15} for further information and detailed analysis. 
		
\paragraph{\emph{Simulation details:}} The chosen numerical values are $\lambda = 0.5$, $\vartheta_1 = 1.1$, $\vartheta_2 = 0.1$, $\overline{\kappa} = 7$, and $\underline{\kappa} = 0.005$. In the first approximation step discussed in Section~\ref{subsec:semi:MDP}, we consider the Fourier basis $u_{2k-1}(s) = \frac{L}{k\pi}\cos\left(\frac{k \pi s}{L}\right)$ and $u_{2k}(s) = \frac{L}{k\pi}\sin\left(\frac{k \pi s}{L}\right)$, where $L=\frac{\overline{\kappa}-\underline{\kappa}}{2}$.
		
\subsubsection*{Randomized approach:}
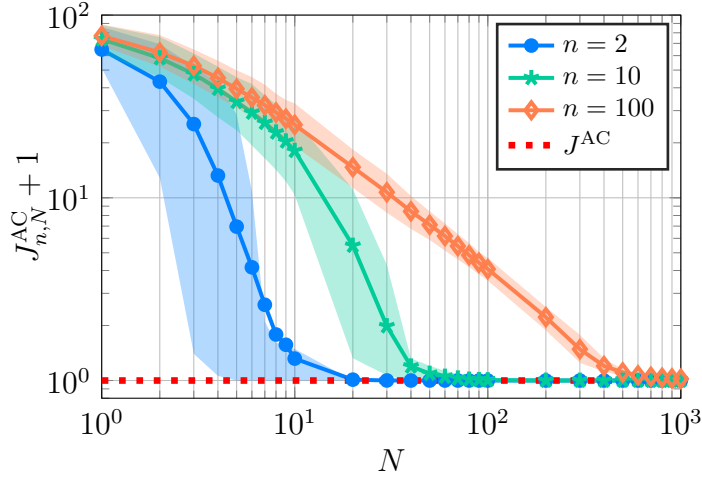
\begin{figure}[t]	
	\centering
	\scalebox{1}{
%

\definecolor{mycolor1}{rgb}{0.0, 0.5, 1.0}
\definecolor{mycolor2}{rgb}{0.0, 0.8, 0.6}
\definecolor{mycolor3}{rgb}{1.0, 0.5, 0.31}

\begin{tikzpicture}

\begin{axis}[%
width=3.0in,
height=2.0in,
at={(1.011111in,0.813889in)},
scale only axis,
xmode=log,
xmin=1,
xmax=1000,
xminorticks=true,
xlabel={$N$},
xmajorgrids,
xminorgrids,
ymode=log,
ymin=0.8,
ymax=100,
ylabel={$J^{\text{AC}}_{n,N}+1$},
ymajorgrids,
ylabel style={yshift=-0.3cm},
legend style={legend cell align=left,align=left,draw=white!15!black,line width=1.0pt,font=\footnotesize}
]

\addplot[area legend,solid,fill=mycolor1,opacity=0.3,draw=none,forget plot]
table[row sep=crcr] {%
x	y\\
1	51.2714090446456\\
2	12.8554017093566\\
3	1.40007751878754\\
4	1.0599599497917\\
5	0.999999999854423\\
6	0.999999999390651\\
7	0.999999996711568\\
8	0.999999997545897\\
9	0.999999992735313\\
10	0.9999999758598\\
20	0.999999911396561\\
30	0.999999912762618\\
40	0.999999932608935\\
50	0.999999893219666\\
60	0.999999948093206\\
70	0.999999954220393\\
80	0.999999948779772\\
90	0.999999919783682\\
100	0.999999901173917\\
200	0.99999984415569\\
300	0.99999979630737\\
400	0.999999943935447\\
500	0.999999993464536\\
600	0.999999996961352\\
700	0.999999990202266\\
800	0.999998293665241\\
900	0.999998615876402\\
1000	0.999996699106386\\
2000	0.999999985687337\\
3000	0.999989210045804\\
4000	0.999999924682763\\
5000	0.999999833349447\\
6000	0.999999423026571\\
7000	0.999999090032058\\
8000	0.999998471996984\\
9000	0.999997358591236\\
9000	0.999999195491182\\
8000	0.999999617460104\\
7000	0.999999774712436\\
6000	0.999999877010055\\
5000	0.9999999959124\\
4000	0.999999999136364\\
3000	0.999999999972595\\
2000	0.999999999988912\\
1000	0.999999999998817\\
900	0.99999999994029\\
800	0.999999999999791\\
700	0.999999999999923\\
600	0.999999999999886\\
500	0.999999999999884\\
400	0.999999999999899\\
300	0.999999999999948\\
200	0.999999999999869\\
100	0.999999999999846\\
90	0.999999999999752\\
80	0.999999999999545\\
70	0.999999999999477\\
60	0.999999999999598\\
50	0.999999999999218\\
40	0.999999999998829\\
30	0.999999999996703\\
20	1.00024554448601\\
10	1.48899308726068\\
9	1.5196942809493\\
8	1.61651417636124\\
7	2.09786857409122\\
6	10.8757806859436\\
5	28.5359276660769\\
4	42.7389974282208\\
3	52.3683901801154\\
2	69.3949509625149\\
1	86.1440806603169\\
}--cycle;

\addplot [color=mycolor1, solid, mark=otimes*, mark size=2, line width=1.5pt]
  table[row sep=crcr]{%
1	64.8116476992506\\
2	43.2200402474537\\
3	25.3208400082223\\
4	13.2378459411505\\
5	6.95400880847415\\
6	4.1645557208202\\
7	2.59765905699936\\
8	1.78597820279529\\
9	1.56911686964621\\
10	1.31828429549788\\
20	1.01094395309579\\
30	1.0005830505906\\
40	1.00003302116884\\
50	1.00000205987336\\
60	1.00000206653471\\
70	1.00000206850419\\
80	0.99999997657582\\
90	0.999999968903861\\
100	0.999999964625778\\
200	0.999999935920859\\
300	0.99999992700282\\
400	0.999999928315666\\
500	0.999999929350169\\
600	0.999999936205877\\
700	0.999999886023148\\
800	0.999999662794959\\
900	0.999999534722168\\
1000	0.999998869174429\\
2000	0.999999953159715\\
3000	0.99999594678029\\
4000	0.999999973894095\\
5000	0.999999928422408\\
6000	0.999999681436228\\
7000	0.999999445076314\\
8000	0.999999128285646\\
9000	0.999998338701263\\
};

\addlegendentry{$n=2$};

\addplot[area legend,solid,fill=mycolor2,opacity=0.3,draw=none,forget plot]
table[row sep=crcr] {%
x	y\\
1	62.0635428418405\\
2	45.4907583447821\\
3	34.8661716108083\\
4	27.5780996871585\\
5	23.4371815502727\\
6	19.3410194752331\\
7	16.4192316649149\\
8	14.2143588995335\\
9	12.2853108616049\\
10	10.3548051842559\\
20	1.32372410515241\\
30	1.06677351950785\\
40	1.00258500702997\\
50	0.999999999597828\\
60	0.99999999198904\\
70	0.999999967241146\\
80	0.999999940412202\\
90	0.999999841083342\\
100	0.999999771221102\\
200	0.999999331824728\\
300	0.999999364204503\\
400	0.999999206187848\\
500	0.999998848910086\\
600	0.999998604033411\\
700	0.999998052096279\\
800	0.999997284019949\\
900	0.99999738050084\\
1000	0.999997315087737\\
2000	0.999995178237145\\
3000	0.999996328271209\\
4000	0.999996808641027\\
5000	0.999997634015652\\
6000	0.999995434770158\\
7000	0.999996348074189\\
8000	0.999972110635102\\
9000	0.999962594479052\\
9000	0.999999994928664\\
8000	0.999999986912433\\
7000	0.999999797623133\\
6000	0.999999717247929\\
5000	0.999999925327239\\
4000	0.999999918127995\\
3000	0.999999909692193\\
2000	0.999999992677334\\
1000	0.999999999988935\\
900	0.999999999996555\\
800	0.99999999999805\\
700	0.999999999999275\\
600	0.999999999999309\\
500	0.99999999999914\\
400	0.999999999999298\\
300	0.999999999999739\\
200	0.999999999999811\\
100	1.02079622260696\\
90	1.0332283363589\\
80	1.04709122950839\\
70	1.0896262225957\\
60	1.12796089441312\\
50	1.20471800371942\\
40	1.32158263907062\\
30	4.30081376243941\\
20	11.1588934476533\\
10	26.9545659296004\\
9	28.8018740545273\\
8	32.5077842536438\\
7	36.4033900498261\\
6	41.9735615277747\\
5	45.7345034400618\\
4	52.2250648726374\\
3	61.6520034208808\\
2	75.9041241759474\\
1	88.2182321130333\\
}--cycle;

  \addplot [color=mycolor2, solid, mark=star, mark size=3, line width=1.5pt]
  table[row sep=crcr]{%
1	73.8302920074995\\
2	58.1542776172087\\
3	47.5591469514312\\
4	39.5706429806797\\
5	33.5678486857135\\
6	29.308406871455\\
7	25.7669609046978\\
8	22.8140914008158\\
9	20.3760188749272\\
10	18.0893824553476\\
20	5.47700257642174\\
30	1.98562187091641\\
40	1.19563504907069\\
50	1.09062600261757\\
60	1.05133197742151\\
70	1.02833220788591\\
80	1.01304501304375\\
90	1.00824096966948\\
100	1.00525946032438\\
200	1.00028246262316\\
300	0.999999832159385\\
400	0.999999773293682\\
500	0.999999698027969\\
600	0.999999624157324\\
700	0.99999953512617\\
800	0.999999379231007\\
900	0.999999309921831\\
1000	0.999999241536277\\
2000	0.999998468308918\\
3000	0.999998348911666\\
4000	0.999998610478249\\
5000	0.999999042889131\\
6000	0.999997904755132\\
7000	0.999998337482491\\
8000	0.999989616639842\\
9000	0.999988832750042\\
};
\addlegendentry{$n=10$};

\addplot[area legend,solid,fill=mycolor3,opacity=0.3,draw=none,forget plot]
table[row sep=crcr] {%
x	y\\
1	67.8690534540419\\
2	52.2612795934801\\
3	41.9930519029856\\
4	35.5583041889373\\
5	31.163891548069\\
6	27.4857986940591\\
7	24.6440284341841\\
8	22.6655163940452\\
9	20.7684873627083\\
10	19.1639296310229\\
20	11.3378230160899\\
30	8.28462156648752\\
40	6.7635474192051\\
50	5.87476899522253\\
60	5.14115205101883\\
70	4.59200741242686\\
80	4.17482720258423\\
90	3.82252040618674\\
100	3.52208519225872\\
200	1.88222360624751\\
300	1.25111275018784\\
400	1.1162646347967\\
500	1.06370357977148\\
600	1.03964332372105\\
700	1.02782614685429\\
800	1.01993544772025\\
900	1.01419236916502\\
1000	1.01082178185698\\
2000	1.00069499630434\\
3000	0.9999999775057\\
4000	0.999996125101665\\
5000	0.999988556459958\\
6000	0.999980896205392\\
7000	0.999974567120992\\
8000	0.9999724240661\\
9000	0.999962964927076\\
9000	1.00014125240953\\
8000	1.00025063084954\\
7000	1.00048073263709\\
6000	1.00085152006655\\
5000	1.0013740348419\\
4000	1.00226272134291\\
3000	1.00479319424886\\
2000	1.01019634051764\\
1000	1.03428135454121\\
900	1.04308176090712\\
800	1.05267979198464\\
700	1.07093047218955\\
600	1.09939990869127\\
500	1.14899772891232\\
400	1.32252456087992\\
300	1.77827850206735\\
200	2.52572275567535\\
100	4.68562210069998\\
90	5.03852191858689\\
80	5.66239537840355\\
70	6.33252829149337\\
60	7.23688099336444\\
50	8.43776777706861\\
40	10.2330499972663\\
30	13.536244061436\\
20	18.3823442529203\\
10	32.8611148900239\\
9	34.3392185911427\\
8	37.1128813801639\\
7	41.5634111891501\\
6	45.7656103411951\\
5	49.8625472008749\\
4	56.2751983205023\\
3	64.8591821763853\\
2	77.757401716288\\
1	88.8666275970295\\
}--cycle;

  \addplot [color=mycolor3, solid, mark=diamond ,mark size=3, line width=1.5pt]
  table[row sep=crcr]{%
1	76.9230952118403\\
2	62.4364888956889\\
3	52.7623730955701\\
4	45.2716002676724\\
5	39.4561345240028\\
6	35.4160542125093\\
7	32.1146392918633\\
8	29.3189568150483\\
9	27.1117850737004\\
10	25.1075537436364\\
20	14.6993274998819\\
30	10.7249814939413\\
40	8.43176859523845\\
50	7.11155586098732\\
60	6.17381624981037\\
70	5.43866702773198\\
80	4.85095004329511\\
90	4.41026336803648\\
100	4.07416339295127\\
200	2.22087509487071\\
300	1.4842572895281\\
400	1.20129227879928\\
500	1.10813237633227\\
600	1.06794358532117\\
700	1.04851764544339\\
800	1.03581933290154\\
900	1.02815211737581\\
1000	1.02237183396532\\
2000	1.00519818105117\\
3000	1.00188382094289\\
4000	1.00085255845987\\
5000	1.00043024094767\\
6000	1.00024471921017\\
7000	1.00012846278167\\
8000	1.00007483581836\\
9000	1.00004553716265\\
};
\addlegendentry{$n=100$};

\addplot [color=red,loosely dotted,line width=2.5pt]
  table[row sep=crcr]{%
1	1\\
900000	 1\\
};
\addlegendentry{$\Jac$}

\end{axis}
\end{tikzpicture}
	\caption{The objective performance $\JacnN$ is computed using \eqref{AC-LP-n,N} for Example~\ref{ex:fisheries}.  The red dotted line is the optimal value approximated by $n = 10^3$ and $N = 10^6$, which amounts to 0 as also reported in \cite{ref:Saldi-15}.}
	\label{fig:fishery:rand}
\end{figure}		
We implement the methodology presented in Section~\ref{subsec:rand:MDP}, resulting in a finite random convex program \eqref{AC-LP-n,N}, where the uniform distribution on $K = S\times A = [\underline{\kappa},\overline{\kappa}]^2$ is used to draw the random samples. Figure~\ref{fig:fishery:rand} illustrates three cases with the number of basis functions $n \in \{2,10,100\}$ and the bound \eqref{theta*:MDP2}. The colored tubes represent the results between $[10\%,90\%]$ quantiles (shaded areas) as well as the means (solid lines) across $400$ independent experiments of the objective performance $\JacnN$ as a function of the sample size $N$. It is interesting to note that in this example the optimal solution is captured even with $2$ basis functions and only $N = 20$ random samples. This becomes even more attractive when we compare the results with a direct discretization scheme depicted in \cite[Figure~2]{ref:Saldi-15}. 

\subsubsection*{Structural convex optimization:}  
Similar to the LQG example in Section~\ref{ex:LQG}, we also implement the smoothing methodology for the case of $n = 10$. The simulation results are reported in Figure~\ref{fig:fishery:smoothing}. 

\begin{figure}[t]
	\subfigure[A priori error $\varepsilon$ and a posteriori error $\Jnub-\Jnlb$]{\scalebox{1}{
%
%
\begin{tikzpicture}

\begin{axis}[%
width=2.2in,
height=1.8in,
at={(1.011111in,0.641667in)},
scale only axis,
xmin=10,
xmax=100000,
xmode=log,
xlabel={number of iterations $k$},
xmajorgrids,
xminorgrids,
ymin=0,
ymax=600,
ymode=log,
ylabel={prior $\&$ posterior error},
ymajorgrids,
ylabel style={yshift=-0.15cm},
legend style={legend cell align=left,align=left,draw=white!15!black,line width=1.0pt,font=\footnotesize}
]
\addplot [color=black,solid,line width=1.5pt]
  table[row sep=crcr]{%
10	90.7596988873165\\
20	50.4063697325145\\
30	35.4576514031928\\
40	27.5475115641207\\
50	22.6165676646474\\
60	19.2341086084525\\
70	16.7628047887621\\
80	14.8744638585725\\
90	13.3823841724815\\
100	12.1723100224072\\
200	6.49491059977694\\
300	4.483003482218\\
400	3.44176046714649\\
500	2.80190712795399\\
600	2.36751050897174\\
700	2.05265889084282\\
800	1.81362652235531\\
900	1.62576619993161\\
1000	1.47410735437071\\
2000	0.772132793037242\\
3000	0.528015936747699\\
4000	0.402937593601818\\
5000	0.326590505080941\\
6000	0.275016154676782\\
7000	0.237781438079292\\
8000	0.209604013726553\\
9000	0.187518807967235\\
10000	0.169731115595792\\
20000	0.087978115751977\\
30000	0.0598358544320587\\
40000	0.0454973179960288\\
50000	0.0367786653603076\\
60000	0.0309059785953666\\
70000	0.0266758616911061\\
80000	0.0234807971207534\\
90000	0.0209805772841802\\
100000	0.0189696873296157\\
200000	0.00976713388785029\\
300000	0.00661923062763622\\
400000	0.00502106405626882\\
500000	0.00405167669644238\\
};
\addlegendentry{$\varepsilon$};

\addplot [color=black,dashed,line width=1.5pt]
  table[row sep=crcr]{%
10	2.75114402364085\\
20	1.64989484200388\\
30	1.08662273474772\\
40	0.838413529167986\\
50	0.716303058391313\\
60	0.751629176764794\\
70	0.527843195838203\\
80	0.453976846348352\\
90	0.370513448724604\\
100	0.475541564126903\\
200	0.252205934333526\\
300	0.129153469504749\\
400	0.056317239774328\\
500	0.0599573624198011\\
600	0.0546622918758253\\
700	0.0493018029438565\\
800	0.0338735832992433\\
900	0.0423541661739733\\
1000	0.0510683108240164\\
2000	0.020656372203495\\
3000	0.00933625317231349\\
4000	0.0124975949205784\\
5000	0.00724609981313697\\
6000	0.0108189115952417\\
7000	0.00695926150953026\\
8000	0.00451336505017449\\
9000	0.00501445742333247\\
10000	0.00285021265837832\\
20000	0.00103120305498905\\
30000	0.000962708277496393\\
40000	0.00099187467494551\\
50000	0.000843458013181959\\
60000	0.000731958241294043\\
70000	0.000820653367694514\\
80000	0.000552985426491319\\
90000	0.000470169711212189\\
100000	0.000549664006081856\\
200000	0.000161542871196692\\
300000	0.000157478416186615\\
400000	0.000114286160560574\\
500000	9.27186052237982e-05\\
};
\addlegendentry{$\Jnub-\Jnlb$};

\end{axis}
\end{tikzpicture}
	\qquad
	\subfigure[Upper bound $\Jnub$ and lower bound $\Jnlb$]{\scalebox{1}{
%
%
\begin{tikzpicture}

\begin{axis}[%
width=2.2in,
height=1.8in,
at={(1.011111in,0.641667in)},
scale only axis,
xmin=10,
xmax=100000,
xmode=log,
xlabel={number of iterations $k$},
xmajorgrids,
xminorgrids,
ymin=-0.1,
ymax=3,
ylabel={posterior approximation},
ymajorgrids,
ylabel style={yshift=-0.15cm},
legend style={legend cell align=left,align=left,legend pos=north east,draw=white!15!black,line width=1.0pt,font=\footnotesize}
]
\addplot [color=black,solid,line width=1.5pt]
  table[row sep=crcr]{%
10	2.69782240051088\\
20	1.62579317933833\\
30	1.07115935272095\\
40	0.828171926336044\\
50	0.70607517282797\\
60	0.740878175245887\\
70	0.522115480764568\\
80	0.444594005149921\\
90	0.362379183009112\\
100	0.470639892582908\\
200	0.248729379903276\\
300	0.127021159055569\\
400	0.0549353014708736\\
500	0.0590281281029852\\
600	0.0536519263944911\\
700	0.0483419461767818\\
800	0.0325118716723125\\
900	0.0416840173242761\\
1000	0.05061899782999\\
2000	0.0204534961184887\\
3000	0.00915265017565384\\
4000	0.0123235985696712\\
5000	0.00716959309992658\\
6000	0.010693155730566\\
7000	0.00685894282273861\\
8000	0.00442100487590579\\
9000	0.004930653756291\\
10000	0.00279382182970044\\
20000	0.00101572126869705\\
30000	0.000955529434186856\\
40000	0.000982917919282597\\
50000	0.000836282492199456\\
60000	0.000721253912624484\\
70000	0.000815445567292819\\
80000	0.000543737656212934\\
90000	0.000465564036385646\\
100000	0.000543300400007567\\
200000	0.000157944041236142\\
300000	0.000154977794771278\\
400000	0.000112678016109268\\
500000	9.1779790071263e-05\\
};

\addlegendentry{$\Jnub$};

\addplot [color=black,dashed,line width=1.5pt]
  table[row sep=crcr]{%
 10	-0.0533216231299718\\
20	-0.0241016626655506\\
30	-0.0154633820267671\\
40	-0.0102416028319417\\
50	-0.0102278855633426\\
60	-0.0107510015189079\\
70	-0.0057277150736354\\
80	-0.00938284119843086\\
90	-0.0081342657154919\\
100	-0.00490167154399528\\
200	-0.00347655443025035\\
300	-0.00213231044917997\\
400	-0.00138193830345437\\
500	-0.000929234316815946\\
600	-0.00101036548133423\\
700	-0.000959856767074679\\
800	-0.00136171162693085\\
900	-0.000670148849697179\\
1000	-0.000449312994026358\\
2000	-0.000202876085006332\\
3000	-0.000183602996659646\\
4000	-0.00017399635090721\\
5000	-7.65067132103906e-05\\
6000	-0.000125755864675721\\
7000	-0.000100318686791647\\
8000	-9.23601742686991e-05\\
9000	-8.38036670414648e-05\\
10000	-5.63908286778793e-05\\
20000	-1.54817862920044e-05\\
30000	-7.17884330953662e-06\\
40000	-8.95675566291295e-06\\
50000	-7.1755209825029e-06\\
60000	-1.07043286695595e-05\\
70000	-5.20780040169506e-06\\
80000	-9.24777027838479e-06\\
90000	-4.60567482654289e-06\\
100000	-6.36360607428934e-06\\
200000	-3.59882996055008e-06\\
300000	-2.50062141533679e-06\\
400000	-1.60814445130632e-06\\
500000	-9.388151525352e-07\\
};
\addlegendentry{$\Jnlb$};

\addplot [color=red,loosely dotted,line width=2pt]
  table[row sep=crcr]{%
10	0\\
100000	0\\
};
\addlegendentry{$\Jac$}

\end{axis}
\end{tikzpicture}
	\caption{The results and error bounds are obtained by Algorithm~\ref{alg} with $n=10$ for Example~\ref{ex:fisheries}. The red dotted line is the optimal solution computed as indicated in Figure~\ref{fig:fishery:rand}.}  
	\label{fig:fishery:smoothing}
\end{figure}
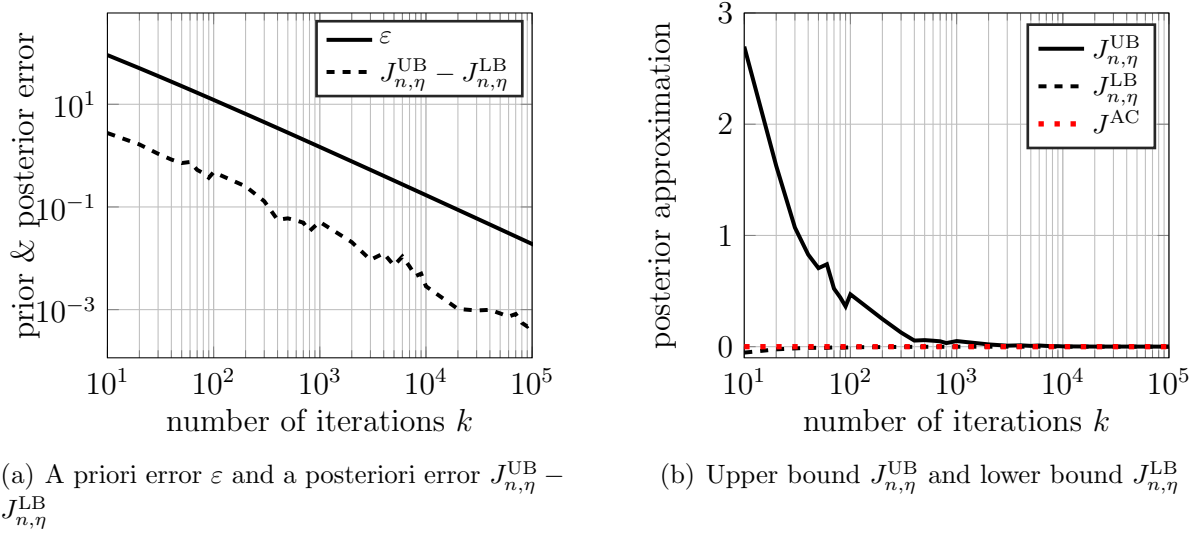

\section{Unknown dynamics} \label{sec:RL}	
This section is an extension of the approximation scheme presented in Sections~\ref{subsec:semi:MDP} and \ref{subsec:rand:MDP} to the case where the transition kernel is is unknown but information is obtained by simulation, i.e. we assume that data drawn from the probability distribution $Q(\cdot|x,a)$ is available. We consider the same setting as in Sections~\ref{subsec:semi:MDP} and \ref{subsec:rand:MDP}, and introduce a linear operator $\RQ: \Lip(X) \to \Lip(X\times A)$, defined by
\begin{equation} \label{eq:operator}
\begin{aligned}
\RQ u(x,a):=& u(x) - \int_X u(y)Q(\drv y|x,a) \\
		 =& u(x) -  \ExpecQ{u(y)},
\end{aligned}
\end{equation}		
and recall (Theorem~\ref{thm:equivalent:LP}) that using this notation the LP~\eqref{thm:equivalent:LP} can be written as
\begin{align}\label{J}
\Jac = & \left\{ \begin{array}{cl}
   \sup\limits_{\rho, u} & \rho   \\
   \subjectto &\rho + \RQ u(x,a) \leq c(x,a), \ \forall (x,a)\in \K \\
   & \rho\in\R, \ u\in \Lip(X).
\end{array} \right.
\end{align}		
Let $\{(x_j,a_j) \}_{j\le N}$ be i.i.d.~samples generated with respect to some probability measure $\PP_1$ supported on $\mathbb{K}$.
We propose the following finite-dimensional convex program as an approximation to the infinite LP~\eqref{J} and hence to the optimal control problem~\eqref{AC}
\begin{align}\label{AC-LP-n,N}
J_{n,N}^m = & \left\{ \begin{array}{cl}
  \!\!\! \sup\limits_{(\rho, \alpha)\in\R^{n+1}}\!\!\!\!\!\!\!\! &  \rho   \\
   \subjectto &\rho + \sum\limits_{i=1}^{n} \alpha_i\RQm u_i(x_j,a_j) \leq c(x_j,a_j), \quad  \forall j \in \{1,\cdots,N\}\\
   &  \| \alpha \|_{2} \le \theta,
\end{array} \right.
\end{align}
where $\{u_i\}_{i=1}^n \subset \Lip(X)$ is a family of linearly independent elements, called the basis functions and $\theta>0$ is the regularization parameter as introduced in \eqref{theta*:MDP}. Moreover,
we use the following notation 
\begin{equation*}
\RQm u(x,a):= u(x)  - \frac{1}{m} \sum_{i=1}^m u(y_i), \quad y_i \overset{\text{i.i.d.}}{\sim}Q(\cdot|x,a).
\end{equation*}
Note that the program \eqref{AC-LP-n,N} does not require knowledge of the transition kernel $Q$, but instead, it uses simulations to learn $Q$ via the samples $y_i$ in the operator $\RQm$.
In the following, we quantify the approximation error of $\eqref{AC-LP-n,N}$ with respect to $\eqref{J}$.		
		
\begin{Thm} \label{thm:main}
Given Assumptions~\ref{a:CM}, let $\eps,\beta\in(0,1)$, $\mathbb{U}_n := \{\sum_{i=1}^{n} \alpha_i u_i : \|\alpha\|_2 \le \theta \}$ and consider the finite convex program \eqref{AC-LP-n,N} where the number of sampled constraints satisfies $N\geq \NN(n+1, (\frac{\eps z_n}{2})^{\dim(\mathbb{K})}, \frac{\beta}{2})$, where $\NN$ is as in \eqref{N}, $z_n := \big(\theta \sqrt{n} (\max\{L_Q,1\}+ 1)+ \| c \|_L \big)^{-1}$ and $m\geq\frac{8Cn\theta^2\log(4 n N /\beta)}{\eps^2}$. Then, with probability $1-\beta$
\begin{align*}
&\left| \Jac - J_{n,N}^m \right | \leq \big(1 + \max\{L_Q, 1\}\big) \big\|u^\star - \Pi_{\mathbb{U}_n}(u^\star)\big\|_L+\eps.
\end{align*}
\end{Thm}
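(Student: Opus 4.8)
The plan is to control the total error by a triangle inequality through two intermediate quantities, splitting both the precision $\eps$ and the failure probability $\beta$ evenly between the two independent sources of randomness: the $N$ constraint samples $\{(x_j,a_j)\}_{j\le N}\sim\PP_1$, and, for each of them, the $m$ inner samples $y_l\sim Q(\cdot\mid x_j,a_j)$ entering $\RQm$. Writing $\Jacn$ for the semi-infinite value of \eqref{AC-LP-n} and $\JacnN$ for the value of the scenario program built on $\{(x_j,a_j)\}_{j\le N}$ but with the \emph{exact} operator $\RQ$ in place of $\RQm$, I would estimate
\[
\left|\Jac - J_{n,N}^m\right| \le \underbrace{\left|\Jac - \Jacn\right|}_{\textrm{(I)}} + \underbrace{\left|\Jacn - \JacnN\right|}_{\textrm{(II)}} + \underbrace{\left|\JacnN - J_{n,N}^m\right|}_{\textrm{(III)}},
\]
where (I) is the infinite-to-semi-infinite projection error, (II) is the constraint-sampling (scenario) error, and (III) is the new kernel-estimation error.

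Term (I) is deterministic and handled directly by Corollary~\ref{cor:MDP:inf-semi}, which under Assumption~\ref{a:CM} gives $\left|\Jac - \Jacn\right| \le (1+\max\{L_Q,1\})\,\|u\opt - \proj_{\mathbb{U}_n}(u\opt)\|_\lip$. Term (II) is exactly the randomized semi-infinite-to-finite step: applying Corollary~\ref{cor:adp:semi-finite} with target accuracy $\eps/2$ and confidence $\beta/2$ yields $0 \le \JacnN - \Jacn \le \eps/2$ with probability at least $1-\beta/2$, provided $N \ge \NN\big(n+1,(z_n\eps/2)^{\dim(\mathbb{K})},\beta/2\big)$, which is the stated sampling requirement; here I use that the dual optimizer bound is $\ynb=1$ by Lemma~\ref{lem:MDP:bd-dual} and that $\PP_1$ admits the tail bound $g(\gamma)=\gamma^{\dim(\mathbb{K})}$ of Example~\ref{ex:TB}, as for the uniform law.

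The heart of the argument is Term (III), where I would exploit that \eqref{AC-LP-n,N} and the exact scenario program share the same decision set and the same $N$ constraint indices, differing only through the coefficients $\RQm u_i(x_j,a_j)$ versus $\RQ u_i(x_j,a_j)$. Since $\RQm u_i(x_j,a_j) - \RQ u_i(x_j,a_j) = \tfrac1m\sum_{l=1}^m u_i(y_l) - \int u_i\,\diff Q(\cdot\mid x_j,a_j)$ is an empirical mean of the bounded variable $u_i$ (recall $\|u_i\|_\infty\le\|u_i\|_\lip=1$), Hoeffding's inequality bounds the probability that any single coefficient deviates by more than $t$, and a union bound over the $nN$ pairs $(i,j)$ gives, on an event of probability at least $1-\beta/2$, the uniform estimate $\max_{i,j}\left|\RQm u_i(x_j,a_j) - \RQ u_i(x_j,a_j)\right| \le t$ as soon as $m \ge 2t^{-2}\log(4nN/\beta)$. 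On that event, Cauchy--Schwarz and the regularizer $\|\alpha\|_2\le\theta$ give, for every feasible $(\rho,\alpha)$ and every $j$, the aggregate bound $\big|\sum_{i=1}^n \alpha_i(\RQm-\RQ)u_i(x_j,a_j)\big| \le \theta\sqrt{n}\,t$; hence shifting $\rho$ by $\theta\sqrt{n}\,t$ turns any feasible point of one scenario program into a feasible point of the other, so $\left|\JacnN - J_{n,N}^m\right| \le \theta\sqrt{n}\,t$. Choosing $t = \eps/(2\theta\sqrt{n})$ makes this at most $\eps/2$ and converts the sample requirement into $m \ge 8\theta^2 n\,\eps^{-2}\log(4nN/\beta)$, matching the stated bound with the universal Hoeffding constant absorbed into $C$.

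Finally, I would intersect the two good events (probability at least $1-\beta$ by a union bound across the independent constraint- and inner-sampling randomness): Terms (II) and (III) together contribute at most $\eps/2+\eps/2=\eps$, and Term (I) contributes the projection residual, which yields the claimed inequality. The main obstacle is Term (III): one must set up the uniform concentration over all $n$ basis functions and $N$ sampled constraints and then convert the coefficient error into a value error through the feasibility-shift argument powered by the bounded dual optimizer, tracking the $\theta$, $\sqrt{n}$ and $\log(nN)$ factors so that the constants reproduce the stated sample complexity; care is also needed to keep the constraint-sampling and kernel-estimation randomness decoupled so the two high-probability events can be intersected cleanly.
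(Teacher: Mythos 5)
Your proposal is correct and follows essentially the same route as the paper's proof: the same three-term decomposition through $\Jacn$ and the exact-kernel scenario program $J_{n,N}$, with Corollary~\ref{cor:MDP:inf-semi} for the projection error, Corollary~\ref{cor:adp:semi-finite} for the constraint-sampling error, and a Hoeffding-plus-union-bound argument (the paper's Lemma~\ref{lemma:MC}, including the same Cauchy--Schwarz step $\|\alpha\|_1\le\sqrt{n}\,\theta$ and objective-shift feasibility argument) for the kernel-estimation error, combined by a final union bound. Your explicit $\eps/2$, $\beta/2$ bookkeeping even recovers the stated sample requirements exactly, which the paper leaves implicit.
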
		
Note that that $J_{n,N}^m$ is a real valued random variable on the space $(\mathbb{K}\times X^m)^{N}$. Strictly speaking, the error bound of Theorem~\ref{thm:main} has to be interpreted with respect to $\PP_2^N$, where $\PP_2$ is a probability measure on $\mathbb{K}\times X^m$ defined by $\PP_2[\drv(x,a,y_1,\hdots,y_m)]:=Q^m(\drv y|x,a)\PP_1[\drv(x,a)]$ and $\PP_2^N$ stands for the N-fold product probability measure. For simplicity we slightly abuse the notation and use $\PP$ instead of $\PP_2^N$, and will be doing so hereinafter.		

\subsection{Proof or Theorem~\ref{thm:main}} \label{proof}
Some preliminaries are needed in order to prove Theorem~\ref{thm:main}. Consider the finite convex program
\begin{align}\label{J_n_N}
J_{n,N} := & \left\{ \begin{array}{cl}
  \!\!\! \sup\limits_{(\rho, \alpha)\in\R^{n+1}} \!\!\!\!\!\!\!\!& \rho   \\
   \subjectto &\rho + \sum\limits_{i=1}^{n} \alpha_i\RQ u_i(x_j,a_j) \leq c(x_j,a_j), \quad \forall j \in \{1,\cdots,N\}\\
   &  \| \alpha \|_{2} \le \theta.
\end{array} \right.
\end{align}

\begin{Lem} \label{lemma:MC}
For any $\eps>0$ and $n,m,N>0$
\begin{equation*}
\PP \Big[ \left| J_{n,N}^m - J_{n,N} \right | \leq \varepsilon \Big] \geq 1-2 n N \exp\left( \frac{-\varepsilon^2 m }{2n\theta^2} \right).
\end{equation*}
\end{Lem}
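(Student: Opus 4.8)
The plan is to treat $J_{n,N}^m$ and $J_{n,N}$ as two finite linear programs that share the same objective $\rho$, the same decision set $\{(\rho,\alpha):\|\alpha\|_2\le\theta\}$, and the same sampled right-hand sides $c(x_j,a_j)$, differing \emph{only} in the constraint coefficients $\RQm u_i(x_j,a_j)$ versus $\RQ u_i(x_j,a_j)$. Writing $a_{ij}\Let\RQ u_i(x_j,a_j)$ and $a_{ij}^m\Let\RQm u_i(x_j,a_j)$, the discrepancy is purely the Monte-Carlo error
$$a_{ij}-a_{ij}^m=\frac{1}{m}\sum_{l=1}^{m}u_i\big(y_l^{(j)}\big)-\int_X u_i(y)\,Q(\drv y|x_j,a_j),$$
an average of $m$ conditionally i.i.d.\ bounded variables minus its conditional mean. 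The whole argument rests on two ingredients: a deterministic sensitivity bound showing the optimal values are close whenever all the $a_{ij}-a_{ij}^m$ are uniformly small, and a concentration bound guaranteeing this uniform smallness with high probability.

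For the sensitivity step, I would fix a realization on which $|a_{ij}-a_{ij}^m|\le t$ for every $i\le n$, $j\le N$. Let $(\rho^\star,\alpha^\star)$ be optimal for $J_{n,N}$. Since $\|\alpha^\star\|_1\le\sqrt{n}\,\|\alpha^\star\|_2\le\sqrt{n}\,\theta$, for each $j$ one has $\big|\sum_i\alpha_i^\star(a_{ij}^m-a_{ij})\big|\le t\sqrt{n}\theta$, so the shifted point $\big(\rho^\star-t\sqrt{n}\theta,\ \alpha^\star\big)$ is feasible for $J_{n,N}^m$ and certifies $J_{n,N}^m\ge J_{n,N}-t\sqrt{n}\theta$. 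Exchanging the roles of the two programs yields the reverse inequality, hence $|J_{n,N}^m-J_{n,N}|\le t\sqrt{n}\theta$ on this event.

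For the concentration step, I would use the normalization $\|u_i\|_\infty\le\|u_i\|_L=1$, so that $u_i(y_l^{(j)})\in[-1,1]$, and apply Hoeffding's inequality conditionally on $(x_j,a_j)$ to get $\PP\big[|a_{ij}-a_{ij}^m|>t\big]\le 2\exp(-mt^2/2)$, a bound uniform in $(x_j,a_j)$ and therefore valid unconditionally after integrating over $\PP_1$. A union bound over the $nN$ pairs $(i,j)$ gives $\PP\big[\max_{i,j}|a_{ij}-a_{ij}^m|>t\big]\le 2nN\exp(-mt^2/2)$. Combining with the sensitivity step and choosing $t=\varepsilon/(\sqrt{n}\theta)$, so that $t\sqrt{n}\theta=\varepsilon$, delivers exactly $\PP\big[|J_{n,N}^m-J_{n,N}|\le\varepsilon\big]\ge 1-2nN\exp\!\big(-\varepsilon^2 m/(2n\theta^2)\big)$.

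The main obstacle is the deterministic sensitivity estimate: one must convert a uniform perturbation of the constraint coefficients into a bound on the change of the optimal value, and the clean constant in the statement hinges on using the norm comparison $\|\alpha\|_1\le\sqrt{n}\|\alpha\|_2$ together with the feasible-set bound $\|\alpha\|_2\le\theta$ to absorb the perturbation into a controlled shift of the scalar $\rho$. The probabilistic part is then routine, provided one is careful that Hoeffding is applied conditionally on the sampled state–action pairs so the resulting tail is independent of $(x_j,a_j)$ and survives both the union bound and the outer expectation over the sampling of $\{(x_j,a_j)\}_{j\le N}$.
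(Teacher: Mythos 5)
Your proof is correct and follows essentially the same route as the paper: Hoeffding's inequality applied entrywise (using $\|u_i\|_\infty\le\|u_i\|_L\le 1$), a union bound over the $nN$ pairs, the norm comparison $\|\alpha\|_1\le\sqrt{n}\|\alpha\|_2\le\sqrt{n}\theta$ to pass from entrywise to uniform constraint perturbation, and the observation that shifting $\rho$ by the perturbation size restores feasibility, giving the two-sided bound on the optimal values. The only differences are cosmetic — you present the sensitivity step before the concentration step and make the conditioning on $(x_j,a_j)$ explicit, which the paper leaves implicit.
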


\begin{proof}
As the first step, we invoke the Hoeffding inequality \cite{ref:Bouch-13} together with the subadditivity of probability measures\footnote{i.e., $\PP(A \cap B) \geq \PP( A ) + \PP( B ) - 1.$} which states that for any $\varepsilon>0$
\begin{align*}
&\PP\Big[ \forall i=1,\hdots,n, j=1,\hdots,N, \left| \RQ u_i(x_j,a_j) - \RQm u_i(x_j,a_j) \right|  \leq \varepsilon  \Big]\geq 1-2 n N \exp\left( \frac{-\varepsilon^2 m}{2} \right).
\end{align*}
Hence, for all $\varepsilon>0$
\begin{align*}
&\PP\left[ \forall j=1,\hdots,N \sup_{\|\alpha\|_2\leq \theta} \left| \sum_{i=1}^n \alpha_i \RQ u_i(x_j,a_j) \right. -\left| \sum_{i=1}^n \alpha_i \RQm u_i(x_j,a_j)  \right| \leq \varepsilon \right] \\
&\quad \geq \PP\Big[ \forall i=1,\hdots,n, j=1,\hdots,N \ \|\alpha \|_1 \left| \RQ u_i(x_j,a_j) \right. \left. - \RQm u_i(x_j,a_j) \right| \leq \varepsilon \Big] \\
& \quad \geq 1-2 n N \exp\left( \frac{-\varepsilon^2 m}{2 n \theta^2} \right),
\end{align*}
where we have used the normalization of the basis functions (i.e., $\|u_i\|_L\leq 1$) leading to $\| \sum_{i=1}^n \alpha_i u_i \|_L \leq \|\alpha\|_1 \leq \sqrt{n} \theta$.
Therefore, with confidence $1 - 2 N \exp\left( \frac{-\varepsilon^2 m}{2n\theta^2} \right)$ we have
\begin{align*}
J_{n,N}^m &=  \left\{ \begin{array}{cl}
   \sup\limits_{(\rho, \alpha)\in\R^{n+1}} & \rho   \\
    \subjectto &\rho + \sum\limits_{i=1}^{n} \alpha_i\big(\RQ u_i(x_j,a_j) \\
   & \quad + \RQm u_i(x_j,a_j)- \RQ u_i(x_j,a_j)\big) \\
   &\quad \leq c(x_j,a_j), \quad \forall j \in \{1,\cdots,N\} \\
   &  \| \alpha \|_{2} \le \theta.
\end{array} \right. \\
&\geq \left\{ \begin{array}{cl}
   \sup\limits_{(\rho, \alpha)\in\R^{n+1}} & \rho-\varepsilon   \\
    \subjectto &\rho + \sum\limits_{i=1}^{n} \alpha_i\RQ u_i (x_j,a_j) \\
   & \quad \leq c(x_j,a_j), \quad \forall j \in \{1,\cdots,N\}\\
   &  \| \alpha \|_{2} \le \theta.
\end{array} \right. \\
&=J_{n,N}-\varepsilon
\end{align*}
and similarly one can show $J_{n,N}^m \leq J_{n,N}+\varepsilon$, which completes the proof.
\end{proof}

\emph{Proof of Theorem~\ref{thm:main}:}
The proof consists of combining three results. First, recall that \cite[Corollary~3.9]{ref:Peyman-17} for the given setting of Theorem~\ref{thm:main}
\begin{equation} \label{proof:step1}
0\leq J-J_n \leq \big(1 + \max\{L_Q, 1\}\big) \big\|u^\star - \Pi_{\mathbb{U}_n}(u^\star)\big\|_L,
\end{equation}
where
\begin{align*}\label{J_n}
J_{n} := & \left\{ \begin{array}{cl}
   \sup\limits_{(\rho, \alpha)\in\R^{n+1}} & \rho   \\
   \subjectto &\rho + \sum\limits_{i=1}^{n} \alpha_i\RQ u_i(x,a) \leq c(x,a), \quad \forall (x,a)\in X\times A\\
   &  \| \alpha \|_{2} \le \theta.
\end{array} \right.
\end{align*}
Next, \cite[Corollary~3.9]{ref:Peyman-17} states that for $N\geq \NN(n+1, (\eps z_n)^{\dim(K)}, \beta)$, where $z_n := \big(\theta \sqrt{n} (\max\{L_Q,1\}+ 1)+ \| c \|_L \big)^{-1}$
\begin{equation} \label{proof:step2}
\PP^N \Big[ |J_n - J_{n,N}|\leq \eps \Big] \geq \beta,
\end{equation}
where $J_{n,N}$ is defined in \eqref{J_n_N}. Finally, a simple union bound of \eqref{proof:step1}, \eqref{proof:step2} and Lemma~\ref{lemma:MC} concludes the proof. $\blacksquare$

\subsection{Numerical example} \label{sec:example}

Consider the linear system
	\begin{align*}
	x_{t+1}=\vartheta x_{t} + \rho a_{t} + \xi_{t}, \quad t \in \N,
	\end{align*}
with quadratic stage cost $\cost(x,a)=qx^{2}+ra^{2}$, where $q\geq 0$ and $r>0$ are given constants. We assume that $X=A=[-L,L]$ and the parameters $\vartheta, \rho \in\R$ are known. The disturbances $\{\xi_{t}\}_{t\in\N}$ are i.i.d.\ random variables generated by a truncated normal distribution with known parameters $\mu$ and $\sigma$, independent of the initial state $x_{0}$. Thus, the process $\xi_{t}$ has a distribution density
	\begin{align*}
	f(x,\mu,\sigma,L) = \left\{
	\begin{array}{cc}
	\frac{\frac{1}{\sigma}\phi\left( \frac{x-\mu}{\sigma} \right)}{\Phi\left( \frac{L-\mu}{\sigma} \right)-\Phi\left( \frac{-L-\mu}{\sigma} \right)}, \quad & x\in[-L,L]\\
	0 & \text{o.w.},
	\end{array} \right.
	\end{align*}
where $\phi$ is the probability density function of the standard normal distribution, and $\Phi$ is its cumulative distribution function. The transition kernel $Q$ has a density function $q(y|x,a)$, i.e., $Q(B|x,a)=\int_{B}q(y|x,a)\drv y$ for all $B\in\Borel{X}$, that is given by
	\begin{align*}
	q(y|x,a)=f(y-\vartheta x - \rho a,\mu,\sigma,L).
	\end{align*}
In the special case that $L=+\infty$ the above problem represents the classical LQG problem, whose solution can be obtained via the algebraic Riccati equation \cite[p.~372]{ref:Bertsekas-12}. By a simple change of coordinates it can be seen that the presented system fulfills Assumption~\ref{a:CM}. Moreover, the following lemma provides the technical parameters required for the proposed error bounds.
		
\begin{Lem}[Truncated LQG properties] \label{lem:LQG}
The error bounds provided by Theorem~\ref{thm:main} hold with the norms $\|\cost\|_\infty = L^2(q+r)$, $\|\cost\|_L = 4L^2\sqrt{q^2+r^2}$, and the Lipschitz constant of the kernel is
	\begin{align*}
	L_Q &=\frac{2L\max\{\vartheta,\rho\}}{\sigma^2\sqrt{2\pi}\left( \Phi\left( \frac{L-\mu}{\sigma} \right)-\Phi\left( \frac{-L-\mu}{\sigma} \right)\right)}\,.
	\end{align*}
\end{Lem}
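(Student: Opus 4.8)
The plan is to reduce the statement to a verification of Assumption~\ref{a:CM} for the explicit truncated LQG data, since Theorem~\ref{thm:main} holds precisely under that assumption; once the three constants $\|\cost\|_\infty$, $\|\cost\|_L$ and $L_Q$ have been identified, the error estimate is a verbatim instance of Theorem~\ref{thm:main}. First I would install the affine change of coordinates $\bar{x}\Let\frac{x}{2L}+\frac12$, $\bar{a}\Let\frac{a}{2L}+\frac12$, which maps $[-L,L]$ bijectively onto $[0,1]$ and thus turns $K=[-L,L]^2$ into the unit square demanded by Assumption~\ref{a:CM}\ref{a:CM:K}. Writing $Z\Let\Phi\big(\tfrac{L-\mu}{\sigma}\big)-\Phi\big(\tfrac{-L-\mu}{\sigma}\big)$ for the truncation constant, I would keep this normalization and the Jacobian $2L$ of the coordinate map explicit throughout, since it is exactly this bookkeeping that fixes the numerical factors in the three target quantities.

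The two cost norms are then routine. The stage cost $\cost(x,a)=qx^2+ra^2$ is a nonnegative quadratic on the box with $q,r\ge0$, so $\|\cost\|_\infty$ is attained at a corner $x=a=\pm L$ and equals $L^2(q+r)$. For the Lipschitz norm I would use $\nabla\cost=(2qx,2ra)$, note that its Euclidean length is again maximized at a corner where it equals $2L\sqrt{q^2+r^2}$, and multiply by the Jacobian $2L$ coming from the coordinate change to obtain $\|\cost\|_L=4L^2\sqrt{q^2+r^2}$, which is finite and so verifies Assumption~\ref{a:CM}\ref{a:CM:cost}.

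The crux is the estimate for $L_Q$, i.e.\ Assumption~\ref{a:CM}\ref{a:CM:Q}. Since $u$ is only assumed bounded, I would transfer the state--action dependence onto the density: from $Qu(x,a)=\int_X u(y)\,f(y-\vartheta x-\rho a)\,\drv y$ one gets $|Qu(x,a)-Qu(x',a')|\le\|u\|_\infty\int_X|f(y-\vartheta x-\rho a)-f(y-\vartheta x'-\rho a')|\,\drv y$. A mean-value estimate on $f$ in its shifted argument bounds the integrand by $\sup|f'|$ times the affine displacement $|\vartheta(x-x')+\rho(a-a')|\le\max\{\vartheta,\rho\}\,\|(x,a)-(x',a')\|_{\ell_\infty}$. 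Because $f'(z)=\sigma^{-2}Z^{-1}\phi'\big(\tfrac{z-\mu}{\sigma}\big)$ and $\phi'(t)=-t\phi(t)$, the elementary bound $\sup_t|\phi'(t)|\le\tfrac{1}{\sqrt{2\pi}}$ gives $\sup|f'|\le\tfrac{1}{\sigma^2\sqrt{2\pi}\,Z}$; integrating this constant bound over $X$ contributes the interval length $2L$. Collecting these factors yields $L_Q=\tfrac{2L\max\{\vartheta,\rho\}}{\sigma^2\sqrt{2\pi}\,Z}$.

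The main obstacle is this last estimate, and more precisely the bookkeeping of the factors $L$, $\sigma$ and $Z$ through the truncation and the coordinate rescaling: one must place the density-derivative bound and the interval length in matching coordinates, and handle the peak value of the truncated density (equivalently $\sup|f'|$) and the boundary of its support with care so that the powers of $\sigma$ and the factor $L$ emerge as claimed. The cost-norm computations and the final assembly via Theorem~\ref{thm:main} are elementary by comparison.
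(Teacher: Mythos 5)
Your route is the same as the paper's: the paper's entire proof consists of installing the change of coordinates $\bar{x}_t \Let \frac{x_t}{2L}+\frac{1}{2}$, $\bar{a}_t \Let \frac{a_t}{2L}+\frac{1}{2}$ and declaring the constants a ``standard computation'', and your proposal is precisely that computation written out. The two cost norms are handled correctly: $L^2(q+r)$ is the corner value, and $4L^2\sqrt{q^2+r^2}$ is indeed $\sup\|\nabla\cost\|_{\ell_2}$ times the Jacobian $2L$ (strictly speaking, Assumption~\ref{a:CM}\ref{a:CM:cost} measures Lipschitz continuity in the $\ell_\infty$-norm, for which the exact constant would be $4L^2(q+r)$; the value $4L^2\sqrt{q^2+r^2}$ is the Euclidean one, but that convention is the lemma's own).

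Two steps in your derivation of $L_Q$, however, fail as written. First, $|\vartheta(x-x')+\rho(a-a')|\le\max\{\vartheta,\rho\}\,\|(x,a)-(x',a')\|_{\ell_\infty}$ is false: with $x-x'=a-a'=\delta$ the left side is $(\vartheta+\rho)\delta$. The factor $\max\{\vartheta,\rho\}$ is the constant for the $\ell_1$-norm on displacements; with the $\ell_\infty$-norm appearing in Assumption~\ref{a:CM}\ref{a:CM:Q} one gets $\vartheta+\rho$. Since this inequality is exactly where the lemma's $\max\{\vartheta,\rho\}$ comes from, you must either switch to the $\ell_1$ pairing explicitly or accept the larger constant. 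Second, the mean-value estimate $\int_X|f(y-c)-f(y-c')|\,\drv y\le 2L\,\sup|f'|\,|c-c'|$ is invalid because the truncated density $f$ is discontinuous at $\pm L$: when $y-c$ lies within $|c-c'|$ of the support boundary, the integrand equals $f(y-c)$ itself, which $\sup|f'|\,|c-c'|$ does not control. The correct per-unit-shift constant is the total variation of $f$, which carries the additional jump terms $f(-L^+)+f(L^-)$; in the limit $\sigma\to\infty$ the truncated normal tends to the uniform density, $\sup|f'|\to 0$ and the displayed $L_Q\to 0$, yet $Qu$ is certainly not constant in $(x,a)$, so the stated constant is then not a valid Lipschitz constant at all. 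Finally, your bookkeeping mixes coordinates: you multiply the cost gradient by the Jacobian $2L$ but take the interval length and the displacement for $L_Q$ in the original variables; carried out consistently in the rescaled coordinates (where Assumption~\ref{a:CM}\ref{a:CM:K} actually holds), the kernel constant acquires an extra factor $2L$. These defects are plausibly shared by the paper's omitted computation --- its constant is reachable essentially only this way --- but a self-contained proof must either add the jump terms and fix the norm conventions, or state the regime (e.g.\ $\sigma\ll L$ and $\mu$ well inside $[-L,L]$) in which the displayed $L_Q$ is a correct bound.
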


\begin{proof}
	In regard to Assumption~\ref{a:CM}\ref{a:CM:K}, we consider the change of coordinates $\bar{x}_t \Let \frac{x_t}{2L}+\frac{1}{2}$ and $\bar{a}_t \Let \frac{a_t}{2L}+\frac{1}{2}$. In the new coordinates, the constants of Lemma~\ref{lem:LQG} follow from a standard computation.
\end{proof}

\paragraph{Simulation details}
For the simulation results we choose the numerical values $\vartheta = 0.8$, $\rho = 0.5$, $\sigma = 1$, $\mu = 0$, $q = 1$, $r = 0.5$, and $L=10$. Throughout this section we used the Fourier basis $u_{2k-1}(s) = \frac{L}{k\pi}\cos\left(\frac{k \pi s}{L}\right)$ and $u_{2k}(s) = \frac{L}{k\pi}\sin\left(\frac{k \pi s}{L}\right)$ and the uniform distribution on $K = X\times A = [-L,L]^2$ to draw the random samples $\{x_j,a_j\}_{j=1}^N$ in program \eqref{AC-LP-n,N}.

\paragraph{Simulation results} \

\begin{figure}[!h]
	\subfigure[varying constraint samples $N$, where $m=10^6$]{\scalebox{1}{
%
%
\definecolor{mycolor1}{rgb}{0.13, 0.55, 0.13}%
\definecolor{mycolor2}{rgb}{0.13, 0.55, 0.13}%

\definecolor{mycolor3}{rgb}{0.32,0.09,0.98}%
\definecolor{mycolor4}{rgb}{0.32,0.09,0.98}%
\begin{tikzpicture}

\begin{axis}[%
width=2.2in,
height=1.6in,
at={(1.011111in,0.813889in)},
scale only axis,
xmode=log,
xmin=1,
xmax=100000,
xminorticks=true,
xlabel={$N$},
xmajorgrids,
xminorgrids,
ymode=log,
ymin=0.5,
ymax=1000000,
ylabel={$J^{\text{AC}}_{n,N}$},
ymajorgrids,
ylabel style={yshift=-0.3cm},
legend style={legend cell align=left,align=left,draw=white!15!black,line width=1.0pt,font=\footnotesize}
]

\addplot[area legend,solid,fill=mycolor1,opacity=0.3,draw=none,forget plot]
table[row sep=crcr] {%
x	y\\
1	64.8626787674932\\
2	51.8028663321495\\
3	40.8698505042221\\
4	34.2057123029274\\
5	32.327740150497\\
6	26.5906149656799\\
7	21.8167512994986\\
8	18.2942570690963\\
9	17.509433217615\\
10	16.5109237494213\\
20	8.68290108849987\\
30	5.49612925906822\\
40	2.35413086420275\\
50	2.32862407932307\\
60	2.12306400135207\\
70	1.86120000680995\\
80	1.63535781691707\\
90	1.63535781690026\\
100	1.55959083049867\\
200	1.39333461801874\\
300	1.3643441040029\\
400	1.35394428022446\\
500	1.34195332050755\\
600	1.33856909460061\\
700	1.32859904928777\\
800	1.32458818880617\\
900	1.32445301927657\\
1000	1.32445302330009\\
2000	1.31914600579702\\
3000	1.31879268391564\\
4000	1.31862187925411\\
5000	1.31844109127923\\
6000	1.31830688543409\\
7000	1.31826094723498\\
8000	1.31826094843103\\
9000	1.3182005639989\\
100000	1.31813536293608\\
100000	1.32263803288161\\
9000	1.32500128260116\\
8000	1.32539113783232\\
7000	1.32655764138582\\
6000	1.32883437777301\\
5000	1.33550740507875\\
4000	1.34085918309822\\
3000	1.37977303081213\\
2000	1.41450337883357\\
1000	1.82423091203754\\
900	1.84987753113501\\
800	1.94883222433922\\
700	2.04079150965957\\
600	2.06445841923901\\
500	2.34248038369511\\
400	2.96176544947812\\
300	3.88985563333783\\
200	5.47246905081519\\
100	12.1911192973486\\
90	13.3995722506444\\
80	15.4831188210459\\
70	20.3527647402716\\
60	22.5255326834499\\
50	27.9046058790261\\
40	32.1987850816379\\
30	42.5032034823197\\
20	59.1930381508071\\
10	84.0304606027895\\
9	86.9142744662561\\
8	86.9142745304304\\
7	93.2294273884365\\
6	94.253548028958\\
5	97.06945360186\\
4	98.4481209823781\\
3	99.2617095114665\\
2	99.9999994398324\\
1	100.000002879261\\
}--cycle;

\addplot [color=mycolor2, solid,line width=1.5pt]
  table[row sep=crcr]{%
1	94.5768870629217\\
2	87.6422421294588\\
3	81.1060535945275\\
4	75.7166598326944\\
5	70.8550067501774\\
6	65.818793100411\\
7	61.4661914492361\\
8	57.5867852858906\\
9	54.506862634783\\
10	51.0690016796575\\
20	30.1255056462458\\
30	19.8540579079187\\
40	14.2465207906624\\
50	10.8717423880872\\
60	8.85011139713774\\
70	7.25012839633849\\
80	6.23207351231921\\
90	5.3655282834809\\
100	4.77894898497726\\
200	2.44822770289855\\
300	1.87735697901229\\
400	1.67372096917809\\
500	1.56577991405623\\
600	1.49838919411101\\
700	1.45440220544531\\
800	1.42286573705222\\
900	1.40333873367853\\
1000	1.38702187301104\\
2000	1.33566597760556\\
3000	1.32648669905007\\
4000	1.32290226192807\\
5000	1.32128509023382\\
6000	1.32039006435097\\
7000	1.31992148967192\\
8000	1.31948814984273\\
9000	1.31924603433781\\
100000	1.31906764370644\\
};
\addlegendentry{$\xnb$ in \eqref{theta*:MDP2}};


\addplot[area legend,solid,fill=mycolor3,opacity=0.25,draw=none,forget plot]
table[row sep=crcr] {%
x	y\\
1	639983.366524608\\
2	329098.537697569\\
3	270518.925077757\\
4	162716.677544189\\
5	99203.2256406953\\
6	86226.4301855034\\
7	47489.5274014969\\
8	47489.5282154978\\
9	30133.0276457532\\
10	21613.4199719176\\
20	7.0906145341405\\
30	4.65928077970097\\
40	2.39043513144145\\
50	2.3286038154872\\
60	2.12306379225585\\
70	2.09667479516671\\
80	1.635356391661\\
90	1.63535708245283\\
100	1.55958962460745\\
200	1.42441044263701\\
300	1.3729378008794\\
400	1.34201033025407\\
500	1.34200855870122\\
600	1.33393016386478\\
700	1.32548079869412\\
800	1.3245880787709\\
900	1.32445276232808\\
1000	1.32067603411791\\
2000	1.31914752559519\\
3000	1.31890352096265\\
4000	1.31862187891315\\
5000	1.3184410887336\\
6000	1.31830689055684\\
7000	1.31826094801364\\
8000	1.31820169759628\\
9000	1.31816451028211\\
100000	1.31813534897557\\
100000	1.32635030823924\\
9000	1.3263503050298\\
8000	1.32709601638159\\
7000	1.32710598610841\\
6000	1.32938918012012\\
5000	1.33621956240585\\
4000	1.3408591123418\\
3000	1.38643803937312\\
2000	1.41323205875657\\
1000	1.7807446249738\\
900	1.78586817791992\\
800	1.94883224270823\\
700	1.95467052048799\\
600	2.19564446831188\\
500	2.376049145298\\
400	2.84118283785502\\
300	4.42306322890742\\
200	5.948249385366\\
100	12.1234649487366\\
90	13.2643994103875\\
80	14.8895276860826\\
70	20.6106203279187\\
60	22.9578071142619\\
50	18731.5689369941\\
40	46154.8527946401\\
30	122416.784984592\\
20	245819.513244859\\
10	657237.808452924\\
9	733798.353290344\\
8	733798.349269297\\
7	879520.407606198\\
6	884146.221461426\\
5	916423.123492617\\
4	921568.783455853\\
3	938169.047545757\\
2	948954.054124091\\
1	949823.811310828\\
}--cycle;

\addplot [color=mycolor4,dashed,line width=1.5pt]
  table[row sep=crcr]{%
1	881068.308142879\\
2	753794.288347249\\
3	651989.021149723\\
4	561892.86158924\\
5	489715.217209112\\
6	427140.172458315\\
7	373606.121950658\\
8	324733.263256534\\
9	277359.534656746\\
10	235953.292052392\\
20	42857.0692688388\\
30	4299.73531162478\\
40	503.880225630512\\
50	57.5866681916299\\
60	8.75517131945854\\
70	7.16879104175466\\
80	6.03343592545909\\
90	5.30458286475803\\
100	4.74690136159816\\
200	2.47432622510224\\
300	1.90523920521026\\
400	1.67809177228141\\
500	1.56928466228527\\
600	1.50170304052627\\
700	1.44790616627579\\
800	1.42030204832343\\
900	1.40220936819888\\
1000	1.38672104646169\\
2000	1.3356980463223\\
3000	1.32613696787708\\
4000	1.32267627846345\\
5000	1.32124562408155\\
6000	1.32037996927841\\
7000	1.31984030493074\\
8000	1.31951848108317\\
9000	1.31925491020946\\
100000	1.31909475493793\\
};
\addlegendentry{$\xnb=\infty$}

\addplot [color=red,loosely dotted,line width=2pt]
  table[row sep=crcr]{%
1	1.3187\\
100000	1.3187\\
};
\addlegendentry{$\Jac$}

\end{axis}
\end{tikzpicture}
	\qquad
	\subfigure[varying number of basis functions $n$, where $m=10^6$]{\scalebox{1}{
%
%
\definecolor{mycolor1}{rgb}{0.13, 0.55, 0.13}%
\definecolor{mycolor2}{rgb}{0.13, 0.55, 0.13}%

\definecolor{mycolor3}{rgb}{0.32,0.09,0.98}%
\definecolor{mycolor4}{rgb}{0.32,0.09,0.98}%

\definecolor{mycolor5}{rgb}{0.59, 0.29, 0.0}
\begin{tikzpicture}

\begin{axis}[%
width=2.4in,
height=1.8in,
at={(1.011111in,0.813889in)},
scale only axis,
xmode=log,
xmin=2,
xmax=180,
xminorticks=true,
xlabel={$n$},
xmajorgrids,
xminorgrids,
ymode=log,
ymin=1.22,
ymax=10,
ylabel={$J^{m}_{n,N}$},
ymajorgrids,
ylabel style={yshift=-0.1cm},
legend style={at={(0.05,0.95)},anchor=north west, legend cell align=left,align=left,draw=white!15!black,line width=1.0pt,font=\footnotesize}
]

\addplot[area legend,solid,fill=mycolor1,opacity=2.500000e-01,draw=none,forget plot]
table[row sep=crcr] {%
x	y\\
2	1.21851798976243\\
4	1.30443684498261\\
6	1.31819413982273\\
8	1.31956680978645\\
10	1.32063874267718\\
12	1.32622125757727\\
14	1.33511028133313\\
16	1.33743318967419\\
18	1.3397104227538\\
20	1.34655065886455\\
40	1.46720085341757\\
60	1.76360370903982\\
80	2.26438668404005\\
100	4.55630500632129\\
120	4.68562958900335\\
140	4.90413361260079\\
160	5.0593249508123\\
180	5.21278776532087\\
180	9.11800395566789\\
160	9.02971157705474\\
140	8.85093554389685\\
120	8.69465516614353\\
100	8.47554283180835\\
80	5.52583037035595\\
60	3.9949639596318\\
40	3.13653687283261\\
20	1.98504074609239\\
18	1.93519498920906\\
16	1.85518514993278\\
14	1.77572363257799\\
12	1.76308012712116\\
10	1.754747441654\\
8	1.71271950522839\\
6	1.61984949346223\\
4	1.47844480442941\\
2	1.37467167830166\\
}--cycle;

\addplot [color=mycolor1, solid,line width=1.5pt]
  table[row sep=crcr]{%
2	1.26251547326883\\
4	1.33728090660148\\
6	1.35610228421885\\
8	1.37090969606023\\
10	1.38707930240325\\
12	1.40599599816037\\
14	1.43076613797033\\
16	1.45623650085315\\
18	1.4836108542614\\
20	1.51349432509347\\
40	1.9267441013783\\
60	2.54238485952415\\
80	3.35596507129358\\
100	5.96668784867735\\
120	6.27381638798601\\
140	6.45756983451891\\
160	6.59459279795208\\
180	6.71340696544437\\
};
\addlegendentry{$N=10^3$};


\addplot[area legend,solid,fill=mycolor3,opacity=2.500000e-01,draw=none,forget plot]
table[row sep=crcr] {%
x	y\\
2	1.21735378875179\\
4	1.30203122260995\\
6	1.31436443314809\\
8	1.31717590676467\\
10	1.31810772967677\\
12	1.31819094464691\\
14	1.31826734558735\\
16	1.31825574502779\\
18	1.31827251212186\\
20	1.31844467071736\\
40	1.31966487484554\\
60	1.3222302280342\\
80	1.32726567732878\\
100	1.68017892109808\\
120	1.70897122532832\\
140	1.73403577602776\\
160	1.74600880928492\\
180	1.7561456552525\\
180	2.67096848508025\\
160	2.63113833620444\\
140	2.5520652959155\\
120	2.51065957060969\\
100	2.39852268554186\\
80	1.38019526688269\\
60	1.35497404688767\\
40	1.34131524473376\\
20	1.32680792981274\\
18	1.32575873606598\\
16	1.32476385245458\\
14	1.32372124962789\\
12	1.32284276756395\\
10	1.32217280109155\\
8	1.32081248065311\\
6	1.31914554578041\\
4	1.30952720517522\\
2	1.23326155041944\\
}--cycle;

\addplot [color=mycolor3, solid,line width=1.5pt]
  table[row sep=crcr]{%
2	1.22197298436012\\
4	1.30431192046822\\
6	1.31569801317773\\
8	1.31815228975377\\
10	1.31890867229767\\
12	1.31918521190929\\
14	1.31945129317029\\
16	1.31972592992585\\
18	1.31998273607678\\
20	1.32035210294587\\
40	1.3247706780631\\
60	1.33175461536792\\
80	1.34149006800039\\
100	1.93200493351356\\
120	2.00934924669849\\
140	2.04397706749593\\
160	2.07230216709334\\
180	2.10070013985845\\
};
\addlegendentry{$N=10^4$};

\addplot[area legend,solid,fill=mycolor5,opacity=0.3,draw=none,forget plot]
table[row sep=crcr] {%
x	y\\
2	1.21711431227416\\
4	1.30185158400481\\
6	1.31420552973648\\
8	1.31686326380858\\
10	1.31761241468131\\
12	1.31000701073858\\
14	1.31279512090143\\
16	1.31395556034419\\
18	1.31315007002564\\
20	1.31365417514198\\
40	1.3138889232756\\
60	1.31451022992511\\
80	1.31448627651163\\
100	1.35171770649448\\
120	1.35438824208047\\
140	1.35339515585583\\
160	1.35614830580725\\
180	1.36328039354759\\
180	1.44512431207682\\
160	1.43976618328982\\
140	1.43838854039343\\
120	1.43526343213205\\
100	1.42416020573977\\
80	1.31842088139675\\
60	1.31815332020937\\
40	1.31744801473338\\
20	1.31729020611809\\
18	1.3167972631667\\
16	1.31632661989196\\
14	1.316908935996\\
12	1.31716697928315\\
10	1.31810085573192\\
8	1.31734333272814\\
6	1.31475269412566\\
4	1.30275153583959\\
2	1.21869697963803\\
}--cycle;

\addplot [color=mycolor5, solid,line width=1.5pt]
  table[row sep=crcr]{%
2	1.21754818498613\\
4	1.30207458668551\\
6	1.31439587701071\\
8	1.3171335016792\\
10	1.31792385826077\\
12	1.31470844418087\\
14	1.3141814856865\\
16	1.31522983661363\\
18	1.31539488406366\\
20	1.31536240109319\\
40	1.31568604356944\\
60	1.31621446223175\\
80	1.31628958360746\\
100	1.38116074999332\\
120	1.38739085412177\\
140	1.39121184207134\\
160	1.39299609428222\\
180	1.39299609428222\\
};
\addlegendentry{$N=10^5$};


\addplot [color=red,loosely dotted,line width=2pt]
  table[row sep=crcr]{%
1	1.3187\\
100000	1.3187\\
};
\addlegendentry{$J^{\text{AC}}$}

\end{axis}
\end{tikzpicture}
	\center{
	\subfigure[varying kernel-learning samples $m$, where $N=10^3$]{\scalebox{1}{
%
%
\definecolor{mycolor1}{rgb}{0.13, 0.55, 0.13}%
\definecolor{mycolor2}{rgb}{0.13, 0.55, 0.13}%

\definecolor{mycolor3}{rgb}{0.32,0.09,0.98}%
\definecolor{mycolor4}{rgb}{0.32,0.09,0.98}%

\definecolor{mycolor5}{rgb}{0.59, 0.29, 0.0}
\begin{tikzpicture}

\begin{axis}[%
width=2.4in,
height=1.8in,
at={(1.011111in,0.813889in)},
scale only axis,
xmode=log,
xmin=10,
xmax=100000,
xminorticks=true,
xlabel={$m$},
xmajorgrids,
xminorgrids,
ymode=linear,
ymin=0.5,
ymax=1.5,
ylabel={$J^{m}_{n,N}$},
ymajorgrids,
ylabel style={yshift=-0.1cm},
legend style={at={(0.63,0.42)},anchor=north west, legend cell align=left,align=left,draw=white!15!black,line width=1.0pt,font=\footnotesize}
]

\addplot[area legend,solid,fill=mycolor1,opacity=0.3,draw=none,forget plot]
table[row sep=crcr] {%
x	y\\
10	0.232881213307227\\
20	0.356148544877288\\
30	0.457087634122338\\
40	0.581657489011523\\
50	0.593824252699872\\
60	0.65037782949258\\
70	0.693298832988233\\
80	0.739132685981094\\
90	0.800725934647246\\
100	0.798407294631585\\
200	0.920436520647634\\
300	0.947109176091395\\
400	1.0030964689061\\
500	1.03104870502016\\
600	1.04909972599431\\
700	1.05793913052624\\
800	1.07642749242451\\
900	1.06274365938646\\
1000	1.08101353703667\\
2000	1.12955368980325\\
3000	1.13351058323775\\
4000	1.15374875511558\\
5000	1.16369119296186\\
6000	1.15997843087475\\
7000	1.1724531493355\\
8000	1.16904828206887\\
9000	1.1754227968988\\
10000	1.17438492876108\\
20000	1.18384476181721\\
30000	1.19589568656287\\
40000	1.19719809353353\\
50000	1.20234037205148\\
60000	1.20385507469031\\
70000	1.20691301836696\\
80000	1.20719276565617\\
90000	1.20376774641464\\
100000	1.2070915553172\\
100000	1.21430856590813\\
90000	1.2141756540796\\
80000	1.21421862248823\\
70000	1.2120650506349\\
60000	1.21198151532706\\
50000	1.21102866463212\\
40000	1.21240951401625\\
30000	1.2111715295248\\
20000	1.20557271651245\\
10000	1.19734815842297\\
9000	1.19943105230394\\
8000	1.19363321169517\\
7000	1.19580544282455\\
6000	1.19645814023396\\
5000	1.18613087571596\\
4000	1.18283030600192\\
3000	1.17739752401551\\
2000	1.1661731477206\\
1000	1.13402212856781\\
900	1.13036230432177\\
800	1.13275106765916\\
700	1.11156456422729\\
600	1.1143706798054\\
500	1.11434230268183\\
400	1.09775768824323\\
300	1.04323430710002\\
200	1.00055620336308\\
100	0.949438088581632\\
90	0.922048006723959\\
80	0.953005767591718\\
70	0.910417682509249\\
60	0.8658799558025\\
50	0.813510306518754\\
40	0.805122886213041\\
30	0.702748485629701\\
20	0.653209902330234\\
10	0.429409740513393\\
}--cycle;

\addplot [color=mycolor1, solid,line width=1.5pt]
  table[row sep=crcr]{%
10	0.326340555123576\\
20	0.518395930678215\\
30	0.629952091540719\\
40	0.689869073825465\\
50	0.706810817420131\\
60	0.759889859640624\\
70	0.81162387362111\\
80	0.827266235386798\\
90	0.856658003214213\\
100	0.866626527602622\\
200	0.970428787753394\\
300	1.01047660932627\\
400	1.04494220862698\\
500	1.06304746600157\\
600	1.08284708792655\\
700	1.0876096575484\\
800	1.10470350362355\\
900	1.10354426816513\\
1000	1.10718836060328\\
2000	1.14748505645753\\
3000	1.15930053976622\\
4000	1.1699703751703\\
5000	1.17663260078688\\
6000	1.1784894285926\\
7000	1.18471565274468\\
8000	1.18281704376986\\
9000	1.18673200354038\\
10000	1.18662915342755\\
20000	1.19863993037484\\
30000	1.2039298164394\\
40000	1.20572813538461\\
50000	1.20684696456388\\
60000	1.20831076900203\\
70000	1.20949423730441\\
80000	1.20999836479319\\
90000	1.20978950856812\\
100000	1.21014295051457\\
};
\addlegendentry{$n=2$};



\addplot[area legend,solid,fill=mycolor3,opacity=0.3,draw=none,forget plot]
table[row sep=crcr] {%
 x	y\\
10	0.582669363461729\\
20	0.713503361815557\\
30	0.801000078637008\\
40	0.863041628115513\\
50	0.824184995225978\\
60	0.889919933692065\\
70	0.873848303880541\\
80	0.909886179338578\\
90	0.958289915223431\\
100	0.98760889816251\\
200	1.05406034777605\\
300	1.08401923080819\\
400	1.09595956607934\\
500	1.10437748757874\\
600	1.14232532881519\\
700	1.14903823778039\\
800	1.15121573811896\\
900	1.16366896883282\\
1000	1.16508331472741\\
2000	1.20617547112529\\
3000	1.21823466980118\\
4000	1.23410583557331\\
5000	1.23973982148308\\
6000	1.25285258699463\\
7000	1.24968272214581\\
8000	1.25326127787138\\
9000	1.25360055410181\\
10000	1.26594076445989\\
20000	1.27727458413504\\
30000	1.28195072169741\\
40000	1.28672894704855\\
50000	1.29018872102953\\
60000	1.28952125125057\\
70000	1.29265275424633\\
80000	1.29397062927464\\
90000	1.29317860570628\\
100000	1.29627912281282\\
100000	1.30100822266649\\
90000	1.30132761431559\\
80000	1.29879764304691\\
70000	1.29854059554445\\
60000	1.29949320278511\\
50000	1.29588216698003\\
40000	1.29442722101335\\
30000	1.292972990012\\
20000	1.28898076932809\\
10000	1.27373684200409\\
9000	1.27349106537047\\
8000	1.27203183831843\\
7000	1.26640947302218\\
6000	1.27174683747886\\
5000	1.26404700731583\\
4000	1.25374389635512\\
3000	1.24339563295959\\
2000	1.23650361336035\\
1000	1.20200129745288\\
900	1.19547091188644\\
800	1.1911456777277\\
700	1.18754074549618\\
600	1.19795034930343\\
500	1.1716239992355\\
400	1.17580457016585\\
300	1.13336996393232\\
200	1.10714329844758\\
100	1.03946240982216\\
90	1.04846082595235\\
80	1.02340813419033\\
70	1.00975258942825\\
60	0.994380491241151\\
50	0.979326836233709\\
40	0.953316608897577\\
30	0.940936712742852\\
20	0.814555175264589\\
10	0.747557351656977\\
}--cycle;

\addplot [color=mycolor3, solid,line width=1.5pt]
  table[row sep=crcr]{%
10	0.660487099929831\\
20	0.775815467538281\\
30	0.859712032254272\\
40	0.900261087787939\\
50	0.922951897688918\\
60	0.941358498910951\\
70	0.952793337310516\\
80	0.971282075837294\\
90	0.998399567105143\\
100	1.01144139222619\\
200	1.07956267945674\\
300	1.11303038185869\\
400	1.13490006962336\\
500	1.14478324792376\\
600	1.16126266565427\\
700	1.17056370161563\\
800	1.1743114147005\\
900	1.18345748045973\\
1000	1.18749278072144\\
2000	1.22062626889082\\
3000	1.23355747276165\\
4000	1.24255784330174\\
5000	1.25131363354072\\
6000	1.25964742095556\\
7000	1.25981461460671\\
8000	1.26279845406276\\
9000	1.26562337884783\\
10000	1.26892313889254\\
20000	1.28167105982375\\
30000	1.28804883405422\\
40000	1.29088486362748\\
50000	1.2929072327024\\
60000	1.29420283783397\\
70000	1.29609995011779\\
80000	1.29678980749725\\
90000	1.29775263063731\\
100000	1.29859115571563\\
};

\addlegendentry{$n=10$};


\addplot[area legend,solid,fill=mycolor5,opacity=0.3,draw=none,forget plot]
table[row sep=crcr] {%
x	y\\
10	1.06944728090579\\
20	1.10528977391431\\
30	1.14010972549472\\
40	1.14527742769216\\
50	1.16074895642217\\
60	1.15975929463208\\
70	1.17012602946377\\
80	1.1766421045435\\
90	1.19157709285196\\
100	1.18954164408351\\
200	1.21284802537536\\
300	1.22764491839548\\
400	1.24000596723195\\
500	1.24520556034427\\
600	1.24857605312802\\
700	1.25578359030127\\
800	1.25764483382203\\
900	1.26101572642139\\
1000	1.26331016873695\\
2000	1.27612246866084\\
3000	1.2816098979132\\
4000	1.28367210450358\\
5000	1.28781297518126\\
6000	1.2919032574996\\
7000	1.29263967154221\\
8000	1.29293194596838\\
9000	1.29502496901465\\
10000	1.29601675054081\\
20000	1.30139317640108\\
30000	1.30339056831679\\
40000	1.30515843934738\\
50000	1.30684274914745\\
50000	1.31121929831254\\
40000	1.31032016723131\\
30000	1.30901815865109\\
20000	1.30787009292914\\
10000	1.30409185505738\\
9000	1.30540017379745\\
8000	1.30335596976406\\
7000	1.30342914282741\\
6000	1.30298036165659\\
5000	1.30193288184888\\
4000	1.29675319648424\\
3000	1.30486099433243\\
2000	1.29565418873804\\
1000	1.29024220062187\\
900	1.28714969177944\\
800	1.28644840566333\\
700	1.29017719558066\\
600	1.28173999384842\\
500	1.2793013740183\\
400	1.28109523734467\\
300	1.27557181919701\\
200	1.27133384925198\\
100	1.26840833147492\\
90	1.2702497335596\\
80	1.27940384316883\\
70	1.25896627245158\\
60	1.25944065566103\\
50	1.29938732183462\\
40	1.25951888814438\\
30	1.3059204014388\\
20	1.28892065489674\\
10	1.29694088349055\\
}--cycle;

\addplot [color=mycolor5, solid,line width=1.5pt]
  table[row sep=crcr]{%
10	1.18043930558203\\
20	1.18498240198077\\
30	1.21503897145779\\
40	1.20321892869736\\
50	1.21641671491051\\
60	1.21560820966806\\
70	1.21482441882843\\
80	1.22390667864723\\
90	1.22609963514618\\
100	1.23927661089482\\
200	1.24354087820241\\
300	1.25174369626898\\
400	1.26099611866841\\
500	1.26500474861283\\
600	1.27368694684411\\
700	1.28073461049789\\
800	1.28458754430366\\
900	1.28069950334232\\
1000	1.2899396141424\\
2000	1.28995185521436\\
3000	1.31136998811437\\
4000	1.29203050411657\\
5000	1.30131721700882\\
6000	1.31841274664937\\
7000	1.31044950370411\\
8000	1.29878705645434\\
9000	1.3089660489702\\
10000	1.30689386084153\\
20000	1.31185446361884\\
30000	1.32169608799924\\
40000	1.33653176763586\\
50000	1.32035813572648\\
};

\addlegendentry{$n=100$};


\addplot [color=red,loosely dotted,line width=2pt]
  table[row sep=crcr]{%
1	1.3187\\
100000	1.3187\\
};
\addlegendentry{$J^{\text{AC}}$}

\end{axis}
\end{tikzpicture}
	\caption{The objective performance $J_{n,N}^m$ is computed according to \eqref{AC-LP-n,N}. The colored tubes represent the results between $[10\%,90\%]$ quantiles (shaded areas) as well as the means (solid lines) across 200 independent experiments of the objective performance $J_{n,N}^m$. The red dotted line denoted by $J^\text{AC}$ is the optimal solution approximated by $n = 10^3$, $m=10^6$ and $N = 10^6$.}}
	\label{fig:LQG:ran}
\end{figure}
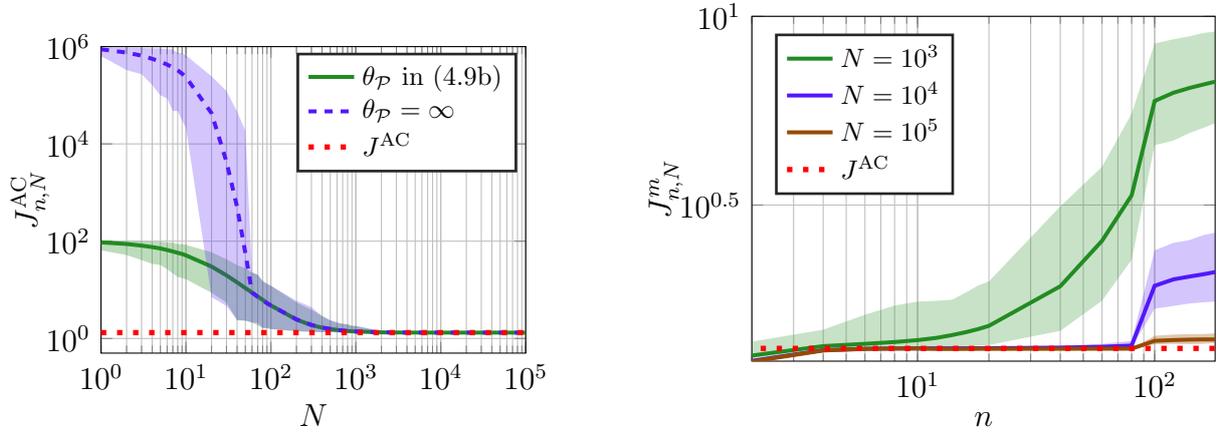
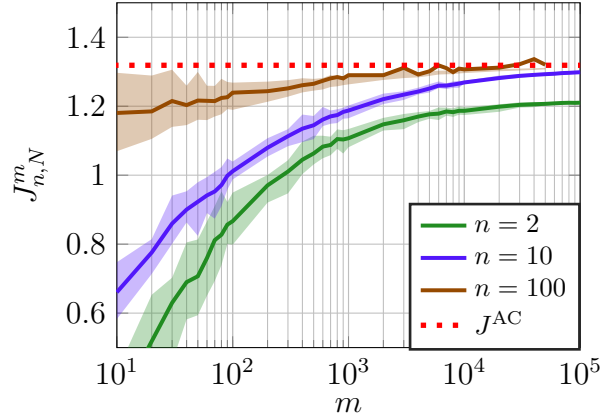
The simulation results are shown in Figure~\ref{fig:LQG:ran}.
Figure~\ref{fig:LQG:varying:N} suggests three interesting features concerning  $n$, the number of basis functions: The higher the number of basis functions,
\begin{enumerate}[label=(\roman*), itemsep = 1mm, topsep = -1mm]
	\item \label{sim:err}
	the smaller the approximation error (i.e., asymptotic distance for $N\to\infty$ to the red dotted line),
	\item \label{sim:var}
	the lower the variance of approximation with respect to the sampling distribution for each $N$, and
	\item \label{sim:conv}
	the slower the convergence behavior with respect to the sample size $N$.
\end{enumerate}
The feature~\ref{sim:conv}, namely that a high number of basis functions requires a large number of sampled constraints $N$ to produce reasonable approximation errors can also be seen in Figure~\ref{fig:LQG:varying:n}. Moreover, the higher the number of sampled constraints $N$ the lower the variance of the approximation. Figure~\ref{fig:LQG:varying:n} suggests that there a sweet spot, namely given a certain number $n$ of basis functions, there is a minimum number of sampled constraints $N$ required for an acceptable approximation accuracy.
Finally, Figure~\ref{fig:LQG:varying:m} indicates that the more basis functions $n$, the less samples from the kernel $m$ are required for $J_{n,N}^m$ to be close to the optimal value.

\section{Infinite-horizon discounted-cost problems} \label{sec:discounted:setting}
In the Markov decision process setting, introduced in Section~\ref{section:MDP:LP}, let us consider \emph{long-run $\tau$-discounted cost} (DC) problems with the discount factor $\tau\in(0,1)$ and initial distribution $\nu\in\mathcal{P}(X)$ described as
	\begin{equation} \label{eq:discounted:problem}
	\Jdc(\nu)\Let \inf_{\pi\in\Pi} \lim_{n\to\infty} \Expecpi{\sum_{t=0}^{n-1}\tau^t \cost(x_{t},a_{t})}.
	\end{equation}
		
As in the average cost setting, in Section~\ref{section:MDP:LP}, we assume that the control model satisfies Assumption~\ref{a:CM}. We refer to \cite[Chapter~4]{ref:Hernandez-96} and \cite[Chapter~8]{ref:Hernandez-99} for a detailed exposition and required technical assumptions in more general settings. As for the AC problems, it is well known that the DC problem~\eqref{eq:discounted:problem} can be alternatively characterized by means of infinite LPs \eqref{primal-inf} and \eqref{dual-inf} introduced in Section~\ref{subsec:dual-pair}, where
	\begin{align}
	\label{AC-setting:discounted}
	\left\{
	\begin{array}{l}
		(\X,\C) \Let (\Cont(S),\Meas(S)) \\
		(\B,\Y) \Let (\Cont(K),  \Meas(K)) \\
		\cone \Let \Cont_{+}(K)\\
		\cone^{*} \Let \Meas_{+}(K) \\
		c(B) = -\nu(B), \quad B\in\borel(S)\\
		b(s,a) = -\cost(s,a) \\
		\op : \X \ra \B, \quad \op x(s,a) \Let - x(s) + \tau Qx(s,a) \\
		\op^{*}: \Y \to \C, \quad \op^{*}y (B) \Let y(B\times A) - \tau y Q(B), \quad B\in \borel(S).
		\end{array}\right.
	\end{align}
	
\begin{Thm}[LP characterization {\cite[Theorem~6.3.8]{ref:Hernandez-96}}] \label{thm:equivalent:LP:discounted}
	Under Assumption~\ref{a:CM}, the optimal value $\Jdc$ of the DC problem in \eqref{eq:discounted:problem} can be characterized by the LP problem \eqref{primal-inf} in the setting \eqref{AC-setting:discounted}, in the sense that $\Jp = -\Jdc$.
\end{Thm}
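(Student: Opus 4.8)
The plan is to unfold the primal program \ref{primal-inf} with the data \eqref{AC-setting:discounted} into a Bellman-type linear program over subsolutions of the dynamic programming operator, and then to match its value with the optimal discounted cost by a standard verification argument. Substituting $c(\diff s)=-\nu(\diff s)$, $b=-\cost$, and $\op x(s,a)=-x(s)+\tau Qx(s,a)$, the conic constraint $\op x\geqc{\cone}b$ (i.e.\ $\op x-b\in\Cont_+(K)$) unravels to the pointwise inequality $x(s)\le \cost(s,a)+\tau Qx(s,a)$ for all $(s,a)\in K$, while the objective becomes $\inner{x}{c}=-\int_S x(s)\,\nu(\diff s)$. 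Hence
\[
\Jp=-\sup\Big\{\textstyle\int_S x\,\diff\nu \ :\ x\in\Cont(S),\ x(s)\le \cost(s,a)+\tau Qx(s,a)\ \ \forall (s,a)\in K\Big\},
\]
and it suffices to show that the supremum equals $\Jdc(\nu)=\int_S V\opt\,\diff\nu$, where $V\opt(s)\Let\inf_{\pi\in\Pi}\mathbb{E}^{\pi}_{s}\big[\sum_{t\ge0}\tau^t\cost(s_t,a_t)\big]$ is the optimal cost-to-go.

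First I would record the dynamic programming facts valid under Assumption~\ref{a:CM}: by compactness of $K$ together with Assumption~\ref{a:CM}\ref{a:CM:Q} and \ref{a:CM:cost}, the operator $x\mapsto \inf_{a\in A(s)}\{\cost(s,a)+\tau Qx(s,a)\}$ is a $\tau$-contraction on $\Cont(S)$ whose unique fixed point is $V\opt$, which is the optimal value function and satisfies the Bellman equation $V\opt(s)=\inf_{a\in A(s)}\{\cost(s,a)+\tau QV\opt(s,a)\}$ (see \cite[Chapter~4]{ref:Hernandez-96}). In particular $V\opt\in\Cont(S)$ and $V\opt(s)\le \cost(s,a)+\tau QV\opt(s,a)$ for every $(s,a)\in K$, so $V\opt$ is feasible for the supremum above; this yields the lower bound $\sup\ge\int_S V\opt\,\diff\nu=\Jdc(\nu)$.

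The core of the argument is the reverse inequality $\int_S x\,\diff\nu\le \int_S V\opt\,\diff\nu$ for every feasible subsolution $x$, which I would obtain by a telescoping estimate. Fix such an $x$ and any admissible policy $\pi\in\Pi$. Evaluating the constraint along the action actually taken at time $t$ and taking conditional expectations gives $x(s_t)\le \cost(s_t,a_t)+\tau\,\mathbb{E}^{\pi}\big[x(s_{t+1})\mid h_t,a_t\big]$; multiplying by $\tau^t$, taking $\mathbb{E}^{\pi}_{s}$, and summing the resulting geometric telescope over $t=0,\dots,n-1$ leaves
\[
x(s)\ \le\ \mathbb{E}^{\pi}_{s}\Big[\sum_{t=0}^{n-1}\tau^t\cost(s_t,a_t)\Big]+\tau^n\,\mathbb{E}^{\pi}_{s}\big[x(s_n)\big].
\]
Since $x$ is continuous on the compact set $S$ it is bounded, and $\tau\in(0,1)$ forces the remainder $\tau^n\,\mathbb{E}^{\pi}_{s}[x(s_n)]\to0$; passing to the limit $n\to\infty$ gives $x(s)\le \mathbb{E}^{\pi}_{s}\big[\sum_{t\ge0}\tau^t\cost(s_t,a_t)\big]$. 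Taking the infimum over $\pi\in\Pi$ shows $x(s)\le V\opt(s)$ pointwise, and integrating against $\nu$ delivers $\int_S x\,\diff\nu\le\int_S V\opt\,\diff\nu$. Combined with the feasibility of $V\opt$, the supremum equals $\int_S V\opt\,\diff\nu=\Jdc(\nu)$, hence $\Jp=-\Jdc$.

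I expect the main obstacle to be the justification of the verification step rather than its algebra: one must ensure that the pointwise subsolution inequality can be evaluated along the (possibly history-dependent) actions of an arbitrary policy and integrated under the law $\mathbb{P}^{\pi}_{\nu}$, and that the tail term genuinely vanishes. Boundedness of $x$ on the compact state space together with $\tau<1$ settles the tail, while the measurability and tower-property interchanges are exactly those guaranteed by the MDP construction behind Assumption~\ref{a:CM}; the continuity and attainment claims for $V\opt$ are imported from \cite[Chapter~4]{ref:Hernandez-96}.
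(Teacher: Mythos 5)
Your proof is correct, but note that the paper itself contains no proof of this statement: Theorem~\ref{thm:equivalent:LP:discounted} is stated as an import, with the argument deferred entirely to \cite[Theorem~6.3.8]{ref:Hernandez-96}. What you have written is a self-contained reconstruction of the standard argument behind that citation: unfold \eqref{primal-inf} under \eqref{AC-setting:discounted} into the LP over continuous subsolutions of the discounted Bellman operator, show $V\opt$ is feasible via the optimality equation, and dominate every feasible $x$ by $V\opt$ through the $\tau^t$-weighted telescoping bound. This buys a transparent, elementary proof (it never invokes abstract LP duality, only weak-duality-style verification), at the cost of having to import the dynamic-programming facts. Two of those imports deserve to be made explicit. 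First, the identity $\Jdc(\nu)=\int_S V\opt\,\diff\nu$, i.e.\ that the infimum over (history-dependent) policies commutes with integration over the initial distribution, is not automatic; it rests on the existence of a stationary optimal policy via measurable selection, which holds under Assumption~\ref{a:CM} and is precisely part of what you cite from \cite[Chapter~4]{ref:Hernandez-96} -- without it you only get $\sup \le \Jdc(\nu)$ from the telescoping step and $\sup \ge \int_S V\opt\,\diff\nu$ from feasibility, and these two bounds do not meet. Second, the step $\mathbb{E}^{\pi}\big[x(s_{t+1})\mid h_t,a_t\big]=Qx(s_t,a_t)$ is exactly the third defining property of $\mathbb{P}^{\pi}_{\nu}$ in the Ionescu--Tulcea construction recalled in Section~\ref{section:MDP:LP}, so the ``measurability and tower-property interchanges'' you flag are indeed covered. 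With these two points spelled out, your argument is a complete and faithful substitute for the cited theorem.
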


It is known that under similar conditions as in Assumption~\ref{a:CM} on the control model, the value function $u^{\star}$ in the $\tau$-discounted cost optimality equation is Lipschitz continuous \cite[Section~2.6]{ref:HernandezLerma-89} or \cite[Theorem~3.1]{ref:Duf-13}. We use the norms similar to the AC-setting \eqref{AC:pairs}. The next step toward studying the approximation error \eqref{inf-semi error} for the DC-setting readily follows by Theorem~\ref{thm:inf-semi} combined with the following lemma.
		
		\begin{Lem}[DC semi-infinite regularity]\label{lem:MDP:DC}
			For the DC-problem \eqref{eq:discounted:problem}, characterized by the dual-pair vector spaces in \eqref{AC-setting:discounted}, under Assumption~\ref{a:CM} we have the operator norm $\|\op\| \leq 1+ \max\{L_Q,1\}\tau$, the inf-sup constant of Assumption~\ref{a:reg}\ref{a:reg:inf-sup} $\gamma = 1-\tau$, and the dual optimizer norm
			\begin{align}
			\label{yb:DC}
			\|y\opt\|_\wass \le \ynb = \frac{\xnb+ (1-\tau)^{-1}\|\cost\|_{\infty}}{(1-\tau) \xnb - \|\cost\|_{\lip}} \,.
			\end{align}
		\end{Lem}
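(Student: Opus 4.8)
The plan is to verify the three quantities for the DC setting by specializing the general definitions in \eqref{AC-setting:discounted} and then invoking Proposition~\ref{prop:SD}, much as the proof of Lemma~\ref{lem:MDP:bd-dual} did for the AC case. The operator here is $\op x(s,a) = -x(s) + \tau Qx(s,a)$, so first I would bound $\|\op\|$ using the triangle inequality and the computation already performed in Lemma~\ref{lem:operator}. Concretely, writing $\op = -(I - \tau Q)$, one has $\|\op x\|_\lip \le \|x\|_\lip + \tau\|Qx\|_\lip$, and the estimate $\|Qx\|_\lip \le \max\{L_Q,1\}\|x\|_\lip$ (from Assumption~\ref{a:CM}\ref{a:CM:Q} together with the stochastic-kernel bound $\|Qx\|_\infty \le \|x\|_\infty$, exactly as in Lemma~\ref{lem:operator}) yields $\|\op\| \le 1 + \max\{L_Q,1\}\tau$.

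Next I would establish the inf-sup constant $\gamma = 1-\tau$. The adjoint is $\op^* y(B) = y(B\times A) - \tau yQ(B)$, and the relevant quantity is a lower bound on $\|\op^* y\|_*$ for $y$ in the positive cone $\cone^* = \Meas_+(K)$. Testing $\op^* y$ against the constant function $\ind \in \Cont(S)$ gives $\inner{\ind}{\op^* y} = y(S\times A) - \tau\, yQ(S) = (1-\tau)\|y\|_\wass$, using that $yQ$ is again a nonnegative measure of the same total mass as $y$ (since $Q$ is stochastic) and that the total-variation and Wasserstein norms coincide on the positive cone, as noted after \eqref{norm-2}. Since $\|\ind\|_\infty = 1$ and the dual norm is realized by continuous functions of norm at most one, this lower bound shows $\|\op^* y\|_* \ge (1-\tau)\|y\|_*$, delivering $\gamma = 1-\tau$.

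Finally, with $\gamma$ and $\|\op\|$ in hand, the dual-optimizer bound \eqref{yb:DC} follows by substituting into the bound \eqref{yb} of Proposition~\ref{prop:SD}. Here, unlike the AC case, the cost vector does not degenerate: in the setting \eqref{AC-setting:discounted} we have $c(B) = -\nu(B)$, whose discretized counterpart gives $\|\cnew\|_{\Rnorm^*}$ proportional to $1$ after accounting for the basis normalization, and $b = -\cost$ so that $\|b\| = \|\cost\|_\lip$. Using the simple lower bound $\Jlb = -\xnb\|\cnew\|_{\Rnorm^*}$ from Proposition~\ref{prop:SD} together with $\|\cost\|_\infty$ to control the relevant norm of $\cnew$, the formula $\ynb = (\xnb\|\cnew\|_{\Rnorm^*} - \Jlb)/(\gamma\xnb - \|b\|)$ simplifies to the stated expression $\bigl(\xnb + (1-\tau)^{-1}\|\cost\|_\infty\bigr)/\bigl((1-\tau)\xnb - \|\cost\|_\lip\bigr)$.

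I expect the main obstacle to be the bookkeeping in the last step, specifically identifying the correct value of $\|\cnew\|_{\Rnorm^*}$ in the discounted setting and its interaction with the factor $(1-\tau)^{-1}$. Whereas in the AC proof the vector $\cnew$ was the trivial $[-1,0,\dots,0]$, here the objective involves the initial distribution $\nu$, so one must be careful about how the normalization of the basis and the absence of the degenerate $\rho$-coordinate feed into the bound; matching the precise form of \eqref{yb:DC} is the delicate part, while the operator-norm and inf-sup computations are routine adaptations of Lemmas~\ref{lem:operator} and~\ref{lem:MDP:bd-dual}.
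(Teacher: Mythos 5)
Your first two steps are correct and coincide with the paper's own argument: the operator-norm bound is the routine adaptation of Lemma~\ref{lem:operator} (giving $\|\op\|\le 1+\max\{L_Q,1\}\tau$), and the inf-sup constant $\gamma=1-\tau$ is obtained exactly as you describe, by testing against the constant function $\pm\ind$ and using $\inner{\ind}{y}=\|y\|_\wass$ on the positive cone. (Both you and the paper implicitly need the constant function to lie in the span of the basis so that the bound applies to $\opn^*$ rather than only to $\op^*$; the paper acknowledges this assumption in the remark following the lemma, so this is not a gap relative to the paper's standard.)

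The genuine gap is in the last step, precisely the bookkeeping you flagged as delicate. If you substitute the generic lower bound $\Jlb=-\xnb\|\cnew\|_{\Rnorm^*}$ into \eqref{yb}, the numerator becomes $\xnb\|\cnew\|_{\Rnorm^*}-\Jlb=2\xnb\|\cnew\|_{\Rnorm^*}$; with $\|\cnew\|_{\Rnorm^*}\le\|\nu\|_\wass=1$ this yields $\ynb=2\xnb/\bigl((1-\tau)\xnb-\|\cost\|_\lip\bigr)$, which is a valid bound but not \eqref{yb:DC}. Moreover, the term $(1-\tau)^{-1}\|\cost\|_{\infty}$ in \eqref{yb:DC} cannot be produced by any norm of $\cnew$: since $c=-\nu$, the vector $\cnew$ involves only the initial distribution, and its dual norm is controlled by $\|\nu\|_\wass=1$ (a probability measure), not by $\|\cost\|_\infty$, so your suggestion to use $\|\cost\|_\infty$ ``to control the relevant norm of $\cnew$'' does not work. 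What the paper does instead is invoke a problem-specific lower bound on the optimal value: since $\cost$ is nonnegative and bounded, the discounted cost satisfies $0\le\Jdc\le(1-\tau)^{-1}\|\cost\|_\infty$, hence $\Jpn\ge\Jp=-\Jdc\ge-(1-\tau)^{-1}\|\cost\|_\infty$, and one sets $\Jlb=-(1-\tau)^{-1}\|\cost\|_\infty$ in \eqref{yb}. With this choice the numerator is $\xnb+(1-\tau)^{-1}\|\cost\|_\infty$ and the denominator is $(1-\tau)\xnb-\|\cost\|_\lip$, which is exactly \eqref{yb:DC}; note that for large $\xnb$ this bound behaves like $(1-\tau)^{-1}$ rather than the $2(1-\tau)^{-1}$ your generic choice would give, which is the reason the problem-specific $\Jlb$ is used.
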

		\begin{proof}
			With the norms considered and following a proof similar to Lemma~\ref{lem:operator}, the operator norm $\|\op\|$ can be upper bounded as
			$\| \op \| \leq 1+\tau.$ The \emph{inf-sup} condition, Assumption~\ref{a:reg}\ref{a:reg:inf-sup}, holds with $\gamma = 1-\tau$, since
			\begin{align*}
			\inf_{y \in \cone^*}\sup_{x \in \X_n} {\inner{\op x}{y} \over \|x\| \|y\|_{\wass}} \geq \inf_{y \in \cone^*} \frac{(1-\tau)\inner{\ind}{y}}{\| y \|_{\wass}} = 1-\tau.
			\end{align*}
			Moreover $\|\nu\|_{\wass} = 1$ since it is a probability measure. Thus, given the lower bound for the optimal value $\Jdcn \ge {-(1-\tau)^{-1}\|\cost\|_{\infty}}$, the assertion of Proposition~\ref{prop:SD} (i.e, the dual optimizers bound in \eqref{yb}) leads to the desired assertion \eqref{yb:DC}. 
		\end{proof}
		
		Note that when the norm constraint is neglected, the dual program enforces that any solution $\yn$ in the program \ref{dual-n} satisfies $\inner{x}{\op^{*}\yn -c}=0$ for all $x\in \X_{n}$ (cf. the program~\ref{dual-inf}). Assume that a constant function belongs to the set $\X_{n}$. Then, the constraint evaluated at the constant function reduces to $(1-\tau)\inner{\ind}{\yn} = (1-\tau)\|\yn\|_\wass = 1$. It is worth noting that this observation can consistently be captured by Lemma~\ref{lem:MDP:DC} when $\xnb$ tends to $\infty$, in which the bound \eqref{yb:DC} reduces to $\|\yn\|_{\wass} \le ({1-\tau})^{-1}$.

\part{Information theoretic problems}\label{part:IT}



\chapter{Channel capacity approximation} \label{chap:channel:cap}

The second part of the thesis studies optimization problems arising from an information theoretic perspective, namely the channel capacity problem and the maximum entropy estimation problem. In this chapter we present a novel iterative method for approximately computing the capacity of discrete memoryless channels, possibly under additional constraints on the input distribution. Based on duality of convex programming, we derive explicit upper and lower bounds for the capacity. 
The presented method requires $O(M^2 N \sqrt{\log N}/\varepsilon)$ to provide an estimate of the capacity to within $\varepsilon$, where $N$ and $M$ denote the input and output alphabet size; a single iteration has a complexity $O(M N)$. We also show how to approximately compute the capacity of memoryless channels having a bounded continuous input alphabet and a countable output alphabet under some mild assumptions on the decay rate of the channel's tail.
It is shown that discrete-time Poisson channels fall into this problem class. As an example, we compute sharp upper and lower bounds for the capacity of a discrete-time Poisson channel with a peak-power input constraint.

\section{Introduction}
A discrete memoryless channel (DMC) comprises a finite input alphabet $\mathcal{X} = \{1,2,\hdots,N\}$, a finite output alphabet $\mathcal{Y} = \{1,2,\hdots,M\}$, and a conditional probability mass function expressing the probability of observing the output symbol $y$ given the input symbol $x$, denoted by $W(y|x)$. In his seminal 1948 paper \cite{shannon48}, Shannon proved that the channel capacity for a DMC is
\begin{equation}
C(W)=\max \limits_{p \in \Delta_N} \I{p}{W},  \label{eq:shannon48}
\end{equation} 
where $\Delta_{N}\!\!:=\!\{  x\in\R^{N} \!: \ \! x\geq 0,  \sum_{i=1}^{N} x_{i}=1\}$\! denotes the $N$-simplex and $\I{p}{W}\!\!:=\!\sum_{x \in \mathcal{X}}  p(x)$ $ \D{W(\cdot|x)}{(pW)(\cdot)}$ the mutual information. $W(y|x)=\ProbIT{Y=y|X=x}$ describes the channel law and $(pW)(\cdot)$ is the probability distribution of the channel output induced by $p$ and $W$, i.e., $(pW)(y):=\sum_{x \in \mathcal{X}} p(x) W(y|x)$. $\D{\cdot}{\cdot}$ denotes the relative entropy that is defined as $\D{W(\cdot|x)}{(pW)(\cdot)}:=\sum_{y \in \mathcal{Y}} W(y|x) \log\left(\tfrac{W(y|x)}{(pW)(y)}\right)$.
Shannon also showed that in case of an additional average cost constraint on the input distribution of the form $\E{s(X)}\leq S$, where $s:\mathcal{X}\to\Rp$ denotes a cost function and $S\geq 0$, the capacity is given by
\begin{equation} \label{eq:DMC_capacity_const}
C_S(W)= \left\{
\begin{array}{lll}
			&\max\limits_{p} 		& \I{p}{W} \\
			&\subjectto			& \E{s(X)}\leq S\\
			& 					& p\in \Delta_{N}.
	\end{array} \right.
\end{equation}
For a few DMCs it is known that the capacity can be computed analytically, however in general there is no closed-form solution. It is therefore of interest to have an algorithm that solves \eqref{eq:DMC_capacity_const} in a reasonable amount of time. Since for a fixed channel the mutual information is a concave function in $p$, the optimization problem \eqref{eq:DMC_capacity_const} is a finite dimensional convex optimization problem. Solving \eqref{eq:DMC_capacity_const} with convex programming solvers, however, turned out to be computationally inefficient even for small alphabet sizes \cite{blahut72}. 

Shannon's formula for the capacity of a DMC generalizes to the case of memoryless channels with continuous input and output alphabets, i.e. $\mathcal{X}=\mathcal{Y}=\R$.  However, when considering such channels, it is essential to introduce additional constraints on the channel input to obtain physically meaningful results, more details can be found in \cite[Chapter~7]{gallager68}.
In addition to average cost type constraints, peak-power constraints are also often considered. A peak-power constraint demands that $X\in \A$ for some compact set $\A\subset\mathcal{X}$ with probability one. For such a setup, i.e., having average and peak-power constraints, the capacity is given by
\begin{equation} \label{eq:cont_DMC_capacity_const}
C_{\A,S}(W)= \left\{
\begin{array}{lll}
			&\sup\limits_{p} 		& \I{p}{W} \\
			&\subjectto			& \E{s(X)}\leq S\\
			& 					& p\in \mathcal{P}(\A),
	\end{array} \right.
\end{equation}
where $\mathcal{P}(\A)$ denotes the set of all probability distributions on the Borel $\sigma$-algebra $\mathcal{B}(\A)$ and the mutual information is defined as $\I{p}{W}:=\int_{\A} \D{W(\cdot|x)}{(pW)(\cdot)} p(\drv x)$. The channel is described by a transition density defined by $\ProbIT{Y\in\drv y|X=x}=W(y|x)\drv y$ and $(pW)(\cdot)$ is the probability distribution of the channel output induced by $p$ and $W$ which is given by $(pW)(y):=\int_{\A}W(y|x)p(\drv x)$ and the relative entropy that is defined as $\D{W(\cdot|x)}{(pW)(\cdot)}:=\int_{\mathcal{Y}} W(y|x) \log\left(\tfrac{W(y|x)}{(pW)(y)}\right)\drv y$.
The optimization problem \eqref{eq:cont_DMC_capacity_const} is an infinite dimensional convex optimization problem and as such in general computationally intractable (NP-hard).

\vspace{3mm}
\subsection*{Previous Work and Contributions}
Historically one of the first attempts to numerically solve \eqref{eq:DMC_capacity_const} is the so-called \emph{Blahut-Arimoto algorithm} \cite{blahut72,arimoto72}, that exploits the special structure of the mutual information and approximates iteratively the capacity of any DMC. Each iteration step has a computational complexity $O(MN)$. It was shown that this algorithm, in case of no additional input constraints has an \textit{a priori} error bound of the form $|C(W)-C_{\textnormal{approx}}^{(n)}(W)|\leq O(\tfrac{\log(N)}{n})$, where $n$ denotes the number of iterations \cite[Corollary~1]{arimoto72}. Hence, the overall computational complexity of finding an additive $\varepsilon$-solution is given by $O(\tfrac{MN\log(N)}{\varepsilon})$. As such the computational cost required for an acceptable accuracy for channels with large input alphabets can be considerable. 
This undesirable property together with the complexity per iteration prevents the algorithm from being useful for a large class of channels, e.g., a Rayleigh channel with a discrete input alphabet \cite{shamai01}.
There have been several improvements of the Blahut-Arimoto algorithm \cite{sayir00,matz04,yaming10}, which achieve a better convergence for certain channels. However, since they all rely on the original Blahut-Arimoto algorithm they inherit its overall computational complexity as well as its complexity per iteration step. Therefore, even with improved Blahut-Arimoto algorithms, approximating the capacity for channels having large input alphabets remains computationally expensive. 
Based on sequential Monte-Carlo integration methods (a.k.a.\ particle filters), the Blahut-Arimoto algorithm has been extended to memoryless channels with continuous input and output alphabets \cite{dauwels05,ref:Chen-13,ref:Chen-14-1, ref:Chen-14-2}. As shown in several examples, this approach seems to be powerful in practice, however a rate of convergence has not been proven.

Another recent approach towards approximating \eqref{eq:DMC_capacity_const} is presented in \cite{chiang04} by Mung and Boyd, where 
they introduce an efficient method to derive upper bounds on the channel capacity problem, based on geometric programming.
Huang and Meyn \cite{meyn05} developed a different approach based on cutting plane methods, where the mutual information is iteratively approximated by linear functionals and in each iteration step, a finite dimensional linear program is solved. It has been shown that this method converges to the optimal value, however no rate of convergence is provided. An interesting insight provided by the work \cite{meyn05} and \cite{ref:Pandit-04} is that the in the setting of a continuous alphabet, the optimal input distributions typically have finite support.

In this article, we present a new approach to solve \eqref{eq:DMC_capacity_const} that is based on its dual formulation. It turns out that the dual problem of \eqref{eq:DMC_capacity_const} has a particular structure that allows us to apply Nesterov's smoothing method \cite{ref:Nest-05}. In the absence of input cost constraints, this leads to an a priori error bound of the order $|C(W)-C_{\textnormal{approx}}^{(n)}(W)|\leq O(\tfrac{M \sqrt{\log(N)}}{n})$, where $n$ denotes the number of iterations and each iteration step has a  computational complexity of $O\!\left(NM \right)$.
Thus, the overall computational complexity of finding an $\varepsilon$-solution is given by $O(\tfrac{M^{2} N \sqrt{\log(N)}}{\varepsilon})$.
In particular for large input alphabets our method has a computational advantage over the Blahut-Arimoto algorithm. In addition the novel method provides primal and dual optimizers leading to an \emph{a posteriori} error which is often much smaller than the a priori error. 

Due to the favorable structure of the capacity problem and its dual formulation, the presented method can be extended to approximate the capacity of memoryless channels having a bounded continuous input alphabet and a countable output alphabet, under some assumptions on the tail of $W(\cdot|x)$, i.e., problem \eqref{eq:cont_DMC_capacity_const} is addressed for a countable output alphabet. As a concrete example, this is demonstrated on the discrete-time Poisson channel with a peak-power constraint. To the best of our knowledge, for this scenario up to now only lower bounds exist \cite{lapidoth09}.

\subsection*{Notation}
The logarithm with basis 2 is denoted by $\log(\cdot)$ and the natural logarithm by $\ln(\cdot)$. In Section~\ref{sec:classicalCapacity} we consider DMCs with a finite input alphabet $\mathcal{X}=\{ 1,2,\hdots,N \}$ and a finite output alphabet $\mathcal{Y}=\{ 1,2,\hdots,M \}$. The channel law is summarized in a matrix $\W\in\R^{N\times M}$, where $\W_{ij}:=\ProbIT{Y=j|X=i}=W(j|i)$. We define the standard $n-$simplex as $\Delta_{d}:=\left\{  x\in\R^{d} : x\geq 0, \sum_{i=1}^{d} x_{i}=1\right\}$. The input and output probability mass functions are denoted by the vectors $p\in \Delta_{N}$ and $q\in\Delta_{M}$. The input cost constraint can be written as $\E{s(X)} = p\transp s\leq S$, where $s\in \Rp^{N}$ denotes the cost vector and $S\in \Rp$ is the given total cost. The binary entropy function is denoted by $\Hb(\alpha):=-\alpha \log(\alpha)-(1-\alpha)\log(1-\alpha)$, for $\alpha\in [0,1]$. For a probability mass function $p \in \Delta_{N}$ we denote the entropy by $H(p):=\sum_{i=1}^N -p_i \log(p_i)$. It is convenient to introduce an additional variable for the conditional entropy of $Y$ given $\{X=i\}$ as $r\in\R^{N}$, where $r_{i}=-\sum_{j=1}^{M}\W_{ij}\log(\W_{ij})$. For a probability density $p$ supported at a measurable set $B\subset \R$ we denote the differential entropy by $h(p)=-\int_{B} p(x) \log(p(x)) \drv x$.  For two vectors $x,y \in \R^n$, we denote the canonical inner product by $\left \langle x,y \right \rangle := x \transp y$. We denote the maximum (resp.~minimum) between $a$ and $b$ by $a \vee b$ (resp.~$a\wedge b$). For $\A\subset\R$ and $1\leq p \leq \infty$, let $\Lp{p}(\A)$ denote the space of $\Lp{p}$-functions on the measure space $(\A, \mathcal{B}(\A)\!, \drv x)$, where $\mathcal{B}(\A)$ denotes the Borel $\sigma$-algebra and $\drv x$ the Lebesgue measure.
The capacity of a channel $W$ is denoted by $C(W)$. 
For the channel law matrix  $\W\in\R^{N\times M}$ we consider the norm
$\|\W\|:= \max\limits_{\lambda\in\R^{M}, \ p\in\R^{N}}\left\{ \inprod{\W\lambda}{p} \ : \ \|\lambda\|_{2}=1, \ \|p\|_{1}=1 \right\},$
and note that an upper bound is given by 
\begin{equation} \label{eq:operator:norm}
\|\W\| 			=		\max\limits_{\|p\|_{1}=1} \max\limits_{\|\lambda\|_{2}=1} \lambda\transp \W\transp p \leq 	\max\limits_{\|p\|_{1}=1} \| \W\transp p \|_{2}\leq 	\max\limits_{\|p\|_{1}=1} \| \W\transp p \|_{1} =	\max\limits_{\|p\|_{1}=1} \| p\|_{1}=	1.
\end{equation}

\section{Discrete memoryless channel} \label{sec:classicalCapacity}
To keep notation simple we consider a single average-input cost constraint as the extension to multiple average-input cost constraints is straightforward. In a first step, we introduce the output distribution $q\in\Delta_{M}$ as an additional decision variable, as done in \cite{benTal88,chiang04,chiang05} and note that the mutual information $I(X;Y)$ is equal to $H(Y)-H(Y|X)$.
\begin{Lem} \label{lem:equivalent:primal:problem}
Let $\mathcal{F}:=\arg\max\limits_{p\in\Delta_{N}} \I{p}{W}$ and $S_{\max}:=\min\limits_{p\in\mathcal{F}}s\transp p$. If $S\geq S_{\max}$ the optimization problem \eqref{eq:DMC_capacity_const} has the same optimal value as
\begin{equation} \label{opt:primal:equivalent:no:power:constraints}
 	\mathsf{P}: \quad \left\{ \begin{array}{lll}
			&\max\limits_{p,q} 		&- r\transp p + H(q) \\
			&\subjectto					& \W\transp p = q\\
			& 					& p\in \Delta_{N}, \ q\in\Delta_{M}.
	\end{array} \right.
\end{equation}
If $S<S_{\max}$ the optimization problem \eqref{eq:DMC_capacity_const} has the same optimal value as
\begin{equation} \label{opt:primal:equivalent}
 	\mathsf{P}: \quad \left\{ \begin{array}{lll}
			&\max\limits_{p,q} 		&- r\transp p + H(q) \\
			&\subjectto					& \W\transp p = q\\
			&					& s\transp p = S \\
			& 					& p\in \Delta_{N}, \ q\in\Delta_{M}.
	\end{array} \right.
\end{equation}
\end{Lem}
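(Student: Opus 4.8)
The plan is to establish the two claimed equivalences by first rewriting the objective through an auxiliary output variable, and then separately deciding whether the average-cost constraint is redundant or active. The key identity is the reformulation of mutual information: writing $\I{p}{W} = H(Y) - H(Y|X)$, the output law induced by $p$ is $(pW)(\cdot) = \W\transp p$, so $H(Y) = H(\W\transp p)$, while the definition $r_i = -\sum_j \W_{ij}\log \W_{ij}$ gives $H(Y|X) = \sum_i p_i r_i = r\transp p$. Hence $\I{p}{W} = H(\W\transp p) - r\transp p$. Introducing $q \Let \W\transp p$ as a decision variable tied by the constraint $\W\transp p = q$ turns the objective into $-r\transp p + H(q)$; note that $q \in \Delta_M$ holds automatically for $p \in \Delta_N$, since $q$ is a probability law. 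Because the equality constraint pins $q = \W\transp p$ at every feasible point, this transformation preserves the objective value and is a bijection on the $p$-component, so it suffices to prove the equivalences at the level of \eqref{eq:DMC_capacity_const}.

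For the case $S \geq S_{\max}$, I would argue the cost constraint is inactive. Let $C(W) = \max_{p\in\Delta_N}\I{p}{W}$ as in \eqref{eq:shannon48}. Since the constrained feasible set sits inside $\Delta_N$, we have $C_S(W) \leq C(W)$ at once. By the definition of $S_{\max}$ there exists $p^\star \in \mathcal{F}$ with $s\transp p^\star = S_{\max} \leq S$, so $p^\star$ is feasible for \eqref{eq:DMC_capacity_const} and attains the unconstrained optimum, yielding $C_S(W) \geq \I{p^\star}{W} = C(W)$. Thus the two values coincide, which is exactly the optimal value of the reformulation \eqref{opt:primal:equivalent:no:power:constraints}.

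For the case $S < S_{\max}$, the goal is to show that the inequality may be tightened to equality, i.e., that some optimizer lies on $\{s\transp p = S\}$. Since $\{s\transp p = S\}\cap\Delta_N \subseteq \{s\transp p \leq S\}\cap\Delta_N$, the equality-constrained optimum is at most the inequality-constrained one, so only the reverse inequality needs work. Let $\hat p$ maximize the $\leq$-constrained problem (attained by continuity of $\I{\cdot}{W}$ on the compact feasible set). If $s\transp \hat p = S$ we are done; otherwise $s\transp \hat p < S$, and any $p^\star \in \mathcal{F}$ satisfies $s\transp p^\star \geq S_{\max} > S$. Along $p_\theta \Let (1-\theta)\hat p + \theta p^\star$ the map $\theta \mapsto s\transp p_\theta$ is continuous and crosses $S$, so the intermediate value theorem gives $\theta^\star \in (0,1)$ with $s\transp p_{\theta^\star} = S$. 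Concavity of $p \mapsto \I{p}{W}$ together with $\I{p^\star}{W} = C(W) \geq \I{\hat p}{W}$ then gives
\[
\I{p_{\theta^\star}}{W} \geq (1-\theta^\star)\I{\hat p}{W} + \theta^\star\I{p^\star}{W} \geq \I{\hat p}{W},
\]
so $p_{\theta^\star}$ is optimal for the $\leq$-problem while meeting the constraint with equality, which proves equality of the two optimal values and hence equivalence with \eqref{opt:primal:equivalent}.

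The hard part is precisely this last boundary argument: concavity of the mutual information is what guarantees that sliding an interior optimizer toward an unconstrained maximizer never lowers the objective, and the intermediate value theorem then produces an optimizer on the cost boundary. The reformulation step and the redundancy argument in the first case are routine once the identity $\I{p}{W} = H(\W\transp p) - r\transp p$ is in place.
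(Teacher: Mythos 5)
Your proof is correct, and the reformulation step together with the case $S \geq S_{\max}$ matches the paper's argument (the paper simply declares the constraint ``inactive'' for $S\geq S_{\max}$; you spell out why, which is fine). For the case $S < S_{\max}$, however, you take a genuinely different route. The paper introduces the capacity--cost function $\mathsf{C}(S) := C_S(W)$ and establishes three structural facts: it is concave on $[0,S_{\max}]$ (by mixing optimizers and invoking concavity of $p \mapsto \I{p}{W}$), it is non-decreasing, and $\mathsf{C}(S_{\max}-\varepsilon) < \mathsf{C}(S_{\max})$ for every $\varepsilon>0$ (otherwise the definition of $S_{\max}$ is contradicted); together these force $\mathsf{C}$ to be strictly increasing on $[0,S_{\max}]$, so for $S < S_{\max}$ any optimizer of the $\leq$-constrained problem must satisfy $s\transp p = S$. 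You instead argue pointwise: take an optimizer $\hat p$ with $s\transp \hat p < S$ and an unconstrained maximizer $p^\star$ (so $s\transp p^\star \geq S_{\max} > S$), use the intermediate value theorem along the segment to find $p_{\theta^\star}$ with $s\transp p_{\theta^\star} = S$, and use concavity of the mutual information plus $\I{p^\star}{W} \geq \I{\hat p}{W}$ to conclude $\I{p_{\theta^\star}}{W} \geq \I{\hat p}{W}$, exhibiting an optimizer on the boundary. Both arguments rest on the same ingredient---concavity of $p \mapsto \I{p}{W}$---but yours is shorter and avoids the value-function machinery, at the price of a weaker conclusion: you produce \emph{one} boundary optimizer, whereas the paper shows \emph{every} optimizer lies on the boundary and additionally obtains concavity and strict monotonicity of the capacity--cost function on $[0,S_{\max}]$, facts of independent interest. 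For equating the two optimal values, either conclusion suffices.
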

\begin{proof}
The mutual information $\I{p}{W}$ can be expressed as
\begin{align*}
\I{p}{W} 	&= 	\sum_{i=1}^{N} \sum_{j=1}^{M} \W_{ij}p_{i}\log\left( \frac{\W_{ij}}{\sum_{k=1}^{N}\W_{kj}p_{k}} \right) \\
		&=	\sum_{i=1}^{N} \sum_{j=1}^{M} \left[ p_{i} \W_{ij}\log(\W_{ij}) - p_{i}\W_{ij}\log\left(\sum_{k=1}^{N}\W_{kj}p_{k} \right) \right].
\end{align*}
By adding the constraint $\sum_{i=1}^{N}p_{i}\W_{ij}=q_{j}$ for all $j=1,\hdots, M$, 
\begin{align*}
\I{p}{W} 	&= 	\sum_{i=1}^{N} \sum_{j=1}^{M} \left[ p_{i} \W_{ij}\log(\W_{ij}) - p_{i}\W_{ij}\log (q_{j}) \right]  \\
		&=	\sum_{i=1}^{N} \sum_{j=1}^{M}  p_{i} \W_{ij}\log(\W_{ij})  -\sum_{j=1}^{M} q_{j }\log(q_{j}) \\
		&= 	- r\transp p + H(q),
\end{align*}
where $p\in\Delta_{N}$. Since $q=\W\transp p$ and $\W\transp$ is a stochastic matrix, this implies $q\in\Delta_{M}$. 
By definition of $S_{\max}$ it is obvious that the input cost constraint $s\transp p \leq S$ is inactive for $S\geq S_{\max}$, leading to the first optimization problem in Lemma~\ref{lem:equivalent:primal:problem}. It remains to show that for
$S<S_{\max}$, the input constraint can be written with equality, leading to the second optimization problem in Lemma~\ref{lem:equivalent:primal:problem}. 
In oder to keep the notation simple we define $\mathsf{C}(S):=C_{S}(W)$ for a fixed channel $W$.
We show that $\mathsf{C}(S)$ is concave in $S$ for $S\in[0,S_{\max}]$. Let $S^{(1)},S^{(2)} \in [0,S_{\max}]$, $0\leq \lambda \leq 1$ and $p^{(i)}$ probability mass functions that achieve $\mathsf{C}(S^{(i)})$ for $i \in \{1,2 \}$. Consider the probability mass function $p^{(\lambda)}=\lambda p^{(1)}+ (1-\lambda) p^{(2)}$. We can write
\begin{align}
s\transp p^{(\lambda)} &= \lambda s\transp p^{(1)} + (1-\lambda) s\transp p^{(2)} \nonumber\\
 &\leq \lambda S^{(1)}+(1-\lambda) S^{(2)} \nonumber\\
 &=: S^{(\lambda)} \label{eq:slam} \in [0,S_{\max}].
\end{align}
Using the concavity of the mutual information in the input distribution, we obtain
\begin{align*}
\lambda \mathsf{C}(S^{(1)}) + (1-\lambda) \mathsf{C}(S^{(2)}) &= \lambda \I{p^{(1)}}{W}+(1-\lambda) \I{p^{(2)}}{W}  \\
&\leq \I{p^{(\lambda)}}{W} \\
&\leq \mathsf{C}(S^{(\lambda)}),
\end{align*}
where the final inequality follows by Shannon's formula for the capacity given in \eqref{eq:shannon48}. $\mathsf{C}(S)$ clearly is non-decreasing in $S$ since enlarging $S$ relaxes the input cost constraint. Furthermore, we show that
\begin{equation} \label{eq:proof:ineq:equality:epsilon:step}
\mathsf{C}(S_{\max}-\varepsilon)<\mathsf{C}(S_{\max}), \quad \text{for all }\varepsilon>0.
\end{equation}
Suppose $\mathsf{C}(S_{\max}-\varepsilon)=\mathsf{C}(S_{\max})$ and denote $\mathsf{C}^{\star}:=\max\limits_{p\in\Delta_{N}}\I{p}{W}$. This then implies that there exists $\bar{p}\in\Delta_{N}$ such that $\I{\bar{p}}{W}=\mathsf{C}^{\star}$ and $s\transp \bar{p}\leq S_{\max}-\varepsilon$, which contradicts the definition of $S_{\max}$. Hence, the concavity of $\mathsf{C}(S)$ together with the non-decreasing property and \eqref{eq:proof:ineq:equality:epsilon:step} imply that $\mathsf{C}(S)$ is strictly increasing in $S$. 
\end{proof}
Note that we later add an assumption on our channel (Assumption~\ref{ass:channel}) that guarantees uniqueness of the optimizer maximizing the mutual information, i.e., $\mathcal{F}$ is a singleton. In this case the optimizer to \eqref{opt:primal:equivalent} (resp. \eqref{opt:primal:equivalent:no:power:constraints}) is also feasible for the original problem \eqref{eq:DMC_capacity_const}. Computing $S_{\max}$ is straightforward once $\mathcal{F}$ is known. The singleton $\mathcal{F}$ can be seen as the maximizer of a channel capacity problem with no additional input cost constraint and can as such be computed with the scheme we present in this article. 

For the rest of the section we restrict attention to \eqref{opt:primal:equivalent}, since the less constrained problem \eqref{opt:primal:equivalent:no:power:constraints} can be solved in a similar, more direct way.
We tackle this optimization problem through its Lagrangian dual problem. The dual function turns out to be a non-smooth function. As such, it is known that the efficiency estimate of a black-box first-order method is of the order $O\left( \tfrac{1}{\varepsilon^{2}}\right)$ if no specific problem structure is used, where $\varepsilon$ is the desired abolute accuracy of the approximate solution in function value \cite{ref:nesterov-book-04}. 
We show, however, that $\mathsf{P}$ has a certain structure that allows us to use Nesterov's approach for approximating non-smooth problems with smooth ones \cite{ref:Nest-05} leading to an efficiency estimate of the order $O\left( \tfrac{1}{\varepsilon}\right)$.  This, together with the low complexity of each iteration step in the approximation scheme leads to a numerical method for the channel capacity problem that has a very attractive computational complexity.

\subsection{Preliminaries}
Some preliminaries are needed in order to present our capacity approximation method. We begin by recalling Nesterov's seminal work \cite{ref:Nest-05} in the context of structural convex optimization, which is our main tool in the proposed capacity approximation scheme. 
 \subsection*{Nesterov's smoothing approach \cite{ref:Nest-05}} 
Consider finite-dimensional real vector spaces $E_i$ endowed with a norm $\|\cdot\|_i$ and denote its dual space by $E^\star_i$ for $i=1,2$. Each dual pair of vector spaces comes with a bilinear form $\inprod{\cdot}{\cdot}_i:E^\star_i\times E_i\to \R$. For a linear operator $A:E_1\to E_2^\star$ the operator norm is defined as $\|A\|_{1,2}=\max_{x,u}\{ \inprod{Ax}{u}_2 \ : \ \|x\|_1=1, \|u\|_2=1 \}$. We are interested in the following optimization problem
\begin{equation}\label{eq:Nesterov:1}
\min_x\{ f(x) \ : \ x\in Q_1\},
\end{equation}
where $Q_1\subset E_1$ is a compact convex set and $f$ is a continuous convex function on $Q_1$. We assume that the objective function has the following structure
\begin{equation} \label{eq:Nesterov:2:structure}
f(x) = \hat{f}(x) + \max_u \{ \inprod{Ax}{u}_2 -\hat{\phi}(u) \ : \ u\in Q_2\},
\end{equation}
where $Q_2\subset E_2$ is a compact convex set, $\hat{f}$ is a continuously differentiable convex function whose gradient is Lipschitz continuous with constant $L$ on $Q_1$ and $\hat{\phi}$ is a continuous convex function on $Q_2$. It is assumed that $\hat{\phi}$ and $Q_2$ are simple enough such that the maximization in \eqref{eq:Nesterov:2:structure} is available in closed form.
The dual program to \eqref{eq:Nesterov:1} can be given as
\begin{equation}\label{eq:Nesterov:dual}
\max_u \left\{ -\hat{\phi}(u) + \min_x \{ \inprod{Ax}{u}_2 + \hat{f}(x) \ : \ x\in Q_1 \}  \ : \ u\in Q_2 \right\}.
\end{equation}
The main difficulty in solving \eqref{eq:Nesterov:1} efficiently is its non-smooth objective function. Without using any specific problem structure the complexity for subgradient-type methods is $O\left( \tfrac{1}{\varepsilon^{2}}\right)$, where $\varepsilon$ is the desired abolute accuracy of the approximate solution in function value. Nesterov's work suggests that when approximating problems with the particular structure \eqref{eq:Nesterov:2:structure} by smooth ones, a solution to the 
non-smooth problem can be constructed with complexity in order of $O\left( \tfrac{1}{\varepsilon}\right)$. In addition, Nesterov shows that when solving the smooth problem, a solution to the dual problem \eqref{eq:Nesterov:dual} can be obtained, and as such an a posteriori statement about the duality gap is available that often is significantly tighter than the $O\left( \tfrac{1}{\varepsilon}\right)$ complexity bound.
Consider the the smooth approximation to problem \eqref{eq:Nesterov:1} given by
\begin{equation} \label{eq:Nesterov:smooth:problem}
\min_x\{ f_\nu (x) \ : \ x\in Q_1\},
\end{equation}
where $\nu>0$ and the objective function is given by
\begin{equation}\label{eq:Nesterov:2:smooth:objective}
f_\nu(x) = \hat{f}(x) + \max_u \{ \inprod{Ax}{u}_2 -\hat{\phi}(u)-\nu d(u) \ : \ u\in Q_2\},
\end{equation}
where $d:Q_2\to\R$ is continuous and strongly convex with convexity parameter $\sigma$. It can be shown that $f_\nu$ has a Lipschitz continuous gradient with Lipschitz constant $L+\tfrac{\| A \|_{1,2}^2}{\nu \sigma}$ \cite[Theorem~1]{ref:Nest-05}. In this light, the optimization problem \eqref{eq:Nesterov:smooth:problem} belongs to a class of problems that can be solved in  
 $O\left( \tfrac{1}{\sqrt{\varepsilon}}\right)$ using a fast gradient method. The result \cite[Theorem~3]{ref:Nest-05} explicitly details how, having solved the smooth problem \eqref{eq:Nesterov:smooth:problem}, primal and dual solutions to the non-smooth problems \eqref{eq:Nesterov:1} and \eqref{eq:Nesterov:dual} can be obtained and how good they are.

 \subsection*{Entropy maximization} 
As a second preliminary result for some $c\in\R^{N}$ we consider the following optimization problem, that, if feasible, has an analytical solution
\begin{equation} \label{opt:cover}
 	\left\{ \begin{array}{lll}
			&\underset{p}{\max} 		&H(p) - c\transp p \\
			&\text{s.t. } 			& s\transp p = S\\
			& 					& p\in \Delta_{N}.
	\end{array} \right.
\end{equation}
\begin{Lem} \label{lem:cover}
Let $p^\star=[p_1^\star, \ldots, p_N^\star]$ with $p_i^\star =2^{\mu_1 - c_i + \mu_2 s_i}$, where $\mu_1$ and $\mu_2$ are chosen such that $p^\star$ satisfies the constraints in \eqref{opt:cover}. Then $p^\star$ uniquely solves \eqref{opt:cover}.
\end{Lem}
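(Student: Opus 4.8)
The plan is to exhibit $p^\star$ as a stationary point of a strictly concave objective over a convex feasible set and then to conclude both global optimality and uniqueness in one stroke via the first-order (tangent) inequality for concave functions, thereby avoiding any discussion of constraint qualifications for the inequalities $p_i \ge 0$.

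First I would record the two structural facts. The map $f(p) \Let H(p) - c\transp p$ is strictly concave on the relative interior of $\Delta_N$: the linear term contributes no curvature, while $H$ has diagonal Hessian with entries $-1/(p_i \ln 2) < 0$. The feasible set $\{p \in \Delta_N : s\transp p = S\}$ is the intersection of the simplex with an affine hyperplane, hence convex. Next I would differentiate. With the base-$2$ logarithm one has $\partial_{p_i} f(p) = -\log p_i - 1/\ln 2 - c_i$, so evaluating at the proposed point and using $\log p^\star_i = \mu_1 - c_i + \mu_2 s_i$ gives the \emph{key cancellation}
\begin{equation*}
\big(\nabla f(p^\star)\big)_i = -\mu_1 - \mu_2 s_i - \tfrac{1}{\ln 2},
\end{equation*}
an affine function of $s_i$ with no residual dependence on $c_i$.

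The heart of the argument is then elementary. Since $p^\star_i = 2^{\mu_1 - c_i + \mu_2 s_i} > 0$ for every $i$, the point $p^\star$ lies in the interior, where $f$ is differentiable. For concave $f$ this yields the tangent inequality $f(p) \le f(p^\star) + \inprod{\nabla f(p^\star)}{p - p^\star}$ for \emph{every} $p \in \Delta_N$, including boundary points; I would justify this by noting that along the chord $t \mapsto f(p^\star + t(p - p^\star))$ the difference quotient is nonincreasing, so its value at $t=1$ is bounded by its limit $\inprod{\nabla f(p^\star)}{p - p^\star}$ as $t \to 0^+$, the continuity of $f$ on the closed simplex covering the endpoint. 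For any feasible $p$ the inner product vanishes, because
\begin{equation*}
\inprod{\nabla f(p^\star)}{p - p^\star} = -\mu_1 \textstyle\sum_i (p_i - p^\star_i) - \mu_2\, s\transp(p - p^\star) - \tfrac{1}{\ln 2}\textstyle\sum_i (p_i - p^\star_i) = 0,
\end{equation*}
using $\sum_i p_i = \sum_i p^\star_i = 1$ and $s\transp p = s\transp p^\star = S$. Hence $f(p) \le f(p^\star)$ on the entire feasible set, which is optimality. For uniqueness I would upgrade the chord estimate using strict concavity of $f$ along any chord emanating from the interior point $p^\star$: the difference quotient is then strictly decreasing, the tangent inequality becomes strict, and $f(p) < f(p^\star)$ for every feasible $p \ne p^\star$.

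The main obstacle I anticipate is purely a matter of care at the boundary of the simplex: $f$ is differentiable only in the interior and $\nabla H$ blows up as a coordinate tends to $0$, so one cannot naively write KKT stationarity including multipliers for $p_i \ge 0$. The tangent-inequality route sidesteps this entirely, since it requires differentiability only at the interior optimizer $p^\star$ together with validity of the chord estimate up to (and including) the boundary endpoint. Finally, the existence of suitable $\mu_1,\mu_2$ is built into the hypothesis ("if feasible"), so no separate existence argument is needed and only the elementary verification above remains.
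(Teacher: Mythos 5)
Your proof is correct, but it takes a genuinely different route from the paper's. The paper follows the classical Cover--Thomas argument (Theorem~12.1.1 there): for any feasible $q$ it splits $\log q_i = \log(q_i/p^\star_i) + \log p^\star_i$ to obtain the exact identity $H(q) - c\transp q = -\D{q}{p^\star} - \sum_i q_i \log p^\star_i - c\transp q$, then invokes non-negativity of relative entropy together with the constraint cancellations (your cancellation, in disguise) to conclude $J(q) \le J(p^\star)$, with uniqueness falling out of the equality case $\D{q}{p^\star} = 0$ if and only if $q = p^\star$. You instead run the generic convex-analytic argument: a gradient that is constant plus a multiple of $s$, hence orthogonal to all feasible directions; the tangent inequality extended to boundary points via monotone difference quotients; and strict concavity along chords for uniqueness. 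The two are closely related --- for this particular objective the gap in your tangent inequality is exactly the Bregman divergence of negative entropy, i.e.\ $\D{q}{p^\star}$ --- so the paper's proof can be read as an exact evaluation of the quantity you only bound. What the paper's route buys is that boundary issues and uniqueness come for free (the equality case of $D \ge 0$ requires no differentiability anywhere on the simplex), at the price of an identity specific to entropy; what your route buys is generality --- it works verbatim for any strictly concave objective differentiable on the interior, and it makes transparent that the exponential form of $p^\star$ is precisely Lagrangian stationarity --- at the price of the extra care you rightly take with difference quotients at the boundary and the strict-concavity upgrade needed for uniqueness.
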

\begin{proof}
This proof is similar to the proof given in \cite[Theorem~12.1.1]{cover}. Let $q$ satisfy the constraints in \eqref{opt:cover}. Then
\begin{subequations}
\begin{align}
J(q)&= H(q)-c \transp q = -\sum_{i=1}^N q_i \log(q_i) -c\transp q   \nonumber \\
 &= -\sum_{i=1}^N q_i \log\left( \frac{q_i}{p^\star_i}p^\star_i \right) -c\transp q=-\D{q}{p^\star}- \sum_{i=1}^N q_i \log(p^\star_i) - c \transp q  \nonumber \\
 & \leq - \sum_{i=1}^N q_i \log(p^\star_i) - c \transp q \label{eq:ineq}\\
 & = - \sum_{i=1}^N q_i \left(\mu_1 + \mu_2 s_i \right) \label{eq:step1}\\
 &= -\sum_{i=1}^N p^{\star}_i \left(\mu_1 + \mu_2 s_i \right) -c\transp p^\star +c \transp p^\star \label{eq:step2}\\ 
 &= -\sum_{i=1}^N p^\star_i \log(p^\star_i) - c\transp p^\star= J(p^\star). \nonumber
\end{align}
\end{subequations}
The inequality follows form the non-negativity of the relative entropy. Equality \eqref{eq:step1} follows by the definition of $p^\star$ and \eqref{eq:step2} uses the fact that both $p^\star$ and $q$ satisfy the constraints in \eqref{opt:cover}. Note that equality holds in \eqref{eq:ineq} if and only if $q = p^\star$. This proves the uniqueness.
\end{proof}

\subsection{Capacity approximation scheme}
In the following we focus on the input constrained channel capacity problem \eqref{opt:primal:equivalent} and the scenario of no input constraints \eqref{opt:primal:equivalent:no:power:constraints} is discussed as a special case within this section.
Consider the convex optimizaton problem \eqref{opt:primal:equivalent}, whose optimal value, according to Lemma~\ref{lem:equivalent:primal:problem} is the capacity $C_{S}(W)$.
The Lagrange dual program to \eqref{opt:primal:equivalent} is
\begin{align}\label{Lagrange:Dual:Program}
\mathsf{D}:\quad \left\{ \begin{array}{ll}
			\underset{\lambda}{\min} 		&G(\lambda) + F(\lambda) \\
			\text{s.t. } 				& \lambda\in\R^{M},
	\end{array}\right. 
\end{align}
where $F, G: \R^{M}\to\R$ are given by
\begin{align} \label{equation:F:and:G} 
G(\lambda)= \left\{ \begin{array}{ll}
			\underset{p}{\max} 		&-r\transp p + \lambda\transp \W\transp p \\
			\text{s.t. } 				&s\transp p=  S \\
								& p\in\Delta_{N}
	\end{array} \right.
	\quad \textnormal{and} \qquad
	F(\lambda)= \left\{ \begin{array}{ll}
			\underset{q}{\max} 		&H(q)-\lambda\transp q \\
			\text{s.t. } 				& q\in\Delta_{M}.
	\end{array}\right. 
\end{align}
Note that since the coupling constraint $ \W\transp p = q$ in the primal program \eqref{opt:primal:equivalent} is affine, the set of optimal solutions to the dual program \eqref{Lagrange:Dual:Program} is nonempty \cite[Proposition~5.3.1]{ref:Bertsekas-09} and as such the optimum is attained. 
It can be seen that the dual program \eqref{Lagrange:Dual:Program} structurally resembles the problem \eqref{eq:Nesterov:1} with \eqref{eq:Nesterov:2:structure}, without a bounded feasible set, however. 
To ensure that the set of dual optimizers is compact, we need to impose the following assumption on the channel matrix $\W$, that we will maintain for the remainder of Section~\ref{sec:classicalCapacity}.

\begin{As} \label{ass:channel}
$\gamma:=\min\limits_{i,j}\W_{ij}>0$
\end{As}
Assumption~\ref{ass:channel} excludes situations where the channel matrix has zero entries. Even though this may seem restrictive at first glance, it holds for a large class of channels. Moreover, in a finite dimensional setting, for a fixed input distribution, the mutual information is well known to be continuous in the channel matrix entries. Therefore, singular cases where the channel matrix contains zero entries can be avoided by slight perturbations of those entries. (This is discussed in more detail in Remark~\ref{rmk:perturbation}.)
Under Assumption~\ref{ass:channel} for a fixed channel, the mutual information can be seen to be a strictly concave function in the input distribution. Therefore, the capacity achieving input distribution is unique.
With Assumption~\ref{ass:channel} one can derive an explicit bound on the norm of the dual optimizers, which is crucial in the subsequent derivation of the main result in this section, namely Theorem~\ref{thm:error:bound:capacity}.   
\begin{Lem} \label{lem:compact:set}
Under Assumption~\ref{ass:channel}, the dual program \eqref{Lagrange:Dual:Program} is equivalent to 
\begin{equation}\label{eq:dual:finite:compact}
\begin{aligned}
\left\{ \begin{array}{ll}
			\underset{\lambda}{\min} 		&G(\lambda) + F(\lambda) \\
			\textnormal{s.t. } 				& \lambda\in Q,
	\end{array}\right. 
\end{aligned}
\end{equation}
where $Q:= \left\{ \lambda\in\R^{M} \ : \ \Norm{\lambda}_{2}\leq M \left( \log(\gamma^{-1}) \vee \tfrac{1}{\ln 2} \right) \right\}$. 
\end{Lem}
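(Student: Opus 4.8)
The plan is to exploit two structural features of the dual objective $G+F$ in \eqref{Lagrange:Dual:Program}: its invariance under shifts along the all-ones direction, and an explicit bound on the optimal output distribution furnished by Assumption~\ref{ass:channel}. Since $Q\subset\R^{M}$, the restricted program \eqref{eq:dual:finite:compact} automatically has optimal value no smaller than that of \eqref{Lagrange:Dual:Program}; it therefore suffices to exhibit a \emph{global} minimizer of $G+F$ that lies in $Q$, as this forces the two optimal values to coincide.

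First I would record the shift invariance. Let $\mathbf 1\in\R^{M}$ be the all-ones vector and $c\in\R$. Since $\W$ is row-stochastic we have $\mathbf 1\transp\W\transp p=(\W\mathbf 1)\transp p=\mathbf 1\transp p=1$ for every $p\in\Delta_{N}$, so the extra term in $G(\lambda+c\mathbf 1)$ is the constant $c$ on the feasible set, giving $G(\lambda+c\mathbf 1)=G(\lambda)+c$. A direct evaluation of $F$ in \eqref{equation:F:and:G} via the entropy-maximization argument of Lemma~\ref{lem:cover} (applied without the cost constraint) yields the closed form $F(\lambda)=\log\sum_{j}2^{-\lambda_{j}}$, with maximizer $q^{\star}_{j}(\lambda)=2^{-\lambda_{j}}/\sum_{k}2^{-\lambda_{k}}$, and hence $F(\lambda+c\mathbf 1)=F(\lambda)-c$. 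Consequently $G+F$ is invariant under $\lambda\mapsto\lambda+c\mathbf 1$, so any dual optimizer stays optimal after an arbitrary shift in the $\mathbf 1$-direction.

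Next I would invoke attainment of the dual optimum (strong duality holds because the coupling constraint $\W\transp p=q$ is affine, cf. \cite[Proposition~5.3.1]{ref:Bertsekas-09}) and pick primal optimizers $(p^{\star},q^{\star})$ together with a dual optimizer $\lambda^{\star}$. The saddle-point/KKT conditions for \eqref{opt:primal:equivalent} force $q^{\star}$ to attain $F(\lambda^{\star})$, so $q^{\star}_{j}=2^{-\lambda^{\star}_{j}}/\sum_{k}2^{-\lambda^{\star}_{k}}$, while primal feasibility gives $q^{\star}=\W\transp p^{\star}$. Using the shift freedom I would normalize $\lambda^{\star}$ so that $\sum_{k}2^{-\lambda^{\star}_{k}}=1$, i.e. $q^{\star}_{j}=2^{-\lambda^{\star}_{j}}$ and thus $\lambda^{\star}_{j}=\log(1/q^{\star}_{j})$. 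Assumption~\ref{ass:channel} now yields $q^{\star}_{j}=\sum_{i}\W_{ij}p^{\star}_{i}\ge\gamma\sum_{i}p^{\star}_{i}=\gamma$, while $q^{\star}\in\Delta_{M}$ gives $q^{\star}_{j}\le 1$; together these bound $0\le\lambda^{\star}_{j}\le\log(\gamma^{-1})$ for every $j$, whence $\Norm{\lambda^{\star}}_{2}\le\sqrt{M}\,\log(\gamma^{-1})\le M\big(\log(\gamma^{-1})\vee\tfrac{1}{\ln 2}\big)$. This places the normalized minimizer in $Q$ and establishes the equivalence.

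I expect the main obstacle to be the optimality step, namely simultaneously asserting $q^{\star}=\W\transp p^{\star}$ and that $q^{\star}$ is the $F$-maximizer at $\lambda^{\star}$. Because $G$ is only piecewise linear (a support-function-type maximum over a polytope) and hence non-smooth, the cleanest route is to avoid subdifferential calculus and instead read these two relations directly off the KKT conditions of the primal--dual pair: complementary slackness for the affine coupling constraint identifies $\lambda^{\star}$ with the log-ratios of the optimal output distribution $q^{\star}$. Everything else is routine bookkeeping with the row-stochasticity of $\W$ and the simplex constraints, and the slack in the stated radius (the factor $M$ rather than $\sqrt{M}$, and the $\vee\,\tfrac{1}{\ln 2}$) only enlarges $Q$, so it causes no difficulty.
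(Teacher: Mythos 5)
Your proof is correct, and it takes a genuinely different route from the paper's. You exploit translation invariance of $G+F$ along $\boldsymbol{1}$ (valid because $\W$ is row-stochastic) together with strong duality and attainment of both primal and dual optima: the saddle-point identity forces the primal-optimal output distribution $q^{\star}=\W\transp p^{\star}$ to be the unique maximizer (Lemma~\ref{lem:cover}) defining $F(\lambda^{\star})$, so after shifting $\lambda^{\star}$ so that $\sum_{k}2^{-\lambda_{k}}=1$ the dual optimizer reads $\lambda_{j}=-\log q^{\star}_{j}$, and Assumption~\ref{ass:channel} yields $\gamma\leq q^{\star}_{j}\leq 1$, hence $0\leq\lambda_{j}\leq\log(\gamma^{-1})$ and $\Norm{\lambda}_{2}\leq\sqrt{M}\,\log(\gamma^{-1})$, which lies in $Q$. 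The paper instead argues at the level of optimal values via a perturbation analysis: it relaxes the coupling constraint to $\Norm{\W\transp p-q}_{\infty}\leq\varepsilon$ with linear penalty $\beta\varepsilon$ (program $\mathsf{P}_{\beta}$), identifies its dual as the $\ell_{1}$-ball-restricted dual $\mathsf{D}_{\beta}$, and uses concavity of the perturbation function $J(\varepsilon)$ plus a second-order Taylor bound on the output entropy to show $\varepsilon^{\star}(\beta)=0$ once $\beta>M\left(\log(\gamma^{-1})\vee\tfrac{1}{\ln 2}\right)$, so the $\ell_{1}$ constraint is inactive; $\Norm{\lambda}_{2}\leq\Norm{\lambda}_{1}$ then gives $Q$. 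Your argument is shorter, isolates exactly where Assumption~\ref{ass:channel} enters (it bounds the optimal output distribution away from zero), and in fact proves the sharper componentwise bound $0\leq\lambda_{j}\leq\log(\gamma^{-1})$, i.e.\ a radius $\sqrt{M}\log(\gamma^{-1})$ rather than $M\left(\log(\gamma^{-1})\vee\tfrac{1}{\ln 2}\right)$. What the paper's value-level argument buys is independence from existence, uniqueness, and explicit form of optimizers: it never invokes a primal maximizer or a closed-form dual optimizer, which is why its proof is claimed to carry over essentially verbatim to the continuous-input setting of Lemma~\ref{lem:compact:set:Poisson}, where the supremum over densities is generally not attained (capacity-achieving input distributions are discrete), so your identification $\lambda_{j}=-\log q^{\star}_{j}$ would there require additional care.
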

\begin{proof}
Consider the following two convex optimization problems
\begin{align*} 
\mathsf{P}_{\beta}:\quad  \left\{ \begin{array}{ll}
			\max\limits_{p,q,\varepsilon} 		&-r\transp p + H(q) - \beta\varepsilon\\
			\text{s.t. }						&\|\W\transp p - q\|_{\infty}\leq \varepsilon  \\
										& s\transp p = S \\
			 							& p\in\Delta_{N}, \ q\in\Delta_{M}, \ \varepsilon \in\Rp
	\end{array} \right.  \quad \textnormal{and} \quad
	\quad 
\mathsf{D}_{\beta}: \quad  \left\{ \begin{array}{ll}
			\min\limits_{\lambda} 		&F(\lambda) + G(\lambda) \\
			\text{s.t. } 					& \Norm{\lambda}_{1} \leq \beta \\
									& \lambda\in \R^{M}.
	\end{array}\right. 
\end{align*}
\begin{Claim}
Strong duality holds between $\mathsf{P}_{\beta}$ and $\mathsf{D}_{\beta}$.
\end{Claim}
\begin{proof}
According to the identity $\Norm{\W\transp p - q}_{\infty}=\max_{\Norm{\lambda}_{1}\leq 1} \lambda\transp \left( \W\transp p - q \right)$ \cite[p.~7]{holevo_book} the optimization problem $\mathsf{P}_{\beta}$ can be rewritten as
\begin{align*} 
\mathsf{P}_{\beta}:\quad  \left\{ \begin{array}{ll}
			\max\limits_{p,q} 			&-r\transp p + H(q) + \min\limits_{\Norm{\lambda}_{1}\leq \beta}\lambda\transp \left( \W\transp p - q \right)\\
			\text{s.t. }					& s\transp p = S \\
			 							& p\in\Delta_{N}, \ q\in\Delta_{M},
	\end{array} \right.
\end{align*}
whose dual program, where strong duality holds according to \cite[Proposition~5.3.1, p.~169]{ref:Bertsekas-09} is given by
\begin{align*}
 \quad  \left\{ \begin{array}{lll}
			\min\limits_{\Norm{\lambda}_{1}\leq \beta}  &\max\limits_{p,q} 		&-r\transp p + H(q) + \lambda\transp \left( \W\transp p - q \right) \\
			&\text{s.t. } 				  &s\transp p = S \\
								&	&  p\in\Delta_{N}, \ q\in\Delta_{M},
	\end{array}\right. .
\end{align*}
which clearly is equivalent to $\mathsf{D}_{\beta}$ with $F(\cdot)$ and $G(\cdot)$ as given in \eqref{equation:F:and:G}.
\end{proof}
Denote by $\varepsilon^{\star}(\beta)$ the optimizer of $\mathsf{P}_{\beta}$ with the respective optimal value $J^{\star}_{\beta}$. We show that for a sufficiently large $\beta$ the optimizer $\varepsilon^{\star}(\beta)$ of $\mathsf{P}_{\beta}$ is equal to zero. Hence, in light of the duality relation, the constraint $\Norm{\lambda}_{1} \leq \tfrac{\beta}{2}$ in $\mathsf{D}_{\beta}$ is inactive and as such $\mathsf{D}_{\beta}$ is equivalent to $\mathsf{D}$ in equation~\eqref{Lagrange:Dual:Program}. Note that for
\begin{align}  \label{eq:J(eps)}
J(\varepsilon):=  \left\{ \begin{array}{ll}
			\max\limits_{p,q} 		&-r\transp p + H(q) \\
			\text{s.t. }						&\|\W\transp p - q\|_{\infty}\leq \varepsilon \\
									 & s\transp p = S \\
			 							& p\in\Delta_{N}, \ q\in\Delta_{M}
	\end{array} \right. ,
	\end{align}
the mapping $\varepsilon \mapsto J(\varepsilon)$, the so-called perturbation function, is concave \cite[p.~268]{ref:BoyVan-04}. In the next step we write the optimization problem \eqref{eq:J(eps)} in another equivalent form
\begin{align}  \label{eq:J(eps):equiv}
J(\varepsilon)=  \left\{ \begin{array}{ll}
			\max\limits_{p,v} 		&-r\transp p + H(\W\transp p + \varepsilon v) \\
			\text{s.t. }						&\Norm{v}_{\infty} \leq 1 \\
								      & s\transp p = S \\
			 							& p\in\Delta_{N}, \ v\in \mathsf{Im}(\W\transp)\subset\R^{M}
	\end{array} \right. .
	\end{align}
By using Taylor's theorem, there exists $y_{\varepsilon}\in[0,\varepsilon]$ such that the entropy term in the objective function of \eqref{eq:J(eps):equiv} can be bounded as
\begin{align}
H(\W\transp p + \varepsilon v) 	&=		H(\W\transp p) - \left( \log(\W\transp p) + \tfrac{1}{\ln 2}\boldsymbol{1} \right)\transp v \varepsilon - \sum_{j=1}^{M}\frac{v_{j}^{2}}{\sum_{i=1}^{N}\W_{ij}p_{i}+y_{\varepsilon}v_{j}}\varepsilon^{2} \tfrac{1}{\ln 2} \nonumber \\
						&\leq		H(\W\transp p) - \left( \log(\W\transp p) + \tfrac{1}{\ln 2}\boldsymbol{1}  \right)\transp v \varepsilon + \frac{M}{\gamma \ln 2}\varepsilon^{2} . \label{eq:Taylor:bound}
\end{align}
Thus, the optimal value of problem $\mathsf{P}_{\beta}$ can be expressed as
\begin{subequations}
\begin{align}
J_{\beta}^{\star} 	&\leq 		\max\limits_{\varepsilon} \left\{ J(\varepsilon)-\beta \varepsilon \right\} \nonumber \\
			&\leq		\max\limits_{\varepsilon} \left\{ \max\limits_{p,v}\left[ -r\transp p + H(\W\transp p) - \left( \log(\W\transp p) + \tfrac{1}{\ln 2} \boldsymbol{1} \right)\transp v \varepsilon :  s\transp p = S  \right]  + \frac{M}{\gamma \ln 2}\varepsilon^{2}  -\beta \varepsilon \right\} \label{eq:proof:compact:first:step}\\
			&\leq		\max\limits_{\varepsilon} \left\{ \max\limits_{p,v}\left[ -r\transp p + H(\W\transp p) \ : \ s\transp p = S  \right] + (\rho - \beta)\varepsilon + \frac{M}{\gamma \ln 2}\varepsilon^{2} \right\} \label{eq:proof:compact:second:step}\\
			&=		J(0) + \max\limits_{\varepsilon} \left\{  (\rho - \beta)\varepsilon +\frac{M}{\gamma \ln 2}\varepsilon^{2} \right\}, \label{eq:proof:compact:third:step}
\end{align}
\end{subequations}
where $\rho = M \left( \log(\gamma^{-1}) \vee \tfrac{1}{\ln 2}\right)$. Note that \eqref{eq:proof:compact:first:step} follows from $\eqref{eq:J(eps):equiv}$ and \eqref{eq:Taylor:bound}. The equation \eqref{eq:proof:compact:second:step} uses $\|v\|_\infty \leq 1$, $- \left( \log(\W\transp p) + \tfrac{1}{\ln 2}\boldsymbol{1} \right)\transp v \leq M \left( \log(\gamma^{-1}) \vee \tfrac{1}{\ln 2}\right)$. Thus, for $\beta>\rho$ and $\varepsilon_{1}=\frac{\gamma\ln 2}{M}(\beta - \rho)$, we have $\max\limits_{\varepsilon\leq \varepsilon_{1}} \left\{  (\rho - \beta)\varepsilon + \tfrac{M}{\gamma \ln 2}\varepsilon^{2}\right\} = 0.$ Therefore, \eqref{eq:proof:compact:third:step} together with the concavity of the mapping $\varepsilon\mapsto J(\varepsilon)$ imply that $J(0)$ is the global optimum of $J(\varepsilon)$ and as such $\varepsilon^{\star}(\beta)=0$ for $\beta>\rho$, indicating that $\mathsf{P}_{\beta}$ is equivalent to $\mathsf{P}$ in the sense that $J^{\star}_{\beta}=J^{\star}_{0}$. By strong duality this implies that the constraint $\Norm{\lambda}_{1} \leq \beta$ in $\mathsf{D}_{\beta}$ is inactive. Finally, $\Norm{\lambda}_{2}\leq\Norm{\lambda}_{1}$ concludes the proof. 
\end{proof}
\begin{Lem} \label{lem:strong:duality:finite}
Strong duality holds between \eqref{opt:primal:equivalent} and \eqref{Lagrange:Dual:Program}.
\end{Lem}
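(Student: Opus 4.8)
The plan is to read strong duality directly off the chain of equalities already assembled in the proof of Lemma~\ref{lem:compact:set}, so that essentially no new work is needed. Recall that in that proof we introduced the perturbed primal $\mathsf{P}_{\beta}$ together with its Lagrangian dual $\mathsf{D}_{\beta}$, and the Claim proved within that argument established that strong duality holds between $\mathsf{P}_{\beta}$ and $\mathsf{D}_{\beta}$ (via \cite[Proposition~5.3.1]{ref:Bertsekas-09}, since the coupling constraint is affine and the remaining feasible set is polyhedral and compact). This gives $\mathrm{val}(\mathsf{P}_{\beta})=\mathrm{val}(\mathsf{D}_{\beta})$ for every $\beta>0$.

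First I would pin down the two endpoints of the chain. On the primal side, the same proof showed that for $\beta>\rho$ the optimal perturbation is $\varepsilon^{\star}(\beta)=0$, whence $J^{\star}_{\beta}=\max_{\varepsilon}\{J(\varepsilon)-\beta\varepsilon\}=J(0)$; since $J(0)$ is precisely problem \eqref{eq:J(eps)} with the constraint $\|\W\transp p-q\|_{\infty}\le 0$, i.e.\ with $\W\transp p=q$, we have $J(0)=\mathrm{val}(\mathsf{P})$. On the dual side, the conclusion of Lemma~\ref{lem:compact:set} is that for such $\beta$ the constraint $\|\lambda\|_{1}\le\beta$ in $\mathsf{D}_{\beta}$ is inactive, so $\mathrm{val}(\mathsf{D}_{\beta})=\mathrm{val}(\mathsf{D})$.

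Combining these with the Claim yields the single chain
\[
\mathrm{val}(\mathsf{P})=J(0)=J^{\star}_{\beta}=\mathrm{val}(\mathsf{P}_{\beta})=\mathrm{val}(\mathsf{D}_{\beta})=\mathrm{val}(\mathsf{D}),
\]
which is exactly the zero duality gap; together with the already-noted attainment of the dual optimum (the dual optimal set is nonempty, and by Lemma~\ref{lem:compact:set} it may be taken over the compact set $Q$), this is strong duality.

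I do not expect a genuine obstacle here, since the heavy lifting was done in Lemma~\ref{lem:compact:set}. If one instead wanted a self-contained derivation bypassing the perturbation argument, the approach would be to apply \cite[Proposition~5.3.1]{ref:Bertsekas-09} directly to $\mathsf{P}$: the objective $-r\transp p+H(q)$ is concave and, with the convention $0\log 0=0$, continuous (hence closed) on the compact polyhedron $\{(p,q):s\transp p=S,\ p\in\Delta_{N},\ q\in\Delta_{M}\}$, while the only dualized constraint $\W\transp p=q$ is affine. The subtle point to verify there would be the linear constraint qualification in the presence of the entropy objective, namely that no finiteness or relative-interior difficulty arises at the simplex boundary --- which is precisely what the detour through $\mathsf{P}_{\beta}$ sidesteps.
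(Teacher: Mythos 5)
Your proof is correct, but it follows a genuinely different route from the paper's: the paper proves Lemma~\ref{lem:strong:duality:finite} in one line by invoking \cite[Proposition~5.3.1, p.~169]{ref:Bertsekas-09} directly on the pair \eqref{opt:primal:equivalent}--\eqref{Lagrange:Dual:Program} --- which is exactly the ``self-contained derivation'' you sketch as a fallback --- whereas your main argument extracts the result from the machinery already built in the proof of Lemma~\ref{lem:compact:set}. Your chain $\mathrm{val}(\mathsf{P})=J(0)=J^{\star}_{\beta}=\mathrm{val}(\mathsf{P}_{\beta})=\mathrm{val}(\mathsf{D}_{\beta})=\mathrm{val}(\mathsf{D})$ is sound: each link is established in that proof for $\beta>\rho$, there is no circularity since Lemma~\ref{lem:compact:set} precedes Lemma~\ref{lem:strong:duality:finite} and never uses it, and if one prefers not to lean on the ``inactive constraint'' phrasing, the final link can be replaced by the weak-duality sandwich $\mathrm{val}(\mathsf{P})=\mathrm{val}(\mathsf{D}_{\beta})\ge\mathrm{val}(\mathsf{D})\ge\mathrm{val}(\mathsf{P})$, which forces equality throughout. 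What your route buys is robustness: the minimax exchange behind the $\mathsf{P}_{\beta}$/$\mathsf{D}_{\beta}$ Claim involves only compact convex decision sets coupled bilinearly, so no constraint qualification on \eqref{opt:primal:equivalent} itself is required; this matters because, as you correctly flag, a relative-interior (Slater-type) condition for the direct application can degenerate --- e.g.\ if $S=\min_i s_i$, the set $\{p\in\Delta_N : s\transp p = S\}$ may consist of boundary points of the simplex only. What the paper's route buys is brevity, at the cost of leaving the verification of the cited proposition's hypotheses implicit (here the check does go through, since the feasible set is a compact polyhedron, the entropy objective is finite and continuous on it, and the dualized constraint is affine, but that verification is precisely the subtle point you identify). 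One small refinement would make your argument self-contained on the attainment side as well: since $\mathrm{val}(\mathsf{D})=\mathrm{val}(\mathsf{D}_{\beta})$ and $\mathsf{D}_{\beta}$ minimizes the continuous function $F+G$ over a compact set, any minimizer of $\mathsf{D}_{\beta}$ is automatically optimal for \eqref{Lagrange:Dual:Program}, so dual attainment follows from your own chain without appealing to the remark following \eqref{Lagrange:Dual:Program}.
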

\begin{proof}
The proof follows by a standard strong duality result of convex optimization, see \cite[Proposition~5.3.1, p.~169]{ref:Bertsekas-09}.
\end{proof}
Note that the optimization problem defining $F(\lambda)$ is of the form given in \eqref{opt:cover}. Hence, according to Lemma~\ref{lem:cover}, $F(\lambda)$ has a unique optimizer $q^{\star}$ with components
$q_{j}^{\star} = 2^{\mu-\lambda_{j}}$,
where $\mu\in\R$ needs to be chosen such that $q^{\star}\in\Delta_{M}$, i.e.,
\begin{equation*}
\mu = - \log\left(\sum_{j=1}^M 2^{-\lambda_j} \right).
\end{equation*}
Therefore, 
\begin{equation} \label{analytical:solution:F}
\begin{aligned}
F(\lambda) 	&=\sum_{j=1}^{M} \left(- q_{j}^{\star}\log(q_{j}^{\star}) - \lambda_{j}q_{j}^{\star} \right) =	-\sum_{j=1}^{M} \mu \, 2^{\mu-\lambda_{j}}\\ 
&= -\mu \, 2^{\mu}\sum_{j=1}^{M}2^{-\lambda_{j}} =    \log\left(\sum_{j=1}^M 2^{-\lambda_j} \right).
\end{aligned}
\end{equation}
$F(\lambda)$ is a smooth function with gradient
\begin{align} \label{eq:gradient:F}
(\nabla F(\lambda))_{i} = \frac{-2^{-\lambda_{i}}}{\sum_{j=1}^M 2^{-\lambda_j} }.
\end{align}
According to \cite[Theorem~1]{ref:Nest-05} and the fact that the negative entropy is strongly convex with convexity parameter 1 \cite[Lemma~3]{ref:Nest-05}, $\nabla F(\lambda)$ is Lipschitz continuous with Lipschitz constant $1$.
The main difficulty in solving \eqref{eq:dual:finite:compact} efficiently is that $G(\cdot)$ is non-smooth. Following Nesterov's smoothing technique \cite{ref:Nest-05}, we alleviate this difficulty by approximating $G(\cdot)$ by a function with a Lipschitz continuous gradient. This smoothing step is efficient in our case because of the particular structure of $\eqref{eq:dual:finite:compact}$.
Following \cite{ref:Nest-05} and \eqref{eq:Nesterov:2:smooth:objective}, consider
\begin{align} \label{eq:discrete:G:nu}
G_{\nu}(\lambda) 	=    \left\{ \begin{array}{ll}
	\max\limits_{p}				&  \lambda\transp \W\transp p - r\transp p + \nu H( p) -\nu\log(N)\\
			\text{s.t. } 			&  s\transp p=  S \\
							&  p\in\Delta_{N},
	\end{array}\right. 
\end{align}
with smoothing parameter $\nu\in\R_{>0}$ and denote by $p_{\nu}(\lambda)$ the optimizer to \eqref{eq:discrete:G:nu}, which is unique because the objective function is strictly concave. Clearly for any $\lambda \in Q$, $G_{\nu}(\lambda)$ is a uniform approximation of the non-smooth function $G(\lambda)$, since $G_{\nu}(\lambda)\leq G(\lambda)\leq G_{\nu}(\lambda) + \nu \log(N)$.
Using Lemma~\ref{lem:cover}, the optimizer $p_{\nu}(\lambda)$ to \eqref{eq:discrete:G:nu} is analytically given by
\begin{align} \label{eq:finite:optimizer:pmu}
p_{\nu}(\lambda,\mu)_{i} = 2^{\mu_{1} + \tfrac{1}{\nu} (\W \lambda \, -\,  r)_{i} + \mu_{2}s_{i}},
\end{align}
where $\mu_{1},\mu_{2}\in\R$ have to be chosen so that $s\transp p_{\nu}(\lambda,\mu)=S$ and $p_{\nu}(\lambda,\mu)\in\Delta_{N}$; for this choice of $\mu_1$, $\mu_2$ we denote the solution by $p_{\nu}(\lambda)$. 
\begin{Rem}\label{rmk:stabilization:optimizer}
In case of no input constraints, the unique optimizer to \eqref{eq:discrete:G:nu} is given by
\begin{equation*}
p_{\nu}(\lambda)_{i} = \frac{2^{ \tfrac{1}{\nu} (\W\lambda -  r)_{i}} }{\sum_{i=1}^{N}2^{ \tfrac{1}{\nu} (\W\lambda -  r)_{i}}}\quad \text{for }i=1,\hdots,N,
\end{equation*}
whose straightforward evaluation is numerically difficult for small $\nu$. One can circumvent this problem, however, by following the numerically stable technique that we present in Remark~\ref{rmk:finite:no:input:const}.
By Dubin's theorem it can be shown that the capacity of a memoryless channel with a discrete output alphabet of size $M$ and input alphabet size $N\geq M$, is achieved by a discrete input distribution with $M$ mass points \cite{gallager68,witsenhausen}. Computing the exact positions and weights of this optimal input distribution may be difficult, though it is worth noting that our analytical solution in \eqref{eq:finite:optimizer:pmu} converges to this optimal input distribution as $\nu$ tends to $0$.
\end{Rem}
\begin{Rem}[Additional input constraints]\label{rmk:finite:constraint:optimizer}
In case of additional input constraints, we need an efficient method to find the coefficients $\mu_{1}$ and $\mu_{2}$ in \eqref{eq:finite:optimizer:pmu}. In particular if there are multiple input constraints (leading to multiple $\mu_{i}$) the efficiency of the method computing them becomes important. Instead of solving a system of nonlinear equations, one can show (\cite[Theorem~4.8]{ref:Borwein-91}, \cite[p.~257 ff.]{ref:Lasserre-11}) that the coefficients $\mu_{i}$ are the unique maximizers to the following convex optimization problem
\begin{equation} \label{eq:opt:problem:find:mu:finite}
\max\limits_{\mu\in\R^{2}}\left\{ y\transp \mu - \sum_{i=1}^{N}p_{\nu}(\lambda,\mu)_{i} \right\}, 
\end{equation}
where $y:=(1,S)$. Notice that $\eqref{eq:opt:problem:find:mu:finite}$ is an unconstrained maximization of a strictly concave function, whose gradient and Hessian can be directly computed as
\begin{equation*}
\left( \begin{array}{c} y_1 - \ln 2 \sum_{i=1}^N p_{\nu}(\lambda,\mu)_i \\ y_2 - \ln 2 \sum_{i=1}^N s_i p_{\nu}(\lambda,\mu)_i
\end{array} \right),
\left( \begin{array}{cc} - (\ln 2)^2 \sum_{i=1}^N p_{\nu}(\lambda,\mu)_i & -(\ln 2)^2 \sum_{i=1}^N s_i p_{\nu}(\lambda,\mu)_i \\ -(\ln 2)^2 \sum_{i=1}^N s_i p_{\nu}(\lambda,\mu)_i & -(\ln 2)^2 \sum_{i=1}^N s^2_i p_{\nu}(\lambda,\mu)_i
\end{array} \right),
\end{equation*}
which allows the use of efficient second-order methods such as Newton's method. This method directly extends to multiple input constraints. 
Let us point out that Theorem~\ref{thm:error:bound:capacity}, quantifying the approximation error of the presented algorithm, is based on the assumption that the maximum entropy solution \eqref{eq:finite:optimizer:pmu} is available, meaning that one can solve \eqref{eq:opt:problem:find:mu:finite} for optimality. In the case of a finite input alphabet this assumption is not restrictive as we have argued that \eqref{eq:opt:problem:find:mu:finite} is easy to solve. For a continuous input alphabet, that we shall discuss in the subsequent section, however, finding the maximum entropy solution is numerically difficult as it involves integration problems. Therefore, in Remark~\ref{rmk:finite:constraint:optimizer:cts}, we comment on how the presented channel capacity algorithm behaves, when having access only to an approximate solution to the mentioned maximum entropy problem. 
\end{Rem}

Finally, we can show that the uniform approximation $G_{\nu}(\lambda)$ is smooth and has a Lipschitz continuous gradient, with known Lipschitz constant.
\begin{Prop} \label{prop:Lipschitz:continuity:DMC}
$G_{\nu}(\lambda)$ is well defined and continuously differentiable at any $\lambda\in Q$. Moreover, it is convex and its gradient $\nabla G_{\nu}(\lambda)=\W\transp p_{\nu}(\lambda)$ is Lipschitz continuous with Lipschitz constant $\tfrac{1}{\nu}$.
\end{Prop}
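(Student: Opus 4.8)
The plan is to recognize $G_{\nu}$ as precisely an instance of Nesterov's smoothed objective \eqref{eq:Nesterov:2:smooth:objective} and to read off the conclusions from \cite[Theorem~1]{ref:Nest-05}, supplying the two problem-specific ingredients — the operator-norm bound and the strong-convexity parameter of the prox-function — along the way. Concretely, I would match the outer variable $\lambda\in Q\subset\R^{M}$ with the smoothing variable, the inner variable $p$ with the maximization variable $u$, the linear operator with $A=\W$ (so that $\langle A\lambda,p\rangle=\lambda\transp\W\transp p$), the smooth part $\hat f\equiv 0$, the convex piece $\hat\phi(p)=r\transp p$, and the prox-function $d(p)=\log(N)-H(p)$ over the compact convex set $Q_{2}=\{p\in\Delta_{N}:s\transp p=S\}$ (nonempty since $S$ is feasible).

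First I would settle well-definedness, convexity, and the form of the gradient. For each fixed $\lambda$ the objective $p\mapsto \lambda\transp\W\transp p-r\transp p+\nu H(p)-\nu\log(N)$ is strictly concave (because $\nu>0$ and $H$ is strictly concave) and continuous on the compact set $Q_{2}$, so the maximizer $p_{\nu}(\lambda)$ exists and is unique; this already yields that $G_{\nu}$ is finite-valued, and its analytic form \eqref{eq:finite:optimizer:pmu} confirms $p_{\nu}(\lambda)\in Q_{2}$. Convexity of $G_{\nu}$ is immediate, as it is a pointwise maximum over $p$ of functions that are affine in $\lambda$. For the gradient I would invoke Danskin's theorem: since the maximizer is unique, $G_{\nu}$ is differentiable with $\nabla G_{\nu}(\lambda)=\nabla_{\lambda}[\lambda\transp\W\transp p]\big|_{p=p_{\nu}(\lambda)}=\W\transp p_{\nu}(\lambda)$, and continuity of $\lambda\mapsto p_{\nu}(\lambda)$ (from uniqueness together with Berge's maximum theorem, or a posteriori from the Lipschitz estimate below) upgrades this to continuous differentiability.

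The main work is the Lipschitz bound, which I would present through the strong-convexity argument underlying \cite[Theorem~1]{ref:Nest-05}. Writing the first-order (variational) inequalities for $p_{\nu}(\lambda)$ and $p_{\nu}(\lambda')$ over $Q_{2}$, subtracting, and using that $-\nu d$ is $\nu$-strongly concave with respect to the $\ell_{1}$-norm — here is where I use that the negative entropy has convexity parameter $1$ \cite[Lemma~3]{ref:Nest-05} — yields
\begin{align*}
\nu\,\Norm{p_{\nu}(\lambda)-p_{\nu}(\lambda')}_{1}^{2} \le \langle \W(\lambda-\lambda'),\, p_{\nu}(\lambda)-p_{\nu}(\lambda')\rangle \le \Norm{\W}\,\Norm{\lambda-\lambda'}_{2}\,\Norm{p_{\nu}(\lambda)-p_{\nu}(\lambda')}_{1},
\end{align*}
so that $\Norm{p_{\nu}(\lambda)-p_{\nu}(\lambda')}_{1}\le \nu^{-1}\Norm{\W}\,\Norm{\lambda-\lambda'}_{2}$. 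Combining this with $\Norm{\nabla G_{\nu}(\lambda)-\nabla G_{\nu}(\lambda')}_{2}=\Norm{\W\transp(p_{\nu}(\lambda)-p_{\nu}(\lambda'))}_{2}\le \Norm{\W}\,\Norm{p_{\nu}(\lambda)-p_{\nu}(\lambda')}_{1}$ gives a Lipschitz constant $\nu^{-1}\Norm{\W}^{2}$, which is at most $\nu^{-1}$ by the operator-norm estimate \eqref{eq:operator:norm}. The one point demanding care is the matching of norms: the operator norm in \eqref{eq:operator:norm} is taken with the $\ell_{2}$-norm on the $\lambda$-side and the $\ell_{1}$-norm on the $p$-side, which is exactly the pairing for which the entropy prox-function is $1$-strongly convex, so the estimates compose cleanly (the dual $\ell_\infty$-$\ell_1$ pairing is used in the middle inequality) to produce the claimed constant $\tfrac{1}{\nu}$.
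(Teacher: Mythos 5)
Your proposal is correct and takes essentially the same route as the paper: the paper's proof is a one-line appeal to \cite[Theorem~1, Lemma~3]{ref:Nest-05} together with the operator-norm bound \eqref{eq:operator:norm}, and your argument simply unpacks that citation in a self-contained way (Danskin's theorem for $\nabla G_{\nu}(\lambda)=\W\transp p_{\nu}(\lambda)$, and the strong-convexity/variational-inequality estimate, with the correct $\ell_{2}$--$\ell_{1}$ norm pairing, for the Lipschitz constant $\tfrac{1}{\nu}$). No gaps.
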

\begin{proof}
The proof follows directly from the proof of Theorem 1 and Lemma 3 in \cite{ref:Nest-05} together with \eqref{eq:operator:norm}.
\end{proof}
We consider the smooth, convex optimization problem
\begin{align}\label{Lagrange:Dual:Program:smooth}
 \mathsf{D}_{\nu}:\quad \left\{ \begin{array}{ll}
	\min\limits_{\lambda} 		& F(\lambda) + G_{\nu}(\lambda) \\
			\text{s.t. } 					& \lambda\in Q,
	\end{array}\right.
\end{align}
whose objective function has a Lipschitz continuous gradient with Lipschitz constant $1+\tfrac{1}{\nu}$. As such $\mathsf{D}_{\nu}$ can be  be approximated with Nesterov's optimal scheme for smooth optimization \cite{ref:Nest-05}, which is summarized in Algorithm~\hyperlink{algo:1}{1}, where $\pi_{Q}(x)$ denotes the projection operator of the set $Q$, defined in Lemma~\ref{lem:compact:set}, with $R:=M \left( \log(\gamma^{-1}) \vee \tfrac{1}{\ln 2}\right)$
\begin{equation*}
\pi_{Q}(x):=\left\{ \begin{array}{ll} R \tfrac{x}{\Norm{x}_{2}}, & \Norm{x}_{2}>R \\ x, & \text{otherwise.} \end{array} \right. 
\end{equation*}

 \begin{table}[!htb]
\centering 
\begin{tabular}{c}
  \Xhline{3\arrayrulewidth}  \hspace{1mm} \vspace{-3mm}\\ 
\hspace{22mm}{\bf{\hypertarget{algo:1}{Algorithm 1: } }} Optimal scheme for smooth optimization \hspace{22mm} \\ \vspace{-3mm} \\ \hline \vspace{-0.5mm}
\end{tabular} \\
\vspace{-5mm}
 \begin{flushleft}
  {\hspace{1.7mm}Choose some $\lambda_0 \in Q$}
 \end{flushleft}
 \vspace{-6mm}
 \begin{flushleft}
  {\hspace{1.7mm}\bf{For $k\geq 0$ do$^{*}$}}
 \end{flushleft}
 \vspace{-5mm}
 
  \begin{tabular}{l l}
{\bf Step 1: } & Compute $\nabla F(x_{k})+\nabla G_{\nu}(x_{k})$ \\
{\bf Step 2: } & $y_k = \pi_{Q}\left(-\frac{1}{L_{\nu}}\left( \nabla F(x_k)+\nabla G_{\nu}(x_k) \right) + x_k\right)$\\
{\bf Step 3: } &   $z_k=\pi_{Q}\left(-\frac{1}{L_{\nu}} \sum_{i=0}^{k} \frac{i+1}{2} \left(  \nabla F(x_i)+\nabla G_{\nu}(x_i) \right)\right)$\\
{\bf Step 4: } & $x_{k+1}=\frac{2}{k+3}z_{k} + \frac{k+1}{k+3}y_{k}$\\ 
  \end{tabular}
   \begin{flushleft}
  {\hspace{1.7mm}[*The stopping criterion is explained in Remark~\ref{remark:stopping}]}
  \vspace{-10mm}
 \end{flushleft}  
\begin{tabular}{c}
\hspace{22.7mm} \phantom{ {\bf{Algorithm:}} Optimal Scheme for Smooth Optimization}\hspace{22.7mm} \\ \vspace{-1.0mm} \\\Xhline{3\arrayrulewidth}
\end{tabular}
\end{table}

The following theorem provides explicit error bounds for the solution provided by Algorithm~\hyperlink{algo:1}{1} after $n$ iterations. Define the constants $D_{1}:=\tfrac{1}{2}(M \log(\gamma^{-1})\vee\tfrac{1}{\ln 2})^2 $ and $D_{2}:=\log(N)$.
\begin{Thm}[\cite{ref:Nest-05}] \label{thm:error:bound:capacity}
Under Assumption~\ref{ass:channel}, for $n\in\mathbb{N}$ consider a smoothing parameter
\begin{align*}
\nu = \nu(n) = \frac{2}{n+1}\sqrt{\frac{D_{1}}{D_{2}}}.
\end{align*}
Then after $n$ iterations of Algorithm~\hyperlink{algo:1}{1} we can generate the approximate solutions to the problems \eqref{Lagrange:Dual:Program} and \eqref{eq:DMC_capacity_const}, namely,
\begin{align}
\hat{\lambda} = y_{n} \in Q\qquad \textnormal{and} \qquad \hat{p}=\sum_{k=0}^{n}\frac{2(k+1)}{(n+1)(n+2)} p_{\nu}(x_{k})\in \Delta_{N}, \label{eq:optInPut}
\end{align}
which satisfy 
\begin{align}
0\leq F(\hat{\lambda}) + G(\hat{\lambda}) - \I{\hat{p}}{W} \leq \frac{4}{n+1} \sqrt{D_{1} D_{2}} + \frac{4 D_{1}}{(n+1)^{2}}. \label{eq:EBB}
\end{align}
Thus, the complexity of finding an $\varepsilon$-solution to the problems \eqref{Lagrange:Dual:Program} and \eqref{eq:DMC_capacity_const} does not exceed
\begin{align}\label{eq:finite:complexity:thm}
4  \sqrt{D_{1} D_{2}} \ \frac{1}{\varepsilon} + 2 \sqrt{\frac{D_{1}}{\varepsilon}}.
\end{align}
\end{Thm}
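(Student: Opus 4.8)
The plan is to recognize the smoothed dual $\mathsf{D}_{\nu}$ in (\ref{Lagrange:Dual:Program:smooth}) as an instance of Nesterov's structural primal--dual framework (\ref{eq:Nesterov:1})--(\ref{eq:Nesterov:dual}) and then to invoke \cite[Theorems~1 and~3]{ref:Nest-05} essentially verbatim, so that the work reduces almost entirely to matching the abstract constants to the concrete channel data. First I would make the identification $x \mapsto \lambda$, $Q_1 \mapsto Q$ (the compact set of Lemma~\ref{lem:compact:set}), and $\hat f \mapsto F$, and read off the bilinear coupling $\inprod{\W\lambda}{p}$ together with $\hat\phi(p) = r\transp p$ and $Q_2 = \{p\in\Delta_N : s\transp p = S\}$, so that $G(\cdot)$ in (\ref{equation:F:and:G}) is exactly the inner-maximization term of (\ref{eq:Nesterov:2:structure}). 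I would equip $Q_1$ with the Euclidean prox-function $\tfrac12\Norm{\cdot}_2^2$ (convexity parameter $\sigma_1 = 1$, with $\max_{\lambda\in Q}\tfrac12\Norm{\lambda}_2^2 = \tfrac12 R^2 = D_1$) and $Q_2$ with the negative-entropy prox $\log(N) - H(p)$ (convexity parameter $\sigma_2 = 1$ on the simplex, the restriction to the affine slice $s\transp p = S$ preserving strong convexity, with maximum $\log(N) = D_2$). The coupling operator is $\W$, whose relevant operator norm (Nesterov's $\Norm{A}_{1,2}$) is bounded by $1$ through (\ref{eq:operator:norm}).

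Next I would record the smoothness input: by Proposition~\ref{prop:Lipschitz:continuity:DMC} (equivalently \cite[Theorem~1]{ref:Nest-05}) the gradient of $G_{\nu}$ is Lipschitz with constant $\Norm{\W}^2/(\nu\sigma_2) \le 1/\nu$, while $\nabla F$ is Lipschitz with constant $1$, so $F + G_{\nu}$ has Lipschitz gradient with $L_\nu = 1 + 1/\nu$. Algorithm~\hyperlink{algo:1}{1} is precisely Nesterov's optimal scheme applied to $\mathsf{D}_{\nu}$, and \cite[Theorem~3]{ref:Nest-05} supplies, for the balancing choice $\nu = \tfrac{2}{n+1}\sqrt{D_1/D_2}$, both the primal reconstruction $\hat p = \sum_{k=0}^n \tfrac{2(k+1)}{(n+1)(n+2)} p_{\nu}(x_k)$ of (\ref{eq:optInPut}) and the guarantee
\[
0 \le f(\hat\lambda) - \psi(\hat p) \le \frac{4\Norm{\W}}{n+1}\sqrt{\frac{D_1 D_2}{\sigma_1\sigma_2}} + \frac{4 L D_1}{(n+1)^2 \sigma_1},
\]
which collapses to the right-hand side of (\ref{eq:EBB}) once $\Norm{\W}\le 1$ and $\sigma_1 = \sigma_2 = L = 1$ are substituted.

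It then remains to translate the abstract objectives back into channel quantities. I would note that $f(\hat\lambda) = F(\hat\lambda) + G(\hat\lambda)$ is the exact (non-smooth) dual objective of $\mathsf{D}$, whereas $\psi(\hat p)$ is the primal objective $-r\transp \hat p + H(\W\transp \hat p) = \I{\hat p}{W}$; here the one genuine verification is feasibility of $\hat p$, namely that each inner maximizer $p_{\nu}(x_k)$ lies in $\Delta_N$ and satisfies $s\transp p_{\nu}(x_k) = S$ by construction in (\ref{eq:finite:optimizer:pmu}) and Lemma~\ref{lem:cover}, so that the convex combination $\hat p$ inherits both properties and the primal objective indeed equals the mutual information. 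Strong duality (Lemma~\ref{lem:strong:duality:finite}) together with Lemma~\ref{lem:equivalent:primal:problem} then identifies the common optimal value with $C_S(W)$, closing the two-sided sandwich. For the complexity claim I would invert the error bound, requiring each of the two terms to fall below the target accuracy; the same substitution of constants yields the iteration count $4\sqrt{D_1 D_2}\,\varepsilon^{-1} + 2\sqrt{D_1/\varepsilon}$ of (\ref{eq:finite:complexity:thm}). The main obstacle is bookkeeping rather than analysis: one must keep the primal/dual roles straight (the minimized variable $\lambda$ plays Nesterov's ``$x$'' and $p$ plays the inner ``$u$''), and ensure that the convexity parameters $\sigma_1,\sigma_2$ and the prox maxima are exactly the quantities entering $D_1$ and $D_2$; with those identifications fixed, the estimate is a direct specialization of \cite[Theorem~3]{ref:Nest-05}.
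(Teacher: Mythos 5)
Your proposal is correct and follows essentially the same route as the paper, whose proof is precisely ``apply \cite[Theorem~3]{ref:Nest-05} to the smoothed dual \eqref{Lagrange:Dual:Program:smooth}, using Lemma~\ref{lem:compact:set} (compact dual feasible set), Proposition~\ref{prop:Lipschitz:continuity:DMC} (Lipschitz gradient of $G_\nu$), and Lemma~\ref{lem:strong:duality:finite} (strong duality)''; your identifications of the prox functions, convexity parameters, operator norm bound $\Norm{\W}\le 1$, and the constants $D_1=\tfrac12 R^2$, $D_2=\log N$ are exactly what makes Nesterov's bound collapse to \eqref{eq:EBB}. The only cosmetic imprecision is the complexity step: \eqref{eq:finite:complexity:thm} comes from inverting the quadratic in $\tfrac{1}{n+1}$ in \eqref{eq:EBB} (using $\sqrt{a^2+4b\varepsilon}\le a+2\sqrt{b\varepsilon}$), not from bounding each term separately, but this does not affect the validity of the argument.
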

\begin{proof}
The proof follows along the lines of \cite[Theorem~3]{ref:Nest-05} and in particular requires Lemma~\ref{lem:compact:set}, Lemma~\ref{lem:strong:duality:finite} and Proposition~\ref{prop:Lipschitz:continuity:DMC}.
\end{proof}
Note that Theorem~\ref{thm:error:bound:capacity} provides an explicit error bound \eqref{eq:EBB}, also called \emph{a priori error}. In addition this theorem gives an approximation to the optimal input distribution \eqref{eq:optInPut}, i.e., the optimizer of the primal problem. Thus, by comparing the values of the primal and the dual optimization problem, one can also compute an \emph{a posteriori error} which is the difference of the dual and the primal problem, namely $F(\hat{\lambda}) + G(\hat{\lambda}) - \I{\hat{p}}{W}$.

\begin{Rem}[Stopping criterion of Algorithm~\hyperlink{algo:1}{1}] \label{remark:stopping}
There are two immediate approaches to define a stopping criterion for Algorithm~\hyperlink{algo:1}{1}.
\begin{enumerate}[label=(\roman*), itemsep = 1mm, topsep = -1mm]
\item \emph{A priori stopping criterion}: Choose an a priori error $\varepsilon>0$. Setting the right hand side of \eqref{eq:EBB} equal to $\varepsilon$ defines a number of iterations $n_{\varepsilon}$ required to ensure an $\varepsilon$-close solution.
\item \emph{A posteriori stopping criterion}: Choose an a posteriori error $\varepsilon>0$. Choose the smoothing parameter $\nu(n_{\varepsilon})$ for $n_{\varepsilon}$ as defined above in the a priori stopping criterion. Fix a (small) number of iterations $\ell$ that are run using Algorithm~\hyperlink{algo:1}{1}. Compute the a posteriori error $\mathrm{e}_{\ell}:= F(\hat{\lambda}) + G(\hat{\lambda}) - \I{\hat{p}}{\rho}$ according to Theorem~\ref{thm:error:bound:capacity}. If $\mathrm{e}_{\ell}\leq \varepsilon$ terminate the algorithm otherwise continue with another $\ell$ iterations. Continue until the a posteriori error is below $\varepsilon$.
\end{enumerate}
\end{Rem}

\begin{Rem}[Computational stability]\label{rmk:finite:no:input:const}
In the special case of no input cost constraints, one can derive an analytical expression for $G_{\nu}(\lambda)$ and its gradient as
\begin{align}
G_{\nu}(\lambda) &= \nu \log\left( \sum_{i=1}^{N} 2^{\frac{1}{\nu}( \W \lambda\, - \, r)_{i}} \right) -\nu \log(N) \nonumber \\
\nabla G_{\nu}(\lambda) &=  \frac{1}{S(\lambda)}\sum_{i=1}^{N} 2^{\frac{1}{\nu}( \W \lambda\, - \, r)_{i}}\W_{i,\cdot}, \label{eq:gradient:G}
\end{align}
where $S(\lambda):=\sum_{i=1}^{N} 2^{\frac{1}{\nu}( \W \lambda\, - \, r)_{i}}$. In order to achieve an $\varepsilon$-precise solution the smoothing factor $\nu$ has to be chosen in the order of $\varepsilon$, according to Theorem~\ref{thm:error:bound:capacity}. A straightforward computation of $\nabla G_{\nu}(\lambda)$ via \eqref{eq:gradient:G} for a small enough $\nu$ is numerically difficult. In the light of  \cite[p.~148]{ref:Nest-05}, we present a numerically stable technique for computing $\nabla G_{\nu}(\lambda)$. By considering the functions $\R^{M}\ni \lambda \mapsto f(\lambda)=\W\lambda -r \in \R^{N}$ and $\R^{N}\ni x \mapsto R_{\nu}(x)=\nu \log \left( \sum_{i=1}^{N}2^{\tfrac{x_{i}}{\nu}} \right)\in\R$ it is clear that $\nabla_{\lambda} R_{\nu}(f(\lambda))=\nabla G_{\nu}(\lambda)$. The basic idea is to define $\bar{f}(\lambda):=\max_{1\leq i \leq N} f_{i}(\lambda)$ and then consider a function $g:\R^{M}\to \R^{N}$ given by $g_{i}(\lambda)=f_{i}(\lambda)-\bar{f}(\lambda)$, such that all components of $g(\lambda)$ are non-positive. One can show that
\begin{equation*} \label{eq:numerical:stability}
\nabla_{\lambda} R_{\nu}(f(\lambda))=\nabla_{\lambda} R_{\nu}(g(\lambda))+ \nabla \bar{f}(\lambda),
\end{equation*}
where the term on the right-hand side can be computed with a small numerical error.
\end{Rem}

\begin{Rem}[Computational complexity]\label{rmk:finite:complexity}
In case of no input cost constraint, one can see by \eqref{eq:gradient:G} that the computational complexity of a single iteration step of Algorithm~\hyperlink{algo:1}{1} is $O(MN)$. Furthermore, according to \eqref{eq:finite:complexity:thm}, the complexity in terms of number of iterations to achieve an $\varepsilon$-precise solution is $O\left( \tfrac{M\sqrt{\log N}}{\varepsilon} \right)$. This finally gives a computational complexity for finding an additive $\varepsilon$-solution of $O(\tfrac{M^2 N \sqrt{\log N}}{\varepsilon})$. 
Let us point out that that the constants in the computational complexity, explicitly given in \eqref{eq:finite:complexity:thm} and in particular the dependency on the parameter $\gamma$, can have a significant impact on the runtime of the proposed approximation method in practice. In the following remark, however, we presents a way to circumvent ill-conditioned channels with very small (or even vanishing) $\gamma$ parameter.
\end{Rem}

\begin{Rem}[Removing Assumption~\ref{ass:channel}]\label{rmk:perturbation}
The continuity of the channel capacity can be used to remove Assumption~\ref{ass:channel}. Let $\W_1\in\R^{N\times M}$ be an channel transition matrix that does not satisfy Assumption~\ref{ass:channel}, i.e., that contains zero entries. Define a new channel matrix $\W_2\in\R^{N\times M}$ by adding a perturbation $\varepsilon>0$ to all zero entries of $\W_1$ and then normalizing the rows. According to \cite{leung09}
\begin{equation} \label{eq:continuity:smith}
|C(\W_1) - C(\W_2)| \leq 3 \Norm{\W_1-\W_2}_{\triangleright} \log(M\vee N) + 2 \eta( \Norm{\W_1-\W_2}_{\triangleright} ),
\end{equation}
where $\eta(t)=-t\log t$ and the norm $\Norm{\cdot}_{\triangleright}$ on $\R^{N\times M}$ is defined as $\Norm{A}_{\triangleright}:=\max_{b\in\Delta_N} \Norm{b b\transp A}_{\text{tr}}$.
 Since $\W_2$ by construction satisfies Assumption~\ref{ass:channel}, we can run Algorithm~\hyperlink{algo:1}{1} for channel $\W_2$ and as such get the following upper and lower bounds for the capacity of the singular channel $\W_1$
\begin{align*}
C_{\text{LB}}(\W_1) &:= C_{\text{LB}}(\W_2) - 3 \Norm{\W_1-\W_2}_{\triangleright} \log(M\vee N) - 2 \eta( \Norm{\W_1-\W_2}_{\triangleright} ) \\
C_{\text{UB}}(\W_1) &:= C_{\text{UB}}(\W_2) + 3 \Norm{\W_1-\W_2}_{\triangleright} \log(M\vee N) + 2 \eta( \Norm{\W_1-\W_2}_{\triangleright} ).
\end{align*}
See in Example~\ref{ex:BEC} how this perturbation method behaves numerically.
\end{Rem}

\subsection{Numerical example}
This section presents two examples to illustrate the theoretical results developed in the preceding sections and their performance. All the simulations in this section are performed on a 2.3 GHz Intel Core i7 processor with 8 GB RAM.
\begin{Ex} \label{ex:one}
Consider a DMC $W$ having a channel matrix $\W \in \R^{N \times M}$ with $N=10000$ and $M=100$, such that $\W_{ij}=\tfrac{V_{ij}}{\sum_{j=1}^M V_{ij}}$, where $V_{ij}$ is chosen i.i.d. uniformly distributed in $[0,1]$ for all $1\leq i \leq N$ and $1 \leq j \leq M$. The parameter $\gamma$ happens to be $1.0742\cdot 10^{-8}$.
Figure~\ref{fig:ex1} and Table~\ref{tab:ex1} compare the performance of the Blahut-Arimoto algorithm with that of Algorithm~\hyperlink{algo:1}{1}, which has the a priori error bound predicted by Theorem~\ref{thm:error:bound:capacity}, namely
\begin{equation*}
C_{\textnormal{UB}}(W)-C_{\textnormal{LB}}(W) \leq  \frac{2M\sqrt{2\log(N)}}{n+1}\left(\log(\gamma^{-1})\vee\tfrac{1}{\ln 2})\right)+\frac{2 M^{2}}{(n+1)^{2}}\left(\log(\gamma^{-1})\vee\tfrac{1}{\ln 2})\right)^{2},
\end{equation*}
where $n$ denotes the number of iterations and $\gamma$ is equal to the smallest entry in the channel matrix $W$. Recall that the Blahut-Arimoto algorithm has an a priori error bound of the form $C(W)-C_{\textnormal{LB}}(W) \leq  \tfrac{\log(N)}{n}$ \cite[Corollary~1]{arimoto72}. Moreover, the new method provides us with an a posteriori error, which the Blahut-Ariomoto algorithm does not. 
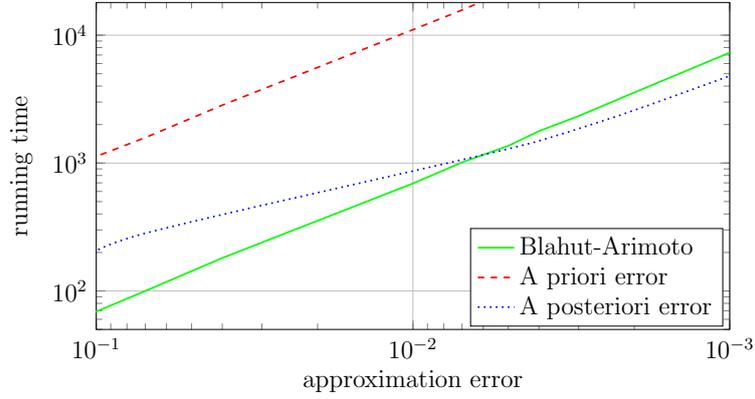
\begin{figure}[t]	
	\centering
	\scalebox{0.8}{  \begin{tikzpicture}
	\begin{axis}[
		height=7cm,
		width=12cm,
		grid=major,
		xlabel=approximation error,
		ylabel=running time,
		xmode=log,
		ymode=log,
		xmin=0.001,
		xmax=0.1,
		ymax=18000,
		ymin=50,
		x dir=reverse,
		legend style={at={(0.790,0.310)},anchor=north,legend cell align=left} 
	]

	\addplot[green,thick] coordinates {
	     (1,7.4)	
		(0.7,10)
		(0.4,18)	
		(0.1,69)	
		(0.07,100)	
		(0.04,181)
		(0.01,693)			
		(0.007,1013)
		(0.005,1364)
		(0.004,1779)	
		(0.003, 2331)	
		(0.002, 3564)
		(0.001,7306)																				
	};
	\addlegendentry{Blahut-Arimoto}
	
		\addplot[red,thick,dashed] coordinates {
	     (1,114)	
		(0.7,162)
		(0.4,282)	
		(0.1,1127)	
		(0.07,1581)	
		(0.04,2837)
		(0.01,11036)			
		(0.007,15718)	
		(0.004,28456)	
	};
		\addlegendentry{A priori error}	

		\addplot[blue,thick,smooth,dotted] coordinates {
	     (0.1325,114)	
		(0.1154,162)
		(0.0702,282)	
		(0.0063,1127)	
		(0.0037,1581)	
		(0.0018,2837)
		(0.0003976,11036)		
	};
	\addlegendentry{A posteriori error}

	\end{axis}  
\end{tikzpicture}}
	\caption{For Example~\ref{ex:one}, this plot depicts the runtime of Algorithm~1 with respect to the a priori and a posteriori stopping criterion, as explained in Remark~\ref{remark:stopping}. As a reference, the runtime of the Blahut-Arimoto algorithm is shown.  }  
	\label{fig:ex1}
\end{figure}

 \begin{table}[!htb]
 \small{
\centering 
\caption{Some specific simulation points of Example~\ref{ex:one}. }
\label{tab:ex1}
\hspace{30mm} {Blahut-Arimoto Algorithm} \hspace{18mm} Algorithm~\hyperlink{algo:1}{1}
\vspace{3mm} \phantom{..}
  \begin{tabular}{c@{\hskip 3mm} | c@{\hskip 2mm} c@{\hskip 2mm} c@{\hskip 2mm} c  | c@{\hskip 2mm} c@{\hskip 3mm} c@{\hskip 3mm} c  }
 A priori error \hspace{1mm}  & \hspace{1mm}    1  &$0.1$ & $0.01$ & $0.001$ \hspace{1mm}   &\hspace{1mm}  1  &$0.1$ & $0.01$ & $0.001$   \\ 
 $C_{\textnormal{UB}}(W)$ & \hspace{1mm} --- & --- & ---  & --- \hspace{1mm}   &\hspace{1mm}  0.4419 & 0.4131 & 0.4092 & 0.4088   \\
 $C_{\textnormal{LB}}(W)$ & \hspace{1mm} 0.2930 & 0.4008 & 0.4088  & 0.4088 \hspace{1mm}   & \hspace{1mm}   0.3094 & 0.4069 & 0.4088 & 0.4088 \\
  A posteriori error & \hspace{1mm} --- & --- & ---  & --- \hspace{1mm}  & \hspace{1mm}   0.1325 & 0.0063 & 4.0$\cdot 10^{-4}$ & 3.7$\cdot 10^{-5}$ \\
 Time [s] & \hspace{1mm} 7.4  &69 &693  & 7306 \hspace{1mm} & \hspace{1mm}  114 & 1127 & 11\,036 & 110\,987  \\
  Iterations & \hspace{1mm} 14  &133 &1329  & 13\,288 \hspace{1mm} & \hspace{1mm}  27\,797 & 273\,447 & 2\,729\,860 & 27\,294\,000
  \end{tabular} }
\end{table} 
\end{Ex}

\begin{Ex} \label{ex:BEC}
Consider a binary erasure channel with erasure probability $\alpha$ whose channel transition matrix is given by 
$\W =  \left(\begin{smallmatrix}
1-\alpha & \alpha & 0 \\ 0 & \alpha & 1-\alpha
\end{smallmatrix} \right)$ and as such does not satisfy Assumption~\ref{ass:channel}. We use the perturbation method introduced in Remark~\ref{rmk:perturbation} to approximate its capacity that is analytically known to be $1-\alpha$ \cite[p.~189]{cover}. Table~\ref{tab:BEC} shows the performance of this perturbation method and Algorithm~\hyperlink{algo:1}{1}.

 \begin{table}[!htb]
 \small{
\centering 
\caption{Some specific simulation points of Example~\ref{ex:BEC} for $\alpha = 0.4$.}
\label{tab:BEC}
\vspace{3mm} \phantom{..}
  \begin{tabular}{c@{\hskip 4mm} | c@{\hskip 4mm} c@{\hskip 4mm} c@{\hskip 4mm} c}
  \hspace{1mm} Perturbation $\varepsilon$  & \hspace{1mm}    $10^{-4}$  &$10^{-5}$ & $10^{-6}$ & $10^{-7}$  \\ 
A priori error & \hspace{1mm} 0.01 & 0.01 & 0.01  & 0.01 \hspace{1mm}      \\
 $C_{\textnormal{UB}}(W)$ & \hspace{1mm} 0.6024 & 0.6003 & 0.6000  & 0.6000 \hspace{1mm}      \\
 $C_{\textnormal{LB}}(W)$ & \hspace{1mm} 0.5949 & 0.5994 & 0.5999  & 0.6000 \hspace{1mm}    \\
  A posteriori error & \hspace{1mm} 0.0075 & $9.2\cdot 10^{-4}$ & $1.1\cdot 10^{-4}$  & $1.2\cdot 10^{-5}$ \hspace{1mm}   \\
 Time [s] & \hspace{1mm} 0.70  & 0.54 & 0.66 & 0.78 \hspace{1mm}  \\
  Iterations & \hspace{1mm} 9056  & 7402 & 8523  & 9896 \hspace{1mm} 
  \end{tabular} }
\end{table}
\end{Ex}

\section{Channels with continuous input and countable output alphabets}   \label{sec:cont:Channels}
In this section we generalize the approximation scheme introduced in Section \ref{sec:classicalCapacity} to memoryless channels with continuous input and countable output alphabets. The class of discrete-time Poisson channels is an example of such channels with particular interest in applications, for example to model direct detection optical communication systems \cite{moser_phd,shamai90,ref:Chen-13}. Consider $\X \subseteq \R$ as the input alphabet set and $\Y = \N$ as the output alphabet set. The channel is described by the conditional probability $W(i|x) := \ProbIT{Y = i ~|~ X=x}$. 
	Given a channel $W$ and an integer $M$,  we introduce an $M$-\emph{truncated} version of the channel by 
		\begin{align} \label{W_M} 
			W_M(i|x)&:=\left \lbrace \begin{array}{ll}W(i|x) + \frac{1}{M}\sum\limits_{j \ge M}W(j|x) , & i\in \{0,1,\ldots,M-1\} \\ 0, & i\geq M.
				\end{array} \right.
		\end{align}
	$W_M$ can be seen as a channel with input alphabet $\X$ and output alphabet $\{0,1,\ldots,M-1\}$. Figure~\ref{fig:inf:channel:plot} shows a pictorial representation of a channel and its $M$-truncated counterpart. 
The finiteness of the output alphabet of $W_{M}$ allows us to deploy an approximation scheme similar to the one developed in Section \ref{sec:classicalCapacity} to numerically approximate $C(W_{M})$.  

	 	\begin{figure}[htb!] 
 			  \centering 
 			  \includegraphics[scale = 0.82]{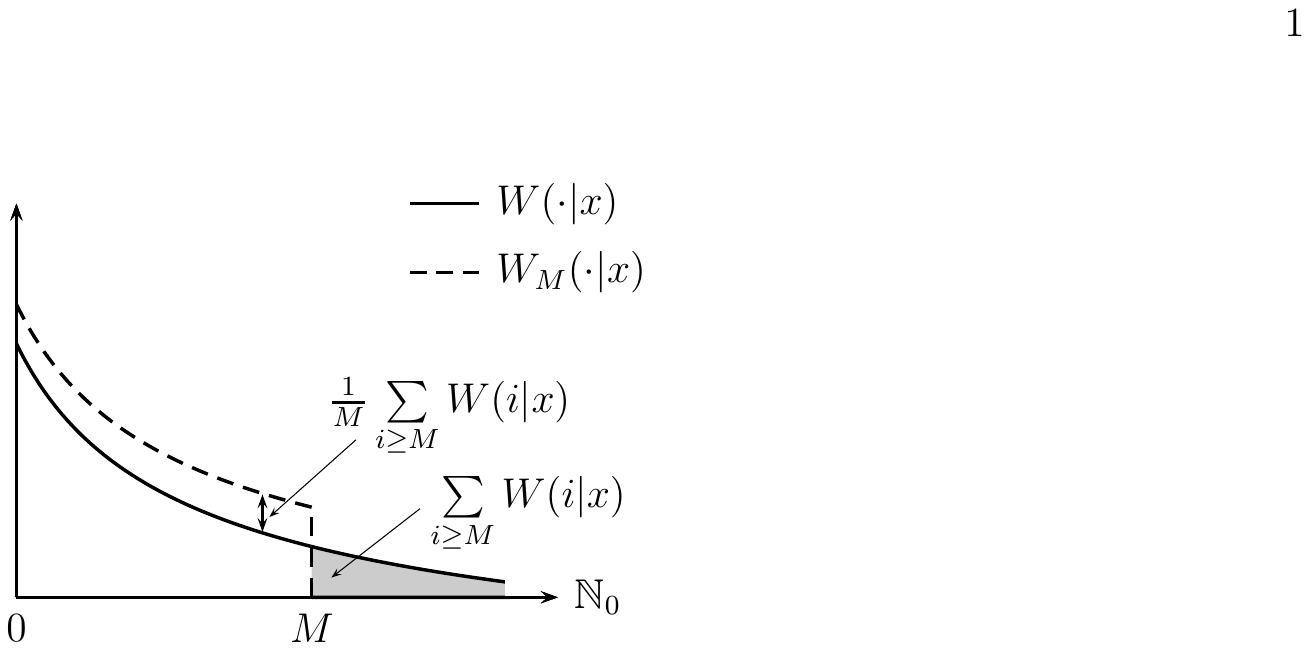} 
 			  \caption{Pictorial representation of the $M$-truncated channel counterpart.} 
 			  \label{fig:inf:channel:plot} 
 	 	\end{figure} 	
The following definition is a key feature of the channel required for the theoretical results developed in this section which, roughly speaking, imposes a certain decay rate for the output distribution uniformly in the input alphabet.
	\begin{Def} [Polynomial tail]
	\label{def:tail}
		The channel $W$ features a \emph{$k$-ordered polynomial tail} if for $M\in\mathbb{N}_{0}$ and $k\in\Rp$
		\begin{align}\label{R}
			R_k(M) \Let \sum\limits_{i\ge M} \big(\sup_{x \in \X}W(i|x)\big)^k  < \infty.
		\end{align}
	\end{Def}
	The following assumptions hold throughout this section. 
	\begin{As} \label{a:channel}  \ 
	\begin{enumerate}[label=(\roman*), itemsep = 1mm, topsep = -1mm]
		\item The channel $W$ has a $k$-ordered polynomial tail for some $k \in (0,1)$ in the sense of Definition \ref{def:tail}. \label{ass:channel:i}
		\item The mapping $x\mapsto W(i|x)$ is Lipschitz continuous for any $i\in\mathbb{N}_{0}$ with Lipschitz constant $L$.	\label{ass:channel:ii}
	\end{enumerate}
	\end{As}

Assumption~\ref{a:channel} allows us to relate the capacity of the original channel to that of its truncated counterpart. 
	
	\begin{Thm} \label{thm:C}
	Suppose channel $W$ satisfies Assumption~\ref{a:channel}\eqref{ass:channel:i} with the order $k \in (0,1)$. Then, for any $M \in \N$ and for any probability distribution $p\in\mathcal{P}(\mathcal{X})$ we have
		\begin{align*} 
			\big|  I(p &, W)  - I(p,W_{M}) \big|  \leq \frac{2\log(\e)}{\e(1-k)} \Big[ M^{1-k}\big(R_1(M)\big)^k + R_k(M) \Big],
		\end{align*}
	where $R_k(M)$ is as defined in \eqref{R}. 
	\end{Thm}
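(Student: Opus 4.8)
The plan is to bound the difference of mutual informations by decomposing it into terms that measure how much the truncation $W_M$ distorts the channel, and then to control each distortion using the polynomial tail assumption. Writing $I(p,W) = \sum_{i\ge 0}\int_{\X} W(i|x)\log\frac{W(i|x)}{(pW)(i)}\,p(\diff x)$ and similarly for $W_M$, I would split the index set into the ``head'' $\{0,\ldots,M-1\}$ and the ``tail'' $\{i\ge M\}$. On the head, the two channels differ only by the redistributed tail mass $\frac{1}{M}\sum_{j\ge M}W(j|x)$, while on the tail $W_M$ vanishes entirely, so the entire tail contribution of $I(p,W)$ must be estimated directly. The key quantity throughout will be $\sup_x W(i|x)$, since $R_k(M)$ and $R_1(M)$ are defined in terms of it, and the natural strategy is to pull every $W(i|x)$ and $(pW)(i)$ out as a pointwise supremum wherever a logarithm or a mass term appears.

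First I would recall the elementary bound $|t\log t - s\log s|$ type estimates and, more importantly, the fact that the function $\eta(t) = -t\log t$ satisfies $0 \le \eta(t) \le \frac{\log(\e)}{\e}$ on $[0,1]$ and is concave, which is where the constant $\frac{\log(\e)}{\e}$ in the statement originates. I expect the tail contribution $\sum_{i\ge M}\int W(i|x)\log\frac{W(i|x)}{(pW)(i)}p(\diff x)$ to break into an entropy-like piece controlled by $\eta$ and a cross piece; using $k\in(0,1)$ one writes $W(i|x) = W(i|x)^{k}\cdot W(i|x)^{1-k}$ and applies $W(i|x)^{1-k}\le (\sup_x W(i|x))^{1-k}$ together with a bound on the logarithmic factor, producing the $R_k(M)$ term. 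The head contribution, arising from replacing $(pW)(i)$ by $(pW_M)(i)$ and from the added mass $\tfrac1M\sum_{j\ge M}W(j|x)$, is where the factor $M^{1-k}(R_1(M))^k$ should appear: summing the $M$ added-mass terms, each of size on the order of $\tfrac1M R_1(M)$, and applying Hölder's inequality (or the power-mean inequality) with exponents $k$ and $1-k$ across the $M$ indices converts $M\cdot(\tfrac1M R_1(M))$ into the stated $M^{1-k}(R_1(M))^k$ shape.

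The hard part will be the careful bookkeeping of the logarithmic factors: unlike the probability masses, the ratios $\log\frac{W(i|x)}{(pW)(i)}$ are not uniformly bounded and can blow up when $(pW)(i)$ is small, so I would avoid bounding them directly and instead absorb them into concavity/entropy estimates via $\eta$, exploiting that $(pW)(i)\ge $ (something controlled by the same suprema) or by grouping the $W\log W$ and $-W\log(pW)$ pieces so that the problematic denominators cancel against the monotonicity of $-\log$. A clean way to organize this is to first prove the pointwise-in-$p$ inequality $\big|\sum_i a_i \log a_i - \sum_i a_i \log b_i\big|$-type bounds and then integrate against $p(\diff x)$, pulling the $\sup_x$ inside at the latest possible stage so that the tail sums assemble exactly into $R_1(M)$ and $R_k(M)$.

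In summary, the proof proceeds by: decomposing $I(p,W)-I(p,W_M)$ into head and tail parts; estimating the tail part using the factorization $W = W^k W^{1-k}$, the supremum bound, and the $\eta$-bound to obtain a contribution of order $R_k(M)$; estimating the head part coming from the redistributed mass by summing $M$ terms and applying Hölder with exponents $(k,1-k)$ to obtain a contribution of order $M^{1-k}(R_1(M))^k$; and finally collecting constants, where the $\frac{2\log(\e)}{\e(1-k)}$ prefactor emerges from the $\eta$-bound constant $\frac{\log(\e)}{\e}$, a factor of $2$ from combining head and tail (or from the two directions of the difference), and the $\frac{1}{1-k}$ from the Hölder/geometric-summation step involving the exponent $1-k$.
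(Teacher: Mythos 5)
Your plan is essentially the paper's proof: your ``grouping'' fallback is exactly the paper's decomposition $I(p,W)=-\int_{\X} h\big(W(\cdot|x)\big)\,p(\diff x)+h(pW)$ into conditional- and output-entropy parts, after which each perturbed $t\log t$ term is bounded by the pointwise estimate $\big|(p+x)\log(p+x)-p\log p\big|\le \frac{\log(\e)}{\e(1-k)}\,x^k$ (the paper's Lemma~\ref{lem:log}), the $M$ head indices (whose common added mass is at most $R_1(M)/M$) sum to $M^{1-k}\big(R_1(M)\big)^k$, and the tail indices sum to $R_k(M)$ --- precisely your head/tail accounting. Two corrections to your constant bookkeeping: the factor $2$ comes from the two entropy groups (conditional and output), each contributing the same bound, not from ``combining head and tail'' or from the two directions of the difference; and the factor $\frac{1}{1-k}$ lives inside the pointwise lemma itself --- which, for the head terms, you need at a general base point $p=W(i|x)$ (obtained in the paper via monotonicity of $p\mapsto(p+x)\log(p+x)-p\log p$), not merely via the maximum $\frac{\log(\e)}{\e}$ of $-t\log t$ at base point $0$ --- rather than arising from any H\"older or summation step.
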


To prove Theorem \ref{thm:C} we need a preliminary lemma.
	
	\begin{Lem}
	\label{lem:log}
		Given $k \in (0,1)$ and $p\in[0,1]$, we have for all $x\in[0,1-p]$ 
		\begin{align*}
			\big| (p+x)\log(p+x) - p\log(p) \big| \le \frac{\log(\e)}{\e(1-k)} x^k.
		\end{align*}
	\end{Lem}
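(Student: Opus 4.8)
The plan is to set $\eta(t):=-t\log t$ on $[0,1]$, a nonnegative concave function with $\eta(0)=\eta(1)=0$, and to observe that the quantity in the lemma is $\phi(x):=(p+x)\log(p+x)-p\log p=\eta(p)-\eta(p+x)$. The assertion is thus precisely that $\eta$ is H\"older continuous of order $k$ with constant $C:=\frac{\log(\e)}{\e(1-k)}$, i.e. $|\eta(p+x)-\eta(p)|\le Cx^{k}$. I would treat the two signs of $\phi(x)$ separately, since one direction follows from a soft concavity argument while the other carries the sharp constant.

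The cornerstone is the one-variable inequality $\eta(t)\le Ct^{k}$ for every $t\in[0,1]$. Dividing by $t^{k}$, this is equivalent to $t^{1-k}\log(1/t)\le C$, and since $\log(1/t)=\log(\e)\ln(1/t)$ it suffices to maximize $f(t):=t^{1-k}\ln(1/t)$ over $(0,1]$. A routine computation gives $f'(t)=-t^{-k}\big[(1-k)\ln t+1\big]$, so the unique interior critical point is $t^{\star}=\exp(-1/(1-k))$, where $f(t^{\star})=\frac{1}{\e(1-k)}$; as $f$ vanishes at both endpoints this is the global maximum, yielding $\eta(t)\le\log(\e)f(t)\le Ct^{k}$ and pinning down the constant.

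For the lower bound $\phi(x)\ge -Cx^{k}$ I would invoke subadditivity: a concave function with $\eta(0)=0$ satisfies $\eta(p+x)\le\eta(p)+\eta(x)$, whence $\eta(p+x)-\eta(p)\le\eta(x)\le Cx^{k}$ by the cornerstone. For the upper bound $\phi(x)\le Cx^{k}$ I would exploit monotonicity in $p$: since $\eta'(t)=-\log(\e t)$, the map $p\mapsto\eta(p)-\eta(p+x)$ has derivative $\log\frac{p+x}{p}\ge 0$, so it is maximized at the largest admissible value $p=1-x$, giving $\eta(p)-\eta(p+x)\le\eta(1-x)-\eta(1)=\eta(1-x)$. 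It therefore remains to prove $\eta(1-x)\le Cx^{k}$ for all $x\in[0,1]$.

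This last inequality is the main obstacle: it is exactly the sharp direction and cannot be obtained from the crude estimate $\eta(1-x)\le x\log(\e)$ (valid via $\ln(1+u)\le u$), which already fails for small $k$ and $x$ near $1$, nor from the cornerstone applied to $1-x$, which only yields the wrong power $C(1-x)^{k}$. For $x\ge\tfrac12$ the bound is immediate from the cornerstone, since $\eta(1-x)\le C(1-x)^{k}\le Cx^{k}$, so only the regime $x<\tfrac12$ (equivalently $1-x$ in the decreasing branch of $\eta$) is at issue. There I would verify nonnegativity of $G(x):=Cx^{k}-\eta(1-x)$ directly: one has $G(0)=0$, $G(1)=C>0$, and $G'(x)=Ckx^{k-1}-\log(\e(1-x))$, which tends to $+\infty$ at both endpoints; because $G$ is not monotone, the real work is to control $G$ at its interior minimum (where $G'=0$), and I expect the bulk of the technical effort to concentrate precisely on this step.
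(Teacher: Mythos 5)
Your decomposition is sound, and its first half coincides with the paper's own argument: your ``cornerstone'' $\eta(t)\le C t^k$, with $\eta(t)=-t\log t$ and $C=\frac{\log(\e)}{\e(1-k)}$, is exactly the paper's verification that $h(x)=Cx^{k-1}+\log x\ge 0$, and the monotonicity of $p\mapsto \eta(p)-\eta(p+x)$ is the same derivative computation the paper uses. Your lower-bound half (subadditivity of $\eta$ plus the cornerstone) is complete. The genuine gap is in the upper-bound half: you correctly reduce it to the edge inequality $\eta(1-x)\le Cx^k$, settle $x\ge\tfrac12$, and then stop --- for $x<\tfrac12$ you only announce that ``the real work is to control $G$ at its interior minimum'' without doing it. That remaining step is not a formality: the stationarity equation $Ckx^{k-1}=\log(\e(1-x))$ is transcendental, and the inequality is asymptotically tight (as $k\to 0$ and $x\to 1-1/\e$ both sides converge to $\frac{\log\e}{\e}$), so no estimate with uniform slack can absorb it. As written, the proposal does not prove the lemma.

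Two remarks. First, the missing case closes with an elementary dichotomy instead of a critical-point analysis: if $x^k\ge 1-k$, then $Cx^k\ge C(1-k)=\frac{\log\e}{\e}=\max_{[0,1]}\eta\ge\eta(1-x)$; if $x^k<1-k$, then $x^{1-k}<(1-k)^{(1-k)/k}\le\frac{1}{\e(1-k)}$ (the last inequality is equivalent to $(1-k)^{1/k}\le 1/\e$, i.e.\ to $\ln(1-k)\le -k$), and combining this with the crude bound $\eta(1-x)\le x\log\e$ gives $\eta(1-x)\le \log(\e)\,x^{1-k}x^{k}\le Cx^k$. Second, a comparison you could not have made yourself: the paper's proof performs the same monotonicity reduction but then claims it suffices to check $p\in\{0,1\}$, and at $p=1$ the constraint $x\in[0,1-p]$ forces $x=0$, so the published proof never verifies the binding endpoint $p=1-x$ at all. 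Your proposal thus isolates exactly the step that the paper elides; but to stand as a proof it must carry that step out, for instance as above.
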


	\begin{proof}
		Note that for a fixed $x \in [0,1]$, the mapping $p \mapsto  (p+x)\log(p+x) - p\log(p)$ is non-decreasing; observe that the derivative of the mapping is non-negative for all $x \in [0,1]$. Therefore, it suffices to verify the claim for $p \in \{0,1\}$. For $p = 1$ and accordingly $x = 0$, Lemma~\ref{lem:log} holds trivially. Let $p = 0$ and $h(x) \Let \frac{\log(\e)}{\e(1-k)} x^{k-1} + \log(x)$. Note that $h(1) = \tfrac{\log(\e)}{\e(1-k)} >0$ and $h(x) \rightarrow \infty$ as $x \rightarrow 0$. Hence, by setting $\frac{\diff }{\diff x}h(x^\star) = 0$, it can be easily seen that 
			$$\min_{x \in (0,1]}h(x) = h(x^\star)= 0, \qquad x^\star \Let \e^{\frac{1}{k-1}}.$$
		Thus $h(x)\ge 0$, and consequently $xh(x)\ge 0$ for all $x \in (0,1]$, which concludes the proof. 
	\end{proof}

	\begin{proof}[Proof of Theorem \ref{thm:C}]
	We bound the mutual information difference uniformly in the input probability distribution $p \in \meas(\X)$. Observe that
	\begin{align*}
		\big|  I(p &, W)  - I(p,W_{M}) \big| \\
		& = \bigg| \int_{\X} \Big[-h\big(W(\cdot,x)\big) + h\big(W_M(\cdot,x)\big)\Big] p(\diff x) \\ 
		&\qquad + h\Big(\int_{\X}W(\cdot,x) p(\diff x) \Big) -  h\Big(\int_{\X}W_M(\cdot,x) p(\diff x) \Big) \bigg| \\
		& = \bigg|  \int_{\X} \Big[ \sum\limits_{i \in \N} W(i|x)\log(W(i|x)) - W_M(i|x)\log(W_M(i|x))\Big] p(\diff x) \\
		& \qquad \qquad + \sum\limits_{i \in \N}  -\Big(\int_{\X} W(i|x)p(\diff x)  \Big)\log \Big( \int_{\X} W(i|x)p(\diff x) \Big) \\
		& \qquad \qquad + \Big(\int_{\X} W_M(i|x) p(\diff x) \Big)\log \Big( \int_{\X} W_M(i|x) p(\diff x) \Big) \bigg|.
	\end{align*}
	By the definition of the truncated channel in \eqref{W_M} and applying Lemma \ref{lem:log} to the above relation, we have
	\begin{align*}
		\big|  I(p , W)   - I(p,W_{M}) \big| & \le \frac{\log(\e)}{\e(1-k)} \bigg(  \int_{\X} \bigg[ \sum\limits_{i < M } \Big(\frac{1}{M}\sum\limits_{j \ge M}W(j|x)\Big)^k + \sum\limits_{i \ge M} \big(W(i|x)\big)^k \bigg] p(\diff x) \\ 
		& \quad   + \sum\limits_{i < M } \Big(\frac{1}{M}\sum\limits_{j \ge M}  \int_{\X} W(j|x)p(\diff x) \Big)^k + \sum\limits_{i \ge M}   \Big( \int_{\X} W(i|x)p(\diff x) \Big)^k  \bigg) \\
		& \le \frac{2\log(\e)}{\e(1-k)} \bigg( M \Big(\frac{R_1(M)}{M}\Big)^k + R_k(M)\bigg),
	\end{align*}
	which concludes the proof.
	\end{proof}

Note that Theorem~\ref{thm:C} directly implies an upper bound to the capacity since
	\begin{align*}
		|C(W)-C(W_M)| =  \big| \sup\limits_{p\in \meas(\X)} I(p,W)- \sup \limits_{p \in \meas(\X)} I(p,W_{M}) \big| \leq \sup \limits_{p \in \meas(\X)} \big| I(p,W) - I(p,W_{M}) \big|.
	\end{align*} 
We consider two types of input cost constraints: a peak-power constraint $\ProbIT{X\in \A}=1$ for a compact set $\A\subseteq\mathcal{X}$ and an average-power constraint $\E{s(X)}\leq S$ for $S\in\Rp$ and a continuous function $s$ on $\mathcal{X}$.  
The primal capacity problem for the channel $W_{M}$ is given by
\begin{equation} \label{eq:inf:channel:primal}
C_{\A,S}(W_{M})= \left\{
\begin{array}{lll}
			&\sup\limits_{p} 		& \I{p}{W_{M}} \\
			&\subjectto					& \E{s(X)}\leq S\\
			& 						& p\in \mathcal{P}(\A),
	\end{array} \right.
\end{equation}
where $\mathcal{P}(\A)$ denotes the space of all probability distributions supported on $\A$, cf.~\eqref{eq:cont_DMC_capacity_const}. Our method always requires a peak-power constraint, whereas the average-power constraint is optimal.
The following proposition allows us to restrict the optimization variables from probability distributions to probability densites.
\begin{Prop} \label{lem:density:dense}
The optimization problem \eqref{eq:inf:channel:primal} is equivalent to
\begin{equation*} 
C_{\A,S}(W_{M})= \left\{
\begin{array}{lll}
			&\sup\limits_{p} 		& \I{p}{W_{M}} \\
			&\subjectto					& \E{s(X)}\leq S\\
			& 						& p\in \mathcal{D}(\A),
	\end{array} \right.
\end{equation*}
where $\mathcal{D}(\A)$ is the set of probability densities functions,~i.e., $\mathcal{D}(\A):=\{ f\in\Lp{1}(\A) \, : \, f\geq 0, \, \int_{\A}f(x)\drv x = 1 \}$. 
\end{Prop}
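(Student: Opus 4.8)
The plan is to prove the two optimization problems have the same optimal value by establishing inequalities in both directions. The key observation is that $\mathcal{D}(\A) \subset \mathcal{P}(\A)$, identifying each density $f$ with the absolutely continuous probability measure $f(x)\drv x$. This immediately gives that the supremum over densities is no larger than the supremum over all probability distributions, so $\sup_{p \in \mathcal{D}(\A)} I(p,W_M) \le \sup_{p \in \mathcal{P}(\A)} I(p,W_M)$. The entire content of the proposition is therefore the reverse inequality: showing that restricting to densities does not decrease the optimal value.

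For the reverse direction, I would proceed by approximation: given any $p \in \mathcal{P}(\A)$ that is feasible (i.e.\ satisfies $\E{s(X)} \le S$), I would construct a sequence of probability densities $\{f_n\} \subset \mathcal{D}(\A)$ that are also feasible and for which $I(f_n, W_M) \to I(p, W_M)$. The natural construction is mollification (convolution with a smooth approximate identity, suitably renormalized to remain supported on $\A$, e.g.\ via a shrinking bandwidth), which produces absolutely continuous approximants converging weakly to $p$. Since the output alphabet of $W_M$ is finite (only $M$ symbols), the output distribution $(pW_M)(i) = \int_{\A} W_M(i|x)\,p(\drv x)$ is a continuous functional of $p$ in the weak topology, provided each map $x \mapsto W_M(i|x)$ is continuous; this follows from Assumption~\ref{a:channel}\ref{ass:channel:ii}, since $W_M$ is a finite combination of the Lipschitz maps $x\mapsto W(i|x)$. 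Weak convergence $f_n \Rightarrow p$ then forces convergence of each output probability, and because the mutual information $I(p,W_M) = -\sum_i r_i(p) + H\big((pW_M)\big)$ involves only finitely many terms together with the finite-dimensional entropy $H$ (which is continuous on the simplex $\Delta_M$), one obtains $I(f_n,W_M)\to I(p,W_M)$. Taking $n$ large yields a feasible density achieving mutual information arbitrarily close to $I(p,W_M)$, hence $\sup_{\mathcal{D}(\A)} I(\cdot,W_M) \ge I(p,W_M)$, and taking the supremum over $p$ completes the argument.

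The main obstacle is handling feasibility of the average-power constraint under the approximation: the mollified density $f_n$ must still satisfy $\int_{\A} s(x) f_n(x)\,\drv x \le S$. Since mollification can slightly perturb the constraint value, I would argue that $\int s\,\drv f_n \to \int s\,\drv p \le S$ by continuity of $s$ and weak convergence, but if the original constraint is tight ($=S$) the perturbation could push $f_n$ infeasible. This I would repair either by choosing $p$ to lie in the interior when possible, or by a small deterministic shift/rescaling of the approximant toward a point of strictly lower cost (feasible by a standard argument when $s$ is not constant), combined with a vanishing convex combination that restores feasibility while only perturbing the mutual information by a negligible amount. Care is also needed to keep the support inside the compact set $\A$, which is arranged by truncating the mollification kernel near the boundary and renormalizing. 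The finiteness of the output alphabet of $W_M$ is what makes all the continuity arguments elementary, so no integrability subtleties on the output side arise.
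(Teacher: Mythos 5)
Your proof is correct and reaches the same conclusion, but by a genuinely different route than the paper. The paper's argument is abstract: it invokes weak \emph{lower semicontinuity} of $p\mapsto \I{p}{W_{M}}$ (cited from the literature) and then proves that the absolutely continuous measures $\mathfrak{D}(\A)$ are weakly dense in $\mathcal{P}(\A)$ in two steps --- finitely supported measures on a countable dense subset of $\A$ are weakly dense, and each point mass is in turn a weak limit of absolutely continuous measures by the Lebesgue differentiation theorem; combined with the trivial inclusion $\mathcal{D}(\A)\subset\mathcal{P}(\A)$ (your first step as well), lower semicontinuity along the approximating sequence is all it needs. You instead build the approximants concretely by mollification and upgrade lower semicontinuity to full weak \emph{continuity} of $\I{\cdot}{W_{M}}$, exploiting exactly what truncation provides: the conditional-entropy term $\inprod{p}{r}$ has a continuous bounded integrand $r$ (each $W_{M}(i|\cdot)$ is Lipschitz by Assumption~\ref{a:channel}\eqref{ass:channel:ii}), the output distribution lives in $\Delta_{M}$ and depends weakly continuously on $p$, and $H$ is continuous on $\Delta_{M}$. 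What each approach buys: the paper's is shorter and extends to output alphabets where only semicontinuity is available, while yours is self-contained (no external semicontinuity result is needed) and, notably, is the only one of the two that confronts feasibility of the average-power constraint under approximation --- the paper's weakly convergent approximants need not satisfy $\inprod{p}{s}\le S$ when the constraint is tight, a gap that your convex-combination repair with a strictly feasible density closes. Do note that this repair (like the proposition itself, and the paper's own proof) implicitly requires a nondegeneracy condition: a point of $\A$ at which $s$ is strictly below $S$, and $\A$ of positive Lebesgue measure so that $\mathcal{D}(\A)\neq\emptyset$.
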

\begin{proof} 
We show that the optimization problem \eqref{eq:inf:channel:primal} is equivalent to
\begin{equation*}
C_{\A,S}(W_{M}) = \sup\limits_{p\in\mathfrak{D}(\A)} \left\{ I(p,W_{M}) \ : \ \E{s(X)}\leq S \right\},
\end{equation*}
where $\mathfrak{D}(\A)$ is the space of probability measures that are absolutely continuous with respect to the Lebesgue measure. This completes the proof since optimizing over $\mathfrak{D}(\A)$ is equivalent to optimizing over the space of probability densities $\mathcal{D}(\A)$ according to the Radon-Nikod\'ym Theorem \cite[Theorem~3.8, p.~90]{ref:Folland-99}.

It is known that the mapping $p \mapsto I(p,W_{M})$ is weakly lower semicontinuous \cite{ref:verdu-12}. It then suffices to show that $\mathfrak{D}(\A)$ is weakly dense in $\mathcal{P}(\A)$. Let $\B$ be a countable dense subset of $\A$, and $\Delta(\B)$ be the family of probability measures whose supports are finite subsets of $\B$. It is well known that $\Delta(\B)$ is weakly dense in $\mathcal{P}(\A)$, i.e., $\mathcal{P}(\A) = \overline{{\Delta}(\B)}$ \cite[Theorem~4, p.~237]{ref:billingsley-68}, where $\overline{\Delta}$ is the weak closure of $\Delta$. Moreover, thanks to the Lebesgue differentiation theorem \cite[Theorem~3.21, p.~98]{ref:Folland-99}, we know that for any $b \in \B$ the point measure $\delta_{\{b\}} \in \Delta(\B)$ can be arbitrarily weakly approximated by measures in $\mathfrak{D}(\A)$, i.e., $\delta_{\{b\}} \in \overline{\mathfrak{D}(\A)}$. Hence, we have $\Delta(\B) = \overline{\mathfrak{D}(\A)}$, which in light of the preceding assertion implies $\mathcal{P}(\A) = \overline{\mathfrak{D}(\A)}$.
\end{proof}
We consider the pair of vector spaces
$(\Lp{1}(\A),\Lp{\infty}(\A))$ together with the bilinear form
\begin{align*}
\inprod{f}{g}:=\int_{\mathcal{X}}f(x)g(x)\drv x.
\end{align*}
In the light of \cite[Theorem~243G]{ref:fremlin-03} this is a dual pair of vector spaces; we refer to \cite[Section~3]{ref:Anderson-87} for the details of the definition of dual pairs of vector spaces. 
Considering the standard inner product as a bilinear form on the dual pair $(\R^{M},\R^{M})$, we define the linear operator $\WW: \R^{M}\to \Lp{\infty}(\A)$ and its adjoint operator $\WW^{\star}:\Lp{1}(\A)\to\R^{M}$, given by
\begin{align*}
\WW \lambda (x) := \sum_{i=1}^{M}W_{M}(i-1|x)\lambda_{i}, \qquad  (\WW^{\star}p)_{i} := \int_{X}W_{M}(i-1|x)p(x)\drv x.
\end{align*}
Let $S_{\max}:=\inf_{p\in\mathcal{D}(\A)}\{ \inprod{p}{s} \ : \ I(p,W_{M}) = \sup_{q\in\mathcal{D}(\A)} I(q,W_{M})\}$. Following similar lines as in Lemma~\ref{lem:equivalent:primal:problem}, one can deduce that in problem \eqref{eq:inf:channel:primal} the inequality input constraint can be replaced by equality (resp. removed) is $S<S_{\max}$ (resp. $S\geq S_{\max}$). That is, in view of  Proposition~\ref{lem:density:dense}, Lemma~\ref{lem:equivalent:primal:problem} and the discussion there, problem \eqref{eq:inf:channel:primal} (under Assumption~\ref{ass:channel:Poisson}, that we require later) is equivalent to
\begin{equation} \label{opt:primal:equivalent:Poisson}
 	\mathsf{P}: \quad \left\{ \begin{array}{lll}
			&\sup\limits_{p,q} 		&- \inprod{p}{r} + H(q) \\
			&\subjectto					& \WW^{\star}p = q\\
			&					& \inprod{p}{s} = S\\
			& 					& p\in\mathcal{D}(\A), \ q\in\Delta_{M},
	\end{array} \right.
\end{equation}
where $r(\cdot): = -\sum_{j=0}^{M-1}W_{M}(j|\cdot)\log(W_{M}(j|\cdot))$ is an element in $\Lp{\infty}(\A)$ by Assumption~\ref{a:channel}\eqref{ass:channel:ii}. For the rest of the section we restrict attention to \eqref{opt:primal:equivalent:Poisson}, since unconstrained problem can be solved in a similar way.
We call \eqref{opt:primal:equivalent:Poisson} the primal program.
Thanks to the dual vector space framework, the Lagrange dual program of $\mathsf{P}$ is given by
\begin{align} \label{eq:Dual:Program:Poisson}\mathsf{D}: \quad\left\{ \begin{array}{ll}
	\underset{\lambda}{\inf} 		&G(\lambda) + F(\lambda) \\
			\text{s.t. } 					& \lambda\in \R^{M},
	\end{array}\right.
\end{align}
where 
\begin{align*} 
G(\lambda)= \left\{ \begin{array}{ll}
			\underset{p}{\sup} 		& \inprod{p}{\WW \lambda}-\inprod{p}{r} \\
			\text{s.t. } 				& \inprod{p}{s} =  S \\
								& p\in \mathcal{D}(\A)
	\end{array} \right. \text{ and}
	\qquad 
	F(\lambda)= \left\{ \begin{array}{ll}
			\underset{q}{\max} 		&H(q)-\lambda\transp q \\
			\text{s.t. } 				& q\in\Delta_{M}.
	\end{array}\right. 
\end{align*}
\begin{Lem} \label{lem:strong:duality:poisson}
Strong duality holds between \eqref{opt:primal:equivalent:Poisson} and \eqref{eq:Dual:Program:Poisson}.
\end{Lem}
\begin{proof}
Note that the dualized constraint is a linear equality constraint. Therefore the conditions of (1) in \cite[Theorem~5]{mitter08} holds and as such strong duality follows by \cite[Theorem~4]{mitter08}.
\end{proof}
In the remainder of this article we impose the following assumption on the channel.
\begin{As} \label{ass:channel:Poisson}
$\gamma_{M}:=\min\limits_{y\in\{0,1,\hdots,M-1\}}\min\limits_{x\in\A}W_{M}(y|x)>0$
\end{As}
In case $\sum_{j\geq M} W(j|x) > 0$ for all $x$, Assumption~\ref{ass:channel:Poisson} holds according to \eqref{W_M} and a lower bound can be given by $\gamma_{M} \geq \tfrac{1}{M} \min_{x} \sum_{j\geq M} W(j|x)$.
Under Assumption~\ref{ass:channel:Poisson} we can show that we can again assume without loss of generality that $\lambda$ takes values in a compact set.
\begin{Lem} \label{lem:compact:set:Poisson}
Under Assumption~\ref{ass:channel:Poisson}, the dual program \eqref{eq:Dual:Program:Poisson} is equivalent to 
\begin{align*}
\left\{ \begin{array}{ll}
			\underset{\lambda}{\min} 		&G(\lambda) + F(\lambda) \\
			\textnormal{s.t. } 				& \lambda\in Q,
	\end{array}\right. 
\end{align*}
where $Q:= \left\{ \lambda\in\R^{M} \ : \ \Norm{\lambda}_{2}\leq M \left( \log(\gamma_{M}^{-1}) \vee \tfrac{1}{\ln 2} \right) \right\}$.
\end{Lem}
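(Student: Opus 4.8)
The plan is to mirror the argument for the discrete channel in Lemma~\ref{lem:compact:set}, exploiting the fact that the \emph{output} alphabet of $W_M$ is finite, so that the dual variable $\lambda$ and the output distribution $q=\WW^\star p$ both live in the finite-dimensional spaces $\R^M$ and $\Delta_M$; only the primal decision variable $p$ is now infinite-dimensional. First I would introduce the perturbed primal program
\begin{align*}
\mathsf{P}_\beta:\quad \left\{\begin{array}{ll} \sup\limits_{p,q,\varepsilon} & -\inprod{p}{r}+H(q)-\beta\varepsilon \\ \text{s.t.} & \Norm{\WW^\star p - q}_\infty \le \varepsilon \\ & \inprod{p}{s}=S \\ & p\in\mathcal{D}(\A),\ q\in\Delta_M,\ \varepsilon\in\Rp, \end{array}\right.
\end{align*}
together with its dual $\mathsf{D}_\beta$ obtained by constraining $\Norm{\lambda}_1\le\beta$ in \eqref{eq:Dual:Program:Poisson}. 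Using the identity $\Norm{\WW^\star p - q}_\infty=\max_{\Norm{\lambda}_1\le1}\lambda\transp(\WW^\star p - q)$ exactly as in the discrete case, $\mathsf{P}_\beta$ can be rewritten with the inner minimisation over $\Norm{\lambda}_1\le\beta$, whose dual is $\mathsf{D}_\beta$; strong duality between $\mathsf{P}_\beta$ and $\mathsf{D}_\beta$ follows from the same infinite-dimensional convex duality result already invoked in Lemma~\ref{lem:strong:duality:poisson} (the dualised constraint is a linear equality), rather than from the finite-dimensional reference used in Lemma~\ref{lem:compact:set}.

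The core step is to show that the optimal perturbation vanishes, $\varepsilon^\star(\beta)=0$, once $\beta$ exceeds $\rho\Let M(\log(\gamma_M^{-1})\vee\tfrac{1}{\ln2})$. To this end I would introduce the perturbation value function $J(\varepsilon)$ (the analog of \eqref{eq:J(eps)}), which is concave in $\varepsilon$, and rewrite it in the form \eqref{eq:J(eps):equiv} with the perturbation direction $v\in\mathsf{Im}(\WW^\star)\subset\R^M$, $\Norm{v}_\infty\le1$. The decisive observation is that Assumption~\ref{ass:channel:Poisson} guarantees $(\WW^\star p)_j=\int_\A W_M(j-1|x)p(x)\drv x\ge\gamma_M$ uniformly over $p\in\mathcal{D}(\A)$, so the output distribution is bounded away from zero by $\gamma_M$. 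This is precisely the property that made the Taylor-expansion bound on the entropy work in the discrete proof, and it carries over verbatim: a second-order Taylor expansion gives $H(\WW^\star p+\varepsilon v)\le H(\WW^\star p)-(\log(\WW^\star p)+\tfrac{1}{\ln2}\mathbf{1})\transp v\,\varepsilon+\tfrac{M}{\gamma_M\ln2}\varepsilon^2$. Following the chain of inequalities \eqref{eq:proof:compact:first:step}--\eqref{eq:proof:compact:third:step}, bounding $-(\log(\WW^\star p)+\tfrac{1}{\ln2}\mathbf{1})\transp v\le\rho$ and maximising the resulting quadratic $(\rho-\beta)\varepsilon+\tfrac{M}{\gamma_M\ln2}\varepsilon^2$ over $\varepsilon\ge0$, one finds that for $\beta>\rho$ this maximum is attained at $\varepsilon=0$, so by concavity $J(0)$ is the global optimum of $J$ and $\varepsilon^\star(\beta)=0$.

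Once $\varepsilon^\star(\beta)=0$, the optimal values of $\mathsf{P}_\beta$ and the original primal $\mathsf{P}$ coincide, so by the strong duality of the first paragraph the constraint $\Norm{\lambda}_1\le\beta$ in $\mathsf{D}_\beta$ is inactive; hence $\mathsf{D}_\beta$ is equivalent to $\mathsf{D}$ and every dual optimiser satisfies $\Norm{\lambda}_1\le\rho$. Finally $\Norm{\lambda}_2\le\Norm{\lambda}_1\le\rho=M(\log(\gamma_M^{-1})\vee\tfrac{1}{\ln2})$, which is exactly the defining inequality of $Q$, completing the proof. I expect the only genuinely new point compared to Lemma~\ref{lem:compact:set} to be the justification of strong duality for $\mathsf{P}_\beta$ in the infinite-dimensional primal setting; since the perturbation merely adds a scalar variable $\varepsilon$ and a linear term to the program already treated in Lemma~\ref{lem:strong:duality:poisson}, the hypotheses of the duality theorem in \cite{mitter08} remain satisfied, so this obstacle is mild. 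The remaining steps are routine adaptations of the discrete argument, with $\gamma_M$ replacing $\gamma$ throughout.
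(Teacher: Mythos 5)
Your proposal is correct and is essentially the paper's own argument: the paper proves Lemma~\ref{lem:compact:set:Poisson} simply by noting that "the proof follows the same lines as in the proof of Lemma~\ref{lem:compact:set}", which is precisely the adaptation you carry out. You also correctly pinpoint the only two points needing care in that transfer — strong duality for the perturbed pair in the infinite-dimensional primal setting (justifiable via the minimax/duality argument already used for Lemma~\ref{lem:strong:duality:poisson}, since the $\ell_1$-ball of dual variables is compact) and the uniform lower bound $(\WW^{\star}p)_j \ge \gamma_M$ for all densities $p\in\mathcal{D}(\A)$, which is what lets the Taylor-expansion estimate go through with $\gamma_M$ in place of $\gamma$.
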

\begin{proof}
The proof follows the same lines as in the proof of Lemma~\ref{lem:compact:set}.
\end{proof}

Note that $F(\lambda)$ is the same as in Section~\ref{sec:classicalCapacity} and therefore given by \eqref{analytical:solution:F} and its gradient by \eqref{eq:gradient:F}.
As in Section~\ref{sec:classicalCapacity}, we consider the smooth approximation
\begin{align} \label{eq:Gnu:cts}
G_{\nu}(\lambda)= \left\{ \begin{array}{ll}
			\underset{p}{\sup} 		& \inprod{p}{\WW \lambda}-\inprod{p}{r} + \nu \Hdiff{p} - \nu \log(\rho) \\
			\text{s.t. } 				& \inprod{p}{s} =  S \\
				 				& p\in\mathcal{D}(\A),
	\end{array} \right. 
\end{align}
with smoothing parameter $\nu\in\R_{>0}$ and $\rho$ denoting the Lebesgue measure of $\A$. 
To analyze the properties of $G_{\nu}(\lambda)$ we need one more auxiliary lemma. 
\begin{Lem}\label{lem:strong:convexity:cts}
The function $\mathcal{D}(\A)\ni p \mapsto  -\Hdiff{p}+\log(\rho)\in\Rp$ is strongly convex with convexity parameter $\sigma = 1$.
\end{Lem}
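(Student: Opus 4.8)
The plan is to recognize that, up to the additive constant $\log(\rho)$, the function is exactly a relative entropy against the uniform reference density on $\A$, and then to reduce the claimed strong convexity to the continuous Pinsker inequality.

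First I would rewrite the objective in closed form. Let $u\in\mathcal{D}(\A)$ be the uniform density $u\equiv\rho^{-1}$ on $\A$, where $\rho$ is the Lebesgue measure of $\A$. Since every $p\in\mathcal{D}(\A)$ integrates to $1$, we have $\int_{\A}p(x)\log(\rho^{-1})\drv x=-\log(\rho)$, and therefore
\begin{equation*}
-\Hdiff{p}+\log(\rho)=\int_{\A}p(x)\log\big(p(x)\big)\drv x+\log(\rho)=\int_{\A}p(x)\log\frac{p(x)}{u(x)}\drv x=\D{p}{u}.
\end{equation*}
In particular the function equals the relative entropy of $p$ with respect to $u$, which is non-negative, confirming that it indeed maps into $\Rp$. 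The strong convexity to be established is with respect to the $\Lp{1}(\A)$-norm, consistent with the dual pair $(\Lp{1}(\A),\Lp{\infty}(\A))$ and with its discrete counterpart in \cite[Lemma~3]{ref:Nest-05}.

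The core of the argument is a purely algebraic midpoint identity that avoids any differentiability of the entropy. Writing $\phi(p)\Let\int_{\A}p\log p\,\drv x=-\Hdiff{p}$ for the negative differential entropy and setting $p_t\Let t\,p+(1-t)\,q$ for $p,q\in\mathcal{D}(\A)$ and $t\in[0,1]$, a direct expansion of the relative entropies together with $t\,p+(1-t)\,q=p_t$ yields
\begin{equation*}
t\,\D{p}{p_t}+(1-t)\,\D{q}{p_t}=t\,\phi(p)+(1-t)\,\phi(q)-\phi(p_t).
\end{equation*}
Since $\phi$ and $-\Hdiff{\cdot}+\log(\rho)$ differ only by the constant $\log(\rho)$, establishing strong convexity of the latter reduces to bounding the left-hand side from below by a multiple of $\Norm{p-q}_{\Lp{1}(\A)}^{2}$.

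Finally I would invoke Pinsker's inequality \cite{cover}, which in the continuous setting gives $\D{\mu}{\nu}\ge\tfrac{1}{2\ln 2}\Norm{\mu-\nu}_{\Lp{1}(\A)}^{2}\ge\tfrac12\Norm{\mu-\nu}_{\Lp{1}(\A)}^{2}$ for base-two logarithms. Applying this to the pairs $(p,p_t)$ and $(q,p_t)$, and using $\Norm{p-p_t}_{\Lp{1}(\A)}=(1-t)\Norm{p-q}_{\Lp{1}(\A)}$ and $\Norm{q-p_t}_{\Lp{1}(\A)}=t\Norm{p-q}_{\Lp{1}(\A)}$, the left-hand side of the identity is at least $\tfrac12\big[t(1-t)^2+(1-t)t^2\big]\Norm{p-q}_{\Lp{1}(\A)}^{2}=\tfrac12 t(1-t)\Norm{p-q}_{\Lp{1}(\A)}^{2}$. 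Rearranging gives
\begin{equation*}
\phi(p_t)\le t\,\phi(p)+(1-t)\,\phi(q)-\tfrac12\,t(1-t)\Norm{p-q}_{\Lp{1}(\A)}^{2},
\end{equation*}
which is precisely strong convexity with parameter $\sigma=1$. The main obstacle is the infinite-dimensional and possibly non-finite nature of $\phi$, which makes the first-order (Bregman) characterization delicate where $p$ or $q$ vanish; the midpoint identity above sidesteps this entirely, as it is a genuine equality of integrals that remains valid (with both sides possibly $+\infty$) without ever differentiating the entropy.
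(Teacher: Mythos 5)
Your proof is correct, but it takes a genuinely different route from the paper. The paper follows Nesterov's Lemma~3 with a second-order argument: it computes the directional second derivative $\inprod{d''(p)\cdot g}{g}=\int_{\A}g(x)^{2}/p(x)\,\drv x$ and applies Cauchy--Schwarz (writing $|g|=(|g|/\sqrt{p})\sqrt{p}$) to bound it below by $\Norm{g}_{\Lp{1}(\A)}^{2}$, which gives the modulus-$1$ strong convexity along smooth directions. You instead identify $-\Hdiff{p}+\log(\rho)=\D{p}{u}$ with $u$ the uniform density, use the compensation identity $t\,\D{p}{p_t}+(1-t)\,\D{q}{p_t}=t\,\phi(p)+(1-t)\,\phi(q)-\phi(p_t)$, and lower-bound the left-hand side via Pinsker's inequality; this is a zeroth-order (midpoint) argument that never differentiates the entropy. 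What each buys: the paper's Hessian computation is shorter and mirrors the finite-dimensional reference it cites, but it is formal in the infinite-dimensional setting --- the second derivative can blow up or fail to exist where $p$ vanishes, and passing from a pointwise Hessian bound to the global strong-convexity inequality requires an extra integration-along-segments step that is left implicit. Your argument is rigorous exactly where the paper's is delicate: it handles extended-valued entropy and densities that vanish on sets of positive measure, since absolute continuity $p\ll p_t$ holds automatically for $t\in(0,1)$, and it delivers the global inequality directly (in fact with the slightly better constant $1/\ln 2$ coming from Pinsker in bits, of which $\sigma=1$ is a weakening). The trade-off is that you invoke Pinsker's inequality as a black box, whose standard proof is itself of the same Cauchy--Schwarz flavor as the paper's computation.
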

\begin{proof}
The proof follows the ideas of \cite{ref:Nest-05}. It can easily be shown that for $d(p):=-\Hdiff{p}+\log(\rho)$
\begin{equation*}
\inprod{d''(p)\cdot g}{g} = \int_{\A}\frac{g(x)^{2}}{p(x)}\drv x.
\end{equation*}
Cauchy-Schwarz then implies
\begin{align*}
\inprod{d''(p)\cdot g}{g} \geq \frac{\left( \int_{\A} g(x) \drv x \right)^{2}}{\int_{\A} p(x) \drv x} = \Norm{g}^{2}.
\end{align*} 
\end{proof}
Furthermore, we can show that the uniform approximation $G_{\nu}(\lambda)$ is smooth and has a Lipschitz continuous gradient, with known constant.
The following result is a generalization of Proposition~\ref{prop:Lipschitz:continuity:DMC}.
\begin{Prop} \label{prop:Lipschitz:cts:channel}
$G_{\nu}(\lambda)$ is well defined and continuously differentiable at any $\lambda\in \R^{M}$. Moreover, this function is convex and its gradient $\nabla G_{\nu}(\lambda)=\WW^{\star} p_{\nu}^{\lambda}$ is Lipschitz continuous with constant $L_{\nu} =\tfrac{1}{\nu}$.
\end{Prop}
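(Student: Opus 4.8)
The plan is to mirror the proof of Proposition~\ref{prop:Lipschitz:continuity:DMC}, invoking the smoothing machinery of \cite[Theorem~1]{ref:Nest-05} but now in the infinite-dimensional dual pair $(\Lp{1}(\A),\Lp{\infty}(\A))$. First I would recognize $G_\nu$ in \eqref{eq:Gnu:cts} as a smoothed support-type function in Nesterov's sense: the outer variable is $\lambda\in\R^{M}$ (equipped with $\|\cdot\|_{2}$), the inner maximization runs over $Q_{2}:=\{p\in\mathcal{D}(\A):\inprod{p}{s}=S\}$, the coupling is the bilinear form $\inprod{\WW\lambda}{p}=\inprod{\lambda}{\WW^{\star}p}$, the linear term $-\inprod{p}{r}$ plays the role of $\hat{\phi}$, there is no smooth part ($\hat{f}\equiv 0$, hence $L=0$), and the prox-function is $d(p)=-\Hdiff{p}+\log(\rho)$.

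Second, I would verify the two structural ingredients that Nesterov's theorem requires. (i) Strong convexity of the prox-function: this is exactly Lemma~\ref{lem:strong:convexity:cts}, giving convexity parameter $\sigma=1$, the continuous analog of the discrete estimate \cite[Lemma~3]{ref:Nest-05} used in Proposition~\ref{prop:Lipschitz:continuity:DMC}. (ii) A bound on the operator norm of $\WW$. Here I would reproduce the argument behind \eqref{eq:operator:norm}: for any $p\in\Lp{1}(\A)$ the adjoint satisfies $\|\WW^{\star}p\|_{1}\le\int_{\A}\sum_{i}W_{M}(i-1|x)|p(x)|\,\drv x=\|p\|_{\Lp1}$, since $\sum_{i}W_{M}(i-1|x)=1$ for every $x$. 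Consequently $\inprod{\WW\lambda}{p}=\inprod{\lambda}{\WW^{\star}p}\le\|\lambda\|_{2}\,\|\WW^{\star}p\|_{2}\le\|\lambda\|_{2}\,\|\WW^{\star}p\|_{1}\le\|\lambda\|_{2}\,\|p\|_{\Lp1}$, whence $\|\WW\|\le 1$.

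Third, I would establish that the supremum in \eqref{eq:Gnu:cts} is attained at a unique density. Strict concavity of $p\mapsto\inprod{p}{\WW\lambda-r}+\nu\Hdiff{p}$ (the differential entropy term is strictly concave) guarantees uniqueness, while the explicit Gibbs form of the maximizer, obtained exactly as in Lemma~\ref{lem:cover}, namely $p_{\nu}^{\lambda}(x)\propto\exp\!\big(\tfrac{1}{\nu}(\WW\lambda-r)(x)+\mu_{2}\,s(x)\big)$ with $\mu_{2}$ fixing $\inprod{p}{s}=S$, certifies existence in $\mathcal{D}(\A)$. Because the objective is affine in $\lambda$ through $\inprod{\lambda}{\WW^{\star}p}$, an envelope (Danskin) argument then yields differentiability with $\nabla G_{\nu}(\lambda)=\WW^{\star}p_{\nu}^{\lambda}$, and Nesterov's theorem upgrades this to Lipschitz continuity of the gradient with constant $\tfrac{1}{\nu\sigma}\|\WW\|^{2}=\tfrac{1}{\nu}=L_{\nu}$. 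Convexity of $G_{\nu}$ is immediate as a supremum of affine functions of $\lambda$.

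The main obstacle I anticipate is the rigorous justification of the envelope/differentiation step and the attainment of the supremum in the genuinely infinite-dimensional setting: unlike Proposition~\ref{prop:Lipschitz:continuity:DMC}, where $Q_{2}$ is the compact finite simplex and one may cite \cite[Theorem~1]{ref:Nest-05} verbatim, here $Q_{2}\subset\Lp{1}(\A)$ is not norm-compact, so I would need the weak lower semicontinuity of $p\mapsto \I{p}{W_M}$ (as already used in Proposition~\ref{lem:density:dense}) together with the coercivity supplied by the strongly convex prox-term to secure existence of $p_{\nu}^{\lambda}$, and then argue that the uniform strong convexity transfers the finite-dimensional Lipschitz estimate of Nesterov intact. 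Since the operator-norm and strong-convexity constants coincide with the finite case, once existence and the envelope identity are in place the value $L_{\nu}=1/\nu$ follows without further computation.
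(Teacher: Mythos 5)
Your proposal is correct, and it isolates exactly the two ingredients the paper itself uses: the strong convexity of the entropic prox-function with parameter $\sigma=1$ (Lemma~\ref{lem:strong:convexity:cts}) and the operator-norm bound $\Norm{\WW}\leq 1$, which you derive by the same Cauchy--Schwarz-plus-stochasticity computation as the paper. Where you diverge is in how the genuinely infinite-dimensional part is handled. The paper does not reconstruct Nesterov's argument at all: it simply invokes \cite[Theorem~5.1]{ref:devolder-12}, a ready-made extension of the smoothing theorem to this setting, which delivers well-definedness, continuous differentiability, convexity, and the Lipschitz constant $\tfrac{1}{\nu}\Norm{\WW}^{2}$ in one stroke; the only work left is the norm bound. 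You instead rebuild the result from first principles: existence and uniqueness of $p_{\nu}^{\lambda}$ via strict concavity and the explicit Gibbs form (Lemma~\ref{lem:cover:cont}), differentiability via a Danskin/envelope identity, and the Lipschitz estimate by transferring the strong-convexity argument underlying \cite[Theorem~1]{ref:Nest-05}, which indeed uses no compactness of $Q_{2}$, only attainment of the inner maximum and first-order optimality. This is workable and more self-contained, but it obliges you to discharge the obstacles you flag yourself (attainment, rigor of the envelope step), whereas the paper's citation sidesteps them entirely; conversely, your route makes visible \emph{why} the constant is $1/(\nu\sigma)\Norm{\WW}^{2}$ rather than treating it as a black box. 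One caveat applies equally to both routes: the existence of the maximizer \eqref{eq:finite:optimizer:pmu:cts} presupposes existence of the multipliers $\mu_{1},\mu_{2}$ matching the constraints in \eqref{eq:Gnu:cts}, an assumption the paper also makes implicitly (cf.\ Lemma~\ref{lem:cover:cont} and Remark~\ref{rmk:finite:constraint:optimizer:cts}), so this is not a gap relative to the paper's own proof.
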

\begin{proof}
It is known, according to Theorem~5.1 in \cite{ref:devolder-12}, that $G_{\nu}(\lambda)$ is well defined and continuously differentiable at any $\lambda\in \R^{M}$ and that  this function is convex and its gradient $\nabla G_{\nu}(\lambda)=\WW^{\star} p_{\nu}^{\lambda}$ is Lipschitz continuous with constant $L_{\nu} =\tfrac{1}{\nu}\Norm{\WW}^{2}$, where we have also used Lemma~\ref{lem:strong:convexity:cts}.
The operator norm can be simplified to
\begin{align}
\Norm{\WW} 	&=		\sup\limits_{\lambda\in\R^{M}\!, \, p\in\Lp{1}(\A)} \left\{ \inprod{p}{\WW \lambda} \ : \ \Norm{\lambda}_{2}=1, \ \Norm{p}_{1}=1 \right\} \nonumber \\
			&\leq 	\sup\limits_{\lambda\in\R^{M}\!, \, p\in\Lp{1}(\A)} \left\{ \Norm{\WW^{\star}p}_{2} \Norm{\lambda}_{2} \ : \ \Norm{\lambda}_{2}=1, \ \Norm{p}_{1}=1\right\}\label{eq:norm:W:proof:step:CS} \\
			&\leq 	\sup\limits_{ p\in\Lp{1}(\A)} \left\{ \Norm{\WW^{\star}p}_{1} \ : \ \Norm{p}_{1}=1\right\}\nonumber \\
			&= 		\sup\limits_{ p\in\Lp{1}(\A)} \left\{ \sum_{i=0}^{M-1}  \int_{\mathcal{X}} W_{M}(i|x) p(x) \drv x   \ : \ \Norm{p}_{1}=1\right\}\nonumber \\
			&= 		\sup\limits_{ p\in\Lp{1}(\A)} \left\{   \int_{\mathcal{X}} \Norm{ W_{M}(\cdot|x)}_{1} p(x) \drv x   \ : \ \Norm{p}_{1}=1\right\} \nonumber\\
			&\leq 	\sup\limits_{x\in \A} \Norm{ W_{M}(\cdot|x)}_{1} \nonumber \\
			&\leq 	1, \nonumber
\end{align}
where \eqref{eq:norm:W:proof:step:CS} is due to Cauchy-Schwarz.
\end{proof}
We denote by $p_{\nu}^{\lambda}$ the optimizer to \eqref{eq:Gnu:cts}, that is unique since the objective function is strictly concave. 
To analyze the solution to \eqref{eq:Gnu:cts} we consider the following optimization problem, that, if feasible, has a closed form solution
\begin{equation} \label{opt:cover:cont}
 	\left\{ \begin{array}{lll}
			&\underset{p}{\sup} 		&\Hdiff{p} + \inprod{p}{c} \\
			&\text{s.t. } 			& \inprod{p}{s} = S\\
			& 					& p\in\mathcal{D}(\A),
	\end{array} \right.
\end{equation}
with $c,s\in \Lp{\infty}(\A)$.
\begin{Lem} \label{lem:cover:cont}
Let $p^{\star}(x)=2^{\mu_1 + c(x) + \mu_2 s(x)}$, where $\mu_1, \mu_2\in\R$ are chosen such that $p^\star$ satisfies the constraints in \eqref{opt:cover:cont}. Then $p^\star$ uniquely solves \eqref{opt:cover:cont}.
\end{Lem}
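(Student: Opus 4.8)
The plan is to mimic the proof of Lemma~\ref{lem:cover} from the discrete setting, replacing sums by integrals and the entropy $H$ by the differential entropy $h$, while relying on the non-negativity of relative entropy for densities. The candidate maximizer is $p^\star(x)=2^{\mu_1+c(x)+\mu_2 s(x)}$, with $\mu_1,\mu_2$ fixed so that $p^\star\in\mathcal{D}(\A)$ and $\inprod{p^\star}{s}=S$; the existence of such multipliers is exactly the feasibility hypothesis of the lemma, so I would not dwell on it. The objective is $J(p)\Let \Hdiff{p}+\inprod{p}{c}$, and the goal is to show $J(q)\le J(p^\star)$ for every feasible $q$, with equality iff $q=p^\star$.

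First I would take an arbitrary feasible density $q\in\mathcal{D}(\A)$ satisfying $\inprod{q}{s}=S$ and rewrite $J(q)$ by inserting $p^\star$:
\begin{align*}
J(q) &= -\int_{\A} q(x)\log q(x)\,\drv x + \inprod{q}{c} \\
&= -\int_{\A} q(x)\log\!\Big(\tfrac{q(x)}{p^\star(x)}p^\star(x)\Big)\,\drv x + \inprod{q}{c} \\
&= -\D{q}{p^\star} - \int_{\A} q(x)\log p^\star(x)\,\drv x + \inprod{q}{c}.
\end{align*}
Since $\log p^\star(x)=\mu_1+c(x)+\mu_2 s(x)$, the term $-\int q\log p^\star+\inprod{q}{c}$ collapses: the $c(x)$ contribution cancels against $\inprod{q}{c}$, leaving $-\int_{\A} q(x)\big(\mu_1+\mu_2 s(x)\big)\,\drv x=-\mu_1-\mu_2 S$, where I used $\int_{\A}q=1$ and $\inprod{q}{s}=S$. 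Crucially this value is the same whether I integrate against $q$ or against $p^\star$, because both satisfy the two constraints. Hence
\begin{align*}
J(q) &= -\D{q}{p^\star} -\mu_1-\mu_2 S \le -\mu_1-\mu_2 S = -\int_{\A} p^\star(x)\big(\mu_1+\mu_2 s(x)\big)\,\drv x = J(p^\star),
\end{align*}
where the inequality is the non-negativity of the relative entropy between the densities $q$ and $p^\star$, and the final equalities re-run the previous computation with $q$ replaced by $p^\star$ (for which $\D{p^\star}{p^\star}=0$). This establishes $J(q)\le J(p^\star)$, i.e.\ optimality of $p^\star$.

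For uniqueness I would invoke that $\D{q}{p^\star}=0$ holds iff $q=p^\star$ Lebesgue-almost everywhere, which is the equality case of Gibbs' inequality in the continuous setting. The only genuine subtlety, and the step I expect to require the most care, is justifying the manipulations with $\D{q}{p^\star}$ in the continuous case: one must ensure the integrals defining $\Hdiff{q}$ and $\D{q}{p^\star}$ are well defined (possibly $+\infty$), that $p^\star>0$ on $\A$ so the Radon--Nikod\'ym derivative $q/p^\star$ makes sense, and that the decomposition $\Hdiff{q}=-\D{q}{p^\star}-\int q\log p^\star$ is valid whenever $\Hdiff{q}>-\infty$ (if $\Hdiff{q}=-\infty$ the inequality $J(q)\le J(p^\star)$ holds trivially). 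Since $\A$ is compact and $s,c\in\Lp{\infty}(\A)$, the exponent of $p^\star$ is bounded, so $p^\star$ is bounded above and below by positive constants on $\A$; this keeps $\int q\log p^\star$ finite for any $q\in\mathcal{D}(\A)$ and removes the integrability obstructions. With that observation the argument is otherwise identical in structure to the discrete proof of Lemma~\ref{lem:cover}, so I would present it compactly, citing \cite[Theorem~12.1.1]{cover} for the analogous computation.
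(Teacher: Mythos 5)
Your proof is correct and follows essentially the same route as the paper, which simply cites \cite[p.~409]{cover} together with the proof of Lemma~\ref{lem:cover}: insert $p^\star$ into the entropy, use the constraints to collapse $-\int q\log p^\star + \inprod{q}{c}$ to the constant $-\mu_1-\mu_2 S$, and conclude via non-negativity of relative entropy, with uniqueness from its equality case. Your additional care about integrability (boundedness of $p^\star$ away from $0$ and $\infty$ on the compact set $\A$, and the case $\Hdiff{q}=-\infty$) is a welcome elaboration of details the paper leaves implicit.
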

The proof directly follows from \cite[p.~409]{cover} and the proof of Lemma~\ref{lem:cover}. Hence, $G_{\nu}(\lambda)$ has a (unique) analytical optimizer
\begin{align} \label{eq:finite:optimizer:pmu:cts}
p_{\nu}^{\lambda}(x,\mu) = 2^{\mu_{1} + \tfrac{1}{\nu}\left( \WW \lambda(x) - r(x)\right) + \mu_{2}s(x)}, \quad x\in \mathcal{X},
\end{align}
where $\mu_{1},\mu_{2}\in\R$ have to be chosen such that $\inprod{p_{\nu}^{\lambda}(\cdot,\mu)}{s}=S$ and $p_{\nu}^{\lambda}(\cdot,\mu)\in\mathcal{D}(\A)$; for this choice of $\mu_1$, $\mu_2$ we denote the solution by $p_{\nu}^{\lambda}(\cdot)$. 
\begin{Rem}[No input constraints]\label{rmk:stabilization:optimizer:cts}
In case of no input constraints, the unique optimizer to \eqref{eq:Gnu:cts} is given by
\begin{equation*}
p_{\nu}^{\lambda}(x) = \frac{2^{ \tfrac{1}{\nu} (\WW\lambda(x) -  r(x))} }{\int_{\A} 2^{ \tfrac{1}{\nu} (\WW\lambda(x) -  r(x))} \drv x},
\end{equation*}
whose numerical evaluation can be done in a stable way by following Remark~\ref{rmk:finite:no:input:const}.
\end{Rem}
\begin{Rem}[Additional input constraints]\label{rmk:finite:constraint:optimizer:cts}
As in Remark~\ref{rmk:finite:constraint:optimizer}, in case of additional input constraints we need an efficient method to find the coefficients $\mu_{i}$ in \eqref{eq:finite:optimizer:pmu:cts}. This problem can again be reduced to a finite dimensional convex optimization problem (\cite[Theorem~4.8]{ref:Borwein-91},\cite[p.~257 ff.]{ref:Lasserre-11}), in the sense that the coefficients $\mu_i$ are the unique maximizers to
\begin{equation} \label{eq:opt:problem:find:mu:cts}
\max\limits_{\mu\in\R^{2}}\left\{ y\transp \mu -\int_{\A}p_{\nu}^{\lambda}(x,\mu)\drv x \right\},
\end{equation}
where $y:=(1,S)$. Note that $\eqref{eq:opt:problem:find:mu:cts}$ is an unconstrained maximization of a striclty concave function. The evalutation of the gradient and the Hessian of this objective function involves computing moments of the measure $p_{\nu}^{\lambda}(x,\mu)\drv x$, which unlike to the finite input alphabet case (Remark~\ref{rmk:finite:constraint:optimizer}) is numerically difficult. In \cite[p.~259 ff.]{ref:Lasserre-11}, an efficient approximation of the mentioned gradient and Hessian in terms of two single semidefinite programs involving two linear matrix inequalities (LMI) is presented, where the desired accuracy is controlled by the size of the LMI constraints. As mentioned in Remark~\ref{rmk:finite:constraint:optimizer}, this will provide a suboptimal solution to the maximum entropy problem \eqref{eq:Gnu:cts} and as such the error bounds of Theorem~\ref{thm:error:bound:capacity:continuous:channel} do not hold. By following \cite{ref:Devolver-13}, however, one can quantify the approximation error of Algorithm~\hyperlink{algo:1}{1} in case of an inexact gradient. We also refer the interested reader to \cite{ref:D:Sutter-16}, for a related work on channel capacity approximation under inexact first-order information.
\end{Rem}

Note that the differential entropy $\Hdiff{p}\leq \log(\rho) $ for all $p\in\mathcal{D}(\A)$ and that there exists a function $\iota:\Rsp\to\Rp$ such that
 \begin{equation} \label{eq:uniform:bound:cts}
 G_{\nu}(\lambda)\leq G(\lambda)\leq G_{\nu}(\lambda) + \iota(\nu) \ \text{for all }\lambda\in Q,
 \end{equation}
 i.e., $G_{\nu}(\lambda)$ is a uniform approximation of the non-smooth function $G(\lambda)$.  
The following lemma, Lemma~\ref{lem:iota:new}, provides an explicit expression for the function $\iota$ in \eqref{eq:uniform:bound:cts} under some Lipschitz continuity assumptions, implying in particular that $\iota(\nu)\to 0$ as $\nu\to 0$. 
\begin{Lem}\label{lem:Lf}
Under Assumption~\ref{a:channel}\eqref{ass:channel:ii} and Assumption~\ref{ass:channel:Poisson} the function $f_{\lambda}(\cdot):= \WW\lambda(\cdot) -  r(\cdot)$ is Lipschitz continuous uniformly in $\lambda\in Q$ with constant $L_{f}=LM^2 (\log \tfrac{1}{\gamma_M} \vee \tfrac{1}{\ln 2})+M L |\log \tfrac{1}{\gamma_M} - \frac{1}{\ln 2} |$.
\end{Lem}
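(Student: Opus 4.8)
The plan is to establish Lipschitz continuity of $f_\lambda = \WW\lambda - r$ by treating the two summands separately and bounding the Lipschitz constant of each uniformly over $\lambda \in Q$. First I would recall the explicit forms: $\WW\lambda(x) = \sum_{i=1}^{M} W_M(i-1|x)\lambda_i$ and $r(x) = -\sum_{j=0}^{M-1} W_M(j|x)\log\big(W_M(j|x)\big)$, and note that by Assumption~\ref{a:channel}\eqref{ass:channel:ii} each map $x \mapsto W(i|x)$ is Lipschitz with constant $L$. The first observation I need is that the truncated channel $W_M$ inherits Lipschitz continuity from $W$: by its definition in \eqref{W_M}, each $W_M(i|x)$ for $i < M$ is a nonnegative combination (with fixed coefficients summing to at most $1+\tfrac1M\cdot\text{(tail)}$) of the Lipschitz maps $W(j|\cdot)$, so I expect $x\mapsto W_M(i|x)$ to be Lipschitz with a constant controlled by $L$. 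This is a routine estimate but must be handled carefully so the final constant comes out as claimed.

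For the term $\WW\lambda$, I would estimate, for any $x,x'\in\A$,
\begin{align*}
|\WW\lambda(x) - \WW\lambda(x')| \le \sum_{i=1}^{M} |W_M(i-1|x) - W_M(i-1|x')|\,|\lambda_i| \le L_{W_M}\|x-x'\|\, \|\lambda\|_1,
\end{align*}
and then use $\lambda\in Q$ from Lemma~\ref{lem:compact:set:Poisson}, namely $\|\lambda\|_2 \le M(\log\tfrac{1}{\gamma_M}\vee\tfrac{1}{\ln 2})$, together with $\|\lambda\|_1 \le \sqrt{M}\|\lambda\|_2$ (or a direct bound), to produce the factor $M(\log\tfrac{1}{\gamma_M}\vee\tfrac{1}{\ln2})$ appearing in $L_f$. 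For the entropy term $r$, I would use the fact that the derivative of the scalar map $t\mapsto -t\log t$ is $-\log t - \tfrac{1}{\ln 2}$, so for each summand the composition $x\mapsto -W_M(j|x)\log W_M(j|x)$ is Lipschitz with constant bounded by $L_{W_M}\cdot\sup_t|\log t + \tfrac1{\ln2}|$, where $t = W_M(j|x) \in [\gamma_M, 1]$ by Assumption~\ref{ass:channel:Poisson}. On this interval $|\log t + \tfrac{1}{\ln2}|$ is bounded by $|\log\tfrac{1}{\gamma_M} - \tfrac{1}{\ln 2}|$, which yields the second contribution $ML|\log\tfrac{1}{\gamma_M} - \tfrac{1}{\ln2}|$.

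Combining the two bounds by the triangle inequality for Lipschitz constants gives $L_f \le LM^2(\log\tfrac{1}{\gamma_M}\vee\tfrac1{\ln2}) + ML|\log\tfrac1{\gamma_M} - \tfrac1{\ln2}|$, matching the claim. The main obstacle I anticipate is pinning down the precise Lipschitz constant of the truncated channel $W_M$ and tracking how the summation over the $M$ output symbols and the norm bound on $\lambda$ combine to produce exactly the factor $M^2$ in the first term (rather than $M$ or $M^{3/2}$); this requires being careful about whether one bounds $\|\lambda\|_1$ or $\|\lambda\|_2$ and how the per-symbol Lipschitz constant of $W_M$ absorbs the extra factors. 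A secondary subtlety is verifying that the mean-value/derivative argument for the $-t\log t$ term is valid uniformly, which is exactly where Assumption~\ref{ass:channel:Poisson} ($\gamma_M > 0$) is essential to keep $\log t$ bounded away from $-\infty$.
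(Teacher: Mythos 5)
Your proposal is correct and follows essentially the same route as the paper's proof: the same triangle-inequality split of $f_\lambda$ into the linear term $\sum_i W_M(i-1|\cdot)\lambda_i$ and the entropy term, the same bound $L\,\|\lambda\|_1$ combined with the norm bound on $Q$ to produce the $LM^2(\log\tfrac{1}{\gamma_M}\vee\tfrac{1}{\ln 2})$ factor, and the same derivative bound on $t\mapsto -t\log t$ over $[\gamma_M,1]$ (which is exactly how the paper's cited fact about Lipschitz continuity of the entropy on the simplex is obtained) yielding $ML\,|\log\tfrac{1}{\gamma_M}-\tfrac{1}{\ln 2}|$. The bookkeeping subtleties you flag — how the truncated channel $W_M$ inherits the constant $L$ and how the $\ell_1/\ell_2$ norm bounds on $\lambda$ combine to give $M^2$ rather than $M^{3/2}$ — are treated with the same (somewhat loose) level of detail in the paper itself, so your plan is faithful to it.
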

\begin{proof}
Let $x_1,x_2 \in \mathcal{X}$, then by definition of $f_{\lambda}(\cdot)$ we obtain
\begin{subequations}
\begin{align}
&|f_{\lambda}(x_1)-f_{\lambda}(x_2)| \nonumber \\
&\hspace{5mm}= \left| \sum_{i=1}^M W_{M}(i-1|x_1) \lambda_i + \sum_{j=1}^{M} W_{M}(j-1|x_1) \log W_{M}(j-1|x_1) \right. \nonumber \\
&\hspace{11mm}\left. - \sum_{i=1}^M W_{M}(i-1|x_2) \lambda_i - \sum_{j=1}^{M} W_{M}(j-1|x_2) \log W_{M}(j-1|x_2) \right| \nonumber \\
&\hspace{5mm}\leq \left| \sum_{i=1}^M \left(W_{M}(i-1|x_1)-W_{M}(i-1|x_2) \right)\lambda_i \right| + \left|\Hh{W_{M}(\cdot|x_1)}-\Hh{W_{M}(\cdot|x_2)} \right| \label{eq:triang}\\
&\hspace{5mm}\leq  \sum_{i=1}^M \left|\left(W_{M}(i-1|x_1)-W_{M}(i-1|x_2) \right)\lambda_i \right| + \left|\Hh{W_{M}(\cdot|x_1)}-\Hh{W_{M}(\cdot|x_2)} \right| \label{eq:ass1}\\
&\hspace{5mm}\leq L M \|\lambda \|_1 |x_1-x_2 | + \left|\Hh{W_{M}(\cdot|x_1)}-\Hh{W_{M}(\cdot|x_2)} \right|\label{eq:LambaSet}\\
&\hspace{5mm}\leq L M^2  \left(\log \frac{1}{\gamma_M}\vee \frac{1}{\ln 2} \right)  |x_1-x_2| + \left|\Hh{W_{M}(\cdot|x_1)}-\Hh{W_{M}(\cdot|x_2)} \right| \label{eq:LambdaSet}\\
&\hspace{5mm}\leq L M^2  \left(\log \frac{1}{\gamma_M}\vee \frac{1}{\ln 2} \right)  |x_1-x_2| + ML \left|\log \frac{1}{\gamma_M}- \frac{1}{\ln 2} \right|  |x_1-x_2|. \label{eq:fini}
\end{align}
\end{subequations}
Inequalities~\eqref{eq:triang} and \eqref{eq:ass1} use the triangle inequality. Inequality~\eqref{eq:LambaSet} follows by Assumption~\ref{a:channel}\eqref{ass:channel:ii} and \eqref{eq:LambdaSet} can be derived by following the proof of Lemma~\ref{lem:compact:set:Poisson}, which is similar to the one of Lemma~\ref{lem:compact:set}. Finally, \eqref{eq:fini} follows from the fact that the function $\Delta_n \ni x^n \mapsto \Hh{x^n} \in \Rp $ with $\min_{1\leq i \leq n} x_i < c$ is Lipschitz continuous with constant $n \left|\log \tfrac{1}{c}-\tfrac{1}{\ln 2} \right|$ and from Assumption~\ref{a:channel}\eqref{ass:channel:ii}. 
\end{proof}
\begin{As}[Lipschitz continuity of the average-power constraint function] \label{a:constraint_fct:s}	
The average-power constraint function $s(\cdot)$ is Lipschitz continuous with constant $L_{s}$. 
\end{As}
\begin{Lem} \label{lem:iota:new}
Under Assumptions~\ref{a:channel}\eqref{ass:channel:ii}, \ref{ass:channel:Poisson} and \ref{a:constraint_fct:s} a possible choice of the function $\iota$ in \eqref{eq:uniform:bound:cts} is given by
\begin{equation*}
\iota(\nu) = \left\{ 
  \begin{array}{l l}
    \nu \left( \log\left( \frac{T_{1}}{\nu}+T_{2} \right) +1 \right), & \quad \nu<\tfrac{T_{1}}{1-T_{2}} \text{ or } T_{2}>1\\
    \nu, & \quad \text{otherwise},
  \end{array} \right.
\end{equation*}
where $T_{1}\!:=L_{f}\rho + 2L_{f}L_{s}\rho^{2}\!\left( \tfrac{1}{-\underline{s}} \vee \tfrac{1}{\overline{s}} \right)$, $T_{2}\!:= L_{s}\rho (\underline{\mu} \, \vee \, \overline{\mu})$, $\underline{\mu}\!:=\tfrac{2}{-\underline{s}}\log\!\left( \tfrac{2L_{s}\rho}{-\underline{s}}\vee 1 \right)$, $\overline{\mu}\!:=\tfrac{2}{\overline{s}}\log\!\left( \tfrac{2L_{s}\rho}{\overline{s}}\vee 1 \right)$, $\rho:=\int_{\A}\drv x$, $\underline{s} := -S + \min_{x\in\A} s(x)$ and $\overline{s} := -S + \max_{x\in\A} s(x)$.
\end{Lem}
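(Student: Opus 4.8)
The left inequality in \eqref{eq:uniform:bound:cts} already follows from $\Hdiff{p}\le\log(\rho)$, so the task is to establish the upper bound $G(\lambda)\le G_{\nu}(\lambda)+\iota(\nu)$ uniformly over $\lambda\in Q$. The plan rests on the elementary observation that, for every density $p\in\mathcal{D}(\A)$ with $\inprod{p}{s}=S$, the value $G_{\nu}(\lambda)$ dominates the smoothed objective evaluated at $p$, i.e. $G_{\nu}(\lambda)\ge \inprod{p}{\WW\lambda-r}+\nu\Hdiff{p}-\nu\log(\rho)$. Writing $f_{\lambda}:=\WW\lambda-r$, it therefore suffices to exhibit a single feasible density $p_{\delta}$ (parametrised by a width $\delta>0$) for which $G(\lambda)-\inprod{p_{\delta}}{f_{\lambda}}$ and $\nu\big(\log(\rho)-\Hdiff{p_{\delta}}\big)$ are both small, since then $G(\lambda)-G_{\nu}(\lambda)$ is bounded by their sum. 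I would start from an optimal solution of $G(\lambda)$: because $G(\lambda)$ is a linear program over $\mathcal{P}(\A)$ with the single moment constraint $\inprod{p}{s}=S$, an optimizer can be taken to be supported on at most two points $x_{1},x_{2}\in\A$, and this is the object I will mollify.

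Then I would construct $p_{\delta}$ by replacing each Dirac mass by a uniform density on an interval of length $\delta$ inside $\A$ located at $x_{1},x_{2}$, and afterwards restore the moment constraint \emph{exactly} by transferring a small probability mass to a uniform bump placed near a minimiser or a maximiser of $s$, depending on the sign of the residual $\inprod{\hat p}{s}-S$. The leverage of such a transfer is measured precisely by $-\underline{s}=S-\min_{x}s(x)$ and $\overline{s}=\max_{x}s(x)-S$, which is why these quantities (and the associated transferred masses $\underline{\mu},\overline{\mu}$) enter the final constant. The bookkeeping is driven by three ingredients I would assemble: the Lipschitz bound $|\inprod{p_{\delta}}{f_{\lambda}}-f_{\lambda}(x_{k})|\le L_{f}\delta$ from Lemma~\ref{lem:Lf}, together with the fact that $f_{\lambda}$ oscillates by at most $L_{f}\rho$ over $\A$, to control the cost of the mass transfer; the Lipschitz bound $|\inprod{\hat p}{s}-S|\le L_{s}\delta$ from Assumption~\ref{a:constraint_fct:s} to size the residual to be corrected (hence a transferred mass of order $L_{s}\delta/((-\underline{s})\wedge\overline{s})$); and concavity of the differential entropy with $\Hdiff{u}=\log(\delta)$ for a uniform density of width $\delta$, which yields $\Hdiff{p_{\delta}}\ge\log(\delta)$ and hence $\nu\big(\log\rho-\Hdiff{p_{\delta}}\big)\le\nu\log(\rho/\delta)$. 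Combining these gives an estimate of the form $G(\lambda)-G_{\nu}(\lambda)\le c_{1}\delta+\nu\log(\rho/\delta)$, where $c_{1}$ collects $L_{f}$ (mollification) and the correction cost $2L_{f}L_{s}\rho\,(\tfrac{1}{-\underline{s}}\vee\tfrac{1}{\overline{s}})$, so that $\rho c_{1}$ reproduces $T_{1}$.

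Finally I would optimise the free width $\delta$: the minimiser of $\delta\mapsto c_{1}\delta+\nu\log(\rho/\delta)$ is $\delta^{\star}=\nu/c_{1}$, giving the value $\nu\big(\log(\rho c_{1}/\nu)+1\big)$, which after inserting the explicit correction masses $\underline{\mu},\overline{\mu}$ sharpens the logarithm's argument to $T_{1}/\nu+T_{2}$ and produces the first branch $\nu(\log(T_{1}/\nu+T_{2})+1)$. The case distinction arises from admissibility of this choice: $\delta^{\star}$ is usable only while it stays below the available width $\rho$ and the corrected mass below one, which holds exactly when $T_{1}/\nu+T_{2}>1$, i.e. $\nu<T_{1}/(1-T_{2})$, or automatically when $T_{2}>1$; in the complementary regime one clamps $\delta=\rho$, where the bound degenerates to $c_{1}\rho=T_{1}\le\nu$, so one records the clean valid bound $\iota(\nu)=\nu$. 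The main obstacle is not any single estimate but maintaining the equality constraint $\inprod{p_{\delta}}{s}=S$ exactly while simultaneously keeping the entropy and both linear functionals under control, and then carrying the constants through the correction step carefully enough that the transferred masses collapse to the stated $\underline{\mu},\overline{\mu}$ and the final constant matches $T_{1},T_{2}$ rather than merely being of the same order.
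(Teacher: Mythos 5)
Your construction is sound, and it is a genuinely different argument from the one in the paper. The paper works entirely on the dual/analytic side: it uses the closed-form Gibbs optimizer of $G_{\nu}$ to write $G_{\nu}(\lambda)=\nu\log\big(\int_{\A}2^{f_{\lambda,\nu}(x)/\nu}\drv x\big)-\nu\log(\rho)$ with $f_{\lambda,\nu}=f_{\lambda}+\nu\mu_{\nu}s$, bounds $G(\lambda)\le\max_{x\in\A}f_{\lambda,\nu}(x)$ by strong duality (choosing $\ell=\nu\mu_{\nu}$ in $\inf_{\ell}\max_{x}\{f_{\lambda}+\ell s\}$), and then lower-bounds the partition integral by the Lebesgue measure of the near-argmax set, which is at least $\tfrac{\varepsilon}{L_{f}+|\nu\mu_{\nu}|L_{s}}\wedge\rho$ by Lipschitz continuity; the constant $T_{2}$ is purely an artifact of this route, arising from the two-sided bound on the Lagrange multiplier $\nu\mu_{\nu}$ (this is where $\underline{\mu},\overline{\mu}$ originate). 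You instead work on the primal side: a two-point optimizer of the measure relaxation of $G(\lambda)$, mollified into bumps of width $\delta$, an explicit mass transfer toward $\arg\min s$ or $\arg\max s$ to restore $\inprod{p_{\delta}}{s}=S$ exactly, and the entropy controlled via $\|p_{\delta}\|_{\infty}\le 1/\delta$, i.e.\ $\Hdiff{p_{\delta}}\ge\log(\delta)$. Optimizing $\delta$ gives $G(\lambda)-G_{\nu}(\lambda)\le\nu\big(1+\log(T_{1}/\nu)\big)$ in the unclamped regime, which is in fact \emph{stronger} than the stated bound (no $T_{2}$ needed there, since $\log(T_{1}/\nu)\le\log(T_{1}/\nu+T_{2})$). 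Your route buys a cleaner bound and avoids multiplier estimates altogether; the paper's route avoids having to construct feasible densities and restore the equality constraint.

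The one place where your sketch is loose is the case split. The admissibility threshold of your construction is \emph{not} $\nu<T_{1}/(1-T_{2})$: the binding restriction is typically the transferred mass, $\epsilon_{m}\le 2L_{s}\delta\big(\tfrac{1}{-\underline{s}}\vee\tfrac{1}{\overline{s}}\big)\le 1$ (and the factor $2$ itself requires $L_{s}\delta\le\tfrac{1}{2}\big((-\underline{s})\wedge\overline{s}\big)$), so the clamp sits at $\delta_{\mathrm{cap}}=\rho\wedge\tfrac{(-\underline{s})\wedge\overline{s}}{2L_{s}}$, equivalently $\nu\le T_{1}/\kappa$ with $\kappa:=\rho/\delta_{\mathrm{cap}}=\big(2L_{s}\rho\big(\tfrac{1}{-\underline{s}}\vee\tfrac{1}{\overline{s}}\big)\big)\vee 1$. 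In the clamped regime your bound is $T_{1}/\kappa+\nu\log(\kappa)$, not $T_{1}$, so the statement ``the bound degenerates to $c_{1}\rho=T_{1}\le\nu$'' covers only the special case $\kappa=1$. To dominate $\iota(\nu)=\nu$ on the second branch (and $\nu(\log(T_{1}/\nu+T_{2})+1)$ on the first) you must relate your clamp $\kappa$ to the lemma's $T_{2}$: the key fact is $T_{2}\ge\kappa\log(\kappa)$ whenever $\kappa>1$, which is immediate from $T_{2}=\big(a\log(a\vee 1)\big)\vee\big(b\log(b\vee 1)\big)$ with $a=\tfrac{2L_{s}\rho}{-\underline{s}}$, $b=\tfrac{2L_{s}\rho}{\overline{s}}$, $\kappa=(a\vee b)\vee 1$. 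This yields $\kappa(1-\log\kappa)\ge\kappa-T_{2}\ge 1-T_{2}$ (note $T_{2}\le 1$ forces $\kappa<2$, so $1-\log\kappa>0$), and from there all branches close. So the constants do work out, but this link between $\kappa$ and $T_{2}$ is a needed step, not bookkeeping that takes care of itself.
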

\begin{proof} 
We start by the following definitions that simplify the proof below
\begin{alignat*}{3}
f_{\lambda, \nu}(x)&:=\WW\lambda(x)-r(x) + \nu \mu_{\nu}s(x), \qquad  &\bar{f}_{\lambda,\nu}&:= \max\limits_{x\in\A}f_{\lambda,\nu}(x) \\
B_{\lambda,\nu}(\varepsilon)&:=\left\{ x\in\A \ | \ \bar{f}_{\lambda,\nu} - f_{\lambda,\nu}(x) <\varepsilon \right\}, \qquad  &\eta_{\lambda,\nu}(\varepsilon) &:= \int_{B_{\lambda,\nu}(\varepsilon)}\drv x.
\end{alignat*}
By the Lipschitz continuity of $f_{\lambda}(\cdot)$ and $s(\cdot)$ we get the uniform lower bound 
\begin{equation} \label{eq:eta:lower:bound}
\eta_{\lambda,\nu}(\varepsilon) \geq  \frac{\varepsilon}{L_{f}+|\nu\mu_{\nu}|L_{s}}\wedge \rho.
\end{equation}
By using the solution to $G_{\nu}(\lambda)$, according to \eqref{eq:finite:optimizer:pmu:cts} we can write
\begin{subequations}
\begin{align}
G_{\nu}(\lambda) 	&= 		-\nu \log(\rho) + \nu \log\left( \int_{\A}2^{\tfrac{1}{\nu}f_{\lambda,\nu}(x)}\drv x \right) \label{eq:proof:Gnu}\\
					&\leq	\inf\limits_{\ell\in\R}\max\limits_{x\in\A} \left\{ f_{\lambda}(x)+\ell s(x) \right\} \label{eq:step:Gnu:smaller:G0} \\
					&= G(\lambda), \label{eq:step:strong:duality}
\end{align}
\end{subequations}	
where the equality \eqref{eq:step:strong:duality} follows as \eqref{eq:step:Gnu:smaller:G0} is the dual program to $G(\lambda)$ and strong duality holds. The inequality \eqref{eq:step:Gnu:smaller:G0} then is due to $G_{\nu}(\lambda)\leq G(\lambda)$ for any $\lambda$, see \eqref{eq:uniform:bound:cts}. Therefore,

\begin{subequations}
\begin{align}
&G(\lambda)-G_{\nu}(\lambda)		\leq		\bar{f}_{\lambda,\nu}-G_{\nu}(\lambda) \label{eq:iota:G} \\
		&\qquad =		\nu \left( -\log \left(  \int_{B_{\lambda,\nu}(\varepsilon)}2^{\frac{1}{\nu}\left( f_{\lambda,\nu}(x) - \bar{f}_{\lambda,\nu}\right)} \drv x +  \int_{B_{\lambda,\nu}^{\setC}(\varepsilon)}2^{\frac{1}{\nu}\left( f_{\lambda,\nu}(x) - \bar{f}_{\lambda,\nu}\right)} \drv x\right) + \log(\rho)  \right) \label{eq:iota:Gnu}\\
		&\qquad \leq 	\nu \left( -\log \left(  \int_{B_{\lambda,\nu}(\varepsilon)}2^{\frac{1}{\nu}\left( f_{\lambda,\nu}(x) - \bar{f}_{\lambda,\nu}\right)} \drv x \right) + \log(\rho)  \right) \nonumber \\
		&\qquad \leq 	\nu \left( -\log \left( \eta_{\lambda,\nu}(\varepsilon) 2^{-\frac{\varepsilon}{\nu}} \right) + \log(\rho)  \right) \label{eq:iota:B:eta} \\
		&\qquad \leq 	\nu \left( -\log \left( \tfrac{\varepsilon}{L_{f}+|\nu\mu_{\nu}|L_{s}}\vee \rho \right) +\frac{\varepsilon}{\nu} + \log(\rho)  \right) \label{eq:iota:lipschitz} \\
		&\qquad =		\nu  \log \left( \tfrac{(L_{f}+|\nu\mu_{\nu}|L_{s})\rho}{\varepsilon} \vee 1 \right) + \varepsilon, \nonumber
\end{align}
\end{subequations}
where \eqref{eq:iota:G} follows from \eqref{eq:step:strong:duality} and \eqref{eq:iota:Gnu} is due to \eqref{eq:proof:Gnu}. The inequality \eqref{eq:iota:B:eta} results from the definitions of $B_{\lambda,\nu}(\varepsilon)$ and $\eta_{\lambda,\nu}(\varepsilon)$ above and \eqref{eq:iota:lipschitz} is implied by \eqref{eq:eta:lower:bound}. Finally, it can be seen that for $\nu<(L_{f}+|\nu\mu_{\nu}|L_{s})\rho$, the optimal choice for $\varepsilon$ is $\nu$, which leads to
\begin{equation} \label{eq:proof:itoa:almost:done}
G(\lambda)-G_{\nu}(\lambda) \leq \nu \left( 1 +  \log \left( \tfrac{(L_{f}+|\nu\mu_{\nu}|L_{s})\rho}{\nu} \vee 1 \right) \right). 
\end{equation}
It remains to upper bound the term $|\nu\mu_{\nu}|$. Define $\underline{f}:=\min_{x,\lambda}f_{\lambda}(x)$, $\overline{f}:=\max_{x,\lambda}f_{\lambda}(x)$, $\Delta_{f}:=\overline{f}-\underline{f}$ and note that $\Delta_{f}\leq L_{f}\rho$. By \eqref{eq:proof:Gnu}, \eqref{eq:uniform:bound:cts} and the fact that adding an additional constraint to a maximization problem cannot increase its objective value
\begin{equation*}
G_{\nu}(\lambda) =  \nu \log\left( \int_{\A}2^{\tfrac{1}{\nu}\left( f_{\lambda}(x)+\nu\mu_{\nu}s(x)\right)}\drv x \right) -\nu \log(\rho) \leq \overline{f}=\nu \log \left( 2^{\tfrac{1}{\nu}\overline{f}} \right),
\end{equation*}
which is equivalent to
 $\int_{\A}2^{\tfrac{1}{\nu}\left( f_{\lambda}(x) - \overline{f} +\nu\mu_{\nu}s(x)\right)}\drv x \leq \rho $ and implies
 \begin{equation} \label{eq:proof:iota:inbetween}
 \int_{\A} 2^{\mu_{\nu}s(x)} \drv x \leq \rho \, 2^{\tfrac{\Delta_{f}}{\nu}}.
 \end{equation}
From \eqref{eq:proof:iota:inbetween} two bounds can be derived. First, \eqref{eq:proof:iota:inbetween} implies that $\left(\rho \wedge \tfrac{\varepsilon}{L_{s}}\right) 2^{\mu_{\nu}(\overline{s}-\varepsilon)}\leq \rho 2^{\tfrac{\Delta_{f}}{\nu}}$, which by choosing $\varepsilon=\tfrac{\overline{s}}{2}$ leads to $2^{\mu_{\nu}\tfrac{\overline{s}}{2}}\leq \left( \tfrac{2L_{s}\rho}{\overline{s}}\vee 1 \right)2^{\tfrac{\Delta_{f}}{\nu}}$ and finally, 
\begin{equation} \label{eq:itoa:upper:bound}
\nu\mu_{\nu}\leq \tfrac{2}{\overline{s}}\log\left( \tfrac{2L_{s}\rho}{\overline{s}}\vee 1 \right)\nu + \tfrac{2\Delta_{f}}{\overline{s}}.
\end{equation}
Similarly one can derive a lower bound
\begin{equation} \label{eq:itoa:lower:bound}
\nu\mu_{\nu}\geq \tfrac{2}{\underline{s}}\log\left( \tfrac{2L_{s}\rho}{-\underline{s}}\vee 1 \right)\nu +  \tfrac{2\Delta_{f}}{\underline{s}}.
\end{equation}
Equation \eqref{eq:proof:itoa:almost:done} together with \eqref{eq:itoa:upper:bound} and \eqref{eq:itoa:lower:bound} complete the proof.
\end{proof}
We consider the smooth, finite dimensional, convex optimization problem
\begin{align}\label{Lagrange:Dual:Program:smooth:poisson}
 \mathsf{D}_{\nu}: \left\{ \begin{array}{ll}
	\underset{\lambda}{\inf} 		& F(\lambda) + G_{\nu}(\lambda) \\
			\text{s.t. } 					& \lambda\in Q,
	\end{array}\right.
\end{align}
whose solution can be approximated with Algorithm~\hyperlink{algo:1}{1} presented in Section~\ref{sec:classicalCapacity}, as follows. Define the constant $D_{1}:=\tfrac{1}{2}(M \log(\gamma^{-1})\vee\tfrac{1}{\ln 2})^2$.
\begin{Thm}\label{thm:error:bound:capacity:continuous:channel}
Under Assumptions~\ref{a:channel}\ref{ass:channel:ii}, \ref{ass:channel:Poisson} and \ref{a:constraint_fct:s}, let $\alpha := 2(T_1 + T_2 + 1)$ where $T_1$ and $T_2$ are as defined in Lemma~\ref{lem:iota:new}. Given $\varepsilon \in (0, \tfrac{\alpha}{4})$, we set the smoothing parameter $\nu = \tfrac{\varepsilon / \alpha}{\log \left( \alpha / \varepsilon \right)}$ and number of iterations $n\geq \tfrac{1}{\varepsilon} \sqrt{8 D_{1}\alpha}\sqrt{\log(\varepsilon^{-1}) + \log(\alpha) + \tfrac{1}{4}}$.  Consider 
\begin{align} 
\hat{\lambda} = y_{n} \in Q\qquad \text{and} \qquad \hat{p}=\sum_{k=0}^{n}\frac{2(i+1)}{(n+1)(n+2)} p_{\nu}^{x_{k}}\in \mathcal{D}(\A), \label{eq:optInPut:cts}
\end{align}
where $y_k$ computed at the $\text{k}^{\text{th}}$ iteration of Algorithm~\hyperlink{algo:1}{1} and $p_{\nu}^{x_{k}}$ is the analytical solution in \eqref{eq:finite:optimizer:pmu:cts}. Then, $\hat{\lambda}$ and $\hat{p}$ are the approximate solutions to the problems \eqref{eq:Dual:Program:Poisson} and \eqref{eq:inf:channel:primal}, i.e., 
\begin{align}
0\leq F(\hat{\lambda}) + G(\hat{\lambda}) - \I{\hat{p}}{W_{M}} \leq \varepsilon. \label{eq:EBB:cts}
\end{align}
Therefore, Algorithm~\hyperlink{algo:1}{1} requires $O\left( \tfrac{1}{\varepsilon}\sqrt{\log\left( \varepsilon^{-1} \right)} \right)$ iterations to find an $\varepsilon$-solution to the problems \eqref{eq:Dual:Program:Poisson} and \eqref{eq:inf:channel:primal}.
\end{Thm}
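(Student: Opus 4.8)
The plan is to mirror the proof strategy of Theorem~\ref{thm:error:bound:capacity} from the discrete case, but now carrying along the nontrivial uniform-approximation penalty $\iota(\nu)$ from Lemma~\ref{lem:iota:new} instead of the clean $\nu\log(N)$ bound. The backbone is Nesterov's smoothing result \cite[Theorem~3]{ref:Nest-05}: since $F(\lambda)+G_\nu(\lambda)$ has Lipschitz gradient with constant $1+\tfrac1\nu$ (Proposition~\ref{prop:Lipschitz:continuity:DMC} in the discrete case, Proposition~\ref{prop:Lipschitz:cts:channel} here, together with Lemma~\ref{lem:strong:convexity:cts}), running Algorithm~\hyperlink{algo:1}{1} for $n$ iterations on the smoothed program \eqref{Lagrange:Dual:Program:smooth:poisson} produces $\hat\lambda=y_n\in Q$ and, via the averaging formula \eqref{eq:optInPut:cts}, a primal candidate $\hat p\in\mathcal{D}(\A)$. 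First I would invoke strong duality (Lemma~\ref{lem:strong:duality:poisson}) and the compactness reduction (Lemma~\ref{lem:compact:set:Poisson}) so that the dual is genuinely over the bounded set $Q$ with radius controlled by $D_1$.

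The key chain of inequalities is the standard smoothing estimate
\begin{align*}
0 \le F(\hat\lambda)+G(\hat\lambda)-\I{\hat p}{W_M}
\le \big[F(\hat\lambda)+G_\nu(\hat\lambda)-\I{\hat p}{W_M}\big] + \iota(\nu),
\end{align*}
where the bracketed term is the duality gap for the \emph{smoothed} problem and $\iota(\nu)$ absorbs the gap between $G$ and $G_\nu$ via \eqref{eq:uniform:bound:cts}. By \cite[Theorem~3]{ref:Nest-05} the bracketed smoothed gap is bounded by a term of the form $\tfrac{4 D_1}{\nu(n+1)^2}$ (the Lipschitz-gradient contribution, using $\|\WW\|\le 1$ from Proposition~\ref{prop:Lipschitz:cts:channel}). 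Thus I would arrive at
\begin{align*}
0 \le F(\hat\lambda)+G(\hat\lambda)-\I{\hat p}{W_M}
\le \frac{4D_1}{\nu (n+1)^2} + \iota(\nu).
\end{align*}
The remaining work is purely to choose $\nu$ and $n$ to make each summand at most $\tfrac\varepsilon2$. For the second summand I would plug the explicit $\iota(\nu)=\nu(\log(T_1/\nu+T_2)+1)$ from Lemma~\ref{lem:iota:new}; with $\alpha=2(T_1+T_2+1)$ the stated choice $\nu=\tfrac{\varepsilon/\alpha}{\log(\alpha/\varepsilon)}$ should force $\iota(\nu)\le\tfrac\varepsilon2$ by the same elementary calculus lemma used in Theorem~\ref{thm:semi-fin:smooth} (that $\nu\le\tfrac{\eps}{2\log(\eps^{-1})}$ guarantees $\nu\log(\nu^{-1})\le\eps$, after the rescaling $\nu\mapsto\nu/c$, $\eps\mapsto\eps/(2cC)$). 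For the first summand, substituting this $\nu$ and requiring $\tfrac{4D_1}{\nu(n+1)^2}\le\tfrac\varepsilon2$ yields exactly the iteration count $n\ge\tfrac1\varepsilon\sqrt{8D_1\alpha}\sqrt{\log(\varepsilon^{-1})+\log\alpha+\tfrac14}$, giving the claimed $O\!\big(\tfrac1\varepsilon\sqrt{\log(\varepsilon^{-1})}\big)$ complexity.

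The main obstacle I anticipate is controlling $\iota(\nu)$ cleanly: unlike the discrete bound $\nu\log N$, here $\iota$ depends on $\nu$ inside the logarithm \emph{and} implicitly through the Lagrange multiplier term $|\nu\mu_\nu|$, which Lemma~\ref{lem:iota:new} only bounds after a somewhat delicate argument (the bounds \eqref{eq:itoa:upper:bound}, \eqref{eq:itoa:lower:bound}). I would need to verify that the $T_1,T_2$ constants already fold in this multiplier dependence so that $\iota(\nu)$ is a genuine function of $\nu$ alone, uniform over $\lambda\in Q$; Lemma~\ref{lem:iota:new} does precisely this, so the subtlety is contained there and I can treat $\iota$ as a black box. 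A secondary technical point is that the averaging \eqref{eq:optInPut:cts} must keep $\hat p$ in $\mathcal{D}(\A)$ — this follows because each $p_\nu^{x_k}$ is a density and the convex weights $\tfrac{2(k+1)}{(n+1)(n+2)}$ sum to one, so $\mathcal{D}(\A)$ being convex closes the argument. Everything else is the verbatim translation of \cite[Theorem~3]{ref:Nest-05} to the $(\Lp1(\A),\Lp\infty(\A))$ dual pair, justified by Lemmas~\ref{lem:strong:duality:poisson}--\ref{lem:compact:set:Poisson} and Proposition~\ref{prop:Lipschitz:cts:channel}.
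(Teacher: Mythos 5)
Your route is the paper's route: apply Nesterov's smoothing bound to the regularized dual \eqref{Lagrange:Dual:Program:smooth:poisson}, add the uniform-approximation penalty $\iota(\nu)$ from Lemma~\ref{lem:iota:new}, and then tune $\nu$ and $n$. However, there is one concrete gap in your error decomposition. You invoke Nesterov's Theorem~3 with only the contribution of $G_\nu$, whose gradient is $\tfrac1\nu$-Lipschitz by Proposition~\ref{prop:Lipschitz:cts:channel}. But the dual objective is $F+G_\nu$, and $F$ is not smoothed --- it is already smooth with a $1$-Lipschitz gradient (negative entropy is $1$-strongly convex; see the discussion around \eqref{eq:gradient:F}). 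The Lipschitz constant entering Nesterov's bound is therefore $1+\tfrac1\nu$, and the correct estimate --- which is the paper's starting point --- is
\begin{equation*}
0\leq F(\hat{\lambda}) + G(\hat{\lambda}) - \I{\hat{p}}{W_{M}} \leq \iota(\nu) + \frac{4D_{1}}{\nu (n+1)^{2}}+\frac{4D_{1}}{(n+1)^{2}}.
\end{equation*}
Your two-term bound drops the last summand. Consequently your ``each term at most $\tfrac{\varepsilon}{2}$'' split certifies only $\varepsilon + \tfrac{4D_1}{(n+1)^2}>\varepsilon$, and the iteration count your derivation actually produces is $n\ge\tfrac1\varepsilon\sqrt{8D_1\alpha}\sqrt{\log(\varepsilon^{-1})+\log\alpha}$, i.e.\ \emph{without} the $+\tfrac14$; you cannot claim to recover the stated bound ``exactly''.

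The gap is repairable, and the repair explains the two hypotheses your argument never uses: the $+\tfrac14$ in the iteration count and the restriction $\varepsilon\in(0,\tfrac\alpha4)$ exist precisely to absorb the dropped term. Indeed, with the stated $n$ (note $\log(\varepsilon^{-1})+\log\alpha = \log(\alpha/\varepsilon)$) and $\nu = \tfrac{\varepsilon/\alpha}{\log(\alpha/\varepsilon)}$ one gets
\begin{equation*}
\frac{4D_{1}}{\nu (n+1)^{2}}+\frac{4D_{1}}{(n+1)^{2}}
= \frac{4D_1}{(n+1)^2}\left(1+\frac1\nu\right)
\le \frac{\varepsilon}{2}\cdot\frac{\varepsilon+\alpha\log(\alpha/\varepsilon)}{\alpha\log(\alpha/\varepsilon)+\tfrac{\alpha}{4}}
< \frac{\varepsilon}{2},
\end{equation*}
where the final inequality is exactly the hypothesis $\varepsilon<\tfrac\alpha4$. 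Combined with $\iota(\nu)\le\tfrac\varepsilon2$ --- which your black-box use of Lemma~\ref{lem:iota:new} does deliver, since with $\delta:=\varepsilon/\alpha$ one has $\iota(\nu)=\delta g(\delta)\le \delta\cdot\tfrac\alpha2=\tfrac\varepsilon2$ for the function $g$ of the paper's proof, using $g(\delta)\le 1+T_1+T_2=\tfrac\alpha2$ --- this gives the claimed $\varepsilon$-bound. The remaining ingredients of your proposal (strong duality via Lemma~\ref{lem:strong:duality:poisson}, the compact dual domain of Lemma~\ref{lem:compact:set:Poisson}, and the observation that $\hat p$ is a convex combination of densities and hence lies in $\mathcal{D}(\A)$) are correct and match the paper.
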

\begin{proof}
Following \cite{ref:Nest-05} and using Lemma~\ref{lem:compact:set:Poisson}, Lemma~\ref{lem:strong:duality:poisson}, Propostion~\ref{prop:Lipschitz:cts:channel} and Lemma~\ref{lem:iota:new}, after $n$ iterations of Algorithm~\hyperlink{algo:1}{1} the following approximation error is obtained
\begin{equation}\label{eq:proof:rate:error:term}
0\leq F(\hat{\lambda}) + G(\hat{\lambda}) - \I{\hat{p}}{W} \leq \iota(\nu) + \frac{4D_{1}}{\nu (n+1)^{2}}+\frac{4D_{1}}{(n+1)^{2}}=:\mathsf{err}(\nu,n),
\end{equation}
where for $\nu<\tfrac{T_{1}}{1-T_{2}}$ or $T_{2}>1$ we have $\iota(\nu)=\nu \left( \log\left( \tfrac{T_{1}}{\nu} + T_{2} \right) +1 \right)$, which is strictly increasing in $\nu$. Let us redefine the smoothing term by $\nu:=\tfrac{\delta}{\log\left( \delta^{-1} \right)}$ for $\delta \in (0,1)$ and define the function $g(\delta):=\left( \tfrac{\log\left( T_{1} \log\left( \delta^{-1} \right) + T_{2}\delta \right)+1}{\log\left( \delta^{-1} \right)} +1 \right)$. One can see that $\iota(\nu)=\delta g(\delta)$ and that $\lim_{\delta\to 0}g(\delta)=1$. Furthermore $\delta\leq 2^{-1} \wedge 2^{-\tfrac{1}{T_{1}+T_{2}}}$ implies 
\begin{equation} \label{eq:bound:g:delta}
g(\delta) - 1 \leq \frac{\log \left( 2(T_{1}+T_{2})\log\left( \delta^{-1} \right) \right) }{\log\left( \delta^{-1} \right)} \leq T_{1}+T_{2},
\end{equation}
where the first inequality is due to $\delta\leq 2^{-1}$ and the second follows from $\delta\leq 2^{-\tfrac{1}{T_{1}+T_{2}}}$. We seek for a lower bound of $n$ and upper bound $\delta$ such that the error term \eqref{eq:proof:rate:error:term} is smaller than the preassigned $\varepsilon>0$, i.e.,
\begin{equation} \label{eq:proof:leq:vareps}
\mathsf{err}(\tfrac{\delta}{\log\left( \delta^{-1} \right)},n) = g(\delta) \delta + \frac{4D_{1}}{(n+1)^{2}} \left( \frac{\log\left( \delta^{-1} \right)}{\delta} +1 \right) \leq \varepsilon
\end{equation}
To this end, we introduce an auxiliary variable $\zeta \in (0,1)$ such that such that $g(\delta) \delta = (1-\zeta)\varepsilon$ and $\tfrac{4D_{1}}{(n+1)^{2}} \left( \tfrac{\log\left( \delta^{-1} \right)}{\delta} +1 \right)\leq \zeta \varepsilon$, which implies \eqref{eq:proof:leq:vareps}. Observe that $g(\delta) \delta = (1-\zeta)\varepsilon$ is equivalent to $\delta = \tfrac{(1-\zeta)}{g(\delta)}\varepsilon=:\beta \varepsilon$. Hence $\zeta=1-\beta g(\delta)$ for $\beta \in [0,\tfrac{1}{g(\delta)}]$. Moreover,
\begin{align*}
\frac{4D_{1}}{(n+1)^{2}} \left( \frac{\log\left( \delta^{-1} \right)}{\delta} +1 \right) =  \frac{4D_{1}}{(n+1)^{2}} \left( \frac{\log\left( (\beta \varepsilon)^{-1} \right)}{\beta \varepsilon} +1 \right)\leq \zeta \varepsilon \nonumber
\end{align*}
is equivalent to
\begin{align}
 4 D_{1} \left(  \frac{\log\left( (\beta \varepsilon)^{-1} \right) + \beta \varepsilon}{\beta (1-g(\delta)\beta)\varepsilon^{2}}\right)
= 4 D_{1} \left( \frac{\log(\varepsilon^{-1}) + \log 2 g(\delta) + \tfrac{\varepsilon}{2 g(\delta)}}{\tfrac{\varepsilon^{2}}{4g(\delta)}} \right) \leq (n+1)^{2} \label{eq:proof:no:beta},
\end{align}
where we have chosen $\beta=\tfrac{1}{2g(\delta)}$ and as such is equivalent to
\begin{align}
\frac{4}{\varepsilon} \sqrt{D_{1}\left( g(\delta) \log\left( \varepsilon^{-1} \right) + g(\delta)\log\left( 2 g(\delta) \right) + \tfrac{\varepsilon}{2} \right)} \leq n+1 \nonumber
\end{align}
 Finally, using \eqref{eq:bound:g:delta} implies for $\nu = \tfrac{\varepsilon / \alpha}{\log \left( \alpha / \varepsilon \right)}$, where $\alpha := 2(T_1 + T_2 + 1)$
\begin{equation*}
\mathsf{err}(\nu,n) \leq \varepsilon \quad \text{ for } \quad n\geq \tfrac{1}{\varepsilon} \sqrt{8 D_{1}\alpha}\sqrt{\log(\varepsilon^{-1}) + \log(\alpha) + \tfrac{1}{4}}.
\end{equation*} 
\end{proof}
Hence, under Assumption~\ref{a:constraint_fct:s} we can quantify the approximation error of the presented method to find the capacity of any channel $W$, satisfying Assumptions~\ref{a:channel} and \ref{ass:channel:Poisson}, by
\begin{align*}
\left| C(W) -  C_{\text{approx}}^{(n)}(W_{M}) \right| 	&\leq   \underbrace{\left| C(W) -  C(W_{M}) \right|}_{(\star)} + \underbrace{\left| C(W_{M}) -  C_{\text{approx}}^{(n)}(W_{M}) \right|}_{(\star \star)},
\end{align*}
where $(\star)$ and $(\star \star)$ are addressed by Theorem~\ref{thm:C} and Theorem~\ref{thm:error:bound:capacity:continuous:channel}, respectively. Let us highlight that for the term $(\star \star)$ we have two different quantitative bounds: First, the \textit{a priori} bound $\varepsilon$ for which Theorem~\ref{thm:error:bound:capacity:continuous:channel} prescribes a lower bound for the required number of iterations; second, the \textit{a posteriori} bound $F(\hat{\lambda}) + G(\hat{\lambda}) - I (\hat p, W_{M})$
which can be computed after a number of iterations have been executed. In practice, the a posteriori bound often approaches $\varepsilon$ much faster than the a priori bound. Note also that 
by \eqref{eq:uniform:bound:cts} and Theorem~\ref{thm:error:bound:capacity:continuous:channel}
\begin{align*}		
0\leq F(\hat{\lambda}) + G_{\nu}(\hat{\lambda}) +\iota(\nu) - \I{\hat{p}}{W_{M}} \leq \iota(\nu) + \varepsilon,
\end{align*}
which shows that $ F(\hat{\lambda}) + G_{\nu}(\hat{\lambda}) +\iota(\nu)$ is an upper bound for the channel capacity with a priori error $\iota(\nu) + \varepsilon$. This bound can be particularly helpful in cases where an evaluation of $G(\lambda)$ for a given $\lambda$ is hard.
\begin{Rem}[Optimal tail truncation] \label{rmk:optimal:truncation}
Given a fixed number of iterations, the term $(\star\star)$ above is effected by the truncation level $M$ for two reasons: the higher $M$ the larger the size of the output as well as the lower the parameter $\gamma_{M}$. Therefore, term $(\star\star)$ increases as M increases, which can be quantified by \eqref{eq:proof:rate:error:term}. On the other hand, term $(\star)$ obviously has the opposite behavior. Namely, the higher M leads to the better approximation of the channel W by the truncated version $W_M$ as quantified in Theorem~\ref{thm:C}. Hence, given a channel $W$ with the polynomial tail order $k$, there is an optimal value for the truncation parameter $M$, which thanks to the monotonicity explained above can be effectively computed in practice by techniques such as bisection.   \\
Note, that this truncation procedure could also be applied to a finite output alphabet, given that the channel satisfies Assumption~\ref{a:channel}\eqref{ass:channel:i}, and for example improve the performance of the method presented in Section~\ref{sec:classicalCapacity}.
\end{Rem}

\begin{Rem}[Without average-power constraint] \
In case of considering only a peak-power constraint and no average-power constraint, our proposed methodology allows us to access a closed form expression for $G_{\nu}(\lambda)$ and its gradient, 
\begin{align} 
G_{\nu}(\lambda) 	&= 	\nu \log\left( \int_{\A} 2^{\frac{1}{\nu}\left( \WW\lambda(x) -  r(x) \right)} \drv x \right) - \nu \log(\rho) \label{eq:poiss:closed:form:Gnu} \\
\nabla G_{\nu}(\lambda) 	&= 	\frac{ \int_{\A} 2^{\frac{1}{\nu}\left( \WW\lambda(x) -  r(x) \right)}W_{M}(\cdot|x) \drv x}{\int_{\A} 2^{\frac{1}{\nu}\left( \WW\lambda(x) -  r(x) \right)} \drv x}. \nonumber
\end{align}
\end{Rem}

 \subsection*{Discrete-time Poisson channel} 
 The discrete-time Poisson channel is a mapping from $\Rp$ to $\mathbb{N}_0$, such that conditioned on the input $x\geq 0$ the output is Poisson distributed with mean $x + \eta$, i.e.,
\begin{equation}
W(y|x)=e^{-(x+\eta)}\frac{(x+\eta)^y}{y!}, \quad y\in \mathbb{N}_0,\, x\in \Rp,\label{eq:WpoissonIntro}
\end{equation}
where $\eta\geq0$ denotes a constant sometimes referred to as \emph{dark current}. A peak-power constraint on the transmitter is given by the peak-input constraint $X\leq A$ with probability one, i.e., $\A=[0,A]$ and an average-power constraint on the transmitter is considered by $\E{X}\leq S$.

Up to now, no analytic expression for the capacity of a discrete-time Poisson channel is known. However, for different scenarios lower and upper bounds exist. Brady and Verd\'u derived a lower and upper bound in the presence of only an average-power constraint \cite{brady90}. Later, for $\eta=0$ and only an average-power constraint, Martinez introduced better upper and lower bounds \cite{martinez07}.
Lapidoth and Moser derived a lower bound and an asymptotic upper bound, which is valid only when the available peak and average power tend to infinity with their ratio held fixed, for the presence of a peak and average-power constraint \cite{lapidoth09}. Lapidoth \emph{et al.} computed the asymptotic capacity of the discrete-time Poisson channel when the allowed average-input power tends to zero with the allowed peak power --- if finite --- held fixed and the dark current is constant or tends to zero proportionally to the average power \cite{lapidoth11}. 

In \cite{ref:Chen-14-1} a numerical algorithm is presented, where the Blahut-Ariomoto algorithm is incorporated into the deterministic annealing method, that allows the computation of both the channel capacity under peak and average power constraints and its associated optimal input distribution. Furthermore, the works \cite{ref:Chen-14-1, ref:Chen-14-2} derive several fundamental properties of capacity achieving input distributions for the discrete-time Poisson channel. 

Here, we numerically approximate the capacity of a discrete-time Poisson channel using the proposed algorithm. For simplicity, we consider the case where only a peak power constraint is imposed; the case where an additional average power constraint is present can be treated similarly. It was shown in \cite{shamai90} that in the case of a peak power constraint (with or without average power constraint), the capacity achieving input distribution is discrete. This, in the limit as the number of iterations in the proposed approximation method goes to infinity, is consistent with the optimal input distribution given in Remark~\ref{rmk:stabilization:optimizer:cts}.

The following proposition provides an upper bound for the $k$-polynomial tail for the Poisson channel $W$ as defined in \eqref{eq:WpoissonIntro}.
	
	\begin{Prop}[Poisson tail]
	\label{prop:poisson_tail}
		The Poisson channel \eqref{eq:WpoissonIntro} having a bounded input alphabet $\X = [0,A]$ and \emph{dark current} parameter $\eta$ has a $k$-polynomial tail for any $k \in (0,1]$ in the sense of Definition \ref{def:tail}, which is upper bounded for all $M \ge A + \eta$ by
		\begin{align*}
			R_k(M) \le \Big( {\alpha\e^{(\alpha-1)(A+\eta)}\frac{(A+\eta)^{M}}{M!}} \Big)^k, \qquad \alpha \Let 2^{(k^{-1}-1)}.
		\end{align*}
	\end{Prop}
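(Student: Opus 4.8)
The plan is to first reduce the tail sum to an explicit quantity by evaluating the inner supremum, then to reduce everything to a single scalar series bound, which I would establish by Hölder's inequality with a carefully tuned parameter. For the first step, fix an index $i$ and note that $x\mapsto W(i|x)=\e^{-(x+\eta)}(x+\eta)^i/i!$ is, after the substitution $\mu\Let x+\eta$, the Poisson weight $\mu\mapsto \e^{-\mu}\mu^i/i!$, whose derivative has the sign of $i-\mu$. Since Definition~\ref{def:tail} only involves indices $i\ge M\ge A+\eta$, every $x\in\X=[0,A]$ satisfies $\mu=x+\eta\le A+\eta\le i$, so the map is non-decreasing on $[\eta,A+\eta]$ and attains its maximum at $x=A$. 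Hence $\sup_{x\in\X}W(i|x)=\e^{-(A+\eta)}(A+\eta)^i/i!$, and writing $\beta\Let A+\eta$,
\begin{align*}
R_k(M)=\e^{-k\beta}\sum_{i\ge M}\frac{\beta^{ki}}{(i!)^k}.
\end{align*}

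Next I would reindex $i=M+j$ and peel off the leading term. From $(M+j)!/M!=\prod_{l=1}^{j}(M+l)\ge\prod_{l=1}^{j}l=j!$ we get $\big((M+j)!\big)^{-k}\le (M!)^{-k}(j!)^{-k}$, so
\begin{align*}
R_k(M)\le \e^{-k\beta}\,\frac{\beta^{kM}}{(M!)^k}\sum_{j\ge0}\Big(\frac{\beta^j}{j!}\Big)^k.
\end{align*}
It therefore suffices to prove the scalar estimate $\sum_{j\ge0}(\beta^j/j!)^k\le \alpha^k\e^{k\alpha\beta}$ with $\alpha=2^{(1/k-1)}$, since multiplying by $\e^{-k\beta}\beta^{kM}/(M!)^k$ then yields exactly $R_k(M)\le\big(\alpha\,\e^{(\alpha-1)\beta}\beta^M/M!\big)^k$.

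The crux of the argument — and the step I expect to be the main obstacle — is this factorial series bound. I would obtain it from Hölder's inequality with conjugate exponents $p=1/k$ and $p'=1/(1-k)$: for any $t\in(0,1)$,
\begin{align*}
\sum_{j\ge0}\Big(\frac{\beta^j}{j!}\Big)^k
=\sum_{j\ge0}\Big(\frac{\beta^j}{j!}\,t^{-j}\Big)^{k}\big(t^{jk}\big)
\le\Big(\sum_{j\ge0}\frac{\beta^j}{j!}\,t^{-j}\Big)^{k}\Big(\sum_{j\ge0}t^{jk/(1-k)}\Big)^{1-k}
=\e^{k\beta/t}\Big(\frac{1}{1-t^{k/(1-k)}}\Big)^{1-k}.
\end{align*}
The free parameter $t$ is then chosen to match the target: taking $t=2^{-(1/k-1)}=1/\alpha\in(0,1]$ makes the exponent $(1/k-1)\cdot k/(1-k)$ equal to $1$, so $t^{k/(1-k)}=\tfrac12$, the second factor collapses to $2^{1-k}=\alpha^k$, and the first factor becomes $\e^{k\alpha\beta}$. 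This delivers $\sum_{j\ge0}(\beta^j/j!)^k\le\alpha^k\e^{k\alpha\beta}$.

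Finally I would assemble the pieces, combining the two displays above to obtain the claimed bound for every $M\ge A+\eta$ and $k\in(0,1)$. As a consistency check, at $k=1$ one has $\alpha=1$ and $t=1$, and the Hölder step degenerates to the elementary identity $\sum_{i\ge M}\beta^i/i!\le\e^\beta\beta^M/M!$ (again via $(M+j)!\ge M!\,j!$), recovering $R_1(M)\le\beta^M/M!$. The only genuine subtlety is the convergence of the Hölder exponents and of the geometric series, which is guaranteed precisely because $k\in(0,1)$ forces $t<1$; the boundary case $k=1$ is handled separately as just noted, and the degenerate case $\beta=0$ makes $R_k(M)=0$ trivially.
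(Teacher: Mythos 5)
Your proof is correct, and it reaches the bound by a genuinely different route than the paper's. The opening step is shared: for $i \ge M \ge A+\eta$ the map $x \mapsto W(i|x)$ is non-decreasing on $[0,A]$, so the supremum sits at $x=A$; the paper does exactly this via \eqref{A,n} and \eqref{eq:3}. From there the paper keeps the tail sum intact: it invokes Lemma \ref{lem:ai} --- $\sum_i a_i^k \le \big(\sum_i \alpha^i a_i\big)^k$ with $\alpha = 2^{(k^{-1}-1)}$, proved by induction from the two-term inequality $a^k+b^k\le 2^{1-k}(a+b)^k$ of Lemma \ref{lem:ab} --- to pull the exponent $k$ outside at the price of geometric weights, and then controls the weighted tail $\sum_{i\ge M}(\alpha(A+\eta))^i/i!$ with the Taylor remainder bound \eqref{taylor}. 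You instead peel off the factor $\beta^{kM}/(M!)^k$ first, using $(M+j)!\ge M!\,j!$, and then bound the full series $\sum_{j\ge 0}(\beta^j/j!)^k$ by H\"older with conjugate exponents $1/k$ and $1/(1-k)$ and a tuned geometric weight $t=1/\alpha$. The two arguments are close cousins --- in fact the paper's Lemma \ref{lem:ai} is itself a one-line consequence of your H\"older step, since with $a_i$ weighted by $\alpha^i$ the correction factor $\big(\sum_{i\ge 1}\alpha^{-ik/(1-k)}\big)^{1-k}$ equals $\big(\sum_{i\ge 1}2^{-i}\big)^{1-k}=1$ --- but each buys something: yours makes transparent where $\alpha = 2^{(k^{-1}-1)}$ comes from (it is precisely the weight that collapses the geometric factor to $2^{1-k}=\alpha^k$) and leaves a free parameter $t$ that could in principle be optimized for a sharper constant, while the paper's route is more elementary, needing nothing beyond induction on a two-term inequality. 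Your separate handling of $k=1$ (where the H\"older exponents degenerate and the claim reduces to the Taylor bound) and of $\beta=0$ is also sound.
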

To prove Proposition \ref{prop:poisson_tail}, we need two lemmas. 
	
	\begin{Lem}\label{lem:ab}
		For any $k\in(0,1]$ and $a, b \ge 0$
		\begin{align*}
			a^k+b^k \le 2^{1-k}(a+b)^k.
		\end{align*}
	\end{Lem}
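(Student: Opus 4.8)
The plan is to reduce the claim to the elementary fact that the power map $t \mapsto t^k$ is concave on $[0,\infty)$ for every $k \in (0,1]$. Concavity on the open half-line follows because the second derivative $k(k-1)t^{k-2}$ is non-positive whenever $0 < k \le 1$ (it vanishes identically in the linear case $k=1$), and continuity at $t=0$ extends the concavity to all of $[0,\infty)$. I would state this concavity as the single structural ingredient and then derive the bound from it directly.

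With concavity in hand, the inequality is just midpoint (Jensen) concavity applied to the two points $a$ and $b$. First I would dispose of the trivial case $a=b=0$, where both sides equal $0$. For the remaining case I would write, with $f(t) \Let t^k$,
\begin{align*}
\frac{a^k + b^k}{2} = \frac{f(a) + f(b)}{2} \le f\!\left(\frac{a+b}{2}\right) = \left(\frac{a+b}{2}\right)^{\!k} = 2^{-k}(a+b)^k,
\end{align*}
where the inequality is exactly the concavity of $f$ at the midpoint. Multiplying through by $2$ yields $a^k + b^k \le 2^{1-k}(a+b)^k$, which is the assertion.

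There is essentially no real obstacle here; the only points requiring a word of care are the verification that $t\mapsto t^k$ is genuinely concave (rather than merely subadditive) across the full range $k\in(0,1]$, including the boundary value $k=1$ where the inequality degenerates into the equality $a+b = a+b$, and the handling of the degenerate point $a+b=0$. An alternative to the Jensen argument, should one prefer to avoid invoking concavity as a black box, is to divide by $(a+b)^k$ and set $x \Let a/(a+b) \in [0,1]$, reducing the claim to $\phi(x) \Let x^k + (1-x)^k \le 2^{1-k}$; one then checks that $\phi$ is symmetric about $x=\tfrac12$ with a unique interior critical point there attaining the value $2\cdot 2^{-k} = 2^{1-k}$. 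I would present the Jensen version as the primary proof since it is the shortest and most transparent.
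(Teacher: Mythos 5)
Your proof is correct, but it takes a genuinely different route from the paper. The paper normalizes by the larger variable: assuming without loss of generality $a \ge b$, it sets $x = b/a \in [0,1]$ and runs a one-variable calculus argument on $g(x) = 2^{1-k}(1+x)^k - x^k$, locating the interior critical point at $x^\star = 1$ and checking $g(x) \ge g(1) = 1$ on $[0,1]$, from which the claim follows by multiplying through by $a^k$. You instead invoke concavity of $t \mapsto t^k$ on $[0,\infty)$ and apply two-point midpoint Jensen, which gives the inequality in one line after multiplying by $2$. Your approach is shorter and more structural: it isolates concavity as the single ingredient, needs no case analysis on which of $a,b$ is larger, and (unlike the paper's argument, which implicitly divides by $a$ and so silently relies on $a=b=0$ being trivial) does not even require separate treatment of the degenerate point, since Jensen applies directly there. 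What the paper's calculus route buys is self-containedness — it never appeals to concavity or Jensen, only to a derivative computation — and its normalization-plus-optimization template is the same one the paper reuses in nearby arguments (e.g.\ Lemma~\ref{lem:log}), so it fits the local style. Your alternative sketch, dividing by $(a+b)^k$ and maximizing $\phi(x) = x^k + (1-x)^k$ at $x = \tfrac12$, is essentially a symmetrized version of the paper's own proof, so either of your two arguments would serve.
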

	 
	 \begin{proof}
		 Let $g(x) \Let 2^{1-k}(1+x)^k - x^k$. By setting $\frac{\diff}{\diff x}g(x^\star) = 0$, one can easily see that $x^\star = 1$ is the minimizer of function $g$ over the interval $[0,1]$, i.e., $g(x) \ge g(1) = 1$ for all $x \in [0,1]$. Suppose, without loss of generality, that $a \ge b$. By virtue of the preceding result of function $g$, we know that 
		 	$$ 1 \le g\left( \frac{b}{a} \right) =  2^{1-k}\left( 1+\frac{b}{a} \right)^k - \left(\frac{b}{a}\right)^k, $$
		 where by multiplying $a^k$ it readily leads to the desired assertion. 
	 \end{proof}
	 
	\begin{Lem}\label{lem:ai}
		Let $(a_i)_{i \in \mathbb{N}}$ be a non-negative sequence of real numbers. For any $k \in (0,1]$
		\begin{align*}
			\sum\limits_{i \in \mathbb{N}} a_i^k \le \Big(\sum\limits_{i \in \mathbb{N}} \alpha^{i}a_i\Big)^k, \qquad \alpha \Let 2^{(k^{-1}-1)}.
		\end{align*}
	\end{Lem}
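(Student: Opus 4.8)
The plan is to prove Lemma~\ref{lem:ai} by induction on the number of nonzero terms, bootstrapping from the two-term inequality already established in Lemma~\ref{lem:ab}. The statement generalizes Lemma~\ref{lem:ab} (which is precisely the case of two terms with weights $\alpha^0 = 1$ and $\alpha^1 = 2^{k^{-1}-1}$), so the natural strategy is to iterate that subadditivity estimate while carefully tracking how the geometric weights $\alpha^i$ accumulate.

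First I would handle the finite case $\sum_{i=1}^{n} a_i^k \le \big(\sum_{i=1}^{n}\alpha^{i-1} a_i\big)^k$ and then pass to the limit. For the base case $n=1$ the inequality $a_1^k \le a_1^k$ is trivial. For the inductive step, suppose the bound holds for $n$ terms. Writing $b \Let \sum_{i=1}^{n}\alpha^{i-1}a_i$, the induction hypothesis gives $\sum_{i=1}^{n} a_i^k \le b^k$, so it suffices to show
\begin{align*}
b^k + a_{n+1}^k \le \Big(b + \alpha^{n} a_{n+1}\Big)^k.
\end{align*}
Applying Lemma~\ref{lem:ab} to the pair $(b, a_{n+1})$ yields $b^k + a_{n+1}^k \le 2^{1-k}(b+a_{n+1})^k$, and since $\alpha = 2^{k^{-1}-1}$ satisfies $\alpha^k = 2^{1-k}$, this reads $b^k + a_{n+1}^k \le \big(\alpha(b+a_{n+1})\big)^k$. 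The remaining task is to compare $\alpha(b+a_{n+1})$ with the target $b+\alpha^n a_{n+1}$.

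The main obstacle is exactly this bookkeeping step, because the naive application of Lemma~\ref{lem:ab} produces the factor $\alpha$ multiplying the whole partial sum $b$, whereas the claimed bound only wants the geometric weight $\alpha^n$ on the new term $a_{n+1}$. To reconcile these, I would not apply Lemma~\ref{lem:ab} to $(b,a_{n+1})$ directly but instead reorganize the induction so that the weights emerge correctly — for instance, by proving the cleaner renormalized statement $\sum_{i=1}^{n} a_i^k \le \alpha^{(n-1)k}\big(\sum_{i=1}^{n}\alpha^{-(n-i)}a_i\big)^k$ or, more transparently, by grouping the sum as $a_1^k + \big(\sum_{i\ge 2} a_i^k\big)$ and applying the two-term inequality to split off one term at a time from the \emph{bottom}, so that each use of Lemma~\ref{lem:ab} contributes one additional factor of $\alpha$ to the still-untouched tail. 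Concretely, a downward induction peeling off $a_1, a_2, \dots$ in order lets each surviving $a_i$ collect exactly $i-1$ factors of $\alpha$, matching the weight $\alpha^{i}$ in the statement (up to the index convention used in Proposition~\ref{prop:poisson_tail}, where the sum starts at $i\ge M$). Care must be taken that $\alpha \ge 1$ for $k \in (0,1]$, so that the substitutions only enlarge the right-hand side and the monotonicity of $t\mapsto t^k$ is used in the correct direction.

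Finally, once the finite inequality is secured for every $n$, I would let $n \to \infty$: both sides are monotone in $n$ (the left by adding nonnegative terms $a_i^k$, the right by the monotonicity of $t \mapsto t^k$ applied to the increasing partial sums $\sum \alpha^i a_i$), so the inequality is preserved under the limit, giving the countable version as stated. If the right-hand series diverges the claim is vacuous, and if it converges the dominated passage to the limit is immediate. This delivers Lemma~\ref{lem:ai}, which then feeds directly into the proof of Proposition~\ref{prop:poisson_tail} by taking $a_i = \sup_x W(i|x)$ and bounding the resulting weighted Poisson tail sum.
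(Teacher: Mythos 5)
Your overall route --- a finite induction bootstrapped from Lemma~\ref{lem:ab}, followed by a monotone passage to the limit --- is the same as the paper's, but the finite inequality you set out to prove is false, and this is a genuine gap rather than a bookkeeping nuisance. You propose $\sum_{i=1}^{n} a_i^k \le \bigl(\sum_{i=1}^{n}\alpha^{i-1} a_i\bigr)^k$, i.e.\ weight $\alpha^0=1$ on the first term. Already for $n=2$, $k=\tfrac{1}{2}$ (so $\alpha=2$) and $a_1=a_2=1$ this reads $2\le\sqrt{3}$, which fails. The obstruction is structural: Lemma~\ref{lem:ab} is tight at $a=b$, so any admissible weights $(w_1,w_2)$ in a two-term bound $a_1^k+a_2^k\le (w_1a_1+w_2a_2)^k$ must satisfy $w_1+w_2\ge 2^{1/k}$, while $1+\alpha=1+2^{1/k-1}<2^{1/k}$ whenever $k<1$. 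Your first proposed repair, the ``renormalized statement'' $\sum_{i=1}^{n} a_i^k \le \alpha^{(n-1)k}\bigl(\sum_{i=1}^{n}\alpha^{-(n-i)}a_i\bigr)^k$, is algebraically identical to the same false inequality (pull $\alpha^{n-1}$ inside the parentheses), so it cannot serve as an induction hypothesis either. Your second repair --- peeling terms off the bottom so that each application of Lemma~\ref{lem:ab} feeds a factor of $\alpha$ to the untouched tail --- is the right mechanism, but the bookkeeping you attach to it (``each surviving $a_i$ collects exactly $i-1$ factors of $\alpha$, matching the weight $\alpha^{i}$'') is internally inconsistent: $i-1$ factors give $\alpha^{i-1}$, which is the false claim again.

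The correct fix, and it is exactly what the paper does, is to let the induction hypothesis carry the weights $\alpha^i$ of the statement, i.e.\ weight $\alpha$ (not $1$) on the leading term. Then the step closes: peel off $a_1$, apply the hypothesis to the tail $(a_2,\dots,a_{N+1})$ so that $a_i$ carries weight $\alpha^{i-1}$, and apply Lemma~\ref{lem:ab} to the pair consisting of $a_1$ and the weighted tail sum. The resulting prefactor $2^{1-k}=\alpha^k$ is pulled inside the $k$-th power as one factor of $\alpha$ multiplying every term, which promotes the tail weights $\alpha^{i-1}$ to $\alpha^i$ and simultaneously gives $a_1$ its weight $\alpha$. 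The slack needed at every level of the induction is precisely that extra $\alpha$ on the leading term; your normalization leaves no room to absorb $\alpha^k$, which is why both your inductive step and the claim itself break. Your limiting argument at the end is fine once the finite inequality holds with the correct weights.
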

	\begin{proof}
		For the proof we make use of an induction argument. Note that for any $a_1 \ge 0$ it trivially holds that $a_1^k \le 2^{1 - k}a_1^k$. We now assume that for any sequence $(a_i)_{i=1}^{N} \subset \R_{\ge 0}$ we have
		\begin{align}
		\label{eq:0}
			\sum\limits_{i=1}^{N} a_i^k \le \Big(\sum\limits_{i=1}^{N} 2^{(k^{-1}-1)i}a_i\Big)^k. 
		\end{align}
		Let $(a_i)_{i=1}^{N+1} \subset \R_{\ge 0}$. Then, 
		\begin{align}
			\label{eq:1} \sum\limits_{i=1}^{N+1} a_i^k & = a_1^k + \sum\limits_{i=2}^{N+1} a_i^k  \le a_1^k + \Big(\sum\limits_{i=2}^{N+1} 2^{(k^{-1}-1){(i-1)}}a_i\Big)^k \le 2^{1-k} \Big(a_1 + \sum\limits_{i=2}^{N+1} 2^{(k^{-1}-1){(i-1)}}a_i\Big)^k \\
			& \notag = \Big(2^{(k^{-1}-1)}a_1 + \sum\limits_{i=2}^{N+1} 2^{(k^{-1}-1){i}}a_i\Big)^k = \Big(\sum\limits_{i=1}^{N+1} 2^{(k^{-1}-1){i}}a_i\Big)^k,
		\end{align}
		where the first (resp.\ second) inequality in \eqref{eq:1} follows from \eqref{eq:0} (resp.\ Lemma \ref{lem:ab}). 
	\end{proof}

	\begin{proof}[Proof of Proposition \ref{prop:poisson_tail}]
		It is straightforward to see that
		\begin{align}
		\label{A,n}
			\max_{x \in [0,A]} \e^{-x} x^i = \e^{-\min\{A,i\}} \big(\min\{A,i\}\big)^{i}. 
		\end{align} 
		Moreover, based on a Taylor series expansion, it is well known that for all $M \in \mathbb{N}$ and $x \in \R_{\geq 0}$
		\begin{align}
		\label{taylor}
			\sum\limits_{i \ge M} \frac{x^i}{i!} \le \frac{\e^x}{M!} x^{M}. 
		\end{align}
		Therefore, it follows that
		\begin{subequations}
		\begin{align}
			\label{eq:3} R_k(M) & \Let \sum\limits_{i\ge M} \Big(\sup_{x \in [0,A]} \e^{-(x + \eta)} \frac{(x+\eta)^{i}}{i!} \Big)^k \le \sum\limits_{i\ge M} \Big( \e^{-(A + \eta)} \frac{(A+\eta)^{i}}{i!} \Big)^k \\
			\label{eq:4} & \le \e^{-k(A + \eta)}\Big( \sum\limits_{i \ge M} \alpha^{(i-M+1)} \frac{(A+\eta)^{i}}{i!} \Big)^k = \frac{\e^{-k(A + \eta)}}{\alpha^{k(M-1)}} \Big( \sum\limits_{i \ge M}  \frac{\big(\alpha(A+\eta)\big)^{i}}{i!} \Big)^k \\
			\label{eq:5} & \le \frac{\e^{-k(A + \eta)}}{\alpha^{k(M-1)}} \Big( \frac{\e^{\alpha(A+\eta)}}{M!} \alpha^{M} (A+\eta)^{M} \Big)^k = \Big( {\alpha\e^{(\alpha-1)(A+\eta)}\frac{(A+\eta)^{M}}{M!}} \Big)^k,
		\end{align}
		\end{subequations} 
		where \eqref{eq:3} results from \eqref{A,n} and the assumption $M \ge A + \eta$, and \eqref{eq:4} (resp.\ \eqref{eq:5}) follows from Lemma \ref{lem:ai} (resp.\ \eqref{taylor}). 
	\end{proof}

In the following we present an example to illustrate the theoretical results developed in the preceding sections and their performance. Note that for the discrete-time Poisson channel Assumption~\ref{ass:channel:Poisson} clearly holds.
\begin{Ex} \label{ex:poisson}
We consider a discrete-time Poisson channel $W$ as defined in \eqref{eq:WpoissonIntro} with a peak-power constraint $A$ and dark current $\eta = 1$. Up to now, the best known lower bound for the capacity is given by \cite[Theorem~4]{lapidoth09}
\begin{equation}
C(W) \geq \frac{1}{\ln 2}\left(\frac{1}{2} \ln A  + \left(\frac{A}{3}+1 \right)\ln \left(1 + \frac{3}{A} \right)-1 -\sqrt{\frac{\eta+\tfrac{1}{12}}{A}}\left(\frac{\pi}{4} + \frac{1}{2}\ln 2 \right) - \frac{1}{2}\ln \frac{\pi e}{2}\right)\!. \label{eq:MoserLB}
\end{equation}
To the best of our knowledge no upper bound for the capacity is known. In \cite{lapidoth09} an asymptotic upper bound is given which includes an unknown error term that is vanishing in the limit $A \to \infty$.
According to Theorems~\ref{thm:C} and \ref{thm:error:bound:capacity:continuous:channel}, the algorithm introduced in this article leads to an approximation error after $n$ iterations that is given by
\begin{align*}
\left|C_{\text{approx}}^{(n)}(W_{M})-C(W) \right| &\leq \left|C_{\text{approx}}^{(n)}(W_{M})-C(W_M) \right| + \left|C(W_M)-C(W) \right| \nonumber\\
&\leq F(\hat{\lambda}) + G(\hat{\lambda}) - \I{\hat{p}}{W} + \mathcal{E} , \label{eq:error:poisson:example}
\end{align*}
where $\mathcal{E} = \frac{2\log(\e)}{\e(1-k)} \Big[ M^{1-k}\big(R_1(M)\big)^k + R_k(M) \Big]$, $R_\ell(M) = \Big( {\alpha\e^{(\alpha-1)(A+\eta)}\frac{(A+\eta)^{M}}{M!}} \Big)^\ell$ and $\alpha \Let 2^{(\ell^{-1}-1)}$ for any $k\in(0,1)$ and $\ell\in(0,1]$. The truncation parameter $M$ was determined as described in Remark~\ref{rmk:optimal:truncation}. This finally leads to the following upper and lower bounds on $C(W)$
\begin{equation} \label{eq:upper:lower:Cp}
	2 \I{\hat{p}}{W}  -  \left( F(\hat{\lambda}) + G(\hat{\lambda}) \right) - \mathcal{E} \leq C(W) \leq 2  \left( F(\hat{\lambda}) + G(\hat{\lambda})  \right) - \I{\hat{p}}{W} +  \mathcal{E}. 
\end{equation}
Figure~\ref{fig:poissonPlott} compares the two bounds \eqref{eq:MoserLB} and \eqref{eq:upper:lower:Cp} for different values of $A$. Further details on the simulation can be found in Appendix~\ref{app:simulation:details}.

\begin{figure}[!htb]
	\centering
	\scalebox{0.9}{\input{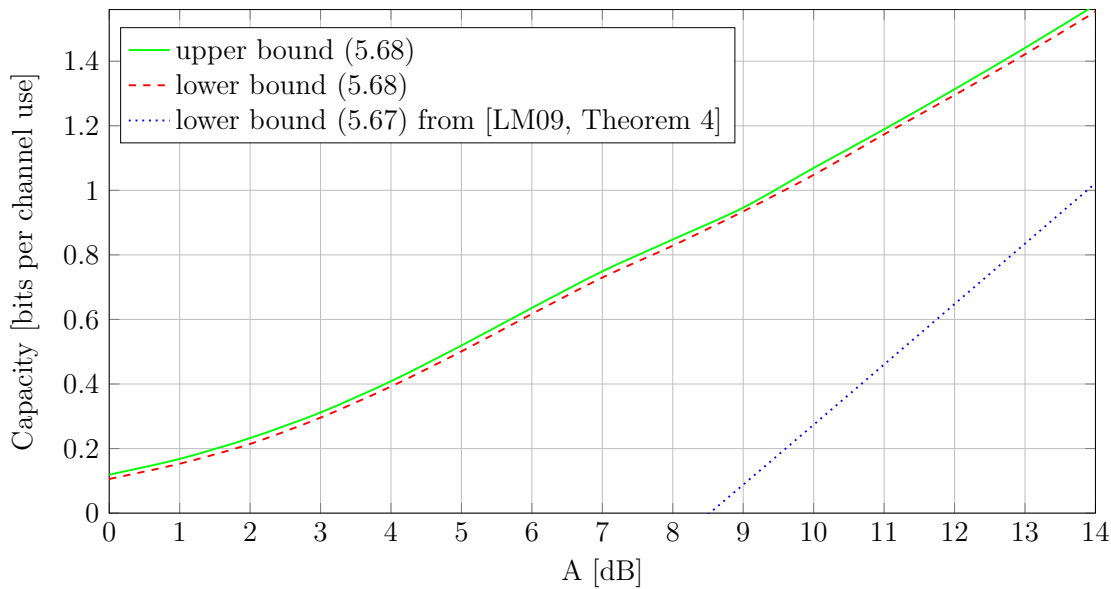}}
	\caption{This plot depicts the capacity of a discrete-time Poisson channel with dark current $\eta=1$ as a function of the peak-power constraint parameter $A$. The red (resp.\ green) line shows the lower (resp.\ upper) bound \eqref{eq:upper:lower:Cp} obtained for a moderate 	number of iterations, see Appendix~\ref{app:simulation:details}. As a comparison we plot the lower bound of \cite{lapidoth09}, which to the best of our knowledge is the tightest
	lower bound available to date (blue line). The parameter $A$ is given in decibels where $A[\textnormal{dB}]=10 \log_{10}(A)$.}	\label{fig:poissonPlott}
\end{figure}
\end{Ex}

\begin{Rem}[AWGN channel with a quantized output]
Another example of a channel that is well studied and can be treated by the proposed method is the discrete-time additive white Gaussian noise (AWGN) channel under output quantization. The output of the channel is described by
\begin{equation*}
Y = \mathsf{Q}(X+N),
\end{equation*}
where $X \in \R$ is the channel input, $N \sim \mathcal{N}(0,\sigma^2)$ for $\sigma^2 > 0$ is white Gaussian noise and $\mathsf{Q}(\cdot)$ is a quantizer that maps the real valued input $X + N$ to one of $M$ bins (where we assume $M < \infty$), which gives $Y \in \{y_1,\ldots,y_MÊ\}$. In addition an average and/or a peak power constraint at the input is considered. More information about this channel model and why it is of interest can be found in \cite{singh09,koch13}. By definition, the AWGN channel with a quantized output has a continuous input alphabet and a discrete output alphabet. Thus, the approximation method discussed in this section can be used to compute the capacity of such channels.  
\end{Rem}

\section{Quantum channel capacities}

Depending on the channel and allowed auxiliary resources, there is a variety of capacities for different communication tasks. An excellent overview can be found in \cite{wilde_book,holevo_book}. For a lot of these tasks, their corresponding capacity can be recast as an optimization problem. Some of them seem to be intrinsically more difficult than others and in general no closed form solution is available.

We consider the task of sending classical information over a classical-quantum (cq) channel which maps each element of a classical input alphabet to a finite-dimensional quantum state. We do not allow any additional resources such as entanglement shared between the sender and receiver nor feedback. The capacity for this task has been shown in \cite{holevo98,holevo_book,schumacher97} to be the maximization of a quantity called the \emph{Holevo information} over all possible input distributions. Unlike the classical channels where a specific efficient method --- the \emph{Blahut-Arimoto algorithm} \cite{blahut72,arimoto72} --- is known for numerical computation of the capacity with a provable rate of convergence, there is no counterpart for cq channels to date.

We have shown \cite{ref:D:Sutter-16} that the techniques developed in this chapter can be extended to numerically compute the capacity of such cq channels.



\section{Appendix}

\subsection{Simulation details}\label{app:simulation:details}	
This section provides some further details on the simulation in Example~\ref{ex:poisson}. The parameters considered are $k=\tfrac{1}{2}$, $L_{f}=0$ and $M$ is chosen according to Table~\ref{tab:poisson:details}. All the simulations in this section are performed on a 2.3 GHz Intel Core i7 processor with 8 GB RAM with Matlab.

\begin{table}[!htb]
\footnotesize{
\centering 
\caption{Simulation details to Example~\ref{ex:poisson}}
\label{tab:poisson:details}
\vspace{1mm}
  \begin{tabular}{c | c c c c c c c c}
 $A$ [dB] \hspace{1mm}  & \hspace{1mm}    0  &1 & 2 & 3 & 4 & 5 & 6 & 7 \\ 
 $M$ \hspace{1mm}  & \hspace{1mm}    16  &17 & 19 & 20 & 22 & 25 & 28 & 31 \\ 
 Iterations $n$ & \hspace{1mm} 4$\cdot 10^{4}$ & 4$\cdot 10^{4}$   & 4$\cdot 10^{4}$  & 5$\cdot 10^{4}$  & 6$\cdot 10^{4}$ & 7$\cdot 10^{4}$ & 9$\cdot 10^{4}$ &1.2$\cdot 10^{5}$ \\
 $\nu$ \hspace{1mm}  & \hspace{1mm} 0.0026 & 0.0029 & 0.0036 & 0.0029 & 0.0027 & 0.0029 & 0.0026 &  0.0022 \\ 
 $F(\hat{\lambda}) + G(\hat{\lambda})$ & \hspace{1mm} 0.1144 & 0.1626  & 0.2263 & 0.3063 & 0.4029 & 0.5129 & 0.6293 & 0.7423 \\
 $\I{\hat{p}}{W}$ & \hspace{1mm} 0.1105 & 0.1583  & 0.2206  & 0.3015  & 0.3979 & 0.5072 & 0.6234 &  0.7365 \\ 
 $\mathcal{E}$ & \hspace{1mm} 9.3$\cdot 10^{-4}$ &9.7$\cdot 10^{-4}$  & 4.8$\cdot 10^{-4}$  & 8.5$\cdot 10^{-4}$  & 8.2$\cdot 10^{-4}$  & 4.9 $\cdot 10^{-4}$ & 5.0$\cdot 10^{-4}$ & 9.5$\cdot 10^{-4}$ \\ 
  \end{tabular}
  
  \vspace{7mm}
  \phantom{bla}
  \centering 
    \begin{tabular}{c | c c c c c c c}
 $A$ [dB] \hspace{1mm}  & \hspace{1mm}  8  & 9   & 10  & 11  & 12 & 13 & 14   \\ 
 $M$ \hspace{1mm}  & \hspace{1mm}   36   & 42  & 49  & 59 &  71 & 85 & 104    \\ 
 Iterations $n$ & \hspace{1mm}   2$\cdot 10^{5}$ & 5$\cdot 10^{5}$   &2$\cdot 10^{6}$   & 3$\cdot 10^{6}$ & 4$\cdot 10^{6}$ & 9$\cdot 10^{6}$ & 1.5$\cdot 10^{7}$\\
 $\nu$ \hspace{1mm}  & \hspace{1mm} 0.0016 & 7.1 $\cdot 10^{-5}$ & 8.0$\cdot 10^{-4}$ &8.3$\cdot 10^{-4}$& 9.7$\cdot 10^{-4}$ &  6.2$\cdot 10^{-4}$ & 5.8$\cdot 10^{-4}$ \\
 $F(\hat{\lambda}) + G(\hat{\lambda})$ & \hspace{1mm}   0.8410 & 0.9422  & 1.0591  & 1.1835  & 1.3070 & 1.4343 & 1.5671  \\
 $\I{\hat{p}}{W}$ & \hspace{1mm} 0.8351 & 0.9388  & 1.0547  & 1.1788 & 1.3013 &1.4219 & 1.5605 \\ 
 $\mathcal{E}$ & \hspace{1mm}    7.5$\cdot 10^{-4}$ &  7.1$\cdot 10^{-4}$ & 8.0$\cdot 10^{-4}$  & 6.2$\cdot 10^{-4}$ & 5.2 $\cdot 10^{-4}$ & 9.0$\cdot 10^{-4}$ & 6.7$\cdot 10^{-4}$ \\ 
  \end{tabular} }
  
\end{table}	





\chapter{Generalized maximum entropy estimation} \label{chap:entropy:max}

In this chapter, we consider the problem of estimating a probability distribution that maximizes the entropy while satisfying a finite number of moment constraints, possibly corrupted by noise. Based on duality of convex programming, we present a novel approximation scheme using a smoothed fast gradient method that is equipped with explicit bounds on the approximation error. We further demonstrate how the presented scheme can be used for approximating the chemical master equation through the zero-information moment closure method.

The entropy maximization problem subject to moment constraints is a key tool in Chapter~\ref{chap:channel:cap} (see Equation~\eqref{opt:cover:cont} and Lemma~\ref{lem:cover:cont}). Moreover the problem also appears in Part~\ref{part:ADP} of this thesis (see Lemma~\ref{lem:entropy}), where so-called MaxEnt distributions act as a regularizer, leading to computationally more efficient optimization programs.

\section{Introduction}

Consider a one-dimensional moment problem formulated as follows: Given a set $\K\subset \R$ and a sequence $(y_i)_{i\in\N}\subset\R$ of moments, does there exist a measure $\mu$ supported on $\K$ such that
\begin{equation} \label{eq:moment:intro}
y_i = \int_\K x^i \mu(\drv x) \quad \text{for all } i\in\N \ ?
\end{equation}
For $\K=\R$ and $\K=[a,b]$ with $-\infty<a<b<\infty$ the above moment problem is known as the \emph{Hamburger moment problem} and \emph{Hausdorff moment problem}, respectively. If the moment sequence is finite, the problem is called a \emph{truncated moment problem}. In both full and truncated cases, a measure $\mu$ that satisfies \eqref{eq:moment:intro}, is called a \emph{representing measure} of the sequence $(y_i)_{i\in\N}$. If a representing measure is unique, it is said to be \emph{determined by its moments}.
From the Stone-Weierstrass theorem it followes directly that every non-truncated representing measure with compact support is determined by its moments.
In the Hamburger moment problem, given a representing measure $\mu$ for a moment sequence $(y_i)_{i\in\N}$, a sufficient condition for $\mu$ being determined by its moments is the so-called \emph{Carleman condition}, i.e.,  $\sum_{i=1}^{\infty}y_{2i}^{-\nicefrac{1}{2i}}=\infty$. Roughly speaking this says that the moments should not grow too fast, see~\cite{ref:Akhiezer-65} for further details. For the Hamburger and the Hausdorff moment problem, there are necessary and sufficient conditions for the existence of a representing measure for a given moment sequence $(y_i)_{i\in\N}$ in both the full as well as the truncated setting, see~\cite[Theorems~3.2, 3.3, 3.4]{ref:Lasserre-11}.

In this article, we investigate the problem of estimating an unknown probability distribution given a finite number of (possibly noisy) observed moments. Given that the observed moments are consistent (i.e., there exists a probability distribution that satisfies all the moment contstraints), the problem is underdetermined and has infinitely many solutions. This therefore raises the question of which solution to choose. A natural choice would be to pick the one with the highest entropy, called the \emph{MaxEnt distribution}. The main reason why the MaxEnt distribution is a natural choice is due to a concentration phenomenon described by Jaynes~\cite{ref:Jaynes-03}:

\vspace{2mm}
\begin{minipage}{35em}
``\emph{If the information incorporated into the maximum-entropy analysis includes all the constraints actually operating in the random experiment, then the distribution predicted by maximum entropy is overwhelmingly the most likely to be observed experimentally.}'' 
\end{minipage}
\vspace{2mm}

\noindent See~\cite{ref:Jaynes-03, ref:Grunwald-08} for a rigorous statement. This maximum entropy estimation problem, subject to moment constraints, also known as the \emph{principle of maximum entropy}, is applicable to large classes of problems in natural and social sciences --- in particular in economics, see~\cite{ref:Golan-08} for a comprehensive survey.
Also it has important applications in approximation methods to dynamical objects, such as in systems biology, where MaxEnt distributions are key objects in the so-called \emph{moment closure method} to approximate the chemical master equation~\cite{ref:Smadbeck-13}, or more recently in the context of approximating dynamic programming (see Part~\ref{part:ADP}) where MaxEnt distributions act as a regularizer, leading to computationally more efficient optimization programs.

 Their operational significance motivates the study of numerical methods to compute MaxEnt distributions, which are the solutions of an infinite-dimensional convex optimization problem and as such computationally intractable in general. Since it was shown that the MaxEnt distribution maximizing the entropy subject to a finite number of moment constraints (if it exists) belongs to the exponential family of distributions~\cite{ref:Csiszar-75}, its computation can be reduced to solving a system of nonlinear equations, whose dimension is equal to the number of moment constraints \cite{ref:Mead-84}. Furthermore, the system of nonlinear equations involves evaluating integrals over $\K$ that are computationally difficult in general. Even if the support set $\K$ is finite, finding the MaxEnt distribution is not straightforward, since solving a system of nonlinear equations can be computationally demanding. 

 In \cite[Section~12.3]{ref:Lasserre-11} it is shown that the maximum entropy subject to moment constraints can be approximated by using duality of convex programming. The problem can be reduced to an unconstrained finite-dimensional convex optimization problem and an approximation hierarchy of its gradient and Hessian in terms of two single semidefinite programs involving two linear matrix inequalities is presented. The desired accuracy is controlled by the size of the linear matrix inequalities constraints. The method seems to be powerful in practice, however a rate of convergence
has not been proven. Furthermore, it is not clear how the method extends to the case of uncertain moment constraints. 
In a finite dimensional setting, \cite{ref:Dudik-07} presents a treatment of the maximum entropy principle with generalized regularization measures, that as a special case contain the setting presented here. However, convergence rates of algorithms presented are not known and again it is not clear how the method extends to the case of uncertain moment constraints.

In this article, we present a new approximation scheme to minimize the relative entropy subject to noisy moment constraints. This is a generalization of the introduced maximum entropy problem and extends the principle of maximum entropy to the so-called \emph{principle of minimum discriminating information}~\cite{ref:Kullback-59}. We show that its dual problem exhibits a particular favourable structure that allows us to apply Nesterov's smoothing method~\cite{ref:Nest-05} and hence tackle the presented problem using a fast gradient method
obtaining process convergence properties, unlike \cite{ref:Lasserre-11}.

In many applications, it is important to efficiently compute the MaxEnt distribution. For example, the zero-information moment closure method~\cite{ref:Smadbeck-13}, as well as a recently developed method to approximate the channel capacity of a large class of memoryless channels~\cite{TobiasSutter15} deal with iterative algorithms that require the numerical computation of the MaxEnt distribution in each iteration step.

\vspace{2mm}
\prlsection{Structure} The layout of this paper is as follows: In Section~\ref{sec:problem:statement} we formally introduce the problem setting. Our results on an approximation scheme in a continuous setting are reported in Section~\ref{sec:rate:distortion}. In Section~\ref{sec:finite:dim:case}, we show how these results simplify in the finite-dimensional case. Section~\ref{sec:gradient:evalutaion} discusses the gradient approximation that is the dominant step of the proposed approximation method from a computational perspective. The theoretical results are applied in Section~\ref{sec:moment:closure} to the zero-information moment closure method.

\vspace{2mm}
\prlsection{Notation}
The logarithm with basis 2 and $\mathrm{e}$ is denoted by $\log(\cdot)$ and $\ln(\cdot)$, respectively. We define the standard $n-$simplex as $\Delta_{n}:=\{  x\in\R^{n} : x\geq 0, \sum_{i=1}^{n} x_{i}=1\}$. For a probability mass function $p \in \Delta_{n}$ we denote its entropy by $H(p):=\sum_{i=1}^n -p_i \log p_i$. 
Let $B(y,r):=\{x\in\R^n \ : \ \|x-y\|_2 \leq r \}$ denote the ball with radius $r$ centered at $y$. Throughout this article, measurability always refers to Borel measurability.
For a probability density $p$ supported on a measurable set $B\subset \R$ we denote the differential entropy by $h(p):=-\int_{B} p(x) \log p(x) \drv x$. For $\A\subset\R$ and $1\leq p \leq \infty$, let $\Lp{p}(\A)$ denote the space of $\Lp{p}$-functions on the measure space $(\A, \Borelsigalg{\A}\!, \drv x)$, where $\Borelsigalg{\A}$ denotes the Borel $\sigma$-algebra and $\drv x$ the Lebesgue measure.  
Let $\XX$ be a compact metric space, equipped with its  Borel $\sigma$-field $\mathcal{B}(\cdot)$. The space of all probability measures on $(\XX, \mathcal{B}(\XX))$ will be denoted by $\mathcal{P}(\XX)$. The \emph{relative entropy} (or
Kullback-Leibler divergence) between any two probability measures $\mu, \nu \in \mathcal{P}(\XX)$ is defined by
\begin{equation*}
\KL{\mu}{\nu} := \left\{ \begin{array}{ll}
\int_{\XX} \log\left( \frac{\drv\mu}{\drv \nu} \right) \drv\mu, &\text{if } \mu \ll \nu\\
+\infty, & \text{otherwise} \, ,
\end{array} \right.
\end{equation*}
where $\ll$ denotes absolute continuity of measures, and $\tfrac{\drv\mu}{\drv \nu}$ is the Radon-Nikodym derivative. The relative entropy is non-negative, and is equal to zero if and only if $\mu\equiv \nu$.
Let $\XX$ be restricted to a compact metric space and let us consider the pair of vector spaces
$(\M(\XX),\mathbb{B}(\XX))$ where $\M(\XX)$ denotes the space of finite signed measures on $\mathcal{B}(\XX)$ and $\mathbb{B}(\XX)$ is the Banach space of bounded measurable functions on $\XX$ with respect to the sup-norm and consider the bilinear form
\begin{align*}
\inprod{\mu}{f}:=\int_{\XX}  f(x)\mu(\drv x).
\end{align*}
 This induces the total variation norm as the dual norm on $\M(\XX)$, since by~\cite[p.2]{ref:Hernandez-99}
\begin{equation*}
\| \mu \|_* = \sup_{\|f\|_\infty \leq 1}\inprod{\mu}{f} = \|\mu\|_{\mathsf{TV}},
\end{equation*}
making $\M(\XX)$ a Banach space.
In the light of~\cite[p.~206]{ref:Hernandez-99} this is a dual pair of Banach spaces; we refer to~\cite[Section~3]{ref:Anderson-87} for the details of the definition of dual pairs.

\section{Problem statement}  \label{sec:problem:statement}

Let $\K \subset \R$ be compact and consider the scenario where a probability measure $\mu \in \mathcal{P}(\K)$ is unknown and only observed via the following measurement model
\begin{equation} \label{eq:measurement:model}
y_{i} = \inprod{\mu}{x^{i}} + u_{i}, \quad u_{i} \in \mathcal{U}_{i} \quad \text{for }i=1,\hdots,M \, ,
\end{equation}
where $u_{i}$ represents the uncertainty of the obtained data point $y_{i}$ and $\mathcal{U}_{i}\subset\R$ is compact, convex and $0\in\mathcal{U}_{i}$ for all $i=1,\hdots,M$.
Given the data $(y_{i})_{i=1}^M\subset \R$, the goal is to estimate a probability measure $\mu$ that is consistent with the measurement model \eqref{eq:measurement:model}. This problem (given that $M$ is finite) is underdetermined and has infinitely many solutions. Among all possible solutions for \eqref{eq:measurement:model}, we aim to find the solution that maximizes the entropy.
Define the set $T:=\times_{i=1}^{M} \{y_{i}-u  :  u\in \mathcal{U}_{i} \}\subset \R^{M}$ and the linear operator $\mathcal{A}:\MM(\K)\to \R^{M}$ by 
\begin{align*}
(\mathcal{A}\mu)_{i}:=\inprod{\mu}{x^{i}} = \int_{\K}x^{i}\mu(\drv x) \quad \text{for all} \quad i=1,\hdots, M\, .
\end{align*}
The operator norm is defined as $\norm{\mathcal{A}}:=\sup_{\norm{\mu}_{\mathsf{TV}}=1, \norm{y}_2=1} \inprod{\mathcal{A}\mu}{y}$. Note that due to the compactness of $\K$ the operator norm is bounded, see Lemma~\ref{lem:lipschitz:cts:gradient} for a formal statement.
The adjoint operator to $\mathcal{A}$ is given by $\mathcal{A}^{*}:\R^{M}\to\mathbb{B}(\K)$, where $\mathcal{A}^{*}z (x):=\sum_{i=1}^{M}z_{i}x^{i}$; note that the domain and image spaces of the adjoint operator are well defined as $(\mathbb{B}(\K),\MM(\K))$ is a topological dual pairs and the operator $\mathcal{A}$ is bounded \cite[Proposition 12.2.5]{ref:Hernandez-99}. 

Given a reference measure $\nu \in \mathcal{P}(\K)$, the problem of minimizing the relative entropy subject to moment constraints \eqref{eq:measurement:model} can be formally described by
 \begin{align} \label{eq:main:problem}
 	       \quad J^{\star}= \min\limits_{\mu\in\mathcal{P}(\K)} \left\{ \KL{\mu}{\nu} \ : \ \mathcal{A}\mu \in T \right\}.
 	\end{align}
	\begin{Prop}[Existence \& uniqueness of \eqref{eq:main:problem}] \label{prop:solvability}
	The optimization problem~\eqref{eq:main:problem} attains an optimal feasible solution that is unique.
	\end{Prop}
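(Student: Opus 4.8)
The plan is to establish existence and uniqueness separately, exploiting the convexity of the relative entropy and the compactness of the underlying space. The statement to prove is Proposition~\ref{prop:solvability}: that \eqref{eq:main:problem} attains a unique optimal feasible solution.

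First I would address \emph{feasibility}, i.e., that the constraint set is nonempty. Since $0\in\mathcal{U}_i$ for each $i$, the true (unknown) measure $\mu$ generating the data via \eqref{eq:measurement:model} with $u_i=0$ is feasible; more robustly, since the data $(y_i)_{i=1}^M$ arise from the model, the set $\{\mu\in\mathcal{P}(\K):\mathcal{A}\mu\in T\}$ contains at least the generating measure, so the feasible set is nonempty and the infimum is over a nonempty set. The reference measure $\nu$ itself may or may not be feasible, but feasibility of the problem does not require it.

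Next I would prove \emph{existence} via the direct method of the calculus of variations. The key ingredients are: (i) the feasible set $F:=\{\mu\in\mathcal{P}(\K):\mathcal{A}\mu\in T\}$ is a closed, convex subset of $\mathcal{P}(\K)$ in the weak topology, and (ii) $\mathcal{P}(\K)$ is weakly compact because $\K$ is compact (Prokhorov's theorem / Banach-Alaoglu applied to the dual pair $(\MM(\K),\mathbb{B}(\K))$ introduced in the Notation). For closedness of $F$: the map $\mu\mapsto\mathcal{A}\mu$ is weakly continuous since each coordinate $\mu\mapsto\inprod{\mu}{x^i}$ is the integral of the bounded continuous function $x^i$ on the compact set $\K$, and $T$ is closed (it is a finite product of compact convex sets). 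Hence $F$ is weakly closed, and being a closed subset of the weakly compact set $\mathcal{P}(\K)$, it is itself weakly compact. Then I would invoke the well-known fact that the relative entropy $\mu\mapsto\KL{\mu}{\nu}$ is weakly lower semicontinuous on $\mathcal{P}(\K)$ (a standard property, e.g.\ via its variational/Donsker-Varadhan representation as a supremum of weakly continuous affine functionals). A weakly lower semicontinuous function attains its minimum on a nonempty weakly compact set, yielding an optimal feasible solution $\mu^\star$. I expect this weak-compactness argument to be the main obstacle, chiefly in being careful that the relevant topology makes both the constraint map continuous and the objective lower semicontinuous simultaneously; the resolution is that both hold in the weak topology induced by $\mathbb{B}(\K)$ (or even $C(\K)$), so there is no conflict.

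Finally, \emph{uniqueness} follows from strict convexity. The feasible set $F$ is convex (intersection of the convex set $\mathcal{P}(\K)$ with the preimage $\mathcal{A}^{-1}(T)$ of the convex set $T$ under the linear operator $\mathcal{A}$). The objective $\mu\mapsto\KL{\mu}{\nu}$ is strictly convex on the set where it is finite: for $\mu_0\neq\mu_1$ both absolutely continuous with respect to $\nu$ and $\lambda\in(0,1)$, strict convexity of the scalar map $t\mapsto t\log t$ gives
\begin{equation*}
\KL{\lambda\mu_0+(1-\lambda)\mu_1}{\nu} < \lambda\,\KL{\mu_0}{\nu} + (1-\lambda)\,\KL{\mu_1}{\nu},
\end{equation*}
with strict inequality because the Radon-Nikodym derivatives differ on a set of positive $\nu$-measure. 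Suppose two optimizers $\mu_0,\mu_1$ existed with common optimal value $J^\star<\infty$; their midpoint is feasible by convexity of $F$ and yields a strictly smaller objective value, a contradiction. Hence the optimizer is unique, completing the proof.
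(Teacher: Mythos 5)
Your proposal is correct and follows essentially the same route as the paper's proof: weak lower semicontinuity of $\mu\mapsto\KL{\mu}{\nu}$ (via its variational representation), weak compactness of the feasible set obtained from compactness of $\mathcal{P}(\K)$ together with (weak) continuity of $\mathcal{A}$ and closedness of $T$, and strict convexity of the relative entropy for uniqueness. You merely spell out details the paper leaves implicit (nonemptiness of the feasible set and the explicit midpoint argument), so there is nothing substantive to add.
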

\begin{proof}
The variational representation of the relative entropy~\cite[Corollary~4.15]{ref:Bouch-13} states that the mapping $\mu \mapsto \KL{\mu}{\nu}$ is the Fenchel-Legendre dual of the mapping $X\mapsto \log \mathds{E}e^{X}$, where $X$ is a random variable with law $\nu$. As a basic property of the Legendre-Fenchel transform, the mapping $\mu \mapsto \KL{\mu}{\nu}$ therefore is lower-semicontinuous~\cite{ref:Luenberger-69}. Note also that the space of probability measures on $\K$ is compact~\cite[Theorem~15.11]{ref:aliprantis-07}. Moreover, since the linear operator $\mathcal{A}$ is bounded, it is continuous. As a result, the feasible set of problem \eqref{eq:main:problem} is compact and hence the optimization problem attains an optimal solution. Finally, the strict convexity of the relative entropy~\cite{ref:Csiszar-75} ensures uniqueness of the optimizer.
\end{proof}

Note that if $\mathcal{U}_i=\{0\}$ for all $i=1,\ldots,M$, i.e., there is no uncertainty in the measurement model~\eqref{eq:measurement:model}, Proposition~\ref{prop:solvability} reduces to a known result~\cite{ref:Csiszar-75}. Consider the special case where the reference measure $\nu$ is the uniform measure on $\K$ and let $p$ denote the Radon-Nikodym derivative $\tfrac{\drv \mu}{\drv \nu}$ (whose existence can be assumed without loss of generality). Since $\mathcal{A}$ is weakly continuous and the differential entropy is known to be weakly lower semi-continuous \cite{ref:Bouch-13}, we can restrict attention to a (weakly) dense subset of the feasible set and hence assume without los of generality that $p\in\Lp{1}(\K)$.
Problem \eqref{eq:main:problem} then reduces to
 \begin{align} \label{eq:main:problem:special case}
 				\max\limits_{p\in\Lp{1}(\K)}	\left \lbrace h(p) \, : \,  \mathcal{A}p(x) \drv x \in T, \,  \inprod{\drv x}{p} = 1 \right \rbrace .
 	\end{align}
Problem \eqref{eq:main:problem:special case} is a generalized maximum entropy estimation problem that, in case $\mathcal{U}_i=\{0\}$ for all $i=1,\hdots,M$, simplifies to the standard entropy maximization problem subject to $M$ moment constraints.
In this article, we present a new approach to solve \eqref{eq:main:problem} that is based on its dual formulation. It turns out that the dual problem of \eqref{eq:main:problem} has a particular structure that allows us to apply Nesterov's smoothing method~\cite{ref:Nest-05}. Furthermore, we will show how an $\varepsilon$-optimal primal solution can be reconstructed. This is done by solving the dual problem and comparing with the existing approach in this context, it additionally requires a second smoothing step that is motivated by~\cite{ref:devolder-12}. The problem of entropy maximization subject to uncertain moment constraints \eqref{eq:main:problem:special case} can be seen as a special case of \eqref{eq:main:problem}.

\begin{Rem}[Sanov's worst-case rate-function]
The problem \eqref{eq:main:problem} is closely related to Sanov's worst-case rate-function, that can be expressed as
\begin{equation*}
\inf\limits_{\mu\in\mathcal{P}(\K)} \left\{ \KL{\nu}{\mu} \ : \ \mathcal{A}\mu \in T \right\},
\end{equation*}
where $\mathcal{U}_i$ is the empty set for all $i$. We refer the interested reader to \cite{ref:Pandit-06} for a detailed treatment of that problem and to \cite{ref:Prandit-Meyn-04} for its applications to robust hypothesis testing.
\end{Rem}

\section{Relative entropy minimization}  \label{sec:rate:distortion}
We start by recalling that an unconstrained minimization of the relative entropy with an additional linear term in the cost admits a closed form solution. Let $c\in\mathbb{B}(\K)$, $\nu\in\mathcal{P}(\K)$ and consider the optimization problem
\begin{equation} \label{eq:modified:entropy:max}
\min\limits_{\mu\in\mathcal{P}(\K)}  \left \{ \KL{\mu}{\nu} - \inprod{\mu}{c} \right\}.
\end{equation}

\begin{Lem}[Gibbs distribution] \label{lem:entropy:max}
The unique optimizer to problem \eqref{eq:modified:entropy:max} is given by the Gibbs distribution, i.e.,
\begin{equation*}
\mu^{\star}(\drv x) = \frac{2^{c(x)}\nu(\drv x)}{\int_{\K}2^{c(x)}\nu(\drv x)} \quad\text{for } x \in \K  ,
\end{equation*}
which leads to the optimal value of $- \log  \int_{\K}2^{c(x)}\nu(\drv x) $.
\end{Lem}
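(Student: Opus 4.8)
The plan is to prove this via the Gibbs variational principle, exploiting the non-negativity of relative entropy exactly as was done in the proofs of Lemma~\ref{lem:cover} and Lemma~\ref{lem:cover:cont}. First I would note that the problem is feasible and the objective is strictly convex and lower-semicontinuous (this was already established in the proof of Proposition~\ref{prop:solvability}), so a unique minimizer exists; it therefore suffices to exhibit a candidate and verify it is optimal. The natural candidate is the claimed Gibbs measure $\mu^\star(\drv x) = 2^{c(x)}\nu(\drv x)/Z$, where $Z:=\int_\K 2^{c(x)}\nu(\drv x)$; note $\mu^\star \in \mathcal{P}(\K)$ by construction and $\mu^\star \ll \nu$, so the relative entropy is finite.

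The key computation is to show that for any competing $\mu \in \mathcal{P}(\K)$ with $\mu \ll \nu$ (otherwise the objective is $+\infty$ and there is nothing to check), the objective value at $\mu$ dominates that at $\mu^\star$. I would write the Radon-Nikodym derivative as $\tfrac{\drv\mu}{\drv\nu} = \tfrac{\drv\mu}{\drv\mu^\star}\cdot\tfrac{\drv\mu^\star}{\drv\nu}$ and observe that $\tfrac{\drv\mu^\star}{\drv\nu} = 2^{c}/Z$, so that $\log\tfrac{\drv\mu}{\drv\nu} = \log\tfrac{\drv\mu}{\drv\mu^\star} + c\log 2 - \log Z$ (using the base-2 logarithm convention of the paper). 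Integrating against $\mu$ and subtracting $\inprod{\mu}{c}$ then telescopes:
\begin{align*}
\KL{\mu}{\nu} - \inprod{\mu}{c}
&= \KL{\mu}{\mu^\star} + \inprod{\mu}{c} - \log Z - \inprod{\mu}{c} \\
&= \KL{\mu}{\mu^\star} - \log Z \\
&\ge -\log Z,
\end{align*}
where the inequality uses non-negativity of the relative entropy $\KL{\mu}{\mu^\star}\ge 0$, with equality if and only if $\mu \equiv \mu^\star$. Since $\mu^\star$ attains the value $-\log Z$ (indeed $\KL{\mu^\star}{\mu^\star}=0$), this shows $\mu^\star$ is the minimizer and the optimal value is $-\log\int_\K 2^{c(x)}\nu(\drv x)$, as claimed.

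The only mild obstacle is bookkeeping: one must confirm the logarithm base is handled consistently (the paper uses $\log$ for base $2$, so $2^{c}$ rather than $e^{c}$ appears, and $\inprod{\mu}{c}$ rather than $(\log 2)\inprod{\mu}{c}$ emerges because $\log(2^{c}) = c$), and that $\tfrac{\drv\mu}{\drv\mu^\star}$ is well-defined $\mu$-a.e.\ whenever $\mu\ll\nu$, which holds because $\mu^\star$ and $\nu$ are mutually absolutely continuous (the density $2^c/Z$ is strictly positive and bounded on the compact set $\K$). Uniqueness follows either from the strict convexity argument already used in Proposition~\ref{prop:solvability} or, more directly here, from the equality condition $\KL{\mu}{\mu^\star}=0 \Leftrightarrow \mu\equiv\mu^\star$ in the chain above.
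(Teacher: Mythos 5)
Your proof is correct. Note that the paper does not actually spell out an argument for this lemma --- its ``proof'' is a citation to \cite{ref:Csiszar-75} and to \cite[Lemma~3.10]{TobiasSutter15} --- so your write-up supplies a self-contained argument rather than paralleling an existing one. The route you take (factor $\tfrac{\drv\mu}{\drv\nu}$ through the candidate $\mu^{\star}$, integrate against $\mu$, and invoke non-negativity of $\KL{\mu}{\mu^{\star}}$ together with its equality case) is precisely the measure-theoretic version of the technique the paper itself uses for the finite-alphabet analogue, Lemma~\ref{lem:cover}; it simultaneously delivers optimality, the optimal value $-\log\int_{\K}2^{c(x)}\nu(\drv x)$, and uniqueness, without needing to import existence from Proposition~\ref{prop:solvability}. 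Two minor points. First, in your displayed identity the term $c\log 2$ should simply read $c$: it is the same quantity, since $\log 2 = 1$ in the paper's base-2 convention, but as written it suggests a base mix-up that your own parenthetical $\log(2^{c}) = c$ contradicts. Second, your observation that $\mu^{\star}$ and $\nu$ are mutually absolutely continuous --- because $c \in \mathbb{B}(\K)$ makes the density $2^{c}/\int_{\K}2^{c(x)}\nu(\drv x)$ bounded above and bounded away from zero --- is exactly what makes $\KL{\mu}{\mu^{\star}}$ well-defined for every competitor $\mu \ll \nu$, and is worth keeping explicit, since without it the telescoping decomposition would be vacuous.
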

\begin{proof}
The result is standard and follows from~\cite{ref:Csiszar-75} or alternatively by~\cite[Lemma~3.10]{TobiasSutter15}.
\end{proof}

Let $\R^{M}\ni z\mapsto \sigma_{T}(z):=\max_{x\in T}\inprod{x}{z}\in\R$ denote the support function of $T$, which is continuous since $T$ is compact \cite[Corollary~13.2.2]{rockafellar70}. The primal-dual pair of problem \eqref{eq:main:problem} can be stated as
\begin{align}
\text{(primal program)}: \quad J^{\star} &= \min\limits_{\mu\in\mathcal{P}(\K)} \Big \{  \KL{\mu}{\nu} + \sup_{z\in\R^{M}}\left\{ \inprod{\mathcal{A}\mu}{z} - \sigma_{T}(z)\right\} \Big \} \label{eq:primal:problem}\\
\text{(dual program)}: \quad J_{\mathsf{D}}^{\star} &= \sup_{z\in\R^{M}} \Big \{ - \sigma_{T}(z)  +   \min\limits_{\mu\in\mathcal{P}(\K)} \left\{ \KL{\mu}{\nu}  + \inprod{\mathcal{A}\mu}{z}\right\} \Big \} \, ,  \label{eq:dual:problem}
\end{align}
where the dual function is given by 
\begin{equation} \label{eq:dual:function}
F(z)= - \sigma_{T}(z)  +   \min\limits_{\mu\in\mathcal{P}(\K)} \left\{ \KL{\mu}{\nu}  + \inprod{\mathcal{A}\mu}{z}\right\}.
\end{equation}
Note that the primal program \eqref{eq:primal:problem} is an infinite-dimensional convex optimization problem. The key idea of our analysis is driven by Lemma~\ref{lem:entropy:max} indicating that the dual function, that involves a minimization running over an infinite-dimensional space, is analytically available. As such, the dual problem becomes an unconstrained finite-dimensional convex optimization problem, which is amenable to first-order methods.
\begin{Lem}[Zero duality gap] \label{lem:zero:duality:gap}
There is no duality gap between the primal program \eqref{eq:primal:problem} and its dual \eqref{eq:dual:problem}, i.e., $J^{\star}=J_{\mathsf{D}}^{\star}$. Moreover, if there exists $\bar{\mu}\in\mathcal{P}(\K)$ such that $\mathcal{A}\bar{\mu}\in\mathsf{int}(T)$, then the set of optimal dual variables in \eqref{eq:dual:problem} is compact.
\end{Lem}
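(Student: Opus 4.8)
The plan is to treat the two assertions separately. For the zero duality gap I would recognise $J^\star$ and $J_{\mathsf{D}}^\star$ as the two sides of a single minimax problem and invoke Sion's minimax theorem. Define $f(\mu,z) := \KL{\mu}{\nu} + \inprod{\mathcal{A}\mu}{z} - \sigma_{T}(z)$ on $\mathcal{P}(\K)\times\R^{M}$, so that by \eqref{eq:primal:problem} and \eqref{eq:dual:problem} one has $J^\star = \inf_{\mu}\sup_{z} f(\mu,z)$ and $J_{\mathsf{D}}^\star = \sup_{z}\inf_{\mu} f(\mu,z)$. (The inner supremum $\sup_{z}\{\inprod{\mathcal{A}\mu}{z}-\sigma_{T}(z)\}$ is the biconjugate indicator $\delta_{T}(\mathcal{A}\mu)$ of the closed convex set $T$, which confirms that the primal program indeed coincides with \eqref{eq:main:problem}.) For fixed $\mu$, the map $z \mapsto f(\mu,z)$ is affine minus the convex support function $\sigma_{T}$, hence concave and continuous; for fixed $z$, the map $\mu \mapsto f(\mu,z)$ is convex and weakly lower semicontinuous, since $\KL{\cdot}{\nu}$ is weakly lower semicontinuous and $\mu \mapsto \inprod{\mathcal{A}\mu}{z} = \inprod{\mu}{\mathcal{A}^{*}z}$ is weakly continuous (both facts are already established in the proof of Proposition~\ref{prop:solvability}). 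As $\mathcal{P}(\K)$ is weakly compact \cite[Theorem~15.11]{ref:aliprantis-07}, all hypotheses of \cite{ref:Sion-58} are met and the two values agree, giving $J^\star = J_{\mathsf{D}}^\star$.

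For the second claim I would first make the dual function explicit. Writing $\inprod{\mathcal{A}\mu}{z} = \inprod{\mu}{\mathcal{A}^{*}z}$ and applying Lemma~\ref{lem:entropy:max} with $c = -\mathcal{A}^{*}z$, the inner minimisation in \eqref{eq:dual:function} evaluates in closed form, so that
\begin{equation*}
F(z) = -\sigma_{T}(z) - \log\int_{\K} 2^{-\mathcal{A}^{*}z(x)}\,\nu(\drv x).
\end{equation*}
This $F$ is concave and upper semicontinuous on $\R^{M}$, so its set of maximisers is a closed convex superlevel set of $F$; it therefore remains only to show that this set is bounded, which I would obtain from coercivity of $F$ using the interior-point hypothesis.

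The coercivity estimate is the technical core. By weak duality, for the Slater point $\bar\mu$ we have $F(z) \le \KL{\bar\mu}{\nu} + \inprod{\mathcal{A}\bar\mu}{z} - \sigma_{T}(z)$ for all $z$. Since $\mathcal{A}\bar\mu \in \mathsf{int}(T)$, choose $\eps>0$ with $B(\mathcal{A}\bar\mu,\eps)\subset T$; then for any $z\neq 0$ the point $\mathcal{A}\bar\mu + \eps\, z/\|z\|_{2}$ lies in $T$, whence $\sigma_{T}(z) \ge \inprod{\mathcal{A}\bar\mu}{z} + \eps\,\|z\|_{2}$. Substituting yields
\begin{equation*}
F(z) \le \KL{\bar\mu}{\nu} - \eps\,\|z\|_{2},
\end{equation*}
which tends to $-\infty$ as $\|z\|_{2}\to\infty$. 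Hence the superlevel sets of $F$ are bounded, and since coercivity together with upper semicontinuity on $\R^{M}$ forces the supremum to be attained, the set $\arg\max_{z} F(z)$ is nonempty, closed and bounded, hence compact.

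The main obstacle I anticipate is the careful verification of the topological hypotheses of Sion's theorem in the weak topology of $\mathcal{M}(\K)$ — in particular, pairing the weak lower semicontinuity of the relative entropy with the weak compactness of $\mathcal{P}(\K)$ so that one of the two feasible sets is compact even though the dual variable ranges over the unbounded space $\R^{M}$. By contrast, once Lemma~\ref{lem:entropy:max} has reduced the dual to a finite-dimensional, smooth concave function, the coercivity argument for the compactness statement is a short convex-analytic computation.
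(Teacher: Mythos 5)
Your proposal is correct. For the zero duality gap you take exactly the paper's route: the paper's proof consists of noting that $\KL{\cdot}{\nu}$ is convex and weakly lower semicontinuous and then invoking Sion's minimax theorem \cite[Theorem~4.2]{ref:Sion-58}; you do the same, merely spelling out the topological hypotheses (weak compactness of $\mathcal{P}(\K)$, weak continuity of $\mu\mapsto\inprod{\mu}{\mathcal{A}^{*}z}$, continuity of $\sigma_T$) that the paper leaves implicit.

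For the compactness of the dual optimal set the two arguments genuinely diverge: the paper disposes of it with a citation to \cite[Proposition~5.3.1]{ref:Bertsekas-09}, the standard result that a Slater point forces the dual optimal set of a convex program to be nonempty, convex and compact, whereas you reprove this fact from scratch via the coercivity estimate $F(z)\le \KL{\bar\mu}{\nu}-\eps\|z\|_2$, obtained by evaluating the inner infimum at $\bar\mu$ and using $\sigma_T(z)\ge\inprod{\mathcal{A}\bar\mu}{z}+\eps\|z\|_2$ for $B(\mathcal{A}\bar\mu,\eps)\subset T$. Your estimate is correct (and is essentially the mechanism behind the cited proposition), so what you buy is a self-contained, finite-dimensional argument that additionally makes attainment of the dual supremum explicit; what the paper buys is brevity. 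One terminological nitpick: the inequality $F(z)\le \KL{\bar\mu}{\nu}+\inprod{\mathcal{A}\bar\mu}{z}-\sigma_T(z)$ is not ``weak duality'' — it is simply the upper bound on the inner infimum in \eqref{eq:dual:function} obtained by plugging in the feasible point $\bar\mu$ — but the inequality itself, and everything you deduce from it, is sound.
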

\begin{proof}
Recall that the relative entropy is known to be lower semicontinuous and convex in the first argument, which can be seen as a direct consequence of the duality relation for the relative entropy~\cite[Corollary~4.15]{ref:boucheron-13}. Hence, the desired zero duality gap follows by Sion's minimax theorem~\cite[Theorem~4.2]{ref:Sion-58}. The compactness of the set of dual optimizers is due to~\cite[Proposition~5.3.1]{ref:Bertsekas-09}. 
\end{proof}

Because the dual function \eqref{eq:dual:function} turns out to be non-smooth, in the absence of any additional structure, the efficiency estimate of a black-box first-order method is of order $O( \nicefrac{1}{\varepsilon^{2}})$, where $\varepsilon$ is the desired absolute additive accuracy of the approximate solution in function value~\cite{ref:nesterov-book-04}. 
We show, however, that the generalized entropy maximization problem \eqref{eq:primal:problem} has a certain structure that allows us to deploy the recent developments in \cite{ref:Nest-05} for approximating non-smooth problems by smooth ones, leading to an efficiency estimate of  order $O( \nicefrac{1}{\varepsilon})$.  This, together with the low complexity of each iteration step in the approximation scheme, offers a numerical method that has an attractive computational complexity. 
In the spirit of~\cite{ref:Nest-05,ref:devolder-12}, we introduce a smoothing parameter $\eta:=(\eta_{1},\eta_{2})\in\Rsp^{2}$ and consider a smooth approximation of the dual function 
\begin{equation} \label{eq:dual:function:smoothed}
F_{\eta}(z):=  -\max_{x\in T}\left\{ \inprod{x}{z} - \frac{\eta_{1}}{2}\norm{x}_{2}^{2} \right\} +  \min\limits_{\mu\in\mathcal{P}(\K)} \left\{ \KL{\mu}{\nu} + \inprod{\mathcal{A}\mu}{z}\right\}-\frac{\eta_{2}}{2}\norm{z}_{2}^{2} \, ,
\end{equation}
with respective optimizers denoted by $x^{\star}_{z}$ and $\mu^{\star}_{z}$. It is straightforward to see that the optimizer $x^{\star}_{z}$ is given by
\begin{align*}
x^{\star}_{z} = \arg\min_{x\in T} \| x - \eta_1^{-1}z\|_2^2 = \pi_T\left(\eta_1^{-1}z\right). 
\end{align*}
Hence, the complexity of computing $x^\star_z$ is determined by the projection operator onto $T$; for simple enough cases (e.g., 2-norm balls, hybercubes) the solution is analytically available, while for more general cases (e.g., simplex, 1-norm balls) it can be computed at relatively low computational effort, see \cite[Section~5.4]{richter_phd} for a comprehensive survey. The optimizer $\mu^{\star}_{z}$ according to Lemma~\ref{lem:entropy:max} is given by
\begin{equation*}
\mu^{\star}_{z}(B)  = \frac{\int_B 2^{-\mathcal{A}^*z(x)}\nu(\drv x)}{\int_\K 2^{-\mathcal{A}^*z(x)}\nu(\drv x)}, \quad \text{for all } B\in\mathcal{B}(\K).
\end{equation*}
\begin{Lem}[Lipschitz gradient] \label{lem:lipschitz:cts:gradient}
The dual function $F_{\eta}$ defined in \eqref{eq:dual:function:smoothed} is $\eta_{2}$-strongly concave and differentiable. Its gradient $\nabla F_{\eta}(z) =-x^{\star}_{z} + \mathcal{A}\mu^{\star}_{z} - \eta_{2} z$ is Lipschitz continuous with Lipschitz constant $\tfrac{1}{\eta_{1}}+\left( \sum_{i=1}^M B^i \right)^2+\eta_{2}$ and $B:=\max\{|x| \ : \ x\in \K\}$.
\end{Lem}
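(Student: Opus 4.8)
The plan is to exploit the additive structure of the smoothed dual, writing
$F_{\eta}(z) = -S_{\eta_{1}}(z) + g(z) - \tfrac{\eta_{2}}{2}\norm{z}_{2}^{2}$
with
$S_{\eta_{1}}(z):=\max_{x\in T}\{\inprod{x}{z}-\tfrac{\eta_{1}}{2}\norm{x}_{2}^{2}\}$
and
$g(z):=\min_{\mu\in\mathcal{P}(\K)}\{\KL{\mu}{\nu}+\inprod{\mathcal{A}\mu}{z}\}$,
and treating the three summands separately. First I would settle concavity and differentiability. The map $z\mapsto S_{\eta_{1}}(z)$ is a supremum of affine functions, hence convex, so $-S_{\eta_{1}}$ is concave; since the inner objective $x\mapsto\inprod{x}{z}-\tfrac{\eta_{1}}{2}\norm{x}_{2}^{2}$ is strictly concave on the compact convex set $T$, its maximizer $x^{\star}_{z}=\pi_{T}(\eta_{1}^{-1}z)$ is unique, and Danskin's theorem yields $\nabla\big(-S_{\eta_{1}}\big)(z)=-x^{\star}_{z}$. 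For $g$, I would invoke Lemma~\ref{lem:entropy:max} with $c=-\mathcal{A}^{*}z$ (using $\inprod{\mathcal{A}\mu}{z}=\inprod{\mu}{\mathcal{A}^{*}z}$): the inner minimum is attained uniquely at $\mu^{\star}_{z}$ and equals $-\log\int_{\K}2^{-\mathcal{A}^{*}z(x)}\nu(\drv x)$, so $g(z)=-\log\int_{\K}2^{-\mathcal{A}^{*}z(x)}\nu(\drv x)$ is a smooth concave function whose gradient, obtained by differentiating under the integral sign (the factors of $\ln 2$ cancelling), is $\nabla g(z)=\mathcal{A}\mu^{\star}_{z}$. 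The last term is $\eta_{2}$-strongly concave and smooth with gradient $-\eta_{2}z$. Adding the three gradients gives the claimed formula $\nabla F_{\eta}(z)=-x^{\star}_{z}+\mathcal{A}\mu^{\star}_{z}-\eta_{2}z$, and since the first two summands are concave while the third is $\eta_{2}$-strongly concave, $F_{\eta}$ is $\eta_{2}$-strongly concave.

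Next I would bound the Lipschitz modulus of each gradient component and combine by the triangle inequality. For the first component, $z\mapsto x^{\star}_{z}=\pi_{T}(\eta_{1}^{-1}z)$, firm non-expansiveness of the Euclidean projection gives $\norm{x^{\star}_{z}-x^{\star}_{z'}}_{2}\le\eta_{1}^{-1}\norm{z-z'}_{2}$, i.e.\ modulus $\tfrac{1}{\eta_{1}}$. The third component contributes exactly $\eta_{2}$. The crux is the second component $z\mapsto\mathcal{A}\mu^{\star}_{z}=\nabla g(z)$, for which I would compute one more derivative: a direct calculation shows $\nabla^{2}g(z)=-\ln 2\,\Sigma_{z}$, where $\Sigma_{z}$ is the covariance matrix of the random vector $(X,X^{2},\ldots,X^{M})\transp$ under $\mu^{\star}_{z}$. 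For any unit vector $v$ one has $v\transp\Sigma_{z}v=\mathrm{Var}_{\mu^{\star}_{z}}\!\big(\sum_{i}v_{i}X^{i}\big)\le\mathbb{E}_{\mu^{\star}_{z}}\big[\big(\sum_{i}v_{i}X^{i}\big)^{2}\big]\le\big(\sum_{i}|v_{i}|B^{i}\big)^{2}\le\big(\sum_{i}B^{i}\big)^{2}$, where I use $|X|\le B$ on $\K$ and $\norm{v}_{\infty}\le\norm{v}_{2}=1$. Hence the spectral norm obeys $\norm{\nabla^{2}g(z)}\le\ln 2\,(\sum_{i}B^{i})^{2}\le(\sum_{i}B^{i})^{2}$, and the mean value inequality upgrades this to a global Lipschitz bound $(\sum_{i}B^{i})^{2}$ for $z\mapsto\mathcal{A}\mu^{\star}_{z}$.

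Summing the three moduli yields the Lipschitz constant $\tfrac{1}{\eta_{1}}+\big(\sum_{i=1}^{M}B^{i}\big)^{2}+\eta_{2}$, as asserted. The same moment estimate also delivers the boundedness of $\norm{\mathcal{A}}$ promised earlier in the text: $\inprod{\mathcal{A}\mu}{y}=\int_{\K}\big(\sum_{i}y_{i}x^{i}\big)\mu(\drv x)\le\big\lVert\sum_{i}y_{i}x^{i}\big\rVert_{\infty}\norm{\mu}_{\mathsf{TV}}\le\big(\sum_{i}B^{i}\big)\norm{y}_{2}\norm{\mu}_{\mathsf{TV}}$, so $\norm{\mathcal{A}}<\infty$. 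The main obstacle I anticipate is the second component: justifying the interchange of differentiation and integration that produces the clean gradient $\mathcal{A}\mu^{\star}_{z}$ and the Hessian identity $\nabla^{2}g=-\ln 2\,\Sigma_{z}$, and then controlling the spectral norm of the covariance matrix. Everything else — the projection estimate and the strong-concavity bookkeeping — is routine. A minor slack worth flagging in the write-up is the factor $\ln 2<1$, which I discard to match the stated constant; the bound therefore holds a fortiori.
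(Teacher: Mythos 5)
Your proposal is correct, but it proves the crucial middle estimate by a genuinely different route than the paper. The paper's proof is a one-liner by citation: it invokes Nesterov's smoothing theorem \cite[Theorem~1]{ref:Nest-05}, noting that the relative entropy is $1$-strongly convex in its first argument with respect to the total-variation norm (via Pinsker's inequality), and then bounds the operator norm $\norm{\mathcal{A}}\le\sum_{i=1}^M B^i$ by Cauchy--Schwarz — exactly the computation you reproduce at the end of your write-up — so that the middle term contributes $\norm{\mathcal{A}}^2\le\big(\sum_i B^i\big)^2$ to the Lipschitz constant. You instead exploit the explicit Gibbs form of $\mu^{\star}_{z}$ from Lemma~\ref{lem:entropy:max}: you write the middle term as the (negative) log-partition function $g(z)=-\log\int_{\K}2^{-\mathcal{A}^{*}z(x)}\nu(\drv x)$, differentiate twice, and identify $\nabla^2 g(z)=-\ln 2\,\Sigma_z$ with $\Sigma_z$ the covariance of $(X,\ldots,X^M)$ under $\mu^{\star}_{z}$ — the classical exponential-family identity — which you then bound spectrally by the moment estimate. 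Each approach has its merits: the paper's argument is shorter, does not require justifying differentiation under the integral sign, and generalizes immediately to other strongly convex prox terms and other bounded operators (which is why the same template recurs throughout the thesis, e.g.\ Proposition~\ref{prop:Lipschitz:continuity:DMC} and Proposition~\ref{prop:Lipschitz:cts:channel}); your argument is self-contained, makes the concavity of the middle term manifest (covariance matrices are positive semidefinite), and actually yields the slightly sharper modulus $\ln 2\,\big(\sum_i B^i\big)^2$ for that term, which you correctly note can be discarded to match the stated constant. Your treatment of the other two terms (firm non-expansiveness of $\pi_T$ for the $\tfrac{1}{\eta_1}$ part, direct bookkeeping for the $\eta_2$ part) is equivalent to what the cited machinery gives and is unobjectionable; the interchange of differentiation and integration you flag is routine here because $\K$ is compact, so the integrand and its $z$-derivatives are uniformly bounded on bounded sets of $z$.
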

\begin{proof}
The proof follows along the lines of~\cite[Theorem~1]{ref:Nest-05} and in particular by recalling that the relative entropy (in the first argument) is strongly convex with convexity parameter one and Pinsker's inequality, that says that for any $\mu\in\mathcal{P}(\K)$ we have
$\KL{\mu}{\nu} \geq \frac{1}{2}\|\mu-\nu\|_{\textsf{TV}}.$ Moreover, we use the bound
\begin{align}
\norm{\AAA} &=		\sup\limits_{\lambda\in\R^{M}\!, \, \mu\in\mathcal{P}(\K)} \left\{ \inprod{\AAA \mu}{\lambda} \ : \ \norm{\lambda}_{2}=1, \ \norm{\mu}_{\mathsf{TV}}=1 \right\} \nonumber \\
			  &\leq 	\sup\limits_{\lambda\in\R^{M}\!, \, \mu\in\mathcal{P}(\K)} \left\{ \norm{\AAA \mu}_{2} \norm{\lambda}_{2} \ : \ \norm{\lambda}_{2}=1, \ \norm{\mu}_{\mathsf{TV}}=1\right\}\label{eq:norm:W:proof:step:CS} \\
			&\leq 	   \sup\limits_{ \mu\in\mathcal{P}(\K)} \left\{ \norm{\AAA \mu}_{1} \ : \ \norm{\mu}_{\mathsf{TV}}=1\right\}\nonumber \\
			&= 		\sup\limits_{ \mu\in\mathcal{P}(\K)} \left\{ \sum_{i=1}^{M} \left| \int_{\mathcal{\K}} x^i \mu(\drv x) \right|   \ : \ \norm{\mu}_{\mathsf{TV}}=1\right\}\nonumber \\
			&\leq 	\sum_{i=1}^M B^i \, , \nonumber
\end{align}
where \eqref{eq:norm:W:proof:step:CS} is due to the Cauchy-Schwarz inequality.
\end{proof}

Note that $F_{\eta}$ is $\eta_{2}$-strongly concave and according to Lemma~\ref{lem:lipschitz:cts:gradient} its gradient is Lipschitz continuous with constant $L(\eta):=\tfrac{1}{\eta_{1}}+\norm{\mathcal{A}}^{2}+\eta_{2}$.
We finally consider the approximate dual program given by
\begin{align} 
\text{(smoothed dual program)}: \quad J_{\eta}^{\star} &= \sup_{z\in\R^{M}} F_{\eta}(z) \, .  \label{eq:dual:problem:approx:double}
\end{align}
It turns out that \eqref{eq:dual:problem:approx:double} belongs to a favorable class of smooth and strongly convex optimization problems that can be solved by a fast gradient method given in Algorithm~\hyperlink{algo:1}{1} (see~\cite{ref:nesterov-book-04}) with an efficiency estimate of the order $O(\nicefrac{1}{\sqrt{\varepsilon}})$.

 \begin{table}[!htb]
\centering 
\begin{tabular}{c}
  \Xhline{3\arrayrulewidth}  \hspace{1mm} \vspace{-3mm}\\ 
\hspace{12.2mm}{\bf{\hypertarget{algo:1}{Algorithm 1: } }} Optimal scheme for smooth $\&$ strongly convex optimization \hspace{1.2mm} \\ \vspace{-3mm} \\ \hline \vspace{-0.5mm}
\end{tabular} \\
\vspace{-5mm}
 \begin{flushleft}
  {\hspace{3mm}Choose $w_0=y_{0} \in \R^{M}$ and $\eta\in\Rsp^2$}
 \end{flushleft}
 \vspace{-8mm}
 \begin{flushleft}
  {\hspace{3mm}\bf{For $k\geq 0$ do$^{*}$}}
 \end{flushleft}
 \vspace{-2mm}

  \begin{tabular}{l l}
{\bf Step 1: } & Set $y_{k+1}=w_{k}+\frac{1}{L(\eta)}\nabla F_{\eta}(w_{k})$ \\
{\bf Step 2: } & Compute $w_{k+1}=y_{k+1} + \frac{\sqrt{L(\eta)}-\sqrt{\eta_{2}}}{\sqrt{L(\eta)}+\sqrt{\eta_{2}}}(y_{k+1}-y_{k})$\\
  \end{tabular}
   \begin{flushleft}
  {\hspace{3mm}[*The stopping criterion is explained in Remark~\ref{remark:stopping}]}
  \vspace{-10mm}
 \end{flushleft}  
\begin{tabular}{c}
\hspace{12.2mm} \phantom{ {\bf{Algorithm:}} Optimal Scheme for Smooth $\&$ Strongly Convex Optimization}\hspace{1.5mm} \\ \vspace{-1.0mm} \\\Xhline{3\arrayrulewidth}
\end{tabular}
\end{table}

Under an additional regularity assumption, solving the smoothed dual problem~\eqref{eq:dual:problem:approx:double} provides an estimate of the primal and dual variables of the original non-smooth problems \eqref{eq:primal:problem} and \eqref{eq:dual:problem}, respectively, as summarized in the next theorem (Theorem~\ref{thm:main:result:inf:dim}).
The main computational difficulty of the presented method lies in the gradient evaluation $\nabla F_{\eta}$. We refer to Section~\ref{sec:gradient:evalutaion}, for a detailed discussion on this subject.

\begin{As}[Slater point] \label{ass:slater}
There exits a strictly feasible solution to \eqref{eq:main:problem}, i.e., $\mu_0\in\mathcal{P}(\K)$ such that $\mathcal{A}\mu_0\in T$ and $\delta:=\min_{y\in  T^c} \| \mathcal{A}\mu_0 - y\|_2 >0$.
\end{As}
Note that finding a Slater point $\mu_0$ such that Assumption~\ref{ass:slater} holds, in general can be difficult. In Remark~\ref{rem:slater:point} we present a constructive way of finding such an interior point. Given Assumption~\ref{ass:slater}, for $\varepsilon>0$ define
\begin{align}
 &C:=\KL{\mu_0}{\nu}, \qquad D :={1 \over 2} \max_{x \in T} \|x\|_2, \qquad \eta_{1}(\varepsilon) :=\frac{\varepsilon}{4D},  \qquad  \eta_{2}(\varepsilon) :=\frac{\varepsilon \delta^2}{2C^2} \nonumber \\
 &N_1(\varepsilon):=2 \left( \sqrt{\frac{8DC^2}{\varepsilon^2 \delta^2}+\frac{2\|\mathcal{A}\|^2 C^2}{\varepsilon \delta^2}+1}\right) \ln\left(\frac{10(\varepsilon +2C)}{\varepsilon}\right) \label{eq:definitions:algo:cont} \\
&N_2(\varepsilon):=2 \left( \sqrt{\frac{8DC^2}{\varepsilon^2 \delta^2}+\frac{2\|\mathcal{A}\|^2 C^2}{\varepsilon \delta^2}+1}\right) \ln\left( \frac{C}{\varepsilon \delta(2-\sqrt{3})}\sqrt{4\left( \frac{4D}{\varepsilon}+\|\mathcal{A}\|^2 + \frac{\varepsilon \delta^2}{2C^2} \right)\left( C +\frac{\varepsilon}{2} \right)} \right). \nonumber
\end{align}

\begin{Thm}[Almost linear convergence rate]\label{thm:main:result:inf:dim}
Given Assumption~\ref{ass:slater} and the definitions~\eqref{eq:definitions:algo:cont}, let $\varepsilon>0$ and $N(\varepsilon) := \left \lceil \max\{ N_1(\varepsilon), N_2(\varepsilon)\} \right \rceil$. Then, $N(\varepsilon)$ iterations of Algorithm~\hyperlink{algo:1}{1} produce approximate solutions to the problems
\eqref{eq:dual:problem} and \eqref{eq:primal:problem} given by
\begin{equation} \label{eq:estimates:primal:dual}
\hat{z}_{k,\eta}:=y_{k} \quad \text{and} \quad \hat{\mu}_{k,\eta}(B) := \frac{\int_B 2^{-\mathcal{A}^*\hat{z}_{k,\eta}(x)}\nu(\drv x)}{\int_\K 2^{-\mathcal{A}^*\hat{z}_{k,\eta}(x)}\nu(\drv x)}\, , \quad \text{for all } B\in\mathcal{B}(\K) \, ,
\end{equation}
which satisfy
\begin{subequations}
\begin{alignat}{3}
&\text{dual $\varepsilon$-optimality:}\hspace{20mm} &&0\leq J^{\star} - F(\hat{z}_{k(\varepsilon)})\leq \varepsilon \label{eq:thm:dual:optimality:cts} \\
&\text{primal $\varepsilon$-optimality:}\hspace{25mm} &&|\KL{\hat{\mu}_{k(\varepsilon)}}{\nu}- J^{\star} | \leq  2(1+2\sqrt{3})\varepsilon \label{eq:thm:primal:optimality:cts} \\
&\text{primal $\varepsilon$-feasibility:}\hspace{20mm} &&d(\mathcal{A}\hat{\mu}_{k(\varepsilon)},T)\leq \frac{2\varepsilon\delta}{C} \, , \label{eq:thm:primal:feasibility:cts}
\end{alignat}
\end{subequations}
where $d(\cdot,T)$ denotes the distance to the set $T$, i.e., $d(x,T):=\min_{y\in T}\|x-y\|_2$.
\end{Thm}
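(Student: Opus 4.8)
The plan is to run the fast gradient scheme of Algorithm~\hyperlink{algo:1}{1} on the smoothed dual \eqref{eq:dual:problem:approx:double}, whose objective $F_{\eta}$ is $\eta_{2}$-strongly concave with $L(\eta)$-Lipschitz continuous gradient by Lemma~\ref{lem:lipschitz:cts:gradient}, and then to transfer the resulting accuracy back to the original non-smooth pair \eqref{eq:primal:problem}--\eqref{eq:dual:problem} by controlling the two smoothing errors separately. First I would invoke the linear convergence estimate for smooth and strongly convex problems \cite[Theorem~2.2.3]{ref:nesterov-book-04}: with $w_{0}=y_{0}$ the iterates obey
\[
J_{\eta}^{\star} - F_{\eta}(y_{k}) \le \Big(1 - \sqrt{\tfrac{\eta_{2}}{L(\eta)}}\Big)^{k}\Big(J_{\eta}^{\star}-F_{\eta}(y_{0}) + \tfrac{\eta_{2}}{2}\|y_{0}-z_{\eta}^{\star}\|_{2}^{2}\Big),
\]
which decays geometrically. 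The two thresholds $N_{1}(\varepsilon), N_{2}(\varepsilon)$ in \eqref{eq:definitions:algo:cont} are exactly the iteration counts at which this quantity drops below the two accuracy levels needed below, after substituting $\eta_{1}(\varepsilon),\eta_{2}(\varepsilon)$, the value of $L(\eta)$ from Lemma~\ref{lem:lipschitz:cts:gradient}, and the crude a~priori bounds $J_{\eta}^{\star}-F_{\eta}(y_{0})\le C$ and $\|z_{\eta}^{\star}\|_{2}\le C/\delta$ that follow from the Slater point of Assumption~\ref{ass:slater} and the non-negativity of the relative entropy.

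Second, for dual $\varepsilon$-optimality \eqref{eq:thm:dual:optimality:cts} I would quantify the smoothing gap. Since the prox terms enter \eqref{eq:dual:function:smoothed} inside the inner maxima/minima, one has, uniformly on the compact set of (near-)optimizers provided by Lemma~\ref{lem:zero:duality:gap}, the sandwich $F_{\eta}(z) - \eta_{1}D \le F(z) \le F_{\eta}(z)+\tfrac{\eta_{2}}{2}\|z\|_{2}^{2}$. Combining this with the convergence estimate and the choices $\eta_{1}(\varepsilon)=\varepsilon/(4D)$, $\eta_{2}(\varepsilon)=\varepsilon\delta^{2}/(2C^{2})$ forces each error contribution to be of order $\varepsilon$; together with the zero duality gap $J^{\star}=J_{\mathsf D}^{\star}$ (Lemma~\ref{lem:zero:duality:gap}) this yields $0\le J^{\star}-F(\hat z_{k(\varepsilon)})\le \varepsilon$ whenever $k\ge N_{1}(\varepsilon)$.

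Third, the primal statements \eqref{eq:thm:primal:optimality:cts}--\eqref{eq:thm:primal:feasibility:cts} require primal recovery, which is where the second (strong-convexity) smoothing motivated by \cite{ref:devolder-12} is essential. The reconstructed measure $\hat\mu_{k,\eta}$ in \eqref{eq:estimates:primal:dual} is precisely the inner minimizer $\mu^{\star}_{y_{k}}$ of \eqref{eq:dual:function:smoothed}, i.e.\ the Gibbs distribution of Lemma~\ref{lem:entropy:max}, so that $\nabla F_{\eta}(y_{k}) = -x^{\star}_{y_{k}} + \mathcal{A}\hat\mu_{k,\eta} - \eta_{2}y_{k}$ with $x^{\star}_{y_{k}}\in T$. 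Strong concavity and smoothness give the co-coercivity bound $\|\nabla F_{\eta}(y_{k})\|_{2}^{2}\le 2L(\eta)\big(J_{\eta}^{\star}-F_{\eta}(y_{k})\big)$, so the geometric decay makes the gradient residual small; since $x^{\star}_{y_{k}}\in T$, the triangle inequality
\[
d(\mathcal{A}\hat\mu_{k,\eta},T) \le \|\mathcal{A}\hat\mu_{k,\eta}-x^{\star}_{y_{k}}\|_{2} \le \|\nabla F_{\eta}(y_{k})\|_{2} + \eta_{2}\|y_{k}\|_{2}
\]
together with $\eta_{2}\|y_{k}\|_{2}\le \tfrac{\varepsilon\delta^{2}}{2C^{2}}\cdot\tfrac{C}{\delta}=\tfrac{\varepsilon\delta}{2C}$ delivers the feasibility estimate $\tfrac{2\varepsilon\delta}{C}$ once $k\ge N_{2}(\varepsilon)$ (the tighter threshold). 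The primal optimality bound then follows by sandwiching $\mathsf{D}(\hat\mu_{k,\eta}\,\|\,\nu)$: a Fenchel--Young inequality at the pair $(\hat\mu_{k,\eta},y_{k})$ relates it to $F_{\eta}(y_{k})$ plus the gradient-residual and feasibility terms, and combining with the dual $\varepsilon$-optimality of the previous paragraph produces the two-sided bound $2(1+2\sqrt{3})\varepsilon$.

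The hard part will be this last step: propagating the two smoothing errors and the gradient residual through the \emph{infinite-dimensional} duality so that $\hat\mu_{k,\eta}$ is simultaneously almost feasible and almost optimal, and in particular tracking the constants so that they collapse to the stated $\tfrac{2\varepsilon\delta}{C}$ and $2(1+2\sqrt{3})\varepsilon$. This bookkeeping is precisely what fixes the forms of $N_{1}(\varepsilon)$ and $N_{2}(\varepsilon)$ in \eqref{eq:definitions:algo:cont}, and taking $N(\varepsilon)=\lceil\max\{N_{1},N_{2}\}\rceil$ guarantees all three assertions hold simultaneously.
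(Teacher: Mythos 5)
Your proposal is correct and takes essentially the same route as the paper: the paper's proof likewise uses the Slater point to bound the optimal dual multiplier by $C/\delta$ (citing \cite{ref:Nedic-08}), uses weak duality to bound the dual function by $C$, and then obtains all three error bounds by invoking the double-smoothing analysis of \cite{ref:devolder-12} --- which is precisely the machinery (fast-gradient linear convergence on the strongly concave smoothed dual, the two smoothing-error terms calibrated by $\eta_1(\varepsilon),\eta_2(\varepsilon)$, and the gradient-residual argument for primal feasibility and optimality) that you reconstruct explicitly. The only substantive difference is that you spell out the bookkeeping the paper outsources to that citation.
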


In some applications, Assumption~\ref{ass:slater} does not hold, as for example in the classical case where $\mathcal{U}_i=\{0\}$ for all $i=1,\ldots,M$, i.e., there is no uncertainty in the measurement model~\eqref{eq:measurement:model}. Moreover, in other cases satisfying Assumption~\ref{ass:slater} using the construction described in Remark~\ref{rem:slater:point} might be computationally expensive. 
Interestingly, Algorithm~\hyperlink{algo:1}{1} can be run irrespective of whether Assumption~\ref{ass:slater} holds or not, i.e. for any choice of $C$ and $\delta$. While explicit error bounds of Theorem~\ref{thm:main:result:inf:dim} as well as the a-posteriori error bound discussed below do not hold anymore, the asymptotic convergence is not affected.

\begin{proof}
Using Assumption~\ref{ass:slater}, note that the constant defined as
\begin{equation*}
L:= \frac{\KL{\mu_0}{\nu}-\min_{\mu\in\mathcal{P}(\K)}\KL{\mu}{\nu}}{\min_{y\in  T^c} \| \mathcal{A}\mu_0 - y\|_2} = \frac{C}{\delta}
\end{equation*}
can be shown to be an upper bound for the optimal dual multiplier \cite[Lemma 1]{ref:Nedic-08}, i.e., $\| z^\star \|_2 \leq L$.
The dual function can be bounded from above by $C$, since weak duality ensures $ F(z) \leq J^\star \leq \KL{\mu_0}{\nu}=C$ for all $z \in \R^M$.
Moreover, if we recall the preparatory Lemmas~\ref{lem:zero:duality:gap} and \ref{lem:lipschitz:cts:gradient}, we are finally in the setting such that the presented error bounds can be derived from \cite{ref:devolder-12}.
\end{proof}

Theorem~\ref{thm:main:result:inf:dim} directly implies that we need at most $O(\frac{1}{\varepsilon} \log \frac{1}{\varepsilon})$ iterations of Algorithm~\hyperlink{algo:1}{1} to achieve $\varepsilon$-optimality of primal and dual solutions as well as $\varepsilon$-feasible primal variable. 
Note that Theorem~\ref{thm:main:result:inf:dim} provides an explicit bound on the so-called \emph{a-priori errors}, together with approximate optimizer of the primal \eqref{eq:primal:problem} and dual \eqref{eq:dual:problem} problem. The latter allows us to derive an \emph{a-posteriori error} depending on the approximate optimizers, which is often significantly smaller than the a-priori error.
\begin{Cor}[Posterior error estimation] \label{Cor:aposteriori:infinite:dim}
Given Assumption~\ref{ass:slater}, let $z^{\star}$ denote the dual optimizer to \eqref{eq:dual:problem}. The approximate primal and dual variables $\hat{\mu}$ and $\hat{z}$ given by~\eqref{eq:estimates:primal:dual}, satisfy the following a-posteriori error bound 
\begin{align*}
F(\hat{z})\leq J^{\star}\leq \KL{\hat{\mu}}{\nu} + \frac{C}{\delta} d(\mathcal{A}\hat{\mu},T) \, ,
\end{align*}
where $d(\cdot,T)$ denotes the distance to the set $T$, i.e., $d(x,T):=\inf_{y\in T}\|x-y\|_2$.
\end{Cor}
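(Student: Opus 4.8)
The plan is to establish the two inequalities separately, both of which are essentially weak-duality statements combined with the Slater-point bound on the dual optimizer. Recall from Lemma~\ref{lem:zero:duality:gap} that there is no duality gap, so $J^\star = J_{\mathsf{D}}^\star = F(z^\star)$, and that $F$ is the true (non-smoothed) dual function defined in \eqref{eq:dual:function}. The left-hand inequality $F(\hat z) \le J^\star$ is immediate: since $F(z) \le J^\star$ for every $z \in \R^M$ by weak duality (the dual program \eqref{eq:dual:problem} is a supremum that is bounded above by the primal optimal value), evaluating at the particular point $z = \hat z$ gives $F(\hat z) \le J^\star$. No further work is needed here.

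For the right-hand inequality, the idea is to exhibit a feasible-type bound using the approximate primal variable $\hat\mu$ together with the optimal dual multiplier $z^\star$. First I would note that $\hat\mu \in \mathcal P(\K)$, so $\KL{\hat\mu}{\nu}$ is a genuine value of the relative-entropy objective, but $\hat\mu$ need not satisfy $\mathcal A\hat\mu \in T$ exactly; it is only approximately feasible. The standard trick is to penalize the constraint violation by the dual optimizer. Concretely, since $z^\star$ is optimal for \eqref{eq:dual:problem} and there is zero duality gap,
\begin{align*}
J^\star = F(z^\star) = -\sigma_T(z^\star) + \min_{\mu \in \mathcal P(\K)}\big\{ \KL{\mu}{\nu} + \inprod{\mathcal A\mu}{z^\star}\big\} \le -\sigma_T(z^\star) + \KL{\hat\mu}{\nu} + \inprod{\mathcal A\hat\mu}{z^\star},
\end{align*}
where the inequality replaces the minimizer by the particular measure $\hat\mu$. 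It then remains to bound $-\sigma_T(z^\star) + \inprod{\mathcal A\hat\mu}{z^\star}$ by $\tfrac{C}{\delta} d(\mathcal A\hat\mu, T)$.

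The key estimate is therefore $\inprod{\mathcal A\hat\mu}{z^\star} - \sigma_T(z^\star) \le \|z^\star\|_2\, d(\mathcal A\hat\mu,T)$. To see this, let $y^\star \in T$ be the projection of $\mathcal A\hat\mu$ onto $T$, so that $d(\mathcal A\hat\mu,T) = \|\mathcal A\hat\mu - y^\star\|_2$. By definition of the support function, $\inprod{y^\star}{z^\star} \le \sigma_T(z^\star)$, hence
\begin{align*}
\inprod{\mathcal A\hat\mu}{z^\star} - \sigma_T(z^\star) \le \inprod{\mathcal A\hat\mu - y^\star}{z^\star} \le \|\mathcal A\hat\mu - y^\star\|_2\,\|z^\star\|_2 = \|z^\star\|_2\, d(\mathcal A\hat\mu,T),
\end{align*}
using Cauchy--Schwarz. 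Finally, invoking the bound $\|z^\star\|_2 \le C/\delta$ on the optimal dual multiplier---which is exactly the estimate used in the proof of Theorem~\ref{thm:main:result:inf:dim} and follows from \cite[Lemma 1]{ref:Nedic-08} under Assumption~\ref{ass:slater}---yields $\inprod{\mathcal A\hat\mu}{z^\star} - \sigma_T(z^\star) \le \tfrac{C}{\delta} d(\mathcal A\hat\mu,T)$, and combining with the displayed chain completes the proof.

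I expect no serious obstacle: the whole argument is a clean reorganization of weak duality plus Cauchy--Schwarz, and the only nontrivial ingredient, the bound $\|z^\star\|_2 \le C/\delta$, has already been established and used. The one point requiring mild care is confirming that the minimizer in $F(z^\star)$ can legitimately be replaced by $\hat\mu$ (valid because $\hat\mu \in \mathcal P(\K)$ lies in the feasible set over which the minimum is taken) and that $y^\star$ attaining the projection distance exists (valid since $T$ is compact). Both are routine given the standing compactness assumptions on $\K$ and $T$.
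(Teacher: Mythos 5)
Your proposal is correct, and it takes a genuinely different route from the paper. The paper proves the upper bound by introducing the family of perturbed programs $J^{\star}(\varepsilon)=\min_{\mu\in\mathcal{P}(\K)}\{\KL{\mu}{\nu} : d(\mathcal{A}\mu,T)\leq\varepsilon\}$ and showing, via a Lagrangian reformulation, strong duality for the inner penalized problem, and Sion's minimax theorem, that this perturbation function is Lipschitz continuous with constant $\|z^{\star}\|_{2}$; the claimed bound then follows because $\hat{\mu}$ is feasible for the perturbed program at $\varepsilon=d(\mathcal{A}\hat{\mu},T)$. You instead bypass the perturbation machinery entirely: you evaluate the Lagrangian lower bound $J^{\star}=F(z^{\star})\leq -\sigma_{T}(z^{\star})+\KL{\hat{\mu}}{\nu}+\inprod{\mathcal{A}\hat{\mu}}{z^{\star}}$ directly at the test point $\hat{\mu}$, and then control $\inprod{\mathcal{A}\hat{\mu}}{z^{\star}}-\sigma_{T}(z^{\star})$ by Cauchy--Schwarz against the projection of $\mathcal{A}\hat{\mu}$ onto the compact set $T$. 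Both arguments rest on the same nontrivial ingredient, the dual-multiplier bound $\|z^{\star}\|_{2}\leq C/\delta$ from the proof of Theorem~\ref{thm:main:result:inf:dim}, and both need dual attainment (guaranteed under Assumption~\ref{ass:slater} by Lemma~\ref{lem:zero:duality:gap}). What your route buys is brevity and self-containedness: weak duality plus Cauchy--Schwarz, with no appeal to Sion's theorem or to strong duality of an auxiliary penalized problem. What the paper's route buys is generality: the Lipschitz estimate on the perturbation function is a reusable sensitivity statement that bounds the cost of \emph{any} level of infeasibility, not just the one realized by $\hat{\mu}$, which is the standard lens through which such a-posteriori bounds are derived in convex programming.
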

\begin{proof}
The two key ingredients of the proof are Theorem~\ref{thm:main:result:inf:dim} and the Lipschitz continuity of the so-called perturbation function of convex programming. We introduce the perturbed program as
\begin{subequations}
 \begin{align} 
 	       \quad J^{\star}(\varepsilon) &= \min\limits_{\mu\in\mathcal{P}(\K)} \{ \KL{\mu}{\nu}  \, : \, d(\mathcal{A}\mu,T)\leq \varepsilon  \} \nonumber\\
		&= \min\limits_{\mu\in\mathcal{P}(\K)}  \KL{\mu}{\nu} + \sup\limits_{\lambda\geq 0} \inf_{y\in T} \lambda \| \mathcal{A}\mu-y \| - \lambda \varepsilon \nonumber\\
		&= \sup\limits_{\lambda\geq 0} - \lambda \, \varepsilon + \inf_{\substack{\mu\in\mathcal{P}(\K)\\  y\in T}} \sup_{\| z \|_{2}\leq \lambda} \inprod{\mathcal{A}\mu-y}{z} + \KL{\mu}{\nu} \label{eq:cor:step:slater:cts} \\
		&=  \sup_{\substack{\lambda\geq 0 \\ \| z \|_{2}\leq \lambda}} - \lambda\, \varepsilon + \inf_{\substack{\mu\in\mathcal{P}(\K)\\  y\in T}} \inprod{\mathcal{A}\mu-y}{z} + \KL{\mu}{\nu}\label{eq:cor:step:sion:cts}\\
		&\geq  -\| z^{\star} \|_{2}\, \varepsilon + \inf_{\substack{\mu\in\mathcal{P}(\K)\\  y\in T}} \inprod{\mathcal{A}\mu-y}{z^{\star}} + \KL{\mu}{\nu} \nonumber \\
		& = -\| z^{\star} \|_{2} \, \varepsilon + J^{\star}.  \nonumber
 	\end{align}
	\end{subequations}
Equation~\eqref{eq:cor:step:slater:cts} uses the strong duality property that follows by the existence of a Slater point that is due to the definition of the set $T$, see Section~\ref{sec:problem:statement}. Step \eqref{eq:cor:step:sion:cts} follows by Sion's minimax theorem~\cite[Theorem~4.2]{ref:Sion-58}. Hence, we have shown that the perturbation function is Lipschitz continuous with constant $\| z^{\star} \|_{2}$. Finally, recalling $\| z^\star \|_2 \leq \frac{C}{\delta}$, established in the proof of Theorem~\ref{thm:main:result:inf:dim} completes the proof.
\end{proof}

\begin{Rem}[Stopping criterion of Algorithm~\hyperlink{algo:1}{1}] \label{remark:stopping}
There are two alternatives for defining a stopping criterion for Algorithm~\hyperlink{algo:1}{1}. Choose desired accuracy $\varepsilon>0$.
\begin{enumerate}[label=(\roman*), itemsep = 1mm, topsep = -1mm]
\item \emph{a-priori stopping criterion}: Theorem~\ref{thm:main:result:inf:dim} provides the required number of iterations $N(\varepsilon)$ to ensure an $\varepsilon$-close solution.
\item \emph{a-posteriori stopping criterion}: Choose the smoothing parameter $\eta$ as in \eqref{eq:definitions:algo:cont}. Fix a (small) number of iterations $\ell$ that are run using Algorithm~\hyperlink{algo:1}{1}. Compute the a-posteriori error $\KL{\hat{\mu}}{\nu} + \frac{C}{\delta} d(\mathcal{A}\hat{\mu},T)  - F(\hat{z})$ according to Corollary~\ref{Cor:aposteriori:infinite:dim} and if it is smaller than If $\varepsilon$ terminate the algorithm. Otherwise continue with another $\ell$ iterations.
\end{enumerate}
\end{Rem}

\begin{Rem}[Slater point computation] \label{rem:slater:point}
To compute the respective constants in Assumption~\ref{ass:slater}, we need to construct a strictly feasible point for \eqref{eq:main:problem}. For this purpose, we consider a polynomial density of degree $r$ defined as $p_r(\alpha,x) := \sum_{i=0}^{r-1} \alpha_i x^i$. For notational simplicity we assume that the support set is the unit interval ($\K = [0,1]$), such that the moments induced by the polynomial density are given by
\begin{align*}
\inprod{p_r(\alpha,x)}{x^i} = \int_0^1 \sum_{j=0}^{r-1} \alpha_j x^{j+i} \drv x= \sum_{j=0}^{r-1} \frac{\alpha_j}{j+i+1},
\end{align*}
for $i=0,\hdots, M$. Consider $\beta\in\R^{M+1}$, where $\beta_1 = 1$ and $\beta_i=y_{i-1}$ for $i=2,\hdots,M+1$. Hence, the feasibility requirement of \eqref{eq:main:problem} can be expressed as the linear constraint $A \alpha = \beta$,
where $A\in\R^{(M+1)\times r}$, $\alpha\in\R^r$, $\beta\in\R^{M+1}$ and $A_{i,j} = \frac{1}{i+j-1}$
and finding a strictly feasible solution reduces to the following feasibility problem
\begin{align} \label{eq:SOS:entropy:max} 
 \left\{ \begin{array}{ll}
		\max\limits_{\alpha \in \R^r} & \text{const}   \\
		\subjectto & A\alpha = \beta \\
		& p_r(\alpha,x) \geq 0 \quad \forall x\in[0,1], 
\end{array} \right.
\end{align}
where $p_r$ is a polynomial function in $x$ of degree $r$ with coefficients $\alpha$. 
The second constraint of the program \eqref{eq:SOS:entropy:max} (i.e., $p_r(\alpha,x) \geq 0 \ \forall x\in[0,1]$)\footnote{In a multi-dimensional setting one has to consider a tightening (i.e., $p_r(\alpha,x) >0 \ \forall x\in[0,1]^n$).} can be equivalently reformulated as linear matrix inequalities of dimension $\ceil{\frac{r}{2}}$, using a standard result in polynomial optimization, see \cite[Chapter~2]{ref:Lasserre-11} for details.
We note that for small degree $r$, the set of feasible solutions to problem \eqref{eq:SOS:entropy:max} may be empty, however, by choosing $r$ large enough and assuming that the moments can be induced by a continuous density, problem \eqref{eq:SOS:entropy:max} becomes feasible. Moreover, if $0 \in \mathsf{int}(T)$ the Slater point leads to a $\delta>0$ in Assumption~\ref{ass:slater}.
\end{Rem}

\begin{Ex}[Density estimation] \label{ex:Example1}
We are given the first $3$ moments of an unknown probability measure defined on $\K=[0,1]$ as\footnote{The considered moments are actually induced by the probability density $f(x):=(\ln 2 \ (1+x))^{-1}$. We, however, do not use this information at any point of this example.}
\begin{align*}
y:=\left( \frac{1-\ln 2}{\ln 2}, \frac{\ln 4 - 1}{\ln 4}, \frac{5-\ln 64}{\ln 64} \right) \approx (0.44,\ 0.28,\ 0.20).
\end{align*}
The uncertainty set in the measurement model \eqref{eq:measurement:model} is assumed to be $\mathcal{U}_{i}=[-u,u]$ for all $i=1,\hdots,3$. A Slater point is constructed using the method described in Remark~\ref{rem:slater:point}, where $r=5$ is enough for the problem~\eqref{eq:SOS:entropy:max} to be feasible, leading to the constant $C=0.0288$. The Slater point is depicted in Figure~\ref{fig:plotExperimentMeanVar1} and its differential entropy can be numerically computed as $-0.0288$.

We consider two  simulations for two different uncertainty sets (namely, $u=0.01$ and $u=0.005$). 
The underlying maximum entropy problem \eqref{eq:main:problem:special case} is solved using Algorithm~\hyperlink{algo:1}{1}. The respective features of the a-priori guarantees by Theorem~\ref{thm:main:result:inf:dim} as well as the a-posteriori guarantees (upper and lower bounds) by Corollary~\ref{Cor:aposteriori:infinite:dim} are reported in Table~\ref{tab:ex1}. 
Recall that $\hat{\mu}_{k(\varepsilon)}$ denotes the approximate primal variable after $k$-iterations of Algorithm~\hyperlink{algo:1}{1} as defined in Theorem~\ref{thm:main:result:inf:dim} and that $d(\mathcal{A}\hat{\mu}_{k(\varepsilon)},T)$ (resp.~$\frac{2\varepsilon \delta}{C}$) represent the a-posteriori (resp.~a-priori) feasibility guarantees.
It can be seen in Table~\ref{tab:ex1} that increasing the uncertainty set $\mathcal{U}$ leads to a higher entropy, where the uniform density clearly has the highest entropy. This is also intuitively expected since enlarging the uncertainty set is equivalent to relaxing the moment constraints in the respective maximum entropy problem. The corresponding densities are graphically visualized in Figure~\ref{fig:plotExperimentMeanVar1}.

 \begin{table}[!htb]  {\small{
\centering 
\caption{Some specific simulation points of Example~\ref{ex:Example1}. }
\label{tab:ex1}

\hspace{37mm} $\mathcal{U}=[-0.01,0.01]$ \hspace{41mm} $\mathcal{U}=[-0.005,0.005]$
\vspace{3mm} \phantom{..}
  \begin{tabular}{c@{\hskip 3mm} | c@{\hskip 2mm} c@{\hskip 2mm} c@{\hskip 2mm} c  | c@{\hskip 2mm} c@{\hskip 3mm} c@{\hskip 3mm} c  }
 a-priori error $\varepsilon$ \hspace{1mm}  & \hspace{1mm}    1  &$0.1$ & $0.01$ & $0.001$ \hspace{1mm}   &\hspace{1mm}  1  &$0.1$ & $0.01$ & $0.001$   \\ 
 $J_{\textnormal{UB}}$ & \hspace{1mm}-0.0174 & -0.0189 & -0.0194  & -0.0194 \hspace{1mm}   &\hspace{1mm}  -0.0223 & -0.0236 & -0.0237 & -0.0238   \\
 $J_{\textnormal{LB}}$ & \hspace{1mm} -0.0220 & -0.0279 & -0.0204  & -0.0195 \hspace{1mm}   & \hspace{1mm}   -0.0263 & -0.0298 & -0.0244 & -0.0238 \\
 iterations $k(\varepsilon)$ & \hspace{1mm} 99 & 551 & 5606  & 74423 \hspace{1mm}  & \hspace{1mm}   232 & 1241 & 12170 & 157865 \\
 $d(\mathcal{A}\hat{\mu}_{k(\varepsilon)},T)$& \hspace{1mm} 0.0008  &0.0036 &0.0005  & 0 \hspace{1mm} & \hspace{1mm}  0 & 0.001 & 0.0001 & 0  \\
  $\frac{2\varepsilon \delta}{C}$ & \hspace{1mm} 0.69  &0.069 &0.0069  & 0.00069 \hspace{1mm} & \hspace{1mm}  0.35 & 0.035 & 0.0035 & 0.00035
  \end{tabular} }}
\end{table}

\begin{figure}[!htb]   
\center{    {\input{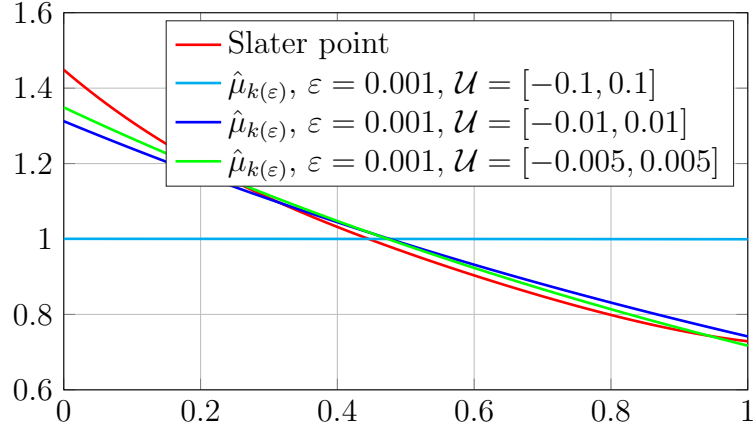} \label{fig:firstExperiment}} }
    \caption[]{Maximum entropy densities obtained by Algorithm~\hyperlink{algo:1}{1} for two different uncertainty sets. As a reference, the Slater point density, that was computed as described in Remark~\ref{rem:slater:point} is depicted in red.}
    \label{fig:plotExperimentMeanVar1}
\end{figure}

\end{Ex}

\section{Finite-dimensional case} \label{sec:finite:dim:case}
We consider the finite-dimensional case where $\K=\{1,\hdots,N\}$ and hence we optimize in \eqref{eq:main:problem} over the probability simplex $\mathcal{P}(\K)=\Delta_{N}$. One substantial simplification, when restricting to the finite-dimensional setting, is that the Shannon entropy is non-negative and bounded from above (by $\log N$). 
Therefore, we can substantially weaken Assumption~\ref{ass:slater} to the following assumption.

\begin{As} \label{ass:finite:dim} \
\begin{enumerate}[label=(\roman*), itemsep = 1mm, topsep = -1mm]
\item There exists $\delta>0$ such that $B(0,\delta)\subset\{x-\mathcal{A}\mu \ : \ \mu\in\Delta_{N}, x\in T\}$.
\item The reference measure $\nu\in\Delta_{N}$ has full support, i.e., $\min\limits_{1\leq i \leq N} \nu_{i}>0$.
\end{enumerate}
\end{As}

Consider the the definitions given in \eqref{eq:definitions:algo:cont} with $C:=\max\limits_{1\leq i \leq N} \log \frac{1}{ \nu_{i}}$, then the following finite-dimensional equivalent to Theorem~\ref{thm:main:result:inf:dim} holds.

\begin{Cor}[a-priori error]\label{cor:main:result:finite:dim}
Given Assumption~\ref{ass:finite:dim}, $C:=\max\limits_{1\leq i \leq N} \log \frac{1}{ \nu_{i}}$ and the definitions~\eqref{eq:definitions:algo:cont}, let $\varepsilon>0$ and $N(\varepsilon) := \left \lceil \{ N_1(\varepsilon) , N_2(\varepsilon) \} \right \rceil$. Then, $N(\varepsilon)$ iterations of Algorithm~\hyperlink{algo:1}{1} produce the approximate solutions to the problems
\eqref{eq:dual:problem} and \eqref{eq:primal:problem}, given by
\begin{equation} \label{eq_varAlgo}
\hat{z}_{k(\varepsilon)}:=y_{k(\varepsilon)} \quad \text{and} \quad \hat{\mu}_{k(\varepsilon)}(B) := \frac{\sum_{i\in B}2^{-\left(\mathcal{A}^*\hat{z}_{k(\varepsilon)}\right)_i}\nu_i}{\sum_{i=1}^N 2^{-\left(\mathcal{A}^*\hat{z}_{k(\varepsilon)}\right)_i}\nu_i} \quad \text{for all }B\subset \{1,2,\hdots,N\} \, ,
\end{equation}
which satisfy
\begin{subequations}
\begin{alignat}{3}
&\text{dual $\varepsilon$-optimality:}\hspace{20mm} &&0\leq F(\hat{z}_{k(\varepsilon)})-J^{\star}\leq \varepsilon \label{eq:thm:dual:optimality} \\
&\text{primal $\varepsilon$-optimality:}\hspace{25mm} &&|\KL{\hat{\mu}_{k(\varepsilon)}}{\nu}- J^{\star} | \leq  2(1+2\sqrt{3})\varepsilon \label{eq:thm:primal:optimality} \\
&\text{primal $\varepsilon$-feasibility:}\hspace{20mm} &&d(\mathcal{A}\hat{\mu}_{k(\varepsilon)},T)\leq \frac{2\varepsilon\delta}{C} \, , \label{eq:thm:primal:feasibility}
\end{alignat}
\end{subequations}
where $d(\cdot,T)$ denotes the distance to the set $T$, i.e., $d(x,T):=\min_{y\in T}\|x-y\|_2$.
\end{Cor}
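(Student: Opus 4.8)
The plan is to mirror the proof of Theorem~\ref{thm:main:result:inf:dim}, verifying that every quantity entering the Devolder-type convergence estimate of \cite{ref:devolder-12} survives the passage from the infinite-dimensional Slater Assumption~\ref{ass:slater} to the weaker finite-dimensional Assumption~\ref{ass:finite:dim}. The crucial simplification, already anticipated in the text, is that the Shannon entropy is non-negative and bounded above by $\log N$. Concretely, for every $\mu \in \Delta_N$ one has
\[
\KL{\mu}{\nu} = -H(\mu) + \sum_{i=1}^N \mu_i \log\tfrac{1}{\nu_i} \le \sum_{i=1}^N \mu_i \log\tfrac{1}{\nu_i} \le \max_{1\le i\le N}\log\tfrac{1}{\nu_i} = C,
\]
where finiteness of $C$ is guaranteed by Assumption~\ref{ass:finite:dim}(ii). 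This single bound plays both roles that the Slater value $\KL{\mu_0}{\nu}$ played in the infinite-dimensional argument: it bounds the primal optimal value $J^\star \le C$ (a feasible $\mu$ exists, since $0\in B(0,\delta)$ forces some $\mathcal{A}\mu=x\in T$ by Assumption~\ref{ass:finite:dim}(i)), and, via weak duality, it bounds the dual function $F(z) \le J^\star \le C$ uniformly in $z$.

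First I would re-derive the bound on the dual optimizer. Assumption~\ref{ass:finite:dim}(i), namely $B(0,\delta) \subset \{x - \mathcal{A}\mu : \mu \in \Delta_N,\, x \in T\}$, is precisely a Slater-type constraint qualification for $\mathcal{A}\mu \in T$, stated in set-inclusion form rather than through a distinguished interior point. Combining it with the universal bound $\KL{\mu}{\nu}\le C$ and $J^\star \ge 0$, the argument of \cite[Lemma~1]{ref:Nedic-08} — equivalently, the Lipschitz estimate for the perturbation function established in the proof of Corollary~\ref{Cor:aposteriori:infinite:dim} — yields $\|z^\star\|_2 \le C/\delta$ for any dual optimizer $z^\star$ of \eqref{eq:dual:problem}. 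This is the finite-dimensional counterpart of the constant $L=C/\delta$ in Theorem~\ref{thm:main:result:inf:dim}, and it is what renders the compact search set and the smoothing parameters in \eqref{eq:definitions:algo:cont} admissible.

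Next I would confirm that the two preparatory lemmas transfer. Zero duality gap and compactness of the dual optimal set (Lemma~\ref{lem:zero:duality:gap}) hold a fortiori on the simplex, since the relative entropy remains lower semicontinuous and strictly convex and $\Delta_N$ is compact. The Lipschitz-gradient estimate of Lemma~\ref{lem:lipschitz:cts:gradient} carries over with $\K = \{1,\dots,N\}$, so that $B := \max\{|x| : x \in \K\} = N$ and the gradient of the doubly-smoothed dual $F_\eta$ is Lipschitz with constant $L(\eta) = 1/\eta_1 + \|\mathcal{A}\|^2 + \eta_2$, where $\|\mathcal{A}\| \le \sum_{i=1}^M N^i$; the optimizers $x^\star_z = \pi_T(\eta_1^{-1}z)$ and $\mu^\star_z$ retain their closed forms, the latter being the discrete Gibbs weights displayed in \eqref{eq_varAlgo}.

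With these ingredients in place, the conclusion follows by invoking the same accelerated-gradient analysis of \cite{ref:devolder-12} used for Theorem~\ref{thm:main:result:inf:dim}: running Algorithm~\hyperlink{algo:1}{1} for $N(\varepsilon)$ iterations with $\eta_1(\varepsilon),\eta_2(\varepsilon)$ as in \eqref{eq:definitions:algo:cont} produces the estimates \eqref{eq_varAlgo} obeying dual $\varepsilon$-optimality \eqref{eq:thm:dual:optimality}, primal $\varepsilon$-optimality \eqref{eq:thm:primal:optimality}, and primal $\varepsilon$-feasibility \eqref{eq:thm:primal:feasibility}. I expect the main obstacle to be the second step: justifying the dual-multiplier bound $\|z^\star\|_2 \le C/\delta$ directly from the set-inclusion form of Assumption~\ref{ass:finite:dim}(i), since, unlike the infinite-dimensional case, there is no explicitly handed Slater point $\mu_0$ to feed into \cite[Lemma~1]{ref:Nedic-08}; one must instead translate the uniform-margin condition $B(0,\delta)\subset\{x-\mathcal{A}\mu\}$ into the requisite bound on the slope of the perturbation function at the origin.
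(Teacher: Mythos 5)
Your proposal is correct and takes essentially the same route as the paper: the paper's (very terse) proof likewise reduces the corollary to a bound $\|z^\star\|_2\le C/\delta$ on the dual optimizer under Assumption~\ref{ass:finite:dim}, with $C=\max_{1\le i\le N}\log\tfrac{1}{\nu_i}$, and then repeats the argument of Theorem~\ref{thm:main:result:inf:dim}. Your key observation — that the uniform bound $\KL{\mu}{\nu}\le C$ over the simplex substitutes for the Slater-point value, and that the set-inclusion margin $\delta$ then yields $\|z^\star\|_2\le C/\delta$ via the perturbation-function (Nedi\'c-type) argument — is exactly what the paper's choice of constant encodes, filling in the one step the paper delegates to a citation.
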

\begin{proof}
Under Assumption~\ref{ass:finite:dim} the dual optimal solutions in \eqref{eq:dual:problem} are bounded by 
\begin{align} \label{eq:dual:var:bound}
\| z^{\star} \| \leq \frac{1}{r}\max\limits_{1\leq i \leq N} \log \frac{1}{ \nu_{i}} \, .
\end{align}
This bound on the dual optimizer follows along the lines of~\cite[Theorem~6.1]{ref:devolder-12}. The presented error bounds can then be derived along the lines of Theorem~\ref{thm:main:result:inf:dim}.
\end{proof}
In addition to the explicit error bound provided by Corollary~\ref{cor:main:result:finite:dim}, the a-posteriori upper and lower bounds presented in Corollary~\ref{Cor:aposteriori:infinite:dim} directly apply to the finite-dimensional setting as well.

\section{Gradient approximation} \label{sec:gradient:evalutaion}
The computationally demanding element for Algorithm~\hyperlink{algo:1}{1} is the evaluation of the gradient $\nabla F_{\eta}(\cdot)$ given in Lemma~\ref{lem:lipschitz:cts:gradient}. In particular, Theorem~\ref{thm:main:result:inf:dim} and Corollary~\ref{cor:main:result:finite:dim} assume that this gradient is known exactly. While this is not restrictive if, for example, $\K$ is a finite set, in general, $\nabla F_{\eta}(\cdot)$ involves an integration that can only be computed approximately. In particular if we consider a multi-dimensional setting (i.e., $\K\subset \R^d$), the evaluation of the gradient $\nabla F_{\eta}(\cdot)$ represents a multi-dimensional integration problem. This gives rise to the question of how the fast gradient method (and also Theorem~\ref{thm:main:result:inf:dim}) behaves in a case of inexact first-order information. We refer the interested readers to \cite{ref:Devolver-13} for further details in this regard.

In this section we discuss two numerical methods to approximate this gradient. Note that in Lemma~\ref{lem:lipschitz:cts:gradient}, given that $T$ is simple enough the optimizer $x^{\star}_{z}$ is analytically available, so what remains is to compute $\mathcal{A}\mu^{\star}_{z}$, that according to Lemma~\ref{lem:entropy:max} is given by
\begin{align} \label{eq:gradient:approximation}
(\mathcal{A}\mu^{\star}_{z})_{i} &= \frac{\int_{\K} x^{i}2^{-\mathcal{A}^{*}z(x)}\nu(\drv x)}{\int_{\K} 2^{-\mathcal{A}^{*}z(x)}\nu(\drv x)} \quad\text{ for all }i=1,\hdots, M \, .
\end{align}

\textbf{Semidefinite programming.} Due to the specific structure of the considered problem, \eqref{eq:gradient:approximation} represents an integration of exponentials of polynomials for which an efficient approximation in terms of two single semidefinite programs involving two linear matrix inequalities has been derived, where the desired accuracy is controlled by the size of the linear matrix inequalities constraints, see~\cite{ref:Bertsimas-08, ref:Lasserre-11} for a comprehensive study. \vspace{2mm}

\textbf{Quasi-Monte Carlo.}
The most popular methods for integration problems of the from \eqref{eq:gradient:approximation} are Monte Caro (MC) schemes, see \cite{ref:robert-04} for a comprehensive summary. The main advantage of MC methods is that the root-mean-square error of the approximation converges to $0$ with a rate of $O(N^{-1/2})$ that is independent of the dimension, where $N$ are the number of samples used.
In practise, this convergence often is too slow. Under mild assumptions on the integrand, the MC methods can be significantly improved with a more recent technique known as Quasi-Monte Carlo (QMC) methods. QMC methods can reach a convergence rate arbitrarily close to $O(N^{-1})$ with a constant not depending on the dimension of the problem. We would like to refer the reader to \cite{ref:Dick-13, Sloan-98, Sloan-05, niederreiter2010quasi} for a detailed discussion about the theory of QMC methods.

\begin{Rem}[Computational stability] \label{rem:computational:stability}
The evaluation of the gradient in Lemma~\ref{lem:lipschitz:cts:gradient} involves the term $\mathcal{A}\mu^{\star}_{z}$, where $\mu^{\star}_{z}$ is the optimizer of the second term in \eqref{eq:dual:function:smoothed}. By invoking Lemma~\ref{lem:entropy:max} and the definition of the operator $\mathcal{A}$, the gradient evaluation reduces to
\begin{equation} \label{eq:rem:stable}
\left(\mathcal{A}\mu^{\star}_{z} \right)_i = \frac{\int_\K x^i 2^{-\sum_{j=1}^M z_j x^j}\drv x}{\int_\K 2^{-\sum_{j=1}^M z_j x^j}\drv x} \quad \text{for }i=1,\hdots,M \, .
\end{equation}
Note that a straightforward computation of the gradient via \eqref{eq:rem:stable} is numerically difficult. To alleviate this difficulty, we follow the suggestion of \cite[p.~148]{ref:Nest-05} which we briefly elaborate here. Consider the functions $f(z,x):= -\sum_{j=1}^M z_j x^j $, $\bar{f}(z):=\max_{x\in\K} f(z,x)$ and $g(z,x):= f(z,x)- \bar{f}(z)$. Note, that  $g(z,x)\geq 0$ for all $(z,x)\in \R^M\times\R$. One can show that
\begin{equation*} 
\left(\mathcal{A}\mu^{\star}_{z} \right)_i = \frac{\int_\K  2^{g(z,x)} \frac{\partial}{\partial z_i}g(z,x)\drv x}{\int_\K 2^{g(z,x)}\drv x} + \frac{\partial}{\partial z_i}\bar{f}(z) \quad \text{for }i=1,\hdots,M \, ,
\end{equation*}
which can be computed with significantly smaller numerical error than \eqref{eq:rem:stable} as the numerical exponent are always negative, leading to values always being smaller than $1$. 
\end{Rem}

\section{Zero-information moment closure method} \label{sec:moment:closure}

In chemistry, physics, systems biology and related fields, stochastic chemical reactions are described by the \emph{chemical master equation} (CME), that is a special case of the Chapman-Kolmogorov equation as applied to Markov processes~\cite{ref:vanKampen-81, ref:Wilkinson-06}.
These equations are usually infinite-dimensional and analytical solutions are generally impossible. Hence, effort has been directed toward developing of a variety of numerical schemes for efficient approximation of the CME, such as stochastic simulation techniques (SSA)~\cite{ref:Gillespie-76}. In practical cases, one is often interested in the first few moments of the number of molecules involved in the chemical reactions. This motivated the development of approximation methods to those low-order moments without having to solve the underlying infinite-dimensional CME. One such approximation method is the so-called \emph{moment closure method}~\cite{ref:Gillespie-09}, that briefly described works as follows: First the CME is recast in terms of moments as a linear ODE of the form
\begin{equation} \label{eq:moment:CME}
\frac{\drv}{\drv t}\mu(t) =  A \mu(t) + B \zeta(t)\, ,
\end{equation}
where $\mu(t)$ denotes the moments up to order $M$ at time $t$ and $\zeta(t)$ is an infinite vector describing the contains moments of order $M+1$ or higher. 
In general $\zeta$ can be an infinite vector, but for most of the standard chemical reactions considered in, e.g., systems biology it turns out that only a finite number of higher order moments affect the evolution of the first $M$ moments. Indeed, if the chemical system involves only the so-called zeroth and first order reactions the vector $\zeta$ has dimension zero (reduces to a constant affine term), whereas if the system also involves second order reactions then $\zeta$ also contains some moments of order $M+1$ only. It is widely speculated that reactions up to second order are sufficient to realistically model most systems of interest in chemistry and biology \cite{ref:Gillespie-07, ref:Gillespie-13}.
The matrix $A$ and the linear operator $B$ (that may potentially be infinite-dimensional) can be found analytically from the CME. The ODE \eqref{eq:moment:CME}, however, is intractable due to its higher order moments dependence. The approximation step is introduced by a so-called closure function
\begin{equation*}
\zeta=\varphi(\mu)\, ,
\end{equation*}
where the higher-order moments are approximated as a function of the lower-oder moments, see \cite{ref:Singh-06,ref:Singh-07}. A closure function that has recently attracted interest is known as the \emph{zero-information} closure function (of order $M$)~\cite{ref:Smadbeck-13}, and is given by
\begin{equation} \label{eq:closure:function}
(\varphi(\mu))_i = \inprod{p^\star_\mu}{x^{M+i}}, \text{ for } i=1,2,\hdots\, ,
\end{equation}
where $p^\star_\mu$ denotes the maximizer to \eqref{eq:main:problem:special case}, where $T=\times_{i=1}^M \left\{ \inprod{\mu}{x^i}-u \ : \ u\in\mathcal{U}_i \right\}$, where $\mathcal{U}_i=[-\kappa, \kappa]\subset\R$ for all $i$ and for a given $\kappa>0$, that acts as a regularizer, in the sense of Assumption~\ref{ass:slater}.
This approximation reduces the infinite-dimensional ODE \eqref{eq:moment:CME} to a finite-dimensional ODE
\begin{equation} \label{eq:finite:ODE}
\frac{\drv}{\drv t}\mu(t) =  A \mu(t) + B \varphi(\mu(t))\, .
\end{equation}
To numerically solve \eqref{eq:finite:ODE} it is crucial to have an efficient evaluation of the closure function $\varphi$. In the zero-information closure scheme this is given by to the entropy maximization problem \eqref{eq:main:problem:special case} and as such can be addressed using Algorithm~\hyperlink{algo:1}{1}.

To illustrate this point, we consider a reversible dimerisation reaction where two monomers ($\mathcal{M}$) combine in a second-order monomolecular reaction to form a dimer ($\mathcal{D}$); the reverse reaction is first order and involves the decomposition of the dimer into the two monomers. This gives rise to the chemical reaction system
\begin{equation} \label{eq:reversible:dimerization}
\begin{aligned}
2\mathcal{M} & \overset{k_1}{\longrightarrow} \mathcal{D} \\
\mathcal{D}&\overset{k_2}{\longrightarrow} 2\mathcal{M} \, ,
\end{aligned}
\end{equation}
with reaction rate constants $k_1,k_2>0$.
Note that the system as described has a single degree of freedom since $\mathcal{M} = 2\mathcal{D}_0 - 2\mathcal{D} + \mathcal{M}_0$, Where $\mathcal{M}$ denotes the count of the monomers, $\mathcal{D}$ the count of dimers, and $\mathcal{M}_0$ and $\mathcal{D}_0$ the corresponding initial conditions.
Therefore, the matrices can be reduced to include only the moments of one component as a simplification and as such the zero-information closure function \eqref{eq:closure:function} consists of solving a one-dimensional entropy maximization problem such as given by \eqref{eq:main:problem:special case}, where the support are the natural numbers (upper bounded by $\mathcal{M}_0+\mathcal{D}_0$ and hence compact). 
For illustrative purposes, let us look at a second order closure scheme, where the corresponding moment vectors are defined as $\mu=(1, \langle \mathcal{M}\rangle, \langle \mathcal{M}^2\rangle)\transp\in\R^3$ and $\zeta = \langle \mathcal{M}^3 \rangle \in\R$ and the corresponding matrices are given by
\begin{align*}
A = \begin{pmatrix*}[c]
0 & 0 & 0 \\
k_2 S_0 & 2k_1-k_2 & -2k_1 \\
2k_2 S_0 & \quad 2k_2(S_0 -1) - 4k_1 \quad\quad  & 8k_1 -2k_2
\end{pmatrix*}, \quad
B =  \begin{pmatrix*}[c]
0  \\
0  \\
-4k_1
\end{pmatrix*},
\end{align*}
where $S_0 = \mathcal{M}_0 + 2\mathcal{D}_0$. The simulation results, Figure~\ref{fig:plotExperimentMeanVar}, show the time trajectory for the average and the second moment of the number of $\mathcal{M}$ molecules in the reversible dimerization model~\eqref{eq:reversible:dimerization}, as calculated for the zero information closure~\eqref{eq:finite:ODE} using Algorithm~\hyperlink{algo:1}{1}, for a second-order closure as well as a third-order closure. To solve the ODE~\eqref{eq:finite:ODE} we use an explicit Runge-Kutta (4,5) formula (ode45) built into MATLAB.
The results are compared to the average of $10^6$ SSA \cite{ref:Wilkinson-06} trajectories. It can be seen how increasing the order of the closure method improves the approximation accuracy.

\begin{figure}[!htb]
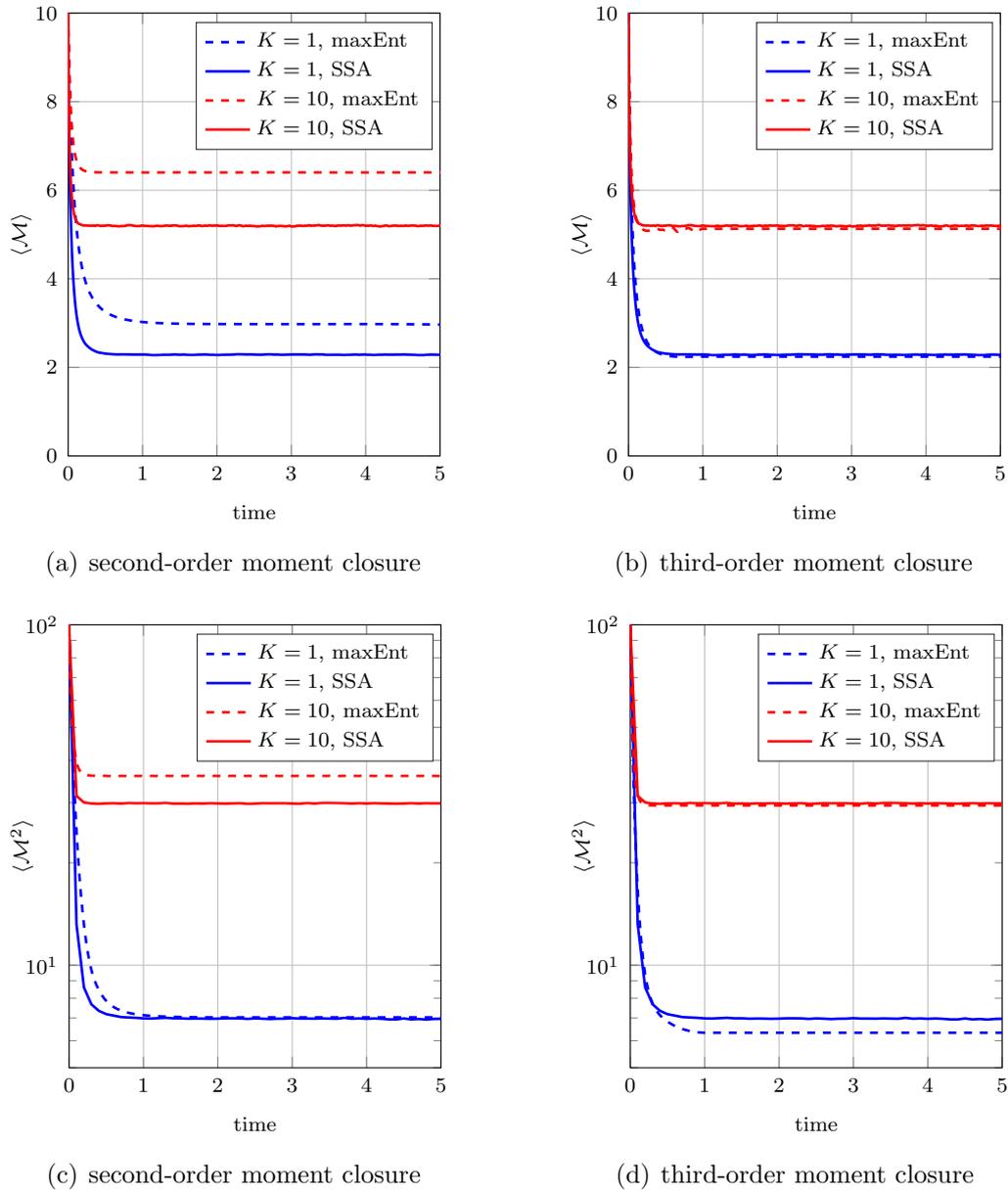
 
    \subfigure[second-order moment closure]{\input{secondOrder.tex} \label{fig:secondOrder} } \hspace{10mm}
    \subfigure[third-order moment closure]{\input{thirdOrder.tex} \label{fig:thirdOrder} } \\
     \subfigure[second-order moment closure]{\input{secondOrder_secondMoment.tex} \label{fig:secondOrder:secondMoment} } \hspace{10mm}
    \subfigure[third-order moment closure]{
%
%
\begin{tikzpicture}

\begin{axis}[%
height=6.0cm,
width=5cm,
at={(1.011111in,0.641667in)},
scale only axis,
grid=major,
style={font=\scriptsize},
y label style={at={(axis description cs:0.12,.5)}},
xmin=0,
xmax=5,
xlabel={time},
ylabel={$\langle \mathcal{M}^2\rangle$},
ymin=5,
ymax=100,
ymode=log,
xmode=linear,
legend style={legend cell align=left,align=left,draw=white!15!black}
]


\addplot [color=blue,dashed,line width=1.0pt]
  table[row sep=crcr]{%
0	100\\
0.00247348597181315	92.4303194890925\\
0.00735048582743701	79.7756597419882\\
0.0123278178962352	69.3763592898415\\
0.0174980692934942	60.6271824979529\\
0.0229267736098641	53.1635322315972\\
0.0286566963140782	46.7526157518448\\
0.0347325413593649	41.2158769059201\\
0.0412042415823096	36.4123321323829\\
0.0481269628495207	32.2302481477056\\
0.0555607717361404	28.5807325173315\\
0.0635701996395642	25.3925783149999\\
0.0722242536485956	22.6079613716804\\
0.0815976352908185	20.1788557170896\\
0.0917725906976913	18.0644495400862\\
0.102841854737132	16.2292809556737\\
0.114911882161327	14.642086532445\\
0.128107235703439	13.2749865111902\\
0.142576111877736	12.1029436598612\\
0.158497230331308	11.1033923303349\\
0.176088858004435	10.2559389107605\\
0.195620873305622	9.5420950034733\\
0.217431807256255	8.9450074848086\\
0.241955222520164	8.44916197827403\\
0.26976155435028	8.04010814873858\\
0.301621452972396	7.70427721753663\\
0.338620939864165	7.42876226411267\\
0.382348154600584	7.2005243259817\\
0.428873912780483	7.02327974189987\\
0.500593103370028	6.82173365550059\\
0.600593103370028	6.62647062575438\\
0.700593103370028	6.49392463738132\\
0.800593103370028	6.40525390121874\\
0.900593103370028	6.35267801701014\\
1	6.33463914589916\\
5       6.33463914589916\\
};
\addlegendentry{$K=1$, maxEnt};

\addplot [color=blue,solid,line width=1.0pt]
  table[row sep=crcr]{%
0	100\\
0.1	13.1846762110879\\
0.2	8.60518069868365\\
0.3	7.67705910115536\\
0.4	7.34545603269304\\
0.5	7.18353210621548\\
0.6	7.1114579395795\\
0.7	7.04235168650224\\
0.8	7.0229051606718\\
0.9	7.00530140166039\\
1	6.97863980223658\\
1.1	6.97242185713404\\
1.2	6.98942429059647\\
1.3	7.00643603998493\\
1.4	6.98334311510182\\
1.5	6.9831067546736\\
1.6	6.96556439090907\\
1.7	6.97227305231403\\
1.8	6.96918606625087\\
1.9	6.96020496539274\\
2	6.96809164733651\\
2.1	6.98224668882705\\
2.2	6.98342464797445\\
2.3	6.97864265030472\\
2.4	6.95555347531089\\
2.5	6.97917716800895\\
2.6	6.96680259550642\\
2.7	6.94904421478716\\
2.8	6.96503539423505\\
2.9	6.96706152631044\\
3	6.98958444613472\\
3.1	6.97143117396451\\
3.2	6.97142143465491\\
3.3	6.96786581964804\\
3.4	6.97759320887489\\
3.5	6.97023878683558\\
3.6	6.96366759608914\\
3.7	7.00368375292945\\
3.8	6.9650680764096\\
3.9	6.97689139280527\\
4	6.96028149587453\\
4.1	6.97621009089989\\
4.2	6.96055792892175\\
4.3	6.93364512458152\\
4.4	6.93009790531627\\
4.5	6.97740664964208\\
4.6	6.93255333039614\\
4.7	6.95181937901597\\
4.8	6.92442013912225\\
4.9	6.95202987137574\\
5	6.95736862670111\\
};
\addlegendentry{$K=1$, SSA};


\addplot [color=red,dashed,line width=1.0pt, each nth point={1}]
  table[row sep=crcr]{%
0	100\\
0.0011438505922504	96.4104102370032\\
0.0022877011845008	93.0423386769572\\
0.0034315517767512	89.8795154069735\\
0.0045754023690016	86.9068855146559\\
0.00744852627070453	80.1839856789375\\
0.0103216501724075	74.3946946742465\\
0.0131947740741104	69.3956201377125\\
0.0160678979758133	65.0518823930898\\
0.0190932302350526	61.0645677574669\\
0.0221185624942919	57.5974037885953\\
0.0251438947535313	54.5747021757256\\
0.0281692270127706	51.9260084977718\\
0.0316646866544118	49.255049453673\\
0.0351601462960529	46.9447987254587\\
0.0386556059376941	44.941976506034\\
0.0421510655793353	43.1964153713399\\
0.046197509507085	41.4426925458071\\
0.0502439534348347	39.9364590486322\\
0.0542903973625844	38.6405954313525\\
0.0583368412903341	37.5196705256296\\
0.0631029909079174	36.384216430406\\
0.0678691405255006	35.4210358146173\\
0.0726352901430838	34.6033958239671\\
0.077401439760667	33.9052810012997\\
0.0831080943944591	33.1968366625455\\
0.0888147490282511	32.6074292233886\\
0.0945214036620432	32.1174968954863\\
0.100228058295835	31.7074910374806\\
0.107181820724837	31.2929955383042\\
0.11413558315384	30.9580372353421\\
0.121089345582842	30.6884311524954\\
0.128043108011844	30.469408154333\\
0.136665685200106	30.250488960823\\
0.145288262388369	30.080930328584\\
0.153910839576631	29.9510699148845\\
0.162533416764893	29.8500049502439\\
0.173388410595362	29.7507238450427\\
0.184243404425831	29.6783746274986\\
0.1950983982563	29.6273947477368\\
0.205953392086768	29.5901018363626\\
0.219750911047443	29.5534235192299\\
0.233548430008118	29.5289773619351\\
0.247345948968793	29.514723606201\\
0.261143467929468	29.5052536185119\\
0.278644351428597	29.4934069119628\\
0.296145234927726	29.4868140478167\\
0.313646118426856	29.4865358204288\\
0.331147001925985	29.486484846078\\
0.351557031744073	29.4716457773141\\
0.371967061562161	29.4672731742283\\
0.392377091380249	29.4862527798329\\
0.412787121198337	29.4958719712177\\
0.424241845468213	29.4858839181489\\
0.435696569738089	29.4810328676897\\
0.447151294007965	29.4830917618974\\
0.458606018277841	29.4849538792234\\
0.470060742547717	29.4820492764798\\
0.481515466817593	29.4806441808781\\
0.492970191087469	29.4812645321659\\
0.504424915357345	29.4818204746261\\
0.519939489865311	29.4795093665395\\
0.535454064373278	29.4787093408972\\
0.550968638881244	29.4810387088347\\
0.566483213389211	29.4824864487988\\
0.58534750253132	29.4727475764936\\
0.604211791673428	29.4700747938641\\
0.623076080815537	29.4838215674651\\
0.641940369957646	29.4911751254512\\
0.654115550667265	29.4828262442801\\
0.666290731376883	29.4790595585703\\
0.678465912086502	29.4821714623455\\
0.690641092796121	29.4846203614801\\
0.702816273505739	29.4813957191312\\
0.714991454215358	29.479944494842\\
0.727166634924977	29.4811623568749\\
0.739341815634595	29.4821176520533\\
0.754780591940258	29.4792154281429\\
0.77021936824592	29.4782141238194\\
0.785658144551582	29.4811466934853\\
0.801096920857244	29.4829763028976\\
0.81902212980442	29.4737082937901\\
0.836947338751596	29.471011859682\\
0.854872547698772	29.4833243280892\\
0.872797756645947	29.4901874653486\\
0.885280210502631	29.4820795618251\\
0.897762664359314	29.4785149525111\\
0.910245118215998	29.4820542862879\\
0.922727572072681	29.4847436395811\\
0.935210025929365	29.4811887241214\\
0.947692479786048	29.4796294255005\\
0.960174933642731	29.4811977025947\\
0.972657387499415	29.4823862489473\\
0.979493040624561	29.4817665223727\\
0.986328693749707	29.4813516523387\\
0.993164346874854	29.4810984095674\\
1	29.480931170018\\
5	29.480931170018\\
};  
\addlegendentry{$K=10$, maxEnt};

\addplot [color=red,solid,line width=1.0pt]
  table[row sep=crcr]{%
0	100\\
0.1	31.4360922253912\\
0.2	30.0725916160037\\
0.3	29.867174510949\\
0.4	29.8085674534831\\
0.5	29.8875101161383\\
0.6	29.9171280835894\\
0.7	29.8554049546653\\
0.8	29.961318657568\\
0.9	29.9114373067155\\
1	29.9440136551355\\
1.1	29.8994439277092\\
1.2	29.9031887629185\\
1.3	29.9905792250483\\
1.4	29.8424324944621\\
1.5	29.8536442469341\\
1.6	29.8269212135024\\
1.7	29.8860254628935\\
1.8	29.8550227496348\\
1.9	29.9187223869966\\
2	29.8732687651085\\
2.1	29.8429214658471\\
2.2	29.8852924894355\\
2.3	29.9234429176317\\
2.4	29.7987714454821\\
2.5	29.8427059226136\\
2.6	29.9268353907351\\
2.7	29.9082619796268\\
2.8	29.9157084143231\\
2.9	29.917644225447\\
3	29.8531848649264\\
3.1	29.8216189532455\\
3.2	29.954456293972\\
3.3	29.8539947979522\\
3.4	29.9717836447014\\
3.5	29.9793352215058\\
3.6	29.9574445232326\\
3.7	29.9221668949465\\
3.8	29.8529642202597\\
3.9	29.9253814587457\\
4	29.8585158294713\\
4.1	29.9312268157462\\
4.2	29.8867818345868\\
4.3	29.8548832964391\\
4.4	29.8327445121527\\
4.5	29.864094089698\\
4.6	29.9294031702885\\
4.7	29.8555522103354\\
4.8	29.920183596465\\
4.9	29.8910287356865\\
5	29.8993821031991\\
};
\addlegendentry{$K=10$, SSA};

\end{axis}
\end{tikzpicture}
    \caption[]{Reversible dimerization system~\eqref{eq:reversible:dimerization} with reaction constants $K = \nicefrac{k_2}{k_1}$: Comparison of the zero-information moment closure method \eqref{eq:finite:ODE}, solved using Algorithm~\hyperlink{algo:1}{1} and the average of $10^6$ SSA trajectories. The initial conditions are $\mathcal{M}_0 = 10$ and $\mathcal{D}_0=0$ and the regularization term $\kappa=0.01$.}
    \label{fig:plotExperimentMeanVar}
\end{figure}

\mypart{1}{Conclusions}





\chapter{Conclusions and future directions}\label{chap:ConclusionsOutlook}

This thesis addresses classes of optimization problems from two disciplines: optimal control and information theory. In Part~\ref{part:ADP} we studied the optimal control of discrete-time continuous space MDPs, whereas Part~\ref{part:IT} treats the channel capacity problem as well as the entropy maximization subject to moment constraints.

\section{Approximate Dynamic Programming}

In Chapters~\ref{chap:infLP} to \ref{chap:approx:MDP}, we presented a two-step approximation scheme for a general class of linear programming problems in infinite-dimensional spaces, including the LP formulation of discrete-time MDPs with Borel state and action spaces under the long-run average (or discounted) cost optimality criterion. The first approximation step bridges the infinite-dimensional LP~\ref{primal-inf} to a semi-infinite relaxed LP~\ref{primal-n}. The second step transforms the latter problem to a finite convex program through two different approaches: a randomized method based on the so-called scenario approach, and a regularized, accelerated first-oder method. Both methods lead to explicit bounds on the approximation error of the proposed scheme.

For potential research directions on the content of this part, one may think of the following possibilities:
\begin{enumerate}
\item Given such an approximating scheme, we aim to study how to find $\varepsilon$-approximating policies, i.e., policies whose corresponding cost is $\varepsilon$ away from the optimal value.
\item In light of Theorem~\ref{thm:semi-fin:rand}, one may look more closely into the derivation of TBs as introduced in Definition~\ref{def:tail}. Meaningful TBs may depend highly on the individual structure of the optimization problems, in particular the uncertainty set and the constraint functions. In fact, the probability measure $\PP$ can also be viewed as a decision variable to propose the most \emph{informative} scenario program to tackle the robust formulation.
\end{enumerate}

\section{Information theoretic problems}
\subsection{Channel capacity approximation}
In Chapter~\ref{chap:channel:cap}, we introduced a new approach to approximate the capacity of DMCs possibly having constraints on the input distribution. The dual problem of Shannon's capacity formula turns out to have a particular structure such that the Lagrange dual function admits a closed form solution. Applying smoothing techniques to the non-smooth dual function enables us to solve the dual problem efficiently. This new approach, in the case of no constraints on the input distribution, has a computational complexity per iteration step of $O(MN)$, where $N$ is the input alphabet size and $M$ the size of the output alphabet. In comparison, the Blahut-Arimoto algorithm has a computational cost of $O(MN^2)$ per iteration step. More precisely for no input power constraint, the total computational cost to find an $\varepsilon$-close solution is $O(\tfrac{M^2 N \sqrt{\log(N)}}{\varepsilon})$ for the algorithm developed in this chapter, whereas the Blahut-Arimoto algorithm requires $O(\tfrac{MN^2 \log(N)}{\varepsilon})$. We would like to emphasize that the computational cost of the smallest unit, i.e.\ the cost of one iteration is strictly better for the algorithm introduced in this chapter. As highlighted by Example~\ref{ex:one}, this can make a crucial difference especially for large input alphabets. Another strength of the new approach is that it provides an a posteriori error, i.e., after having run a certain number of iterations we can precisely estimate the actual error in the current approximation. This is computationally appealing as explicit (or a priori) error bounds often are conservative in practice. By exploiting this a posteriori bound we can stop the computation once the desired accuracy has been reached.

As a second contribution, we have shown how similar ideas can be used to approximate the capacity of memoryless channels with continuous bounded input alphabets and countable output alphabets under a mild assumption on the channels tail. This assumption holds, for example, for discrete-time Poisson channels, allowing us to efficiently approximate their capacity. As an example we derived upper and lower bounds for a discrete-time Poisson channel having a peak-power constraint at the input. 

The presented optimization method highly depends on the Lipschitz constant estimate of the objective's gradient. The worse this estimate the more steps the method requires for an a priori $\varepsilon$-precision. For future work, we aim to study the derivation of local Lipschitz constants of the gradient. This technique has recently been shown to be very efficient in practice (up to three orders of magnitude reduction of computation time), while preserving the worst-case complexity \cite{ref:Baes-14}.

In the case of a continuous input alphabet, Section~\ref{sec:cont:Channels}, the proposed method requires to evaluate the gradient $\nabla G_{\nu}(\cdot)$ in every step of Algorithm~1, that requires solving an integral over $\mathbb{A}$. As such the method used to compute those integrals has to be included to the complexity of the proposed algorithm. Therefore, it would be interesting to investigate under which structural properties on the channel the gradient $\nabla G_{\nu}(\cdot)$ can be evaluated efficiently.

The approach introduced in this chapter can be used to efficiently approximate the capacity of classical-quantum channels, i.e., channels that have classical input and quantum mechanical output, with a discrete or bounded continuous input alphabet. Using the idea of a universal encoder allows us to compute close upper and lower bounds for the Holevo capacity \cite{ref:D:Sutter-16}.

\subsection{Generalized maximum entropy estimation}
In Chapter~\ref{chap:entropy:max}, we presented an approximation scheme to a generalization of the classical problem of estimating a density via a maximum entropy criterion, given some moment constraints. The key idea used is to apply smoothing techniques to the non-smooth dual function of the entropy maximization problem, that enables us to solve the dual problem efficiently with fast gradient methods. Due to the favourable structure of the considered entropy maximization problem, we provide explicit error bounds on the approximation error as well as a-posteriori error estimates.

The proposed method requires one to evaluate the gradient \eqref{eq:gradient:approximation} in every iteration step, which, as highlighted in Section~\ref{sec:gradient:evalutaion}, in the infinite-dimensional setting involves an integral. As such the method used to compute those integrals has to be included to the complexity of the proposed algorithm and, in higher dimensions, may become is the dominant factor.~Therefore, it would be interesting to investigate this integration step in more detail.~Two approaches, one based on semidefinite programming and another invoking Quasi-Monte Carlo integration techniques, are briefly sketched. 
Another potential direction, would be to test the proposed numerical method in the context of approximating the channel capacity of a large class of memoryless channels~\cite{TobiasSutter15}, as mentioned in the introduction.
 
 Finally it should be mentioned that the approximation scheme proposed in this chapter can be further generalized to quantum mechanical entropies. In this setup probability mass functions are replaced by density matrices (i.e., positive semidefinite matrices, whose trace is equal to one). The von Neumann entropy of such a density matrix $\rho$ is defined by $H(\rho):=-\mathrm{tr}(\rho \log \rho)$, which reduces to the (Shannon) entropy in case the density matrix $\rho$ is diagonal. Also the relative entropy can be generalized to the quantum setup~\cite{umegaki62} and general treatment of our approximation scheme, its analysis can be lifted to the this (strictly) more general framework. As demonstrated in~\cite{ref:D:Sutter-16}, (quantum) entropy maximization problems can be used to efficiently approximate the classical capacity of quantum channels.



\fancyhead[LO, RE]{Bibliography}

	\bibliographystyle{alpha} 



	\addcontentsline{toc}{part}{Bibliography}

	\small{\bibliography{library_GXZ}}





\chapter*{Curriculum Vitae}
\addcontentsline{toc}{chapter}{Curriculum Vitae}

{\bf Tobias Sutter\\
Born on March $14^\text{th}$, 1987 in St.~Gallen, Switzerland

\vspace{1.0cm}

\begin{table}[htb]
\begin{tabular}{r ll}

2012 -- 2017 & {\bf Doctoral Studies} at Automatic Control Laboratory, \\
	& Department of Information Technology and Electrical Engineering, \\
			& Swiss Federal Institute of Technology Zurich (ETH Zurich), Switzerland\\[2ex]

2010 -- 2012 & {\bf Master Studies} \\
	& Department of Mechanical and Process Engineering, \\
			&  Swiss Federal Institute of Technology Zurich (ETH Zurich), Switzerland\\[2ex]

2007 -- 2010 & {\bf Bachelor Studies} \\
	& Department of Mechanical and Process Engineering, \\
			&  Swiss Federal Institute of Technology Zurich (ETH Zurich), Switzerland\\[2ex]

2006 -- 2007 & {\bf Military Service} \\
		& Swiss Military Orchestra (clarinetist),\\
			& Swiss Army, Bern, Switzerland \\ [2ex]

2002 -- 2006 & {\bf High School, Swiss Matura}\\
		 & Kantonsschule am Burggraben, St.~Gallen, Switzerland \\
		 
\end{tabular}
\label{default}
\end{table}%

\end{document}